\documentclass[12pt,CJK]{article}
\usepackage{amssymb}
\usepackage{stmaryrd} 
\usepackage{amsfonts,amsmath,amssymb,amsthm,mathrsfs}
\usepackage{latexsym}
\usepackage{graphicx}
\usepackage{indentfirst}
\usepackage{color}
\usepackage{fancyhdr}
\usepackage[colorlinks,linktocpage,linkcolor=blue]{hyperref}

\usepackage{tikz}               
\usetikzlibrary{positioning, fit, arrows.meta,calc}
\usetikzlibrary{decorations.markings} 

\usepackage{tabularx} 
\usepackage{booktabs} 
\usepackage{longtable} 

\usepackage{authblk} 

\theoremstyle{plain}                       
\newtheorem{lemma}{Lemma}[section]
\newtheorem{theorem}[lemma]{Theorem}
\newtheorem{corollary}[lemma]{Corollary}
\newtheorem{remark}[lemma]{Remark}
\newtheorem{definition}[lemma]{Definition}
\newtheorem{proposition}[lemma]{Proposition}

\numberwithin{equation}{section}

\theoremstyle{remark}

\def\Xint#1{\mathchoice
  {\XXint\displaystyle\textstyle{#1}}%
  {\XXint\textstyle\scriptstyle{#1}}%
  {\XXint\scriptstyle\scriptscriptstyle{#1}}%
  {\XXint\scriptscriptstyle\scriptscriptstyle{#1}}%
  \!\int}
\def\XXint#1#2#3{{\setbox0=\hbox{$#1{#2#3}{\int}$}
  \vcenter{\hbox{$#2#3$}}\kern-.5\wd0}}

\def\dashint{\Xint-}

\DeclareMathOperator*{\esssup}{ess\,sup}

\author[a]{
Jun Geng \thanks{Email: gengjun@lzu.edu.cn.}
}

\author[a]{
Qiang Xu \thanks{Email: xuq@lzu.edu.cn.}
}

\affil[a]{School of Mathematics and Statistics, Lanzhou University, Lanzhou, 730000, China.}

\title{\textbf{Sharp error estimates in stochastic homogenization
of parabolic systems with time-dependent coefficients}}

\begin{document}

\allowdisplaybreaks

\maketitle

%

%
%
%

%

\begin{abstract}
This article mainly proves the existence of stationary correctors under space-time spectral gap conditions, which exhibit different properties from those of elliptic operator correctors. Additionally, new flux correctors and their fluctuation estimates
are introduced. Based on this, we obtain the optimal homogenization  error in the sense of strong and weak norms on $C^1$ cylinders by using the duality and distance-weighted arguments, in which the (weighted) annealed Calder\'on-Zygmund estimates coupled with a novel form of the minimal radius are developed.  Throughout the paper, no small-scale smoothness of the coefficients is used.
\\
\textbf{Key words:} Stochastic homogenization; parabolic systems;
homogenization errors;
annealed Calder\'on-Zygmund estimates.
\end{abstract}

\small
\tableofcontents

\normalsize

\section{Introduction}
\noindent
In recent years, Armstrong, Bordas, and Mourrat \cite{Armstrong-Bordas-Mourrat-18} considered uniformly
parabolic operators with random coefficients depending on space and time variables (with a parabolic scaling):
\begin{equation}\label{operator}
\partial_t + \mathcal{L}_\varepsilon :=
\partial_t -\nabla \cdot a^\varepsilon\nabla
=\partial_t -\nabla\cdot a(\cdot/\varepsilon,\cdot/\varepsilon^2)\nabla,
\qquad \varepsilon>0,
\end{equation}
and they developed a quantitative theory of stochastic homogenization
(e.g. homogenization errors and large-scales $C^{k,1}$-type estimates)
by assuming that the law of the random field $a$ is invariant under $\mathbb{Z}^d\times\mathbb{Z}$-translations and has a finite-range
dependence, building on the ideas introduced by
Armstrong et al. \cite{Armstrong-Kuusi-Mourrat-19,Armstrong-Smart-16}
\footnote{We note
that Armstrong and Kuusi have already upgraded their quantitative theory under very general mixing
conditions in \cite{Armstrong-Kuusi-22} compared to the previous monograph \cite{Armstrong-Bordas-Mourrat-18}, which includes the spectral-gap type inequalities used in this article. This line of arguments is now recognized as the coarse-grained theory \cite{Armstrong-Kuusi-22,Armstrong-Kuusi-25}. }.
Additionally, Bella, Chiarini, and Fehrman studied $\eqref{operator}$
and established qualitative properties of correctors
along with large-scale parabolic excess decay estimates,
under the assumption that the law of the coefficient field $a$ is stationary with respect to space-time translations and ergodic, drawing inspiration from Gloria, Neukamm, and Otto \cite{Gloria-Neukamm-Otto-20}. In an almost periodic setting,
the first author and Shi \cite{Geng-Shi-25} also developed
a quantitative theory for $\eqref{operator}$ in compliance with
its elliptic counterpart by Armstrong and Shen \cite{Armstrong-Shen-16,Shen-15}. Regarding high contrast settings, Lau \cite{Lau-26} introduced a parabolic coarse-graining framework extending the recent work of Armstrong and Kussi \cite{Armstrong-Kuusi-25} on elliptic operators. The operators $\eqref{operator}$ also arise naturally in the intersection of homogenization and regularity structure theory, as evidenced by the recent works  \cite{Clozeau-Singh-25,Hairer-Singh-25}, while research on such multiscale parabolic operators can be traced back to the classical literature \cite{Bensoussan-Lions-Papanicolaou-11,Zhikov-Kozlov-Oleinik-82}.

Compared with the quantitative theory of periodic homogenization
investigated by the first author and Shen \cite{Geng-Shen-15,Geng-Shen-17}, the main challenge in the aperiodic case lies in the study of the following corrector equation:
\begin{equation}\label{corrector}
\partial_t\phi_j -\nabla\cdot a(\nabla\phi_j+e_j) = 0
\quad \text{in}\quad\mathbb{R}^{d+1},
\quad \text{with}~j=1,\cdots,d,
\end{equation}
subject to an underlying ensemble of coefficient fields.
However, to the best of the authors' knowledge,
the existence of stationary solutions to $\eqref{corrector}$ has not yet been addressed in the aforementioned literature \textemdash a gap that is particularly relevant for optimal homogenization error estimates and further applications.
The main motivation and objective of this article
are to close this gap by establishing the existence of the stationary solution to $\eqref{corrector}$
under a spectral gap inequality (e.g., \eqref{c:3}), adopting an approach that has been well developed to the study of
quantitative stochastic homogenization by
Gloria et al. \cite{Gloria-Otto-11,Gloria-Neukamm-Otto-15,Gloria-Neukamm-Otto-20,
Gloria-Neukamm-Otto-21}, originally inspired by Naddaf and Spencer \cite{Naddaf-Spencer-98,Naddaf-Spencer-97}. The present contribution is twofold: (i) we provide a rigorous proof of the existence of stationary correctors; (ii) we demonstrate how this result can be utilized to improve the quantitative estimates in the stochastic homogenization framework for parabolic systems.

\subsection{Setup and main results}
\noindent
Precisely, let $a$ satisfy the $\mu$-uniform ellipticity condition: for some $\mu\in(0,1)$, it holds that
\begin{equation}\label{c:1}
 \mu  |\xi|^2 \leq a_{ij}(z)\xi_i \xi_j \leq \mu^{-1} |\xi|^2
 \qquad
\text{for~all~}\xi\in\mathbb{R}^{d}\text{~and~}z:=(x,t)\in\mathbb{R}^{d+1}.
\end{equation}
(Einstein's summation convention for repeated indices is
used throughout.)
The configuration space can be introduced as the set
of coefficient fields satisfying $\eqref{c:1}$, equipped with a
probability measure \textemdash  referred to as the ensemble
\textemdash and the expectation is denoted by $\langle\cdot\rangle$.
This ensemble is assumed to be \emph{stationary}\footnote{The stationarity in the literature postulates that
the coefficients are statistically homogeneous in time and space.}, i.e., for all shift vectors $z=(x,t)\in\mathbb{R}^{d+1}$, $a(\cdot+x,\cdot+t)$ and $a(\cdot,\cdot)$ have the same law under $\langle\cdot\rangle$.  Additionally, the ensemble satisfies the \emph{spectral gap condition}
\begin{equation}\label{c:3}
 \big\langle (F-\langle F\rangle)^2 \big\rangle \leq \lambda_1
 \Big\langle\int_{\mathbb{R}^{d+1}} dz\Big(\dashint_{Q_1(z)}\big|\frac{\partial F}{\partial a}\big|\Big)^2 \Big\rangle,
\end{equation}
where the notation $Q_r(z):=B_r(x)\times(t-r^2,t+r^2)$
with $z=(x,t)$ is known as the parabolic cube, and
the random tensor field $\frac{\partial F}{\partial a}$ (depending on $(a,z)$) is the functional derivative of $F$ with respect to $\delta a$
(see $\eqref{functional}$ for the concrete definition).

The first result of this paper is now presented.
\begin{theorem}[Stationary correctors]\label{thm:1.0}
Let $d\geq 2$ and $1\leq p<\infty$. Suppose that the ensemble $\langle\cdot\rangle$ satisfies the stationarity condition
associated with $\eqref{c:1}$ and the spectral gap condition $\eqref{c:3}$. Let $1\leq i,j,k\leq d$ and
$1\leq k'\leq d+1$ be integers.
Then, there exist random fields $\phi=\{\phi_j\}$ and   $\sigma=\{\sigma_{k'ij}\}$  with the following properties:
\begin{enumerate}
  \item The random fields $\phi_j$  and $\sigma_{kij}$ are
the unique stationary solutions\footnote{If we say a random field $X(a,x)$ is stationary if it is shift-covariant in the sense of $X(a,x+z) = X(a(z),x)$ for any $x,z\in\mathbb{R}^{d+1}$ and
$\langle\cdot\rangle$-a.e. $a$. This implies that $X(a,\cdot)$ and $X(a,\cdot+z)$ have the same law under
$\langle\cdot\rangle$.}
to the following equations:
\begin{equation}\label{pde:0}
\begin{aligned}
\partial_t\phi_j -\nabla\cdot a(\nabla\phi_j+e_j) &= 0;\\
\Delta_{d+1}\sigma_{k'ij}
 &=\partial_{k'}q_{ij}-\partial_i q_{k'j};\\
 \partial_{k'}\sigma_{k'ij}
 &= q_{ij};\\
 \partial_i\sigma_{(d+1)ij}
 &= \langle \phi_j\rangle-\phi_j,
\end{aligned}
\end{equation}
in the weak sense on $\mathbb{R}^{d+1}$, where
it is noted that $\sigma_{(d+1)ij}$ is not stationary, and
\begin{equation}\label{eq:1.1}
q_{ij}:= \bar{a}_{ij}
- a_{ij}
- a_{ik}\partial_k\phi_j,
\quad
q_{(d+1)j} :=\phi_j-\langle\phi_j\rangle,
\quad
 \bar{a}_{ij}:=\big\langle e_i\cdot a(\nabla\phi_j+e_j)\big\rangle.
\end{equation}
  \item The field $\sigma$ (referred to as the flux corrector) is skew-symmetric in its first two indices, i.e.,
      \begin{equation}\label{skew-symmetric}
      \sigma_{k'ij} = -\sigma_{ik'j}.
      \end{equation}
  \item  The following quantitative estimates hold:
  \begin{subequations}
  \begin{align}
  & \bigg\langle\Big(\dashint_{Q_1(z)}\big|(\nabla \phi_j,\nabla \sigma_{k'ij})\big|^{2}\Big)^p\bigg\rangle^{1/p}\lesssim_{\mu,\lambda_1,d,p}1;
  \label{pri:5.2}\\
  &\bigg\langle \Big|\dashint_{Q_1(z)}\big(\phi_j,\sigma_{kij}\big) - \dashint_{Q_1(0)}\big(\phi_j,\sigma_{kij}\big)\Big|^{2p}\bigg\rangle^{1/p}
\lesssim_{\mu,\lambda_1,d,p}1;
  \label{pri:5.4}\\
  &\bigg\langle \Big|\dashint_{Q_1(z)}\sigma_{(d+1)ij} - \dashint_{Q_1(0)}\sigma_{(d+1)ij}\Big|^{2p}\bigg\rangle^{1/p}
\lesssim_{\mu,\lambda_1,p} \bar{\mu}_d^2(z)
\qquad \forall z\in\mathbb{R}^{d+1},
\label{pri:5.4b}
  \end{align}
  \end{subequations}
\end{enumerate}
where the notation ``$\lesssim_{\mu,\lambda_1,p,d}$'' indicates that the multiplicative constant depends on $\mu,\lambda_1,p,d$. Let 
$\text{d}(z,0):=|x|+\sqrt{|t|}$ be the parabolic distance, and the weight function $\mu_d$ is defined as
\begin{equation}\label{pri:5.5}
\bar{\mu}_d(z):=\left\{\begin{aligned}
&\ln^{\frac{3}{4}}\big(2+\text{d}(z,0)\big) &\quad&\text{if}~~d=2;\\
& 1  &\quad&\text{if}~~d>2.
\end{aligned}\right.
\end{equation}
\end{theorem}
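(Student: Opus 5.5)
We construct the correctors by massive (time-regularized) approximation and pass to the limit using uniform-in-$T$ moment bounds from the spectral gap \eqref{c:3}, following Gloria, Neukamm and Otto. For $T\ge1$ we consider the stationary solutions of
\begin{equation*}
\tfrac1T\phi_j^T+\partial_t\phi_j^T-\nabla\cdot a(\nabla\phi_j^T+e_j)=0,
\end{equation*}
which exist and are unique by Lax--Milgram in the space of stationary fields, since the massive term gives coercivity in the probability space. Similarly, $\sigma^T_{k'ij}$ is defined through $\frac1T\sigma^T-\Delta_{d+1}\sigma^T=\partial_{k'}q^T_{ij}-\partial_iq^T_{k'j}$, with $q^T$ built from $\phi^T$ as in \eqref{eq:1.1}. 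Skew-symmetry \eqref{skew-symmetric} is built into this right-hand side and passes to the limit. The identities $\partial_{k'}\sigma_{k'ij}=q_{ij}$ and $\partial_i\sigma_{(d+1)ij}=\langle\phi_j\rangle-\phi_j$ then follow once one knows $\partial_{k'}q_{k'j}=0$. This holds because $q_{(d+1)j}=\phi_j-\langle\phi_j\rangle$ was chosen so that the space-time divergence $\partial_t\phi_j+\nabla\cdot(\bar a e_j-a(\nabla\phi_j+e_j))$ vanishes by \eqref{pde:0}. Taking the divergence of the $\sigma$-equation and using uniqueness of decaying/stationary-gradient solutions of $\Delta_{d+1}u=0$ completes this step.

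The heart of the argument is the uniform bound \eqref{pri:5.2} for $\nabla\phi^T$, together with a bound on $\phi^T$ itself. We follow the sensitivity strategy.
\begin{enumerate}
\item \emph{Sensitivity representation.} For $F=\int g\cdot\nabla\phi^T$ with $g$ supported in $Q_1$, introduce the dual (backward) parabolic problem $-\partial_t v+\frac1T v-\nabla\cdot a^*\nabla v=\nabla\cdot g$. One then has $\frac{\partial F}{\partial a}=(\nabla v+g)\otimes(\nabla\phi^T+e)$.
\item \emph{$p$-th moment spectral gap.} Insert this into the $p$-version of \eqref{c:3}. The annealed (Meyers-type, large-scale) Calder\'on--Zygmund estimate for the backward parabolic operator moves the moment on $\nabla v$ onto $g$.
\item \emph{Buckling.} The result is an estimate of $\langle|\nabla\phi^T|^{2p}\rangle$ by itself with a small loss. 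We close it by buckling with the large-scale Lipschitz/excess-decay estimate above a minimal radius $r_*$. The moments of $r_*$ in turn are controlled by the moments of $\nabla\phi^T$, so the two bounds feed into each other.
\end{enumerate}
We expect this buckling step to be the main obstacle. The time direction is non-symmetric, so Green's-function/CZ estimates must be done for the adjoint backward equation with parabolic scaling. We would try first to prove the annealed CZ bound by a perturbative argument in the ensemble ($\langle\cdot\rangle$-$L^p$ duality plus deterministic parabolic CZ on small scales with quenched energy estimates).

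Next we control differences of averages, which gives \eqref{pri:5.4}. Apply the spectral gap to $F=\dashint_{Q_1(z)}\phi^T-\dashint_{Q_1(0)}\phi^T$. Its sensitivity is $(\nabla v+\dots)\otimes(\nabla\phi^T+e)$, where now $v$ solves the dual problem with right-hand side the difference of the two normalized indicators. Parabolic Green's function decay, $|\nabla G|\lesssim \text{d}^{-(d+1)}$ for the space-time dimension $d+2$, gives $\int|\nabla v|^2\lesssim\int_1^{\text{d}(z,0)}r^{-(d+1)}r^{d+1}\,dr/r$. This yields a logarithm in $d=2$ at the level of $\nabla v$ itself, but for $\phi$ one gains from the extra time integrability. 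We expect the first three lines to be bounded uniformly, including $d=2$; only $\sigma_{(d+1)ij}$, which is driven by $\phi$ itself rather than a gradient, inherits the anomalous weight $\bar\mu_d^2$. The exponent $\frac34$ should come from one more Green's function integration for the $\Delta_{d+1}$ Poisson problem, via a CLT-type scaling of $q_{(d+1)}=\phi-\langle\phi\rangle$. Since $\phi^T$ has uniform moments, $\phi_j^T-\langle\phi_j^T\rangle$ converges as $T\to\infty$. We show the increments $\phi^{T}-\phi^{2T}$ are Cauchy in the stationary $L^2$ space, by the same sensitivity estimate with rate $T^{-\gamma}$. Hence the limit $\phi_j$ is stationary, which is the feature that distinguishes the parabolic case from the elliptic one.

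Finally, uniqueness. Suppose two stationary solutions have finite moments. Their difference $w$ solves $\partial_tw-\nabla\cdot a\nabla w=0$ with stationary, mean-zero gradient. Testing with the cutoff ergodic averages gives $\langle a\nabla w\cdot\nabla w\rangle=0$, since $\langle\partial_t w\, w\rangle=\frac12\langle\partial_t w^2\rangle=0$ by stationarity. Hence $\nabla w=0$, and $w$ is time-independent. Then $\partial_tw=0$ with stationarity forces $w$ to be constant, which is fixed by the equations. The same argument applies to $\sigma$ with $\Delta_{d+1}$.
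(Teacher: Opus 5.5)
\paragraph{Verdict.} There is a genuine gap. The main one is that the flux-corrector half of the theorem is not proved, and this does not follow routinely from the $\phi$-bounds.

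\paragraph{The flux-corrector estimates.} The increment estimates \eqref{pri:5.4} for $\sigma_{kij}$ and \eqref{pri:5.4b} for $\sigma_{(d+1)ij}$ are only announced (``we expect'', ``should come from''). Moment bounds on $\phi,\nabla\phi$ together with the Poisson equation for $\sigma$ give no control of increments. For that you need a CLT-type cancellation $\langle|\int q_{ij}h|^{2p}\rangle^{1/p}\lesssim\int|h|^2$ for the flux itself, which requires its own sensitivity computation. In the paper, the dual Poisson problem $-\Delta_{d+1}v=\overline{\nabla}\cdot g$ turns $\int\overline{\nabla}\sigma_{kij}\cdot g$ into $\int q_{kj}\partial_iv-\int q_{ij}\partial_kv$. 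The contribution of $\partial\phi_j/\partial a$ is then absorbed by a second, backward parabolic problem for $w_{ki}$, and the annealed Meyers estimate closes the argument. For $\sigma_{(d+1)ij}$ the source contains $q_{(d+1)j}=\phi_j-\langle\phi_j\rangle$, which is not a gradient, so the paper proceeds differently:
\begin{itemize}
\item it rewrites the equation with the backward heat operator $\partial_t+\Delta$;
\item it reduces $\int\nabla\sigma_{(d+1)ij}\cdot g$ to linear functionals of $q_{ij}$, $\overline{\nabla}\sigma_{kij}$, $\nabla\phi_j$, tested against $v,\nabla v$ with $(\partial_t-\Delta)v=\nabla\cdot g$;
\item it bounds $\|v\|_{L^2}$ by $\|g\|_{L^{2(d+2)/(d+4)}}$ via parabolic fractional integration.
\end{itemize}
The weight $\bar{\mu}_2^2=\ln^{3/2}$ then comes from the $L^{2(d+2)/(d+4)}$-norm of the heat-kernel gradient on the intermediate region, namely $\big(\int_1^{r^2}t^{-1}\,dt\big)^{3/2}$ at $d=2$. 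It does not come from a CLT scaling of $q_{(d+1)}$, as you suggest.

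\paragraph{The increment bound for $\phi$.} Your argument uses the dual problem for the random operator with the non-divergence source $|Q_1|^{-1}(\mathbf{1}_{Q_1(z)}-\mathbf{1}_{Q_1(0)})$ and pointwise bounds $|\nabla G|\lesssim \text{d}^{-(d+1)}$. Such pointwise bounds are unavailable for rough $a$. The computation is also off: on parabolic annuli (volume $r^{d+2}$) one gets $\int|\nabla v|^2\lesssim\int_1^{L}r^{-2(d+1)}r^{d+2}\,dr/r<\infty$, so there is no logarithm in any dimension.

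That finiteness holds already for the monopole source $|Q_1|^{-1}\mathbf{1}_{Q_1(z)}$, and it is the parabolic fact that drives the proof. The paper takes the deterministic problem $\beta v+\partial_tv-\Delta v=|Q_1|^{-1}\mathbf{1}_{Q_1(z)}$. It bounds the energy uniformly in $\beta$ and $z$ through $\int fv\lesssim\|v\|_{L^{2+4/d}}\lesssim(\sup_t\|v(t)\|_{L^2}^2+\|\nabla v\|_{L^2}^2)^{1/2}$. It then uses the massive corrector equation to write $\dashint_{Q_1(z)}\phi_\beta=\int(\nabla\phi_\beta\cdot\nabla v+2\beta v\phi_\beta)+\int\nabla v\cdot a(\nabla\phi_\beta+e_j)$, and handles each term by the spectral gap as in \eqref{pri:5.3*}. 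This gives $\langle|\phi_\beta|^2\rangle\lesssim1$ uniformly in $\beta$, and weak compactness then produces the stationary $\phi$. The Cauchy rate $T^{-\gamma}$ you invoke is neither proved nor needed.

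\paragraph{The buckling for \eqref{pri:5.2}.} This does not require a minimal radius. The paper closes it with the Caccioppoli-type inequality \eqref{pri:6.3-ap}, which involves a spatial mollification, combined with the cancellation estimate for the mollified gradient; the latter gains $R^{-d-2}$. Your $r_*$-route would instead need a large-scale regularity theory for the massive parabolic operator. Its minimal radius involves $\sigma$, including $\sigma_{(d+1)}$, which are precisely the estimates you postpone.

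\paragraph{Smaller points.}
\begin{itemize}
\item The sensitivity of $\int g\cdot\nabla\phi^T$ is $\nabla v\otimes(\nabla\phi^T+e)$, without the $+g$.
\item Lax--Milgram does not apply directly, because $\langle D_0f\,g\rangle$ is unbounded on the space where the form is coercive. The paper uses the regularization $-\gamma D_0^2$ instead.
\item Uniqueness holds only modulo additive constants; the equations do not fix them.
\end{itemize}
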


Let $\Omega\subset\mathbb{R}^d$ with $d\geq 2$ be a Lipschitz domain. For $0<T<\infty$, the parabolic cylinder is defined as $\Omega_T= \Omega\times(0,T]$, and the parabolic boundary of $\Omega_T$ is denoted by
$\partial_p\Omega_T := \overline{\Omega}_T\setminus\Omega_T$.
For given data $F$, we consider the following parabolic system with a zero initial-Dirichlet condition:
\begin{equation}\label{pde:1.1}
(\text{DP}_\varepsilon)\left\{\begin{aligned}
\big(\partial_t + \mathcal{L}_\varepsilon\big)(u_\varepsilon)
& = F &~&\text{in}~ ~\Omega_T;\\
u_\varepsilon & = 0 &~&\text{on}~ \partial_p\Omega_T.
\end{aligned}\right.
\end{equation}
If the ensemble $\langle\cdot\rangle$ is merely \emph{ergodic}
rather than satisfying $\eqref{c:3}$, which means
that every translationally invariant function of the coefficient field is constant\footnote{If
$X(a)=X(a(z))$ holds for any $z\in\mathbb{R}^{d+1}$, then
one asserts $X=c$ for $\langle\cdot\rangle$-a.s.,
where $c$ is a constant.}, then it follows from Tartar's test function method
(see e.g. \cite{Papanicolaou-Varadhan-79})
that, for $\langle\cdot\rangle$-a.e. $a$, the following strong convergence holds:
\begin{equation}\label{pri:1.0}
  \lim_{\varepsilon\to 0}\Big\langle\int_{\Omega_T}|u_\varepsilon-u_0|^2
  \Big\rangle = 0.
\end{equation}
This convergence is indeed based on the sublinearity of correctors studied by Bella et al. \cite[Proposition 1]{Bella-Chiarini-Fehrman-19} (and in
the literature \cite{Zhikov-Kozlov-Oleinik-82}), where
the effective solution $u_0$ satisfies the following homogenized equations:
\begin{equation}\label{pde:1.2}
(\text{DP}_0)\left\{\begin{aligned}
\big(\partial_t + \mathcal{L}_0\big)(u_0)
& = F &~&\text{in}~ ~\Omega_T;\\
u_0 & = 0 &~&\text{on}~ \partial_p\Omega_T,
\end{aligned}\right.
\end{equation}
where $\mathcal{L}_0 :=-\nabla\cdot \bar{a}\nabla$, and $\bar{a}$ is the effective coefficient as defined in $\eqref{eq:1.1}$.

On account of the results presented in Theorem $\ref{thm:1.0}$ and
the quantified ergodicity provided by $\eqref{c:3}$,
the next result is devoted
to quantifying the strong convergence described in $\eqref{pri:1.0}$,
which will benefit the real implementation of the numerical algorithm (see e.g. \cite{Clozeau-Josien-Otto-Xu,Gloria-Neukamm-Otto-15}).

\begin{theorem}[Homogenization errors I]\label{thm:1.1}
Let $\Omega\subset\mathbb{R}^d$ be a bounded $C^1$ domain with
$d\geq 2$, $\varepsilon\in(0,1]$, and $T>0$.
Suppose that the ensemble $\langle\cdot\rangle$ is stationary
with respect to $\eqref{c:1}$ and satisfies
the spectral gap condition $\eqref{c:3}$.
Given $F\in L^2(\Omega_T)$ such that $\|F\|_{L^2(\Omega_T)}=1$, let
$u_\varepsilon$ and $u_0$ be the weak solutions of $\eqref{pde:1.1}$
and $\eqref{pde:1.2}$, respectively. Then,
for any $p\in[1,\infty)$, it holds that
\begin{equation}\label{pri:1.1}
\bigg\langle\Big(\int_{\Omega_T}|u_\varepsilon - u_0|^2\Big)^{\frac{p}{2}}\bigg\rangle^{\frac{1}{p}}
\lesssim_{\mu,\lambda_1,d,p,\Omega,T} \varepsilon\mu_d(R_0/\varepsilon)\ln(R_0/\varepsilon).
\end{equation}
In particular,
for the whole space-time region $\mathbb{R}^{d+1}$ and $F\in L^q(\mathbb{R}^{d+1})\cap L^2(\mathbb{R}^{d+1})$ with $q> 2$ and $\|F\|_{L^2(\mathbb{R}^{d+1})}
+\|\bar{\mu}_d^{1+2/d}F\|_{L^q(\mathbb{R}^{d+1})}=1$,
and for any $p\in[1,\infty)$, the optimal error estimate holds:
\begin{equation}\label{pri:1.2}
\begin{aligned}
\bigg\langle\Big(\int_{\mathbb{R}^{d+1}}|u_\varepsilon
&- u_0|^{\frac{2(d+2)}{d}}\Big)^{\frac{pd}{2(d+2)}}\bigg\rangle^{\frac{1}{p}}\\
&+\Bigg\langle\bigg(\int_{\mathbb{R}^{d+1}}dz\Big(
\dashint_{Q_\varepsilon(z)}|u_\varepsilon - u_0|^2\Big)^{\frac{q(d+2)}{2d}}\bigg)^{\frac{pd}{q(d+2)}}
\Bigg\rangle^{\frac{1}{p}}
\lesssim_{\mu,\lambda_1,q,p,d} \varepsilon\mu_d(1/\varepsilon),
\end{aligned}
\end{equation}
where $\mu_d$ is merely a rephrasing of $\bar{\mu}_d$, and given by 
\begin{equation}\label{pri:5.5**}
\mu_d(r):=\left\{\begin{aligned}
&\ln^{\frac{3}{4}}\big(2+r\big) &\quad&\text{if}~~d=2;\\
& 1  &\quad&\text{if}~~d>2.
\end{aligned}\right.
\end{equation}
\end{theorem}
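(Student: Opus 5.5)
We will prove Theorem \ref{thm:1.1} using the correctors $(\phi,\sigma)$ from Theorem \ref{thm:1.0} and a two-scale expansion with a boundary cut-off. Let $\delta\in(\varepsilon,1)$ be a layer thickness to be chosen. Let $\eta_\delta$ be a cut-off that vanishes on the parabolic $\delta$-layer $\{z\in\Omega_T:\text{d}(z,\partial_p\Omega_T)<\delta\}$, equals one away from it, and satisfies $|\nabla\eta_\delta|\lesssim\delta^{-1}$ and $|\partial_t\eta_\delta|\lesssim\delta^{-2}$. Set
\begin{equation*}
w_\varepsilon:=u_\varepsilon-u_0-\varepsilon\phi_j^\varepsilon\,S_\varepsilon(\eta_\delta\partial_ju_0),
\end{equation*}
where $\phi^\varepsilon_j:=\phi_j(\cdot/\varepsilon,\cdot/\varepsilon^2)-\dashint_{Q_1(0)}\phi_j$ and $S_\varepsilon$ is a parabolic smoothing operator at scale $\varepsilon$. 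The smoothing is needed because no small-scale regularity of $a$ is assumed, so $\phi$ must be tested against mollified quantities. Using the equations \eqref{pde:0} and the skew-symmetry \eqref{skew-symmetric}, a direct computation gives
\begin{equation*}
(\partial_t+\mathcal{L}_\varepsilon)w_\varepsilon=\nabla\cdot f_\varepsilon+\partial_t g_\varepsilon+\text{(lower order)}.
\end{equation*}
The term $\partial_t g_\varepsilon$ carries $\varepsilon^2\sigma^\varepsilon_{(d+1)ij}$ coming from $q_{(d+1)j}$, and $f_\varepsilon$ collects terms such as $(1-\eta_\delta)\partial u_0$, $\varepsilon(a\phi^\varepsilon-\sigma^\varepsilon)\nabla S_\varepsilon(\eta_\delta\nabla u_0)$ and $(S_\varepsilon-I)$-errors. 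The time component of $\sigma$ is essential here. In $\partial_t(\varepsilon\phi^\varepsilon\cdots)$ the factor $\varepsilon\phi^\varepsilon$ loses one power of $\varepsilon$ against the $\varepsilon^{-2}$ time scaling. The relation $\partial_i\sigma_{(d+1)ij}=\langle\phi_j\rangle-\phi_j$ rewrites this as $\varepsilon\,\partial_i(\varepsilon\sigma^\varepsilon_{(d+1)ij}\cdots)$ plus an $\varepsilon^2\partial_t$-term. We then apply the energy estimate, in its parabolic $L^2(0,T;H^{-1})+$ time-derivative dual form, to $w_\varepsilon$.

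Next we take $\langle|\cdot|^{p}\rangle$ and estimate each term with \eqref{pri:5.2}--\eqref{pri:5.4b}. By stationarity and \eqref{pri:5.4}, $\langle|\dashint_{Q_\varepsilon(z)}\phi^\varepsilon|^{2p}\rangle^{1/p}\lesssim 1$ after rescaling, and the same holds for $\sigma_{kij}$. The component $\sigma_{(d+1)ij}$ instead grows like $\bar\mu_d(z/\varepsilon)$, which on the bounded cylinder is at most $\mu_d(R_0/\varepsilon)$, where $R_0$ is the diameter of $\Omega_T$. This is the source of the $\mu_d$ factor. All these estimates multiply $\|\nabla u_0\|$ and $\|\nabla^2u_0\|$ only on the interior region $\{\eta_\delta\ne0\}$. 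The boundary layer term is bounded by $\|\nabla u_0\|_{L^2(\text{layer})}\lesssim\delta^{1/2}$, by a trace/Meyers-type estimate for $(\text{DP}_0)$ on the $C^1$ cylinder.

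This crude energy argument gives only $\varepsilon^{1/2}$, so the step to the full rate $\varepsilon\ln(R_0/\varepsilon)$ in $L^2$ uses duality. Let $v_\varepsilon$ and $v_0$ solve the adjoint backward problems with data $u_\varepsilon-u_0$, and write
\begin{equation*}
\int_{\Omega_T}|u_\varepsilon-u_0|^2
\end{equation*}
as a bilinear pairing of the two first-order expansions. The errors are then products of boundary-layer integrals of $\nabla u_0$ and $\nabla v_0$ weighted by the distance to $\partial_p\Omega_T$. Because $\Omega$ is only $C^1$, the bounds $\nabla^2u_0\in L^2$ near the boundary fail. We therefore use distance-weighted estimates: $\int_{\Omega_T}\delta(z)|\nabla^2u_0|^2\lesssim\|F\|^2$-type bounds, together with $\int_{\text{layer}_r}|\nabla u_0|^2\lesssim r$ summed over dyadic layers $r\in[\varepsilon,1]$. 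The dyadic sum produces the $\ln(R_0/\varepsilon)$. For the random part we need the weighted annealed Calder\'on--Zygmund estimate, announced in the abstract, for $\partial_t+\mathcal{L}_\varepsilon$. It controls moments of $\nabla v_\varepsilon$ against products with the stationary fields $\phi^\varepsilon,\sigma^\varepsilon$, which are not independent of $v_\varepsilon$. It is built via the minimal radius and large-scale regularity from Theorem \ref{thm:1.0}, and making it work with boundary weights on $C^1$ cylinders is the main obstacle. We will first try it in the unweighted form with a Muckenhoupt weight $\text{d}(z,\partial_p\Omega_T)^{\alpha}$, $|\alpha|<1$, and then lose only logarithms.

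For the whole-space estimate \eqref{pri:1.2} no boundary layer is needed, so we may take $\eta\equiv1$ and skip the cut-off. The energy identity gives $\|\nabla w_\varepsilon\|_{L^2}$, and, together with the $L^\infty_tL^2_x$ bound, the parabolic Sobolev embedding $L^{2(d+2)/d}$ bounds $w_\varepsilon$. The right-hand side is $\varepsilon\|(\phi^\varepsilon,\sigma^\varepsilon)\nabla^2 S_\varepsilon u_0\|_{L^2}$. We control it with the annealed Calder\'on--Zygmund estimate for $\partial_t+\mathcal{L}_0$, in the form $\|\nabla^2u_0\|_{L^2}+\|\bar\mu_d^{1+2/d}\nabla^2 u_0\|_{L^q}\lesssim$ data, using H\"older in probability and \eqref{pri:5.4b}. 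The weight $\bar\mu_d^{1+2/d}$ on $F$ exactly absorbs the logarithmic growth of $\sigma_{(d+1)}$ after rescaling, leaving $\varepsilon\mu_d(1/\varepsilon)$. Finally, the corrector term $\varepsilon\phi^\varepsilon\nabla S_\varepsilon u_0$ is itself $O(\varepsilon)$ in these norms, so it can be subtracted back. The local average norm in the second term of \eqref{pri:1.2} is handled identically, with $q$-integrability from Meyers.
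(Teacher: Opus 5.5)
Your architecture matches the paper: a two-scale expansion carrying the $\varepsilon^2\sigma_{(d+1)}$ correction, the energy estimate, duality, layer/co-layer bounds for $u_0$ with the logarithm coming from dyadic layers, and energy plus parabolic Sobolev on $\mathbb{R}^{d+1}$. However, the step that produces $\varepsilon\ln(R_0/\varepsilon)$ on the $C^1$ cylinder is misidentified, and as you set it up it does not close.

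\textbf{The bounded-domain duality step.} After duality, the critical term is $\varepsilon\int\varpi^\varepsilon\nabla\varphi\cdot\nabla v_\varepsilon$ with $\nabla\varphi\approx\nabla^2u_0$. Since $\nabla^2u_0\notin L^2$ up to $\partial\Omega$, you must split it as $\varepsilon\|\delta^{1/2}\varpi^\varepsilon\nabla\varphi\|_{L^2}\|\delta^{-1/2}\nabla v_\varepsilon\|_{L^2(\Sigma^T_{2\varepsilon})}$. The weight $\delta^{-1}$ is exactly the endpoint excluded from $A_2$.
\begin{itemize}
\item If you use $\delta^{-1+\kappa}$ instead, the $u_0$-factor $\varepsilon\|\delta^{(1-\kappa)/2}\nabla^2u_0\|_{L^2(\Sigma^T_\varepsilon)}$ is only of order $\varepsilon^{1-\kappa/2}$. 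That is a power loss: compare Proposition~\ref{P:4.1}, where the weighted Calder\'on--Zygmund route gives exactly $\varepsilon^{1-}$.
\item Turning this into ``only logarithms'' would need an explicit polynomial dependence of the Calder\'on--Zygmund constant on $[\omega]_{A_2}\sim\kappa^{-1}$. Theorem~\ref{thm:C-Z}, proved by Shen's real-variable argument, does not provide this.
\item The correlation between $v_\varepsilon$ and $\phi^\varepsilon,\sigma^\varepsilon$ is not the obstacle. Cauchy--Schwarz in space followed by H\"older in probability decouples them.
\end{itemize}
The idea you are missing is to two-scale expand the adjoint solution too (Lemma~\ref{lemma:4.3}): write $\nabla v_\varepsilon=\mathcal{R}_{\varepsilon,1}+\mathcal{R}_{\varepsilon,2}$.
\begin{itemize}
\item The main part $\mathcal{R}_{\varepsilon,2}=[\nabla\tilde v_0+(\nabla\phi^*_j)^\varepsilon_T\tilde\varphi_j](\cdot,T-\cdot)$ is a stationary field times a deterministic function. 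So the singular weight is harmless: Lemma~\ref{lemma:5} reduces it to the deterministic bound $\|\delta^{-1/2}\nabla\tilde v_0\|_{L^2(\Sigma^T_{2\varepsilon})}\lesssim\ln^{1/2}(R_0/\varepsilon)$ from \eqref{pri:3.6-ap}.
\item The remainder $\mathcal{R}_{\varepsilon,1}$ is bounded only in unweighted $L^2$ by the energy estimate, at order $\mu_d(R_0/\varepsilon)\varepsilon^{1/2}$. It is paired with the unweighted $O(\varepsilon^{1/2})$ layer bound on the $u_0$-side.
\end{itemize}
No Calder\'on--Zygmund estimate for $\partial_t+\mathcal{L}_\varepsilon$ enters \eqref{pri:1.1}.

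Relatedly, the bound $\|\nabla u_0\|_{L^2(\text{layer of width }\delta)}\lesssim\delta^{1/2}$ does not follow from Meyers, which gives only a small power of $\delta$. It needs the $L^2$ nontangential maximal function estimate on $C^1$ cylinders (Fabes--Rivi\`ere) plus the co-area formula, as in Lemma~\ref{lemma:3.2-ap}.

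\textbf{The whole-space estimate.} In \eqref{pri:1.2} the first term is fine. The second term cannot be handled ``identically, with $q$-integrability from Meyers'': it is claimed for every $q>2$, whereas Meyers only reaches $|q-2|\ll1$.
\begin{itemize}
\item The paper applies \eqref{pri:6.8-ap} to $W_\varepsilon(z)=(\dashint_{Q_\varepsilon(z)}|w_\varepsilon|^2)^{1/2}$. This requires moment bounds on $\|(\dashint_{Q_\varepsilon(\cdot)}|\nabla w_\varepsilon|^2)^{1/2}\|_{L^q(\mathbb{R}^{d+1})}$ for arbitrary $q$.
\item Those bounds come from the annealed $L^q$ Calder\'on--Zygmund estimate for $\partial_t+\mathcal{L}_\varepsilon$ on $\mathbb{R}^{d+1}$ (Theorem~\ref{thm:C-Z} with trivial weight), combined with Lemma~\ref{lemma:5}.
\item The estimate for $\partial_t+\mathcal{L}_0$ that you cite is deterministic and controls only $u_0$.
\end{itemize}
So the Calder\'on--Zygmund theory for $\mathcal{L}_\varepsilon$ is needed exactly where you invoked Meyers, and not where you invoked it.

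\textbf{A smaller point.} A forcing $\partial_t g_\varepsilon$ is not covered by the energy estimate in $L^2(0,T;H^{-1})$. Put $-\varepsilon^2\tilde\sigma^\varepsilon_{l(d+1)j}\partial_l\varphi_j$ into the ansatz, as in Lemma~\ref{lemma:3.4}.
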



Once the fluctuation estimates
$\eqref{pri:5.2}$, $\eqref{pri:5.4}$, and $\eqref{pri:5.4b}$
in Theorem $\ref{thm:1.0}$ have been established, the
optimal error estimate $\eqref{pri:1.1}$ follows from
a dual method\footnote{It was also known as the Aubin-Nitsche's approach in the numerical fields.}, originally inspired by references
\cite{Kenig-Lin-Shen-12,Suslina-13,Xu-16}. However,
the duality argument fails to establish fluctuation estimates for convergence rates, 
owing to the presence of boundary layer effects.
Therefore, recourse is made to the weighted annealed Calder\'on-Zygmund estimates to accelerate the convergence rate and overcome the negative impact caused by the boundary layer phenomenon. This idea was recently developed
by the second author and Wang
for the optimal convergence rate of
linear elasticity systems on the perforated
domains \cite{Wang-Xu-Zhao21}, and this method is specifically designed to address situations in which the dual argument fails.

In the process of establishing the weighted annealed Calder\'on-Zygmund estimates, the main contribution is to introduce a new definition of the minimum radius, originally inspired by Gloria et al. \cite{Gloria-Neukamm-Otto-20,Gloria-Neukamm-Otto-21},
\begin{equation}\label{min_rad*}
\chi_{*}(0;\theta):=
 \inf\Bigg\{l>0:
 \frac{1}{R}
  \Big(\dashint_{Q_{R}}
 \big|(\bar{\phi},\bar{\sigma},\nabla\tilde{\sigma}_{(d+1)})\big|^{2}\Big)^{\frac{1}{2}}
+ \frac{1}{R^2}\Big(\dashint_{Q_{R}}
 |\tilde{\sigma}_{(d+1)}|^{2}\Big)^{\frac{1}{2}}
 \leq \theta;~\forall R\geq l\Bigg\}\vee 1,
\end{equation}
in which the extended correctors $(\bar{\phi},\bar{\sigma},\tilde{\sigma}_{(d+1)})$ are given by
\begin{equation*}
\begin{aligned}
&\bar{\phi}_j:=\phi_j-\dashint_{Q_R}\phi_j;
\qquad \bar{\sigma}_{ikj}(x,t)
:=\sigma_{kij}(x,t)-\dashint_{B_R}\sigma_{kij}(\cdot,t);\\
&\tilde{\sigma}_{(d+1)}(x,t)
:= \sigma_{(d+1)}(x,t) - \dashint_{Q_R}\sigma_{(d+1)}
-x\cdot\dashint_{Q_R}\nabla\sigma_{(d+1)}, \qquad i,j,k\in\{1,\cdots,d\},
\end{aligned}
\end{equation*}
where $\theta>0$ is an arbitrarily small quantity that can be determined according to the concrete content,
and we usually omit the center point and $\theta$ without causing any confusion. It is not hard to observe that the definition of $\chi_*$ depends on the correctors at the gradient level, and thus it is well-defined even under a stationary and ergodic ensemble (see Remark $\ref{remark:1.2}$).

For the reader's convenience, we present some geometric notation
on integral regions. For any $z\in\Omega_T$, we set
$U_{*,\varepsilon}(z):=\Omega_T\cap Q_{r}(z)$
with $r=\varepsilon\chi_{*}(z/\varepsilon)
=\varepsilon\chi_{*}(x/\varepsilon,t/\varepsilon^2)$,
and $U_\varepsilon(z):=Q_\varepsilon(z)\cap\Omega_T$.
Let $\Omega_0\supseteq\Omega$
be a  $C^1$ domain such that $\partial\Omega_0$ is
the hypersurface at the distance of $4\varepsilon$ from $\partial\Omega$
and parallel to it. Now, we can present another important result in the paper.


\begin{theorem}[Calder\'on-Zygmund estimates]\label{thm:C-Z}
Let $\Omega\subset\mathbb{R}^d$ be a $C^1$ domain with
$d\geq 2$, $\varepsilon\in(0,1]$, and $T>0$.
Suppose that the ensemble $\langle\cdot\rangle$ is stationary and
ergodic.
Let $u_\varepsilon$ and $f$ be associated with the following equations:
\begin{equation}\label{pde:A}
\left\{\begin{aligned}
\partial_t u_\varepsilon+\mathcal{L}_\varepsilon(u_\varepsilon)
&= \nabla\cdot f
&\quad&\text{in}~~\Omega_T;\\
u_\varepsilon &= 0
&\quad&\text{on}~~\partial_p\Omega_T.
\end{aligned}\right.
\end{equation}
Then, there exists a stationary random field $\chi_*$
(referred to as the minimal radius) such that,
for any $1<p<\infty$,
the quenched Calder\'on-Zygmund estimate holds:
\begin{equation}\label{pri:A}
\bigg(\int_{\Omega_T}dz\Big(
\dashint_{U_{*,\varepsilon}(z)}|\nabla u_\varepsilon|^2
\Big)^{\frac{p}{2}}\bigg)^{\frac{1}{p}}
\lesssim_{\mu,d,\partial\Omega,p}
\bigg(
\int_{\Omega_T}dz\Big(\dashint_{U_{*,\varepsilon}(z)}|f|^2\Big)^{\frac{p}{2}}
\bigg)^{\frac{1}{p}}.
\end{equation}
Moreover, if the ensemble $\langle\cdot\rangle$ additionally satisfies
$\eqref{c:3}$,
then, for any $1<p,q<\infty$ and $\bar{p}>p$,
the weighted annealed Calder\'on-Zygmund estimate holds:
\begin{equation}\label{pri:B}
\bigg(\int_{\Omega_T}dz\Big\langle
\Big(\dashint_{
U_\varepsilon(z)}
|\nabla u_\varepsilon|^2 \Big)^{\frac{p}{2}}
\Big\rangle^{\frac{q}{p}}\omega_\sigma(z)\bigg)^{\frac{1}{q}}
\lesssim_{\mu,\lambda_1,d,\partial\Omega,p,\bar{p},q} \bigg(\int_{\Omega_T}dz
\Big\langle\Big(\dashint_{
U_{\varepsilon}(z)}
|f|^2 \Big)^{\frac{\bar{p}}{2}}\Big\rangle^{\frac{q}{\bar{p}}}
\omega_\sigma(z)\bigg)^{\frac{1}{q}},
\end{equation}
where $\omega_\sigma(z):=[\text{dist}(x,\partial\Omega_0)]^{\underline{q}-1}$
with $0<\underline{q}<q$ and
$z=(x,t)\in\Omega_T$ is the
wight function.  Finally, for
$-(d+2)<\alpha<(d+2)(q-1)$,
the estimate $\eqref{pri:B}$
with $\omega_\sigma(z):=(|x|+\sqrt{|t|}+\varepsilon)^{\alpha}$, alone with
$\eqref{pri:A}$,
also holds in the case where $\Omega_T=\mathbb{R}^{d+1}$.
\end{theorem}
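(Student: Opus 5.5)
The quenched estimate \eqref{pri:A} will come from a real-variable (Shen-type) argument; the annealed weighted estimate \eqref{pri:B} will then follow by combining \eqref{pri:A} with the moment bounds on $\chi_*$ implied by \eqref{c:3} and Theorem \ref{thm:1.0}.

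\textbf{Step 1: large-scale regularity.} The key input is a large-scale Lipschitz (excess-decay) estimate for $\partial_t+\mathcal{L}_\varepsilon$, valid down to the scale $\varepsilon\chi_*$, both in the interior and near the lateral boundary of the $C^1$ cylinder.

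Interior: for an $a^\varepsilon$-caloric $v$ in $Q_R(z)$ with $R\ge r\ge \varepsilon\chi_*(z/\varepsilon)$, I aim to prove $\dashint_{Q_r}|\nabla v|^2\lesssim \dashint_{Q_R}|\nabla v|^2$. I would use the Gloria--Neukamm--Otto approach:
\begin{enumerate}
\item compare $v$ with its $\bar a$-caloric approximation $\bar v$;
\item control the two-scale error $v-\bar v-\varepsilon\phi_j^\varepsilon\partial_j\bar v$ through the flux corrector identity
\[
a(\nabla\phi_j+e_j)-\bar a e_j=-\nabla\cdot\sigma_j+\partial_t\sigma_{(d+1)j}\text{-terms}.
\]
\end{enumerate}
The time component is the delicate part. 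By \eqref{pde:0}, $\partial_i\sigma_{(d+1)ij}=\langle\phi_j\rangle-\phi_j$. So the term $\varepsilon(\phi_j-\langle\phi_j\rangle)\partial_t\partial_j\bar v$ must be rewritten as $\varepsilon^2\nabla\cdot(\sigma_{(d+1)}\ldots)$ and integrated by parts once more. This is exactly why \eqref{min_rad*} contains $\nabla\tilde\sigma_{(d+1)}$ at order $1/R$ and $\tilde\sigma_{(d+1)}$ at order $1/R^2$. With $\theta$ small, a Campanato iteration then gives the excess decay.

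Boundary: flatten the boundary, use the boundary regularity of $\bar a$-caloric functions in $C^1$ cylinders (Lipschitz fails, but $C^{1-\delta}$ suffices for a decay rate $<1$, together with reverse Hölder). Only a boundary $C^\alpha$/reverse-Hölder decay, plus a boundary layer cutoff on $\Omega_0\setminus\Omega$ scale $\varepsilon$, is needed for $L^p$ bounds.

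\textbf{Step 2: quenched estimate \eqref{pri:A}.} Apply the real-variable lemma (Caffarelli--Peral/Shen) to $F(z)=(\dashint_{U_{*,\varepsilon}(z)}|\nabla u_\varepsilon|^2)^{1/2}$. On each cube, split $u_\varepsilon=w+v$, where $w$ solves the equation with data $f\mathbf 1_{2Q}$ (energy estimate) and $v$ is homogeneous. Step 1 gives $v$ a reverse-Hölder inequality with arbitrary exponent at scales above $\varepsilon\chi_*$. Below that scale the averages over $U_{*,\varepsilon}$ absorb everything, since $\chi_*$ is slowly varying: $\chi_*(z')\sim\chi_*(z)$ for $|z-z'|\lesssim\chi_*(z)$, by the doubling-type property of the definition. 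The case $p<2$ then follows by duality, using the adjoint (backward) equation, which has the same structure.

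\textbf{Step 3: annealed weighted estimate \eqref{pri:B}.} From \eqref{c:3}, together with \eqref{pri:5.2}--\eqref{pri:5.4b}, derive stretched-exponential moments of $\chi_*$. This needs a union bound over dyadic $R$ of the rescaled corrector fluctuations, with $d=2$ handled via the logarithmic weight $\bar\mu_d$ using $\ln^{3/4}R/R\to0$. Then follow the Josien--Otto annealing argument:
\begin{enumerate}
\item apply \eqref{pri:A} with exponent $p_0$ close to $p$;
\item pass from $U_{*,\varepsilon}$-averages to $U_\varepsilon$-averages, losing a power of $\chi_*$;
\item absorb that loss by Hölder in probability, which costs $\bar p>p$;
\item obtain the outer $L^q$ with $q\neq p$ by a second real-variable argument, now in the $z$ variable, for the functional $z\mapsto\langle(\dashint_{U_\varepsilon}|\nabla u_\varepsilon|^2)^{p/2}\rangle^{1/p}$, which satisfies the required local reverse-Hölder property by the first part.
\end{enumerate}
The weights $\mathrm{dist}(x,\partial\Omega_0)^{\underline q-1}$ and $(|x|+\sqrt{|t|}+\varepsilon)^\alpha$ lie in the Muckenhoupt class $A_q$ for the parabolic metric in exactly the stated ranges. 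The weighted version of the real-variable lemma therefore applies, with $\varepsilon$-regularization making the weights locally constant on $U_\varepsilon$.

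\textbf{Main obstacle.} I expect the main difficulty to be Step 1: the excess decay with the non-stationary $\sigma_{(d+1)}$ and the boundary layer near a merely $C^1$ boundary. I would first treat the interior with the second-order two-scale expansion described above, then handle the boundary by a compactness-free approximation in flattened coordinates.
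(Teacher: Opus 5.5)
\textbf{The gap: Step~3(4).} You cannot run the second real-variable argument on $z\mapsto\langle(\dashint_{U_\varepsilon(z)}|\nabla u_\varepsilon|^2)^{p/2}\rangle^{1/p}$. Lemma~\ref{shen's lemma2} needs, for the piece $v$ that is caloric in $CQ$, a reverse H\"older bound \eqref{pri:7.4a} whose right-hand side involves the \emph{same} functional $F$. The lemma closes only by absorbing $F$ into itself. For $U_\varepsilon$-averages there is no such bound with a deterministic constant. The coefficients have no small-scale regularity, so between the scales $\varepsilon$ and $\varepsilon\chi_*$ nothing beyond Meyers' estimate is available. Hence the quenched reverse H\"older inequality in $U_\varepsilon$-averages holds only with a random constant, a power of $\chi_*(\cdot/\varepsilon)$ on $CQ$. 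Removing that constant by H\"older in probability bounds the $p$-th stochastic moment by averages of the $\bar p$-th moment with $\bar p>p$. That is a strictly stronger norm of $\nabla v$, so the bound is not of the form \eqref{pri:7.4a}, and iterating only pushes the loss further. Your items (1)--(3) give only the diagonal case. That handles the localized-data piece $W_Q$, not the reverse H\"older inequality for $V_Q$.

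\textbf{The fix} is the paper's order. Run both annealed Shen arguments on $F(z)=\langle(\dashint_{U_{*,\varepsilon}(z)}|\nabla u_\varepsilon|^2)^{p/2}\rangle^{1/p}$: first unweighted for all $1<p,q<\infty$ (with $q<p$ by duality), then with $\omega\in A_{q/q_0}$. In this setting:
\begin{itemize}
\item $W_Q$ (data $f\mathbf 1_{2D}$) is controlled by \eqref{pri:A} and the localization \eqref{f:6.19}.
\item $V_Q$ is controlled by Minkowski ($q_1\ge p$), the quenched reverse H\"older inequality in $U_{*,\varepsilon}$-averages from your Step~2, and Jensen. The constant is deterministic precisely because $U_{*,\varepsilon}$ removes the sub-$\varepsilon\chi_*$ scales.
\end{itemize}
Only at the very end do you pass once from $U_{*,\varepsilon}$ to $U_\varepsilon$, via Lemma~\ref{lemma:7.1}. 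That is where the moments \eqref{m-1} and the loss $p\to\bar p$ enter, and where $\omega$ becomes $\dashint_{Q_\varepsilon}\omega\sim\omega_\sigma$. Algebraic moments suffice. The stretched-exponential moments you claim do not follow from \eqref{pri:5.2}--\eqref{pri:5.4b}, whose constants have unspecified $p$-dependence.

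\textbf{Steps~1--2} differ from the paper but are workable. You prove large-scale interior Lipschitz and boundary $C^{1-\delta}$ decay, the latter by iterating the approximation against the $C^{1-\delta}$ decay and \eqref{pri:5.11-ap} for $\bar a$-caloric functions, and then apply Shen's lemma with $\eta=0$. The paper instead feeds Lemma~\ref{lemma:approxi} directly into $W_Q$ with small $\eta=\nu$, and sets $W_Q=0$, $V_Q=F$ below the scale $\frac{\varepsilon}{4}\chi_*$. No scale iteration or boundary regularity theory is then needed. Two cautions for your route:
\begin{itemize}
\item Do not flatten the boundary. That destroys the stationary structure the correctors are built on. Expand in $D_r$ in the original coordinates with a cutoff, as in the proof of Lemma~\ref{lemma:approxi}.
\item The slow variation of $\chi_*$ is not automatic from \eqref{min_rad*} at a fixed $\theta$. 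It comes from the $\frac1L$-Lipschitz regularization of Remark~\ref{remark:3}, sandwiched between thresholds $\theta_1<\theta_0$.
\end{itemize}
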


Theorem $\ref{thm:C-Z}$ concerns regularity estimates in quantitative homogenization theory, which can be traced back to the contributions of Avellaneda and Lin \cite{Avellaneda-Lin-87}. The first work on quenched Calder\'on-Zygmund estimate
such as $\eqref{pri:A}$ was developed by Armstrong and Daniel \cite{Armstrong-Daniel-16}. Taking into account the influence of boundary smoothness, the present estimate $\eqref{pri:A}$
was established for a stationary and ergodic ensemble, which is novel
even among results on the elliptic boundary value problems. The weighted annealed Calder\'on-Zygmund estimate $\eqref{pri:B}$ has proven to be very useful in quantitative stochastic homogenization theory. For example, it can be used to study the fluctuation properties of high-order correctors, homogenization errors, and boundary correctors
(see e.g., \cite{Clozeau-Josien-Otto-Xu,Wang-Xu-25,Wang-Xu-26}).

Although the complete proof of the above theorem is far from easy,
the basic idea for obtaining estimates $\eqref{pri:A}$ and $\eqref{pri:B}$ is conceptually clear: via the two-scale expansion, one has the following decomposition:
\begin{equation*}
\nabla u_\varepsilon
 = \underbrace{\nabla u_\varepsilon
 -\big(e_i+\nabla\phi_i(\cdot/\varepsilon,\cdot/\varepsilon^2)\big)\partial_i
 \bar{u}_{\text{approx}}}_{W_\varepsilon}
 + \underbrace{\big(e_i+\nabla\phi_i(\cdot/\varepsilon,\cdot/\varepsilon^2)\big)
 \partial_i\bar{u}_{\text{approx}}}_{V}.
\end{equation*}
Roughly speaking, $V$ represents the ``regular'' part in which the approximating function $\bar{u}_{\text{approx}}$
has the same type estimates as the stated estimates $\eqref{pri:A}$ and $\eqref{pri:B}$. If we want to pass the regularity of $V$ to $\nabla u_\varepsilon$, then it is required that $W_\varepsilon$ should be sufficiently small under the appropriate norm and scales. Thereupon,
the so-called minimum radius $\chi_{*}$ becomes relevant, and it exactly characterizes the scale at which the approximating term $W_\varepsilon$
can be as desired small (see the concrete statement in Lemma \ref{lemma:approxi}). We will implement this basic idea by using Shen's lemma (i.e., Lemma $\ref{shen's lemma2}$), which can be traced back to Duerinckx and Otto's work \cite{Duerinckx-Otto-20} for random elliptic operators. To obtain the estimate $\eqref{pri:B}$ from $\eqref{pri:A}$,
the key idea is the repetitive application of Shen's lemma.

In order to precisely describe this local approximation,
additional notation must be introduced.
Let $R_0$ represent the diameter of $\Omega$, and $R_*:= R_0\vee T$. Then,  $\Omega_*:=\Omega\times I_{R_*}$ (with $I_{R_*}:=(-R_*^2,R_*^2)$) is defined as an extension cylinder of
$\Omega_T$, and $\partial_{\shortparallel}\Omega_*:=\partial\Omega\times I_{R_*}$ represents the lateral boundary of $\Omega_*$.
For any $z\in\partial_{\shortparallel}\Omega_T$,
let $D_r(z):=Q_r(z)\cap\Omega_{*}$,  $\Delta_r(z):=Q_r(z)\cap\partial_{\shortparallel}\Omega_{*}$,
and we will omit the center point of
$D_r(z),\Delta_r(z)$ when $z=0$. Let $\partial_p D_r$ denote the parabolic boundary of $D_r$.

\begin{lemma}[Qualitative theory]\label{lemma:approxi}
Let $\Omega$ be a $C^1$ domain with $\partial\Omega\ni\{0\}$, $T>0$, and  $\varepsilon\in(0,1]$.
Assume that the ensemble $\langle\cdot\rangle$
is stationary and ergodic.
Then, for any $\nu>0$,  there exists
a stationary random field
$\chi_*$ as given in $\eqref{min_rad*}$,  such that,
for any solution $u_\varepsilon$ to $\big(\partial_t+\mathcal{L}_\varepsilon\big) (u_\varepsilon) = 0$
in $D_{8r}$
with $u_\varepsilon = 0$
on $\Delta_{8r}$, there exists
a weak solution $\bar{u}_r$ satisfying
$\big(\partial_t+\mathcal{L}_0\big)(\bar{u}_r) = 0$ in $D_{2r}$ with $\bar{u}_r = u_\varepsilon$ on $\partial_p  D_{2r}$, such that the estimate
\begin{equation}\label{pri:1.5}
\dashint_{D_r}|\nabla u_\varepsilon -
\big(e_j+\nabla\phi_j(\cdot/\varepsilon,\cdot/\varepsilon^2)\big)\partial_j\bar{u}_r|^2 \leq 
\nu^2
 \dashint_{D_{4r}}|\nabla u_\varepsilon|^2
\end{equation}
holds for any $r\geq \varepsilon \chi_*$. Moreover,
if the ensemble $\langle\cdot\rangle$ satisfies the spectral gap condition $\eqref{c:3}$, then the stationary random field $\chi_*$ satisfies
the following:
\begin{equation}\label{m-1}
\langle
\chi_*^\beta\rangle\lesssim_{\mu,\lambda_1,d,\beta} 1
\qquad \forall\beta\in[1,\infty).
\end{equation}
\end{lemma}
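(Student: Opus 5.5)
By scaling, $u_\varepsilon(x,t)\mapsto u_\varepsilon(\varepsilon x,\varepsilon^2 t)$, we may take $\varepsilon=1$ and $R:=r\ge\chi_*$, with $\Omega$ replaced by the rescaled $C^1$ domain $\Omega/\varepsilon$; its flatness improves as $r/\varepsilon$ grows, which helps below. The proof then has three parts: a two-scale expansion with a boundary cut-off, smallness of the correctors from \eqref{min_rad*}, and the moment bound \eqref{m-1}.

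\emph{Two-scale expansion.} Let $\bar u_r$ be the $\bar a$-caloric replacement of $u_\varepsilon$ in $D_{2r}$. Introduce a cut-off $\eta$ which equals $1$ away from a layer of width $\delta r$ around $\partial_p D_{2r}$ and vanishes on $\partial_{\shortparallel}\Omega_*$. Set
\[
w:=u_\varepsilon-\bar u_r-\eta\,\bar\phi_j\partial_j\bar u_r .
\]
Using \eqref{pde:0} and the skew-symmetry \eqref{skew-symmetric}, one rewrites $(\partial_t+\mathcal L_\varepsilon)w$ in divergence form
\[
\nabla\cdot\Big[(1-\eta)(a-\bar a)\nabla\bar u_r+(a\bar\phi_j-\bar\sigma_{\cdot\cdot j})\nabla(\eta\partial_j\bar u_r)\Big]
\]
plus a time-derivative term. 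That term is $\partial_t(\eta\partial_j\bar u_r)\bar\phi_j$. It is handled with the non-stationary component through $\bar\phi_j=-\partial_i\tilde\sigma_{(d+1)ij}$ up to constants, which turns it into $\nabla\cdot\big(\tilde\sigma_{(d+1)}\partial_t(\eta\nabla\bar u_r)\big)$. This is exactly why $\nabla\tilde\sigma_{(d+1)}$ and $R^{-2}\tilde\sigma_{(d+1)}$ appear in \eqref{min_rad*}.

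\emph{Energy estimate.} Testing with $w$, which vanishes on $\partial_pD_{2r}$, gives
\[
\dashint_{D_r}|\nabla w|^2\lesssim \dashint_{\text{layer}}|\nabla\bar u_r|^2+\theta^2 r^2\|\nabla^2\bar u_r\|^2_{L^\infty(\text{int})}+\theta^2 r^4\|\partial_t\nabla\bar u_r\|^2_{L^\infty(\text{int})}+\dots
\]
The boundary layer is the main obstacle: on a $C^1$ domain the constant-coefficient solution $\bar u_r$ is not Lipschitz up to the boundary. I would use the boundary $W^{1,p}$ estimates for $\bar u_r$, valid for every $p<\infty$ on $C^1$ cylinders, together with the Meyers estimate for $u_\varepsilon$ in $D_{4r}$. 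Hölder's inequality then bounds the layer contribution by $\delta^{\gamma}\dashint_{D_{4r}}|\nabla u_\varepsilon|^2$. Interior derivatives of $\bar u_r$ at distance $\delta r$ from the boundary are bounded by $(\delta r)^{-k}$ times the energy; the boundary condition $u_\varepsilon=0$ on $\Delta_{8r}$ and Caccioppoli control $\dashint_{D_{2r}}|\nabla\bar u_r|^2\lesssim\dashint_{D_{4r}}|\nabla u_\varepsilon|^2$. Choosing $\delta$ small and then $\theta=\theta(\delta,\nu)$ gives \eqref{pri:1.5}, since $\nabla u_\varepsilon-(e_j+\nabla\phi_j)\partial_j\bar u_r=\nabla w+\text{(layer and }\bar\phi\nabla^2\bar u_r\text{ terms)}$.

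\emph{Qualitative existence and stationarity of $\chi_*$.} Under ergodicity, sublinearity of the correctors (as in Bella et al.) implies the normalized averages in \eqref{min_rad*} tend to $0$ as $R\to\infty$. Hence $\chi_*<\infty$ almost surely, and $\chi_*$ is shift-covariant by construction.

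\emph{Moments \eqref{m-1}.} The defining quantity is
\[
\mathcal E_R:=\frac1R\Big(\dashint_{Q_R}|\bar\phi|^2\Big)^{1/2}+\dots
\]
By \eqref{pri:5.4}, \eqref{pri:5.4b} and the spectral gap \eqref{c:3}, it has stretched-exponential-type moments $\langle\mathcal E_R^{2p}\rangle^{1/p}\lesssim R^{-2}\mu_d^2(R)$; the $\tilde\sigma_{(d+1)}$ part needs a Poincaré-type argument after subtracting its affine part. A union bound over dyadic $R=2^k\ge l$ then gives $\langle 1_{\chi_*>l}\rangle\le\sum_k\theta^{-2p}\langle\mathcal E_{2^k}^{2p}\rangle\lesssim_p l^{-p+}$ for every $p$, using comparability of $\mathcal E_R$ and $\mathcal E_{2^k}$ for $R\in[2^k,2^{k+1}]$. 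This yields all moments of $\chi_*$.
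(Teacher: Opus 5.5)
There is a genuine gap in your two-scale expansion. The ansatz $w=u_\varepsilon-\bar u_r-\eta\bar\phi_j\partial_j\bar u_r$ is the elliptic one, and the divergence-form identity you claim for $(\partial_t+\mathcal{L}_\varepsilon)w$ is false for the parabolic operator.

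Set $\varepsilon=1$ and $\varphi_j:=\eta\,\partial_j\bar u_r$. The part of $(\partial_t+\mathcal{L})w$ not already in divergence form is $-(q_{ij}\partial_i\varphi_j+\bar\phi_j\partial_t\varphi_j)$. By \eqref{eq:5.1b} the flux has a \emph{temporal} potential component: $q_{ij}=\sum_{k=1}^{d}\partial_k\sigma_{kij}+\partial_t\sigma_{(d+1)ij}$. A purely spatial skew potential cannot exist in general, because \eqref{eq:5.2} gives $\sum_{i=1}^d\partial_iq_{ij}=-\partial_t\phi_j$. Now use \eqref{skew-symmetric} and $\bar\phi_j=-\partial_i\tilde\sigma_{(d+1)ij}+c_j$ with $c_j$ constant. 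The zeroth-order terms $\pm\tilde\sigma_{(d+1)ij}\partial_i\partial_t\varphi_j$ cancel, and
\[
q_{ij}\partial_i\varphi_j+\bar\phi_j\partial_t\varphi_j=\partial_k\big(\sigma_{kij}\partial_i\varphi_j\big)-\partial_i\big(\tilde\sigma_{(d+1)ij}\partial_t\varphi_j\big)+\partial_t\big(\tilde\sigma_{(d+1)ij}\partial_i\varphi_j\big)+c_j\,\partial_t\varphi_j .
\]
The third term is the obstruction. Although $\tilde\sigma_{(d+1)}\nabla\varphi$ is small in $L^2$ on scales $R\geq\chi_*$, its time derivative contains $(\partial_t\sigma_{(d+1)ij})\partial_i\varphi_j$. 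Here $\partial_t\sigma_{(d+1)ij}=q_{ij}-\partial_k\sigma_{kij}$ is an order-one stationary field, so the term is not small in $L^2_tH^{-1}_x$ and cannot be controlled by testing with $w$.

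A transparent example is $a=a(t)$. Then $\phi=0$, $\sigma_{kij}=0$ for $k,i\leq d$, and $q_{ij}=\bar a_{ij}-a_{ij}(t)$. Your formula would give $(\partial_t+\mathcal{L})w=\nabla\cdot[(1-\eta)(a-\bar a)\nabla\bar u_r]$, whereas the whole interior term $(a-\bar a)_{ij}\partial_{ij}\bar u_r$, of order one, is still present. By contrast, the term $\bar\phi_j\partial_t\varphi_j$ that you single out is harmless on its own. It is zeroth order, and Poincar\'e's inequality in $x$ on $D_{2r}$ together with $\frac1R(\dashint|\bar\phi|^2)^{1/2}\leq\theta$ already makes its contribution $O(\theta\delta^{-k})$ for some $k=k(d)$.

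What is missing is the second-order term of Lemma~\ref{lemma:3.4}. Take $w=u_\varepsilon-\bar u_r-\bar\phi_j\varphi_j-\tilde\sigma_{l(d+1)j}\partial_l\varphi_j$, noting $-\tilde\sigma_{l(d+1)j}=\tilde\sigma_{(d+1)lj}$. The time derivative of the new term cancels $\partial_t(\tilde\sigma_{(d+1)ij}\partial_i\varphi_j)$. Applying $\mathcal{L}$ to it produces the fluxes $a(\partial\tilde\sigma_{(d+1)j})\nabla\varphi_j$ and $a\otimes\tilde\sigma_{(d+1)j}:\partial^2\varphi_j$ of \eqref{eq:3.3}. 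This, not the $\bar\phi_j\partial_t\varphi_j$ term, is why $\nabla\tilde\sigma_{(d+1)}$ at level $R^{-1}$ enters \eqref{min_rad*}. In your scheme $\nabla\tilde\sigma_{(d+1)}$ is never used, which is a symptom of the missing term.

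The corrected expansion also changes the final comparison on $D_r$:
\begin{itemize}
\item it must absorb $\nabla(\tilde\sigma_{l(d+1)j}\partial_l\varphi_j)$;
\item the layer term $(1-\eta)\nabla\phi_j\,\partial_j\bar u_r$ needs the Meyers bound for $\nabla\phi_j$, controlled by $\bar\phi$ through Caccioppoli, as in the paper's estimate of $J_1$.
\end{itemize}
With these corrections, the rest of your plan is in line with the paper: the boundary layer via Meyers and $W^{1,p}$ estimates on $C^1$ cylinders, the interior bounds with negative powers of $\delta$, and the dyadic union bound for \eqref{m-1}, which is equivalent to the paper's $\chi_{**}$ argument. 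One small overstatement: the spectral gap yields all algebraic moments, not stretched-exponential ones, but algebraic moments are all you use.
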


In stochastic homogenization theory, by appealing to
the correctors, the oscillations of $\nabla u_\varepsilon$ can be expressed via the two-scale asymptotic expansion as in the case of periodic homogenization:
\begin{equation*}
  \nabla u_\varepsilon \thickapprox
  \big(e_j+\nabla\phi_j(\cdot/\varepsilon,\cdot/\varepsilon^2)
  \big)\partial_ju_0.
\end{equation*}
This is exactly the content of the estimate $\eqref{pri:1.5}$ at large scales (through the notion of the minimum radius). However, as opposed to the deterministic case, the stochastic setting additionally involves the study of the fluctuations of $\nabla u_\varepsilon$ alongside its oscillations.
Concerned with random elliptic operators, the fluctuations exhibit a central limit theorem (CLT) scaling $O(\varepsilon^{\frac{d}{2}})$, in which the above two-scale expansion is not accurate in general. This was first observed by Gu and Mourrat \cite{Gu-Mourrat-16}. Later, by introducing the standard homogenization commutator, defined as
\begin{equation}\label{eq:1}
  \Xi_i^{\text{ell}}:= (a^{\text{ell}}-\bar{a}^{\text{ell}})\big(e_i+\nabla\phi_i^{\text{ell}}\big),
\end{equation}
where we use the superscript ``ell'' to denote the quantities associated with the elliptic operator,
Duerinckx, Gloria, and Otto \cite{Duerinckx-Gloria-Otto-20} discovered that the two-scale expansion of the homogenization commutator of the solution $u_\varepsilon^{\text{ell}}$
\begin{equation*}
  (a^{\text{ell}}(\cdot/\varepsilon) - \bar{a}^{\text{ell}})\nabla u_\varepsilon^{\text{ell}} -
  \big\langle(a^{\text{ell}}(\cdot/\varepsilon) - \bar{a}^{\text{ell}})\nabla u_\varepsilon^{\text{ell}}\big\rangle
  \thickapprox \Xi_{i}^{\text{ell}}(\cdot/\varepsilon)\partial_iu^{\text{ell}}_0
\end{equation*}
is accurate in the fluctuation scaling. Below, we establish results for parabolic operators that are parallel to those for elliptic ones.
Here, it is emphasized that the two-scale expansion used for parabolic operator differs from that given for the elliptic operator. Correction
for the time variable must be taken into account simultaneously, i.e.,
\begin{equation*}
   u_\varepsilon \thickapprox \big(1 + \varepsilon\phi_i(\cdot/\varepsilon,\cdot/\varepsilon^2)\partial_i
   + \varepsilon^2\tilde{\sigma}_{i(d+1)j}(\cdot/\varepsilon,\cdot/\varepsilon^2)
   \partial_{ij}\big)u_0,
\end{equation*}
which will be conducive to establishing the equations satisfied by the error of the two-scale expansions (see Lemma $\ref{lemma:3.4}$).
In other words, it has been confirmed that the (first-order) homogenization commutator of the parabolic operators has the same structure as its elliptic counterpart. However, this by no means implies a similar conclusion for high-order homogenization commutators, and the related issues will be discussed in a separated work. In addition, the following results are also an important application of the weighted annealed Calder\'on-Zygmund estimates.

\begin{theorem}[Homogenization errors II]\label{thm:1.2}
Let $2\leq q<\infty$.
Let $\Omega\subset\mathbb{R}^d$ be a bounded $C^1$ domain with
$d\geq 2$, $\varepsilon\in(0,1]$, and $T>0$.
Suppose that the ensemble $\langle\cdot\rangle$ is stationary
with respect to $\eqref{c:1}$ and satisfies $\eqref{c:3}$. Given $f\in C_0^1(\Omega_T;\mathbb{R}^d)$,  assume that $u_\varepsilon$ and $u_0$ are the weak solutions of the zero initial-Dirichlet problems
in $\eqref{pde:1.1}$ and $\eqref{pde:1.2}$, respectively, with $F=\nabla\cdot f$.
Let $\varphi_j\in W_{2}^{2,1}(\Omega_T)$ be chosen such that it vanishes near the parabolic boundary $\partial_{\shortparallel}\Omega_T$.
Then, for any $p\in[1,\infty)$, one can obtain that
\begin{equation}\label{pri:1.3}
\begin{aligned}
\Bigg\langle
\bigg(\int_{\Omega_T}dz \Big(\dashint_{U_\varepsilon(z)}
|\nabla u_\varepsilon - \nabla u_0 -\nabla\phi_j\varphi_j|^2\Big)^{\frac{q}{2}}\omega_\sigma(z)
\bigg)^{\frac{p}{q}}\Bigg\rangle^{\frac{1}{p}}
\lesssim
\varepsilon^{1-}\mu_d(R_0/\varepsilon)
\Big(\int_{\Omega_T}
 |R_0\nabla f|^{q}\Big)^{\frac{1}{q}},
\end{aligned}
\end{equation}
where the weight function $\omega_\sigma$ is given as in Theorem $\ref{thm:C-Z}$.
Moreover, for any $h\in C_0^{2}(\Omega_T;\mathbb{R}^{d+1})$ adhering to
a parabolic scaling,
the random variable $H^\varepsilon$ is defined as follows:
\begin{equation}\label{eq:3-5-1}
H^\varepsilon : = \int_{\Omega}
h\cdot (a^\varepsilon-\bar{a})\big(\nabla u_\varepsilon-\nabla u_0
-\nabla\phi_i(\cdot/\varepsilon,\cdot/\varepsilon^2)\varphi_i\big).
\end{equation}
Then, for any $p\in[1,\infty)$, we have
\begin{equation}\label{pri:1.4}
\begin{aligned}
\varepsilon^{-\frac{d+2}{2}} \big\langle (H^\varepsilon
&-\langle H^\varepsilon\rangle)^{2p} \big\rangle^{\frac{1}{2p}}\\
&\lesssim_{\mu,\lambda_1,d,\Omega,T,p,s}
\varepsilon^{1-}\mu_{d}(R_0/\varepsilon)
\Big(\int_{\Omega_T}|R_0(\partial_t,\partial^2) h|^{2s}\Big)^{\frac{1}{2s}}
\Big(\int_{\Omega_T}
 |R_0\nabla f|^{2s'}\Big)^{\frac{1}{s'}},
\end{aligned}
\end{equation}
where $s,s'>1$ are associated with $1/s'+1/s=1$ and $0<s'-1\ll 1$.
\end{theorem}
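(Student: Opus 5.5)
We derive both estimates from the two-scale expansion error equation (Lemma \ref{lemma:3.4}), the weighted annealed Calder\'on--Zygmund estimate \eqref{pri:B} of Theorem \ref{thm:C-Z}, and the corrector bounds \eqref{pri:5.2}--\eqref{pri:5.4b} of Theorem \ref{thm:1.0}.

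\textbf{Step 1: the error equation.} Set $\varphi_j:=\eta_\varepsilon S_\varepsilon(\partial_j u_0)$, where $\eta_\varepsilon$ is a cut-off vanishing in an $O(\varepsilon)$-layer near $\partial_{\shortparallel}\Omega_T$ and $S_\varepsilon$ is a parabolic mollifier at scale $\varepsilon$. Define
\[
w_\varepsilon:=u_\varepsilon-u_0-\varepsilon\phi_j^\varepsilon\varphi_j-\varepsilon^2\tilde\sigma_{j(d+1)i}^\varepsilon\partial_i\varphi_j .
\]
By the skew-symmetry \eqref{skew-symmetric} and the flux relations in \eqref{pde:0}, $w_\varepsilon$ solves \eqref{pde:A} with right-hand side $\nabla\cdot g_\varepsilon$, where
\[
g_\varepsilon=(a^\varepsilon-\bar a)(1-\eta_\varepsilon)\nabla u_0+\varepsilon\big(a^\varepsilon\phi^\varepsilon-\sigma^\varepsilon\big)\nabla\varphi+\varepsilon^2\sigma_{(d+1)}^\varepsilon\nabla^2\varphi+\varepsilon\,\partial_t\text{-terms}.
\]
The time correction $\tilde\sigma_{(d+1)}$ is used to absorb $\varepsilon\phi^\varepsilon\partial_t\varphi$ into divergence form.

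\textbf{Step 2: apply \eqref{pri:B} to $w_\varepsilon$.} This reduces the left side of \eqref{pri:1.3} (after replacing $\nabla\phi_j\varphi_j$ by $\nabla w_\varepsilon$ up to lower-order terms) to weighted annealed norms of $g_\varepsilon$. For the interior terms, stationarity together with \eqref{pri:5.4} and \eqref{pri:5.4b} gives $\langle(\dashint_{U_\varepsilon}|\phi^\varepsilon,\sigma^\varepsilon|^2)^{\bar p/2}\rangle^{1/\bar p}\lesssim\mu_d(R_0/\varepsilon)$ after normalizing the averages, with the $\ln^{3/4}$ loss when $d=2$. The derivatives $\nabla\varphi$ and $\nabla^2\varphi$ are controlled by $W^{2,1}_q$ estimates for $u_0$ on $C^1$ cylinders. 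In this form they give a bound in terms of $\nabla f$, with a loss $\varepsilon^{-}$ from the mollification.

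\textbf{Step 3: the boundary layer (main obstacle).} The term $(a^\varepsilon-\bar a)(1-\eta_\varepsilon)\nabla u_0$ lives in an $\varepsilon$-strip, and in unweighted $L^q$ it is only of size $\varepsilon^{1/q}$. Here the weight $\omega_\sigma=\mathrm{dist}(x,\partial\Omega_0)^{\underline q-1}$ is essential: with $\underline q$ small, integrating this weight over the strip produces a factor $\varepsilon^{\underline q}$. Combining this with the boundary behavior $|\nabla u_0|\lesssim$ (its $L^q$ norm weighted by the distance), via a Hardy-type inequality and the regularity of $u_0$ near a $C^1$ boundary, should yield $\varepsilon^{1-}$ overall. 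The exponent bookkeeping to reach $1-$ rather than a fractional power is the delicate point. I would first try choosing $\underline q\to q$ so that the strip gains $\varepsilon^{1-\delta}$, paying with constants that depend on $\delta$. This gives \eqref{pri:1.3}.

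\textbf{Step 4: fluctuations of $H^\varepsilon$.} Apply the spectral gap \eqref{c:3} in its $p$-version to $H^\varepsilon$. The functional derivative $\partial H^\varepsilon/\partial a$ is computed by duality. Introduce the adjoint solution $v_\varepsilon$ of $(-\partial_t+\mathcal L_\varepsilon^*)v_\varepsilon=\nabla\cdot\big((a^\varepsilon-\bar a)^*h\big)$ with terminal-Dirichlet data, together with the analogous solutions linearized in the correctors. Then $\partial H^\varepsilon/\partial a$ becomes a sum of terms of the form $(\text{adjoint gradient})\otimes(\nabla u_\varepsilon-\nabla u_0-\nabla\phi^\varepsilon\varphi)$ plus corrector-derivative terms. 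The latter are handled through the gradient estimates for $\phi$ and $\sigma$ behind \eqref{pri:5.2}. The factor $\varepsilon^{(d+2)/2}$ arises from rescaling the $Q_1$-averages in \eqref{c:3} to $Q_\varepsilon$-averages. Finally, H\"older's inequality with exponents $(s,s')$ splits the product: the error factor is bounded by \eqref{pri:1.3} in $L^{2s'}$, which requires $s'$ close to $1$ so that the available annealed exponent suffices. The adjoint factor is bounded by \eqref{pri:B} applied to the dual problem, giving $\|(\partial_t,\partial^2)h\|_{L^{2s}}$ once the commutator structure is expanded to second order.
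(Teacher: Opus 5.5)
Your architecture is the paper's, but there is a genuine gap in your Step 2. The shared outline is: the expansion error $w_\varepsilon$ with a laterally cut-off, mollified $\varphi_j$ (the paper takes $\varphi_j=S_\varepsilon K_\varepsilon(\psi_\varepsilon\partial_ju_0)$ and extends $u_\varepsilon,u_0$ by zero to $t<0$, which is why only a lateral cut-off is needed); the weighted annealed estimate \eqref{pri:B} applied to the whole right-hand side of \eqref{eq:3.2} (this is Lemma \ref{lemma:4.4}); then layer/co-layer bounds for $u_0$ with $\underline q\uparrow q$; and, for $H^\varepsilon$, the spectral-gap sensitivity argument.

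The gap: global $W^{2,1}_q$ estimates for $u_0$ are not available on a $C^1$ cylinder. They need roughly $C^{1,1}$ boundaries; for $q>d+2$ they would make $\nabla u_0$ H\"older continuous up to $\partial_{\shortparallel}\Omega_T$, whereas on a $C^1$ domain $\nabla u_0$ need not even be bounded. What does hold is only the co-layer bound \eqref{pri:3.3-ap}, $\|(\nabla^2u_0,\partial_tu_0)\|_{L^q((\Omega\setminus O_{\varepsilon})_T)}\lesssim\varepsilon^{1/q-1}\|F\|_{L^q(\Omega_T)}$. Without a weight, the following interior terms are therefore only $O(\varepsilon^{1/q})$, exactly as bad as the boundary strip: $\varepsilon(a^\varepsilon\bar\phi^\varepsilon-\bar\sigma^\varepsilon+\cdots)\nabla\varphi$, $\varepsilon^2\tilde\sigma_{(d+1)}^\varepsilon(\partial_t,\nabla^2)\varphi$, and the mollification error $\eta_\varepsilon(\nabla u_0-S_\varepsilon\nabla u_0)$. (Your $g_\varepsilon$ omits the latter: $(a^\varepsilon-\bar a)(\nabla u_0-\varphi)$ is not $(a^\varepsilon-\bar a)(1-\eta_\varepsilon)\nabla u_0$.) So the weight $\omega_\sigma$ must do the work for these terms too, not only for the strip term.

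The paper's mechanism (Lemma \ref{lemma:3.2-ap}) is as follows. Write $u_0=v+w$, with $v$ the whole-space solution (in $W^{2,1}_q$ by Mihlin) and $w$ solving the homogeneous problem with lateral data $-v\in{}_0H^{1,1/2}_q(\partial_{\shortparallel}\Omega_T)$. Then combine three ingredients: the Fabes--Rivi\`ere nontangential maximal function bound $\|(\nabla w)^*\|_{L^q(\partial_{\shortparallel}\Omega_T)}\lesssim\|F\|_{L^q(\Omega_T)}$, the interior estimate $|\nabla^2w(z)|\lesssim\sigma(z)^{-1}\dashint_{Q_{\sigma(z)}(z)}|\nabla w|$, and the co-area formula. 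This yields
\[
\Big(\int_{(O_{2\varepsilon})_T}|\nabla u_0|^q\omega_\sigma\Big)^{\frac1q}
+\varepsilon\Big(\int_{(\Omega\setminus O_{\varepsilon})_T}\big(|\nabla^2u_0|^q+|\partial_tu_0|^q\big)\omega_\sigma\Big)^{\frac1q}
\lesssim\varepsilon^{\underline q/q}\|F\|_{L^q(\Omega_T)},
\]
where the co-layer part comes from $\varepsilon\big(\int_{\varepsilon}^{c_0}r^{\underline q-1-q}\,dr\big)^{1/q}\sim\varepsilon^{\underline q/q}$. Only then does $\underline q\uparrow q$ give $\varepsilon^{1-}$ for both contributions simultaneously. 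This also replaces your Step 3 heuristic: a Hardy-type inequality trades $u_0$ for $\nabla u_0$ and cannot control $\nabla u_0$ itself in the strip; the nontangential maximal function is the needed input. Also, the gain $\varepsilon^{\underline q}$ from the strip has nothing to do with $\underline q$ being small.

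Your Step 4 matches the paper's structure, but two ingredients are missing. The structure is: one adjoint problem on the domain for $V^*=v^*+\phi^*_j\check h_j+\sigma^*_{(d+1)lj}\partial_l\check h_j$, and one on $\mathbb{R}^{d+1}$ for the dependence through $\phi_j$. Their leading parts combine into $h_j(e_j+(\nabla\phi^*_j)^\varepsilon)\otimes(\nabla u_\varepsilon-\nabla u_0-(\nabla\phi_i)^\varepsilon\varphi_i)$ plus $(\nabla\phi^*_j)^\varepsilon h_j\otimes(\nabla u_0-\varphi)$, up to the small corrector terms.

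First missing ingredient: since \eqref{pri:1.3} is weighted, after H\"older in $(s,s')$ you must put the complementary weight $\sigma^{s/s'-2s}$ on the $h$-factor. You then remove it with the weighted Hardy inequality, using that $h$ vanishes on $\partial\Omega$; this is what produces a norm of $\nabla h$ with constants independent of $\mathrm{supp}\,h$. The term with $(\nabla u_0-\varphi)$ needs the weighted layer/co-layer bounds above rather than \eqref{pri:1.3}.

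Second: the restriction $0<s'-1\ll1$ is forced by the Meyer-type integrability of $\nabla\phi^*$ on unit cubes. Corollary \ref{corollary:2.1} requires the exponent $2s'$ to be close to $2$; the restriction does not come from \eqref{pri:1.3}, which holds for every $q\ge2$.
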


\subsection{Novelties and remarks on the main results}

\begin{remark}
\emph{While the notation $Q_1(z)$ in $\eqref{c:3}$ refers to a parabolic cube centered at $z=(x,t)\in\mathbb{R}^{d+1}$, it is important to clarify that the variable $t$ in $\eqref{c:3}$ does not coincide with the time variable in the parabolic system $\eqref{pde:1.1}$. The time variable in an evolutionary system possesses a directional structure, whereas the spectral gap condition here does not encode such information. 
}

\emph{The central idea of this article is to extend the sensitive estimates developed by Gloria et al. \cite{Gloria-Otto-11,Gloria-Neukamm-Otto-15,Gloria-Neukamm-Otto-20,Gloria-Neukamm-Otto-21}
for elliptic operators to the parabolic setting. Specifically, we employ functional inequalities as the primary tool to quantify ergodicity, thereby enabling a precise quantification of the sublinear growth of the correctors. A distinguishing feature of our results is that the existence of stationary correctors associated with the parabolic operator $\eqref{operator}$, as established in Theorem $\ref{thm:1.0}$, is independent of the spatial dimension, which markedly contrasts with the dimension-dependent results for elliptic operators.}
\emph{Furthermore, compared with the work of Bella et al. \cite{Bella-Chiarini-Fehrman-19}, a novel structural refinement is introduced by expanding $q_{ij}$ into $(q_{ij},q_{(d+1)j})$ as shown in $\eqref{eq:1.1}$. This endows the new vector field $q_{\cdot j}$ with two crucial properties: it is \emph{divergence-free} and exhibits \emph{mean zero} (see Proposition $\ref{P:5.3}$). This structural innovation allows us to exploit the intrinsic geometry of the equation $\eqref{corrector}$, leading to the derivation of the new system $\eqref{pde:0}$. Although this observation was initially recognized in the periodic case by the first author and Shen \cite{Geng-Shen-17}, the extension to the stochastic setting and the establishment of the new estimates $\eqref{pri:5.2}$, $\eqref{pri:5.4}$, and $\eqref{pri:5.4b}$ require substantially more sophisticated analytical techniques (see Section $\ref{section:2}$).}
\end{remark}

\begin{remark}\label{remark:1.2}
\emph{As noted in \cite{Gloria-Neukamm-Otto-20}, the minimum radius $\chi_*$ introduced in this paper exhibits both qualitative and quantitative characteristics within the framework of stochastic homogenization theory. While its rudimentary form was already present in \cite[Proposition 1]{Bella-Chiarini-Fehrman-19},
this quantity was not formally defined in the work of Bella et al. \cite{Bella-Chiarini-Fehrman-19}. A key innovation of this paper is the introduction of the extended flux correctors $(\bar{\sigma}_k,\tilde{\sigma}_{(d+1)})$ used to define $\chi_*$\footnote{It is further reflected in the fact that the minimum radius $\chi_*$ defined here is more amenable to be quantified in terms of stochastic integrability compared with the related construction in \cite{Bella-Chiarini-Fehrman-19}.}.
Below, we briefly take $\bar{\sigma}_k$ as an example to
illustrate that this minimum radius is well-defined
even under stationary and ergodic ensembles.
By virtue of $\langle|\nabla \sigma_{kij}|^2\rangle<\infty$ and
$\langle \nabla \sigma_{kij}\rangle = 0$, it can be established
that for each $i,j,k\in\{1,\cdots,d\}$,
\begin{equation}\label{pri:1.6}
 \limsup_{R\to\infty}\frac{1}{R^2}\dashint_{Q_R}\big|\sigma_{kij}
 -\dashint_{B_R}dy\sigma_{kij}(y,\cdot)\big|^2 = 0
 \qquad \text{for}~\langle\cdot\rangle\text{-a.e.~} a
\end{equation}
(see \cite[Lemma 5]{Bella-Chiarini-Fehrman-19}),
and the stationarity of $\nabla\sigma$ follows immediately from
the second equation of $\eqref{pde:0}$. The argument for $\tilde{\sigma}_{(d+1)}$ is more involved
and is deferred to Remark $\ref{remark:3.2}$.
It is desirable to obtain the optimal stochastic integrability for $\chi_*$
(e.g., under stronger functional inequalities such as the Logarithmic Sobolev Inequality (LSI)), though this direction is not pursued here. It is also noted that Armstrong et al. \cite{Armstrong-Bordas-Mourrat-18} addressed similar issues in a relatively straightforward manner and
achieved the sharp stochastic integrability under the framework therein.}

\emph{Equipped with the minimum radius $\chi_*$, one can, roughly speaking,  further perform large-scale regularity estimates analogous to those
developed by Armstrong et al. \cite{Armstrong-Bordas-Mourrat-18,Armstrong-Kuusi-Mourrat-19,Armstrong-Smart-16} or Bella et al. \cite{Bella-Chiarini-Fehrman-19,Gloria-Neukamm-Otto-20}.
The core principle is to convert the (boundary) regularity estimates  into a local approximation lemma (e.g.,
Lemma $\ref{lemma:approxi}$) based on a qualitative (or quantitative) homogenization theory\footnote{If one studies boundary Lipschitz (or Schauder) estimates, a quantitative statement
analogous to \cite[Lemma 2.8]{Wang-Xu-26} seems to be inevitable, which
could be observed even in a periodic setting (see \cite{Shen18}).}.
Since the definition of $\chi_*$ relies entirely on correctors and flux correctors, its advantage lies in the ability to ``separate'' large-scale estimates from quantifying the sublinearity of correctors. Although boundary Lipschitz estimates do not appear in  \cite{Armstrong-Bordas-Mourrat-18} or \cite{Bella-Chiarini-Fehrman-19}, they become tractable under the assumption of quantitative ergodicity (see e.g., \cite{Armstrong-Kuusi-Mourrat-19,Wang-Xu-26}).}
\emph{Moreover, we pose the following open questions to guide future research:
\emph{Can boundary Lipschitz estimates be derived under stationary and ergodic ensembles? Similarly, for a Lipschitz cylinder, can the estimate $\eqref{pri:A}$ be obtained as the same in Theorem $\ref{thm:C-Z}$?}}
\end{remark}

\begin{remark}
\emph{Another  effort of this paper is the establishment of homogenization errors  under low regularity of boundary geometries in Theorems $\ref{thm:1.1}$ and $\ref{thm:1.2}$.
A key methodological contribution here is the development of a unified framework that reduces error estimates to boundary-layer-type estimates (encompassing both spatial and temoral boundary layers, see Fig.$\ref{pic:1.2}$) and co-layer-type estimates for the homogenized solution (see Lemma \ref{lemma:3.2-ap}), even for domains with minimal regularity.
This approach not only connects to the study of low-regularity boundary value problems \cite{Brown-89,Fabes-Riviere-79} but also demonstrates that providing a detailed argument remains essential to fully capture  this meaningful connection,
particularly given that low boundary regularity inherently leads to convergence rate losses (see e.g., the estimates $\eqref{pri:1.1}$ and $\eqref{pri:1.4}$). While extending results from $C^1$ domains to Reifenberg-flat-type regions (or from Dirichlet to other type boundary conditions) is entirely feasible, generalizing to Lipschitz domains remains constrained by integrability index conditions (see e.g., \cite[Theorem 1.1]{Wang-Xu-25}). Notably, the dependence of the aforementioned conclusions $\eqref{pri:1.1}$, $\eqref{pri:1.3}$ and $\eqref{pri:1.4}$ on the domain diameter (which limits direct extension to the whole space) is addressed here: with slight modifications to the proof, this diameter dependence is stransformed into a weight function in the norms as seen in $\eqref{pri:1.2}$. Overall, the whole-space estimate is more straightforward than the bounded domain case, primarily because it avoids boundary layer complications.}
\end{remark}

\begin{remark}
\emph{The investigation of higher-order correctors not only carries profound theoretical significance (see e.g., the connection to the Bourgain-Spencer conjecture \cite{Duerinckx-Gloria-Lemm-19}) but also has substantial practical utility in numerical applications, particularly in analyzing systematic errors within the Representative Volume Element (RVE) method \cite{Clozeau-Josien-Otto-Xu} and in addressing the optimal artificial boundary condition problem \cite{Lu-Otto-Wang-24}. Through asymptotic analysis, the governing equation for the second-order corrector can be derived as follows:
\begin{equation}\label{corrector-2}
\big(\partial_t -\nabla\cdot a\nabla\big)(\phi_{ij}^{(2)})
=\nabla\cdot\big(a\phi_{i}-\sigma_{i}
+a\partial\sigma_{(d+1)i}\big)e_j
\qquad\text{in}\quad\mathbb{R}^{d+1}.
\end{equation}
By Theorem $\ref{thm:1.0}$, the divergence term on the right-hand side of $\eqref{corrector-2}$ corresponds to a stationary field. Consequently, analogous to Theorem $\ref{thm:1.0}$, there is strong potential for identifying stationary solutions for the equation $\eqref{corrector-2}$ for $d>2$. However, extending the methodology for obtaining second-order correctors to higher orders proves exceptionally challenging for the parabolic operators in $\eqref{operator}$. A core practical obstacle lies in the scaling equivalence between the time derivative and the second-order spatial derivative, which currently lacks an accurate and structured representation in formal expansions. Thus, the exploration of higher-order correctors and associated homogenization commutators for parabolic operators remains an interesting topic. Additionally, the correlation structure of (first-order) correctors and the variance structure of homogenization commutators will be elaborated on in a subsequent paper.}
\end{remark}

\subsection{Outline of the paper}
\noindent
Although this article encompasses a significant amount of material, its primary line of reasoning follows a structured progression. As shown in Tab.$\ref{table1}$, Theorems $\ref{thm:1.0}$ and $\ref{thm:C-Z}$ form the most fundamental theories of this article, while Theorems $\ref{thm:1.1}$ and $\ref{thm:1.2}$ serve as crucial bridges connecting to applied fields such as computational mathematics.

Section $\ref{section:2}$ is dedicated to the study of the corrector and flux correctors, and the proof structure of Theorem $\ref{thm:1.0}$ is shown in Fig.$\ref{pic:2.1}$. Section $\ref{section:4}$ investigates
two-scale asymptotic expansions, weighted estimates of smoothing operators, and the proof of Lemma $\ref{lemma:approxi}$, which leads to the definition of the minimum radius $\chi_*$, as depicted in $\eqref{min_rad*}$. Section $\ref{section:5}$ focuses on Calder\'on-Zygmund theory, and the proof flowchart for Theorem $\ref{thm:C-Z}$ is presented in
Fig.$\ref{pic:4.1}$. Section $\ref{section:3}$ examines the homogenization errors in the sense of oscillation and fluctuation, and the structural proof of Theorems $\ref{thm:1.1}$ and $\ref{thm:1.2}$ are provided in
Figs.$\ref{pic:5.1}$ and $\ref{pic:5.2}$, respectively.


\begin{longtable}{|>{\arraybackslash}m{5.1cm}|>{
\centering\arraybackslash}m{5.1cm}|>{\centering\arraybackslash}m{5.2cm}|}
\caption{The basic classification of the main results}\label{table1}\\
\hline
%
%
%
Theoretical foundation work &\quad~ Theorem $\ref{thm:1.0}$ \newline (Correctors' theory) &\quad~ Theorem $\ref{thm:C-Z}$ \newline (Calder\'on-Zygmund theory) \\
\hline
Applied mathematics work & \quad~
Theorem $\ref{thm:1.1}$ \newline (Oscillation of the errors) &\quad~ Theorem $\ref{thm:1.2}$ \newline (Fluctuation of the errors)\\
\hline
\end{longtable}

\subsection{Notation}
\noindent
An apology is offered at the outset for introducing a substantial amount of notation here. This is particularly necessary because the analysis of parabolic equations requires the definition of more intricate symbols than their elliptic counterparts, and therefore it is recommended that this section should be consulted as a reference.
\begin{enumerate}
  \item Notation for estimates.
\begin{enumerate}
  \item $\lesssim$ and $\gtrsim$ stand for $\leq$ and $\geq$
  up to a multiplicative constant,
  which may depend on some given parameters in the paper
  but never on $\varepsilon$. The subscript form $\lesssim_{\lambda,\cdots,\lambda_n}$ indicates the constant depends only on parameters $\lambda,\cdots,\lambda_n$.
  In addition, superscripts such as $\lesssim^{(2.1)}$ are used to indicate the referenced formula or estimate.
  The notation $\sim$ is used when both $\lesssim$ and $\gtrsim$ hold.
  \item We use $\gg$ instead of $\gtrsim$ to indicate that the multiplicative constant is much larger than 1 (but still finite),
      and it is similarly for $\ll$.
  \item We write $\varepsilon^{s-}$ to represent the quantity of the form $\varepsilon^{s-\kappa}$, provided that $0<\kappa\ll 1$.
\end{enumerate}
  \item Notation for derivatives.
  \begin{enumerate}
  \item Temporal derivative: $\partial_t := \partial u/\partial t$ is the derivative with respect to the time variable.
    \item Spatial derivatives:
  $\nabla v = (\partial_1 v, \cdots, \partial_d v)$ is the gradient of $v$, where
  $\partial_i v = \partial v /\partial x_i$ denotes the
  $i^{\text{th}}$ derivative of $v$.
  $\nabla^2 v$  denotes the Hessian matrix of $v$;
  $\nabla\cdot v=\sum_{i=1}^d \partial_i v_i$
  denotes the divergence of $v$, where
  $v = (v_1,\cdots,v_d)$ is a vector-valued function.
$\Delta_{d+1}:=\sum_{i=1}^{d+1}\partial_i^2$ represents the $(d+1)$-dimensional Laplace operator, while $\overline{\nabla}
=(\nabla,\partial_{d+1})$ denotes the $(d+1)$-dimensional gradient operator.
   \item Functional (or vertical) derivative:  the random tensor field $\frac{\partial F}{\partial a}$ (depending on $(a,z)$) is the functional derivative of $F$ with respect to $a$, defined by
       \begin{equation}\label{functional}
       \lim_{\varepsilon\to 0} \frac{F(a+\varepsilon\delta a)
       -F(a)}{\varepsilon}
       =\int_{\mathbb{R}^{d+1}}dz \frac{\partial F(a)}{\partial a_{ij}(z)}
       (\delta a)_{ij}(z).
       \end{equation}
  \end{enumerate}

  \item Geometric notation.
  \begin{enumerate}
  \item Let $d\geq 2$ be the dimension, and let $R_0$ denote
  the diameter of $\Omega$. For $0<T<\infty$, we define the parabolic cylinder as $\Omega_T= \Omega\times(0,T]$, and the parabolic boundary of $\Omega_T$ is denoted by
$\partial_p\Omega_T := \overline{\Omega}_T\setminus\Omega_T$, which consists of the ``lateral'' and ``bottom'' boundaries of $\Omega_T$, respectively, as follows:
\begin{equation*}
 \partial_p\Omega_T=\partial_{\shortparallel}\Omega_T
 \cup \partial_b\Omega
 :=\big\{\partial\Omega\times (0,T]\big\}
 \cup \big\{\Omega\times \{t=0\}\big\} .
\end{equation*}
  Let $R_*:= R_0\vee T$, where it is always assumed
   that $R_0\geq 1$ and $T\geq 1$ throughout. Then,  $\Omega_*:=\Omega\times I_{R_*}$ with
  $I_{R_*}:=(-R_*^2,R_*^2)$ is defined as an extension cylinder of
$\Omega_T$, and $\partial_{\shortparallel}\Omega_*:=\partial\Omega\times I_{R_*}$ denotes the lateral boundary of $\Omega_*$.
  \item Let $Q_r(z):=B_r(x)\times(t-r^2,t+r^2)$
be the parabolic cube with $z=(x,t)\in\mathbb{R}^{d+1}$.  For a parabolic cube $Q$, set $Q=Q_{r_Q}(z_Q)$ and, by abuse of notation,  write
  $\alpha Q:=Q_{\alpha r_Q}(z_B)$.
\item For any $z\in\partial_{\shortparallel}\Omega_T$,
let $D_r(z):=Q_r(z)\cap\Omega_{*}$,  $\Delta_r(z):=Q_r(z)\cap\partial_{\shortparallel}\Omega_{*}$, and
\begin{equation*}
\partial_p D_r(z):= (\partial_{\shortparallel}\cup\partial_b)D_r(z)
= \big\{\partial (B_r(x)\cap\Omega) \times
 I_{r}(t)\big\}\cup \big\{(B_r(x)\cap\Omega)\times\{t=-r^2\}\big\},
\end{equation*}
where $I_r(t):=t+I_r=(t-r^2,t+r^2)$, and $\partial_p D_r$ denotes the parabolic boundary of
$D_r$ which consists of the lateral and
bottom parts, denoted by $\partial_{\shortparallel} D_r$ and
$\partial_b D_r$, respectively.
The center point of
$Q_r(z),D_r(z)$, and $\Delta_r(z)$ is omitted only when $z=0$.
For any $z=(x,t)\in\Omega_T$, we introduce $U_r(z):=Q_r(z)\cap\Omega_T$,
and $U_{*,\varepsilon}(z):=
\Omega_T\cap Q_{*,\varepsilon}(z)$ with
$Q_{*,\varepsilon}(z):=Q_{\varepsilon\chi_{*}(z/\varepsilon)}(z)
=Q_{\varepsilon\chi_{*}(x/\varepsilon,t/\varepsilon^2)}(z)$,
where $\chi_*$ is the minimum radius given
in Lemma $\ref{lemma:approxi}$.

\item
$\Sigma_{r}^T = \Sigma_{r}\times [r^2,T-r^2]$ is interpreted as the time-space co-layer of $\Omega_T$,
      where $\Sigma_{r}
      = \big\{x\in\Omega:\text{dist}(x,\partial\Omega) \geq r\big\}$.
      $\boxbox_{r}:=\Omega_T\setminus \Sigma_{r}^T$ represents the time-space layer of
      $\Omega_T$. Let $(O_r)_T$ be the lateral layer of $\Omega_T$,
      where $O_r:=\{x\in\Omega:\text{dist}(x,\partial\Omega)<r\}$,
      and $(\Omega\setminus O_r)_T:=
(\Omega\setminus O_r)\times (0,T] = \Sigma_{r}\times(0,T]$ is
the lateral co-layer of $\Omega_T$.
\end{enumerate}
\includegraphics[width=5.5in]{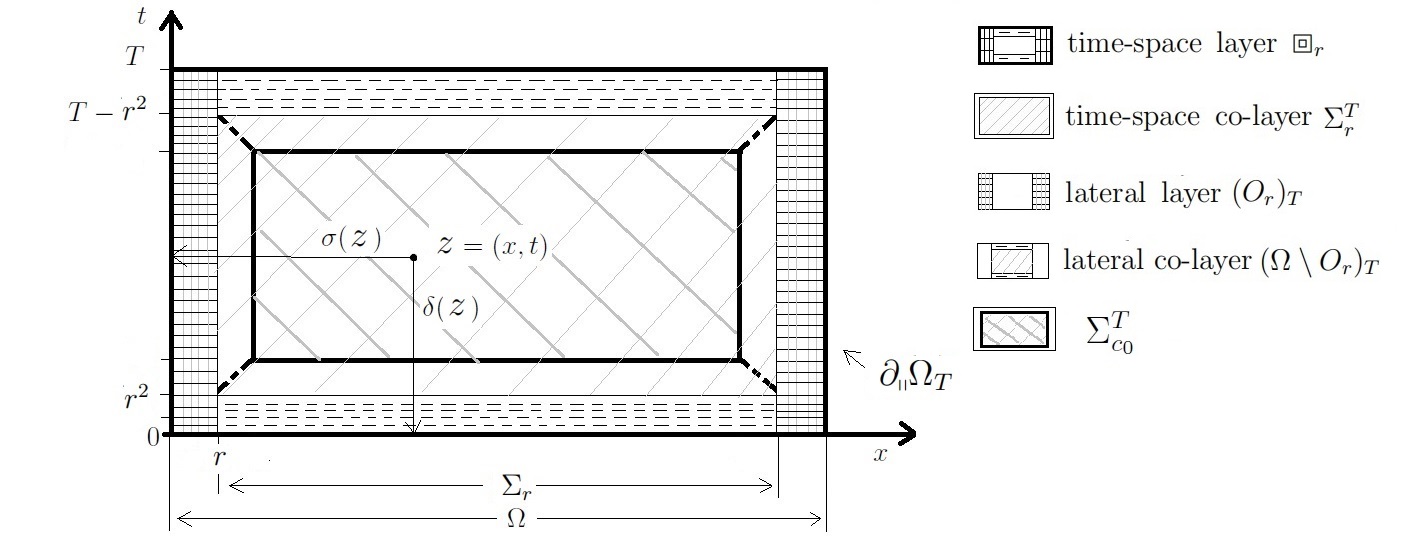}
\makeatletter\def\@captype{figure}\makeatother
\caption{sectional view of $\Omega_T$}\label{pic:1.2}

\item Notation for functions.
\begin{enumerate}
    \item The function $\textbf{1}_{E}$ is the indicator function of $E$.

\item $a\vee b := \max\{a,b\}$; $a\wedge b:=\min\{a,b\}$.

\item We denote $F(\cdot/\varepsilon,\cdot/\varepsilon^2)$ by $F^{\varepsilon}(\cdot,\cdot)$ for simplicity, and $(f)_r = \dashint_{Q_r} f  = \frac{1}{|Q_r|}\int_{Q_r} f$.
\item We denote the support of $f$ by $\text{supp}(f)$.

\item Let $\text{d}(z,z_0):=|x-x_0|+|t-t_0|^{\frac{1}{2}}$
be the parabolic distance between any $z,z_0\in\mathbb{R}^{d+1}$.
For any
$z=(x,t)\in \Omega_T$, the distance between $z$ and $\partial\Omega_T$ is denoted by
\begin{equation}\label{eq:2.4}
\delta(z)=\delta(x,t) =  \min\Big\{\text{dist}(x,\partial\Omega),t^{\frac{1}{2}},
(T-t)^{\frac{1}{2}}\Big\}.
\end{equation}
and the distance between $z$ and $\partial_{\shortparallel}\Omega_T$ is written by
\begin{equation}\label{eq:2.5}
 \sigma(z) =\sigma(x,t)= \text{dist}(x,\partial\Omega).
\end{equation}

\item The nontangential maximal function of $u$ is defined by
\begin{equation}\label{def:nontangential}
(u)^{*}(z_0):=\sup\big\{|u(z)|:z\in\Gamma_{N_0}(z_0)\big\}
\qquad\forall z_0\in\partial_{\shortparallel}\Omega_T,
\end{equation}
where $\Gamma_{N_0}(z_0):=\big\{z=(x,t)\in\Omega_T:\text{d}(z,z_0)\leq N_0\text{dist}(x,\partial\Omega)\big\}$ is the cone with vertex $z_0$
and the aperture $N_0$, and $N_0>1$ may be chosen sufficiently large.
(See Fig.\ref{pic:1.3})

  \end{enumerate}


\includegraphics[width=5.5in]{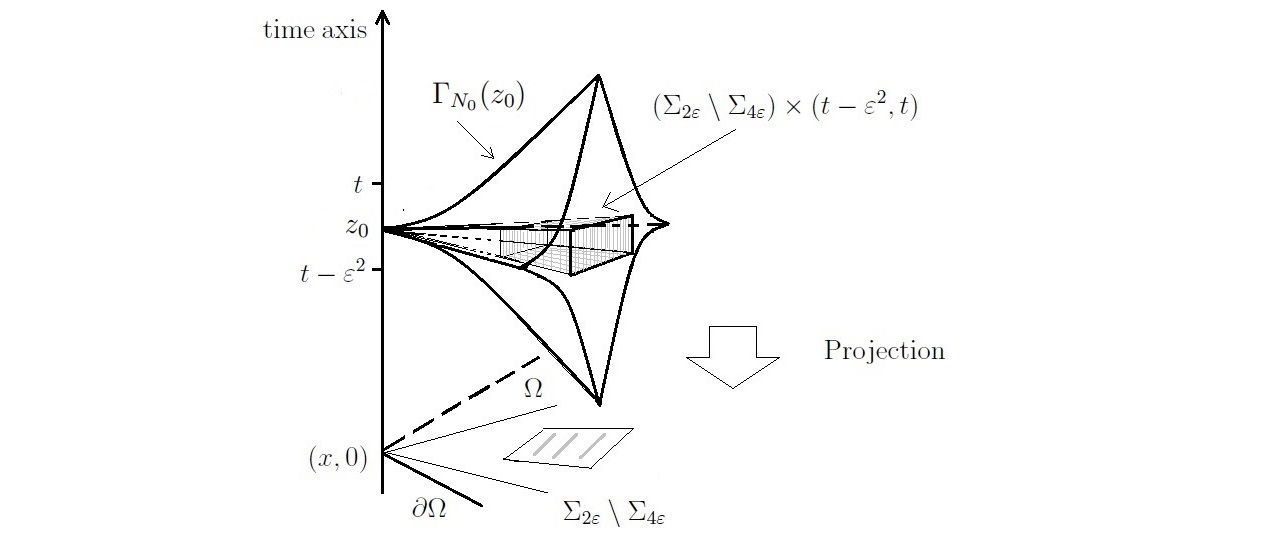}
\makeatletter\def\@captype{figure}\makeatother
\caption{Non-tangential region}\label{pic:1.3}

\item Notation for function spaces.
\begin{enumerate}
  \item $C_0^1(Q_{R})$ denotes the Banach space of functions with continuous one-order derivative with respect to spatial or time variables, requiring each element to vanish near $\partial Q_{R}$.
  \item $L^2(0,T;X)$ represents the Banach space of
$X$-valued functions owning $L^2$-integrability with respect to the time variable (see e.g., \cite[Subsection 5.9.2]{LCE}).

\item $W^{2,1}_q(\Omega_T)$ with $q\in(1,\infty)$ denotes the Sobolev space of functions with second-order spatial derivatives and first-order temporal derivatives in $L^q(\Omega_T)$.

  \item ${_0H^{1,1/2}_q(\partial_{\shortparallel}\Omega_T)}$
  with $q\in(1,\infty)$ denotes
the Sobolev space of functions with one spatial derivative and half
of a time derivative, denoted by $D_t^{1/2}$, in $L^q(\partial_{\shortparallel}\Omega_T)$, requiring each element to vanish near $\partial\Omega\times\{t=0\}$
(see e.g., \cite{Brown-89,Fabes-Riviere-79}).

\end{enumerate}

\end{enumerate}

\begin{remark}
\emph{Finally, we mention that:
(1)
When we say that the multiplicative constant depends on $\partial\Omega$,
it means that the constant relies on the geometric character of the boundary of the domain $\Omega$ (but independent of its diameter $R_0$);
(2) If a zero extension of the solutions $u_\varepsilon$ and $u_0$ of $\eqref{pde:1.1}$ and $\eqref{pde:1.2}$ is needed in the direction
$t<0$ (i.e., a zero-extension), or a natural extension in the direction of $t>T$ (simply extending the solution by increasing the evolution time of the corresponding partial differential equations\footnote{We mention that the parabolic equations
$\eqref{pde:1.1}$ and $\eqref{pde:1.2}$ possess the existence and uniqueness of the solutions, where we usually take $F=0$ in $\Omega_*\setminus\Omega_T$ in accordance with this extension.}), such
operations are not specifically indicated, and the extended functions are still represented using the original notations.}
\end{remark}

\section{Sensitive estimates}\label{section:2}

\noindent
This section is primarily concerned with establishing fluctuation estimates for correctors and flux-correctors by combining the spectral gap condition $\eqref{c:3}$ with sensitive estimate techniques. The key to the proof lies in the existence of stationary correctors, which is based on uniform sensitive estimates for the massive correctors. The conclusion is drawn from the following propositions.

\begin{proposition}[Massive correctors]\label{P:5.1*}
Let $d\geq 2$ and $1\leq p<\infty$. Let $\beta\in(0,1]$. Suppose that the ensemble $\langle\cdot\rangle$ satisfies the stationarity condition
with respect to $\eqref{c:1}$ and the spectral gap condition $\eqref{c:3}$. Let $\phi_{\beta}$ be the
massive corrector, whose component $\phi_{\beta,j}$
satisfies
\begin{equation}\label{massive-corrector}
  \mathcal{A}_{\beta}(\phi_{\beta,i}) :=
  \beta\phi_{\beta,i} + \partial_t \phi_{\beta,i} - \nabla\cdot a\nabla \phi_{\beta,i}
  =\nabla \cdot ae_i\quad\text{in}\quad \mathbb{R}^{d+1}.
\end{equation}
Then, it holds that
\begin{equation}\label{pri:5.2*}
 \bigg\langle\Big(\dashint_{Q_1}|\nabla \phi_{\beta}|^{2}\Big)^p\bigg\rangle^{1/p}\lesssim_{\mu,\lambda_1,d,p} 1.
\end{equation}
Moreover, for any deterministic fields $g\in L^2(\mathbb{R}^{d+1};\mathbb{R}^d)$ and $G\in L^2(\mathbb{R}^{d+1})$, there holds
a stochastic cancellation property:
\begin{equation}\label{pri:5.3*}
  \bigg\langle\Big|\int_{\mathbb{R}^{d+1}}
  \big(\nabla\phi_{\beta}\cdot g +\beta G\phi_{\beta}\big)\Big|^{2p}\bigg\rangle^{\frac{1}{p}}
  \lesssim_{\mu,\lambda_1,d,p} \int_{\mathbb{R}} dt\int_{\mathbb{R}^{d}}\big|(g,\sqrt{\beta} G)(\cdot,t)\big|^2.
\end{equation}
As a result, for any $z\in\mathbb{R}^{d+1}$, the following
fluctuation estimate holds:
\begin{equation}\label{pri:5.4*}
\bigg\langle \Big|\dashint_{Q_1(z)}\phi_{\beta} - \dashint_{Q_1(0)}\phi_{\beta}\Big|^{2p}\bigg\rangle^{1/p}
\lesssim_{\mu,\lambda_1,d,p}1,
\end{equation}
where the multiplicative constant is independent of $\beta$.
\end{proposition}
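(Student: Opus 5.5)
Our plan follows the Gloria--Neukamm--Otto sensitivity strategy, adapted to the non-self-adjoint operator $\mathcal{A}_\beta$; the adjoint $\mathcal{A}_\beta^*=\beta-\partial_t-\nabla\cdot a^*\nabla$ is the backward operator. The basic energy input is simple. Testing \eqref{massive-corrector} with $\phi_{\beta,i}\eta$, for a cut-off $\eta$, and using that $\int\partial_t\phi\,\phi\,\eta=-\frac12\int\phi^2\partial_t\eta$, gives a Caccioppoli-type inequality. Stationarity of $\phi_\beta$ (for $\beta>0$ the massive corrector is stationary and unique in $L^2(\langle\cdot\rangle)$ by Lax--Milgram on the probability space) then gives $\beta\langle\phi_\beta^2\rangle+\mu\langle|\nabla\phi_\beta|^2\rangle\lesssim 1$, since $\langle\phi\,\partial_t\phi\rangle=0$. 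Everything else is a bootstrap from $p=1$ to large $p$ through the $p$-version of the spectral gap \eqref{c:3}:
\[
\langle|F-\langle F\rangle|^{2p}\rangle^{1/p}\lesssim_p \Big\langle\Big(\int dz\Big(\dashint_{Q_1(z)}\Big|\frac{\partial F}{\partial a}\Big|\Big)^2\Big)^p\Big\rangle^{1/p}.
\]
This $p$-version is obtained by applying \eqref{c:3} to $|F|^p$, which is standard.

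\textbf{Step 1: cancellation estimate \eqref{pri:5.3*}, conditioned on \eqref{pri:5.2*}.} Take $F=\int(\nabla\phi_\beta\cdot g+\beta G\phi_\beta)$, which has mean zero by stationarity. Perturb $a$ by $\delta a$. Then $\mathcal{A}_\beta\delta\phi=\nabla\cdot\delta a(\nabla\phi_\beta+e)$. Introduce the dual solution $v$ of $\mathcal{A}_\beta^*v=-\nabla\cdot g+\beta G$, so that $\delta F=\int\nabla v\cdot\delta a(\nabla\phi_\beta+e)$ and
\[
\frac{\partial F}{\partial a}=\nabla v\otimes(\nabla\phi_\beta+e).
\]
Hölder on unit cubes gives
\[
\int dz\Big(\dashint_{Q_1(z)}|\nabla v||\nabla\phi_\beta+e|\Big)^2\le\int dz\,\dashint_{Q_1(z)}|\nabla v|^2\;\dashint_{Q_1(z)}|\nabla\phi_\beta+e|^2.
\]
To take the $p$-th moment we use Hölder in probability with exponents slightly off $1$. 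This requires a perturbative annealed Meyers/$L^{2+}$ estimate for $v$, i.e. a deterministic weighted energy estimate $\int|\nabla v|^2w\lesssim\int|(g,\sqrt\beta G)|^2w$ for slowly decaying weights $w$, valid uniformly in $\beta$. Combined with stationarity of the local $L^2$ averages of $\nabla\phi_\beta$, this bounds the right-hand side by $\langle(\dashint_{Q_1}|\nabla\phi_\beta+e|^2)^{p'}\rangle^{1/p'}\int|(g,\sqrt\beta G)|^2$ for some $p'>p$. The parabolic energy estimate for $v$ holds for the backward equation exactly as for the forward one, so the direction of time causes no trouble here.

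\textbf{Step 2: moment bound \eqref{pri:5.2*}.} This is the main obstacle, and we intend to run the elliptic buckling argument. Choose $g=\nabla\cdot$ of a mollifier on $Q_1$ (equivalently, test the local average of $\nabla\phi_\beta$ against $g\in L^2(Q_1)$). Step 1 then controls fluctuations of $\int_{Q_1}\nabla\phi_\beta\cdot g$ by $C_p\langle(\dashint_{Q_1}|\nabla\phi_\beta|^2)^{p'}\rangle^{1/(2p')}+C$. Passing from averaged quantities back to $\dashint_{Q_1}|\nabla\phi_\beta|^2$ requires a deterministic inner-regularity (Caccioppoli plus averaging) step. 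This yields
\[
\dashint_{Q_1}|\nabla\phi_\beta|^2\lesssim\dashint_{Q_R}|\nabla\phi_\beta|^2+\text{averaged quantities on larger scales},
\]
with the large-scale part controlled via the sensitivity bound at scale $R$, gaining a factor $R^{-(d+2)/2}$ from the CLT scaling. We then close an inequality of the form $X_p\le C+\tfrac12 X_{p'}$, first with $p'$ close to $p$ and using interpolation with the $p=1$ energy bound. To avoid losing stochastic integrability we would first attempt the minimal-radius route: define $r_*$ as the scale above which averages of $\nabla\phi_\beta$ are small. Step 1 then gives all moments of $r_*$, and deterministic large-scale regularity gives $\dashint_{Q_1}|\nabla\phi_\beta|^2\lesssim r_*^{d+2}$. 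This step is where uniformity in $\beta$ must be tracked, with the mass term serving only as a cut-off at scale $\beta^{-1/2}$.

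\textbf{Step 3: fluctuation estimate \eqref{pri:5.4*}.} The difference $\dashint_{Q_1(z)}\phi_\beta-\dashint_{Q_1(0)}\phi_\beta$ has the form $\int\phi_\beta\,(\eta_z-\eta_0)$, where $\eta$ is the normalized mollifier on $Q_1$. Write $\eta_z-\eta_0=\beta G-\nabla\cdot g$, with $g,G$ supported along a path from $0$ to $z$. The difficulty is the time component, which cannot be written as a spatial divergence; we absorb it through the equation. Specifically, $\int\phi\,\partial_t h=-\int\partial_t\phi\,h$, and $\partial_t\phi_\beta=-\beta\phi_\beta+\nabla\cdot a(\nabla\phi_\beta+e)$, so the time displacement produces $\beta G\phi_\beta$ and $\nabla\phi_\beta\cdot g$ terms with $\|(g,\sqrt\beta G)\|_{L^2}$ bounded. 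For the spatial displacement we use a Green-type representation, solving $-\Delta_{d+1}$ in $d+1$ dimensions so that the dipole $\eta_z-\eta_0$ has an $L^2$ gradient potential uniformly in $|z|$. That $d+1\ge3$ is what makes this work for $d=2$ as well, explaining the dimension independence. Applying \eqref{pri:5.3*} then yields a bound independent of $\beta$ and $z$.
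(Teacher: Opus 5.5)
Your sensitivity formula $\partial F/\partial a=\nabla v\otimes(\nabla\phi_\beta+e)$, with the backward dual problem, is exactly the paper's. There is, however, a genuine gap in Step~2, which is the heart of \eqref{pri:5.2*}. It originates in the loss of stochastic integrability you accept in Step~1, where you bound $\langle|F|^{2p}\rangle^{1/p}$ by $\langle(\dashint_{Q_1}|\nabla\phi_\beta+e|^2)^{p'}\rangle^{1/p'}\int|(g,\sqrt\beta G)|^2$ with some $p'>p$.

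With such a loss, no buckling of the type you describe can close. An inequality $X_p\le C+\frac12X_{p'}$ with $p'>p$ bounds a lower moment by a higher one. Interpolation between $X_1\lesssim1$ and $X_{p'}$ only expresses intermediate moments through $X_{p'}$; nothing bounds $X_{p'}$ itself.

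The minimal-radius fallback does not rescue this. As written it is circular: ``Step~1 then gives all moments of $r_*$'' invokes an estimate you made conditional on \eqref{pri:5.2*}. Moreover, with the only deterministic input you name, $\dashint_{Q_1}|\nabla\phi_\beta|^2\lesssim r_*^{d+2}$, a Chebyshev argument over dyadic scales leaves a strictly higher moment of $r_*$ on the right than on the left. It would close only with the much finer large-scale-regularity buckling (including for the dual problem), which is not in your sketch.

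Finally, $\dashint_{Q_1}|\nabla\phi_\beta|^2\lesssim\dashint_{Q_R}|\nabla\phi_\beta|^2$ is false deterministically. Passing from $Q_1$ to $Q_R$ costs, via stationarity, a factor $R^{(d+2)(1-1/p)}$, against a CLT gain of $R^{-(d+2)}$ (both measured in $\langle(\dashint|\cdot|^2)^p\rangle^{1/p}$). So only the net factor $R^{-(d+2)/p}$ is available for absorption, and only if the cancellation estimate has no loss.

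The missing ingredient is precisely that lossless cancellation estimate, \eqref{f:5.15-ap}, with the same $p$ on both sides. The paper obtains it in three moves:
\begin{itemize}
\item It writes the $p$-th moment of $\int dz(\dashint_{Q_1(z)}|\nabla v\otimes(\nabla\phi_\beta+e)|)^2$ by duality as a supremum of $\langle H^2\int dz(\cdots)^2\rangle$ over random variables with $\langle H^{2p'}\rangle=1$. Here $p'$ is the dual exponent of $p$, close to $1$ for $p$ large; smaller $p$ follow by Jensen.
\item It applies H\"older in probability with exponents $(p',p)$, together with stationarity of $\nabla\phi_\beta$.
\item It bounds $\int\langle|\nabla(vH)|^{2p'}\rangle^{1/p'}\lesssim\int|(g,\sqrt\beta G)|^2$ by the annealed Meyers estimate of Proposition~\ref{P:5.1-ap}, obtained by Shen's real-variable method rather than by weighted energy estimates. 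This uses that $vH$ solves the dual equation with right-hand side $H(\beta G-\nabla\cdot g)$.
\end{itemize}
The buckling then closes at fixed $p$. Set $X:=\langle(\dashint_{Q_1}|\nabla\phi_{\beta,j}+e_j|^2)^p\rangle^{1/p}$. Stationarity gives $X\lesssim R^{(d+2)(1-1/p)}\langle(\dashint_{Q_{2R}}|\nabla\phi_{\beta,j}+e_j|^2)^p\rangle^{1/p}$. The Caccioppoli inequality with spatial mollification \eqref{pri:6.3-ap}, with stationarity absorbing its boundary-layer and $(r/R)^2$ terms, reduces this to moments of $\nabla(\phi_{\beta,j})_{\vartheta R}$ plus a constant. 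The lossless estimate, applied to a test field $g$ at scale $R$, bounds those moments by $R^{-(d+2)}X$. Hence $X\lesssim R^{-(d+2)/p}X+R^{(d+2)(1-1/p)}$, and choosing $R$ large absorbs.

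Your Step~3, based on the $(d+1)$-dimensional Newtonian potential $w$ of $\eta_z-\eta_0$, is a valid alternative to the paper's auxiliary problem $\beta v_z+\partial_tv_z-\Delta v_z=\eta_z-\eta_0$. The paper gets its $z$-uniform energy bound from the parabolic Sobolev embedding. Two points need care in your version:
\begin{itemize}
\item The time component must also be taken from $w$; a path representation in time gives norms growing like $|t|^{1/2}$. Once it is, $\nabla\partial_tw\in L^2$ uniformly by Plancherel.
\item Substituting the equation for $\partial_t\phi_\beta$ produces $\int\nabla\phi_\beta\cdot a^*\nabla\partial_tw$ and $\int\nabla\partial_tw\cdot ae$, which \eqref{pri:5.3*} does not cover because its $g$ is deterministic. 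The first needs its own sensitivity computation, with an extra explicit term in $\partial F/\partial a$ from the $a$-dependence of the integrand. The second follows directly from the spectral gap, as in the paper's Step~7.
\end{itemize}
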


\begin{proposition}[Flux correctors]\label{P:5.3}
Let $d\geq 2$ and $1\leq p<\infty$. Suppose that the ensemble $\langle\cdot\rangle$ satisfies the stationarity and ergodicity conditions.
One can introduce the following quantities:
\begin{equation}\label{f:5.29}
q_{ij}
:= \bar{a}_{ij}
- a_{ij}
- a_{ik}\partial_k\phi_j,
\qquad
q_{(d+1)j} :=\phi_j-\langle\phi_j\rangle,
\end{equation}
where $1\leq i,j,k\leq d$ are integers. Then, we can verify that
\begin{equation}\label{eq:5.2}
 \big\langle q_{i'j}\big\rangle = 0;
 \qquad \sum_{i'=1}^{d+1}\partial_{i'} q_{i'j} = 0.
\end{equation}
where we denote the time derivative by $\partial_{d+1}$ throughout the lemma and $i'=1,\cdots,d+1$. Moreover,
there exists the so-called flux corrector, denoted by $\sigma_j$, such that
$\sigma_{k'ij}=-\sigma_{ik'j}$ with $k'=1,\cdots,d+1$, and
$\overline{\nabla}\sigma_{kij}$ is stationary, while $\partial_i\sigma_{(d+1)ij}$
(further $\nabla\sigma_{(d+1)ij}$) merely admits the sublinear growth:
\begin{equation}\label{pri:2.20}
\limsup_{R\to\infty}\frac{1}{R^2}\dashint_{Q_R}
 \big|\nabla\sigma_{(d+1)} - \dashint_{Q_{R}}\nabla\sigma_{(d+1)}\big|^2 = 0;
\end{equation}
and the flux correctors satisfy the following equations:
\begin{subequations}
\begin{align}
 &\Delta_{d+1}\sigma_{k'ij}
 =\partial_{k'}q_{ij}-\partial_i q_{k'j};
 \label{eq:5.1a}\\
 &\sum_{k'=1}^{d+1}\partial_{k'}\sigma_{k'ij}
 = q_{ij};
 \label{eq:5.1b}\\
 &
 \sum_{i=1}^{d}\partial_i\sigma_{(d+1)ij}
 = M_j-\phi_j \quad\forall M_j\in\mathbb{R},
 \label{eq:5.1c}
\end{align}
\end{subequations}

Suppose that the ensemble
$\langle\cdot\rangle$ additionally satisfies the spectral gap condition $\eqref{c:3}$.
Let $g\in L^2(\mathbb{R}^{d+1};\mathbb{R}^{d+1})$ be
any deterministic field.
Then, it is obtained that
\begin{equation}\label{pri:5.6a}
\bigg\langle\Big(\dashint_{Q_1(z)}|\overline{\nabla} \sigma_{k'ij}|^{2}\Big)^p\bigg\rangle^{1/p}\lesssim_{\mu,\lambda_1,d,p} 1
\qquad \forall z\in\mathbb{R}^{d+1}\text{~and~}k'=1,\cdots,d+1,
\end{equation}
where it is recalled that $\overline{\nabla}$ represents the $(d+1)$-dimensional
gradient operator.
Moreover, in the case of $k'=1,\cdots,d$, the following estimates hold:
\begin{subequations}
\begin{align}
&\bigg\langle\Big|\int_{\mathbb{R}^{d+1}}
\overline\nabla\sigma_{k'ij}\cdot g\Big|^{2p}\bigg\rangle^{\frac{1}{p}}
\lesssim_{\mu,\lambda_1,d,p} \int_{\mathbb{R}^{d+1}}|g|^2;
\label{pri:5.6b}\\
&\Big\langle \big|\dashint_{Q_1(z)}\sigma_{k'ij} - \dashint_{Q_1(0)}\sigma_{k'ij}\big|^{2p}\Big\rangle^{1/p}
\lesssim_{\mu,\lambda_1,d,p}1
\qquad \forall z\in\mathbb{R}^{d+1},
\label{pri:5.6c}
\end{align}
\end{subequations}
In the case $k'=d+1$, by setting $q:=\frac{2(d+2)}{d+4}$, there correspondingly hold
\begin{subequations}
\begin{align}
&\bigg\langle\Big|\int_{\mathbb{R}^{d+1}}\nabla\sigma_{(d+1)ij}\cdot g\Big|^{2p}\bigg\rangle^{\frac{1}{p}}
\lesssim_{\mu,\lambda_1,p} \int_{\mathbb{R}^{d+1}}|g|^2 
+\Big(\int_{\mathbb{R}^{d+1}}|g|^{q}\Big)^{\frac{2}{q}};
\label{pri:5.7b}\\
&\bigg\langle \big|\dashint_{Q_1(z)}\sigma_{(d+1)ij} - \dashint_{Q_1(0)}\sigma_{(d+1)ij}\big|^{2p}\bigg\rangle^{1/p}
\lesssim_{\mu,\lambda_1,p} \bar{\mu}_d^2(|z|)
\qquad \forall z=(x,t)\in\mathbb{R}^{d+1},
\label{pri:5.7c}
\end{align}
\end{subequations}
where the weight function $\bar{\mu}_d$ is given by $\eqref{pri:5.5}$.
In such a case, one can choose $M_j=\langle\phi_j\rangle$
in the identity $\eqref{eq:5.1c}$.
\end{proposition}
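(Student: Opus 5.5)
We prove Proposition \ref{P:5.3} in three steps: the algebraic properties of $q$, the construction of $\sigma$ as a space-time Helmholtz potential, and the quantitative estimates via the spectral gap. All properties of $\phi$ (its existence as the limit $\beta\to 0$ of the massive correctors of Proposition \ref{P:5.1*}, stationarity of $\nabla\phi$, stationarity of $\phi-\langle\phi\rangle$, and the bounds \eqref{pri:5.2}--\eqref{pri:5.4}) are taken as known.

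\emph{Step 1: properties of $q$.} Write $\partial_{d+1}=\partial_t$. By \eqref{corrector},
\[
\sum_{i=1}^d\partial_i q_{ij}=-\nabla\cdot a(e_j+\nabla\phi_j)=-\partial_t\phi_j=-\partial_{d+1}q_{(d+1)j},
\]
so $\sum_{i'}\partial_{i'}q_{i'j}=0$. The identities $\langle q_{ij}\rangle=0$ and $\langle q_{(d+1)j}\rangle=0$ follow from the definition of $\bar a$ and of $q_{(d+1)j}$.

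\emph{Step 2: construction of $\sigma$.} For $k'\le d$ we construct $\sigma_{k'ij}$ as the limit of massive approximations
\[
\frac1T\sigma^T_{k'ij}-\Delta_{d+1}\sigma^T_{k'ij}=\partial_{k'}q_{ij}-\partial_i q_{k'j}.
\]
This is a stationary elliptic problem in $d+1$ variables with stationary, mean-zero, $L^2$ right-hand side. The energy estimate bounds $\langle|\overline\nabla\sigma^T|^2\rangle$ uniformly in $T$, and a weak limit gives a stationary $\overline\nabla\sigma$ solving \eqref{eq:5.1a}; uniqueness follows from Liouville for harmonic functions with stationary gradient. Skew-symmetry is inherited from the right-hand side. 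For \eqref{eq:5.1b}, apply $\partial_{k'}$ and sum: $\Delta_{d+1}\big(\sum_{k'}\partial_{k'}\sigma_{k'ij}-q_{ij}\big)=-\partial_i\sum_{k'}\partial_{k'}q_{k'j}=0$. The bracket is then stationary, mean zero, and harmonic, hence zero.

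The component $k'=d+1$ is the genuinely new part, since $q_{(d+1)j}=\phi_j-\langle\phi_j\rangle$ is stationary but only $\phi$ itself (not its gradient) enters. We define $\sigma_{(d+1)ij}=-\sigma_{i(d+1)j}$ through the same equation, but now only $\nabla\sigma_{(d+1)}$ is controlled, via $\langle|\phi-\langle\phi\rangle|^2\rangle<\infty$. The identity \eqref{eq:5.1c} comes from the $i'=d+1$ row of \eqref{eq:5.1b} combined with the divergence-free property, up to a harmonic field with sublinear gradient, i.e.\ a constant $M_j$. Sublinearity \eqref{pri:2.20} then follows by the ergodic argument of \cite[Lemma 5]{Bella-Chiarini-Fehrman-19}.

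\emph{Step 3: quantitative estimates.} These rest on the spectral gap \eqref{c:3} applied to $F=\int\overline\nabla\sigma\cdot g$. Write $\overline\nabla\sigma=\overline\nabla\Delta_{d+1}^{-1}(\text{derivatives of }q)$, so $F=\int q\cdot \overline\nabla h$ with $h$ solving $-\Delta_{d+1}h=\overline\nabla\cdot g$. The sensitivity of $q_{ij}$ involves $\delta a(e_j+\nabla\phi_j)$ and $a\nabla\delta\phi_j$. The latter is removed by a dual parabolic problem $(-\partial_t-\nabla\cdot a^*\nabla)v=\nabla\cdot(a^*\overline\nabla h)+\text{(time part)}$. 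This yields
\[
\frac{\partial F}{\partial a}=(\overline\nabla h+\nabla v)\otimes(e+\nabla\phi),
\]
and $L^p$-moment bounds follow from the $p$-version of the spectral gap together with the annealed estimate \eqref{pri:5.2} for $\nabla\phi$ and a (Meyers/annealed) $L^2$ bound on $\nabla v$, giving \eqref{pri:5.6b}. Choosing $g$ as an averaging kernel gives \eqref{pri:5.6a}. Estimate \eqref{pri:5.6c} follows by integrating along a path from $0$ to $z$, taking $g=\overline\nabla\Delta^{-1}_{d+1}(\mathbf 1_{Q_1(z)}-\mathbf 1_{Q_1(0)})$, whose $L^2$ norm is bounded uniformly in $z$ when $d+1\ge3$.

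\emph{Main obstacle: the case $k'=d+1$.} Here $F$ depends on $\phi$ itself rather than $\nabla\phi$, so the sensitivity requires $\delta\phi=\mathcal A_0^{-1}\nabla\cdot\delta a(e+\nabla\phi)$ tested against $\nabla\cdot g$-type data. One must solve the backward parabolic problem $(-\partial_t-\nabla\cdot a^*\nabla)w=\nabla\cdot g$ and bound $\nabla w$ in $L^2$ by $\|g\|_{L^q}$ with $q=\frac{2(d+2)}{d+4}$. This is a parabolic Sobolev embedding (homogeneous dimension $d+2$), which explains the extra $L^q$ term in \eqref{pri:5.7b}. For \eqref{pri:5.7c}, insert the difference-of-averages kernel. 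Its $L^2\cap L^q$ norms grow only logarithmically in $d=2$ (the critical case of the Riesz potential of order $2$ in homogeneous dimension $4$), yielding $\bar\mu_d^2$. I would first verify this kernel computation carefully, as the exact power $\ln^{3/2}$ hinges on it.
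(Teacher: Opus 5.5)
Your Step 3 for $k'\le d$ follows the paper: sensitivity of $\int q\cdot\overline\nabla h$, a dual parabolic problem absorbing $a\nabla\delta\phi$, the $L^p$ spectral gap and the annealed Meyers estimate. Your $(d+1)$-dimensional Green-function route to \eqref{pri:5.6c} is a legitimate simplification of the paper's heat-equation argument.

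\textbf{The gap: the qualitative half.} Existence of $\sigma$, \eqref{eq:5.1a}--\eqref{eq:5.1c} and the sublinearity \eqref{pri:2.20} are asserted for ensembles that are only stationary and ergodic. This is what makes $\chi_*$ well defined in Remarks \ref{remark:1.2} and \ref{remark:3.2}. Your Step 2, however, uses stationarity and square integrability of $\phi_j-\langle\phi_j\rangle$, which is Theorem \ref{thm:1.0}, i.e.\ a consequence of the spectral gap. With ergodicity alone, $\phi_j$ is merely sublinear with stationary gradient. Then:
\begin{itemize}
\item the datum $\partial_t q_{ij}-\partial_i\phi_j$ for $\sigma_{(d+1)ij}$ need not be a derivative of a stationary square-integrable field, so your uniform energy bound for $\sigma^T_{(d+1)ij}$ is lost;
\item the ``$i'=d+1$ row'' argument for \eqref{eq:5.1c}, which needs $q_{(d+1)j}$ stationary with mean zero, is unavailable.
\end{itemize}
Conversely, under your assumptions \eqref{pri:2.20} is immediate from the ergodic theorem, so the appeal to \cite[Lemma 5]{Bella-Chiarini-Fehrman-19} mixes the two settings. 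In the ergodic setting that lemma only gives spatial sublinearity after subtracting time-dependent spatial means, which is too weak on cylinders of time length $R^2$.

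The missing idea is to work one derivative higher. The field $\omega_{lij}:=\partial_l\sigma_{(d+1)ij}$ solves $\Delta_{d+1}\omega_{lij}=\partial_{d+1}\partial_l q_{ij}-\partial_i\partial_l\phi_j$, which involves only the stationary fields $q_{ij}$ and $\nabla\phi_j$. By \eqref{eq:5.1b}, its time derivative $\partial_{d+1}\omega_{lij}=\partial_k(\delta_{kl}q_{ij}-\partial_l\sigma_{kij})$ is the spatial divergence of a stationary field. This is exactly the input of \cite[Lemma 6]{Bella-Chiarini-Fehrman-19}, which yields \eqref{pri:2.20}. The identity \eqref{eq:5.1c} then follows from a Liouville argument for the $(d+1)$-harmonic function $\partial_i\sigma_{(d+1)ij}+\phi_j$, using the sublinearity of $\phi_j$. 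Here $M_j$ is an undetermined constant that can be taken equal to $\langle\phi_j\rangle$ only once $\phi_j$ is stationary.

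\textbf{Two smaller points in Step 3.}

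First, \eqref{pri:5.6a} controls $\dashint_{Q_1}|\overline\nabla\sigma|^2$, not a mollified average, so inserting an averaging kernel into \eqref{pri:5.6b} is not enough by itself. You need a local step, for example:
\begin{itemize}
\item split $\sigma$ on a unit ball into an energy-controlled part driven by the localized $q$ and a harmonic remainder whose gradient equals its own radial mollification; or
\item as in the paper, use local $L^p$ estimates for $\Delta_{d+1}$ plus the ergodic theorem.
\end{itemize}

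Second, for $k'=d+1$ the $L^q$ term cannot come from a backward problem with datum $\nabla\cdot g$, which only gives $\|\nabla w\|_{L^2}\lesssim\|g\|_{L^2}$. It appears because $\phi_j$ itself is tested against a datum of order zero in $g$. But \eqref{pri:5.3*} controls $\phi$ only through $\beta\int G\phi_\beta$, which degenerates as $\beta\to0$, so a cancellation estimate for $\int\phi_j G$ would still have to be proved. The paper avoids this. Using \eqref{eq:5.1b}, it rewrites the equation for $\sigma_{(d+1)ij}$ as the backward heat equation \eqref{pde:5.6} and tests it against the solution of $(\partial_t-\Delta)v=\nabla\cdot g$. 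Then only $q_{ij}$, $\overline\nabla\sigma_{kij}$ and $\nabla\phi_j$ are tested, and the loss sits on the deterministic $v$ through Lemma \ref{lemma:6.1-n}.

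The same parabolic kernel produces $\bar{\mu}_d^2$ in \eqref{pri:5.7c}, with $\partial_t\sigma_{(d+1)ij}=q_{ij}-\partial_k\sigma_{kij}$ handling the time part, since \eqref{pri:5.7b} only controls $\nabla\sigma_{(d+1)}$. The isotropic kernel you used for \eqref{pri:5.6c} would only give a bound growing like a power of $|z|$ when $d=2$.
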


\begin{proposition}[Annealed Calder\'on-Zygmund estimates of Meyer type]\label{P:5.1-ap}
Let $\beta\in[0,1]$.
Let $v_\beta$ and the square-integrable $f$ and $g$ be associated with the
following equations:
\begin{equation}\label{}
  \mathcal{A}_\beta(v_\beta) = \nabla\cdot f + \beta g
  \qquad\text{in}\quad\mathbb{R}^{d+1}
\end{equation}
(or $\mathcal{A}_{\beta}^*(v_\beta) = \nabla\cdot f + \beta g$ in $\mathbb{R}^{d+1}$),
where the massive operator $\mathcal{A}_\beta$ is given in
$\eqref{massive-corrector}$, and its adjoint operator $\mathcal{A}_\beta^*$ is given in $\eqref{pde:5.3-ap}$.
Then, for any ensemble $\langle\cdot\rangle$ of time-dependent coefficient fields satisfying
the ellipticity condition $\eqref{c:1}$, it holds that
\begin{equation}\label{pri:5.1-ap}
\int_{\mathbb{R}^{d+1}}
 \big\langle|(\sqrt{\beta} v_\beta,\nabla v_\beta)|^{p}\big\rangle^{\frac{2}{p}}
 \lesssim \int_{\mathbb{R}^{d+1}}
 \big\langle|(f,\sqrt{\beta} g)|^{p}\big\rangle^{\frac{2}{p}},
\end{equation}
whenever $|p-2|\ll 1$, where the multiplicative constant is independent of
$\beta$.
\end{proposition}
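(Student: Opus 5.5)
The plan is a perturbative argument around the constant-coefficient massive operator, combined with Meyers-type $L^p$ theory in the mixed norm $\ell^2_z(L^p_{\langle\cdot\rangle})$, where the exponent $p$ acts only on the probability variable. I will prove \eqref{pri:5.1-ap} for $\mathcal{A}_\beta$; the adjoint case is identical, since $\mathcal{A}_\beta^*$ is a backward massive parabolic operator with transposed coefficients and the same ellipticity.

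\emph{Step 1: the case $p=2$.} Testing the equation with $v_\beta$ over $\mathbb{R}^{d+1}$ gives $\int\partial_t v_\beta\,v_\beta=0$. Then \eqref{c:1} yields
\[
\int(\beta v_\beta^2+\mu|\nabla v_\beta|^2)\le\int(|f||\nabla v_\beta|+\beta|g||v_\beta|).
\]
Absorbing the right-hand side gives $\int|(\sqrt\beta v_\beta,\nabla v_\beta)|^2\lesssim\int|(f,\sqrt\beta g)|^2$ with a constant independent of $\beta$. Taking expectations and using Fubini, this is exactly \eqref{pri:5.1-ap} at $p=2$, since $\langle|X|^2\rangle^{2/2}=\langle|X|^2\rangle$.

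\emph{Step 2: freezing and a contraction.} Let $\Lambda:=(\mu+\mu^{-1})/2$ and write $a=\Lambda(\mathrm{Id}-b)$. By \eqref{c:1}, for symmetric $a$ the pointwise operator norm satisfies $\|b\|\le\kappa<1$ with $\kappa=\kappa(\mu)$. For general $a$ I would first rescale time, $\partial_t\mapsto\Lambda^{-1}\partial_t$, and use the standard Meyers reformulation via the coercive part, as in Gehring/Meyers arguments for nonsymmetric systems. The equation then reads
\[
\beta v+\partial_t v-\Lambda\Delta v=-\Lambda\nabla\cdot(b\nabla v)+\nabla\cdot f+\beta g.
\]
Define $T_0$, the solution operator of the constant-coefficient massive heat operator $\beta+\partial_t-\Lambda\Delta$, acting on data $(F,G)\mapsto(\sqrt\beta w,\nabla w)$. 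It is given by convolution with a deterministic kernel. In $L^2_z$, Plancherel gives the exact identity $\|\nabla T_0\nabla\cdot F\|\le\Lambda^{-1}\|F\|$, because the symbol $|\xi|^2/(\beta+i\tau+\Lambda|\xi|^2)$ has modulus at most $1/\Lambda$ uniformly in $\beta$. Classical parabolic singular-integral theory then gives $L^{p}_z$ bounds with constant $C_p\to\Lambda^{-1}$ as $p\to2$, uniformly in $\beta$.

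\emph{Step 3: lifting to mixed norms.} The kernel is deterministic and acts only in $z$, while $\langle\cdot\rangle$ is a probability average. Hence the operator extends to $L^2_z(L^p_{\langle\cdot\rangle})$, that is, to the Banach lattice $L^p(\Omega_{\rm prob})$-valued setting. For this I would use vector-valued Calderón–Zygmund theory: $L^p$ is a UMD space and the kernel is a standard parabolic CZ kernel, so Marcinkiewicz/Riesz–Thorin interpolation between $L^2_z(L^2)$, where the norm is $\Lambda^{-1}$, and $L^2_z(L^{p_1})$, where the norm is bounded, shows that the norm on $L^2_z(L^p)$ tends to $\Lambda^{-1}$ as $p\to2$. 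Alternatively one can interpolate complexly within the mixed-norm scale $L^2_z(L^{p_\theta})$, which is a legitimate complex interpolation family. Consequently $\nabla v\mapsto\nabla T_0\nabla\cdot(b\nabla v)$ has norm at most $\kappa\,(\text{norm on }L^2_z(L^p))\cdot\Lambda<1$ for $|p-2|\ll1$, since $b$ acts pointwise and $\langle|b\nabla v|^p\rangle\le\kappa^p\langle|\nabla v|^p\rangle$. A Neumann series, together with uniqueness of the energy solution from Step 1, yields the a priori estimate. The $\sqrt\beta v$ component is handled the same way, since $\sqrt\beta T_0\nabla\cdot$ and $\beta T_0$ are also multipliers of modulus at most $\lesssim1$ uniformly in $\beta$.

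\emph{Main obstacle.} The critical point is the quantitative continuity of the constant near $p=2$ in the mixed norm. The $p=2$ constant is exactly $\Lambda^{-1}$, which leaves room only of size $1-\kappa$. I would address this with the Riesz–Thorin/Stein interpolation just described, using a qualitative bound at some $p_1\neq2$ that comes from vector-valued CZ theory and is uniform in $\beta$ because the massive term only improves kernel decay.
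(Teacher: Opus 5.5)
Your argument is correct, but it takes a genuinely different route from the paper's.

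\textbf{The paper's route.} The paper never freezes coefficients. It applies Shen's real-variable lemma to the annealed function $V(z)=\langle|(\sqrt{\beta}v_\beta,\nabla v_\beta)(z)|^{\underline p}\rangle^{1/\underline p}$ with $\underline p<2$, together with the analogous quantity $F$ for $(f,\sqrt{\beta}g)$. On each cube $Q$ it splits $v_\beta=w+u$, where $w$ solves the whole-space problem with data $\mathbf{1}_Q(f,g)$ and $u$ solves the homogeneous equation in $Q$. The only inputs about the variable-coefficient operator are deterministic:
\begin{itemize}
\item the global Meyers bound \eqref{pri:5.10-ap} in $L^{\underline p}$, applied to $w$;
\item the local Gehring-type estimate \eqref{pri:5.9-ap}, applied to $u$.
\end{itemize}
This yields $\int V^2\lesssim\int F^2$, which is the case $p<2$. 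The case $p>2$ is then obtained by duality.

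\textbf{Your route and the trade-off.} You run Meyers' original perturbation argument directly in $L^2_z(L^p_{\langle\cdot\rangle})$. This gives several things at once:
\begin{itemize}
\item both $p<2$ and $p>2$ come out simultaneously, with no duality step;
\item neither Gehring's lemma nor Shen's lemma is needed;
\item the $\beta$-uniformity is transparent, since parabolic scaling shows the norm of $\nabla(\beta+\partial_t-\Lambda\Delta)^{-1}\nabla\cdot$ on any $L^q_z(L^r)$ is independent of $\beta>0$.
\end{itemize}
The price is that your argument is tied to $\mathbb{R}^{d+1}$ and a constant-coefficient reference operator. The paper's local scheme, by contrast, is exactly the template it reuses in Section~\ref{section:5} for boundary and weighted Calder\'on--Zygmund estimates.

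\textbf{Points to state precisely in Step~3.}
\begin{itemize}
\item Marcinkiewicz interpolation would lose the sharp endpoint constant $\Lambda^{-1}$. You need complex interpolation of the Bochner scale, $[L^2_z(L^2),L^2_z(L^{p_1})]_\theta=L^2_z(L^{p_\theta})$, which gives the bound $\Lambda^{-(1-\theta)}C_{p_1}^{\theta}$.
\item The bound on $L^2_z(L^{p_1})$ needs no UMD theory. The kernel is scalar, so the operator is bounded on $L^{p_1}_z(L^{p_1})$ by Fubini and extrapolates to $L^2_z(L^{p_1})$ by Benedek--Calder\'on--Panzone.
\item For nonsymmetric $a$, no time rescaling is needed. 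If $|a\xi|\le\mu^{-1}|\xi|$ (which the paper uses implicitly), then $a=\mu^{-3}(\mathrm{Id}-b)$ with $\|b\|\le(1-\mu^4)^{1/2}$.
\end{itemize}
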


\begin{lemma}[Existence of massive correctors]\label{lemma:2.1}
Assume that the ensemble $\langle\cdot\rangle$ is stationary and ergodic. Then,
for any $\beta\in(0,1]$ and $i\in\{1,\cdots,d\}$, there exists a unique stationary solution $\phi_{\beta}$ satisfying the following equations
$\mathcal{A}_\beta(\phi_{\beta,i}) =\nabla\cdot ae_i$ in $\mathbb{R}^{d+1}$
with the properties $\langle\phi_{\beta}\rangle =
\langle\nabla\phi_{\beta}\rangle= 0$, where the massive operator
$\mathcal{A}_\beta$ is defined in $\eqref{massive-corrector}$.
\end{lemma}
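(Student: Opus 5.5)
We construct $\phi_{\beta,i}$ on the probability space with Lax--Milgram, using the generators of the space-time shifts as derivatives, and then recover the physical field by stationary extension. Let $\Omega$ be the configuration space with the measure $\langle\cdot\rangle$ and the measurable action $\tau_z a:=a(\cdot+z)$, $z\in\mathbb{R}^{d+1}$. By stationarity this action preserves the measure, so $(T_zF)(a):=F(\tau_z a)$ is a strongly continuous unitary group on $L^2(\Omega)$. We write $D_k$ for its generator in the direction $e_k$ ($k=1,\dots,d$) and $D_t$ for the time direction. These are skew-adjoint, and they commute with each other. Set $H:=\{F\in L^2(\Omega): DF:=(D_1F,\dots,D_dF)\in L^2(\Omega)^d\}$ with norm $\langle \beta F^2+|DF|^2\rangle$, and put $A(a):=a(0)$, so that $a(z)=A(\tau_z a)$.

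The equation \eqref{massive-corrector} becomes $\beta\Phi+D_t\Phi-D\cdot A(D\Phi+e_i)=0$. Its weak form is to seek $\Phi\in H$ with
\begin{equation*}
B(\Phi,\Psi):=\big\langle \beta\Phi\Psi+D\Psi\cdot A D\Phi\big\rangle+\big\langle \Psi D_t\Phi\big\rangle=-\big\langle D\Psi\cdot Ae_i\big\rangle\quad\forall\Psi .
\end{equation*}
The time derivative is the only non-coercive part. Since $D_t$ is skew-adjoint, $\langle\Psi D_t\Psi\rangle=0$ whenever $\Psi\in\mathrm{dom}(D_t)$. For this reason we apply Lions' version of Lax--Milgram, which is the standard tool for parabolic problems, rather than plain Lax--Milgram. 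The test space is $V:=H\cap\mathrm{dom}(D_t)$, dense in $H$ (e.g. by convolving with the group). On $V$ we have $B(\Psi,\Psi)\ge\min\{\beta,\mu\}\|\Psi\|_H^2$ by \eqref{c:1}, and $\Psi\mapsto B(\Phi,\Psi)$ is $H$-continuous for each fixed $\Phi$ after rewriting $\langle\Psi D_t\Phi\rangle=-\langle\Phi D_t\Psi\rangle$. The right-hand side is bounded in $H^*$ with norm $\lesssim\mu^{-1}$. Lions' theorem then gives $\Phi\in H$ solving the equation in the sense $-\langle\Phi D_t\Psi\rangle+\dots$. The same structure appears in the alternative route: apply Lax--Milgram to the perturbed form with an additional $\delta\langle D_t\Phi D_t\Psi\rangle$, use the bound $\beta\langle\Phi^2\rangle+\mu\langle|D\Phi|^2\rangle\lesssim1$ uniformly in $\delta$, and pass $\delta\to0$ weakly.

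Next we define $\phi_{\beta,i}(a,z):=\Phi(\tau_z a)$. It is stationary by construction. Fubini and the correspondence between $D_k$ and the distributional derivatives $\partial_k$ of $z\mapsto\Phi(\tau_z a)$ show that, for $\langle\cdot\rangle$-a.e. $a$, $\phi_{\beta,i}(a,\cdot)\in L^2_{\rm loc}$ with $\nabla\phi_{\beta,i}\in L^2_{\rm loc}$. To see that it solves \eqref{massive-corrector} in the weak sense on $\mathbb{R}^{d+1}$, we test the probabilistic equation with $\Psi=\int\eta(z)\,T_{-z}G\,dz$ for $\eta\in C_c^\infty(\mathbb{R}^{d+1})$ and $G$ in a countable dense family. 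This yields the physical weak formulation tested against $\eta$ after averaging, for a.e. $a$; a countable density argument then gives the equation pathwise.

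For the mean properties, $\langle\nabla\phi_{\beta}\rangle=\langle D\Phi\rangle=0$ because generators of unitary groups have range orthogonal to the constants. Testing with $\Psi\equiv1$ (which lies in $V$ with $D\Psi=D_t\Psi=0$) gives $\beta\langle\Phi\rangle=0$, so $\langle\phi_\beta\rangle=0$.

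For uniqueness, take the difference $w$ of two stationary solutions in the class $\langle w^2+|\nabla w|^2\rangle<\infty$. It satisfies the homogeneous equation. If $w$ lies in $\mathrm{dom}(D_t)$, testing with $w$ itself gives $\beta\langle w^2\rangle+\mu\langle|Dw|^2\rangle\le0$, hence $w=0$. In general $w$ need not lie in $\mathrm{dom}(D_t)$, and we handle this by duality: solve the adjoint problem $\beta\Psi-D_t\Psi-D\cdot A^{*}D\Psi=h$ with the same construction, and test the two equations against each other. This requires regularizing via Steklov averages in time, $\Psi_s:=\frac1s\int_0^sT_{re_t}\Psi\,dr$, which lie in $\mathrm{dom}(D_t)$ and commute with $D$. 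We then let $s\to0$ and obtain $\langle wh\rangle=0$ for all $h$, so $w=0$.

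The main obstacle is precisely this non-coercive time derivative. Once we work with Lions' theorem and justify the skew-symmetry identity through Steklov averaging, both existence and uniqueness follow from the energy identity. Ergodicity is not really needed for this lemma beyond fixing the setting; the mass term $\beta>0$ provides the coercivity.
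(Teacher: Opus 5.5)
Your proposal is correct, but its existence argument differs from the paper's. The paper uses elliptic regularization. It adds $-\gamma D_0^2$ to obtain a form that is coercive on $\mathcal{H}^1$, whose norm includes $D_0$, and applies plain Lax--Milgram. It then proves bounds on $\langle\phi_{\beta,i,\gamma}^2\rangle$, $\langle|D\phi_{\beta,i,\gamma}|^2\rangle$ and $\|D_0\phi_{\beta,i,\gamma}\|_{\mathcal{H}^{-1}}$ that are uniform in $\gamma$, and passes to a weak limit as $\gamma\to0$; this is exactly your ``alternative route''.

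Your direct use of Lions' projection theorem on $V\subset H$ has three advantages: it is shorter, it needs no compactness, and it works from the start in the natural space $H$ with $D_t$ absent from the norm. The paper's scheme has two advantages of its own: the energy identity holds exactly at the approximate level, and it is the same weak-compactness argument reused for $\beta\to0$ in the proof of Theorem~\ref{thm:1.0}, where the $L^2$-coercivity supplied by $\beta$ degenerates.

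Beyond existence, your treatment is more careful than the paper's. The paper simply asserts that $\bar\phi_{\beta,i}=0$ can be verified, although its limit is only known to satisfy $D_0\phi_{\beta,i}\in\mathcal{H}^{-1}$, so testing the difference with itself needs justification; your duality argument supplies that justification. Likewise, your derivation of the pathwise physical weak formulation and your proof of $\langle\phi_\beta\rangle=0$ by testing with $\Psi\equiv1$ are cleaner than what the paper writes.

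Two small fixes are needed.
\begin{enumerate}
\item Lions' theorem requires continuity of $\Phi\mapsto B(\Phi,\Psi)$ on $H$ for each fixed test function $\Psi\in V$, which is what your rewriting $\langle\Psi D_t\Phi\rangle=-\langle\Phi D_t\Psi\rangle$ provides. It does not require continuity of $\Psi\mapsto B(\Phi,\Psi)$ for fixed $\Phi$, and that would in fact fail when $\Phi\notin\mathrm{dom}(D_t)$.
\item In the uniqueness step, the Steklov averages should run in opposite directions. Test the equation for $w$ with $\frac1s\int_0^sT_{re_t}\Psi\,dr$, and the adjoint equation with $\frac1s\int_0^sT_{-re_t}w\,dr$. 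Then both time terms equal $\frac1s\langle w\Psi\rangle-\frac1s\langle w\,T_{se_t}\Psi\rangle$ and cancel identically for every $s$. What remains converges by strong continuity on $H$ of the group, which commutes with $D$, and this gives $\langle hw\rangle=0$. With averages in the same direction, the time terms leave a symmetric second difference quotient whose vanishing is not evident.
\end{enumerate}
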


\begin{proof}
The basic idea is the so-called ``elliptie regularization method,''
which was originally developed by J.-L. Lions and E. Magenes \cite{Lions-Magenes-72}.
Since we need to ``lift'' this equation to the probability space to solve it, we mainly adopt the terminology and notations in \cite{Bella-Chiarini-Fehrman-19}.

Let $\{D_i\}_{i=0}^{d}$ be the horizontal derivatives, where $D_0$ represents the horizontal time derivative. Let $\mathcal{D}(D_i)$ denote
the domain of $D_i$. Then, one can introduce the Hilbert space, denoted by
$\mathcal{H}^1$, with the inner product
\begin{equation*}
\langle f,g\rangle:= \langle fg\rangle
+ \langle D_0fD_0g\rangle + \langle Df\cdot Dg\rangle
\qquad\forall f,g\in \mathcal{H}^1,
\end{equation*}
where $D:=(D_1,\cdots,D_d)$ denotes the horizontal spatial gradient.
Then, the dual space of $\mathcal{H}^1$ is denoted by $\mathcal{H}^{-1}$.
By virtue of the above notions, for arbitrarily given
$\beta\in(0,1]$ and $i\in\{1,\cdots,d\}$,
it suffices to find a unique solution
$\phi_{\beta,i}\in \mathcal{H}^1$ such that
\begin{equation}\label{massive-corrector*}
 \beta\langle\phi_{\beta,i}h\rangle
 +\langle D_0\phi_{\beta,i}h\rangle + \big\langle
  Dh\cdot a(D\phi_{\beta,i}+e_i)\big\rangle = 0
\end{equation}
holds for any $h\in\mathcal{H}^1$.

Now, for any $\gamma\in(0,1]$, consider the approximating correctors,
denoted by $\phi_{\beta,i,\gamma}$, satisfying the following equations:
\begin{equation}\label{ap-corrector}
\beta\phi_{\beta,i,\gamma}-\gamma D_0^2\phi_{\beta,i,\gamma}
+ D_0\phi_{\beta,i,\gamma} - D\cdot a(D\phi_{\beta,i,\gamma}+e_i) = 0.
\end{equation}
Let $\mathcal{B}_{\beta,\gamma}$ be the bilinear form, given by
\begin{equation*}
 \mathcal{B}_{\beta,\gamma}(f,g)
 :=\beta\langle fg\rangle + \gamma\langle D_0fD_0g\rangle
 + \langle D_0f g\rangle + \big\langle D g\cdot a Df\big\rangle
 \qquad \forall f,g\in \mathcal{H}^1.
\end{equation*}
It is not hard to verify that
\begin{subequations}
\begin{align}
& |\mathcal{B}_{\beta,\gamma}(f,g)| \lesssim \|f\|_{\mathcal{H}^1} \|g\|_{\mathcal{H}^1}; \label{f:5.45}\\
& \mathcal{B}_{\beta,\gamma}(f,f)
\geq \beta\langle f^2 \rangle + \gamma\langle (D_0f)^2 \rangle
+\mu\langle|Df|^2\rangle \qquad \forall f,g\in\mathcal{H}^1,
\label{f:5.46}
\end{align}
\end{subequations}
where we employ the fact that $\langle D_0f f\rangle = 0$. By noting that $D\cdot ae_i\in \mathcal{H}^{-1}$, it follows from Lax-Milgram theorem that
there exists a unique solution $\phi_{\beta,i,\gamma}\in\mathcal{H}^1$ such that the equation $\eqref{ap-corrector}$ holds for any fixed $\beta,\gamma\in(0,1]$. Moreover, one can obtain the following estimates:
\begin{subequations}
\begin{align}
& \langle\phi_{\beta,i,\gamma}^2 \rangle \lesssim_{\mu,d} 1/\beta;
\label{f:5.47a}\\
& \gamma\langle (D_0\phi_{\beta,i,\gamma})^2 \rangle \lesssim_{\mu,d} 1;
\label{f:5.47b}\\
& \langle |D\phi_{\beta,i,\gamma}|^2 \rangle \lesssim_{\mu,d} 1.
\label{f:5.47c}
\end{align}
\end{subequations}
Moreover, for any $g\in\mathcal{H}^1$, it can be derived that
\begin{equation*}
\begin{aligned}
\langle D_0\phi_{\beta,i,\gamma}g\rangle
&= -\langle Dg\cdot a(D\phi_{\beta,i,\gamma}+e_i)\rangle
-\gamma\langle D_0\phi_{\beta,i,\gamma} D_0g\rangle -\beta\langle\phi_{\beta,i,\gamma}g\rangle\\
&\lesssim_{\mu,d}^{\eqref{f:5.47a},\eqref{f:5.47b},\eqref{f:5.47c}}
\big(1+\sqrt{\gamma}+\sqrt{\beta}\big)\|g\|_{\mathcal{H}^1},
\end{aligned}
\end{equation*}
and this implies that
\begin{equation}\label{f:5.47d}
  \|D_0\phi_{\beta,j,\gamma}\|_{\mathcal{H}^{-1}}\lesssim_{\mu,d}1.
\end{equation}

For any fixed $\beta\in(0,1]$, it follows from the estimates
$\eqref{f:5.47a}$, $\eqref{f:5.47c}$, and $\eqref{f:5.47d}$
that
\begin{equation}\label{f:5.48}
\begin{aligned}
&\phi_{\beta,i,\gamma} \rightharpoonup \phi_{\beta,i} &\quad&\text{weakly~in}\quad L^2(\Sigma); \\
&D\phi_{\beta,i,\gamma} \rightharpoonup \Phi_{\beta,i}
&\quad&\text{weakly~in}\quad L^2(\Sigma;\mathbb{R}^d);\\
&D_0\phi_{\beta,i,\gamma} \rightharpoonup \Xi_{\beta,i}
&\quad&\text{weakly~in}\quad \mathcal{H}^{-1}.
\end{aligned}
\end{equation}
This implies that
\begin{equation*}
\mathcal{B}_{\beta,\gamma}(\phi_{\beta,i,\gamma},g)
\to \beta\langle\phi_{\beta,i}g\rangle
 + \langle \Xi_{\beta,i} g\rangle + \big\langle D g\cdot a \Phi_{\beta,i}\big\rangle,
 \qquad \text{as} \quad \gamma\to 0,
\end{equation*}
holds for any $g\in\mathcal{H}^1$. In view of $\mathcal{B}_{\beta,\gamma}(\phi_{\beta,i,\gamma},g) = D\cdot a e_i$, it
can be concluded that
\begin{equation}\label{pde:5.10}
 \beta\phi_{\beta,i} + \Xi_{\beta,i} - D\cdot a (\Phi_{\beta,i}+e_i) = 0.
\end{equation}
On the other hand, the convergence in
$\eqref{f:5.48}$ also leads to
the equalities $\Phi_{\beta,i} = D\phi_{\beta,i}$ and $\Xi_{\beta,i} = D_0\phi_{\beta,i}$. This, together with $\eqref{pde:5.10}$,
leads to the desired equation $\eqref{massive-corrector*}$. Due the
the linearity of $\eqref{massive-corrector*}$, it may be assumed that
$\phi_{\beta,i}'$ satisfies the same equation, and then by setting
$\bar{\phi}_{\beta,i}:=\phi_{\beta,i}'-\phi_{\beta,i}$, it holds that
\begin{equation*}
 \beta\bar{\phi}_{\beta,i} + D_0\bar{\phi}_{\beta,i}
 - D\cdot a D\bar{\phi}_{\beta,i} = 0.
\end{equation*}
As a result, one can verify that $\bar{\phi}_{\beta,i} = 0$.
The uniqueness of $\phi_{\beta,i}$ to the equation $\eqref{massive-corrector*}$ is proved. Taking expectation on
the both sides of $\eqref{pde:5.10}$, it is not hard to see that
$\langle\phi_{\beta}\rangle = 0$ since $(\Phi_\beta,\Xi_\beta)$ is the curl-free
vector which consequently leads to $\langle\Phi_\beta\rangle =
\langle \Xi_\beta\rangle = 0$ (we refer the reader to \cite[Lemma 3]{Bella-Chiarini-Fehrman-19}
for the details). We have completed the whole proof.
\end{proof}

\begin{corollary}\label{corollary:2.1}
Assume the ensemble $\langle\cdot\rangle$
satisfies the same conditions as in Theorem $\ref{thm:1.0}$.
Let $d\geq 2$, $0<q-2\ll 1$, and $1\leq p<\infty$. Then, it holds that
\begin{subequations}
\begin{align}
&\bigg\langle\Big(\dashint_{Q_1}\big|(\phi,\sigma_{k},\nabla \phi,\nabla\sigma_{k},\nabla\sigma_{d+1})\big|^{q}\Big)^{\frac{p}{q}}
\bigg\rangle^{1/p}\lesssim_{\mu,\lambda_1,d,p,q} 1; \label{pri:2.18} \\
&\bigg\langle\Big(\dashint_{Q_1(z)}|\sigma_{d+1}|^{q}
\Big)^{\frac{p}{q}}\bigg\rangle^{1/p}\lesssim_{\mu,\lambda_1,d,p,q} \bar{\mu}_d(z)
\qquad \forall z\in\mathbb{R}^{d+1}.\label{pri:2.19}
\end{align}
\end{subequations}
\end{corollary}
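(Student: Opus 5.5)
We upgrade the $L^2$-type bounds of Theorem \ref{thm:1.0} (equivalently Propositions \ref{P:5.1*} and \ref{P:5.3}) to slightly higher integrability $q>2$ by local parabolic regularity, and we control the zeroth-order quantities by fluctuation bounds plus a normalization of the additive constants. Fix $0<q-2\ll1$ and $p\ge1$, and by stationarity center at $z=0$ for every stationary quantity. For the gradients, note that $\phi_j$ solves $\partial_t\phi_j-\nabla\cdot a\nabla\phi_j=\nabla\cdot ae_j$. The parabolic Meyers (reverse Hölder / Gehring) estimate, valid for $q-2$ small depending only on $\mu,d$, gives
\[
\Big(\dashint_{Q_1}|\nabla\phi|^q\Big)^{1/q}\lesssim \Big(\dashint_{Q_2}|\nabla\phi|^2\Big)^{1/2}+\frac12\Big(\dashint_{Q_2}|\phi-c|^2\Big)^{1/2}+1,
\]
where $c$ is any constant. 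The fields $\sigma_{k'ij}$ solve the constant-coefficient Poisson equation $\Delta_{d+1}\sigma_{k'ij}=\partial_{k'}q_{ij}-\partial_iq_{k'j}$. The Calderón–Zygmund estimate for $\Delta_{d+1}$ then bounds $\|\overline\nabla\sigma\|_{L^q(Q_1)}$ by $\|q_{\cdot j}\|_{L^q(Q_2)}$ and $\|\overline\nabla\sigma\|_{L^2(Q_2)}$. Since $q_{ij}$ involves $\nabla\phi$ and $q_{(d+1)j}=\phi_j-\langle\phi_j\rangle$, these are controlled once $\phi$ is. We use a Poincaré inequality to replace $\phi-c$ by $\nabla\phi$ together with the average of $\phi$. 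Taking $\langle(\cdot)^{p/q}\rangle^{1/p}$ and applying \eqref{pri:5.2} and \eqref{pri:5.6a} on a finite cover of $Q_2$ by unit cubes, we obtain the gradient part of \eqref{pri:2.18}.

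The zeroth-order terms need a normalization, because $\phi$ and $\sigma_k$ are determined only up to constants. We first check whether the paper's convention is $\langle\phi_j\rangle=0$, or more naturally $\dashint_{Q_1(0)}\phi_j=0$ and similarly for $\sigma$. In either case we write
\[
\phi=\Big(\phi-\dashint_{Q_1}\phi\Big)+\dashint_{Q_1}\phi .
\]
The first term is bounded in $L^q(Q_1)$ by the Sobolev–Poincaré inequality for parabolic functions, using $\nabla\phi\in L^q$ and the equation for $\partial_t\phi$ in $L^2(H^{-1})$. Alternatively, we use a Lions–Aubin/Nash-type local bound such as $\sup_{Q_1}|\phi-\dashint\phi|\lesssim$ (energy). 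For $\sigma_k$, the full $(d+1)$-gradient is available, so ordinary Poincaré in $\mathbb{R}^{d+1}$ applies.

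The mean term is the delicate one. If $\phi$ is normalized by $\langle\phi\rangle=0$ and is stationary, then $\dashint_{Q_1}\phi$ has the same law at every point. Its moments are bounded by passing from the massive corrector: the spectral gap \eqref{c:3} applied to $F=\dashint_{Q_1}\phi_\beta$, together with the sensitivity bound \eqref{pri:5.3*} with $g$ and $G$ given by the normalized indicator of $Q_1$, gives variance bounds uniform in $\beta$. Since $\langle\phi_\beta\rangle=0$, this gives $\langle|\dashint_{Q_1}\phi_\beta|^{2p}\rangle\lesssim1$ uniformly in $\beta$, which passes to the limit $\beta\to0$. The same argument applies to $\sigma_k$ via \eqref{pri:5.6b}, using that $\sigma_k$ is obtained from the stationary gradient $\overline\nabla\sigma_k$ with $g=\nabla\Delta_{d+1}^{-1}$ applied to the indicator. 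We expect the main obstacle to be that this choice of $g$ is square integrable only when $d+1\ge3$, which holds since $d\ge2$.

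For $\sigma_{d+1}$, which is not stationary, we split $\sigma_{d+1}$ on $Q_1(z)$ into three pieces:
\[
\Big(\sigma_{d+1}-\dashint_{Q_1(z)}\sigma_{d+1}\Big)+\Big(\dashint_{Q_1(z)}\sigma_{d+1}-\dashint_{Q_1(0)}\sigma_{d+1}\Big)+\dashint_{Q_1(0)}\sigma_{d+1}.
\]
We control the first piece by Poincaré using $\overline\nabla\sigma_{d+1}$, where $\nabla\sigma_{d+1}$ is covered by the previous step and $\partial_t\sigma_{d+1}$ comes from \eqref{eq:5.1b} and \eqref{pri:5.6a}. The second piece is bounded by \eqref{pri:5.4b} with $L^{2p}$ size $\bar\mu_d(z)$. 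The third piece vanishes under the normalization $\dashint_{Q_1(0)}\sigma_{d+1}=0$, or else is handled as in the stationary case at $z=0$. Summing the three pieces gives \eqref{pri:2.19}. The main obstacle is the normalization and mean bound in the previous paragraph; the regularity upgrades are routine.
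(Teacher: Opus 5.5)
Your architecture matches the paper's proof. You use Meyers \eqref{pri:5.9-ap} for $\nabla\phi$ and the local Calder\'on--Zygmund estimate for \eqref{eq:5.1a} for $\overline\nabla\sigma$. The gradients then follow from a finite cover, stationarity, and \eqref{pri:5.2}, \eqref{pri:5.6a}. The zeroth-order terms come from Poincar\'e plus control of the cube averages, and $\sigma_{d+1}$ is handled by the oscillation/increment split with \eqref{pri:5.4b}. You are more careful than the paper about $\partial_t\sigma_{d+1}$ via \eqref{eq:5.1b} and about normalizing at $Q_1(0)$.

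The step that fails as written is the one you call delicate: the $\beta$-uniform bound on $\dashint_{Q_1}\phi_\beta$. For the functional in \eqref{pri:5.3*} to equal $\dashint_{Q_1}\phi_\beta$ you must take $g=0$ and $\beta G=|Q_1|^{-1}\mathbf 1_{Q_1}$. The right-hand side is then $\int\beta G^2=(\beta|Q_1|)^{-1}$, which blows up as $\beta\to0$. Taking $g$ to be the indicator instead produces $\dashint_{Q_1}\nabla\phi_\beta$, not the average. Applying the spectral gap to $F=\dashint_{Q_1}\phi_\beta$ directly and repeating Step 3 of the proof of Proposition \ref{P:5.1*} has the same defect, because \eqref{pri:5.1-ap} then charges $\int|\sqrt\beta G|^2\sim\beta^{-1}$.

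The missing device is the constant-coefficient auxiliary equation of Step 6 of that proof, with the second source term dropped. This is how the proof of Theorem \ref{thm:1.0} obtains $\langle|\dashint_{Q_1(z)}\phi_\beta|^{2p}\rangle^{1/p}\lesssim1$. For $\beta v+\partial_tv-\Delta v=|Q_1|^{-1}\mathbf 1_{Q_1(z)}$ the source lies in $L^1_tL^2_x$, so $\sup_t\|v(\cdot,t)\|_{L^2}+\|(\sqrt\beta v,\nabla v)\|_{L^2}\lesssim1$ uniformly in $\beta$. Testing the massive corrector equation against $v$ gives
\[
\dashint_{Q_1(z)}\phi_\beta=\int\big(\nabla\phi_\beta\cdot\nabla v+2\beta v\phi_\beta\big)+\int\nabla v\cdot a(\nabla\phi_\beta+e),
\]
to which \eqref{pri:5.3*} (with $g=\nabla v$, $G=2v$) and \eqref{f:5.49} apply. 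With this replacement the rest of your argument goes through, including the passage $\beta\to0$. Alternatively you can simply cite that bound, as the paper's proof of the corollary tacitly does where it invokes \eqref{pri:5.4}.

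Your treatment of the mean of $\sigma_k$ via $g=\overline\nabla\Delta_{d+1}^{-1}(|Q_1|^{-1}\mathbf 1_{Q_1})\in L^2(\mathbb R^{d+1})$ is sound. It is in effect \eqref{f:5.5-n} with $h=|Q_1|^{-1}\mathbf 1_{Q_1}$. It controls $\dashint_{Q_1}\sigma_k-\langle\sigma_k\rangle$, and so relies on the stationarity of $\sigma_k$ asserted in Theorem \ref{thm:1.0}.
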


\begin{proof}
Under the condition that no regularity assumptions are made on the coefficients, the energy estimate can still be lifted to a Meyer-type estimate. To see this, the verification begins with showing that
$\eqref{pri:2.18}$ holds for
$\nabla\phi$, namely
\begin{equation}\label{f:2.20}
\Big(\dashint_{Q_1}|\nabla \phi|^{q}\Big)^{\frac{1}{q}}
\lesssim^{\eqref{pri:5.9-ap}} \Big(\dashint_{Q_2}|\nabla \phi|^{2}\Big)^{\frac{1}{2}} + 1
\lesssim
\sum_{i=1}^N\Big(\dashint_{Q_1(z_i)}|\nabla \phi|^{2}\Big)^{\frac{1}{2}} + 1,
\end{equation}
where $N$ is finite such that $Q_2\subset\cup_{i=1}^NQ_1(z_i)$. Therefore,
we have
\begin{equation}\label{f:2.21}
\bigg\langle \Big(\dashint_{Q_1}|\nabla \phi|^{q}\Big)^{\frac{p}{q}}
\bigg\rangle^{\frac{1}{p}}
\lesssim^{\eqref{f:2.20}}
\sum_{i=1}^N
\bigg\langle\Big(\dashint_{Q_1(z_i)}|\nabla \phi|^{2}\Big)^{\frac{p}{2}}
\bigg\rangle^{\frac{1}{p}} + 1
\lesssim^{\eqref{pri:5.2}} 1.
\end{equation}
Then, one can verify $\eqref{pri:2.18}$ for
$\nabla\sigma_k$. By virtue of the equation $\eqref{eq:5.1a}$ (taking $k'=k$ therein), it holds that
\begin{equation*}
\Big(\dashint_{Q_1}|\nabla \sigma_k|^{q}\Big)^{\frac{1}{q}}
\lesssim \Big(\dashint_{Q_2}|\nabla \sigma_k|^{2}\Big)^{\frac{1}{2}} +
\Big(\dashint_{Q_2}|\nabla\phi|^{q}\Big)^{\frac{1}{q}} + 1
\lesssim^{\eqref{f:2.20}}
\sum_{i=1}^{N'}\Big(\dashint_{Q_1(z_i)}|(\nabla\sigma_k,\nabla \phi)|^{2}\Big)^{\frac{1}{2}} + 1,
\end{equation*}
where $N'$ is finite such that $Q_4\subset\cup_{i=1}^NQ_1(z_i)$.
By taking $\langle(\cdot)^p\rangle^{1/p}$ on the both sides above,
we have
\begin{equation}\label{f:2.22}
\bigg\langle \Big(\dashint_{Q_1}|\nabla \sigma_k|^{q}\Big)^{\frac{p}{q}}
\bigg\rangle^{\frac{1}{p}}
\lesssim
\sum_{i=1}^{N'}
\bigg\langle\Big(\dashint_{Q_1(z_i)}|(\nabla \phi,\nabla\sigma_k)|^{2}\Big)^{\frac{p}{2}}
\bigg\rangle^{\frac{1}{p}} + 1
\lesssim^{\eqref{pri:5.2}} 1,
\end{equation}
where the stationary property of $(\nabla\phi,\nabla\sigma_k)$ is also employed in the last inequality. Moreover, we can verify $\eqref{pri:2.18}$ for
$(\phi,\sigma_k)$. It follows from the triangle inequality and Poincar\'e's inequality that
\begin{equation}\label{f:2.23}
\begin{aligned}
\bigg\langle\Big(\dashint_{Q_1(z)}|(\phi,
&\sigma_k)|^q\Big)^{\frac{p}{q}}
\bigg\rangle^{\frac{1}{p}}
\leq \bigg\langle\Big(\dashint_{Q_1(z)}\big|(\phi,\sigma_k)
-\dashint_{Q_1(z)}(\phi,\sigma_k)\big|^q\Big)^{\frac{p}{q}}
\bigg\rangle^{\frac{1}{p}}
+\Big\langle\Big|\dashint_{Q_1(z)}(\phi,\sigma_k)\Big|^{p}
\Big\rangle^{\frac{1}{p}}\\
&\lesssim
\bigg\langle\Big(\dashint_{Q_1(z)}
\big|(\nabla\phi,\nabla\sigma_k)\big|^q\Big)^{\frac{p}{q}}
\bigg\rangle^{\frac{1}{p}}
+\Big\langle\big|\dashint_{Q_1(z)}(\phi,\sigma_k)\big|^{p}
\Big\rangle^{\frac{1}{p}}
\lesssim^{\eqref{f:2.21},\eqref{f:2.22},\eqref{pri:5.4}}1.
\end{aligned}
\end{equation}
Now, attention is turned to showing the estimate $\eqref{pri:2.18}$ for
$\nabla\sigma_{d+1}$. By using the equation $\eqref{eq:5.1a}$ again (taking $k'=d+1$ therein), we can derive that
\begin{equation}\label{f:2.24}
\begin{aligned}
&\bigg\langle\Big(\dashint_{Q_1}|\nabla \sigma_{d+1}|^{q}\Big)^{\frac{p}{q}}
\bigg\rangle^{\frac{1}{p}}
\lesssim
\bigg\langle\Big(\dashint_{Q_2}|\nabla \sigma_{d+1}|^{2}\Big)^{\frac{p}{2}}\bigg\rangle^{\frac{1}{p}} +
\bigg\langle\Big(\dashint_{Q_2}|(\nabla\phi,\phi)|^{q}\Big)^{\frac{p}{q}}
\bigg\rangle^{\frac{1}{p}}\\
&\lesssim
\sum_{i=1}^{N'}\bigg\langle
\Big(\dashint_{Q_1(z_i)}|\nabla\sigma_{d+1}|^{2}\Big)^{\frac{p}{2}}
\bigg\rangle^{\frac{1}{p}}
+\sum_{i=1}^{N'}\bigg\langle\Big(\dashint_{Q_1(z_i)}
\big|(\nabla\phi,\phi)\big|^{q}\Big)^{\frac{p}{q}}\bigg\rangle^{\frac{1}{p}}
\lesssim^{\eqref{pri:5.2},\eqref{f:2.21},\eqref{f:2.23}} 1.
\end{aligned}
\end{equation}
This, together with $\eqref{f:2.21}$, $\eqref{f:2.22}$, and $\eqref{f:2.23}$,
leads to the desired estimate $\eqref{pri:2.18}$. By using the same argument as that given for $\eqref{f:2.23}$, it is obtained that
\begin{equation*}
\begin{aligned}
\bigg\langle\Big(\dashint_{Q_1(z)}|
&\sigma_{d+1}|^q\Big)^{\frac{p}{q}}
\bigg\rangle^{\frac{1}{p}}
\leq \bigg\langle\Big(\dashint_{Q_1(z)}\big|\sigma_{d+1}
-\dashint_{Q_1(z)}\sigma_{d+1}\big|^q\Big)^{\frac{p}{q}}
\bigg\rangle^{\frac{1}{p}}
+\Big\langle\big|\dashint_{Q_1(z)}\sigma_{d+1}\big|^{p}
\Big\rangle^{\frac{1}{p}}\\
&\lesssim
\bigg\langle\Big(\dashint_{Q_1(z)}
\big|\nabla\sigma_{d+1}\big|^q\Big)^{\frac{p}{q}}
\bigg\rangle^{\frac{1}{p}}
+\Big\langle\big|\dashint_{Q_1(z)}\sigma_{d+1}\big|^{p}
\Big\rangle^{\frac{1}{p}}
\lesssim^{\eqref{f:2.24},\eqref{pri:5.4b}}\bar{\mu}_d(|z|)
\quad \forall z\in\mathbb{R}^{d+1}.
\end{aligned}
\end{equation*}
This yields the stated estimate $\eqref{pri:2.19}$, and the proof
is thereby completed.
\end{proof}

\subsection{Proof of Proposition $\ref{P:5.1*}$}
\noindent
The basic idea and key steps of the proof are explained in its first step    and are therefore not discussed separately. The novelty of the proof is also primarily reflected in the technical aspects of the detailed demonstration. The entire proof is completed in seven steps.

\textbf{Step 1.} Reduction and outline the proof. The key idea is a buckling argument (see e.g., \cite{Gloria-Neukamm-Otto-20,Gloria-Neukamm-Otto-21,Josien-Otto-22}
for the prototype in the elliptic counterparts), and therefore one can derive the stated estimates
$\eqref{pri:5.2*}$ and $\eqref{pri:5.3*}$ simultaneously. Let $\beta\in(0,1]$ be arbitrary.
For any deterministic fields $g\in L^2(\mathbb{R}^{d+1};\mathbb{R}^d)$
and $\sqrt{\beta} G\in L^2(\mathbb{R}^{d+1})$, we start from defining
the associated functional as follows:
\begin{equation}\label{f:5.21-ap}
  F_{\beta,j,g,G}(a):
  = \int_{\mathbb{R}^{d+1}}
  \big(\nabla\phi_{\beta,j}(a) \cdot g
  +\beta\phi_{\beta,j}G\big).
\end{equation}
It is known from \cite[Lemma 1]{Bella-Chiarini-Fehrman-19} and Lemma $\ref{lemma:2.1}$ that
$\langle F_{\beta,j,g} \rangle =0$. In view of the spectral gap condition of
$L^p$-version\footnote{How to derive
the corresponding $L^p$-version from $\eqref{c:3}$,
we refer the reader to \cite[Lemma 3.1]{Josien-Otto-22} for the details.}, it holds that
\begin{equation}\label{f:5.20-ap}
 \big\langle F_{\beta,j,g,G}^{2p}\big\rangle^{1/p}
 \lesssim_{\lambda_1,p}
 \bigg\langle\bigg(\int_{\mathbb{R}^{d+1}}dz\Big(\dashint_{Q_1(z)}
 \frac{\partial F_{\beta,j,g,G}}{\partial a}\Big)^2\bigg)^p\bigg\rangle^{1/p}.
\end{equation}
Thus, to obtain the desired estimate $\eqref{pri:5.3*}$, it suffices to show
\begin{equation}\label{f:5.19-ap}
 \frac{\partial F_{\beta,j,g,G}(a)}{\partial a(x,t)} = -\nabla v\otimes(\nabla\phi_{\beta,j}+e_j)(x,t),
\end{equation}
where $v$, as an auxiliary function, satisfies the following equation:
\begin{equation}\label{pde:5.3-ap}
 \mathcal{A}_{\beta}^*(v):=\beta v
 -\partial_t v - \nabla\cdot a^*\nabla v =
 \beta G -\nabla\cdot g
 \quad\text{in}\quad\mathbb{R}^{d+1}.
\end{equation}

For ease of later reference, let
\begin{equation}\label{f:5.14-ap}
   u_{\beta,j} = \phi_{\beta,j} + x_j.
\end{equation}

Thus, by plugging the estimate $\eqref{f:5.19-ap}$ back into $\eqref{f:5.20-ap}$,
one of two crucial ingredients for $\eqref{pri:5.3*}$ can be derived, i.e.,
\begin{equation}\label{f:5.15-ap}
\bigg\langle
 \Big|\int_{\mathbb{R}^{d+1}}
 \big(\nabla\phi_{\beta,j} \cdot g + \beta G\phi_{\beta,j}\big)
 \Big|^{2p}\bigg\rangle^{\frac{1}{p}}
\lesssim
\Big\langle\Big(\dashint_{Q_1}|\nabla u_{\beta,j}|^{2}\Big)^{p}\Big\rangle^{\frac{1}{p}}
\int_{\mathbb{R}^{d+1}} |(g,\sqrt{\beta} G)|^2.
\end{equation}

Moreover, the other crucial ingredient is
the ``reverse H\"older inequality,'' given by
\begin{equation}\label{f:5.18-ap}
\begin{aligned}
\Big\langle\Big(\dashint_{Q_1}|\nabla u_{\beta,j}|^{2}\Big)^{p}\Big\rangle^{\frac{1}{p}}
&\lesssim R^{(d+2)(1-1/p)}\bigg\langle \Big(\dashint_{-(4R)^2}^{0}
dt|\nabla (\phi_{\beta,j})_{\vartheta R}(0,t)|^{2}\Big)^p\bigg\rangle^{1/p}
+ R^{(d+2)(1-1/p)}.
\end{aligned}
\end{equation}

To carry out the buckling argument, we claim a technical result:
\begin{equation}\label{f:5.17-ap}
\bigg\langle \Big(\dashint_{-(4R)^2}^{0}
dt|\nabla (\phi_{\beta,j})_{\vartheta R}(0,t)|^{2}\Big)^p\bigg\rangle^{1/p}
\lesssim R^{-d-2}
\Big\langle\Big(\dashint_{Q_1}|\nabla u_{\beta,j}|^{2}\Big)^{p}\Big\rangle^{\frac{1}{p}},
\end{equation}
where the stated estimate $\eqref{f:5.15-ap}$ has already been employed for this result (see Step 5). Now, inserting the estimate $\eqref{f:5.17-ap}$ back into $\eqref{f:5.18-ap}$, one can immediately acquire that
\begin{equation*}
\Big\langle\Big(\dashint_{Q_1}|\nabla u_{\beta,j}|^{2}\Big)^{p}\Big\rangle^{\frac{1}{p}}
\lesssim R^{-(d+2)/p}\Big\langle\Big(\dashint_{Q_1}|\nabla u_{\beta,j}|^{2}\Big)^{p}\Big\rangle^{\frac{1}{p}} + R^{(d+2)(1-1/p)},
\end{equation*}
and by choosing $R>1$ to be sufficiently large, we can obtain the desired estimate $\eqref{pri:5.2*}$. Furthermore, this together with $\eqref{f:5.15-ap}$ consequently leads to the stated estimate $\eqref{pri:5.3*}$.

\textbf{Step 2.} Arguments for $\eqref{f:5.19-ap}$.
Taking the functional derivatives $\frac{\partial}{\partial a_{ik}(x,t)}$
on the both sides of $\eqref{f:5.21-ap}$, we have
\begin{equation*}
 \frac{\partial F_{\beta,j,g,G}(a)}{\partial a_{ik}(x,t)} =
 \int_{\mathbb{R}^{d+1}}\Big(\nabla\frac{\partial \phi_{\beta,j}}{\partial a_{ik}(x,t)} \cdot g + \beta G\frac{\partial \phi_{\beta,j}}{\partial a_{ik}(x,t)}\Big)
 = \int_{\mathbb{R}^{d+1}}\frac{\partial \phi_{\beta,j}}{\partial a_{ik}(x,t)}\big(\beta G -\nabla \cdot g\big).
\end{equation*}
Recalling the auxiliary equation $\eqref{pde:5.3-ap}$, it is found that
\begin{equation}\label{f:5.22-ap}
\begin{aligned}
\frac{\partial F_{j,g}(a)}{\partial a_{ik}(x,t)}
=^{\eqref{pde:5.3-ap}}  \int_{\mathbb{R}^{d+1}}\frac{\partial \phi_{\beta,j}}{\partial a_{ik}(x,t)}
 \mathcal{A}_{\beta}^*(v)
= \int_{\mathbb{R}^{d+1}}
\mathcal{A}_\beta\big(\frac{\partial \phi_{\beta,j}}{\partial a_{ik}(x,t)}\big)v.
\end{aligned}
\end{equation}
By taking $\frac{\partial}{\partial a_{ik}(x,t)}$ on the both sides of the
approximating corrector's equation, it is obtained that
\begin{equation}\label{f:5.23-ap}
\mathcal{A}_{\beta}\big(\frac{\partial \phi_{\beta,j}}{\partial a_{ik}(x,t)}\big)
= \nabla\cdot \delta(x,t)e_ie_k(\nabla\phi_{\beta,j}+e_j),
\end{equation}
where $\delta(x,t)$ is the Dirac delta function with pole at $(x,t)\in\mathbb{R}^{d+1}$.
Therefore, plugging the estimate $\eqref{f:5.23-ap}$ back into
$\eqref{f:5.22-ap}$, we derive the stated estimate $\eqref{f:5.19-ap}$.

\textbf{Step 3.} Arguments for $\eqref{f:5.15-ap}$.
It follows from the spectral gap inequality of the $L^p$-version that
\begin{equation}\label{f:5.7-ap}
\begin{aligned}
 \Big\langle
 F_{\beta,j,g,G}^{2p}\Big\rangle^{\frac{1}{p}}
& \lesssim^{\eqref{f:5.20-ap},\eqref{f:5.19-ap}} \bigg\langle \Big( \int_{\mathbb{R}^{d+1}}dz\big|\dashint_{Q_1(z)}\nabla v\otimes(\nabla\phi_{\beta,j}+e_j)\big|^2\Big)^p\bigg\rangle^{\frac{1}{p}} \\
& = \sup_{\langle H^{2p'}\rangle =1}
\bigg\langle H^2\Big(\int_{\mathbb{R}^{d+1}}dz\big|\dashint_{Q_1(x,t)}\nabla v\otimes(\nabla\phi_{\beta,j}+e_j)\big|^2\Big)
\bigg\rangle,
\end{aligned}
\end{equation}
where $1/p+1/p' =1$ with $0<p'-1\ll 1$. Therefore, for any $H$ with $\langle H^{2p'}\rangle =1$,
the following computations are performed:
\begin{equation}\label{f:5.6-ap}
\begin{aligned}
&\bigg\langle \int_{\mathbb{R}^{d+1}}
\big|\dashint_{Q_1(x,t)}\nabla (vH)\otimes(\nabla\phi_{\beta,j}+e_j)\big|^2 dxdt\bigg\rangle\\
&\leq^{\eqref{f:5.14-ap}} \int_{\mathbb{R}^{d+1}} \Big\langle\Big(\dashint_{Q_1(x,t)}
|\nabla(vH)|^{2}\Big)^{p'}\Big\rangle^{\frac{1}{p'}}
\Big\langle\Big(\dashint_{Q_1(x,t)}|\nabla u_{\beta,j}|^{2}\Big)^p\Big\rangle^{\frac{1}{p}} dxdt\\
&= \Big\langle\Big(\dashint_{Q_1}|\nabla u_{\beta,j}|^{2}\Big)^{p}\Big\rangle^{\frac{1}{p}}
\int_{\mathbb{R}^{d+1}}\Big\langle\Big(\dashint_{Q_1(x,t)}|\nabla(vH)|^{2}
\Big)^{p'}\Big\rangle^{\frac{1}{p'}}dxdt\\
&\leq
\Big\langle\Big(\dashint_{Q_1}|\nabla u_{\beta,j}|^{2}\Big)^{p}\Big\rangle^{\frac{1}{p}}
\int_{\mathbb{R}^{d+1}}\big\langle|\nabla(vH)|^{2p'}
\big\rangle^{\frac{1}{p'}}
\lesssim^{\eqref{pri:5.1-ap}}
\Big\langle\Big(\dashint_{Q_1}|\nabla u_{\beta,j}|^{2}\Big)^{p}\Big\rangle^{\frac{1}{p}}
\int_{\mathbb{R}^{d+1}} \big|(g,\sqrt{\beta} G)\big|^2.
\end{aligned}
\end{equation}
where we employ the stationarity property of $\nabla u_{\beta,j}$ in the equality, and Minkowski's inequality and Fubini's theorem in the second inequality.
For the last inequality above, the fact that $\langle|H|^{2p'}\rangle = 1$
is also used. Plugging the estimate $\eqref{f:5.6-ap}$
back into $\eqref{f:5.7-ap}$, one can derive the stated estimate $\eqref{f:5.15-ap}$.

\textbf{Step 4.}  Arguments for $\eqref{f:5.18-ap}$.
Due to the stationarity and H\"older's inequality in $l^p$-spaces, the following starting point is taken:
\begin{equation}\label{f:5.12-ap}
\begin{aligned}
&\Big\langle\Big(\dashint_{Q_1}|\nabla u_{\beta,j}|^{2}\Big)^{p}\Big\rangle^{\frac{1}{p}}
\leq \Big\langle\dashint_{Q_R}dz\Big(\dashint_{Q_1(z)}|\nabla u_{\beta,j}|^{2}\Big)^{p}\Big\rangle^{\frac{1}{p}}
\lesssim_d R^{-(d+2)/p}\bigg\langle
\sum_{i}\Big(\dashint_{Q_1(z_i)}
|\nabla u_{\beta,j}|^{2}\Big)^p\bigg\rangle^{1/p}\\
&\lesssim R^{-(d+2)/p}
\bigg\langle\Big(\sum_{i}\dashint_{Q_1(z_i)}
|\nabla u_{\beta,j}|^{2}\Big)^{p}\bigg\rangle^{1/p}
\lesssim R^{(d+2)(1-1/p)}
\bigg\langle\Big(\dashint_{Q_{R}}dz
\dashint_{Q_1(z)}|\nabla u_{\beta,j}|^{2}\Big)^{p}\bigg\rangle^{1/p}\\
&\lesssim
R^{(d+2)(1-1/p)}
\bigg\langle\Big(\dashint_{Q_{2R}}
|\nabla u_{\beta,j}|^{2}\Big)^{p}\bigg\rangle^{1/p}.
\end{aligned}
\end{equation}

In view of the massive equation $\eqref{massive-corrector}$
and the estimate $\eqref{pri:6.3-ap}$, it is found that
\begin{equation}\label{f:5.10-ap}
\begin{aligned}
\bigg\langle\Big(\dashint_{Q_{2R}}
|\nabla \phi_{\beta,j}|^{2}
&\Big)^{p}\bigg\rangle^{1/p}
\leq
\theta \bigg\langle\bigg(\dashint_{I_{4R}}dt\dashint_{\partial B_{4R}}dx\dashint_{B_r(x)}|\nabla \phi_{\beta,j}(\cdot,t)|^2\bigg)^p
\bigg\rangle^{1/p} \\
&+ C_{\theta}\Big(\frac{r}{R}\Big)^2
\bigg\langle\Big(\dashint_{Q_{6R}}|\nabla \phi_{\beta,j}|^2\Big)^{p}\bigg\rangle^{1/p} +
 C_{\theta}\bigg\langle\Big(\dashint_{Q_{4R}}|\nabla (\phi_{\beta,j})_r|^2\Big)^p\bigg\rangle^{1/p} + C_{d,\mu}.
\end{aligned}
\end{equation}
By duality, we have
\begin{equation*}
\begin{aligned}
\bigg\langle\bigg(\dashint_{I_{4R}}dt\dashint_{\partial B_{4R}}dx\dashint_{B_r(x)}|\nabla \phi_{\beta,j}(\cdot,t)|^2\bigg)^p
\bigg\rangle^{1/p}
=\sup_{\langle H^{2p'}\rangle =1}
\bigg\langle
\dashint_{I_{4R}}dt\dashint_{\partial B_{4R}}dx\dashint_{B_r(x)}|H\nabla \phi_{\beta,j}(\cdot,t)|^2\bigg\rangle,
\end{aligned}
\end{equation*}
and it follows from the stationarity of $\nabla \phi_{\beta,j}$ that
\begin{equation*}
\begin{aligned}
\bigg\langle
\dashint_{I_{4R}}dt\dashint_{\partial B_{4R}}dx\dashint_{B_r(x)}|H\nabla \phi_{\beta,j}(\cdot,t)|^2\bigg\rangle
&= \dashint_{I_{4R}}dt\dashint_{\partial B_{4R}}dx
\Big\langle \dashint_{B_r(x)}|H\nabla \phi_{\beta,j}(\cdot,t)|^2 \Big\rangle \\
&= \Big\langle |H\nabla \phi_{\beta,j}(0,0)|^2 \Big\rangle
=\dashint_{Q_{2R}}\big\langle|H\nabla \phi_{\beta,j}|^2\big\rangle\\
&\leq \bigg\langle\Big(\dashint_{Q_{2R}}|\nabla \phi_{\beta,j}|^2\Big)^p
\bigg\rangle^{1/p},
\end{aligned}
\end{equation*}
where we employ the fact that $\langle H^{2p'}\rangle = 1$ in the last step above. This implies that
\begin{equation}\label{f:5.8-ap}
\bigg\langle\bigg(\dashint_{I_{4R}}dt\dashint_{\partial B_{4R}}dx\dashint_{B_r(x)}|\nabla \phi_{\beta,j}(\cdot,t)|^2\bigg)^p
\bigg\rangle^{1/p}
\leq \bigg\langle\Big(\dashint_{Q_{2R}}|\nabla \phi_{\beta,j}|^2\Big)^p
\bigg\rangle^{1/p}.
\end{equation}

Moreover, it is noticed that there exist $\{z_i\}_{i=1}^N\subset Q_{6R}$ such that
\begin{equation*}
 \dashint_{Q_{6R}}|\nabla \phi_{\beta,j}|^2
 \lesssim_d \sum_{i=1}^{N}\dashint_{Q_{2R}(z_i)}|\nabla \phi_{\beta,j}|^2,
\end{equation*}
and this provides us with the following:
\begin{equation}\label{f:5.9-ap}
 \bigg\langle\Big(\dashint_{Q_{6R}}|\nabla \phi_{\beta,j}|^2\Big)^p\bigg\rangle^{1/p}
 \lesssim \sum_{i=1}^{N}
 \bigg\langle\Big(\dashint_{Q_{2R}(z_i)}|\nabla \phi_{\beta,j}|^2\Big)^p\bigg\rangle^{1/p}
 \lesssim \bigg\langle\Big(\dashint_{Q_{2R}}|\nabla \phi_{\beta,j}|^2\Big)^p\bigg\rangle^{1/p},
\end{equation}
where the last step is also due to the stationarity of $\nabla \phi_{\beta,j}$.

Thus, plugging the estimates $\eqref{f:5.8-ap}$ and $\eqref{f:5.9-ap}$ back into
$\eqref{f:5.10-ap}$, we now arrive at
\begin{equation*}
\begin{aligned}
\bigg\langle\Big(\dashint_{Q_{2R}}
|\nabla \phi_{\beta,j}|^{2}\Big)^{p}\bigg\rangle^{1/p}
&\leq
\theta \bigg\langle\Big(\dashint_{Q_{2R}}|\nabla \phi_{\beta,j}|^2\Big)^p
\bigg\rangle^{1/p} + C_{\mu,d}\\
&+ C_{\theta}\Big(\frac{r}{R}\Big)^2
\bigg\langle\Big(\dashint_{Q_{2R}}|\nabla \phi_{\beta,j}|^2\Big)^{p}\bigg\rangle^{1/p} +
 C_{\theta}\bigg\langle\Big(\dashint_{Q_{4R}}|\nabla (\phi_{\beta,j})_r|^2\Big)^p\bigg\rangle^{1/p}.
\end{aligned}
\end{equation*}
By choosing $r=\vartheta R$ with $\vartheta\in(0,1)$ sufficiently small,
it consequently follows that
\begin{equation}\label{f:5.11-ap}
\begin{aligned}
\bigg\langle\Big(\dashint_{Q_{2R}}
|\nabla \phi_{\beta,j}|^{2}\Big)^{p}\bigg\rangle^{1/p}
&\lesssim \bigg\langle\Big(\dashint_{Q_{4R}}|\nabla (\phi_{\beta,j})_{\vartheta R}|^2\Big)^p\bigg\rangle^{1/p} + 1\\
&\lesssim \bigg\langle \Big(\dashint_{I_{4R}}
dt|\nabla (\phi_{\beta,j})_{\vartheta R}(0,t)|^{2}\Big)^p\bigg\rangle^{1/p} + 1,
\end{aligned}
\end{equation}
where the stationarity of $\nabla \phi_{\beta,j}$ with respect to the space variable is also used
in the last step.  Inserting $\eqref{f:5.11-ap}$ into $\eqref{f:5.12-ap}$, we obtain the stated estimate $\eqref{f:5.18-ap}$.

\textbf{Step 5.} Arguments for $\eqref{f:5.17-ap}$.
Note that, for any $k=1,\cdots,d$, it holds that
\begin{equation}\label{f:5.16-ap}
\begin{aligned}
\bigg\langle \Big(\dashint_{I_{4R}}
dt|\partial_k (\phi_{\beta,j})_{\vartheta R}(0,t)|^{2}\Big)^p\bigg\rangle^{1/p}
&\lesssim_d\frac{1}{R^2}\sup_{\langle H^{2p'}\rangle=1\atop
\|\zeta\|_{L^2(\mathbb{R})}=1}\bigg\langle
\Big|\int_{\mathbb{R}}\zeta
\int_{\mathbb{R}^d}\eta_{\vartheta R}\partial_k \phi_{\beta,j}\Big| H^2\bigg\rangle\\
&\lesssim
\frac{1}{R^2}\sup_{\|\zeta\|_{L^2(\mathbb{R})}=1}\bigg\langle
\Big|\int_{\mathbb{R}}\zeta
\int_{\mathbb{R}^d}\eta_{\vartheta R}\partial_k \phi_{\beta,j}\Big|^{2p}\bigg\rangle.
\end{aligned}
\end{equation}
One can fix $\tilde{g}=(0,\cdots,\overbrace{\eta_{\vartheta R}}^{k^{\text{th}}},\cdots,0)\zeta$ to be a vector form with respect to the spacial variable,
and by rewriting the right-hand side above, it is obtained that
\begin{equation}\label{f:5.13-ap}
\begin{aligned}
\bigg\langle
\Big|\int_{\mathbb{R}}\zeta
\int_{\mathbb{R}^d}\eta_{\vartheta R}\partial_k \phi_{\beta,j}\Big|^{2p}\bigg\rangle
&=\bigg\langle
\Big|\int_{\mathbb{R}^{d+1}}
\nabla \phi_{\beta,j}\cdot \tilde{g}\Big|^{2p}\bigg\rangle  \\
&\lesssim^{\eqref{f:5.15-ap}}
\Big\langle\Big(\dashint_{Q_1}|\nabla u_{\beta,j}|^{2}\Big)^{p}\Big\rangle^{\frac{1}{p}}
\int_{\mathbb{R}^{d+1}} |\tilde{g}|^2
\lesssim R^{-d}\Big\langle\Big(\dashint_{Q_1}|\nabla u_{\beta,j}|^{2}\Big)^{p}\Big\rangle^{\frac{1}{p}}.
\end{aligned}
\end{equation}
As a result, plugging the estimate $\eqref{f:5.13-ap}$ back into $\eqref{f:5.16-ap}$, we derive that $\eqref{f:5.17-ap}$.

\textbf{Step 6.} Arguments for $\eqref{pri:5.4*}$.
For any $z:=(x,t)\in\mathbb{R}^{d+1}$, recourse is made to constructing an auxiliary equation
\begin{equation}\label{pde:5.4-ap}
\beta v_z + \partial_t v_z -\Delta v_z = \frac{1}{|Q_1|}\big(\textbf{1}_{Q_1(z)} - \textbf{1}_{Q_1(0)}\big) \qquad \text{in}\quad \mathbb{R}^{d+1},
\end{equation}
with a claimed estimate (which will be demonstrated later):
\begin{equation}\label{f:5.24-ap}
\sup_{t\in\mathbb{R}}\int_{\mathbb{R}^d}|v_z(t)|^2
+\int_{\mathbb{R}^{d+1}}\big|(\sqrt{\beta} v_z,\nabla v_z)\big|^2 \lesssim 1.
\end{equation}

By multiplying $\phi_{\beta,j}$ on the both sides of $\eqref{pde:5.4-ap}$, it holds that
\begin{equation*}
\dashint_{Q_1(z)}\phi_{\beta,j} - \dashint_{Q_1(0)}\phi_{\beta,j}
=\int_{\mathbb{R}^{d+1}}\nabla\phi_{\beta,j}\cdot\nabla v_z
+ 2\beta\int_{\mathbb{R}^{d+1}}v_z\phi_{\beta,j}
+ \int_{\mathbb{R}^{d+1}}\nabla v_z\cdot a(\nabla\phi_{\beta,j}+e_j).
\end{equation*}
If there holds
\begin{equation}\label{f:5.49}
 \bigg\langle \Big|\int_{\mathbb{R}^{d+1}}\nabla v_z\cdot a(\nabla\phi_{\beta,j}+e_j)\Big|^{2p}\bigg\rangle^{1/p}
 \lesssim \int_{\mathbb{R}^{d+1}}|\nabla v_z|^2,
\end{equation}
then, we have
\begin{equation*}
\begin{aligned}
&\Big\langle \big|\dashint_{Q_1(z)}\phi_{\beta,j} - \dashint_{Q_1(0)}\phi_{\beta,j}\big|^{2p}\Big\rangle^{1/p}\\
&\lesssim \bigg\langle \Big|\int_{\mathbb{R}^{d+1}}\big(\nabla\phi_{\beta,j}\cdot\nabla v_z
+2\beta v_z\phi_{\beta,j}\big) \Big|^{2p}\bigg\rangle^{1/p}
+ \bigg\langle \Big|\int_{\mathbb{R}^{d+1}}\nabla v_z\cdot a(\nabla\phi_{\beta,j}+e_j)\Big|^{2p}\bigg\rangle^{1/p}\\
&\lesssim^{\eqref{pri:5.3*},\eqref{f:5.49}} \int_{\mathbb{R}^{d+1}}\big|(\sqrt{\beta} v_z,\nabla v_z)\big|^2
\lesssim^{\eqref{f:5.24-ap}} 1,
\end{aligned}
\end{equation*}
which therefore leads to the desired estimate $\eqref{pri:5.4*}$.

Arguments for $\eqref{f:5.24-ap}$. Multiplying $v_z$ on the both sides of
$\eqref{pde:5.4-ap}$, for any $t'\in\mathbb{R}$, it holds that
\begin{equation*}
\int_{\mathbb{R}^d}|v_z(\cdot,t')|^2
+\int_{-\infty}^{t'}\int_{\mathbb{R}^d}\big(\beta|v_z| + |\nabla v_z|^2\big)
\leq  \dashint_{Q_1(z)}|v_z| + \dashint_{Q_1(0)}|v_z|,
\end{equation*}
and this implies that (denoting the critical Sobolev embedding index by $2^*:=2+4/d$ and using Lemma $\ref{lemma:4.1-ap}$ in the following calculation):
\begin{equation}\label{f:5.2-n}
\begin{aligned}
&\sup_{t\in\mathbb{R}}\int_{\mathbb{R}^d}|v_z(\cdot,t)|^2
+\int_{\mathbb{R}^{d+1}}\big|(\sqrt{\beta} v_z,\nabla v_z)\big|^2
\leq \dashint_{Q_1(z)}|v_z| + \dashint_{Q_1(0)}|v_z|\\
&\leq \Big(\dashint_{Q_1(z)}|v_z|^{2^*} + \dashint_{Q_1(0)}|v_z|^{2^*}\Big)^{1/2^*}
\lesssim \Big(\int_{\mathbb{R}^{d+1}}|v_z|^{2^*}\Big)^{1/2^*}
\lesssim^{\eqref{pri:6.8-ap}}
\bigg(\sup_{t\in\mathbb{R}}\int_{\mathbb{R}^d}|v_z(\cdot,t)|^2
+\int_{\mathbb{R}^{d+1}}\big|\nabla v_z\big|^2
\bigg)^{\frac{1}{2}}\\
&\lesssim \bigg(\sup_{t\in\mathbb{R}}\int_{\mathbb{R}^d}|v_z(\cdot,t)|^2
+\int_{\mathbb{R}^{d+1}}\big|(\sqrt{\beta} v_z,\nabla v_z)\big|^2
\bigg)^{\frac{1}{2}},
\end{aligned}
\end{equation}
which immediately gives the stated estimate $\eqref{f:5.24-ap}$.

\textbf{Step 7.} Arguments for $\eqref{f:5.49}$. Due to the finite energy condition
$\eqref{f:5.24-ap}$, we have $\int_{\mathbb{R}^{d+1}}\nabla v_z = 0$. Therefore, the following decomposition can be carried out:
\begin{equation}\label{f:5.51}
  F:=\int_{\mathbb{R}^{d+1}}\nabla v_z\cdot a(\nabla\phi_{\beta,i}+e_i)
  = \int_{\mathbb{R}^{d+1}}\nabla v_z\cdot a\nabla\phi_{\beta,i}
  + \int_{\mathbb{R}^{d+1}}\nabla v_z\cdot a e_i := F_1 + F_2.
\end{equation}
It is not hard to see that $\langle F_1\rangle = \langle F_2 \rangle = 0$.
One may rewrite $F_1$ as follows:
\begin{equation*}
 F_1 = \int_{\mathbb{R}^{d+1}}\nabla v_z\cdot a\nabla\phi_{\beta,i}
 = \int_{\mathbb{R}^{d+1}}\nabla\phi_{\beta,i}\cdot \underbrace{a^{*}\nabla v_z}_{f_a}
 = \int_{\mathbb{R}^{d+1}}\nabla\phi_{\beta,i}\cdot f_a.
\end{equation*}
Then, one can construct the auxiliary equation $\mathcal{A}_\beta^{*}(w)=\nabla\cdot f_a$ in $\mathbb{R}^{d+1}$
as in $\eqref{pde:5.3-ap}$, and by the same computations as those given in Step 2, we can derive that
\begin{equation*}
 \frac{\partial F_{1}(a)}{\partial a(x,t)} = \big[-\nabla w\otimes(\nabla\phi_{\beta,i}+e_i) + \nabla\phi_{\beta,i}\otimes\nabla v_z\big](x,t).
\end{equation*}
It consequently follows from a routine computation as in Step 3 that
\begin{equation}\label{f:5.50}
 \big\langle F_1^{2p}\big\rangle^{1/p} \lesssim \int_{\mathbb{R}^{d+1}}
 |\nabla v_z|^2,
\end{equation}
where it noted that $|f_a|\lesssim_{\mu} |\nabla v_z|$. We mention that
the above estimate $\eqref{f:5.50}$ can be roughly regarded as an application of $\eqref{pri:5.3*}$ with the choices $g=f_a$ and $G=0$.
The computations for $F_2$ are much easier, and they reduce to showing
\begin{equation*}
\frac{\partial F_{2}(a)}{\partial a(x,t)} = \big[\nabla v_z\otimes e_i\big](x,t).
\end{equation*}
By applying the spectral gap of the $L^p$-version, one can immediately obtain that
\begin{equation}\label{f:5.52}
 \big\langle F_2^{2p}\big\rangle^{1/p}
\lesssim
\bigg\langle \Big( \int_{\mathbb{R}^{d+1}}\big|\dashint_{Q_1(x,t)}\nabla v_z\otimes e_i\big|^2
 dxdt\Big)^p\bigg\rangle^{\frac{1}{p}}
 \lesssim \int_{\mathbb{R}^{d+1}}|\nabla v_z|^2,
\end{equation}
using Minkowski's inequality.  Thus, combining the estimates
$\eqref{f:5.51}, \eqref{f:5.50}$, and $\eqref{f:5.52}$ leads to
the stated estimate $\eqref{f:5.49}$.
\qed

\subsection{Proof of Proposition $\ref{P:5.3}$}
%
\noindent
The proof is divided into qualitative and quantitative parts. We begin with the qualitative analysis, employing spectral representations and functional calculus (see the pioneering work of Papanicolaou and Varadhan \cite{Papanicolaou-Varadhan-79}), as well as applying the established results of Bella et al. \cite{Bella-Chiarini-Fehrman-19}. Subsequently, we utilize the spectral gap condition $\eqref{c:3}$ to investigate the fluctuation properties of the flux correctors.

\noindent
\textbf{Part I.} Arguments for $\eqref{eq:5.1a}$, $\eqref{eq:5.1b}$, and
$\eqref{eq:5.1c}$.
In view of the equations $\eqref{corrector}$ and $\eqref{eq:1.1}$, it is not hard to derive the identities $\eqref{eq:5.2}$. Let $j=1,\cdots,d$
and $i'=1,\cdots,d+1$.
The flux corrector $\sigma=(\sigma_{k'i'j})$ with $k'=1,\cdots,d+1$ can be constructed as follows. Consider the equation:  $\overline{\nabla}\cdot b_{i'j} = q_{i'j}$ in $\mathbb{R}^{d+1}$. Based on the spectral calculus of the operator,
it can be found that
\begin{equation}\label{eq:5.3}
b_{k'i'j}(a,z) = \big(e^{z\cdot D}-I\big)(D^*\cdot D)^{-1}D_{k'}q_{i'j},
\end{equation}
where the ``horizontal derivatives'' $D=(D_1,\cdots,D_{d+1})$ are defined as the generators of $(d+1)$-shift $L^2$-unitary groups $e^{z\cdot D}$, and $D_j:=\sqrt{-1}\int_{\mathbb{R}^{d+1}}\lambda_jdE_{\lambda}$,
with $E_{\lambda}$ denoting the spectral measure of a self-adjoint operator. Also, in this regard, it is mentioned that $e^{z\cdot D}:=\int_{\mathbb{R}^{d+1}}
\exp\{\sqrt{-1}z\cdot\lambda\}dE_{\lambda}$, and $e^{z\cdot D}f(a)=
f(\tau_za)$, where $\{\tau_z\}_{z\in\mathbb{R}^{d+1}}$ represents a family of shift operators acting on the configuration space governed by the stationary and ergodic ensemble $\langle\cdot\rangle$. Due to the second identity
in $\eqref{eq:5.2}$, it follows from the representation in $\eqref{eq:5.3}$ that
\begin{equation}\label{eq:5.4}
  \sum_{k'=1}^{d+1} b_{k'k'j} = 0.
\end{equation}

Let
\begin{equation}\label{eq:5.6}
 \sigma_{k'i'j}(a,z) := b_{k'i'j}(a,z)-b_{i'k'j}(a,z)
 =\big(e^{z\cdot D}-I\big)(D^*\cdot D)^{-1}\big(D_{k'}q_{i'j}-D_{i'}q_{k'j}\big),
\end{equation}
which immediately leads to the antisymmetry of $\sigma_j$ (i.e., $\sigma_{k'i'j}=
-\sigma_{i'k'j}$). Moreover, it is found that
\begin{equation*}
\begin{aligned}
\sum_{k'=1}^{d+1}\partial_{k'}\sigma_{k'ij}(a,z)
&=\sum_{k'=1}^{d+1}\Big(\partial_{k'} b_{k'ij}(a,z) - \partial_{k'} b_{ik'j}(a,z)\Big)\\
&=e^{z\cdot D}q_{ij} - \sum_{k'=1}^{d+1}\partial_i b_{k'k'j}
=^{\eqref{eq:5.4}}e^{z\cdot D}q_{ij} = q_{ij}(\tau_za),
\end{aligned}
\end{equation*}
which gives the identity $\eqref{eq:5.1b}$.
To see the equation $\eqref{eq:5.1a}$, it follows from $\eqref{eq:5.6}$ that
\begin{equation}\label{eq:5.5}
\Delta_{d+1}\sigma_{k'ij}(a,z)
= e^{z\cdot D}\big(D_{k'}q_{ij}-D_iq_{k'j}\big)
= \big(D_{k'}q_{ij}-D_iq_{k'j}\big)(\tau_{z}a)
= (\partial_{k'}q_{ij}-\partial_iq_{k'j}\big)(a,z),
\end{equation}
which gives the stated identity $\eqref{eq:5.1a}$.
For the case $k'=d+1$, taking the partial derivative $\partial_i$ on
the both sides of $\eqref{eq:5.5}$ and summing from $i=1$ to $i=d$, we obtain that
\begin{equation*}
\Delta_{d+1}\partial_i\sigma_{(d+1)ij}
=\partial_{d+1}\partial_{i}q_{ij}
-\Delta \phi_j
=^{\eqref{eq:5.2}}-\partial_{d+1}^2\phi_j
-\Delta \phi_j
=-\Delta_{d+1}\phi_j.
\end{equation*}
This implies that
\begin{equation}\label{pde:5.9}
\Delta_{d+1} \big(\partial_i\sigma_{(d+1)ij} + \phi_j\big)=0.
\end{equation}
Also, it is already known that $\phi_j$ possesses a sublinear growth property (see \cite[Proposition 1]{Bella-Chiarini-Fehrman-19}). By
the representation in $\eqref{eq:5.6}$, one can derive that
\begin{equation*}
\partial_i \sigma_{(d+1)ij}(a,z)= e^{z\cdot D}(D^*\cdot D)^{-1}
 D_i\big(D_{d+1}q_{ij}(a)-D_i\phi_j(a,z)\big)
 =^{\eqref{corrector}}e^{z\cdot D}\phi(a,z)=\phi(a(z),z),
\end{equation*}
where it is noted that $q_{ij}$ is stationary while $\phi_j$ is not.
Thus, $\partial_i \sigma_{(d+1)ij}$ also satisfies the sublinear growth
property, and therefore it is known from Liouville's theorem that
the equation $\eqref{pde:5.9}$ implies $\eqref{eq:5.1c}$.
Moreover, if $\phi_j$ is stationary, it follows from the ergodic condition and the equation $\eqref{pde:5.9}$ that
\begin{equation*}
\partial_i\sigma_{(d+1)ij} + \phi_j = \langle \partial_i\sigma_{(d+1)ij} + \phi_j \rangle = \langle \phi_j \rangle,
\end{equation*}
which consequently modifies $\eqref{eq:5.1c}$ to $\partial_i\sigma_{(d+1)ij} =
\langle \phi_j \rangle-\phi_j$.

Now, we turn to discussing the sublinearity of $\nabla \sigma_{(d+1)ij}$
instead of $\partial_i \sigma_{(d+1)ij}$ since the latter has already followed from the equation $\eqref{eq:5.1c}$ and the sublinearity of $\phi_i$. Let $\omega_{lij}:=\partial_l\sigma_{(d+1)ij}$ and $l=1,\cdots,d$. To obtain $\eqref{pri:2.20}$, it suffices to show
\begin{equation}\label{f:2.25}
\limsup_{R\to\infty}\frac{1}{R^2} \dashint_{Q_R}\big| \omega_{lij}- \dashint_{Q_R}\omega_{lij}\big|^2 = 0;
\quad \text{and}\quad
\limsup_{R\to\infty} \frac{1}{R^2}\dashint_{Q_R}|\omega_{lij}|^2 = 0,
\end{equation}
where an ``anchor point'' condition $\dashint_{Q_1}\omega_{lij}=0$ can be introduced by default to ensure the validity of the second limit in the above equation.
By the equation $\eqref{eq:5.1a}$, one can derive that
\begin{equation*}
  \Delta_{d+1}\omega_{lij} = \partial_{d+1}\partial_{l}q_{ij}-\partial_i \partial_l\phi_j,
\end{equation*}
which implies the stationarity of $\overline{\nabla} \omega_{lij}$, and we have $\langle\nabla \omega_{lij}\rangle = 0$ and $\langle|
\nabla \omega_{lij}|^2\rangle<\infty$. Moreover, by virtue of $\eqref{eq:5.1b}$ and $\eqref{eq:5.3}$, it can be derived that
\begin{equation*}
\begin{aligned}
\partial_{d+1}\omega_{lij} = \partial_{d+1}\partial_l\sigma_{(d+1)ij}
&=^{\eqref{eq:5.1b}}\partial_l\big(q_{ij}-\partial_k\sigma_{kij}\big)\\
&=^{\eqref{eq:5.3}}\partial_l\big(\partial_k b_{kij} -\partial_k\sigma_{kij}\big)
=^{\eqref{eq:5.6}}\partial_k\partial_l b_{ikj}=:\nabla\cdot F_{lij},
\end{aligned}
\end{equation*}
where it can be verified that $F_{lij}$ is a stationary vector field, since it can be shown that $F_{likj}(a,z)=\partial_l (b_{ikj}(a,z))
= e^{z\cdot D}(D^*\cdot D)^{-1}D_lD_iq_{kj} = \Pi_{likj}(\tau_za)$ and
$\Pi_{likj}=:(D^*\cdot D)^{-1}D_lD_iq_{kj}=-\int_{\mathbb{R}^{d+1}}\frac{\lambda_l\lambda_i}{|\lambda|^2}
dE_{\lambda}q_{kj}$. This also implies $\langle |F_{lij}|^2\rangle
\lesssim_d \langle|q_{ij}|^2\rangle\lesssim 1$. Now,
according to \cite[Lemma 6]{Bella-Chiarini-Fehrman-19}, one can derive
the first limit stated in $\eqref{f:2.25}$.
The second limit therein is not difficult to obtain, and the related details are left to the reader.
This completes the first part of the proof.

\medskip
\noindent
\textbf{Part II.} In this part,  the spectral gap
condition $\eqref{c:3}$ is additionally assumed, by which the
stationary solution can be recovered. Here, by skipping the step from the corresponding ``massive correctors'' to the correctors, the computation is made more straightforward without losing the key message. 

\textbf{Step 1.} Arguments for $\eqref{pri:5.6b}$.
In view of the equation $\eqref{eq:5.1a}$, consider the following equation:
\begin{equation}\label{f:5.8-n}
 \Delta_{d+1}\sigma_{kij} = \partial_{k}q_{ij} -\partial_{i}q_{kj} \qquad\text{in}\quad\mathbb{R}^{d+1}.
\end{equation}
For any deterministic function $g$, construct the associated functional
\begin{equation*}
  F_{kij,g}(a):
  = \int_{\mathbb{R}^{d+1}}\overline{\nabla}\sigma_{kij}(a) \cdot g,
\end{equation*}
where we recall that $\overline{\nabla}$ denotes the $(d+1)$-dimensional
gradient operator. To achieve the goal, the auxiliary equation can be constructed as follows:
\begin{equation}\label{pde:5.1-n}
 -\Delta_{d+1}v =\overline{\nabla}\cdot g \qquad\text{in}\quad\mathbb{R}^{d+1}.
\end{equation}
Thus, integration by parts leads to
\begin{equation}
\begin{aligned}
&F_{kij,g}(a):
= \int_{\mathbb{R}^{d+1}}\overline{\nabla}\sigma_{kij}(a) \cdot g
&=\int_{\mathbb{R}^{d+1}}q_{kj}\partial_{i}v
-\int_{\mathbb{R}^{d+1}}q_{ij}\partial_{k}v.
\end{aligned}
\end{equation}

Starting from taking the functional derivative $\frac{\partial}{\partial a(x,t)}$ on $F_{kij,g}$, it is obtained that
\begin{equation}\label{f:5.30}
\begin{aligned}
\frac{\partial F_{kij,g}}{\partial a_{mn}(x,t)}
=\int_{\mathbb{R}^{d+1}}\frac{\partial q_{kj}}{\partial a_{mn}(x,t)}\partial_{i}v
-\int_{\mathbb{R}^{d+1}}\frac{\partial q_{ij}}{\partial a_{mn}(x,t)}\partial_{k}v,
\end{aligned}
\end{equation}
and
\begin{equation}\label{f:5.31}
\frac{\partial q_{kj}}{\partial a_{mn}(x,t)}
=-e_ke_me_n\delta_{(x,t)}\big(e_j+\nabla\phi_j\big)
-e_ka\nabla\frac{\partial\phi_j}{\partial a_{mn}(x,t)},
\end{equation}
where $\delta_{(x,t)}$ is the Dirac delta function with pole at $(x,t)\in\mathbb{R}^{d+1}$. 
Plugging the estimate $\eqref{f:5.31}$ (and its counterpart by exchanging $i$ and $k$) back into $\eqref{f:5.30}$,
we arrive at
\begin{equation*}
\begin{aligned}
\frac{\partial F_{kij,g}}{\partial a_{mn}(x,t)}
&=-e_ke_me_n\big[(e_j+\nabla\phi_j)\partial_{i}v\big](x,t)
-\int_{\mathbb{R}^{d+1}}
e_ka\nabla\frac{\partial\phi_j}{\partial a_{mn}(x,t)}\partial_{i}v\\
&\qquad\qquad +e_ie_me_n\big[(e_j+\nabla\phi_j)\partial_{k}v\big](x,t)
+\int_{\mathbb{R}^{d+1}}
e_ia\nabla\frac{\partial\phi_j}{\partial a_{mn}(x,t)}\partial_{k}v\\
&=-e_ie_me_n\big[(e_j+\nabla\phi_j)\partial_{d+1}v\big](x,t)
-\int_{\mathbb{R}^{d+1}}
\frac{\partial\phi_j}{\partial a_{mn}(x,t)}
\nabla\cdot (e_ka)^*\partial_{i}v\\
&\qquad\qquad +e_ie_me_n\big[(e_j+\nabla\phi_j)\partial_{k}v\big](x,t)
+\int_{\mathbb{R}^{d+1}}
\frac{\partial\phi_j}{\partial a_{mn}(x,t)}
\nabla\cdot e_ia^*\partial_{k}v
\end{aligned}
\end{equation*}


To continue the idea, we construct the auxiliary equation:
\begin{equation}\label{pde:5.7}
-(\partial_t +\nabla\cdot a^*\nabla) w_{ki} = \nabla\cdot
(e_ka)^*\partial_{i}v
\qquad\text{in}\quad\mathbb{R}^{d+1}.
\end{equation}
It can be derived that
\begin{equation*}
\begin{aligned}
\frac{\partial F_{kij,g}}{\partial a_{mn}(x,t)}
&=e_m\cdot\big[(e_k\partial_{i}v-\nabla w_{ki})\otimes
(e_j+\nabla\phi_j)\big](x,t)e_n \\
&\qquad - e_m\cdot\big[(e_i\partial_{k}v-\nabla w_{ik})\otimes
(e_j+\nabla\phi_j)\big](x,t)e_n, 
\end{aligned}
\end{equation*}
which is equivalent to saying 
\begin{equation*}
\frac{\partial F_{kij,g}}{\partial a(x,t)}
=\big[(e_k\partial_{i}v-\nabla w_{ki})\otimes
(e_j+\nabla\phi_j)\big](x,t)
-\big[(e_i\partial_{k}v-\nabla w_{ik})\otimes
(e_j+\nabla\phi_j)\big](x,t).
\end{equation*}

Since $\langle F_{kij,g}\rangle = 0$, 
it follows from from the spectral gap inequality of the $L^p$-version that 
\begin{equation}\label{f:5.33}
\begin{aligned}
\big\langle F_{kij,g}^{2p}\big\rangle^{1/p}
&\lesssim^{\eqref{c:3}}
\bigg\langle\Big(\int_{\mathbb{R}^{d+1}}dz\Big(\dashint_{Q_1(z)}
\big|\frac{\partial F_{kij,g}}{\partial a}\big|\Big)^2\Big)^{p}\bigg\rangle^{1/p}\\
&\lesssim\bigg\langle\bigg(
\int_{\mathbb{R}^{d+1}}dz
\Big(\dashint_{Q_1(z)}
\big|(\nabla v-\nabla w)\otimes
(e+\nabla\phi)\big|\Big)^2\bigg)^p\bigg\rangle^{1/p}\\
&\lesssim
\bigg\langle\Big(\dashint_{Q_1}|e+\nabla\phi|^2\Big)^{p}
\bigg\rangle^{\frac{1}{p}}\int_{\mathbb{R}^{d+1}}
\bigg\langle\Big(\dashint_{Q_1(z)}H^2(\nabla v+\nabla w)^2\Big)^{p'}
\bigg\rangle^{\frac{1}{p'}}\\
&\lesssim^{\eqref{pri:5.2}}
\int_{\mathbb{R}^{d+1}} \big\langle(H\nabla v+H\nabla w)^{2p'}\big\rangle^{\frac{1}{p'}},
\end{aligned}
\end{equation}
where Minkowski's inequality and Fubini's theorem are also employed in the last step, and $H$ is arbitrary, satisfying $\langle H^{2p'}\rangle =1$
with $1/p+1/p'=1$ and $p\gg 1$.

Applying Proposition $\ref{P:5.1-ap}$ (in the case $\beta=0$) to the equation $\eqref{pde:5.7}$
after multiplying $H$ in $\eqref{f:5.33}$ on the both sides of $\eqref{pde:5.7}$,
we have
\begin{equation}\label{f:5.6-n}
\int_{\mathbb{R}^{d+1}} \big\langle(H\nabla w)^{2p'}\big\rangle^{\frac{1}{p'}}
\lesssim \int_{\mathbb{R}^{d+1}} \big\langle(H\nabla v)^{2p'}\big\rangle^{\frac{1}{p'}}
\lesssim \int_{\mathbb{R}^{d+1}}|\overline{\nabla}v|^2
\lesssim \int_{\mathbb{R}^{d+1}}|g|^2,
\end{equation}
where the energy estimate related to $\eqref{pde:5.1-n}$ is employed for the last inequality. By combining the estimates $\eqref{f:5.6-n}$ and $\eqref{f:5.33}$, it is obtained that
\begin{equation*}
  \big\langle F_{kij,g}^{2p}\big\rangle^{1/p}
  \lesssim \int_{\mathbb{R}^{d+1}}|g|^2,
\end{equation*}
which leads to the desired estimate $\eqref{pri:5.6b}$. 

\textbf{Step 2.} Arguments for $\eqref{pri:5.6c}$. We begin from a preparation by using the same idea given in the previous step. 
Let $v$ and $h$ be associated with by the equation $-\Delta_{d+1}v = h$ in $\mathbb{R}^{d+1}$, and we consider the functional 
\begin{equation}
 G_{kij,h}(a):=\int_{\mathbb{R}^{d+1}}
 \big(\sigma_{kij}-\big\langle\sigma_{kij}\big\rangle\big)h.
\end{equation}
Integration by parts and the equation $\eqref{f:5.8-n}$ lead to     
\begin{equation}
	G_{kij,h} = \int_{\mathbb{R}^{d+1}}q_{ij}\partial_kv-\int_{\mathbb{R}^{d+1}}q_{kj}\partial_i v.
\end{equation}
Noting that $\langle G_{kij,h}\rangle = 0$, one can employ the spectral gap inequality as before to derive that 
\begin{equation}\label{f:5.5-n}
  \big\langle G_{kij,h}^{2p}\big\rangle^{1/p}
  \lesssim \int_{\mathbb{R}^{d+1}}|\nabla v|^2
  \lesssim \Big(\int_{\mathbb{R}^{d+1}}|h|^{2_*}\Big)^{\frac{2}{2_*}},
\end{equation}
where we employ Sobolev's inequality on $\mathbb{R}^{d+1}$ in the last step and $2_*:=\frac{2(d+1)}{d+3}$. Obviously, as a by-product, one can derive that 
\begin{equation}\label{f:5.7-n}
\Big\langle\big|\int_{\mathbb{R}^{d+1}}q_{ij}h\big|^{2p}\Big\rangle^{1/p}
\lesssim \int_{\mathbb{R}^{d+1}}|h|^2.
\end{equation}

We are now in the position to handle the fluctuation estimate $\eqref{pri:5.6c}$. Construct the auxiliary equation:
for any $z_0\in\mathbb{R}^{d+1}$,
\begin{equation}\label{pde:5.2-n}
 \partial_t v_{z_0}-\Delta v_{z_0} = \textbf{1}_{Q_1(z_0)} - \textbf{1}_{Q_1(0)}
 \qquad\text{in}\quad\mathbb{R}^{d+1}.
\end{equation}
Then, it holds that
\begin{equation*}
\begin{aligned}
 \dashint_{Q_1(z)}\sigma_{kij}
 &-
 \dashint_{Q_1(0)}\sigma_{kij} 
 =\int_{\mathbb{R}^{d+1}}
 \nabla v_{z_0}\cdot \nabla\sigma_{kij} +\int_{\mathbb{R}^{d+1}}\partial_t v_{z_0}\big(\sigma_{kij}-\langle\sigma_{kij}\rangle\big).
 \end{aligned}
\end{equation*}
It follows from the previous estimates $\eqref{pri:5.6b}$ and $\eqref{f:5.5-n}$  that 
\begin{equation}
\begin{aligned}
\bigg\langle\Big(\dashint_{Q_1(z)}\sigma_{kij}-
 \dashint_{Q_1(0)}\sigma_{kij}\Big)^{2p}
 \bigg\rangle^{\frac{1}{p}}
\lesssim
\int_{\mathbb{R}^{d+1}}|\nabla v_{z_0}|^2+\Big(\int_{\mathbb{R}^{d+1}}|\partial_t v_{z_0}|^{2_*}\Big)^{\frac{2}{2_*}}
\lesssim^{\eqref{f:5.24-ap}} 1,
\end{aligned}
\end{equation}
where we also employ the Calder\'on-Zygmund estimate for the equation 
$\eqref{pde:5.2-n}$ in the last step\footnote{The requirement $2_*>1$ is satisfied by the condition $d\geq 2$.}. This gives 
the proof of the desired estimate $\eqref{pri:5.6c}$. 

\textbf{Step 3.} Arguments for $\eqref{pri:5.7b}$. 
In view of $\eqref{eq:5.1a}$ and $\eqref{eq:5.1b}$, one can find the following equation:
\begin{equation*}
\begin{aligned}
\Delta \sigma_{(d+1)ij} &= \partial_{d+1}q_{ij} -\partial_{i}q_{(d+1)j} -\partial_{d+1}^2\sigma_{(d+1)ij} \\
&=^{\eqref{eq:5.1b}}
\partial_{d+1}q_{ij}-\partial_i\phi_j-\partial_{d+1}\big(q_{ij}-\partial_{k}\sigma_{kij}\big)=-\partial_i\phi_j+\partial_{d+1}\partial_{k}\sigma_{kij}.
\end{aligned} 
\end{equation*}
By treating $\partial_{d+1}$ as $\partial_t$, one can further derive from 
$\eqref{eq:5.1b}$ and the above equation that  
\begin{equation}\label{pde:5.6}
(\partial_t+\Delta)\sigma_{(d+1)ij}= q_{ij}-\partial_k\sigma_{kij}
-\partial_i\phi_j+\partial_{d+1}\partial_{k}\sigma_{kij},
\end{equation}
where we employ the antisymmetry property of $\sigma_{(d+1)ij}$ (i.e., $\sigma_{k'ij} = -\sigma_{ik'j}$) to 
make the left-hand side be a parabolic operator (in its dual form). 

Now, we can construct the associated functional
\begin{equation*}
  \bar{F}_{ij,g}(a):
  = \int_{\mathbb{R}^{d+1}}\nabla\sigma_{(d+1)ij}(a) \cdot g.
\end{equation*}
Meanwhile, to achieve the goal, the auxiliary equation can be constructed as follows:
\begin{equation}\label{pde:5.8}
 \big(\partial_t -\Delta\big) v =\nabla\cdot g \qquad\text{in}\quad\mathbb{R}^{d+1}.
\end{equation}
Thus, integration by parts leads to
\begin{equation}\label{f:5.35}
\begin{aligned}
&\bar{F}_{ij,g}(a):
= \int_{\mathbb{R}^{d+1}}\nabla\sigma_{(d+1)ij} \cdot g
=-\int_{\mathbb{R}^{d+1}}\sigma_{(d+1)ij}\nabla\cdot g
=\int_{\mathbb{R}^{d+1}}\sigma_{(d+1)ij}\big(\partial_t-\Delta\big)v \\
&= -\int_{\mathbb{R}^{d+1}}\big(\partial_t+\Delta\big)\sigma_{(d+1)ij}v
=^{\eqref{pde:5.6}} \int_{\mathbb{R}^{d+1}}
\big(q_{ij}-\partial_k\sigma_{kij}
-\partial_i\phi_j\big)v
-\int_{\mathbb{R}^{d+1}}\partial_{d+1}\sigma_{kij}\partial_k v\\
&=\int_{\mathbb{R}^{d+1}}q_{ij}v-\int_{\mathbb{R}^{d+1}}
\overline{\nabla}\sigma_{kij}\cdot(e_kv+e_{d+1}\partial_k v) 
-\int_{\mathbb{R}^{d+1}}\nabla\phi_j\cdot e_iv.
\end{aligned}
\end{equation}

Combining the estimates $\eqref{f:5.7-n}$, $\eqref{pri:5.6b}$, $\eqref{pri:5.3*}$, and $\eqref{f:5.35}$,  we obtain that
\begin{equation*}
  \big\langle\bar{F}_{ij,g}^{2p}\big\rangle^{1/p}
  \lesssim \int_{\mathbb{R}^{d+1}}\big(|v|^2+|\nabla v|^2\big)
  \lesssim^{\eqref{pri:6.1-n}} \int_{\mathbb{R}^{d+1}}|g|^2+\Big(\int_{\mathbb{R}^{d+1}}|g|^{2_{**}}\Big)^{\frac{2}{2_{**}}}, 
\end{equation*}
where $2_{**}:=\frac{2(d+2)}{d+4}$ is due to 
the anisotropic fractional integration estimate (see Lemma \ref{lemma:6.1-n}). This gives the desired estimate $\eqref{pri:5.7b}$.

\textbf{Step 4.} Arguments for $\eqref{pri:5.7c}$. 
To do so, construct the auxiliary equation:
for any $z_0\in\mathbb{R}^{d+1}$,
\begin{equation}\label{pde:5.11}
 \partial_t v_{z_0}-\Delta v_{z_0} = \textbf{1}_{Q_1(z_0)} - \textbf{1}_{Q_1(0)}
 \qquad\text{in}\quad\mathbb{R}^{d+1}.
\end{equation}
Then, it holds that
\begin{equation*}
\begin{aligned}
 \dashint_{Q_1(z)}\sigma_{(d+1)ij}
 &-
 \dashint_{Q_1(0)}\sigma_{(d+1)ij} 
 =\int_{\mathbb{R}^{d+1}}
 \nabla v_z\cdot \nabla\sigma_{(d+1)ij} -\int_{\mathbb{R}^{d+1}}v_{z_0}\partial_{t}\sigma_{(d+1)ij}\\
 &=^{\eqref{eq:5.1b}}\int_{\mathbb{R}^{d+1}}
 \nabla v_{z_0}\cdot \nabla\sigma_{(d+1)ij} -\int_{\mathbb{R}^{d+1}}v_{z_0} q_{ij}
  -\int_{\mathbb{R}^{d+1}}v_{z_0}e_k\cdot\nabla\sigma_{kij}.
\end{aligned}
\end{equation*}
It follows the estimate $\eqref{pri:5.7b}$, $\eqref{f:5.7-n}$, and $\eqref{pri:5.6b}$ that
\begin{equation}\label{f:5.53}
\begin{aligned}
\bigg\langle\Big(\dashint_{Q_1(z)}\sigma_{(d+1)ij}-
 \dashint_{Q_1(0)}\sigma_{(d+1)ij}\Big)^{2p}
 \bigg\rangle^{\frac{1}{p}}
\lesssim
\int_{\mathbb{R}^{d+1}}|\nabla v_{z_0}|^2
+ \Big(\int_{\mathbb{R}^{d+1}}|\nabla v_{z_0}|^{2_{**}}\Big)^{\frac{2}{2_{**}}}
+ \int_{\mathbb{R}^{d+1}}|v_{z_0}|^2,
\end{aligned}
\end{equation}
where we recall that $2_{**}=\frac{2(d+2)}{d+4}$. Due to the estimate $\eqref{f:5.24-ap}$, to estimate the right-hand side of $\eqref{f:5.53}$, it suffices to estimate the following quantities: 
\begin{equation}\label{q:5.1}
\Big(\int_{\mathbb{R}^{d+1}}|\nabla v_{z_0}(z)|^{2_{**}}
\Big)^{\frac{2}{2_{**}}},
\qquad \int_{\mathbb{R}^{d+1}}|v_{z_0}|^2.
\end{equation}


We appeal to the fundamental solution of $\partial_t-\Delta$,
denoted by $\Gamma$, and then the solution of $\eqref{pde:5.11}$ is given by
\begin{equation*}
\begin{aligned}
& v_{z_0}(z) = \dashint_{Q_1(0)}dy\big(\Gamma(z-z_0-y)-\Gamma(z-y)\big);\\
&\nabla v_{z_0}(z) = \dashint_{Q_1(0)}dy\big(\nabla\Gamma(z-z_0-y)-\nabla\Gamma(z-y)\big)
\qquad \forall z\in\mathbb{R}^{d+1}.
\end{aligned}
\end{equation*}
One can set $r:=\text{d}(z_0,0)/4$. Let $A:=Q_1(0)\cup Q_1(z_0)$, $B:=\big(Q_r(0)\setminus Q_1(0)\big)\cup \big(Q_r(z_0)\setminus Q_1(z_{0})\big)$,  and  $C:=\mathbb{R}^{d+1}\setminus \big(Q_r(0)\cup Q_r(z_0)\big)$. It is not hard to observe that
$\mathbb{R}^{d+1}\subset A\cup B\cup C$. Therefore, it suffices to show
the following estimates:
\begin{equation}\label{f:5.54}
\begin{aligned}
\int_{\mathbb{R}^{d+1}}|v_{z_0}|^2
&+\Big(\int_{\mathbb{R}^{d+1}}|\nabla v_{z_0}|^{2_{**}}\Big)^{\frac{2}{2_{**}}}\\
&\leq \bigg\{\int_{A}+\int_{B}+\int_{C}\bigg\}|v_{z_0}|^2 
+ \bigg\{\Big(\int_{A}+\int_{B}+\int_{C}\Big)|\nabla v_{z_0}|^{2_{**}}\bigg\}^{\frac{2}{2_{**}}}
\lesssim 1 + \bar{\mu}_d^2(z_0) 
\end{aligned}
\end{equation}
which, together with $\eqref{f:5.53}$, consequently leads to the stated
estimate $\eqref{pri:5.7c}$.

\textbf{Step 5.} Arguments for $\eqref{f:5.54}$. We start from 
estimating the quantities in $\eqref{q:5.1}$ on the integral region $A$ that 
\begin{equation}\label{f:2.12}
\begin{aligned}
\Big(\int_{A}|\nabla v_{z_0}|^{2_{**}}\Big)^{\frac{2}{2_{**}}}
=
\Big(\int_{Q_1(0)\cup Q_1(z_0)}|\nabla v_{z_0}|^{\frac{2(d+2)}{d+4}}\Big)^{\frac{d+4}{d+2}}
\lesssim_d \int_{\mathbb{R}^{d+1}}
|\nabla v_{z_0}|^2 \lesssim^{\eqref{f:5.24-ap}}  1,
\end{aligned}
\end{equation}
where we merely employ H\"older's inequality in the first inequality. Let $q=\frac{2(d+2)}{d}$. By using H\"older's inequality again, we obtain  that 
\begin{equation}
\int_{A}|v_{z_0}|^2 \lesssim_d 
\Big(\int_{\mathbb{R}^{d+1}}|v_{z_0}|^q\Big)^{\frac{2}{q}}
\lesssim^{\eqref{pri:6.8-ap}} \sup_{t\in\mathbb{R}}\|v_{z_0}(\cdot,t)\|_{L^2(\mathbb{R}^d)}^2
+\int_{\mathbb{R}^{d+1}}|\nabla v_{z_0}|^2 
\lesssim^{\eqref{f:5.24-ap}} 1.
\end{equation}

For the region $C$ (which is known as the far-field region), one can 
 employ the smoothness of the fundamental solution, namely, 
\begin{equation*}
\begin{aligned}
 &|v_{z_0}(z)| 
  \lesssim  \text{d}(z_0,0)\big[\text{d}(z,0)\big]^{-d-1};\\
 &|\nabla v_{z_0}(z)| 
  \lesssim  \text{d}(z_0,0)\big[\text{d}(z,0)\big]^{-d-2}
  \qquad \forall z\in C.
\end{aligned}
\end{equation*}
Thus, we obtain that 
\begin{equation}\label{f:2.10}
\begin{aligned}
&\Big(\int_{C}|\nabla v_{z_0}|^{\frac{2(d+2)}{d+4}}\Big)^{\frac{4+d}{d+2}}
\lesssim r^2\Big(\int_{
\{\text{d}(z,0)\geq r/4\}}\big[\text{d}(z,0)\big]^{-\frac{2(d+2)^2}{d+4}}
\Big)^{\frac{d+4}{d+2}}
\lesssim  r^{2-d};\\
&\int_{C}|v_{z_0}|^{2}
\lesssim r^2\int_{\{\text{d}(z,0)\geq r/4\}}\big[\text{d}(z,0)\big]^{-2d-2}
\lesssim  r^{2-d}.
\end{aligned}
\end{equation}

For any $z\in \big(Q_r(0)\setminus Q_1(0)\big)$ (or $z\in (Q_r(z_0)\setminus Q_1(z_0))$), 
it is not hard to observe that $\text{d}(z,z_0)\geq r$ (or $\text{d}(z,0)\geq r$), 
and therefore one can derive that
\begin{equation}\label{f:2.11}
\begin{aligned}
&\Big(\int_{B}|\nabla v_{z_0}|^{2_{**}}\Big)^{\frac{2}{2_{**}}}
\lesssim \Big(\int_{B}dz|\nabla\Gamma(z-z_0)|^{\frac{2(d+2)}{d+4}}\Big)^{\frac{d+4}{d+2}} + \Big(\int_{B}dz|\nabla\Gamma(z)|^{\frac{2(d+2)}{d+4}}\Big)^{\frac{d+4}{d+2}}\\
&\lesssim \Big(r^{d+2}r^{-\frac{2(d+2)(d+1)}{d+4}}\Big)^{\frac{d+4}{d+2}}
+\Big(\int_{1}^{r^2} t^{-\frac{(d+1)(d+2)}{d+4}+\frac{d}{2}}dt\Big)^{\frac{d+4}{d+2}}
\lesssim r^{2-d}+\bar{\mu}_d^{2}(z_0)\lesssim \bar{\mu}_d^2(z_0).
\end{aligned}
\end{equation}
By the same token, one can derive that 
\begin{equation}\label{f:5.8-n}
\begin{aligned}
\int_{B}|v_{z_0}|^{2} 
\lesssim  \int_{B}dz|\Gamma(z-z_0)|^{2}  + \int_{B}dz|\Gamma(z)|^{2}
\lesssim r^{2-d}
+ \int_{1}^{r^2} t^{-\frac{d}{2}}dt 
\lesssim \bar{\mu}_d^2(z_0).
\end{aligned}
\end{equation}

As a result, combining the estimates $\eqref{f:2.10}$, $\eqref{f:2.11}$, 
$\eqref{f:5.8-n}$, 
and $\eqref{f:2.12}$ gives the estimate $\eqref{f:5.54}$.

\textbf{Step 6.}
Arguments for $\eqref{pri:5.6a}$. By translation it is fine to assume
$z=0$. Without a proof (whose strategy is similar to that given
for $\sigma_{(d+1)ij}$ later),
one can derive that
\begin{equation}\label{f:5.55}
\begin{aligned}
\bigg\langle\Big(\dashint_{Q_1(0)}|\overline{\nabla} \sigma_{k'ij}|^{2}\Big)^p\bigg\rangle^{1/p}\lesssim_{\mu,\lambda_1,d,p} 1
\qquad k'=1,\cdots,d.
\end{aligned}
\end{equation}

In view of the equation $\eqref{eq:5.1a}$ with $k'=d+1$, i.e., 
$-\Delta_{d+1}\sigma_{(d+1)ij} = \partial_{d+1}q_{ij}-\partial_i\phi_j$,
it follows from local $L^p$ estimates for this equation that
\begin{equation*}
 \dashint_{\hat{B}_{R}}\Big(\dashint_{\hat{B}_1(z)}
 |\overline{\nabla}\sigma_{(d+1)ij}|^2\Big)^p
 \lesssim \bigg(\dashint_{\hat{B}_{2R}}\dashint_{\hat{B}_1(z)}\big|\overline{\nabla}\sigma_{(d+1)ij}\big|^2\bigg)^{p}
 + \dashint_{\hat{B}_{2R}}\Big(\dashint_{\hat{B}_1(z)}\big(|q_{ij}|^2
 +|\tilde{\phi}_j|^2\big)\Big)^p,
\end{equation*}
where we mention that $\hat{B}_{R}:=\{z=(x,t)\in\mathbb{R}^{d+1}:|x|^2+|t|^2\leq R^2\}$ is the $(d+1)$-dimensional ball centered at zero with radius $R$, and $\tilde{\phi}_j(z) =
\phi_j(z) - \dashint_{Q_1(0)}\phi_j$.
In view of $\eqref{eq:5.1b}$, this implies that
\begin{equation}\label{f:5.36}
\begin{aligned}
& \dashint_{\hat{B}_R}\Big(\dashint_{\hat{B}_1(z)}|\overline{\nabla}\sigma_{(d+1)ij}|^2\Big)^p \\
& \lesssim \bigg(\dashint_{\hat{B}_{2R}}\dashint_{\hat{B}_1(z)}
 \big(|\nabla\sigma_{(d+1)ij}|^2+\sum_{k=1}^{d}
 |\nabla\sigma_{kij}|^2\big)\bigg)^{p}
 + \dashint_{\hat{B}_{2R}}\Big(\dashint_{\hat{B}_1(z)}\big(|\nabla\phi_j|^2
 +|\tilde{\phi}_j|^2\big)\Big)^p\\
 & \lesssim_d \bigg(\dashint_{\hat{B}_{2R}}\dashint_{Q_1(z)}
 \big(|\nabla\sigma_{(d+1)ij}|^2+\sum_{k=1}^{d}
 |\nabla\sigma_{kij}|^2\big)\bigg)^{p}
 + \dashint_{\hat{B}_{2R}}\Big(\dashint_{Q_1(z)}\big(|\nabla\phi_j|^2
 +|\tilde{\phi}_j|^2\big)\Big)^p,
\end{aligned}
\end{equation}
where we employ the simply geometry fact: $\hat{B}_1\subset Q_1$ in the last inequality. 


By letting $R\to\infty$ on the both sides above, it follows from
the ergodic theorem that
\begin{equation*}
\begin{aligned}
\bigg\langle\Big(\dashint_{\hat{B}_1}|\overline{\nabla}\sigma_{(d+1)ij}|^2\Big)^p
\bigg\rangle
&\lesssim \bigg\langle\dashint_{Q_1}
 \big(|\nabla\sigma_{(d+1)ij}|^2+\sum_{k=1}^{d}|\nabla\sigma_{kij}|^2\big)
 \bigg\rangle^{p} \\
&+\bigg\langle\Big(\dashint_{Q_1}|\nabla\phi|^2\Big)^p\bigg\rangle
+\bigg\langle\Big(\dashint_{Q_1}\big|\tilde{\phi}_j\big|^2\Big)^{p} \bigg\rangle
\lesssim^{\eqref{pri:5.2},\eqref{pri:5.4}} 1.
\end{aligned}
\end{equation*}
Based on the stationarity and covering method, it is not difficult to replace the integral region $\hat{B}_1$ on the left-hand side of the above estimate with $Q_1$.  This, together with the estimate $\eqref{f:5.55}$, leads to the stated estimate $\eqref{pri:5.6a}$. 
The proof is thereby completed.
\qed

\subsection{Proof of Proposition $\ref{P:5.1-ap}$}
\noindent
The idea of the proof is to repeatedly use a real method developed by  Shen \cite{Shen-05}, and we refer the reader to Subsection $\ref{subsec:6.1}$ for the main idea of utilizing Shen's real methods to address the annealed Calder\'on-Zygmund estimates in a random setting, originally developed in
\cite{Duerinckx-Otto-20}.
Therefore, it is suggested that readers who are not familiar with this method skip this proof during their first reading.
We first address the estimate $\eqref{pri:5.1-ap}$ in the case
$0<2-p\ll 1$, and the counterpart case $0<p-2\ll 1$ will then follow from a dual argument. For the ease of the statement,
the following notation are introduced throughout the proof: $0<2-\underline{p},~\overline{p}-2\ll1$ and $\underline{p}\leq q<\overline{p}$,
\begin{equation*}
 V := \big\langle|(\sqrt{\beta} v,\nabla v)|^{\underline{p}}\big\rangle^{\frac{1}{\underline{p}}};\qquad F := \big\langle|(f,\sqrt{\beta} g)|^{\underline{p}}\big\rangle^{\frac{1}{\underline{p}}}.
\end{equation*}
Given any parabolic cube $Q_R(z)\subset\mathbb{R}^{d+1}$ (abbreviated as $Q$), we also introduce
\begin{equation*}
\begin{aligned}
  W_{Q} := \big\langle|(\sqrt{\beta} w,\nabla w)|^{\underline{p}}\big\rangle^{\frac{1}{\underline{p}}};\qquad
  U_{Q} := \big\langle|(\sqrt{\beta} u, \nabla u)|^{\underline{p}}\big\rangle^{\frac{1}{\underline{p}}}.
\end{aligned}
\end{equation*}

\textbf{Step 1.} Reduction and outline of the main ingredients. For any $Q$,
let $w$ and $u:=v-w$ satisfy the following equations:
\begin{subequations}
\begin{align}
&\partial_t w -\nabla \cdot \big(a\nabla w + \mathbf{1}_{Q} f\big)
+\beta(w-\mathbf{1}_{Q}g) = 0    \qquad\text{in}\quad \mathbb{R}^{d+1};
\label{pde:5.1-ap}\\
&\beta u + (\partial_t -\nabla\cdot a \nabla) u = 0
\qquad \text{in} \quad Q,
\label{pde:5.2-ap}
\end{align}
\end{subequations}
where $\mathbf{1}_{Q}$ is the indicator function of $Q$. We first observe that $V\leq W_{Q} + U_{Q}$ on $Q$. According to Shen's real method (see \cite[Theorem 4.23]{Shen18}), it suffices to establish
\begin{subequations}
\begin{align}
&  \dashint_{\frac{1}{2}Q} W_{Q}^{\underline{p}}
\lesssim_{\mu,d,\underline{p}} \dashint_{Q} F^{\underline{p}};
\label{f:5.1a-ap}\\
& \bigg(\dashint_{\frac{1}{2}Q} U_{Q}^{\overline{p}}\bigg)^{\underline{p}/\overline{p}}
\lesssim_{\mu,d,\underline{p},\overline{p}}
\dashint_{2Q} \big(V^{\underline{p}}
+ F^{\underline{p}}\big). \label{f:5.1b-ap}
\end{align}
\end{subequations}

Admitting the above estimates provisionally, it follows from Shen's real argument that
\begin{equation}\label{f:5.2-ap}
 \int_{\mathbb{R}^{d+1}} V^q \lesssim_{\mu,d,q,\underline{p},\overline{p}}
 \int_{\mathbb{R}^{d+1}} F^q
\end{equation}
holds for any $\underline{p}\leq q<\overline{p}$. Now, one may fix $q=2$ and the stated estimate $\eqref{f:5.2-ap}$ remains true when   $\underline{p}$ is chosen arbitrarily in the range $0<2-\underline{p}\ll 1$, here denoted by $p$. In other words, the desired estimate $\eqref{pri:5.1-ap}$ holds for $0<2-p\ll 1$. For the case $0<p-2\ll 1$, it follows from a duality argument. To this end, for any $(h,H)$, consider the adjoint problem:
\begin{equation*}
\beta z -\big(\partial_t+\nabla\cdot a^*\nabla\big) z = \beta H-\nabla\cdot h
 \qquad\text{in}\quad\mathbb{R}^{d+1},
\end{equation*}
and we have the formula
\begin{equation*}
\begin{aligned}
\Big\langle\int_{\mathbb{R}^{d+1}}\big(\nabla v\cdot h
+\beta Hz\big)\Big\rangle
=\Big\langle\int_{\mathbb{R}^{d+1}}\big(f\cdot\nabla z-\beta gz\big)\Big\rangle.
\end{aligned}
\end{equation*}
This implies
\begin{equation*}
\begin{aligned}
\Big\langle\int_{\mathbb{R}^{d+1}}\big(\nabla v\cdot h +\beta vH\big)\Big\rangle
&\leq \int_{\mathbb{R}^{d+1}}\langle|(f,\sqrt{\beta} g)|^p\rangle^{\frac{1}{p}}
\langle|(\sqrt{\beta} z,\nabla z)|^{p'}\rangle^{\frac{1}{p'}}\\
&\lesssim^{\eqref{f:5.2-ap}}  \Big(\int_{\mathbb{R}^{d+1}}\big\langle|(f,\sqrt{\beta} g)|^p\big\rangle^{\frac{2}{p}}\Big)^{\frac{1}{2}}
\Big(\int_{\mathbb{R}^{d+1}}\big\langle|(h,\sqrt{\beta} H)|^{p'}\big\rangle^{\frac{2}{p'}}
\Big)^{\frac{1}{2}},
\end{aligned}
\end{equation*}
where $1/p+1/p' = 1$, and this together with the case $0<2-p\ll 1$
consequently leads to the desired estimate
$\eqref{pri:5.1-ap}$.

\textbf{Step 2.} Arguments for $\eqref{f:5.1a-ap}$. We now study the equation $\eqref{pde:5.1-ap}$. The key ingredient is
Meyer-type estimate $\eqref{pri:5.10-ap}$.
In this regard, it is noticed that
\begin{equation}\label{f:5.5-ap}
\dashint_{\frac{1}{2}Q} W_{Q}^{\underline{p}}
\lesssim \frac{1}{|Q|}\Big\langle\int_{\mathbb{R}^{d+1}}|\nabla w|^{\underline{p}}\Big\rangle
\lesssim^{\eqref{pri:5.10-ap}} \frac{1}{|Q|}\Big\langle\int_{\mathbb{R}^{d+1}}
|\textbf{1}_{Q} f|^{\underline{p}}\Big\rangle
= \dashint_{Q} F^{\underline{p}},
\end{equation}
which gives the stated estimate $\eqref{f:5.1a-ap}$.

\textbf{Step 3.} Arguments for $\eqref{f:5.1b-ap}$. We now turn to
the equation $\eqref{pde:5.2-ap}$, and the crucial ingredient is
the local version of Meyer-type estimate, namely
\begin{equation}\label{f:5.4-ap}
 \Big(\dashint_{\frac{1}{2}Q} |(\sqrt{\beta} u,\nabla u)|^{\overline{p}}\Big)^{1/\overline{p}}
 \lesssim \Big(\dashint_{Q} |
 (\sqrt{\beta} u, \nabla u)|^{\underline{p}}\Big)^{1/\underline{p}},
\end{equation}
which follows from $\eqref{pri:5.9-ap}$ by setting $p_0=\underline{p}$ therein.
Thus, starting from Minkowski's inequality, it is found that
\begin{equation*}
\begin{aligned}
\bigg(\dashint_{\frac{1}{2}Q} U_Q^{\overline{p}}\bigg)^{\underline{p}/\overline{p}}
&\leq \bigg\langle\Big(\dashint_{\frac{1}{2}Q}
|\nabla u|^{\overline{p}}\Big)^{\underline{p}/\overline{p}}\bigg\rangle\\
&\lesssim^{\eqref{f:5.4-ap}}
\Big\langle\dashint_{Q}
|\nabla u|^{\underline{p}}\Big\rangle
\lesssim^{\eqref{f:5.5-ap}} \dashint_{2Q}
\big(V^{\underline{p}}+F^{\underline{p}}\big),
\end{aligned}
\end{equation*}
where we also use the fact that $u = v-w$. This yields the
stated estimate $\eqref{f:5.1b-ap}$, and we have completed the whole proof.
\qed

\subsection{Proof of Theorem $\ref{thm:1.0}$}
\noindent
The part of the conclusions concerning the flux corrector in Theorem $\ref{thm:1.0}$ is already included in Proposition $\ref{P:5.3}$. Here,   only the proof of the existence of the stationary solution to the equation $\eqref{corrector}$ is provided, along with the proof of the related estimates. The structure of the proof is presented in Fig. $\ref{pic:2.1}$.
\begin{figure}[htbp]
\centering 
\resizebox{0.85\textwidth}{!}{
\begin{tikzpicture}[
    node distance=0.7cm and 1cm,
    >=Stealth, 
    box/.style={
        rectangle, draw,
        minimum width=3.2cm, minimum height=0.9cm,
        align=center, text width=3cm
    },
    smallbox/.style={
        rectangle, draw,
        minimum width=3.5cm, minimum height=0.7cm,
        align=center, text width=3cm
    },
]

\node[box] (T1) {\textbf{Theorem \ref{thm:1.0}}};
\node[smallbox, right=of T1,draw=white, text opacity=0] (C25) {};
\node[box, right=of C25] (C25*) {Corollary \ref{corollary:2.1}};
\node[box, below left=of T1] (P22) {Proposition \ref{P:5.3}\\
\small(flux correctors)};
\node[box, below right=of T1] (P21) {Proposition \ref{P:5.1*}\\\small(correctors)};
\node[box, below=of P22] (P23) {Proposition $\ref{P:5.1-ap}$\\ \small(Annealed C-Z of\\Meyer type)};
\node[box, right=of P23] (L64) {Lemma \ref{lemma:5.2}};
\node[box, below=of P21] (L24) {Lemma \ref{lemma:2.1}};
\node[smallbox, right=of L24] (L63) {Lemma \ref{lemma:5.1}};
\node[smallbox, above=of L63] (L62) {Lemma \ref{lemma:4.1-ap}};

\node[draw, dashed, fit=(L62)(L63), inner sep=5pt] (dashedbox) {};

\draw[->] (T1) -- node[midway, above] {$+$~\boxed{~~\text{Lemma} ~$\ref{lemma:5.2}$~~}} (C25*);
\draw[->] (P22) -- (T1);       
\draw[->,thick] (P21) -- (T1);       
\draw[->] (P23) -- (P22);      
\draw[->] (P23) -- (P21);      
\draw[->] (L64) -- (P23);      
\draw[->,thick] (L24) -- (P21);      
\draw[->,dashed] (P21) -- (P22);      

\node at ($(L24.east)!0.5!(L63.west)$) {$+$};

\end{tikzpicture} %
}
\caption{On the proof structure of Theorem $\ref{thm:1.0}$} 
\label{pic:2.1}
\end{figure}

In view of Lemma $\ref{lemma:2.1}$, the existence of
the stationary solution of massive correctors $\phi_\beta$ to the equations $\eqref{massive-corrector*}$ is established. By virtue of Proposition $\ref{P:5.1*}$, an estimate much stronger than that stated in $\eqref{f:5.47a}$ can be derived.
To this end, removing the second term on the right-hand side of $\eqref{pde:5.4-ap}$, we obtain that
\begin{equation*}\label{}
\Big\langle \big|\dashint_{Q_1(z)}\phi_{\beta}\big|^{2p}\Big\rangle^{1/p}
\lesssim_{\mu,\lambda_1,d,p}1
\qquad\forall z\in\mathbb{R}^{d+1}.
\end{equation*}
This implies that
\begin{equation*}
\begin{aligned}
\Big\langle \dashint_{Q_1(z)}|\phi_\beta|^2 \Big\rangle
\lesssim \Big\langle \dashint_{Q_1(z)}|\nabla\phi_\beta|^2 \Big\rangle
+ \Big\langle \big|\dashint_{Q_1(z)}\phi_\beta \big|^2 \Big\rangle
\lesssim^{\eqref{pri:5.2*}} 1,
\end{aligned}
\end{equation*}
which, by the stationarity of $\phi_\beta$, further leads to
\begin{equation}\label{f:2.6}
 \big\langle|\phi_\beta|^2\big\rangle\lesssim 1.
\end{equation}

Moreover, in view of the estimates $\eqref{f:5.47c}$ and $\eqref{f:5.47d}$, one can continue to derive that
\begin{equation}\label{f:2.7}
\begin{aligned}
& \|D_0\phi_{\beta}\|_{\mathcal{H}^{-1}}\lesssim_{\mu,d}1; \\
& \langle |D\phi_{\beta}|^2 \rangle \lesssim_{\mu,d} 1.
\end{aligned}
\end{equation}
Therefore, as $\beta\to 0$, it follows from the estimates
$\eqref{f:2.6}$ and $\eqref{f:2.7}$ that
\begin{equation*}
\begin{aligned}
&\phi_{\beta,i} \rightharpoonup \phi_{i} &\quad&\text{weakly~in}\quad L^2(\Sigma); \\
&D\phi_{\beta,i} \rightharpoonup \Phi_{i}
&\quad&\text{weakly~in}\quad L^2(\Sigma;\mathbb{R}^d);\\
&D_0\phi_{\beta,i} \rightharpoonup \Xi_{i}
&\quad&\text{weakly~in}\quad \mathcal{H}^{-1}.
\end{aligned}
\end{equation*}
Then, the limit with respect to $\beta$ can be taken in the equation
$\eqref{massive-corrector*}$, and
\begin{equation*}
 \langle D_0\phi_{i}h\rangle + \big\langle
  Dh\cdot a(D\phi_{i}+e_i)\big\rangle = 0
\end{equation*}
holds for any $h\in\mathcal{H}^1$, where we employ the fact that
$\Phi_i = D\phi_i$ and $\Xi_{i} = D_0\phi_i$. This yields
the stationary solution to the equations $\eqref{corrector}$.
Also, the stated estimates $\eqref{pri:5.2}$ and $\eqref{pri:5.4}$ for $\phi$ follow from $\eqref{pri:5.2*}$ and $\eqref{pri:5.4*}$, respectively, by taking the limit with respect to $\beta$. We have completed the whole proof.
\qed

\section{Two-scale expansions}\label{section:4}
\noindent
Recall that $L^2(0,T;X)$ denotes the Banach space of
$X$-valued functions possessing $L^2$-integrability with respect to the time variable, and in the following $X$ will be taken as either  $H^1_0(\Omega)$ or $H^{-1}(\Omega)$.

\begin{lemma}[Error of the two-scale expansion]\label{lemma:3.4}
Suppose that $u_\varepsilon,u_0\in L^2(0,T;H^1(\Omega))$ with
$\partial_t u_\varepsilon,\partial_t u_0\in L^2(0,T;H^{-1}(\Omega))$ satisfy
\begin{equation}\label{eq:3.4}
\left\{\begin{aligned}
\partial_t u_\varepsilon + \mathcal{L}_\varepsilon(u_\varepsilon)
& = \partial_t u_0 + \mathcal{L}_0(u_0) &\quad&\emph{in}\quad ~\Omega_T;\\
u_\varepsilon & = u_0 &\quad&\emph{on}\quad \partial_p\Omega_T.
\end{aligned}\right.
\end{equation}
Let the error of the two-scale expansion be given by
\begin{equation}\label{eq:3.1}
w_\varepsilon := u_\varepsilon  - u_0
-\varepsilon\bar{\phi}_j^\varepsilon\varphi_j
-\varepsilon^2 \tilde{\sigma}_{l(d+1)j}^\varepsilon\partial_l\varphi_j,
\end{equation}
where $\varphi_j\in W_{2}^{2,1}(\Omega_T)$ vanishes near $\partial_p\Omega_T$, and we define the shifted quantities:
\begin{equation}\label{eq:3.5}
\begin{aligned}
&\bar{\phi}_j^\varepsilon:=\phi_j(\cdot/\varepsilon,\cdot/\varepsilon^2)-\alpha; \\
& \bar{\sigma}_{ikj}^\varepsilon
:=\sigma_{ikj}(\cdot/\varepsilon,\cdot/\varepsilon^2)-
\beta(\cdot/\varepsilon^2);\\
&\tilde{\sigma}_{l(d+1)j}^\varepsilon:=
\sigma_{l(d+1)j}(\cdot/\varepsilon,\cdot/\varepsilon^2)-\pi_{ilj} y_i - \gamma,
\end{aligned}
\end{equation}
in which $\alpha,\gamma,\pi_{klj}\in\mathbb{R}$ are arbitrary constants  determined respectively by $\phi_j$, $\sigma_{l(d+1)j}$; $\beta$ is
a single-variable function determined by $\sigma_{ikj}$\footnote{The explicit expressions for these shifted quantities are given in $\eqref{min_rad*}$. A crucial aspect of this definition is the preservation of the antisymmetry of the flux correctors under the ``shifting'' operation.};
and $y=x/\varepsilon$. Then, it can be derived that
\begin{equation}\label{eq:3.2}
\left\{\begin{aligned}
\partial_t w_\varepsilon + \mathcal{L}_\varepsilon(w_\varepsilon)
& = \nabla\cdot f_\varepsilon - \varepsilon\sum_{l=1}^{d}\pi_{llj}\partial_t\varphi_j&\quad&\emph{in}\quad ~\Omega_T;\\
w_\varepsilon & = 0 &\quad&\emph{on}\quad \partial_p\Omega_T,
\end{aligned}\right.
\end{equation}
where the right-hand side of the above equation is formulated as follows:
\begin{equation}\label{eq:3.3}
\begin{aligned}
f_\varepsilon
&:= (a^\varepsilon-\bar{a})
\big(\nabla u_0-\varphi\big)
 + \varepsilon \big[a^{\varepsilon}\bar{\phi}_j^{\varepsilon}
-\bar{\sigma}_{j}^{\varepsilon}
+ a^{\varepsilon}(\partial \tilde{\sigma}_{(d+1)j})^{\varepsilon}
\big]\nabla\varphi_j \\
& \qquad\quad+ \varepsilon^2\Big[
a^{\varepsilon}\otimes \tilde{\sigma}_{(d+1)j}^{\varepsilon}: \partial^2 \varphi_j
- \tilde{\sigma}_{(d+1)j}^{\varepsilon}\partial_t\varphi_j\Big],
\end{aligned}
\end{equation}
where $1\leq i,j,l,k\leq d$, and it is recalled that
$a^\varepsilon:=a(\cdot/\varepsilon,\cdot/\varepsilon^2)$.
\end{lemma}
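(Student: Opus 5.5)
The proof is a direct computation. I compute $(\partial_t+\mathcal{L}_\varepsilon)$ of each piece of $w_\varepsilon$ and reorganize the result into divergence form using the structure of the correctors from Theorem \ref{thm:1.0} and Proposition \ref{P:5.3}. The boundary condition is immediate: $u_\varepsilon=u_0$ on $\partial_p\Omega_T$, and the added terms carry the factors $\varphi_j$ or $\partial_l\varphi_j$, which vanish near $\partial_p\Omega_T$.

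By \eqref{eq:3.4}, I start from
\[
(\partial_t+\mathcal{L}_\varepsilon)(u_\varepsilon-u_0)=\nabla\cdot(a^\varepsilon-\bar a)\nabla u_0 .
\]
I then subtract $(\partial_t+\mathcal{L}_\varepsilon)$ of the first-order term $\varepsilon\bar\phi_j^\varepsilon\varphi_j$. Since $\bar\phi_j^\varepsilon$ differs from $\phi_j^\varepsilon$ by a constant, the corrector equation \eqref{pde:0}, rescaled as $\varepsilon^{-1}[\partial_t\phi^\varepsilon_j-\nabla\cdot a^\varepsilon(\nabla\phi_j)^\varepsilon]=\varepsilon^{-1}\nabla\cdot a^\varepsilon e_j$, gives
\[
(\partial_t+\mathcal{L}_\varepsilon)(\varepsilon\bar\phi^\varepsilon_j\varphi_j)=-\nabla\cdot\big(a^\varepsilon(\nabla\phi_j)^\varepsilon\varphi_j\big)+(\partial_t\phi_j)^\varepsilon\varphi_j+\varepsilon\bar\phi^\varepsilon_j\partial_t\varphi_j-\nabla\cdot\big(\varepsilon a^\varepsilon\bar\phi_j^\varepsilon\nabla\varphi_j\big)-(a^\varepsilon(\nabla\phi_j)^\varepsilon)\cdot\nabla\varphi_j .
\]
After this subtraction, the remaining non-divergence terms combine into the flux $q^\varepsilon_{\cdot j}=(\bar a-a^\varepsilon-a^\varepsilon\nabla\phi_j^\varepsilon)e_j$ (with the $\varepsilon$-scaling). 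These terms are $\nabla\cdot[(a^\varepsilon-\bar a)(e_j+\nabla\phi_j^\varepsilon)\varphi_j]$ up to $\nabla\cdot(a^\varepsilon-\bar a)(\nabla u_0-\varphi)$, together with the $(d+1)$-dimensional pairing of $q^\varepsilon_{\cdot j}$ against $(\nabla\varphi_j,\ldots)$ coming from the time derivative.

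The key step is to write $q_{ij}$ via \eqref{eq:5.1b}, $q_{ij}=\partial_k\sigma_{kij}+\partial_{d+1}\sigma_{(d+1)ij}$, and to use skew-symmetry \eqref{skew-symmetric}. This converts $q^\varepsilon_{ij}\partial_i\varphi_j$ into $\nabla\cdot(\varepsilon\bar\sigma_j^\varepsilon\nabla\varphi_j)$ plus a leftover involving $\varepsilon^2\partial_t$ of $\sigma_{(d+1)ij}$ paired with $\partial_i\varphi_j$. The leftover $-\varepsilon\bar\phi^\varepsilon_j\partial_t\varphi_j$ is handled with \eqref{eq:5.1c}: $\bar\phi_j$ equals $-\partial_i\sigma_{(d+1)ij}$ up to constants, absorbed in the shifts. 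The combined temporal terms are exactly what $(\partial_t+\mathcal{L}_\varepsilon)$ of the second-order term $\varepsilon^2\tilde\sigma^\varepsilon_{l(d+1)j}\partial_l\varphi_j$ cancels. Expanding that term produces $\varepsilon a^\varepsilon(\partial\tilde\sigma_{(d+1)j})^\varepsilon\nabla\varphi_j$, the $\varepsilon^2$ terms $a^\varepsilon\otimes\tilde\sigma^\varepsilon:\partial^2\varphi_j$ and $\tilde\sigma^\varepsilon\partial_t\varphi_j$ (the latter written as a divergence after integrating in $x_l$), which together form $f_\varepsilon$ in \eqref{eq:3.3}. The affine shift $\pi_{ilj}y_i$ in $\tilde\sigma$ is harmless except that its spatial gradient produces $\varepsilon\pi_{llj}\partial_t\varphi_j$, which is not of divergence form and is kept separately in \eqref{eq:3.2}.

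I expect the bookkeeping to be the main obstacle: tracking signs and the antisymmetric index pairings between $\sigma_{kij}$, $\sigma_{(d+1)ij}=-\sigma_{i(d+1)j}$ and their shifted versions. In particular, the time-dependent shift $\beta(t/\varepsilon^2)$ in $\bar\sigma$ must not spoil antisymmetry. I would check that $\partial_k$ of the shift vanishes and that its $\partial_t$ contribution cancels through the antisymmetric pairing $\partial_i\partial_k\varphi_j$. All identities are first verified in the weak sense against test functions, which is legitimate since $\varphi_j\in W^{2,1}_2$ and the correctors have the local integrability of Corollary \ref{corollary:2.1}.
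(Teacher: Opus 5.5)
Your proposal is correct in substance and follows the paper's proof essentially step by step: the corrector equation (equivalently the divergence-free structure \eqref{eq:5.2}) reduces everything to $-q^\varepsilon_{ij}\partial_i\varphi_j-\varepsilon\bar\phi^\varepsilon_j\partial_t\varphi_j$ plus divergence terms, and then \eqref{eq:5.1b}, \eqref{eq:5.1c} and skew-symmetry produce the flux term $\varepsilon\bar\sigma_j^\varepsilon\nabla\varphi_j$ together with $\varepsilon^2$-terms whose time-derivative part is cancelled by the second-order term, leaving only the trace term $\varepsilon\sum_l\pi_{llj}\partial_t\varphi_j$. Some bookkeeping needs fixing: (i) the first two terms of your display should read $-[\nabla\cdot(a^\varepsilon(\nabla\phi_j)^\varepsilon)]\varphi_j+\varepsilon^{-1}(\partial_t\phi_j)^\varepsilon\varphi_j$, since as written $a^\varepsilon(\nabla\phi_j)^\varepsilon\cdot\nabla\varphi_j$ is counted twice; (ii) the leftover flux is $-q^\varepsilon_{\cdot j}\varphi_j$, not $(a^\varepsilon-\bar a)(e_j+(\nabla\phi_j)^\varepsilon)\varphi_j$, and the two differ by $\bar a(\nabla\phi_j)^\varepsilon\varphi_j$; (iii) no $\partial_t$ ever falls on $\bar\sigma_{kij}$ with $k\le d$, so you only need $\partial_{y_k}\beta=0$ and the antisymmetry of $\beta$ in $(k,i)$; and (iv) the constant $M_j-\alpha$ produced by \eqref{eq:5.1c} is not absorbed by the shifts of $\tilde\sigma$ (differentiation kills $\gamma$, and the linear part yields exactly $\sum_l\pi_{llj}$), but vanishes only under the normalization $M_j=\alpha$, which the paper's proof also uses tacitly.
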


\begin{proof}
The proof is inspired by \cite{Geng-Shen-17,Xu-Zhou-17}, and we provide a proof for the sake of
the completeness. Observing the equation $\eqref{eq:3.4}$, we have
\begin{equation}\label{f:3.25}
\begin{aligned}
\frac{\partial w_\varepsilon}{\partial t} + \mathcal{L}_\varepsilon(w_\varepsilon)
&= \mathcal{L}_0(u_0) - \mathcal{L}_\varepsilon(u_0)
-\Big(\frac{\partial}{\partial t}+\mathcal{L}_\varepsilon\Big)
\big[\varepsilon\bar{\phi}_j(y,\tau)\varphi_j\big]\\
&-\Big(\frac{\partial}{\partial t}+\mathcal{L}_\varepsilon\Big)
\Big[\varepsilon^2\tilde{\sigma}_{l(d+1)j}(y,\tau)
\frac{\partial}{\partial x_l}(\varphi_j)\Big] \\
& = -\underbrace{\frac{\partial}{\partial x_i}
\bigg\{ q_{ij}(y,\tau)\varphi_j\bigg\}}_{I_1}
-\underbrace{\frac{\partial}{\partial t}\big[\varepsilon\bar{\phi}_j(y,\tau)\varphi_j\big]}_{I_2}
-\underbrace{\Big(\frac{\partial}{\partial t}+\mathcal{L}_\varepsilon\Big)
\big[\varepsilon^2 \tilde{\sigma}_{l(d+1)j}(y,\tau)
\frac{\partial\varphi_j}{\partial x_l}\big]}_{I_3}\\
&-\frac{\partial}{\partial x_i}\bigg\{[\bar{a}_{ij} - a_{ij}(y,\tau)]\big[\frac{\partial u_0}{\partial x_j}
- \varphi_j\big]
-\varepsilon a_{ik}(y,\tau)\bar{\phi}_j(y,\tau)
\frac{\partial\varphi_j}{\partial x_k}\bigg\},
\end{aligned}
\end{equation}
where $q_{ij}$ is given as in $\eqref{f:5.29}$, and
$y=x/\varepsilon$ with $\tau = t/\varepsilon^2$. The last line of $\eqref{f:3.25}$ is
a favorable term, and the task is reduced to computing $I_1$, $I_2$, and $I_3$. Recalling
Proposition $\ref{P:5.3}$, it is not hard to see that
\begin{equation}\label{f:3.26}
\begin{aligned}
~I_1 + I_2
=^{\eqref{eq:5.2}} \varepsilon\bar{\phi}_j(y,\tau)\frac{\partial\varphi_j}{\partial t}
+ q_{ij}(y,\tau)\frac{\partial\varphi_j}{\partial x_i}.
\end{aligned}
\end{equation}
Then, in view of
the equations $\eqref{eq:5.1b}$ and $\eqref{eq:5.1c}$, the right-hand side
of $\eqref{f:3.26}$ can be rewritten as
\begin{equation}\label{f:3.27}
\begin{aligned}
&\qquad\qquad
\varepsilon\frac{\partial}{\partial y_{k}}\big[\tilde{\sigma}_{k(d+1)j}\big]
\frac{\partial\varphi_j}{\partial t}
+ \varepsilon\sum_{k=1}^{d}\pi_{kkj}\frac{\partial\varphi_j}{\partial t}
+ \frac{\partial}{\partial y_{k}}\big[\bar{\sigma}_{k ij}\big]
\frac{\partial\varphi_j}{\partial x_i}
+ \frac{\partial}{\partial \tau}\big[\tilde{\sigma}_{(d+1)ij}\big]
\frac{\partial\varphi_j}{\partial x_i}\\
& =\varepsilon^2\frac{\partial}{\partial x_{k}}\big[\tilde{\sigma}_{k(d+1)j}(y,\tau)\big]
\frac{\partial\varphi_j}{\partial t}
+\varepsilon\frac{\partial}{\partial x_{k}}
\big[\bar{\sigma}_{k ij}(y,\tau)\big]
\frac{\partial\varphi_j}{\partial x_i}
+ \varepsilon^2\frac{\partial}{\partial t}\big[\tilde{\sigma}_{(d+1)ij}(y,\tau)\big]
\frac{\partial\varphi_j}{\partial x_i}
+ \varepsilon\sum_{k=1}^{d}\pi_{kkj}\partial_t\varphi_j,
\end{aligned}
\end{equation}
where it is noted that $\frac{\partial \bar{\sigma}_{k ij}}{\partial y_{k}}=\frac{\partial \sigma_{k ij}}{\partial y_{k}}$
and $\frac{\partial\tilde{\sigma}_{(d+1)ij}}{\partial \tau}
=\frac{\partial \sigma_{(d+1)ij}}{\partial \tau}$; also recall
$\bar{\sigma}_{k ij}(y,\tau) := \sigma_{k ij}(y,\tau)-\beta(\tau)$,
and therefore $\partial_{y_k}\beta=0$. Again, by employing the antisymmetry of $\sigma$ (i.e., $\eqref{skew-symmetric}$), the second line of $\eqref{f:3.27}$
is equal to
\begin{equation*}
\begin{aligned}
\varepsilon^2\frac{\partial}{\partial x_{k }}\bigg\{\tilde{\sigma}_{k (d+1)j}(y,\tau)
\frac{\partial\varphi_j}{\partial t}\bigg\}
+\varepsilon\frac{\partial}{\partial x_{k}}
\bigg\{\bar{\sigma}_{k ij}(y,\tau)
\frac{\partial\varphi_j}{\partial x_i}\bigg\}
+\varepsilon^2\frac{\partial}{\partial t}
\bigg\{\tilde{\sigma}_{(d+1)ij}(y,\tau)\frac{\partial\varphi_j}{\partial x_i}\bigg\}
+ \varepsilon\sum_{k=1}^{d}\pi_{kkj}\partial_t\varphi_j.
\end{aligned}
\end{equation*}
Thus, it is obtained that
\begin{equation*}
\begin{aligned}
&I_1 + I_2 + I_3\\
&:= \varepsilon^2\frac{\partial}{\partial x_{k}}\bigg\{\tilde{\sigma}_{k(d+1)j}(y,\tau)
\frac{\partial\varphi_j}{\partial t}\bigg\}
+\varepsilon\frac{\partial}{\partial x_{k}}
\bigg\{\bar{\sigma}_{kij}(y,\tau)
\frac{\partial\varphi_j}{\partial x_i}\bigg\}
+\varepsilon^2\mathcal{L}_\varepsilon\big[\tilde{\sigma}_{l(d+1)j}(y,\tau)
\frac{\partial\varphi_j}{\partial x_l}\big]
+ \varepsilon\sum_{k=1}^{d}\pi_{kkj}\partial_t\varphi_j,
\end{aligned}
\end{equation*}
and this, together with the last line of $\eqref{f:3.25}$, implies the desired formula
$\eqref{eq:3.3}$.
\end{proof}

\subsection{Smoothing operators and distance weights}

\begin{definition}\label{def:1}
\emph{Fix $\eta\in C_0^\infty(B_{\frac{1}{2}}(0))$ with $\int_{\mathbb{R}^d}\eta = 1$. A smoothing
operator associated with the spatial variable is defined as
\begin{equation}\label{def:K}
 K_\varepsilon(f)(x,t) =
 \int_{\mathbb{R}^d}dy\eta_\varepsilon(x-y)f(y,t),
\end{equation}
where $\eta_\varepsilon(x)=\varepsilon^{-d}\eta(x/\varepsilon)$.
Let $\zeta\in C_0^\infty(Q_{\frac{1}{2}}(0))$ satisfy $\int_{\mathbb{R}^{d+1}}\zeta  = 1$.
A parabolic smoothing operator is defined as
\begin{equation}\label{def:S}
 S_\varepsilon(f)(x,t) = \int_{\mathbb{R}^{d+1}}
 dyds\zeta_\varepsilon(x-y,t-s)f(y,s),
\end{equation}
where $\zeta_\varepsilon(x,t)=\varepsilon^{-d-2}
\zeta(x/\varepsilon,t/\varepsilon^2)$.}
\end{definition}

\begin{lemma}\label{lemma:2.4}
Let $1\leq p<\infty$.
Assume $f\in C^{1}_{0}(\mathbb{R}^{d+1})$.
Then it holds that
\begin{equation}\label{pri:2.17}
\begin{aligned}
\big\|K_\varepsilon(f)\big\|_{L^p(\mathbb{R}^{d+1})}
&\leq C\big\|f\big\|_{L^p(\mathbb{R}^{d+1})};\\
\big\|\nabla K_\varepsilon(f)\big\|_{L^p(\mathbb{R}^{d+1})}
&\leq C\varepsilon^{-1}\big\|f\big\|_{L^p(\mathbb{R}^{d+1})};\\
\big\|\partial_t S_\varepsilon(f)\big\|_{L^p(\mathbb{R}^{d+1})}
&\leq C\varepsilon^{-2}\big\|f\big\|_{L^p(\mathbb{R}^{d+1})},
\end{aligned}
\end{equation}
as well as,
\begin{subequations}
\begin{align}
&\big\|f-K_\varepsilon(f)\big\|_{L^p(\mathbb{R}^{d+1})}
\leq C\varepsilon \big\|\nabla f\big\|_{L^p(\mathbb{R}^{d+1})};
\label{pri:2.13}\\
& \big\|f-S_\varepsilon(f)\big\|_{L^p(\mathbb{R}^{d+1})}
\leq C\varepsilon \big\|\nabla f\big\|_{L^p(\mathbb{R}^{d+1})}
+ C\varepsilon^2\big\|\partial_t f\big\|_{L^p(\mathbb{R}^{d+1})},
\label{pri:2.14}
\end{align}
\end{subequations}
where $C$ depends at most on $d$, $\zeta$, and $\eta$.
\end{lemma}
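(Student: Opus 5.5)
All five bounds are elementary convolution estimates, and the plan is to treat $K_\varepsilon$ and $S_\varepsilon$ together by viewing them as convolutions on $\mathbb{R}^{d+1}$. The operator $K_\varepsilon$ is convolution in $x$ alone, against $\eta_\varepsilon(x)\otimes\delta_0(t)$. Throughout, $f\in C^1_0(\mathbb{R}^{d+1})$ justifies differentiating under the integral and using the fundamental theorem of calculus pointwise; the constants then depend only on $d,\eta,\zeta$.

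\emph{First three bounds.} For the $L^p$ bound on $K_\varepsilon(f)$, fix $t$ and apply Young's inequality in $x$:
\[
\|\eta_\varepsilon*f(\cdot,t)\|_{L^p(\mathbb{R}^d)}\le\|\eta_\varepsilon\|_{L^1}\|f(\cdot,t)\|_{L^p(\mathbb{R}^d)}=\|\eta\|_{L^1}\|f(\cdot,t)\|_{L^p(\mathbb{R}^d)}.
\]
Taking the $L^p$ norm in $t$ gives the claim. For $\nabla K_\varepsilon(f)$, move the derivative onto the kernel, $\nabla K_\varepsilon f=(\nabla\eta_\varepsilon)*_x f$, and use $\|\nabla\eta_\varepsilon\|_{L^1(\mathbb{R}^d)}=\varepsilon^{-1}\|\nabla\eta\|_{L^1}$ by scaling. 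For $\partial_tS_\varepsilon(f)$, write $\partial_tS_\varepsilon f=(\partial_t\zeta_\varepsilon)*f$. Since $\zeta_\varepsilon=\varepsilon^{-d-2}\zeta(x/\varepsilon,t/\varepsilon^2)$, we have $\|\partial_t\zeta_\varepsilon\|_{L^1(\mathbb{R}^{d+1})}=\varepsilon^{-2}\|\partial_t\zeta\|_{L^1}$, and Young's inequality on $\mathbb{R}^{d+1}$ finishes.

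\emph{Approximation estimates \eqref{pri:2.13} and \eqref{pri:2.14}.} Use $\int\eta=1$ to write
\[
f(x,t)-K_\varepsilon f(x,t)=\int_{\mathbb{R}^d}\eta(y)\big(f(x,t)-f(x-\varepsilon y,t)\big)\,dy,
\]
and then
\[
f(x,t)-f(x-\varepsilon y,t)=\int_0^1\varepsilon y\cdot\nabla f(x-s\varepsilon y,t)\,ds.
\]
Minkowski's integral inequality and translation invariance of the $L^p$ norm give
\[
\|f-K_\varepsilon f\|_{L^p}\le\varepsilon\int|\eta(y)||y|\,dy\,\|\nabla f\|_{L^p}\le C\varepsilon\|\nabla f\|_{L^p},
\]
since $|y|\le1/2$ on $\operatorname{supp}\eta$. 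For $S_\varepsilon$ the same device works with the increment $(\varepsilon y,\varepsilon^2 s)$ for $(y,s)\in Q_{1/2}$. Split
\[
f(x,t)-f(x-\varepsilon y,t-\varepsilon^2 s)=\big[f(x,t)-f(x-\varepsilon y,t)\big]+\big[f(x-\varepsilon y,t)-f(x-\varepsilon y,t-\varepsilon^2 s)\big].
\]
The first bracket is bounded through $\nabla f$ with factor $\varepsilon|y|$, and the second through $\partial_tf$ with factor $\varepsilon^2|s|$. Minkowski and translation invariance again yield $C\varepsilon\|\nabla f\|_{L^p}+C\varepsilon^2\|\partial_tf\|_{L^p}$.

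\emph{Main point to watch.} There is no real obstacle; the one thing to keep straight is the parabolic scaling. Under the change of variables $(x,t)=(\varepsilon y,\varepsilon^2 s)$, a time derivative of $\zeta_\varepsilon$ costs $\varepsilon^{-2}$ while the time increment gains $\varepsilon^2$. Tracking this correctly gives the $\varepsilon^{-2}$ in the third inequality and the $\varepsilon^2$ weight on $\partial_tf$ in \eqref{pri:2.14}. The case $p=1$ is included, since Young and Minkowski hold for all $1\le p<\infty$.
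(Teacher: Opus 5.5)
Your proposal is correct and follows essentially the route the paper takes: the paper omits this proof by referring to Lemma~\ref{lemma:2.5}, whose Step~2 uses your increment argument — it splits $g(X)-g(X-\varepsilon Y)$ into a spatial difference at fixed time and a temporal difference at the shifted spatial point, bounds each by the fundamental theorem of calculus (gaining $\varepsilon$ and $\varepsilon^2$ respectively), and then integrates, while the first three bounds are the routine Young-inequality and scaling estimates you give. The only cosmetic difference is that the paper handles the averaging step with H\"older's inequality, which it needs for the weights, whereas your Minkowski-plus-translation-invariance argument is the natural unweighted version of the same step.
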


\begin{proof}
The proof is analogous to that given in Lemma $\ref{lemma:2.5}$, and is therefore omitted. Also, we refer the reader to the literature \cite{Geng-Shen-17,Shen18,Xu-Zhou-17} for the basic idea.
\end{proof}

\begin{lemma}[Weighted-type estimates]\label{lemma:2.5}
Let $1< p<\infty$, and $f\in C^{1}_{0}(\Omega_T)$ be supported in $\Sigma_{2\varepsilon}^T$.
Then, for any $\alpha\in\mathbb{R}$, it holds that
\begin{subequations}
\begin{align}
&\|\delta^{\frac{\alpha}{p}} K_\varepsilon(f)\|_{L^p(\Sigma_{2\varepsilon}^T)}
 \lesssim\|\delta^{\frac{\alpha}{p}} f\|_{L^p(\Sigma_{2\varepsilon}^T)};
 \label{pri:2.8}\\
&\|\delta^{\frac{\alpha}{p}} \nabla K_\varepsilon(f)\|_{L^p(\Sigma_{2\varepsilon}^T)}
 \lesssim \varepsilon^{-1}\|\delta^{\frac{\alpha}{p}}f\|_{L^p(\Sigma_{2\varepsilon}^T)},
 \label{pri:2.9}
\end{align}
\end{subequations}
where $\delta(z):= \text{dist}(z,\partial_p\Omega_T)$ or $[\mu_d(z)]^{\frac{p}{\alpha}}$, as given in $\eqref{pri:5.5**}$, and
\begin{subequations}
\begin{align}
&\|\delta^{\frac{\alpha}{p}}(K_\varepsilon(f)-f)
\|_{L^p(\Sigma_{2\varepsilon}^T)}
\lesssim \varepsilon
\|\delta^{\frac{\alpha}{p}}\nabla f\|_{L^p(\Sigma_{\varepsilon}^T)};
\label{pri:2.10}\\
&\|\delta^{\frac{\alpha}{p}}(S_\varepsilon(f)-f)
\|_{L^p(\Sigma_{2\varepsilon}^T)}
\lesssim \varepsilon
\|\delta^{\frac{\alpha}{p}}\nabla f\|_{L^p(\Sigma_{\varepsilon}^T)}
+ \varepsilon^2
\|\delta^{\frac{\alpha}{p}}\partial_t f\|_{L^p(\Sigma_{\varepsilon}^T)},
\label{pri:2.12}
\end{align}
\end{subequations}
where the multiplicative constant depends at most on $d$, $\eta$, $\alpha$, $p$, $\eta$, and $\zeta$.
\end{lemma}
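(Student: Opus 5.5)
The plan is to treat all four estimates pointwise, using the compact support of the kernels together with the fact that on $\Sigma_{2\varepsilon}^T$ the weight $\delta$ is essentially constant on parabolic cubes of size $\varepsilon$. The key observation is the following. If $z\in\Sigma_{2\varepsilon}^T$ and $w\in Q_{\varepsilon}(z)$, then $\delta(w)\geq \delta(z)-\varepsilon\geq \delta(z)/2$ and $\delta(w)\leq 2\delta(z)$. For $\delta=\mathrm{dist}(\cdot,\partial_p\Omega_T)$ this uses that $\delta(z)\geq 2\varepsilon$ and the triangle inequality for the parabolic distance. For $\delta=[\mu_d]^{p/\alpha}$ the weight $\delta^{\alpha/p}=\mu_d(\mathrm{d}(\cdot,0))$ is a slowly varying logarithm with $\mu_d(r+\varepsilon)\sim\mu_d(r)$ uniformly. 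Consequently, for every $\alpha\in\mathbb{R}$,
\begin{equation*}
\delta^{\frac{\alpha}{p}}(w)\sim_{\alpha,p}\delta^{\frac{\alpha}{p}}(z)\qquad\forall w\in Q_\varepsilon(z),\ z\in\Sigma_{2\varepsilon}^T,
\end{equation*}
and this comparability absorbs the sign of $\alpha$.

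For \eqref{pri:2.8} and \eqref{pri:2.9}, write $K_\varepsilon(f)(z)=\int\eta_\varepsilon(x-y)f(y,t)\,dy$ and $\nabla K_\varepsilon(f)(z)=\int\nabla\eta_\varepsilon(x-y)f(y,t)\,dy$. The integration runs over $|x-y|<\varepsilon/2$, so the weight can be moved inside the integral at the cost of a constant. Then Hölder's inequality, using $\|\eta_\varepsilon\|_{L^1}\lesssim1$ and $\|\nabla\eta_\varepsilon\|_{L^1}\lesssim\varepsilon^{-1}$, followed by Fubini, gives the bound by $\|\delta^{\alpha/p}f\|_{L^p}$, with the extra factor $\varepsilon^{-1}$ in the gradient case. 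This is Young's convolution inequality with a locally frozen weight. Since $f$ is supported in $\Sigma_{2\varepsilon}^T$, the $y$-integration only sees points of $\mathrm{supp} f$.

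For \eqref{pri:2.10}, use
\begin{equation*}
K_\varepsilon(f)(x,t)-f(x,t)=\int\eta_\varepsilon(x-y)\big(f(y,t)-f(x,t)\big)\,dy=-\int_0^1\!\!\int\eta_\varepsilon(x-y)(x-y)\cdot\nabla f\big(x+s(y-x),t\big)\,dy\,ds.
\end{equation*}
Here $|x-y|\leq\varepsilon$, and every point $x+s(y-x)$ lies in $Q_\varepsilon(z)$, hence in $\Sigma^T_{\varepsilon}$, where the weight is comparable. Minkowski's inequality in $s$ and $y$ followed by a change of variables then yields the factor $\varepsilon\|\delta^{\alpha/p}\nabla f\|_{L^p(\Sigma_\varepsilon^T)}$.

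For \eqref{pri:2.12}, split $f(y,s)-f(x,t)=[f(y,s)-f(x,s)]+[f(x,s)-f(x,t)]$. The spatial part is treated exactly as above. The temporal part is written as $\int_s^t\partial_t f(x,\tau)\,d\tau$ with $|t-s|\leq\varepsilon^2$, which produces the term $\varepsilon^2\|\delta^{\alpha/p}\partial_tf\|_{L^p}$ in the same way.

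The main obstacle is justifying the weight comparability near the boundary of the region of integration. There one must check that the shifted points stay inside $\Sigma_\varepsilon^T$, which is where the hypothesis $\mathrm{supp} f\subset\Sigma_{2\varepsilon}^T$ and the right-hand domain $\Sigma_\varepsilon^T$ enter. For the $\mu_d$ weight one must also check the uniform doubling $\mu_d(r+\varepsilon)\lesssim\mu_d(r)$; this is immediate because $\ln(2+r+\varepsilon)\leq 2\ln(2+r)$. The constants depend only on $d,\eta,\zeta,\alpha,p$, as stated.
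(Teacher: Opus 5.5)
Your proposal is correct and is essentially the paper's argument. Both rest on the weight being comparable to a constant on $\varepsilon$-scale neighbourhoods of points of $\Sigma_{2\varepsilon}^T$, combined with the fundamental theorem of calculus along a spatial and a temporal segment and Fubini. The only cosmetic difference is that the paper moves the weight through H\"older's inequality with the splitting $\delta^{\alpha}\cdot\delta^{-\alpha/(p-1)}$ and the averaged comparability \eqref{f:2.13}, \eqref{f:3.41-ap}, whereas you freeze it pointwise and apply Young's/Minkowski's inequality.

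One small correction concerns your key observation. For $w\in Q_\varepsilon(z)$ the parabolic distance $|x-y|+|t-s|^{1/2}$ can approach $2\varepsilon$, so the triangle inequality alone only gives $\delta(w)>\delta(z)-2\varepsilon\geq 0$. The bound $\delta(w)\geq\delta(z)-\varepsilon$ you need follows either from the componentwise form $\delta=\min\{\mathrm{dist}(x,\partial\Omega),t^{1/2},(T-t)^{1/2}\}$ (each entry changes by less than $\varepsilon$ on $Q_\varepsilon(z)$), or simply from the kernels being supported in $Q_{\varepsilon/2}(z)$.
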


\begin{proof}
The above results generalize those stated in \cite[Lemmas 3.2, 3.3]{Xu-16}.
Since the estimates $\eqref{pri:2.9}$ and $\eqref{pri:2.10}$
can be proved in a manner similar to the proofs of $\eqref{pri:2.8}$ and $\eqref{pri:2.12}$, respectively, only
the corresponding proofs are shown, organized in two steps.

\textbf{Step 1.} Arguments for $\eqref{pri:2.8}$.
For any $X=(x,t)\in\Sigma_{2\varepsilon}^T$,
the following assertion is stated as a starting point:
for any $\alpha\in\mathbb{R}$, there exist $C_1$ and $C_2$, depending
only on $\alpha$, such that
\begin{equation}\label{f:2.13}
C_1\delta^\alpha(X)\leq
\dashint_{B_\varepsilon(x)}\delta^\alpha(\cdot,t)
\leq C_2 \delta^{\alpha}(X),
\end{equation}
due to the Lipschitz continuity of the distance function $\delta$ and the fact that $\delta(X)\geq 2\varepsilon$. Then,
it follows from
H\"older's inequality that
\begin{equation}\label{f:2.14}
\begin{aligned}
\big|K_{\varepsilon}(f)(X)\big|^p
&\leq \int_{\mathbb{R}^d}\eta_{\varepsilon}(x-\cdot)|f(\cdot,t)|^{p}
\delta^{\alpha}(\cdot,t)
\Big(\int_{\mathbb{R}^d}\eta_{\varepsilon}(x-\cdot)
\delta^{-\frac{\alpha}{p-1}}(\cdot,t)\Big)^{p-1}\\
&\lesssim  \int_{\mathbb{R}^d}\eta_{\varepsilon}(x-\cdot)|f(\cdot,t)|^{p}
\delta^{\alpha}(\cdot,t)
\Big(\dashint_{B_\varepsilon(x)}
\delta^{-\frac{\alpha}{p-1}}(\cdot,t)\Big)^{p-1} \\
&\lesssim^{\eqref{f:2.13}} \delta^{-\alpha}(x,t)
\int_{\mathbb{R}^d}\eta_{\varepsilon}(x-\cdot)|f(\cdot,t)|^{p}
\delta^{\alpha}(\cdot,t).
\end{aligned}
\end{equation}
This provides with the following:
\begin{equation*}
\begin{aligned}
\int_{\Sigma_{2\varepsilon}^T}dX
\big|K_{\varepsilon}(f)(X)\big|^p\delta^{\alpha}(X)
&\lesssim^{\eqref{f:2.14}}
\int_{\Sigma_{2\varepsilon}^T}dX
\int_{\mathbb{R}^d}\eta_{\varepsilon}(x-\cdot)|f(\cdot,t)|^{p}
\delta^{\alpha}(\cdot,t)\\
&\lesssim \int_{4\varepsilon^2}^{T-4\varepsilon^2}dt
\int_{\Sigma_\varepsilon}|f(\cdot,t)|^{p}
\delta^{\alpha}(\cdot,t)
= \int_{\Sigma_{\varepsilon}^T}dX|f(X)|^p\delta^{\alpha}(X),
\end{aligned}
\end{equation*}
where we recall $\Sigma_\varepsilon:=\{x\in\Omega:\text{dist}(x,\partial\Omega)\geq \varepsilon\}$, and this gives the stated estimate $\eqref{pri:2.8}$.

\textbf{Step 2.}
Arguments for $\eqref{pri:2.12}$.
For any $Y=(y,\tau)\in\mathbb{R}^{d+1}$ satisfying $|y|\leq 1$
and $|\tau|\leq 1$,
let $X=(x,t)\in\Sigma_{2\varepsilon}^{T}$, and set
$\varepsilon Y:=(\varepsilon y,\varepsilon^2\tau)$.
It is not hard to see that
\begin{equation*}
\begin{aligned}
&\big|g(X)-g(X-\varepsilon Y)\big|^p
\leq \varepsilon^p\int_0^1|\nabla g(x+(s-1)\varepsilon y,t)|^p ds
+ \varepsilon^{2p}
\int_0^1|\partial_t g(x-\varepsilon y,t+(\theta-1)\varepsilon^2\tau)|^p d\theta.
\end{aligned}
\end{equation*}

Then, integrating with respect to $X$,
we obtain that
\begin{equation}\label{f:2.5}
\begin{aligned}
&\int_{\Sigma_{2\varepsilon}^T}dX\big|g(X)-g(X-\varepsilon Y)\big|^p
\delta^{\alpha}(X-\varepsilon Y) \\
&\lesssim_\alpha \varepsilon^p
\int_{\Sigma_{2\varepsilon}^T}dX \delta^{\alpha}(X) \int_0^1 ds\big|\nabla g(X+\beta_s \varepsilon Y)
\big|^p
 + \varepsilon^{2p}
\int_{\Sigma_{2\varepsilon}^T}dX\delta^{\alpha}(X)
\int_0^1d\theta\big|\partial_t g(X-\alpha_\theta\varepsilon Y)\big|^p  \\
&\lesssim_\alpha
\varepsilon^p\int_{\Sigma_{\varepsilon}^T}dZ\delta^{\alpha}(Z)\big|\nabla g(Z)
\big|^p
+\varepsilon^{2p}\int_{\Sigma_{\varepsilon}^T}dZ
\delta^{\alpha}(Z)\big|\partial_t g(Z)
\big|^p,
\end{aligned}
\end{equation}
where $\beta_s \varepsilon Y=:((s-1)\varepsilon y,0)$
and $\alpha_\theta \varepsilon Y=:(\varepsilon y,(1-\theta)\varepsilon^2\tau)$, and it is not hard to
observe that
$X+\beta_s \varepsilon Y, X-\alpha_\theta\varepsilon Y\in \Sigma_\varepsilon^T$. In the estimate $\eqref{f:2.5}$, we employ
the fact that $\delta^\alpha(X)\sim \delta^{\alpha}(X-\varepsilon \gamma Y)$ provided $|\gamma|\leq 1$ and $\text{d}(Y,0)\leq 1$.
Before continuing the proof, let us make a preparation similar to $\eqref{f:2.13}$.
For any $\alpha\in\mathbb{R}$,
there exist $C_1$ and $C_2$, depending only on
$\alpha$, such that
\begin{equation}\label{f:3.41-ap}
 C_1\delta^{\alpha}(X)
 \leq \dashint_{Q_\varepsilon(X)}\delta^{\alpha}
 \leq C_2\delta^{\alpha}(X)
\end{equation}
holds for any $X\in \Omega_T$ satisfying $\delta(X)\geq 2\varepsilon$.

Now, it follows from H\"older's inequality and
Fubini's  theorem that
\begin{equation*}
\begin{aligned}
&\int_{\Sigma_{2\varepsilon}^T}dX\big|g(X)-S_\varepsilon(g)(X)\big|^p\delta^{\alpha}(X)
= \int_{\Sigma_{2\varepsilon}^T}dX
\Big|\int_{\mathbb{R}^{d+1}}dY\zeta_\varepsilon(X-Y)\big(g(X)-g(Y)\big)\Big|^p
\delta^{\alpha}(X)\\
&\lesssim \int_{\Sigma_{2\varepsilon}^T}dX
\Big(\dashint_{Q_\varepsilon(X)}\delta^{-\frac{\alpha}{p-1}}\Big)^{p-1}
\delta^{\alpha}(X)
\int_{\mathbb{R}^{d+1}}dY \zeta_\varepsilon(X-Y)\big|g(X)-g(Y)\big|^p
\delta^{\alpha}(Y)  \\
& \lesssim^{\eqref{f:3.41-ap}} \int_{Q_1}dY \zeta(Y)\int_{\Sigma_{2\varepsilon}^T}dX\big|g(X)-g(X-\varepsilon Y)\big|^p
\delta^{\alpha}(X-\varepsilon Y)\\
&\lesssim^{\eqref{f:2.5}}
\varepsilon^p\int_{\Sigma_\varepsilon^T}dZ\big|\nabla g(Z)
\big|^p\delta^\alpha(Z)
+\varepsilon^{2p}\int_{\Sigma_\varepsilon^T}dZ\big|\partial_t g(Z)
\big|^p\delta^{\alpha}(Z) ,
\end{aligned}
\end{equation*}
which implies the desired estimate $\eqref{pri:2.12}$,
and we have completed the whole proof.
\end{proof}

\begin{lemma}[Random cancellation]\label{lemma:5}
Let $\Omega_{T}\subset\mathbb{R}^{d+1}$ be a subset
with $d\geq 2$ and $T>0$ (not necessary to be a bounded parabolic cylinder). Let $p\in[1,\infty)$ and $r\in[2,\infty)$.
For any $\beta\in [(p\vee r),\infty)$, assume that the random field $\varpi$ satisfies
\begin{equation}\label{pri:5.8}
 \Big\langle\big(\dashint_{Q_1(z)}|\varpi|^r\big)^{\frac{\beta}{r}}
 \Big\rangle^{\frac{1}{\beta}}
 \leq c_0 \theta(z)
 \qquad\forall z\in\mathbb{R}^{d+1}.
\end{equation}
Then, for any $r\leq q<\infty$, $\alpha\in\mathbb{R}$, and $f\in C^{1}_{0}(\Omega_T)$ supported in $\Sigma_{2\varepsilon}^T$, it holds that
\begin{subequations}
\begin{align}
& \int_{\Omega_T}dz \Big\langle\big(\dashint_{U_\varepsilon(z)}
 |\varpi^\varepsilon S_\varepsilon(f)|^r\big)^{\frac{p}{r}}\Big\rangle^{\frac{q}{p}}
 \delta^{\alpha}(z)
  \lesssim_{d,c_0}
 \int_{\Omega_T} |f|^q\theta^q(\cdot/\varepsilon)
 \delta^{\alpha};
 \label{pri:3.42a}\\
& \int_{\Omega_T}dz
\Big\langle\big(\dashint_{U_\varepsilon(z)}
 |\varpi^\varepsilon \nabla  S_\varepsilon(f)|^r\big)^{\frac{p}{r}}\Big\rangle^{\frac{q}{p}}
 \delta^{\alpha}(z)
  \lesssim_{d,c_0}
 \varepsilon^{-q}\int_{\Omega_T} |f|^q\theta^q(\cdot/\varepsilon)
 \delta^{\alpha};
 \label{pri:3.42b}\\
& \Big\langle\Big(\int_{\Omega_T} dz
 |\varpi^\varepsilon(z)S_\varepsilon(f)(z)|^2
 \delta^{\alpha}(z)\Big)^{\frac{p}{2}}\Big\rangle^{\frac{2}{p}}
 \lesssim_{d,c_0}
 \int_{\Omega_T} |f|^2\theta^2(\cdot/\varepsilon)\delta^{\alpha};
 \label{pri:3.42c} \\
& \Big\langle\Big(\int_{\Omega_T} dz
 |\varpi^\varepsilon(z)\nabla S_\varepsilon(f)(z)|^2
 \delta^{\alpha}(z)\Big)^{\frac{p}{2}}\Big\rangle^{\frac{2}{p}}
 \lesssim_{d,c_0}
 \varepsilon^{-2}
 \int_{\Omega_T} |f|^2\theta^2(\cdot/\varepsilon)\delta^{\alpha},
 \label{pri:3.42d}
\end{align}
\end{subequations}
where it is recalled that $\delta(z):=\text{dist}(z,\partial_p\Omega_T)$, $U_\varepsilon(z)
:=Q_\varepsilon(z)\cap\Omega_T$, and $S_\varepsilon$ is given in
Definition $\ref{def:1}$.

\end{lemma}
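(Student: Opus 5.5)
The plan is to exploit that $S_\varepsilon(f)$ is essentially constant on parabolic cubes of size $\varepsilon$, with local values controlled by averages of $|f|$ over $Q_\varepsilon$-neighborhoods. The estimates then follow from the local moment bound \eqref{pri:5.8} on $\varpi$, stationarity-free scaling, and the comparability of $\delta^\alpha$ over $\varepsilon$-cubes in $\Sigma_\varepsilon^T$, as in \eqref{f:3.41-ap}.

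Since $f$ is supported in $\Sigma_{2\varepsilon}^T$ and $\zeta_\varepsilon$ is supported in $Q_{\varepsilon/2}$, for $z'\in U_\varepsilon(z)$ we have the pointwise bound
\begin{equation*}
|S_\varepsilon(f)(z')|\lesssim \dashint_{Q_{2\varepsilon}(z)}|f| =: \bar f(z).
\end{equation*}
Similarly, $|\nabla S_\varepsilon(f)(z')|\lesssim \varepsilon^{-1}\bar f(z)$, because $\nabla\zeta_\varepsilon$ carries one extra factor $\varepsilon^{-1}$. Since $\bar f$ is deterministic, for \eqref{pri:3.42a} it follows that
\begin{equation*}
\Big\langle\big(\dashint_{U_\varepsilon(z)}|\varpi^\varepsilon S_\varepsilon(f)|^r\big)^{\frac pr}\Big\rangle^{\frac1p}
\lesssim \bar f(z)\Big\langle\big(\dashint_{Q_1(z/\varepsilon)}|\varpi|^r\big)^{\frac pr}\Big\rangle^{\frac1p}
\lesssim c_0\,\bar f(z)\,\theta(z/\varepsilon),
\end{equation*}
using the change of variables $Q_\varepsilon(z)\mapsto Q_1(z/\varepsilon)$, $p\le\beta$, and \eqref{pri:5.8}. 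Raising this to the power $q$, multiplying by $\delta^\alpha$ and integrating, Jensen gives $\bar f^q\le \dashint_{Q_{2\varepsilon}(z)}|f|^q$. We then apply Fubini. Because $\theta(z/\varepsilon)$ must be transferred to $\theta(z'/\varepsilon)$ for $z'\in Q_{2\varepsilon}(z)$, we need $\theta$ to be comparable on unit cubes. This holds for the intended weights ($\theta=1$ or $\bar\mu_d$, which are slowly varying); otherwise one replaces $\theta$ by its local supremum, which is harmless. For the weight, $\delta^\alpha(z)\sim\delta^\alpha(z')$ on the support, since there $\delta\ge 2\varepsilon$ (cf. \eqref{f:3.41-ap}). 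The points $z$ with $U_\varepsilon(z)$ meeting the support also satisfy $\delta(z)\gtrsim\varepsilon$, so the comparison is legitimate. This gives \eqref{pri:3.42a}, and \eqref{pri:3.42b} follows identically with the extra factor $\varepsilon^{-q}$.

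For \eqref{pri:3.42c}–\eqref{pri:3.42d}, the plan is to use Minkowski's inequality in $L^{p/2}(\langle\cdot\rangle)$, valid for $p\ge2$; the case $p<2$ follows from Hölder in probability. This gives
\begin{equation*}
\Big\langle\Big(\int|\varpi^\varepsilon S_\varepsilon f|^2\delta^\alpha\Big)^{\frac p2}\Big\rangle^{\frac2p}\le \int_{\Omega_T}dz\,\delta^\alpha(z)\Big\langle\dashint_{U_\varepsilon(z)}|\varpi^\varepsilon S_\varepsilon f|^2\ \text{raised suitably}\Big\rangle.
\end{equation*}
More precisely, we first write $\int g = \int dz\dashint_{Q_\varepsilon(z)} g$ (up to the $\delta^\alpha$ comparability), then use Minkowski to move $\langle(\cdot)^{p/2}\rangle^{2/p}$ inside the $dz$ integral. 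This reduces the claim to the case $q=2$ with exponent $r=2$ of the previous step. Since $r\ge2$, Hölder on the cube allows replacing $r$-averages by $2$-averages of $\varpi$. The gradient version again carries the factor $\varepsilon^{-2}$.

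The main obstacle is purely the bookkeeping near the boundary of the support. One must check that $U_\varepsilon(z)\cap\Omega_T$ versus the full cube $Q_\varepsilon(z)$ does not spoil the averaging. This is handled by noting that $S_\varepsilon f$ vanishes unless $Q_\varepsilon(z)$ is within distance $\varepsilon$ of $\Sigma_{2\varepsilon}^T$, in which case $Q_\varepsilon(z)\subset\Omega_T$ and $U_\varepsilon(z)=Q_\varepsilon(z)$. The other delicate point is the comparability of $\theta(\cdot/\varepsilon)$ and $\delta^\alpha$ at scale $\varepsilon$, which I would state explicitly as an assumption satisfied by all weights used.
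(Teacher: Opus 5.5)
\textbf{Gap.} Your argument proves the lemma only under an extra hypothesis on $\theta$ that the statement does not contain, and the step you call harmless is where it breaks.

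Bounding $|S_\varepsilon(f)|$ on $U_\varepsilon(z)$ by the deterministic average $\bar f(z)=\dashint_{Q_{2\varepsilon}(z)}|f|$ decouples the location of $f$ from the location of $\varpi$. As a result, \eqref{pri:5.8} can only be applied on $Q_1(z/\varepsilon)$, which gives $\theta(z/\varepsilon)$ at the centre. The right-hand side of \eqref{pri:3.42a}, however, needs $\theta(z'/\varepsilon)$ at the point $z'$ where $|f(z')|^q$ sits.

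After Jensen and Fubini you are left with $\int|f(z')|^q\,\delta^\alpha(z')\dashint_{Q_{2\varepsilon}(z')}\theta^q(\cdot/\varepsilon)$, or with $\sup_{Q_2(z'/\varepsilon)}\theta$. For a general $\theta$ this is a different and weaker estimate. Nothing in \eqref{pri:5.8} lets you compare $\theta$ at nearby points, since unit cubes with different centres are not nested. So replacing $\theta$ by its local supremum changes the conclusion rather than being harmless.

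A telltale sign is that your route never uses $\beta\ge r$ or $q\ge r$. For the weights actually used later in the paper ($\theta\equiv1$ and $\theta=\bar\mu_d$, which is slowly varying on unit cubes) your argument does suffice. It does not, however, prove the lemma as stated, and your reduction of \eqref{pri:3.42c}--\eqref{pri:3.42d} to the first part inherits the same defect.

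\textbf{Missing idea.} Keep $f$ and $\varpi$ coupled through the kernel, as the paper does.

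Since $\zeta_\varepsilon$ has unit mass, H\"older gives $|S_\varepsilon(f)|^r\lesssim S_\varepsilon(|f|^r)$. For the gradient, $|\nabla S_\varepsilon(f)|^r\lesssim\varepsilon^{-r}\tilde S_\varepsilon(|f|^r)$, where $\tilde S_\varepsilon$ has kernel $|\nabla\zeta|$. Fubini then moves the kernel onto $|\varpi^\varepsilon|^r$:
\begin{equation*}
\dashint_{U_\varepsilon(z)}|\varpi^\varepsilon|^r\,S_\varepsilon(|f|^r)
\lesssim\dashint_{Q_{2\varepsilon}(z)}dz'\,|f(z')|^r\dashint_{Q_{1/2}(z'/\varepsilon)}|\varpi|^r .
\end{equation*}
Next apply Minkowski in $L^{\beta/r}(\langle\cdot\rangle)$; this is where $\beta\ge r$ enters, together with $p\le\beta$. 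Then \eqref{pri:5.8} at the point $z'/\varepsilon$ places exactly $c_0^r\theta^r(z'/\varepsilon)$ next to $|f(z')|^r$. Jensen (using $q\ge r$), Fubini, and $\dashint_{Q_{2\varepsilon}}\delta^\alpha\sim\delta^\alpha$ on the support then give \eqref{pri:3.42a}--\eqref{pri:3.42b} with no regularity assumption on $\theta$.

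For \eqref{pri:3.42c}, the same coupling comes from Cauchy--Schwarz in the kernel:
\begin{equation*}
|S_\varepsilon(f)|^2\delta^\alpha\le S_\varepsilon(|f|^2\delta^\alpha)\,S_\varepsilon(\delta^{-\alpha})\,\delta^\alpha\lesssim S_\varepsilon(|f|^2\delta^\alpha).
\end{equation*}
Fubini then yields $\int|f|^2\delta^\alpha\dashint_{Q_1(\cdot/\varepsilon)}|\varpi|^2$, and Minkowski in probability finishes the argument.

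\textbf{Minor point.} Your claim that $Q_\varepsilon(z)\subset\Omega_T$ whenever $U_\varepsilon(z)$ meets the support of $S_\varepsilon(f)$ is off by constants in the spatial direction. This only affects normalising factors.
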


\begin{proof}
The proof is totally deterministic, and the basic idea is similar
to that shown in Lemma $\ref{lemma:2.5}$. To strike a balance between brevity and completeness, we only provide the proofs of estimates $\eqref{pri:3.42b}$ and $\eqref{pri:3.42d}$ correspondingly.

\textbf{Step 1.} Arguments for $\eqref{pri:3.42b}$.
Let $\zeta$ be given as in Definition $\ref{def:1}$.
If we replace $\zeta$ with $|\nabla\zeta|$ in
the formula $\eqref{def:S}$, we denote
the associated smoothing operator by $\tilde{S}_\varepsilon$.
For any $z\in\Omega_T$, it follows from H\"older's inequality
and Fubini's theorem that
\begin{equation}\label{f:3.40-ap}
\begin{aligned}
\dashint_{U_\varepsilon(x)}
|\varpi^\varepsilon
 \nabla S_\varepsilon(f)|^r
\lesssim \varepsilon^{-r} \dashint_{U_\varepsilon(x)}
|\varpi^\varepsilon|^r
\tilde{S}_\varepsilon(|f|^r)
\lesssim \varepsilon^{-r} \dashint_{U_\varepsilon(x)}
\tilde{S}_\varepsilon\big(|\varpi^\varepsilon|^r\big)
|f|^r.
\end{aligned}
\end{equation}
Let $d\mu(z):=\delta^{\alpha}(z)dz$, and we further derive that
\begin{equation*}
\begin{aligned}
&\int_{\Omega_T}d\mu(z)
\Big\langle\big(\dashint_{U_\varepsilon(z)}
|\varpi^\varepsilon \nabla
 S_\varepsilon(f)|^r\big)^{\frac{p}{r}}
\Big\rangle^{\frac{q}{p}}
\lesssim^{\eqref{f:3.40-ap}}
\varepsilon^{-q}\int_{\Omega_T}d\mu(z)\bigg\langle\Big(
 \dashint_{U_\varepsilon(z)}
\tilde{S}_\varepsilon\big(|\varpi^\varepsilon|^r\big)|f|^r\Big)^{\frac{p}{r}}
\bigg\rangle^{\frac{q}{p}}\\
&\lesssim \varepsilon^{-q}
\int_{\Omega_T}d\mu(z)
\bigg(\dashint_{U_\varepsilon(z)}
|f|^2
\Big\langle\big(\tilde{S}_\varepsilon(
|\varpi^\varepsilon|^r)\big)^{\frac{\beta}{r}}
\Big\rangle^{\frac{r}{\beta}}\bigg)^{\frac{q}{r}}\\
&\lesssim^{\eqref{pri:5.8}}
\varepsilon^{-q}
\int_{\Omega_T}d\mu(z)
\bigg(\dashint_{U_\varepsilon(z)}
|f|^r\theta^r(\cdot/\varepsilon)\bigg)^{\frac{q}{r}}
\lesssim
\varepsilon^{-q}
\int_{\Omega_T}dz
|f(z)|^q\theta^q(\cdot/\varepsilon) \dashint_{Q_\varepsilon(z)}\delta^{\alpha}\\
&\lesssim^{\eqref{f:3.41-ap}}
\varepsilon^{-q}
\int_{\Omega_T}
|f|^q \theta^q(\cdot/\varepsilon)\delta^{\alpha},
\end{aligned}
\end{equation*}
where we employ Minkowski's inequality in the second inequality,
and H\"older's inequality with Fubini's theorem in the fourth inequality.
By the same token, one can establish the estimate
$\eqref{pri:3.42a}$.

\textbf{Step 2.}  Arguments for $\eqref{pri:3.42c}$. By the definition of $S_\varepsilon$ in $\eqref{def:S}$,
it can be obtained that
\begin{equation*}
\begin{aligned}
\int_{\Omega_T}dz|\varpi^\varepsilon(z)|^2
&|S_\varepsilon(f)(z)|^2 \delta^\alpha(z)
\leq  \int_{\Omega_T}dz
|\varpi^\varepsilon(z)|^2 S_{\varepsilon}(|f|^2\delta^{\alpha})(z)
S_{\varepsilon}(\delta^{-\alpha})(z) \delta^\alpha(z)\\
&\lesssim^{\eqref{f:3.41-ap}}
\int_{\Omega_T}dz
|\varpi^\varepsilon(z)|^2 S_{\varepsilon}(|f|^2\delta^{\alpha})(z)
\lesssim \int_{\Omega_T}dz |f(z)|^2\delta^{\alpha}(z) \dashint_{Q_1(z/\varepsilon)}|\varpi|^2.
\end{aligned}
\end{equation*}
Then, by taking $\langle|\cdot|^{\frac{p}{2}}\rangle^{\frac{2}{p}}$ on
the both sides above and then applying Minkowski's inequality
to its right-hand side, it is derived that
\begin{equation*}
\begin{aligned}
\Big\langle\Big(\int_{\Omega_T}dz|\varpi^\varepsilon(z)|^2
&|S_\varepsilon(f)(z)|^2 \delta^\alpha(z)\Big)^{\frac{p}{2}}
\Big\rangle^{\frac{2}{p}}
\lesssim \Big\langle\Big(\int_{\Omega_T}dz |f(z)|^2\delta^{\alpha}(z) \dashint_{Q_1(z/\varepsilon)}|\varpi|^2\Big)^{\frac{\beta}{2}}
\Big\rangle^{\frac{2}{\beta}}\\
&\lesssim \int_{\Omega_T}dz |f(z)|^2\delta^{\alpha}(z) \Big\langle\Big(\dashint_{Q_1(z/\varepsilon)}|\varpi|^2
\Big)^{\frac{\beta}{2}}
\Big\rangle^{\frac{2}{\beta}}
\lesssim^{\eqref{pri:5.8}}
\int_{\Omega_T}dz |f(z)|^2\delta^{\alpha}(z)\theta^2(z/\varepsilon),
\end{aligned}
\end{equation*}
where the fact that $\beta\geq p$ is employed in the first inequality, and this yields the stated estimate $\eqref{pri:3.42c}$. We have completed
the whole proof.
\end{proof}

\subsection{Proof of Lemma $\ref{lemma:approxi}$}

\noindent
By a rescaling argument, it may be assumed without loss of generality that  $r=1$.
Let $\Psi_{\delta}\in C_0^{2}(D_2)$ with $0< \delta\leq  1/2$ (to be fixed later) be a cut-off function satisfying $\Psi_{\delta} = 1$ on $D_{2-4\delta}$,
$\Psi_{\delta} = 0$ outside $D_{2-\delta}$ and
\begin{equation*}
(1/\delta)|\nabla\Psi_{\delta}|+|\nabla^2\Psi_{\delta}|+|\partial_t\Psi_{\delta}|
\lesssim_d 1/\delta^2
\qquad\text{on}
\quad D_{2-\delta}\setminus D_{2-2\delta}.
\end{equation*}
Let $\bar{u}_1$
be given by the equations: $\big(\partial_t+\mathcal{L}_0\big)(\bar{u}_1) = 0$ in $D_{2}$ with $\bar{u}_1 = u_\varepsilon$ on $\partial_p  D_{2}$. Appealing to Lemma $\ref{lemma:3.4}$, the error expansion
associated with $u_\varepsilon$ and $\bar{u}$
is similarly denoted by $w_\varepsilon$ as in $\eqref{eq:3.1}$ (with  $\varphi_j\in W^{2,1}_2(D_2)$ supported in $D_2$), and
it holds that
\begin{equation*}
\left\{\begin{aligned}
\partial_t w_\varepsilon + \mathcal{L}_\varepsilon(w_\varepsilon)
& = \nabla\cdot\tilde{f}_\varepsilon
-\varepsilon\sum_{l=1}^{d}\pi_{llj}\partial_t\varphi_j &\quad&\text{in}\quad ~D_2;\\
w_\varepsilon & = 0 &\quad&\text{on}\quad \partial_p D_{2},
\end{aligned}\right.
\end{equation*}
where $\tilde{f}_\varepsilon$ is given as in $\eqref{eq:3.3}$.

Noting that one may choose $\varphi_{j} =
S_\delta (\Psi_{\delta}\partial_j\bar{u}_1)$
in $\eqref{eq:3.1}$,
it follows from
the energy estimate that
\begin{equation}\label{f:6.13-ap}
\begin{aligned}
&\|\nabla w_\varepsilon\|_{L^2(D_2)}
\lesssim \big\|\tilde{f}_\varepsilon\big\|_{L^2(D_2)}
+ \varepsilon|\sum_{l=1}^{d}\pi_{llj}|\big\|\partial_t\varphi_j
\big\|_{L^2(D_2)}\\
&\lesssim
\varepsilon|\sum_{l=1}^{d}\pi_{llj}|\big\|\partial_t\varphi_j
\big\|_{L^2(D_2)}
+
\big\|\nabla \bar{u}_1
- S_\delta (\Psi_{\delta}\nabla \bar{u}_1)\big\|_{L^2(D_2)} \\
& +\varepsilon\big\|\varpi_j^{\varepsilon}
\nabla S_\delta (\Psi_{\delta}\partial_j\bar{u}_1)\big\|_{L^2(D_2)}
 +\varepsilon^2 \big\|\tilde{\sigma}_{(d+1)j}^\varepsilon\big(\nabla^2
+\partial_t\big)S_\delta
(\Psi_{\delta}\partial_j \bar{u}_1)\big\|_{L^2(D_2)}
=:I_0+ I_1 + I_2 + I_3,
\end{aligned}
\end{equation}
where $\varpi_j := (\bar{\phi}_j,\bar{\sigma}_j,\nabla\tilde{\sigma}_{(d+1)j})$ are referred to as
the extended correctors, and we recall the notation convention
$\varpi^\varepsilon:=\varpi(\cdot/\varepsilon,\cdot/\varepsilon^2)$.

To obtain  $I_0$, one may further set $\pi_{klj}:=\dashint_{Q_{2/\varepsilon}}\partial_k\sigma_{l(d+1)j}$ in $\eqref{eq:3.1}$, and observe that
\begin{equation}\label{f:6.17-ap}
\sum_{k=1}^{d}\pi_{kkj} =\sum_{k=1}^{d}\dashint_{Q_{2/\varepsilon}}\partial_k\sigma_{k(d+1)j}
=^{\eqref{eq:5.1c}} \dashint_{Q_{2/\varepsilon}}\bar{\phi}_j.
\end{equation}
Therefore, it follows from Lemma $\ref{lemma:2.4}$ that
\begin{equation}\label{f:6.18-ap}
  I_0 \lesssim^{\eqref{f:6.17-ap},\eqref{pri:2.17}} \frac{\varepsilon}{\delta^2}
  \Big|\dashint_{Q_{2/\varepsilon}}\bar{\phi}_j\Big|
  \|\partial_j\bar{u}_1\|_{L^2(D_2)}
  \lesssim \frac{\varepsilon}{\delta^2}
  \Big(\dashint_{Q_{2/\varepsilon}}|\bar{\phi}|^2\Big)^{\frac{1}{2}}
  \|\nabla u_\varepsilon\|_{L^2(D_2)}.
\end{equation}

To estimate $I_1$, we first note
the following fact
\begin{equation}\label{eq:6.2}
\begin{aligned}
\nabla \bar{u}_1
- S_\delta (\Psi_{\delta}\nabla \bar{u}_1)
&= \Psi_{\delta}\nabla \bar{u}_1
-  S_\delta(\Psi_{\delta}\nabla \bar{u}_1)
+ (\Psi_{\delta}-1)\nabla \bar{u}_1.
\end{aligned}
\end{equation}
Hence, it follows from Lemma $\ref{lemma:2.4}$  that
\begin{equation}\label{f:6.6-ap}
\begin{aligned}
I_1
&\lesssim^{\eqref{pri:2.14}} \delta
\bigg\{\|\nabla(\Psi_{\delta}\bar{u}_1)\|_{L^2(\mathbb{R}^{d+1})}
+ \delta\|\partial_t(\Psi_{\delta}\nabla \bar{u}_1)\|_{L^2(\mathbb{R}^{d+1})}
\bigg\}
+\|\nabla \bar{u}_1\|_{L^2(\boxbox_{4\delta})}\\
&\lesssim \|\nabla \bar{u}_1\|_{L^2(\boxbox_{4\delta})}
+ \delta\|(\nabla^2\bar{u}_1,\delta\partial_t
\nabla\bar{u}_1)\|_{L^2(Q_{2-2\delta})},
\end{aligned}
\end{equation}
where
$\boxbox_{4\delta}=Q_2\setminus Q_{2-4\delta}$.
For ease of presentation, one has the decomposition
$\boxbox_{4\delta} \subset H_{\delta} \cup T_{\delta}$,
in which we set $H_{\delta}:=\{x\in B_2^{+}: \text{dist}(x,\partial\Omega)\leq 4\delta\}\times (-4,4)$,
and
\begin{equation*}
T_{\delta}:= B_{2}^{+}\times\big\{(-4,-4+16\delta^2] \cup
[4-16\delta^2,4)\big\}.
\end{equation*}
We then obtain that
\begin{equation}\label{f:6.7-ap}
\begin{aligned}
\|\nabla \bar{u}_1\|_{L^2(\boxbox_{4\delta})} &\leq
\|\nabla \bar{u}_1\|_{L^2(H_{\delta})}
+  \|\nabla \bar{u}_1\|_{L^2(T_{\delta})}\\
&\lesssim \delta^{\frac{1}{2}-\frac{1}{p}}
\|\nabla \bar{u}_1\|_{L^p(D_2)}
+ \delta^{\frac{1}{2}}\|\nabla \bar{u}_1\|_{L^2(D_2)}
\lesssim \delta^{\frac{1}{2}-\frac{1}{p}}
\|\nabla u_\varepsilon\|_{L^2(D_4)},
\end{aligned}
\end{equation}
where $0<p-2\ll 1$ is referred to as the Meyer-type exponent; the mean value theorem for integrals and the definition of Riemann integral are mainly employed in the second inequality, while the third one primarily follows from Meyer-type estimates\footnote{Here,
we indeed employ a global Meyer-type estimate
which follows from the interior and boundary estimates $\eqref{pri:5.9-ap}$, $\eqref{pri:5.12-ap}$, respectively.
Then we use $\eqref{pri:5.12-ap}$ again
to get the last inequality stated in
$\eqref{f:6.7-ap}$.} (see Lemma \ref{lemma:5.2}). Moreover, for any
$z\in D_{2-2\delta}$, let $\rho(z):= \text{dist}(z,\partial_{\shortparallel}D_2)$ be the distance function,
where it is recalled  that $\partial_{\shortparallel}D_2:=\partial_p D_2\cap \partial_{\shortparallel}\Omega_{*}
=\big(B_2\cap\partial\Omega\big)\times(-4,4)$.
It holds that
\begin{equation}\label{f:6.8-ap}
\begin{aligned}
\delta^2\int_{D_{2-2\delta}}|\nabla^2 \bar{u}_1|^2
&\lesssim \delta^2\int_{D_{2}\setminus H_{\delta/2}}|\nabla^2 \bar{u}_1|^2
\lesssim \delta^2 \int_{D_{2}\setminus H_{\delta/2}} dz\rho^{-2}(z)
\dashint_{Q_{\rho}(z)}
|\nabla\bar{u}_1|^2 \\
&\lesssim \delta^2\Big(\int_{D_{2}\setminus H_{\delta/2}} dz [\rho(z)]^{-\frac{2p}{p-2}}\Big)^{1-\frac{2}{p}}
\Big(\int_{D_{2}\setminus H_{\delta/2}}\dashint_{Q_{\rho}(z)}|\nabla \bar{u}_1|^p\Big)^{\frac{2}{p}}\\
&\lesssim \delta^{1-\frac{2}{p}}\Big(\int_{D_2}|\nabla \bar{u}_1|^p\Big)^{\frac{2}{p}}
\lesssim \delta^{1-\frac{2}{p}}\Big(\int_{D_2}|\nabla u_\varepsilon|^p\Big)^{\frac{2}{p}}
\lesssim \delta^{1-\frac{2}{p}}
\int_{D_4}|\nabla u_\varepsilon|^2,
\end{aligned}
\end{equation}
where we use interior regularity estimates in the first step,
and H\"older's inequality with $0<p-2\ll 1$ in the second, and
the last line is primarily due to Meyer-type estimates as before.
By the same token, it may be derived that
\begin{equation}\label{f:6.9-ap}
\delta^4\int_{D_{2-2\delta}}|\partial_t\nabla\bar{u}_1|^2
\lesssim \delta^4\int_{D_{2-2\delta}}|\nabla^3\bar{u}_1|^2
\lesssim \delta^{1-\frac{2}{p}}
\int_{D_4}|\nabla u_\varepsilon|^2
\end{equation}
without any real difficulty. Plugging the estimates $\eqref{f:6.7-ap}$,
$\eqref{f:6.8-ap}$, and $\eqref{f:6.9-ap}$ back into $\eqref{f:6.6-ap}$, one has that
\begin{equation}\label{f:6.10-ap}
I_1
\lesssim \delta^{\beta}
\|\nabla u_\varepsilon\|_{L^2(D_4)}
\quad \text{with~~} \beta =1/2-1/p.
\end{equation}

We now proceed to study $I_2$, and it follows from Fubini's theorem
and energy estimates that
\begin{equation}\label{f:6.11-ap}
\begin{aligned}
I_2
&\lesssim \frac{\varepsilon}{\delta}\sup_{z\in Q_2}
\Big(\dashint_{Q_\delta(z)}
|\varpi_j(\cdot/\varepsilon,\cdot/\varepsilon^2)|^2\Big)^{\frac{1}{2}}
\|\partial_j\bar{u}_1\|_{L^2(D_2)}\\
&\lesssim  \frac{\varepsilon}{\delta^{d+3}}
\Big(\dashint_{Q_{2/\varepsilon}}|\varpi|^2\Big)^{1/2}
\|\nabla\bar{u}_1\|_{L^2(D_2)}
\lesssim \frac{\varepsilon}{\delta^{d+3}}
\Big(\dashint_{Q_{2/\varepsilon}}|\varpi|^2\Big)^{1/2}
\|\nabla u_\varepsilon\|_{L^2(D_2)},
\end{aligned}
\end{equation}
where it is recalled that $\varpi_j = (\bar{\phi}_j,\bar{\sigma}_j,\nabla\tilde{\sigma}_{(d+1)j})$ for
the reader's convenience.
By a similar computation,
it is not hard to derive that
\begin{equation}\label{f:6.12-ap}
I_3 \lesssim \frac{\varepsilon^2}{\delta^{d+4}}
\Big(\dashint_{Q_{2/\varepsilon}}|\tilde{\sigma}_{(d+1)j}|^2\Big)^{1/2}
\|\partial_j\bar{u}_1\|_{L^2(D_2)}
\lesssim \frac{\varepsilon^2}{\delta^{d+4}}
\Big(\dashint_{Q_{2/\varepsilon}}|\tilde{\sigma}_{(d+1)}|^2\Big)^{1/2}
\|\nabla u_\varepsilon\|_{L^2(D_2)}.
\end{equation}

Consequently, by plugging $\eqref{f:6.18-ap}$, $\eqref{f:6.10-ap}$, $\eqref{f:6.11-ap}$, and $\eqref{f:6.12-ap}$ back into $\eqref{f:6.13-ap}$,  it follows that
\begin{equation}\label{f:6.15-ap}
\begin{aligned}
\|\nabla w_\varepsilon\|_{L^2(D_2)}
&\lesssim \bigg\{\delta^{\beta}
+\frac{\varepsilon}{\delta^{d+3}}\Big(\dashint_{Q_{2/\varepsilon}} |\varpi|^{2}
\Big)^{\frac{1}{2}}
+\frac{\varepsilon^2}{\delta^{d+4}}
\Big(\dashint_{Q_{2/\varepsilon}}|\tilde{\sigma}_{(d+1)}|^2\Big)^{\frac{1}{2}}
\bigg\}\|\nabla u_\varepsilon\|_{L^2(D_4)}.
\end{aligned}
\end{equation}

In view of $\eqref{eq:3.1}$ with $\varphi_{j} =
S_\delta (\Psi_{\delta}\partial_j\bar{u}_1)$, it is derived that
\begin{equation}\label{f:6.14-ap}
\begin{aligned}
&\quad\Big(\int_{D_1}|\nabla u_\varepsilon - (e_j+(\nabla\phi_j)^\varepsilon)\varphi_j|^2\Big)^{\frac{1}{2}}
\lesssim
\|\nabla \bar{u}_1\|_{L^2(\boxbox_{4\delta})} +
\Big(\int_{D_2} |\nabla w_\varepsilon|^2\Big)^{\frac{1}{2}}\\
&
+ \Big(\int_{D_2}
\big|\varepsilon\big(\bar{\phi}_j^\varepsilon
+(\nabla\tilde{\sigma}_{l(d+1)j})^\varepsilon e_l \big) \nabla \varphi_j\big|^2
\Big)^{\frac{1}{2}}
+ \Big(\int_{D_2}
\big|\varepsilon^2\tilde{\sigma}_{l(d+1)j}^\varepsilon
\nabla\partial_l\varphi_j\big|^2
\Big)^{\frac{1}{2}}\\
&\lesssim^{\eqref{f:6.7-ap},\eqref{f:6.15-ap},\eqref{f:6.11-ap},\eqref{f:6.12-ap}}
\bigg\{\delta^{\beta}
+\frac{\varepsilon}{\delta^{d+3}}\Big(\dashint_{Q_{2/\varepsilon}} |\varpi|^{2}
\Big)^{\frac{1}{2}}
+\frac{\varepsilon^2}{\delta^{d+4}}
\Big(\dashint_{Q_{2/\varepsilon}}|\tilde{\sigma}_{(d+1)}|^2\Big)^{\frac{1}{2}}
\bigg\}\|\nabla u_\varepsilon\|_{L^2(D_4)}.
\end{aligned}
\end{equation}

Next, by setting $0<\alpha-1\ll 1$ and $1/\alpha+1/\alpha'=1$, consider
\begin{equation}\label{f:7.11}
\Big(\int_{D_1}
|(e_j+(\nabla\phi_j)^\varepsilon)\big(\varphi_{j}
-\partial_j\bar{u}_1\big)|^2\Big)^{\frac{1}{2}}
\leq \underbrace{\Big(\int_{Q_1}
|(e_j+(\nabla\phi_j)^\varepsilon)|^{2\alpha}\Big)^{\frac{1}{2\alpha}}}_{J_1}
\underbrace{\Big(\int_{D_1}
|\varphi_{j}
-\partial_j\bar{u}_1|^{2\alpha'}\Big)^{\frac{1}{2\alpha'}}}_{J_2}.
\end{equation}
In view of the identity $\eqref{eq:6.2}$,
a routine computation analogous to that for $I_1$ yields
\begin{equation*}
\begin{aligned}
J_2 &\leq \|\Psi_{\delta}\nabla \bar{u}_1
-  S_\delta(\Psi_{\delta}\nabla \bar{u}_1)\|_{L^{2\alpha'}(D_1)}
+\|(\Psi_{\delta}-1)\nabla \bar{u}_1\|_{L^{2\alpha'}(D_1)}\\
&\lesssim^{\eqref{pri:2.12}} \delta\Big\{\|\nabla(\Psi_{\delta}\nabla \bar{u}_1)\|_{L^{2\alpha'}(D_{1+\delta})}
+\delta \|\partial_t(\Psi_{\delta}\nabla \bar{u}_1)\|_{L^{2\alpha'}(D_{1+\delta})}\Big\}
+ \|\nabla \bar{u}_1\|_{L^{2\alpha'}(H_{\delta}\cap D_1)}\\
&\lesssim
\|\nabla \bar{u}_1\|_{L^{2\alpha'}(H_{\delta}\cap D_{3/2})}
+\delta\|\nabla^2 \bar{u}_1\|_{L^{2\alpha'}(D_{3/2}\setminus H_{\delta/2})}
+\delta^2\|\partial_t\nabla \bar{u}_1\|_{L^{2\alpha'}(D_{3/2}\setminus H_{\delta/2})}  \\
&\lesssim
\delta^{\frac{1}{s}}\|\nabla \bar{u}_1\|_{L^{2\alpha's'}(D_{3/2})}
+\delta^{\frac{1}{2\alpha'q}}
\|\nabla \bar{u}_1\|_{L^{2\alpha'q'}(D_{3/2})}
\end{aligned}
\end{equation*}
(where we employ the similar computation given in $\eqref{f:6.8-ap}$
for the last step), and this leads to
\begin{equation}\label{f:7.9}
\begin{aligned}
J_2
&\lesssim
\delta^{\frac{1}{s}}\|\nabla \bar{u}_1\|_{L^{2\alpha's'}(D_{3/2})}
+\delta^{\frac{1}{2\alpha'q}}
\|\nabla \bar{u}_1\|_{L^{2\alpha'q'}(D_{3/2})}\\
&\lesssim \delta^{\beta}
\|\nabla \bar{u}_1\|_{L^{4\alpha'}(D_{3/2})}
\lesssim^{\eqref{pri:5.11-ap}} \delta^{\beta}
\|\nabla \bar{u}_1\|_{L^{2}(D_{2})}
\lesssim
\delta^{\beta}
\|\nabla u_\varepsilon\|_{L^{2}(D_{2})},
\end{aligned}
\end{equation}
in which we take $q=2$, $\frac{1}{4\alpha'}=\beta$ (therefore $\alpha=\frac{p}{4-p}$ with $0<p-2\ll 1$ is the Meyer-type exponent), and $s=4\alpha'$ (so that $2\alpha's'\leq 4\alpha'$)  in the first inequality.
Proceeding to handle the term $J_1$,
it follows from the Meyer-type estimate and
Caccioppoli's inequality that
\begin{equation}\label{f:7.10}
 J_1 \lesssim 1 +
 \Big(\dashint_{Q_{1/\varepsilon}}|\nabla \phi|^{2\alpha}\Big)^{\frac{1}{2\alpha}}
 \lesssim^{\eqref{pri:5.9-ap},\eqref{pri:6.1-ap}} 1 + \varepsilon
 \Big(\dashint_{Q_{2/\varepsilon}}
 |\bar{\phi}|^2\Big)^{\frac{1}{2}},
\end{equation}
where we merely employ $\eqref{pri:5.9-ap}$ and $\eqref{pri:6.1-ap}$ in the case $\beta=0$ therein.

Plugging the estimates $\eqref{f:7.9}$ and $\eqref{f:7.10}$ back into $\eqref{f:7.11}$, we have
\begin{equation}\label{f:7.12}
\Big(\int_{D_1}
|(e_j+(\nabla\phi_j)^\varepsilon)\big(\varphi_{j}
-\partial_j\bar{u}_1\big)|^2\Big)^{\frac{1}{2}}
\lesssim \delta^{\beta}\Big(1+
\varepsilon\big(\dashint_{Q_{2/\varepsilon}}
 |\bar{\phi}|^2\big)^{\frac{1}{2}}\Big)\|\nabla u_\varepsilon\|_{L^{2}(D_{2})}.
\end{equation}
Therefore, it is obtained that
\begin{equation*}
\begin{aligned}
&\Big(\int_{D_1}|\nabla u_\varepsilon - (e_j+(\nabla\phi_j)^\varepsilon)\partial_j\bar{u}_1|^2\Big)^{\frac{1}{2}}\\
&\leq \Big(\int_{D_1}|\nabla u_\varepsilon - (e_j+(\nabla\phi_j)^\varepsilon)\varphi_{j}|^2\Big)^{\frac{1}{2}}
+ \Big(\int_{D_1}
|(e_j+(\nabla\phi_j)^\varepsilon)\big(\varphi_{j}
-\partial_j\bar{u}_1\big)|^2\Big)^{\frac{1}{2}}\\
&\lesssim^{\eqref{f:6.14-ap},\eqref{f:7.12}}
\bigg\{\delta^{\beta}
+\frac{\varepsilon}{\delta^{d+3}}\Big(\dashint_{Q_{2/\varepsilon}} |\varpi|^{2}
\Big)^{\frac{1}{2}}
+\frac{\varepsilon^2}{\delta^{d+4}}
\Big(\dashint_{Q_{2/\varepsilon}}|\tilde{\sigma}_{(d+1)}|^2\Big)^{\frac{1}{2}}
\bigg\}\|\nabla u_\varepsilon\|_{L^2(D_4)}.
\end{aligned}
\end{equation*}

Upon rescaling back (and therefore setting $\varepsilon':=\varepsilon/r$), the stated estimates $\eqref{f:6.14-ap}$ can be rewritten as
\begin{equation}\label{f:6.16-ap}
\begin{aligned}
&\Big(\dashint_{D_r}|\nabla u_\varepsilon - (e_j+(\nabla\phi_j)^\varepsilon)\partial_j\bar{u}_r|^2\Big)^{\frac{1}{2}}\\
&\lesssim \bigg\{{\delta}^{\beta}+\frac{\varepsilon'}{{\delta}^{d+3}}
\Big(\dashint_{Q_{2/\varepsilon'}} \big|
\varpi\big|^{2}\Big)^{\frac{1}{2}}
+\frac{{\varepsilon'}^2}{{\delta}^{d+4}}
\Big(\dashint_{Q_{2/\varepsilon'}}
|\tilde{\sigma}_{(d+1)}|^2\Big)^{\frac{1}{2}}\bigg\}
\Big(\dashint_{D_{4r}}|\nabla u_\varepsilon|^2\Big)^{\frac{1}{2}},
\end{aligned}
\end{equation}
where we recall that $\bar{u}_r$ satisfies
$\big(\partial_t+\mathcal{L}_0\big)(\bar{u}_r) = 0$ in $D_{2r}$ with $\bar{u}_r = u_\varepsilon$ on $\partial_p  D_{2r}$. Also, it is recalled that $\alpha,\beta,\gamma$ in $\eqref{eq:3.5}$ are chosen such that
\begin{equation*}
\begin{aligned}
&\bar{\phi}_j:=\phi_j-\dashint_{Q_{2R}}\phi_j;
\qquad \bar{\sigma}_{ikj}(x,t)
:=\sigma_{ikj}(x,t)-\dashint_{B_{2R}}\sigma_{ikj}(\cdot,t);\\
&\tilde{\sigma}_{(d+1)}(x,t)
:= \sigma_{(d+1)}(x,t) - \dashint_{Q_{2R}}\sigma_{(d+1)}
-x\cdot\dashint_{Q_{2R}}\nabla\sigma_{(d+1)}
\quad\text{with}\quad R=1/\varepsilon'.
\end{aligned}
\end{equation*}

In this regard, for any $\theta>0$ (to be determined later), we can
define the stationary field $\chi_*$ (referred to as the minimal radius) as follows:
\begin{equation}\label{min_rad-ap}
\chi_{*}(0):=
 \inf\Bigg\{l>0:
 \frac{1}{R}
  \Big(\dashint_{Q_{2R}}
 \big|(\bar{\phi},\bar{\sigma},\nabla\tilde{\sigma}_{(d+1)})\big|^{2}\Big)^{\frac{1}{2}}
+ \frac{1}{R^2}\Big(\dashint_{Q_{2R}}
 |\tilde{\sigma}_{(d+1)}|^{2}\Big)^{\frac{1}{2}}
 \leq \theta;~\forall R\geq l\Bigg\}\vee 1
\end{equation}
(This is the same as stated in $\eqref{min_rad*}$;
the center point is usually omitted, and it is denoted by $\chi_*$).
By the definition of $\chi_*$, it can be straightforwardly derived that
\begin{equation*}
\begin{aligned}
\Big(\dashint_{D_r}|\nabla u_\varepsilon - (e_j+(\nabla\phi_j)^\varepsilon)\partial_j\bar{u}_r|^2\Big)^{\frac{1}{2}}
&\lesssim \theta^{\frac{\beta}{d+4+\beta}}
\Big(\dashint_{D_{4r}}|\nabla u_\varepsilon|^2\Big)^{\frac{1}{2}}
\leq \nu \Big(\dashint_{D_{4r}}|\nabla u_\varepsilon|^2\Big)^{\frac{1}{2}}
\qquad \forall r\geq \varepsilon\chi_{*},
\end{aligned}
\end{equation*}
which proves the stated result $\eqref{pri:1.5}$ upon choosing
$\theta = (\nu/ C(\mu,d,\partial\Omega))^{\frac{d+4+\beta}{\beta}}$, where it is fixed that $\delta = \theta^{\frac{1}{d+4+\beta}}$
by optimizing the size of
the right-hand side of $\eqref{f:6.16-ap}$ according to the definition of $\chi_*$ given in $\eqref{min_rad-ap}$.

Finally, the arguments for $\eqref{m-1}$ are presented; these are
mainly based on a dyadic decomposition of scales
and Theorem $\ref{thm:1.0}$. To this end, for any $\alpha\in(0,1)$,
it suffices to define stationary
random fields as follows:
\begin{equation*}
 \chi_{**}(0):=
 \Bigg\{\frac{1}{\theta}\sup_{R\geq 1}\frac{1}{R^{\alpha}}
 \bigg[
 \Big(\dashint_{Q_{2R}}|(\bar{\phi},\bar{\sigma},\nabla\tilde{\sigma}_{(d+1)})|^{2}
 \Big)^{\frac{1}{2}}
+
\frac{1}{R}
\Big(\dashint_{Q_{2R}}|\tilde{\sigma}_{(d+1)}|^{2}\Big)^{\frac{1}{2}}
 \bigg]\Bigg\}^{\frac{1}{1-\alpha}}\vee 1.
\end{equation*}
Thus, it can be verified that for any
$R\geq\chi_{**}(0)$ there holds
\begin{equation*}
\frac{1}{R}
  \Big(\dashint_{Q_{2R}}
 \big|(\bar{\phi},\bar{\sigma},\nabla\tilde{\sigma}_{(d+1)})\big|^{2}\Big)^{\frac{1}{2}}
+ \frac{1}{R^2}\Big(\dashint_{Q_{2R}}
 |\tilde{\sigma}_{(d+1)}|^{2}\Big)^{\frac{1}{2}}
 \leq \theta.
\end{equation*}
Recalling the definition of $\chi_*$, we have
$\chi_{**}(0)\geq \chi_{*}(0)$. For any $\beta\in[1,\infty)$,
it can be obtained that
\begin{equation*}
\begin{aligned}
\langle|\chi_{**}|^\beta\rangle^{\frac{1}{\beta}}
&\lesssim_{\theta} \Bigg\langle\Bigg(\sup_{R\geq 1}
\bigg[\frac{1}{R^{\alpha}}
\Big(\dashint_{Q_{2R}}|(\bar{\phi},\bar{\sigma},\nabla\tilde{\sigma}_{(d+1)})|^{2}
\Big)^{\frac{1}{2}}+
\frac{1}{R^{1+\alpha}}
\Big(\dashint_{Q_{2R}}|\tilde{\sigma}_{(d+1)}|^{2}
\Big)^{\frac{1}{2}}\bigg]\Bigg)^{\beta}\Bigg\rangle^{\frac{1}{\beta}}\\
&\lesssim
\Bigg\langle\Bigg(\sum_{R\geq 1\atop\text{dyadic}}
\bigg[\frac{1}{R^{\alpha}}
\Big(\dashint_{Q_{2R}}|(\bar{\phi},\bar{\sigma},\nabla\tilde{\sigma}_{(d+1)})|^{2}
\Big)^{\frac{1}{2}}+
\frac{1}{R^{1+\alpha}}
\Big(\dashint_{Q_{2R}}|\tilde{\sigma}_{(d+1)}|^{2}
\Big)^{\frac{1}{2}}\bigg]\Bigg)^{\beta}\Bigg\rangle^{\frac{1}{\beta}}\\
&\lesssim^{\eqref{pri:5.2},\eqref{pri:5.4},\eqref{pri:5.4b}} \sum_{R\geq1\atop\text{dyadic}}\Big(\frac{1}{R^{\alpha}}
+\frac{1}{R^{1+\alpha}}\mu_d(R)\Big)
\lesssim_{\mu,\lambda_1,d,\beta,\theta} 1,
\end{aligned}
\end{equation*}
which, together with $\chi_{**}\geq \chi_{*}$, leads to the desired
estimate $\eqref{m-1}$. This ends the whole proof.
\qed

\begin{remark}\label{remark:3.2}
\emph{To see that the minimum radius $\chi_*$ is well-defined,
it only remains to verify the following limit:
\begin{equation}\label{pri:3.7}
\limsup_{R\to\infty}\frac{1}{R^4}\dashint_{Q_R}
 \big|\sigma_{(d+1)} - \dashint_{Q_{R}}\sigma_{(d+1)}
-x\cdot\dashint_{Q_{R}}\nabla\sigma_{(d+1)}\big|^2 = 0.
\end{equation}
Let $\zeta(t):=\dashint_{B_R}\sigma_{(d+1)}(\cdot,t)$, and we start from
\begin{equation}\label{f:3.60}
\begin{aligned}
&\dashint_{Q_R}
 \big|\sigma_{(d+1)} - \dashint_{Q_{R}}\sigma_{(d+1)}
-x\cdot\dashint_{Q_{R}}\nabla\sigma_{(d+1)}\big|^2\\
&\lesssim \dashint_{Q_R}
 \big|\sigma_{(d+1)} -\sigma_{(d+1)}(0,\cdot)
-x\cdot\dashint_{Q_{R}}\nabla\sigma_{(d+1)}\big|^2
+\dashint_{Q_R}
\big|\sigma_{(d+1)}(0,\cdot)- \dashint_{Q_{R}}\sigma_{(d+1)}\big|^2\\
&\lesssim R^2\dashint_{Q_R}
 \big|\nabla\sigma_{(d+1)}
-\dashint_{Q_{R}}\nabla\sigma_{(d+1)}\big|^2
+\dashint_{I_{R}}\big|\sigma_{(d+1)}(0,\cdot)- \dashint_{Q_{R}}\sigma_{(d+1)}\big|^2 =: J_1 + J_2.
\end{aligned}
\end{equation}
In view of the result $\eqref{pri:2.20}$ stated in Proposition $\ref{P:5.3}$, $J_1$ is a term that can be handled easily, while $J_2$
is addressed as follows:
\begin{equation}\label{f:3.61}
J_{2}\lesssim  \dashint_{I_R}|\sigma_{(d+1)}(0,\cdot)-\zeta|^2
+\dashint_{I_R}|\zeta-\dashint_{I_R}\zeta|^2
\lesssim R^2\dashint_{Q_R}|\nabla\sigma_{(d+1)}|^2
+ \dashint_{I_R}|\zeta-\dashint_{I_R}\zeta|^2.
\end{equation}
Combining the estimates $\eqref{f:3.60}$ and $\eqref{f:3.61}$ leads to
\begin{equation*}
\begin{aligned}
&\limsup_{R\to\infty}\frac{1}{R^4}\dashint_{Q_R}
 \big|\sigma_{(d+1)} - \dashint_{Q_{R}}\sigma_{(d+1)}
-x\cdot\dashint_{Q_{R}}\nabla\sigma_{(d+1)}\big|^2\\
&\lesssim \limsup_{R\to\infty}\bigg(\frac{1}{R^2}\dashint_{Q_R}
 \big|\nabla\sigma_{(d+1)}
-\dashint_{Q_{R}}\nabla\sigma_{(d+1)}\big|^2
+\frac{1}{R^2}\dashint_{Q_R}|\nabla\sigma_{(d+1)}|^2
+ \frac{1}{R^{4}}\dashint_{I_R}|\zeta-\dashint_{I_R}\zeta|^2\bigg)=0,
\end{aligned}
\end{equation*}
where the results $\eqref{f:2.25}$ and \cite[Subsection 4.3]{Bella-Chiarini-Fehrman-19} are employed. This proves
the stated limit $\eqref{pri:3.7}$. Also, $\partial_{d+1}\sigma_{(d+1)}$
and $\overline{\nabla}(\nabla\sigma_{(d+1)})$ are
stationary, which together the known results leads to
that the minimum radius $\chi_*$ is well-defined and stationary.}
\end{remark}

\section{Calder\'on-Zygmund estimates}\label{section:5}

\begin{definition}[$A_q$ weights]\label{def:weight}
Let $1\leq q<\infty$. A non-negative, locally integrable function $\omega$
is said to be an $A_q$ weight if there exists a positive constant
$A$ such that, for every parabolic cube $Q\subset\mathbb{R}^{d+1}$,
\begin{equation}\label{f:w-1} \bigg(\dashint_{Q}\omega\bigg)\bigg(\dashint_{Q}\omega^{-1/(q-1)}\bigg)^{q-1}
  \leq A,
\end{equation}
if $q>1$, where
the average integral symbol $\dashint_{Q}$ is defined by $\frac{1}{|Q|}\int_Q$,
with $|Q|$ denoting  the Lebesgue measure of $Q$,  or
\begin{equation}\label{f:w-2}
 \bigg(\dashint_{Q}\omega\bigg)\esssup_{z\in Q}\frac{1}{\omega(z)}\leq A,
\end{equation}
if $q=1$. The infimum over all such constants $A$ is called
the $A_q$ Muckenhoupt characteristic constant of $\omega$, denoted by
$[\omega]_{A_q}$. Moreover, $A_q$ denotes
the set of all $A_q$ weights.
\end{definition}

For the reader's convenience, some geometric notation
on integral regions is recalled. For any $z\in\Omega_T$, we set
$U_{*,\varepsilon}(z):=\Omega_T\cap Q_{*,\varepsilon}(z)$
with $Q_{*,\varepsilon}(z)
:=Q_{\varepsilon\chi_{*}(z/\varepsilon)}(z)$
and, $U_\varepsilon(z):=Q_\varepsilon(z)\cap\Omega_T$, where
$\chi_{*}$ is known as the
minimum radius given in Lemma $\ref{lemma:approxi}$.

\begin{proposition}\label{P:7.1}
Let $\Omega\subset\mathbb{R}^d$ be a $C^1$ domain with
$d\geq 2$, $\varepsilon\in(0,1]$, and $T>0$.
Suppose that the ensemble $\langle\cdot\rangle$ is stationary and ergodic.
Let $u_\varepsilon$ and $f$ be associated with the equations $\eqref{pde:A}$.
Then, there exists a stationary random field $\chi_*$, as
given in Lemma $\ref{lemma:approxi}$ such that,
for any $1<p<\infty$ and  $\omega\in A_p$,
the following weighted quenched Calder\'on-Zygmund estimate holds:
\begin{equation}\label{pri:7.1}
\bigg(\int_{\Omega_T}dz\Big(
\dashint_{U_{*,\varepsilon}(z)}|\nabla u_\varepsilon|^2
\Big)^{\frac{p}{2}}\omega(z)\bigg)^{\frac{1}{p}}
\lesssim_{\mu,d,\partial\Omega,p,[\omega]_{A_p}}
\bigg(
\int_{\Omega_T}dz\Big(\dashint_{U_{*,\varepsilon}(z)}|f|^2\Big)^{\frac{p}{2}}
\omega(z)\bigg)^{\frac{1}{p}}.
\end{equation}
Moreover, if the ensemble $\langle\cdot\rangle$ additionally satisfies
$\eqref{c:3}$, then, for any $1<p,q<\infty$ and $\omega\in A_q$, we have
the weighted annealed Calder\'on-Zygmund estimate
\begin{equation}\label{pri:7.2}
\bigg(\int_{\Omega_T}dz
\Big\langle\Big(\dashint_{
U_{*,\varepsilon}(z)}|\nabla u_\varepsilon|^2 \Big)^{\frac{p}{2}}
\Big\rangle^{\frac{q}{p}}\omega(z)\bigg)^{\frac{1}{q}}
\lesssim_{\mu,d,\partial\Omega,p,q,[\omega]_{A_q}}
\bigg(\int_{\Omega_T}dz
\Big\langle\Big(\dashint_{U_{*,\varepsilon}(z)}
|f|^2\Big)^{\frac{p}{2}}\Big\rangle^{\frac{q}{p}}\omega(z)\bigg)^{\frac{1}{q}}.
\end{equation}
In particular, the estimates $\eqref{pri:7.1}$ and $\eqref{pri:7.2}$ also hold in the case $\Omega_T=\mathbb{R}^{d+1}$.
\end{proposition}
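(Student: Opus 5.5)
The plan is to apply Shen's real-variable lemma (Lemma~\ref{shen's lemma2}, in the weighted form for $A_p$ weights) to the \emph{locally averaged} quantities
\[
G(z):=\Big(\dashint_{U_{*,\varepsilon}(z)}|\nabla u_\varepsilon|^2\Big)^{\frac12},\qquad
F(z):=\Big(\dashint_{U_{*,\varepsilon}(z)}|f|^2\Big)^{\frac12},
\]
following the Duerinckx--Otto strategy. For a parabolic cube $Q=Q_r(z_0)$ (interior, or boundary-type $D_r$ centered on $\partial_{\shortparallel}\Omega_*$ after the usual reduction to interior and boundary cubes), I would split $u_\varepsilon=w+v$. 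Here $w$ solves $(\partial_t+\mathcal{L}_\varepsilon)w=\nabla\cdot(\mathbf{1}_{4Q}f)$ with zero data on $\partial_p\Omega_T$, and $v$ solves the homogeneous equation in $4Q\cap\Omega_T$ with $v=0$ on the lateral part.

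\textbf{Step 1: the $w$-part.} The energy estimate gives $\dashint_Q G_w^2\lesssim\dashint_{4Q}F^2$, once one checks that averaging over the random cylinders $U_{*,\varepsilon}$ is compatible with Fubini. I would use that $\chi_*$ is $1/8$-Lipschitz in the sense that $\varepsilon\chi_*(z'/\varepsilon)$ is comparable to $\varepsilon\chi_*(z/\varepsilon)$ whenever $z'\in Q_{*,\varepsilon}(z)$. This follows from the definition \eqref{min_rad*}: shifting the center by less than the radius only changes $R$ by a factor of two. Hence $\int \dashint_{U_{*,\varepsilon}(z)}h\,dz\sim\int h$.

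\textbf{Step 2: reverse H\"older for the $v$-part.} This is the main obstacle. I need, for some $\bar p>p$,
\[
\Big(\dashint_{Q}G_v^{\bar p}\Big)^{1/\bar p}\lesssim\Big(\dashint_{4Q}G_v^{2}\Big)^{1/2}.
\]
There are two regimes.
\begin{itemize}
\item If $r\lesssim\varepsilon\chi_*(z/\varepsilon)$ for $z\in Q$, the averaging cylinder already contains $Q$, so $G_v$ is essentially constant on $Q$ by the comparability above, and the bound is trivial.
\item If $r\geq\varepsilon\chi_*$, I would use Lemma~\ref{lemma:approxi} iteratively (Campanato/excess decay). Since $v$ is $\nu$-close to $(e_j+\nabla\phi_j^\varepsilon)\partial_j\bar u_r$, and $\bar u_r$ has boundary $C^{\alpha}$-gradient (even Lipschitz on $C^1$ domains up to $C^{0,\alpha}$ losses) estimates, one obtains the large-scale bound $\dashint_{D_\rho}|\nabla v|^2\lesssim (r/\rho)^{\kappa}\dashint_{D_r}|\nabla v|^2$ for all $\rho\ge\varepsilon\chi_*$ with small $\kappa$, i.e.\ a large-scale $L^\infty$-type bound on $G_v$. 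This yields the reverse H\"older inequality with any $\bar p<\infty$.
\end{itemize}
The delicate point is the boundary version on a $C^1$ (not $C^{1,\alpha}$) domain, where only $W^{1,\bar p}$ rather than Lipschitz bounds hold for $\bar u_r$. So I would aim for $L^{\bar p}$ with $\bar p$ arbitrary large, via the $W^{1,\bar p}$ theory of the homogenized operator, combined with the smallness $\nu$ in a second Shen-type iteration.

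\textbf{Step 3: conclusion.} Shen's lemma with weights then gives \eqref{pri:7.1} for $\omega\in A_p$, $p<\bar p$. For the annealed estimate \eqref{pri:7.2}, I would apply Shen's lemma again to $\langle G^p\rangle^{1/p}$ and $\langle F^p\rangle^{1/p}$, using \eqref{pri:7.1} with the same $p$ in probability for the local decomposition and Minkowski's inequality to exchange $\langle\cdot\rangle$ and spatial integrals. The reverse H\"older step for the annealed quantity follows from the quenched one by Minkowski, since the constant is deterministic. The whole-space case follows identically without boundary cubes.
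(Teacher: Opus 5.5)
The gap is in your Step~2, which is the heart of the proof.

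\textbf{The large-scale regime does not give reverse H\"older for $G_v$.} The decay you propose, $\dashint_{D_\rho}|\nabla v|^2\lesssim (r/\rho)^{\kappa}\dashint_{D_r}|\nabla v|^2$ with $\kappa>0$, is a large-scale Morrey (H\"older-type) bound, not an $L^\infty$ bound. With $\rho=\varepsilon\chi_*(z/\varepsilon)$ it only yields $G_v(z)^2\lesssim\big(r/(\varepsilon\chi_*(z/\varepsilon))\big)^{\kappa}\dashint_{D_r}|\nabla v|^2$. Since $\chi_*\geq 1$ may be of order one on most of $Q$, $\dashint_Q G_v^{\bar p}$ picks up a factor of order $(r/\varepsilon)^{\kappa\bar p/2}$, which is not uniform in $\varepsilon$.

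Taking $\kappa=0$ would amount to a large-scale boundary Lipschitz estimate. That is false on $C^1$ cylinders already for $a=\bar a$: then $\phi=0$, $\chi_*=1$, and letting $\varepsilon\to0$ would give Lipschitz bounds for $\bar a$-caloric functions vanishing on the lateral boundary. So the reverse H\"older inequality for $G_v$ is not established. Your closing sentence about the $W^{1,\bar p}$ theory ``combined with the smallness $\nu$ in a second Shen-type iteration'' names the missing argument but does not carry it out.

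\textbf{How the paper handles this.} It never proves a reverse H\"older inequality for $G_v$ directly. Instead it uses the perturbative form of Shen's lemma (the $\eta$-term in \eqref{pri:7.4b}) with a scale-dependent splitting. In your notation ($G$ for the gradient average, $F$ for the data average):
\begin{itemize}
\item For $r_Q<\frac{\varepsilon}{4}\chi_*(z_Q/\varepsilon)$, take $W_Q=0$ and $V_Q=G$. This is your small-scale regime, handled via \eqref{pri:7.6}.
\item For $r_Q\geq\frac{\varepsilon}{4}\chi_*(z_Q/\varepsilon)$, let $V_Q$ be the $U_{*,\varepsilon}$-average of $(e_j+\nabla\phi_j^\varepsilon)\partial_j\bar v_{r_Q}$, and $W_Q\leq W_Q^{(1)}+W_Q^{(2)}$. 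Here $W_Q^{(2)}$ comes from $\nabla w_\varepsilon$, and $W_Q^{(1)}$ from the two-scale error of $v_\varepsilon$, which is only $O(\nu)$ in $L^2$ by \eqref{pri:1.5}. This gives $\dashint W_Q^2\lesssim\nu^2\dashint G^2+\dashint F^2$.
\item The good part $V_Q$ gets any exponent $p_1$ by H\"older's inequality. For $R\geq\chi_*$ one has $\big(\dashint_{Q_R}|e_j+\nabla\phi_j|^{2\alpha}\big)^{1/(2\alpha)}\lesssim 1+R^{-1}\big(\dashint_{Q_{2R}}|\bar\phi_j|^2\big)^{1/2}\lesssim 1$, by Meyers and Caccioppoli for the corrector plus the definition of $\chi_*$. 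Combine this with \eqref{pri:7.7a} and the boundary reverse H\"older inequality \eqref{pri:5.11-ap} for $\nabla\bar v_{r_Q}$ on $C^1$ domains.
\item Finally $\nu$ is chosen below the threshold of Shen's lemma.
\end{itemize}
The reverse H\"older inequality for $v_\varepsilon$ then comes out as a by-product (take $f=0$), and that is what the annealed step uses.

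\textbf{Range of exponents and weights.} Your $w$-part is controlled only through the energy estimate, so $p_0=2$ in Shen's lemma. Your Step~3 therefore gives \eqref{pri:7.1} only for $p>2$ and $\omega\in A_{p/2}$, not for all $1<p<\infty$ and $\omega\in A_p$. The paper proceeds in stages:
\begin{itemize}
\item prove the unweighted estimate \eqref{pri:A};
\item extend it to $1<p<2$ by duality;
\item rerun Shen's lemma with $p_0$ close to $1$, controlling $\nabla w_\varepsilon$ in $L^{p_0}$ by \eqref{pri:A}, and using $A_p\subset A_{p/p_0}$ (from \eqref{f:apw-5}) and the reverse H\"older property \eqref{f:apw-3} of the weight.
\end{itemize}
Likewise, in the annealed step Shen's lemma directly gives only $q\geq p$; the range $q<p$ needs duality before general $A_q$ weights can be inserted.

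\textbf{Lipschitz property of $\chi_*$.} Comparability of $\chi_*$ at nearby points does not follow from \eqref{min_rad*} with the same threshold $\theta$. Enlarging the cylinder costs a volume factor, i.e.\ a smaller $\theta$. This is why the paper works with the $\frac{1}{L}$-Lipschitz minorant of Remark~\ref{remark:3}, sandwiched between $\chi_*(\cdot;\theta_0)$ and $\chi_*(\cdot;\theta_1)$.
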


\begin{remark}\label{remark:3}
\emph{To establish Proposition $\ref{P:7.1}$, the Lipschitz continuity of
the minimum radius is required, as originally developed
by Gloria et al. \cite[pp.137-138]{Gloria-Neukamm-Otto-20}.
In terms of $\chi_*$ introduced in Lemma $\ref{lemma:approxi}$, one can similarly construct
$\frac{1}{L}$-Lipschitz continuous random fields, where we can fix
$L:=8$ for convenience.
Let $\theta_0$ be such that $\eqref{pri:1.5}$ holds
for any $r\geq \varepsilon\chi_*(0;\theta)$ with $0<\theta\leq\theta_0$.
Define $\underline{\chi}_{*}(0)$ as
the largest function with Lipschitz
constant $\frac{1}{L}$ less than
$\chi_*(0;\theta)$ for some $\theta$ chosen later, i.e.,
\begin{equation*}
  \underline{\chi}_*(0):=\inf_{y\in\mathbb{R}^{d+1}}
  \big(\chi_*(y;\theta)+\text{d}(y,0)/L\big).
\end{equation*}
By setting $\theta_1:= (L+1)^{-\frac{d}{2}-2}\theta_0$, it follows that
$\underline{\chi}_{*}$ is $\frac{1}{L}$-Lipschitz continuous and satisfies
$\chi_*(0;\theta_0)\leq \underline{\chi}_{*}(0)\leq \chi_*(0;\theta_1)$.
Obviously, this definition is independent of the origin and
can be translated to any point $z\in\mathbb{R}^{d+1}$. For ease of presentation,  the original notation is retained to denote the minimal radius with the $\frac{1}{L}$-Lipschitz continuity.}
\end{remark}

Let $R_*:= R_0\vee T$, where $R_0$ is the diameter of $\Omega$.
Recall that $\Omega_*:=\Omega\times I_{R_*}$ is an extension cylinder of
$\Omega_T$, and $\partial_{\shortparallel}\Omega_*:=\partial\Omega\times I_{R_*}$ represents the lateral boundary of $\Omega_*$. Below, we state the main tools used in this section.

\begin{lemma}[Weighted version of Shen's real argument  \cite{Shen-20}]\label{shen's lemma2}
Let $0<p_0<p<p_1$, $\omega\in A_{\frac{p}{p_0}}$,
and $\Omega_*\subset\mathbb{R}^{d+1}$ is a bounded Lipschitz domain. Let
$F\in L^{p_0}(\Omega_*)$ and $g\in L^{p_1}(\Omega)$.
Let $Q_0:=Q_{r_0}(x_0)$, where $x_0\in\partial_{\shortparallel}\Omega_{*}$ and
$0<r_0<c_0\text{diam}(\Omega_*)$. Suppose that for each parabolic cube $Q$
with the property that $|Q|\leq  c_1|Q_0|$
and either $4Q\subset 2Q_0\cap\Omega_*$ or $z_Q\in\partial_{\shortparallel}\Omega_*\cap 2Q_0$, there exist two measurable functions
$F_Q$ and $R_Q$ on $2Q\cap \Omega_*$,
such that $|F|\leq |W_Q| +
|V_Q|$ on $2Q\cap\Omega_*$, and
\begin{subequations}
\begin{align}
\Big(\dashint_{2Q\cap\Omega_{*}}|V_Q|^{p_1}\omega
\Big)^{\frac{1}{p_1}}
&\lesssim_{N_1} \bigg\{
\Big(\dashint_{\alpha Q\cap\Omega_{*}}|F|^{p_0}\Big)^{\frac{1}{p_0}}
+\sup_{4Q_0\supseteq Q^\prime\supseteq Q}
\Big(\dashint_{Q^\prime\cap\Omega_{*}}|g|^{p_0}
\Big)^{\frac{1}{p_0}}
\bigg\}\Big(\dashint_{Q}\omega\Big)^{1/{p_1}};
\label{pri:7.4a}\\
\Big(\dashint_{2Q\cap\Omega_{*}}|W_Q|^{p_0}
\Big)^{\frac{1}{p_0}}
&\lesssim_{N_2}
\sup_{4Q_0\supseteq Q^\prime\supseteq Q}
\Big(\dashint_{Q^\prime\cap\Omega_{*}}|g|^{p_0}
\Big)^{\frac{1}{p_0}}
+\eta\Big(\dashint_{\alpha Q\cap\Omega_{*}}
|F|^{p_0}\Big)^{\frac{1}{p_0}},
\label{pri:7.4b}
\end{align}
\end{subequations}
where $N_1, N_2>0$, $\eta\geq0$, $0<c_1<1$, and $\alpha>2$. Then there exists $\theta_{0}>0$, depending on
$d,p_0,p_1,p,N_1,[\omega]_{A_{p/p_0}}$, and the
Lipschitz character of $\Omega_*$, such that if $0\leq \eta\leq\eta_0$, it is obtained that
\begin{equation}\label{pri:7.5}
\Big(\dashint_{Q_0\cap\Omega_*}|F|^p\omega\Big)^{\frac{1}{p}}
\lesssim \bigg\{\Big(\dashint_{4Q_0\cap\Omega_*}|F|^{p_0}\Big)^{\frac{1}{p_0}}
\Big(\dashint_{Q_0}\omega\Big)^{1/{p}}
+\Big(\dashint_{4Q_0\cap\Omega_*}|\textcolor[rgb]{0.00,0.00,1.00}{g}|^p\omega\Big)^{\frac{1}{p}}\bigg\},
\end{equation}
where the multiplicative constant depends on
$d,p_0,p_1,p,c_1,\alpha,N_1,N_2,[\omega]_{A_{p/p_0}}$, and the Lipschitz
character of $\Omega_*$.
\end{lemma}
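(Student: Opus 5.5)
The plan is to run Shen's good-$\lambda$ (real-variable) argument with the measure $\omega\,dz$ replacing Lebesgue measure wherever $V_Q$ enters. The $A_\infty$ property of $\omega$ is what lets the smallness of $W_Q$, which is only available in unweighted form, be used. Since $\omega\in A_{p/p_0}$, the following two facts hold, and they are the only properties of $\omega$ the argument needs.
\begin{itemize}
\item[(i)] The Hardy–Littlewood maximal operator $\mathcal{M}$, taken over parabolic cubes, is bounded on $L^{s}(\omega)$ for $s\geq p/p_0$, in particular for $s=p_1/p_0$ since $A_{p/p_0}\subset A_{p_1/p_0}$.
\item[(ii)] There are $C,\kappa>0$ such that $\omega(E)/\omega(Q)\leq C(|E|/|Q|)^{\kappa}$ for all measurable $E\subset Q$.
\end{itemize}
Both hold for cubes with the doubling measure $|Q\cap\Omega_*|$, because $\Omega_*$ is Lipschitz, so $|Q\cap\Omega_*|\sim|Q|$ for the admissible cubes (interior cubes with $4Q\subset 2Q_0\cap\Omega_*$, or cubes centered on $\partial_{\shortparallel}\Omega_*\cap 2Q_0$).

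First I would set up the level sets. Let $\mathcal{M}_{2Q_0}$ be the maximal operator localized to cubes contained in $2Q_0$, applied to $|F|^{p_0}\mathbf{1}_{\Omega_*}$ and $|g|^{p_0}\mathbf{1}_{\Omega_*}$, and put $E(\lambda):=\{z\in Q_0\cap\Omega_*:\mathcal{M}_{2Q_0}(|F|^{p_0})(z)>\lambda\}$. Fix $\lambda_0:=C\dashint_{4Q_0\cap\Omega_*}|F|^{p_0}$ with $C$ large. Then for $\lambda>\lambda_0$ a Calder\'on–Zygmund (stopping-time) decomposition of $E(\lambda)$ yields disjoint maximal cubes $\{Q_k\}$ with $\dashint_{Q_k\cap\Omega_*}|F|^{p_0}\sim\lambda$ and $|Q_k|\leq c_1|Q_0|$. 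Each $Q_k$ is either interior or can be replaced by a comparable boundary-centered cube, so the hypotheses \eqref{pri:7.4a}–\eqref{pri:7.4b} apply on $Q_k$. The range $\lambda\leq\lambda_0$ contributes at most $\lambda_0^{p/p_0}\,\omega(Q_0)$, which is the first term of \eqref{pri:7.5}.

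Next comes the key local estimate. For a large constant $A$ and small $\gamma$, suppose some point of $Q_k$ has $\mathcal{M}(|g|^{p_0})\leq\gamma\lambda$; cubes without such a point are absorbed into $\{\mathcal{M}(|g|^{p_0})>\gamma\lambda\}$.
\begin{itemize}
\item By maximality of $Q_k$, $\sup_{Q'\supseteq Q_k}\dashint_{Q'\cap\Omega_*}|g|^{p_0}\leq\gamma\lambda$, and the enlarged averages of $|F|^{p_0}$ are $\lesssim\lambda$.
\item Hence, on $Q_k$, $E(A\lambda)\subset\{\mathcal{M}(|W_{Q_k}|^{p_0}\mathbf{1}_{2Q_k})>c A\lambda\}\cup\{\mathcal{M}(|V_{Q_k}|^{p_0}\mathbf{1}_{2Q_k})>cA\lambda\}$.
\item The weak-$(1,1)$ bound and \eqref{pri:7.4b} give a Lebesgue measure of the first set $\lesssim A^{-1}(\gamma+\eta^{p_0})|Q_k|$. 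By (ii), its $\omega$-measure is then $\lesssim (A^{-1}(\gamma+\eta^{p_0}))^{\kappa}\omega(Q_k)$.
\item By (i) and the weighted hypothesis \eqref{pri:7.4a}, the $\omega$-measure of the second set is $\lesssim (A\lambda)^{-p_1/p_0}\int_{2Q_k}|V_{Q_k}|^{p_1}\omega\lesssim A^{-p_1/p_0}\,\omega(Q_k)$. Here $\dashint_{Q_k}\omega$ times $|Q_k|$ gives $\omega(Q_k)$, and $\omega(2Q_k)\lesssim\omega(Q_k)$ by doubling.
\end{itemize}
Summing over $k$ gives
\[
\omega(E(A\lambda))\leq C\big(A^{-p_1/p_0}+(\gamma+\eta^{p_0})^{\kappa}\big)\,\omega(E(\lambda))+\omega\{\mathcal{M}(|g|^{p_0})>\gamma\lambda\}.
\]

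Finally, I would multiply by $\lambda^{p/p_0-1}$ and integrate over $\lambda>\lambda_0$. Because $p<p_1$, I can choose $A$ so large that $CA^{p/p_0}A^{-p_1/p_0}\leq\frac14$, and then $\gamma$ and $\eta_0$ so small that $CA^{p/p_0}(\gamma+\eta_0^{p_0})^{\kappa}\leq\frac14$. This lets the $E$-term be absorbed into the left-hand side. To make the absorption legitimate, I would truncate: run the argument for $\min(\mathcal{M}(|F|^{p_0}),N)$ and let $N\to\infty$. The $g$-term is bounded by $\int_{4Q_0}|g|^p\omega$ using (i) with exponent $p/p_0>1$, and $|F|^{p_0}\leq\mathcal{M}(|F|^{p_0})$ a.e. then yields \eqref{pri:7.5}.

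I expect the main obstacle to be the mismatch between the weighted control of $V_Q$ and the unweighted control of $W_Q$. Property (ii) resolves it, and it is also what forces $\eta_0$ to depend on $[\omega]_{A_{p/p_0}}$. A secondary technical point is checking that the stopping cubes meeting the boundary can be enlarged to boundary-centered admissible cubes, using the Lipschitz character of $\Omega_*$ to keep all constants uniform.
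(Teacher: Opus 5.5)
Your outline is correct and is essentially the standard weighted good-$\lambda$ proof from the cited source \cite{Shen-20}, which the paper invokes without proof. The argument has three ingredients: the weak-type bound for $\mathcal{M}$ on $L^{p_1/p_0}(\omega)$ for the $V_Q$ part, the unweighted weak-$(1,1)$ bound plus the $A_\infty$ property for the $W_Q$ part, and absorption after truncation.

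A few details need care, mostly where you already flag them.
\begin{itemize}
\item The enlarged averages $\dashint_{\alpha Q_k}|F|^{p_0}\lesssim\lambda$ come from choosing $Q_k$ as maximal cubes inside $E(\lambda)$, so that each parent meets the complement of $E(\lambda)$. They do not come from a two-sided bound $\dashint_{Q_k}|F|^{p_0}\sim\lambda$, which such a decomposition does not give and which you never use.
\item The bound $\sup_{Q'\supseteq Q_k}\dashint_{Q'\cap\Omega_*}|g|^{p_0}\le\gamma\lambda$ follows from the good point you placed in $Q_k$, not from maximality.
\item Near $\partial_{\shortparallel}\Omega_*$, run the stopping argument on the level set in $Q_0$ of $\mathcal{M}(|F|^{p_0}\mathbf{1}_{\Omega_*})$, or keep $|Q_k\cap\Omega_*|\gtrsim|Q_k|$. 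Either way this secures $\sum_k\omega(Q_k)\lesssim\omega(E(\lambda))$.
\end{itemize}
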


\begin{lemma}[Geometric properties of integrals]\label{lemma:integral geometry}
Let $\chi_{*}$ be given as in Lemma $\ref{lemma:approxi}$ with the $\frac{1}{L}$-Lipschitz continuity, and $\varepsilon\in(0,1]$.
Let $f\in L^1_{\emph{loc}}(\mathbb{R}^{d+1})$, and
$\Omega_*\subset\mathbb{R}^{d+1}$ be a Lipschitz domain with
$X\in\partial_{\shortparallel}\Omega_*$. Then
the following inequalities hold:
\begin{itemize}
  \item For all $Q_r(X)
  \subset\mathbb{R}^{d+1}$ and $x_0\in Q_r(X)\cap\Omega_*$ with
   $r<\frac{\varepsilon}{4}\chi_*(x_0/\varepsilon)$, we have
  \begin{equation}\label{pri:7.6}
  \Big(\dashint_{U_{*,\varepsilon}(x_0)}|f|\Big)^{\frac{1}{s}}
  \lesssim
  \dashint_{D_{5r}(X)}dz
  \Big(
  \dashint_{U_{*,\varepsilon}(z)}
  |f|\Big)^{\frac{1}{s}},
  \end{equation}
  where $s>0$, and we recall $D_r(z):= Q_r(z)\cap\Omega_*$ whenever $z\in\partial_{\shortparallel}\Omega_*$.
  \item For all $Q_r(X)
  \subset\mathbb{R}^{d+1}$ with $r\geq \frac{\varepsilon}{4}
  \chi_*(X/\varepsilon)$,
  we have
\begin{subequations}
\begin{align}
  &\dashint_{D_r(X)} |f|
  \lesssim \dashint_{D_{2r}(X)}
  \Big(\dashint_{U_{*,\varepsilon}(x)}|f|\Big)dx
  \lesssim \dashint_{D_{7r}(X)} |f|;
  \label{pri:7.7a}\\
  &
\int_{\Omega_*}|f|
\sim
\int_{\Omega_*}
  \Big(\dashint_{U_{*,\varepsilon}(x)}|f|\Big)dx,
  \label{pri:7.7b*}
\end{align}
\end{subequations}
\end{itemize}
where the multiplicative constant depends only on $d$ and the
Lipschitz character of $\Omega_*$.
\end{lemma}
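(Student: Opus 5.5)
The plan is to exploit only two ingredients: the $\frac1L$-Lipschitz continuity of $\chi_*$ (Remark \ref{remark:3}, with $L=8$) and the doubling/Lipschitz geometry of $\Omega_*$. Lipschitz continuity means that the radius $\varepsilon\chi_*(z/\varepsilon)$ changes by at most $\frac{1}{L}\text{d}(z,z')$ when the centre moves from $z$ to $z'$ (the parabolic distance scales correctly). Hence nearby cells $U_{*,\varepsilon}(z)$ and $U_{*,\varepsilon}(z')$ have comparable radii, and each is contained in a fixed dilate of the other. The Lipschitz character of $\Omega_*$ gives $|Q_\rho(z)\cap\Omega_*|\sim\rho^{d+2}$ for all $z\in\overline{\Omega_*}$ and $\rho\lesssim\text{diam}(\Omega_*)$. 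This is what lets averages over $U_{*,\varepsilon}$ be compared up to constants depending only on $d$ and $\partial\Omega$.

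\emph{Small scales, \eqref{pri:7.6}.} Set $\rho_0:=\varepsilon\chi_*(x_0/\varepsilon)>4r$. For $z\in D_{5r}(X)$ we have $\text{d}(z,x_0)\le 6r<\frac32\rho_0$, so Lipschitz continuity gives
\[
\varepsilon\chi_*(z/\varepsilon)\in\big[\rho_0-\tfrac{6r}{L},\,\rho_0+\tfrac{6r}{L}\big]\subset\big[\tfrac{13}{16}\rho_0,\tfrac{19}{16}\rho_0\big].
\]
Consequently $Q_{\rho_0}(x_0)\subset Q_{C\rho(z)}(z)$ and $|U_{*,\varepsilon}(z)|\sim|U_{*,\varepsilon}(x_0)|\sim\rho_0^{d+2}$. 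Because of the wide margin (since $r<\rho_0/4$), I expect that for $L=8$ one even gets $U_{*,\varepsilon}(x_0)\subset \widetilde U_{*,\varepsilon}(z)$, where $\widetilde U$ denotes a $c$-dilate, and that this dilation can be absorbed into the constant after redefining via $\chi_*$ at a slightly different $\theta$. Either way, $\dashint_{U_{*,\varepsilon}(x_0)}|f|\lesssim\dashint_{U_{*,\varepsilon}(z)}|f|$ holds uniformly in $z\in D_{5r}(X)$. Raising this to the power $1/s$ and averaging over $z\in D_{5r}(X)$ gives \eqref{pri:7.6}. The main obstacle is exactly this containment: if the crude dilation bound fails, I would enlarge the cell on the right by a fixed factor and note that Lemma \ref{lemma:approxi} is insensitive to such a factor.

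\emph{Large scales, \eqref{pri:7.7a} and \eqref{pri:7.7b*}.} Let $\rho_X:=\varepsilon\chi_*(X/\varepsilon)\le4r$. For $x\in D_{2r}(X)$, Lipschitz continuity gives $\varepsilon\chi_*(x/\varepsilon)\le\rho_X+\frac{2r}{L}\le 5r$, so $U_{*,\varepsilon}(x)\subset D_{7r}(X)$. For the upper bound, use Fubini:
\[
\dashint_{D_{2r}}\dashint_{U_{*,\varepsilon}(x)}|f|\,dx=\frac{1}{|D_{2r}|}\int |f(y)|\int_{D_{2r}}\frac{\mathbf 1_{y\in U_{*,\varepsilon}(x)}}{|U_{*,\varepsilon}(x)|}\,dx\,dy.
\]
The inner kernel is bounded by a constant: $y\in Q_{\rho(x)}(x)$ forces $\rho(x)\sim\rho(y)$ via Lipschitz continuity, so the $x$-set has measure $\lesssim\rho(y)^{d+2}\sim|U_{*,\varepsilon}(x)|$. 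This yields the bound by $\dashint_{D_{7r}}|f|$. For the lower bound, the same computation with $y\in D_r(X)$ shows that the set of $x\in D_{2r}$ with $y\in U_{*,\varepsilon}(x)$ has measure $\gtrsim\rho(y)^{d+2}$. Indeed, any $x$ with $\text{d}(x,y)\le\frac12\rho(y)$ qualifies, this set lies inside $D_{2r}$ because $\rho(y)\le 5r$ but $\rho(y)\ge\varepsilon$ is small relative to the ball, and the Lipschitz geometry of $\Omega_*$ gives the measure lower bound. This yields \eqref{pri:7.7a}. The same Fubini kernel estimates, applied over all of $\Omega_*$, give \eqref{pri:7.7b*} directly.
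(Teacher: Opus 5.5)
\textbf{The gap: the small-scale bound \eqref{pri:7.6}.} The large-scale part is fine. Your Fubini-kernel treatment of \eqref{pri:7.7a} and \eqref{pri:7.7b*} is sound, up to one slip in the lower bound of \eqref{pri:7.7a}. For $y\in D_r(X)$, $\rho(y):=\varepsilon\chi_*(y/\varepsilon)$ can be as large as $\rho_X+2r/L\approx 4r$. So $\{x:\text{d}(x,y)\le\frac12\rho(y)\}$ need not lie in $D_{2r}(X)$, and the remark ``$\rho(y)\ge\varepsilon$'' says nothing about this. Use instead $x\in Q_{c\min\{\rho(y),r\}}(y)\cap\Omega_*$, which still has measure $\gtrsim\rho(y)^{d+2}$ because $\rho(y)\lesssim r$.

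Your uniform bound $\dashint_{U_{*,\varepsilon}(x_0)}|f|\lesssim\dashint_{U_{*,\varepsilon}(z)}|f|$ for all $z\in D_{5r}(X)$ is false. Comparable radii and volumes do not give comparable averages; you need $U_{*,\varepsilon}(x_0)\subset U_{*,\varepsilon}(z)$. Write $x_0=(\hat x_0,t_0)$, $z=(x,t)$ and $\rho_0=\varepsilon\chi_*(x_0/\varepsilon)$. Away from $\partial\Omega_*$, containment requires $\rho(z)\ge\rho_0+|x-\hat x_0|$ and $\rho(z)^2\ge\rho_0^2+|t-t_0|$. Lipschitz continuity only gives $\rho(z)\le\rho_0+\text{d}(z,x_0)/L$. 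If $\chi_*$ decreases away from $x_0$, containment fails for every $z\neq x_0$, and mass of $f$ near the edge of $U_{*,\varepsilon}(x_0)$ is missed.

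Your fallback does not help. The right-hand side of \eqref{pri:7.6} is built from the given cells, so enlarging them proves a different inequality. Switching $\theta$ does not help either: nothing bounds $\chi_*(\cdot;\theta')$ below by a fixed multiple $>1$ of $\chi_*(\cdot;\theta)$, and Remark \ref{remark:3} only sandwiches $\underline{\chi}_*$.

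\textbf{No repair from Lipschitz continuity alone.} In the parabolic geometry, no argument using only the $\frac1L$-Lipschitz property can give \eqref{pri:7.6} when $r\ll\rho_0/L$. With Euclidean balls, a proportion-of-good-centres argument would repair your step. There, moving the centre toward $y$ by $\tau$ gains $\tau\cos\alpha$ and loses at most $\tau/L$; a finite sector decomposition of $U_{*,\varepsilon}(x_0)$ then handles $s>1$, where Jensen goes the wrong way. In time, however, shifting the centre by $\sigma$ gains only $\sigma$ in the time window, while $\rho(z)^2$ may drop by about $2\rho_0\sqrt{\sigma}/L$.

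Concretely, let $\rho(z)=\rho_0-\text{d}(z,x_0)/L$ near $x_0$, take $r\le\rho_0/(12L)$, and set $y=(\hat x_0,t_0+\rho_0^2-\eta)\in U_{*,\varepsilon}(x_0)$. Every $z\in D_{5r}(X)$ has $b:=\text{d}(z,x_0)<12r\le\rho_0/L$ and $t-t_0\le b^2$. Hence $y\in U_{*,\varepsilon}(z)$ forces
\[
\frac{2\rho_0 b}{L}-\frac{b^2}{L^2}<\eta+b^2,\qquad\text{hence}\qquad b\lesssim \frac{L\eta}{\rho_0}.
\]
Take $f=\mathbf{1}_{Q_\kappa(y)}$ with $\kappa^2<\eta$. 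Then the ratio of the right-hand side to the left-hand side of \eqref{pri:7.6} tends to $0$ as $\eta\to0$, for every $s>0$.

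So the first bullet does not follow from Lipschitz continuity of $\chi_*$, which is all the cited elliptic argument of \cite{Wang-Xu-25} uses. The paper's appeal to that argument does not address the time direction, and the statement itself needs adjusting. For instance, put the smaller cell $Q_{\rho_0/2}(x_0)\cap\Omega_*$ on the left. Then your pointwise comparison does hold for all $z\in\Omega_*$ with $\text{d}(z,x_0)\le\rho_0/8$, and these $z$ fill a fixed proportion of $D_{5r}(X)$.
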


\begin{proof}
Apart from the parabolic scaling and the parabolic distance, the definition of a parabolic cube is analogous to that of a standard cube in $\mathbb{R}^{d+1}$. Consequently, the proof of this lemma can be entirely derived by replicating the proof of
\cite[Lemma 2.12]{Wang-Xu-25}, and it is omitted here. The idea can also be found in
\cite{Duerinckx-Otto-20}.
\end{proof}

\subsection{Outline some basic ingredients}
\label{subsec:6.1}

\noindent
Let $\chi_{*}$ be given as in Lemma $\ref{lemma:approxi}$
with the $\frac{1}{L}$-Lipschitz continuity.
Then, in order to be consistent with the notation in Lemma $\ref{shen's lemma2}$, define
\begin{equation*}
F(z):=\Big(\dashint_{U_{*,\varepsilon}(z)}
 |\nabla u_\varepsilon|^2\Big)^{\frac{1}{2}};
 \qquad
 g(z):=\Big(\dashint_{U_{*,\varepsilon}(z)}
 |f|^2\Big)^{\frac{1}{2}},
\end{equation*}
where $u_\varepsilon$ and $f$ are associated with the equations $\eqref{pde:7.1}$.

\medskip
\noindent
\textbf{Basic ideas:} The repeated application of Shen's lemma (i.e., Lemma $\ref{shen's lemma2}$) is reflected in two aspects.
\begin{itemize}
  \item[a.] \textbf{From unweighted estimates to weighted ones.} Roughly speaking, the proof begins by establishing
  the estimate $\eqref{pri:7.1}$ in the case $\omega=1$ (i.e., the estimate $\eqref{pri:A}$) through the use of Shen's lemma, where the energy estimate plays a fundamental role
  as one of ``inputs'' and accordingly $p_0=2$ is taken therein.
  Then, the desired estimate $\eqref{pri:7.1}$ for $A_p$ weights follows from reusing Shen's lemma, where the role played by the energy estimate is replaced by the estimate obtained in the previous step,
  and accordingly the range of $p_0$ will be greatly relaxed.
  \item[b.] \textbf{From quenched estimates to annealed ones} (as shown in the flow of the proofs in Fig.$\ref{pic:4.1}$).
\end{itemize}

Understanding how Shen’s lemma is concretely utilized to address the Calder\'on-Zygmund estimates is crucial for the proof of
Theorem $\ref{thm:C-Z}$ (or Proposition $\ref{P:7.1}$).
The estimate $\eqref{pri:A}$ is taken
as an example to illustrate it.

\textbf{Step A.} \emph{Decompose the domain $\Omega_T$ and reduce
the desired estimate $\eqref{pri:A}$ to local estimates.}
Let $B_0\subset\mathbb{R}^d$ be a fixed ball centered at the origin
with radius satisfying $r_{B_0}\sim R_0$ if $R_0<\infty$;
or $r_{B_0}\sim 1$ if $R_0=\infty$. The basic parabolic cube
is denoted by $Q_0:=B_0\times I_{-r_{B_0}}$. For any $z\in\mathbb{R}^{d+1}$ one can set $Q_0(z):=Q_0+z$. Then, two
families of center points are introduced: $\mathcal{I}$ for
interior points and $\mathcal{B}$ for boundary points. They satisfy the following properties:
\begin{itemize}
  \item[(1)] for any $z\in\mathcal{I}$, it holds that $Q_0(z)\subset \Omega_T$;
  \item[(2)] $\mathcal{B}\subset \partial_{\shortparallel} \Omega_T$, and
\begin{equation}\label{decompsition}
\Omega_T\subset \underbrace{\Big(\cup_{z\in\mathcal{B}}Q_0(z)\Big)}_{\text{boundary~ part}}
  \bigcup \underbrace{\Big(\cup_{z\in\mathcal{I}}(1/4)Q_0(z)\Big)}_{
  \text{interior~part}}.
\end{equation}
\end{itemize}

For any $2\leq p<\infty$, the following local estimates are established:
\begin{equation}\label{pri:7.9}
\dashint_{\tilde{Q}_0(z)\cap\Omega_T} F^p
\lesssim  \Big(\dashint_{4\tilde{Q}_0(z)\cap\Omega_T} F^2\Big)^{\frac{p}{2}}
+ \dashint_{4\tilde{Q}_0(z)\cap\Omega_T} g^p,
\end{equation}
where
$\tilde{Q}_0(z) := Q_0(z)$ if $z\in\mathcal{B}$;
and $\tilde{Q}_0(z) := (1/4)Q_0(z)$ if $z\in\mathcal{I}$.
A covering argument then yields
\begin{equation*}
\int_{\Omega_T}F^p\lesssim \int_{\Omega_T}g^p.
\end{equation*}
Then, by a duality argument, the above estimate
also holds for $1<p<2$, which finally leads to $\eqref{pri:A}$.
Thus, the desired estimate
$\eqref{pri:A}$ has been reduced to $\eqref{pri:7.9}$.

\textbf{Step B.} \emph{Shen's lemma
(with $p_1\in(2,\infty)$, $p_0=2$, and $\omega =1$) is employed
to obtain the stated local estimates $\eqref{pri:7.9}$.} By the decomposition $\eqref{decompsition}$,
the proof of $\eqref{pri:7.9}$ is divided into boundary
and interior parts.
Here, only the arguments for the boundary part are introduced,
since the interior case can be treated similarly.

In view of Lemma $\ref{shen's lemma2}$,
for the given $Q_0(z)$ with $z\in\mathcal{B}$, consider any cube
$Q$ with the properties: $|Q|\leq c_1|Q_0|$ and either
$z_Q\in 2Q_0(z)\cap\partial_{\shortparallel}\Omega_T$ or $4Q\subset 2Q_0(z)\cap\Omega_T$,
where $c_1\in(0,1)$.
Then, $F$ must be split according to $Q$.
Recalling the error expansion in Lemma $\ref{lemma:approxi}$,
it heuristically suggests that
\begin{equation}\label{eq:7.1}
\nabla u_\varepsilon = \underbrace{\nabla u_\varepsilon -
(e_j+(\nabla\phi_j)^\varepsilon)\varphi_j}_{W_\varepsilon} + \underbrace{(e_j+(\nabla\phi_j)^\varepsilon)\varphi_j}_{V}
\end{equation}
and the right-hand side of $\eqref{eq:7.1}$
provides a concrete form of $W_Q$ and $V_Q$
stated in Lemma $\ref{shen's lemma2}$:
\begin{equation}\label{eq:7.2}
W_Q(z):= \Big(\dashint_{U_{*,\varepsilon}(z)}|W_\varepsilon|^2\Big)^{\frac{1}{2}}
\quad \text{and} \quad
V_Q(z):= \Big(\dashint_{U_{*,\varepsilon}(z)}|V|^2\Big)^{\frac{1}{2}},
\end{equation}
by which it is immediately obtained that $F\leq W_Q + V_Q$ on $2Q\cap\Omega_T$.

However, the decomposition of $F$ is subtle because
geometric properties of integrals render
the cases complicated under the multi-scale action
(see Lemma $\ref{lemma:integral geometry}$). Therefore, the decomposition of $F$
is based on the scale of the cube $Q$, and
the following cases are considered\footnote{We
impose the notation $W_Q$ (and $V_B$) to stress that
the decomposition of $Q$ is indeed subjected to $Q$.}:
\begin{itemize}
  \item [(I).] $0<r_Q<\frac{\varepsilon}{4}\chi_{*}(z_Q/\varepsilon)$;
  in this case, set
  $W_Q = 0$ and $V_Q = F$;
  \item [(II).] $r_Q\geq \frac{\varepsilon}{4}\chi_{*}(z_Q/\varepsilon)$;
  in this case, $W_Q$ and $V_Q$ are exactly given by $\eqref{eq:7.2}$.
\end{itemize}

Then,
the remaining work is to verify
the estimates $\eqref{pri:7.4a}$
and $\eqref{pri:7.4b}$
satisfied by $W_B$ and $V_B$,
respectively, for each of the two cases above. These verification are addressed in detail in the next subsection, and
the desired estimate $\eqref{pri:7.9}$ is finally obtained.

\subsection{Quenched Calder\'on-Zygmund estimates}
\label{subsec:6.2}

\noindent
First, the ranges of values of parameters $p_1,p_0$ in Lemma $\ref{shen's lemma2}$ are fixed as follows:
\begin{itemize}
\item [(a)] To handle the estimate $\eqref{pri:A}$ (or $\eqref{pri:7.1}$ for $\omega=1$), take $p_1\in(2,\infty)$ and $p_0=2$;
\item [(b)] To handle the estimate $\eqref{pri:7.1}$ for any
$\omega\in A_p$, take $p_1\in(p_0,\infty)$ and $p_0\in(1,2]$.
\end{itemize}

\textbf{Step 1.} Outline of the proof of
$\eqref{pri:7.1}$.
Let $\omega\in A_{p/p_0}$ with $p\in(p_0,p_1)$.
As in \textbf{Step A},
the desired estimate $\eqref{pri:7.1}$ is based on the following local estimates:
\begin{equation}\label{f:7.1}
\Big(\dashint_{\tilde{Q}_0(z)\cap\Omega}F^p\omega\Big)^{\frac{1}{p}}
\lesssim
\Big(\dashint_{4\tilde{Q}_0(z)\cap\Omega}F^{p_0}\Big)^{\frac{1}{p_0}}
\Big(\dashint_{\tilde{Q}_0(z)}\omega\Big)^{1/{p}}
+\Big(\dashint_{4\tilde{Q}_0(z)\cap\Omega}|g|^p\omega\Big)^{\frac{1}{p}},
\end{equation}
where
$\tilde{Q}_0(z) := Q_0(z)$ if $z\in\mathcal{B}$,
and $\tilde{Q}_0(z) := (1/4)Q_0(z)$ if $z\in\mathcal{I}$.

Admitting the estimates $\eqref{f:7.1}$ provisionally, it can be further derived that
\begin{equation}\label{f:7.2}
\begin{aligned}
\dashint_{\tilde{Q}_0(z)\cap\Omega}F^p\omega
\lesssim
\Big(\dashint_{4\tilde{Q}_0(z)\cap\Omega}F^{p_0}\Big)^{\frac{p}{p_0}}
\Big(\dashint_{\tilde{Q}_0(z)}\omega\Big)
+ \dashint_{4\tilde{Q}_0(z)\cap\Omega}|g|^p\omega.
\end{aligned}
\end{equation}
Two cases must be considered:
(1) bounded parabolic cylinder; (2) unbounded parabolic cylinder.

For the case (1), the number of elements in $\mathcal{I}\cup\mathcal{B}$ is finite, and the covering argument yields
\begin{equation}\label{f:7.3}
\begin{aligned}
\dashint_{\Omega}F^p\omega
&\lesssim^{\eqref{f:7.2}}
\Big(\dashint_{\Omega}F^{p_0}\Big)^{\frac{p}{p_0}}
\max_{z\in\mathcal{I}\cup\mathcal{B}}\Big(\dashint_{\tilde{Q}_0(z)}\omega\Big)
+\dashint_{\Omega}g^p\omega\\
&\lesssim^{\eqref{pri:A}} \Big(\dashint_{\Omega}g^{p_0}\Big)^{\frac{p}{p_0}}
\Big(\dashint_{2\Omega}\omega\Big)
+\dashint_{\Omega}g^p\omega
\lesssim^{\eqref{f:apw-2}}
\Big(\dashint_{\Omega}g^{p_0}\Big)^{\frac{p}{p_0}}
\Big(\dashint_{\Omega}\omega\Big)
+\dashint_{\Omega}g^p\omega.
\end{aligned}
\end{equation}
To proceed with the computation, further set
$0<p_0-1\ll1$. By using H\"older's inequality and the property of the
$A_{p/p_0}$ class (it is known from $\eqref{f:apw-5}$ that
$\omega\in A_p$ implies $\omega\in A_{p/p_0}$), it can be derived that
\begin{equation}\label{f:7.4}
\Big(\dashint_{\Omega}g^{p_0}\Big)^{\frac{1}{p_0}}
\leq \Big(\dashint_{\Omega}g^{p}\omega\Big)^{\frac{1}{p}}
\Big(\dashint_{\Omega}\omega^{-\frac{p_0}{p-p_0}}\Big)^{\frac{p-p_0}{p_0p}}
\lesssim^{\eqref{f:w-1}}_{d,p,[\omega]_{A_p}} \Big(\dashint_{\Omega}g^{p}\omega\Big)^{\frac{1}{p}}
\Big(\dashint_{\Omega}\omega\Big)^{-\frac{1}{p}}.
\end{equation}
Thus, by plugging the estimate $\eqref{f:7.4}$ back into $\eqref{f:7.3}$,
the desired estimate $\eqref{pri:7.1}$ is established for the case (1).


For the case (2), the number of elements in $\mathcal{I}\cup\mathcal{B}$ is infinite, and an approximating argument is applied. Instead, fix any $z\in\mathcal{B}$, and denote it by $\hat{z}$. It follows from
the estimate $\eqref{f:7.2}$ that
\begin{equation}\label{f:7.5}
\begin{aligned}
\int_{Q_0(\hat{z})\cap\Omega}F^p\omega
\lesssim
|Q_0|^{1-\frac{p}{p_0}}\Big(\int_{\Omega_T}F^{p_0}\Big)^{\frac{p}{p_0}}
\Big(\dashint_{\tilde{Q}_0(\hat{z})}\omega\Big)
+ \int_{\Omega_T}|g|^p\omega,
\end{aligned}
\end{equation}
Using the property $\eqref{f:apw-5}$ again, one can infer that $\omega\in A_{p/p_0}$ implies $\omega\in A_q$ for some $q\in(1,p/p_0)$.
Therefore, it holds that
\begin{equation}\label{f:7.6}
\Big(\dashint_{Q_0(\hat{z})}\omega\Big)
\lesssim |Q_0|^{q-1}\Big(\int_{Q_0(\hat{z})}\omega^{-\frac{1}{q-1}}\Big)^{1-q}
\lesssim |Q_0|^{q-1},
\end{equation}
where it is noted that $\omega\geq 0$ in the last step above. Plugging
$\eqref{f:7.5}$ back into $\eqref{f:7.6}$, we have
\begin{equation*}
\int_{Q_0(\hat{z})\cap\Omega}F^p\omega
\lesssim
|Q_0|^{q-\frac{p}{p_0}}\Big(\int_{\Omega_T}F^{p_0}\Big)^{\frac{p}{p_0}}
+ \int_{\Omega_T}|g|^p\omega,
\end{equation*}
The desired estimate $\eqref{pri:7.1}$ for the case (2) consequently follows by letting $|Q_0|\to\infty$ above.

\textbf{Step 2.} Further reduction of
$\eqref{f:7.1}$. As mentioned in \textbf{Step B} of
Subsection $\ref{subsec:6.1}$,
it suffices to show $\eqref{f:7.1}$ for the case of $z\in\mathcal{B}$.
Moreover, it may be assumed without loss of generality that $z=0$.
Given $\tilde{Q}_0$ (which is equal to
$Q_0$ by definition, and whose radius is $r_0$), let $Q$ be arbitrary cube
with the properties: $r_Q\leq (1/100)r_{0}$ and
either $z_Q\in 2Q_0\cap\partial_{\shortparallel}\Omega_T$ or
$4Q\subset 2Q_0\cap\Omega_T$.
Here, only the case $z_Q\in 2Q_0\cap \partial_{\shortparallel}\Omega_T$ is considered, and $D:=Q\cap\Omega_T$.
Thus, according to the guideline in \textbf{Step B}, the next task is to determine the concrete
forms of $W_B$ and $V_B$ for the case
$r_Q\geq \frac{\varepsilon}{4}\chi_{*}(z_Q/\varepsilon)$ and
$z_Q\in 2Q_0\cap\partial_{\shortparallel}\Omega_T$.

\textbf{Step 3.} Decomposition of the field $\nabla u_\varepsilon$ and
final reduction of $\eqref{f:7.1}$ based on Step 2.
Let $Q$ be given as in the previous step.
Suppose that $w_\varepsilon\in L^2(0,T;H_0^1(\Omega))$ and
$v_\varepsilon:=u_\varepsilon-w_\varepsilon$ satisfy the following equations:
\begin{equation}\label{pde:7.1}
\left\{\begin{aligned}
 \partial_t w_\varepsilon +\mathcal{L}_\varepsilon(w_\varepsilon)
 &= \nabla \cdot f\textbf{1}_{30D}
 &\quad&\text{in}~~\Omega_T;\\
 w_\varepsilon
 &= 0
 &\quad&\text{on}~~\partial_p\Omega_T,
\end{aligned}\right.
 \quad\text{and}\quad
 \partial_t v_\varepsilon +\mathcal{L}_\varepsilon(v_\varepsilon)
 =0 \quad\text{in}\quad 30D.
\end{equation}
Then, consider the approximating function $\bar{v}$ satisfying
$\big(\partial_t+\mathcal{L}_0\big)(\bar{v}_{r_Q}) = 0$ in $9D$
with $\bar{v}_{r_Q} = v_\varepsilon$ on $\partial_{p}(9D)$.
The following notation is also introduced:
\begin{equation*}
\begin{aligned}
&V_{Q}(z)
:=\Big(\dashint_{U_{*,\varepsilon}(z)}
|(e_j+(\nabla\phi_j)^\varepsilon)
\varphi_{j,r_{Q}}|^{2}\Big)^{1/2};
\qquad
W_{Q}(z)
:=\Big(\dashint_{U_{*,\varepsilon}(z)}
|\nabla u_\varepsilon-(e_j+(\nabla\phi_j)^\varepsilon)
\varphi_j|^{2}\Big)^{1/2};\\
&W^{(1)}_{Q}(z)
:=\Big(\dashint_{U_{*,\varepsilon}(z)}
|\nabla v_\varepsilon - (e_j+(\nabla\phi_j)^\varepsilon)
\varphi_{j,r_{Q}}|^{2}\Big)^{1/2};\qquad
W^{(2)}_{Q}(z)
:=\Big(\dashint_{U_{*,\varepsilon}(z)}|\nabla w_\varepsilon|^{2}\Big)^{1/2},
\end{aligned}
\end{equation*}
where $\varphi_{j,r_Q}=\partial_j\bar{v}_{r_Q}$ is similar to the function  defined in the proof of Lemma $\ref{lemma:approxi}$.

Then, combining the cases (I) and (II),
one can observe that
$F\leq W_{Q}+V_{Q}$ and $W_{Q}\leq W_{Q}^{(1)} + W_{Q}^{(2)}$
on $2D$.
Let $p_0$ and $p_1$ be fixed as in (a) and (b) above, and let $p_0<p<p_1$ with $\omega\in A_{p/p_0}$ being a weight function. We claim that:
\begin{subequations}
\begin{align}
&\Big(\dashint_{2D}
V_Q^{p_1}\omega
\Big)^{\frac{1}{p_1}} \lesssim
\Big(\dashint_{60D}
(F^{p_0}+g^{p_0})\Big)^{\frac{1}{p_0}}
\Big(\dashint_{Q}\omega\Big)^{\frac{1}{p_1}};
\label{f:7.7a} \\
& \dashint_{\frac{8}{7}D}
W_Q^{p_0}
\lesssim \dashint_{60D}
(g^{p_0}
+\nu^{p_0}
F^{p_0}).
\label{f:7.7b}
\end{align}
\end{subequations}

By Lemma $\ref{shen's lemma2}$, it is derived that
\begin{equation*}
\Big(\dashint_{Q_0\cap\Omega_T}F^p\omega\Big)^{\frac{1}{p}}
\lesssim^{\eqref{pri:7.5}}\bigg\{
\Big(\dashint_{4Q_0\cap\Omega_T}F^{p_0}\Big)^{\frac{1}{p_0}}
\Big(\dashint_{Q_0}\omega\Big)^{1/{p}}
+\Big(\dashint_{4Q_0\cap\Omega_T}g^p\omega\Big)^{\frac{1}{p}}\bigg\},
\end{equation*}
which indeed leads to the desired estimate $\eqref{f:7.1}$
by an analogous computation for the cases $z\in\mathcal{B}\cup\mathcal{I}$.

Thus, the crucial task is to verify the claims $\eqref{f:7.7a}$
and $\eqref{f:7.7b}$.
As mentioned in \textbf{Step B},
the proof of $\eqref{f:7.7a}$
and $\eqref{f:7.7b}$ is divided into
two cases: (I) $0<r_Q<\frac{\varepsilon}{4}\chi_{*}(z_Q/\varepsilon)$;
(II) $r_Q\geq \frac{\varepsilon}{4}\chi_{*}(z_Q/\varepsilon)$.
We plan to address the case (I) in Step 6,
and the case (II) in Steps 7a and 7b, respectively.

\textbf{Step 4.} Proof of the estimate $\eqref{pri:A}$ (i.e., the estimate $\eqref{pri:7.1}$ in the case $\omega=1$).
Let $p_0$ and $p_1$ be given as in (a). It can be verified that  $\eqref{f:7.7a}$ and $\eqref{f:7.7b}$ hold
in the cases (I) and (II), with the verifications for the case (II)
postponed to Steps 5a and 5b.
Then, by Lemma $\ref{shen's lemma2}$, it is obtained that
\begin{equation*}
\Big(\dashint_{Q_0\cap\Omega_T}F^p\Big)^{\frac{1}{p}}
\lesssim^{\eqref{pri:7.5}}\bigg\{
\Big(\dashint_{4Q_0\cap\Omega_T}F^{2}\Big)^{\frac{1}{2}}
+\Big(\dashint_{4Q_0\cap\Omega_T}g^p\Big)^{\frac{1}{p}}\bigg\},
\end{equation*}
which yields the stated estimate $\eqref{pri:7.9}$
by following similar calculations for the cases  $z\in\mathcal{B}\cup\mathcal{I}$.
(Recall that, by previous reductions,
the present case is $z\in\mathcal{B}$ with $z=0$).
Therefore, by
\textbf{Step A} of Subsection $\ref{subsec:6.1}$,
the estimate $\eqref{pri:A}$ has been proved. As a byproduct,
if $f\equiv0$ in $\eqref{pde:7.1}$, then we have $u_\varepsilon=v_\varepsilon$, and the above estimate becomes
the reverse H\"older inequality (which will also be useful later
in establishing some annealed Calder\'on-Zygmund estimates):
\begin{equation*}
\Big(\dashint_{Q_0\cap\Omega_T}F^p\Big)^{\frac{1}{p}}
\lesssim
\Big(\dashint_{4Q_0\cap\Omega_T}F^{2}\Big)^{\frac{1}{2}}
\lesssim
\Big(\dashint_{5Q_0\cap\Omega_T}F^{s}\Big)^{\frac{1}{s}},
\end{equation*}
where $s>0$, and last inequality follows
from a convexity argument (\cite[pp.173]{Fefferman-Stein72}).

The verification of $\eqref{f:7.7a}$ and $\eqref{f:7.7b}$ for the
case (I) now begins. In this case,
set $V_B = F$ and $W_B = 0$. Then
the estimate $\eqref{f:7.7b}$ is trivial, while the
estimate $\eqref{f:7.7a}$ is reduced to showing
\begin{equation}\label{f:7.8}
\Big(\dashint_{2D}
F^{p_1}\Big)^{\frac{1}{p_1}}
\lesssim \Big(\dashint_{15D}F^{p_0}\Big)^{\frac{1}{p_0}}.
\end{equation}
To obtain the estimate $\eqref{f:7.8}$,
on account of the $\frac{1}{L}$-Lipschitz continuity of
$\chi_{*}$, it follows that for any
$z_0\in 3D$, there holds $0< \vartheta
r_Q<\frac{\varepsilon}{4}\chi_{*}(z_0/\varepsilon)$ with
$\vartheta:=(1-\frac{3}{4L})\in[7/8,1)$ by the definition of
$L$ in Remark $\ref{remark:3}$.
Let $\tilde{Q}$ be the concentric cube with
the radius $r_{\tilde{Q}}=\vartheta r_Q$.
It can be inferred that
$2Q\subset(5/2)\tilde{Q}\subset 3Q$.
Then, for any $z_0\in (5/2)\tilde{Q}$, by
appealing to Lemma $\ref{lemma:integral geometry}$, it can be derived that
\begin{equation*}
 \Big(\dashint_{U_{*,\varepsilon}(z_0)}
 |\nabla u_\varepsilon|^2\Big)^{\frac{p_0}{2}}
 \lesssim^{\eqref{pri:7.6}}
 \dashint_{15D}dz\Big(
 \dashint_{U_{*,\varepsilon}(z)}|\nabla u_\varepsilon|^2
 \Big)^{\frac{p_0}{2}},
\end{equation*}
which further implies that for any $p_1\geq p_0$,
\begin{equation*}
\Big(\dashint_{U_{*,\varepsilon}(z_0)}
|\nabla u_\varepsilon|^2\Big)^{\frac{p_1}{2}}
\lesssim \bigg(\dashint_{15D}dz\Big(
 \dashint_{U_{*,\varepsilon}(z)}
 |\nabla u_\varepsilon|^2
 \Big)^{\frac{p_0}{2}}\bigg)^{\frac{p_1}{p_0}}.
\end{equation*}
Integrating both sides above with respect to
$z_0\in 2D$, we arrive at
\begin{equation*}
\bigg(\dashint_{2D}dz
\Big(\dashint_{U_{*,\varepsilon}(z)}
|\nabla u_\varepsilon|^2\Big)^{\frac{p_1}{2}}
\bigg)^{\frac{1}{p_1}}
\lesssim \bigg(\dashint_{15D}dz\Big(
 \dashint_{U_{*,\varepsilon}(z)}|\nabla u_\varepsilon|^2
 \Big)^{\frac{p_0}{2}}\bigg)^{\frac{1}{p_0}},
\end{equation*}
which indicates the stated estimate $\eqref{f:7.8}$ and therefore
verifies $\eqref{f:7.7a}$ in the case (I). It is noted that
the restriction $p_0=2$ is not necessary at this stage.

\textbf{Step 5a.}
Show the estimate
$\eqref{f:7.7a}$ in the case (II) (under the restrictions on $p_1$ and $p_0$ as in (a)).
Recall that $\varphi_{j,r_Q} =\partial_j\bar{v}_{r_Q}$,
$Q_{*,\varepsilon}(z)
:=Q_{\varepsilon\chi_{*}(z/\varepsilon)}(z)$, and
$D_{*,\varepsilon}(z)
:= D_{\varepsilon\chi_*(z/\varepsilon)}(z)$.
For any $z\in 2D$,
it follows from H\"older's inequality ($1/\alpha+1/\alpha'=1$
with $0<\alpha-1\ll 1$) that
\begin{equation*}
\begin{aligned}
\Big(\dashint_{U_{*,\varepsilon}(z)} \big|(e_j+(\nabla\phi_j)^\varepsilon)
\varphi_{j,r_Q}\big|^2\Big)^{\frac{1}{2}}
&\lesssim \Big(\dashint_{Q_{*,\varepsilon}(z)} |(e_j+(\nabla\phi_j)^\varepsilon)|^{2\alpha}\Big)^{\frac{1}{2\alpha}}
\Big(\dashint_{D_{*,\varepsilon}(z)}  |\partial_j\bar{v}_{r_Q}|^{2\alpha'}\Big)^{\frac{1}{2\alpha'}}\\
&\lesssim \Big\{1
+\chi_{*}(z/\varepsilon)
\big(\dashint_{Q_{2\chi_{*}(z/\varepsilon)}(z/\varepsilon)}
|\bar{\phi}_j|^2\big)^{\frac{1}{2}}\Big\} \Big(\dashint_{D_{*,\varepsilon}(z)}
|\partial_j\bar{v}_{r_Q}|^{2\alpha'}\Big)^{\frac{1}{2\alpha'}}\\
&\lesssim \Big(\dashint_{D_{*,\varepsilon}(z)}
|\nabla\bar{v}_{r_Q}|^{2\alpha'}\Big)^{\frac{1}{2\alpha'}},
\end{aligned}
\end{equation*}
where the computation for the second inequality is analogous to
that given for $\eqref{f:7.10}$, and the last inequality follows from
the definition of $\chi_*(z/\varepsilon)$. Thus, by taking $p_1>2\alpha'$, we have
\begin{equation}\label{f:7.13}
\begin{aligned}
\Big(\dashint_{2D}
 V_Q^{p_1}\Big)^{\frac{1}{p_1}}
&\lesssim  \Bigg(\dashint_{2D}dz
\Big(\dashint_{\mathcal{Q}_{*,\varepsilon}(z)}
|\nabla\bar{v}_{r_{Q}}|^{2\alpha'}\Big)^{\frac{p_1}{2\alpha'}}
\Bigg)^{\frac{1}{p_1}}\\
&\lesssim^{\eqref{pri:7.7a}}
\bigg(\dashint_{7D}
|\nabla\bar{v}_{r_{Q}}|^{p_1}
\bigg)^{\frac{1}{p_1}}
\lesssim^{\eqref{pri:5.11-ap}}\bigg(\dashint_{9D}
|\nabla\bar{v}_{r_{Q}}|^{2}
\bigg)^{\frac{1}{2}}
\lesssim \bigg(\dashint_{9D}
|\nabla v_\varepsilon|^{2}
\bigg)^{\frac{1}{2}}\\
&\lesssim \bigg(\dashint_{9D}
|\nabla u_\varepsilon|^{2}
\bigg)^{\frac{1}{2}}
+\bigg(\dashint_{30D}
|f|^{2}
\bigg)^{\frac{1}{2}}
\lesssim^{\eqref{pri:7.7a}}
\bigg(\dashint_{18D}
V_Q^{2}
\bigg)^{\frac{1}{2}}
+\bigg(\dashint_{60D}
g^{2}
\bigg)^{\frac{1}{2}},
\end{aligned}
\end{equation}
where
the third line follows from the energy estimates for $w_\varepsilon$ defined
in $\eqref{pde:7.1}$. This yields the desired estimate
$\eqref{f:7.7a}$.

\textbf{Step 5b.}
Show the estimate
$\eqref{f:7.7b}$ in the case (II) (under the restrictions on $p_1$ and $p_0$ as in (a)). It follows from Lemma \ref{lemma:approxi} and
energy estimates that
\begin{equation*}
\begin{aligned}
\dashint_{\frac{8}{7}D}
W_Q^{2}
&\lesssim^{\eqref{pri:7.7a}} \dashint_{4D}
\big|\nabla v_\varepsilon - (e_j+(\nabla\phi_j)^\varepsilon)
\partial_j\bar{v}_{r_Q}\big|^{2}
+ \dashint_{4D}
\big|\nabla w_\varepsilon\big|^{2}\\
&\lesssim^{\eqref{pri:1.5}} \nu^2 \dashint_{16D}|\nabla v_\varepsilon|^2
+\dashint_{30D}|f|^2
\lesssim \nu^2 \dashint_{16D}|\nabla u_\varepsilon|^2
+\dashint_{30D}|f|^2\\
&\lesssim^{\eqref{pri:7.7a}}
\nu^2 \dashint_{32D}F^2
+\dashint_{60D}g^2,
\end{aligned}
\end{equation*}
where it is recalled that $v_\varepsilon:=u_\varepsilon-w_\varepsilon$
as defined in $\eqref{pde:7.1}$. This implies the stated estimate
$\eqref{f:7.7b}$.

\textbf{Step 6.}
Show the estimates $\eqref{f:7.7a}$ and
$\eqref{f:7.7b}$ in the case (I) (under the restrictions on $p_1$ and $p_0$ as in (b)). Here, it is recalled that
$V_Q = F$ and $W_Q = 0$ in this case,
and therefore attention is merely focused on the estimate $\eqref{f:7.7a}$.
It follows from H\"older's inequality ($1/\alpha+1/\alpha'=1$
with $0<\alpha-1\ll 1$) and the
reverse H\"older  property of the $A_p$-class that
\begin{equation*}
\begin{aligned}
\bigg(\dashint_{2D}dz
F^{p_1}
\omega(z)\bigg)^{\frac{1}{p_1}}
&\leq
\bigg(\dashint_{2D}
F^{p_1\alpha'} \bigg)^{\frac{1}{p_1\alpha'}}
\Big(\dashint_{2Q}\omega^{\alpha}\Big)^{\frac{1}{p_1\alpha}}\\
&\lesssim^{\eqref{f:7.8},\eqref{f:apw-3}}
\bigg(\dashint_{15D}
F^{p_0} \bigg)^{\frac{1}{p_0}}
\Big(\dashint_{2Q}\omega\Big)^{\frac{1}{p_1}}
\lesssim^{\eqref{f:apw-2}}
\bigg(\dashint_{15D}
F^{p_0} \bigg)^{\frac{1}{p_0}}
\Big(\dashint_{Q}\omega\Big)^{\frac{1}{p_1}},
\end{aligned}
\end{equation*}
which in fact yields the stated estimate $\eqref{f:7.7a}$.

\textbf{Step 7a.} Show the estimate
$\eqref{f:7.7a}$ in the case (II) (under the restrictions on $p_1$ and $p_0$ as in (b)). Based on the established estimate $\eqref{pri:A}$, begin by modifying the estimate $\eqref{f:7.13}$:
\begin{equation}\label{f:7.14}
\begin{aligned}
\Big(\dashint_{2D}
 V_Q^{p_1}\Big)^{\frac{1}{p_1}}
&\lesssim
\bigg(\dashint_{9D}
|\nabla v_\varepsilon|^{2}
\bigg)^{\frac{1}{2}}
\lesssim
\bigg(\dashint_{10D}
|\nabla v_\varepsilon|^{p_0}
\bigg)^{\frac{1}{p_0}}
\lesssim
\bigg(\dashint_{10D}
|\nabla u_\varepsilon|^{p_0}
\bigg)^{\frac{1}{p_0}}
+
\bigg(\dashint_{10D}
|\nabla w_\varepsilon|^{p_0}
\bigg)^{\frac{1}{p_0}}\\
&\lesssim
\bigg(\dashint_{20D}
F^{p_0}
+\big(W_Q^{(2)}\big)^{p_0}
\bigg)^{\frac{1}{p_0}}
\lesssim^{\eqref{pri:A}}
\bigg(\dashint_{45D}
F^{p_0}
+g^{p_0}
\bigg)^{\frac{1}{p_0}},
\end{aligned}
\end{equation}
where we also employ the fact that $\text{supp}(\tilde{g})\subset 44Q$
with $\tilde{g}(z):=\big(\dashint_{U_{*,\varepsilon}(z)}
 |f\textbf{1}_{30D}|^2\big)^{\frac{1}{2}}$ in the last step,
which follows from the $(1/8)$-Lipschitz continuity of $\chi_*$.
Then, by H\"older's inequality, the reverse H\"older inequality
and the doubling property
of the $A_p$-class, it is obtained that
\begin{equation*}
\begin{aligned}
\Big(\dashint_{2D}
 V_{Q}^{p_1}\omega\Big)^{\frac{1}{p_1}}
&\leq
\bigg(\dashint_{2D}
V_{Q}^{p_1\alpha'}
\bigg)^{\frac{1}{p_1\alpha'}}
\Big(\dashint_{2Q}
\omega^{\alpha}\Big)^{\frac{1}{p_1\alpha}}\\
&\lesssim^{\eqref{f:7.14},\eqref{f:apw-3},\eqref{f:apw-2}}
\bigg(\dashint_{45D}
F^{p_0}
+g^{p_0}
\bigg)^{\frac{1}{p_0}}
\Big(\dashint_{Q}
\omega \Big)^{\frac{1}{p_1}},
\end{aligned}
\end{equation*}
which consequently leads to the stated estimate $\eqref{f:7.7a}$.

\textbf{Step 7b.}
Show the estimate
$\eqref{f:7.7b}$ in the case (II) (under the restrictions on $p_1$ and $p_0$ as in (b)).
Recalling $v_\varepsilon=u_\varepsilon-w_\varepsilon$ in $\eqref{pde:7.1}$,
we first have
\begin{equation}\label{f:7.15}
\begin{aligned}
\dashint_{\frac{8}{7}D}
W_B^{p_0}
&\lesssim
\bigg(\dashint_{\frac{8}{7}D}dz
\dashint_{U_{*,\varepsilon}(z)}
|\nabla v_\varepsilon
-(e_j+(\nabla\phi_j)^\varepsilon)\partial_j\bar{v}_{r_Q}|^2\bigg)^{\frac{p_0}{2}}
 + \dashint_{2D}dz
 \Big(\dashint_{U_{*,\varepsilon}(z)}|\nabla w_\varepsilon
 |^2\Big)^{\frac{p_0}{2}}\\
&\lesssim^{\eqref{pri:7.7a},\eqref{pri:A}}
\Big(\dashint_{4D}
|\nabla v_\varepsilon
-(e_j+(\nabla\phi_j)^\varepsilon)\partial_j\bar{v}_{r_Q}|^2\Big)^{\frac{p_0}{2}}
 + \dashint_{45D}dz
 \Big(\dashint_{U_{*,\varepsilon}(z)}|f|^2\Big)^{\frac{p_0}{2}}\\
&\lesssim^{\eqref{pri:1.5}}
\Big(\dashint_{16D}
|\nabla v_\varepsilon|^2\Big)^{\frac{p_0}{2}}
+\dashint_{45D}
 g^{p_0}.
\end{aligned}
\end{equation}
Based on the Meyer-type estimates (see Lemma \ref{lemma:5.2}), a routine computation yields
\begin{equation*}
\begin{aligned}
\Big(\dashint_{16D}|\nabla v_\varepsilon|^2\Big)^{\frac{1}{2}}
&\lesssim^{\eqref{pri:5.12-ap}} \dashint_{18D}|\nabla v_\varepsilon|
\lesssim  \dashint_{18D}|\nabla u_\varepsilon| + |\nabla w_\varepsilon|\\
&\lesssim^{\eqref{pri:7.7a}}
 \Big(\dashint_{36D} \big(F^{p_0} + (W_Q^{(2)})^{p_0}\big)\Big)^{\frac{1}{p_0}}
 \lesssim^{\eqref{pri:A}}
 \Big(\dashint_{45D} \big(F^{p_0} + g^{p_0}\big)\Big)^{\frac{1}{p_0}}.
\end{aligned}
\end{equation*}
Substituting this back into $\eqref{f:7.15}$ we obtain the desired
estimate $\eqref{pri:7.7a}$. This completes the proof.
\qed

\subsection{Annealed Calder\'on-Zygmund estimates}

\noindent
As mentioned before, the idea is to repeatedly
use Shen's real argument, which was first applied
to the study of annealed Calder\'on-Zygmund estimates for
the Helmholtz projection
$\nabla(\nabla\cdot a\nabla)^{-1}
\nabla\cdot $ in \cite{Duerinckx-Otto-20}.
The original proof is modified for the present purpose, and the entire proof is completed in two parts (\textbf{Steps 1-3}). In the first part,
the following is
established:
\begin{equation}\label{pri:6.5}
\int_{\Omega_T}dz
\Big\langle\Big(\dashint_{
U_{*,\varepsilon}(z)}|\nabla u_\varepsilon|^2 \Big)^{\frac{p}{2}}
\Big\rangle^{\frac{q}{p}}
\lesssim_{\mu,d,\partial\Omega,p,q}
\int_{\Omega_T}dz
\Big\langle\Big(\dashint_{U_{*,\varepsilon}(z)}
|f|^2\Big)^{\frac{p}{2}}\Big\rangle^{\frac{q}{p}}
\end{equation}
for any $1<p,q<\infty$, by employing the quenched Calder\'on-Zygmund estimate $\eqref{pri:A}$. Then, the second part (\textbf{Steps 4-6}) is devoted to proving
the stated estimate $\eqref{pri:7.2}$.
Unlike the decomposition made in Subsection $\ref{subsec:6.2}$,
the concrete forms of $W_Q$ and $V_Q$ defined below are
independent of $Q$.
In addition,
the details of moving from a family of ``local estimates''
to global ones are omitted (as this has
been well presented in Subsection $\ref{subsec:6.1}$),
whereas attention is focused solely on verifying the preconditions
satisfied by $W_Q$ and $V_Q$, respectively,
in Lemma $\ref{shen's lemma2}$.

To this end, given $Q_0$ (with center $z_0\in\mathcal{B}$ and radius  $r_0$), let $Q$ be arbitrary cube
satisfying $r_Q\leq (1/100)r_{0}$ and
either $z_Q\in 2Q_0\cap\partial_{\shortparallel}\Omega_T$ or
$4Q\subset 2Q_0\cap\Omega_T$.
Here, only the case $z_Q\in 2Q_0\cap \partial_{\shortparallel}\Omega_T$ is considered, and $D:=Q\cap\Omega_T$.

\medskip
\textbf{Step 1.} Reduction of the proof of $\eqref{pri:6.5}$.
Let $p,q_1\in(1,\infty)$ be arbitrarily fixed.
The estimate $\eqref{pri:6.5}$ must be shown for the case $p\leq q<q_1$;  then the case $q<p$ follows from a duality argument.
For the cube $Q$ satisfying $z_Q\in 2Q_0\cap \partial_{\shortparallel}\Omega_T$, let $w_\varepsilon,v_\varepsilon\in L^2(0,T;H_0^1(\Omega))$
satisfy the following equations:
\begin{equation*}
\left\{\begin{aligned}
 \partial_t w_\varepsilon
 +\mathcal{L}_\varepsilon(w_\varepsilon) &=
 \nabla \cdot f\textbf{1}_{2D}
 &~\text{in}&~\Omega_T;\\
 w_\varepsilon &= 0
 &~\text{on}&~\partial_p\Omega_T,
\end{aligned}\right.
 \quad\text{and}\quad
\left\{\begin{aligned}
 \partial_t v_\varepsilon
 +\mathcal{L}_\varepsilon(v_\varepsilon)
 &=\nabla \cdot f\textbf{1}_{\Omega\setminus 2D}
 &~\text{in}&~ \Omega_T;\\
 v_\varepsilon &= 0
 &~\text{on}&~\partial_p\Omega_T.
\end{aligned}\right.
\end{equation*}
For the ease of statement,
the following notation is introduced:
\begin{equation*}
 F(z)
 :=\Big\langle\Big(\dashint_{U_{*,\varepsilon}(z)}|\nabla u_\varepsilon|^2\Big)^{\frac{p}{2}}\Big\rangle^{\frac{1}{p}};
 \qquad
 g(z)
 :=\Big\langle\Big(\dashint_{
 U_{*,\varepsilon}(z)}
 |f|^{2}\Big)^{\frac{p}{2}}\Big\rangle^{\frac{1}{p}},
\end{equation*}
and
\begin{equation*}
 W_Q(z):=\Big\langle
 \Big(\dashint_{U_{*,\varepsilon}(z)}
 |\nabla w_\varepsilon|^2\Big)^{\frac{p}{2}}\Big\rangle^{\frac{1}{p}};
 \qquad
 V_Q(z):=\Big\langle\Big(\dashint_{
 U_{*,\varepsilon}(z)}
 |\nabla v_\varepsilon|^2\Big)^{\frac{p}{2}}\Big\rangle^{\frac{1}{p}}.
\end{equation*}
Thus, it follows from the relationship
$u_\varepsilon = w_\varepsilon + v_\varepsilon$ in $\Omega_T$ that $F\leq W_B+V_B$.
According to the preconditions stated in
Lemma $\ref{shen's lemma2}$ (with $\omega=1$ here),
the following two estimates must be established:
\begin{subequations}
\begin{align}
& \dashint_{\frac{1}{2}D}W_B^{p}
 \lesssim \dashint_{7D}
 g^{p};
 \label{pri:6.6a}\\
& \Big(\dashint_{\frac{1}{8}D}V_B^{q_1}\Big)^{\frac{p}{q_1}}
 \lesssim \dashint_{7D}(F^{p}+g^{p}).
 \label{pri:6.6b}
\end{align}
\end{subequations}


\textbf{Step 2.} Show the estimate $\eqref{pri:6.6a}$.
It is reduced to showing
\begin{equation}\label{f:6.20}
\dashint_{\frac{1}{2}D}
\Big(\dashint_{U_{*,\varepsilon}(x)}
|\nabla w_\varepsilon|^2\Big)^{\frac{p}{2}}
 \lesssim \dashint_{7D}
\Big(\dashint_{U_{*,\varepsilon}(x)}
|f|^{2}\Big)^{\frac{p}{\bar{2}}}.
\end{equation}
The main work is to analyze the size of
support set of $\dashint_{U_{*,\varepsilon}(x)} f\textbf{1}_{2D}$,
and this is discussed in two cases: (1) $r_Q\geq \frac{\varepsilon}{4}\chi_{*}(z_Q/\varepsilon)$;
(2) $r_Q < \frac{\varepsilon}{4}\chi_{*}(z_Q/\varepsilon)$.
Starting from the first case, it is noted that if
$\dashint_{U_{*,\varepsilon}(z)}
f\textbf{1}_{2D}\not=0$, then it must
hold that $\text{d}(z,z_Q)-\varepsilon\chi_{*}(z/\varepsilon)<2r_Q$ due to
the geometric condiserations. In this respect, $z\in\mathbb{R}^{d+1}$ is required to satisfy
\begin{equation}\label{f:6.19}
 \text{d}(z,z_Q)\leq 2r_Q+ \varepsilon \chi_{*}(z/\varepsilon)
 \leq  2r_Q+ \varepsilon \chi_{*}(z_Q/\varepsilon)
 + \text{d}(z,z_Q)/8 \quad
 \Rightarrow \quad
 \text{d}(z,z_Q) \leq 7r_Q.
\end{equation}
Hence, with the help of the quenched Calder\'on-Zygmund estimate, it is found that
\begin{equation}\label{f:6.21}
\begin{aligned}
\dashint_{D}dz
\Big(\dashint_{U_{*,\varepsilon}(z)}
|\nabla w_\varepsilon|^2\Big)^{\frac{p}{2}}
&\leq \frac{1}{|Q|}
\int_{\Omega_T}dz
\Big(\dashint_{U_{*,\varepsilon}(z)}
|\nabla w_\varepsilon|^2\Big)^{\frac{p}{2}}\\
&\lesssim^{\eqref{pri:A}}
\frac{1}{|Q|}
\int_{\Omega_T}dz
\Big(\dashint_{U_{*,\varepsilon}(z)}
|f\textbf{1}_{2D}|^2\Big)^{\frac{p}{2}}
\lesssim^{\eqref{f:6.19}} \dashint_{7D}dz
\Big(\dashint_{U_{*,\varepsilon}(z)}
|f|^2\Big)^{\frac{p}{2}}.
\end{aligned}
\end{equation}

Now, we proceed to deal with the case (2)
$r_Q < \frac{\varepsilon}{4}\chi_{*}(z_Q/\varepsilon)$.
For any $y_0\in\frac{1}{2}D$,
by noting $2Q\subset Q_{*,\varepsilon}(y_0)$ and using
the energy estimate, it follows that
\begin{equation}\label{f:6.22}
 \dashint_{U_{*,\varepsilon}(y_0)} |\nabla w_\varepsilon|^2
 \lesssim
 \dashint_{U_{*,\varepsilon}(y_0)} |f|^{2}.
\end{equation}
Then, by appealing to Lemma $\ref{lemma:integral geometry}$ (and the fact that $r_Q<(\frac{8L}{8L-1})\frac{\varepsilon}{4}\chi_{*}(y_0/\varepsilon)$),
it follows that
\begin{equation*}
\Big(\dashint_{U_{*,\varepsilon}(x_0)} |\nabla w_\varepsilon|^2\Big)^{\frac{p}{2}}
\lesssim
\Big(\dashint_{U_{*,\varepsilon}(x_0)} |f|^{2}
\Big)^{\frac{p}{2}}
\lesssim^{\eqref{pri:7.6}}
\dashint_{5D}dz
\Big(\dashint_{U_{*,\varepsilon}(z)} |f|^{2}\Big)^{\frac{p}{2}}.
\end{equation*}
Integrating both sides above with respect to $y_0\in\frac{1}{2}D$ yields the estimate $\eqref{f:6.20}$ in this case. This, together with
$\eqref{f:6.21}$, completes the argument for the stated estimate $\eqref{f:6.20}$; finally, taking expectation on both
sides of $\eqref{f:6.20}$ leads to the desired estimate $\eqref{pri:6.6a}$.

\textbf{Step 3.} Show the estimate $\eqref{pri:6.6b}$.
By using Minkowski's inequality, H\"older's inequality,
and the reverse H\"older inequality (which follows
from $\eqref{f:7.1}$ by setting $\omega=1$
and $g=0$ therein), it is obtained that
\begin{equation}\label{f:6.25}
\begin{aligned}
\Big(\dashint_{\frac{1}{8}D}
V_B^{q_1}\Big)^{\frac{p}{q_1}}
&= \bigg(\dashint_{\frac{1}{8}D}dz
\Big\langle\Big(\dashint_{U_{*,\varepsilon}(z)}
|\nabla v_\varepsilon|^2\Big)^{\frac{p}{2}}\Big\rangle^{\frac{q_1}{p}}
\bigg)^{\frac{p}{q_1}}
\leq
\bigg\langle
\Big(\dashint_{\frac{1}{8}D}dz
\Big(\dashint_{
U_{*,\varepsilon}(z)}|\nabla v_\varepsilon|^2
\Big)^{\frac{q_1}{2}}\Big)^{\frac{p}{q_1}}\bigg\rangle\\
&\lesssim^{\eqref{f:7.1}}
\dashint_{\frac{1}{2}D}dz\Big\langle
\Big(\dashint_{U_{*,\varepsilon}(z)}
|\nabla v_\varepsilon|^2\Big)^{\frac{p}{2}}\Big\rangle.
\end{aligned}
\end{equation}
This, together with the fact that $u_\varepsilon = w_\varepsilon + v_\varepsilon$ in $\Omega$ and with the estimate $\eqref{f:6.20}$,
yields the desired estimate $\eqref{pri:6.6b}$. It concludes the first part of the proof.

\textbf{Step 4.} Show the estimate $\eqref{pri:7.2}$.
Let $0<q_0-1\ll 1$, and let $q_1\in(1,\infty)$ be chosen sufficiently large. On account of
Lemma $\ref{shen's lemma2}$, for any $q_0<q<q_1$ and $\omega\in A_{q/q_0}$, it suffices to establish
\begin{subequations}
\begin{align}
& \dashint_{\frac{1}{2}D}W_Q^{q_0}
 \lesssim \dashint_{7D}
 g^{q_0};
\label{pri:6.7a}\\
& \Big(\dashint_{\frac{1}{8}D}V_Q^{q_1}\omega\Big)^{\frac{q_0}{q_1}}
 \lesssim \Big(\dashint_{7D}(F^{q_0}+g^{q_0})\Big)
 \Big(\dashint_{Q}\omega\Big)^{\frac{q_0}{q_1}}.
 \label{pri:6.7b}
\end{align}
\end{subequations}
Then, the same arguments as those given for Step 1 in Subsection $\ref{subsec:6.2}$ lead to the stated estimate $\eqref{pri:7.2}$,
where the estimate $\eqref{pri:6.5}$ is also used.

\textbf{Step 5.} Show the estimate $\eqref{pri:6.7a}$, which is parallel to Step 2. For the case $r_Q\geq\frac{\varepsilon}{4}\chi_*(z_Q/\varepsilon)$, it follows that
\begin{equation}\label{f:6.23}
\begin{aligned}
\dashint_{D}dz
\Big\langle\Big(\dashint_{U_{*,\varepsilon}(z)}
|\nabla w_\varepsilon|^2
\Big)^{\frac{p}{2}}\Big\rangle^{\frac{q_0}{p}}
&\lesssim \frac{1}{|B|}
\int_{\Omega_T}dz
\Big\langle\Big(\dashint_{U_{*,\varepsilon}(z)}
|\nabla w_\varepsilon|^2\Big)^{\frac{p}{2}}\Big\rangle^{\frac{q_0}{p}}
\\
&\lesssim^{\eqref{pri:6.5}}
\frac{1}{|B|}
\int_{\Omega_T}dz
\Big\langle\Big(\dashint_{U_{*,\varepsilon}(z)}
|f\textbf{1}_{2D}|^2\Big)^{\frac{p}{2}}\Big\rangle^{\frac{q_0}{p}}
\lesssim^{\eqref{f:6.19}} \dashint_{7D}
F^{q_0}.
\end{aligned}
\end{equation}
Then, for the case $r_Q<\frac{\varepsilon}{4}\chi_*(z_Q/\varepsilon)$, consider any $y_0\in\frac{1}{2}Q\cap\Omega_{*}$.  In view of the estimate
$\eqref{f:6.22}$, it is obtained that
\begin{equation*}
\Big\langle\Big(\dashint_{U_{*,\varepsilon}(y_0)} |\nabla w_\varepsilon|^2
\Big)^{\frac{p}{2}}\Big\rangle^{\frac{q_0}{p}}
 \lesssim
\Big\langle\Big(\dashint_{U_{*,\varepsilon}(y_0)} |f|^{2}
\Big)^{\frac{p}{2}}\Big\rangle^{\frac{q_0}{p}}.
\end{equation*}
Integrating both sides above with respect to $y_0\in\frac{1}{2}D$ then yields \eqref{f:6.23}. This completes the proof of the estimate $\eqref{pri:6.7a}$.

\textbf{Step 6.} Show the estimate $\eqref{pri:6.7b}$,
which is parallel to Step 3.
Let $\gamma$ and $\gamma'$ be such that $1/\gamma+1/\gamma'=1$
with $0<\gamma'-1\ll 1$. By using H\"older's inequality,
the reverse H\"older property of the $A_q$ class, and Minkowski's inequality (in that
order), it is derived that
\begin{equation}\label{f:6.24}
\begin{aligned}
\Big(\dashint_{\frac{1}{8}D}V_Q^{q_1}\omega\Big)^{\frac{1}{q_1}}
&\leq
\bigg(\dashint_{\frac{1}{8}D}dz
\Big\langle\big(
\dashint_{U_{*,\varepsilon}(z)}|\nabla v_\varepsilon|^2
\big)^{\frac{p}{2}}\Big\rangle^{\frac{q_1\gamma}{p}}
\bigg)^{\frac{1}{q_1\gamma}}\Big(\dashint_{Q}
\omega^{\gamma'}\Big)^{\frac{1}{q_1\gamma'}}\\
&\lesssim^{\eqref{f:apw-3}}
\bigg(\dashint_{\frac{1}{8}D}dz
\Big\langle\big(
\dashint_{U_{*,\varepsilon}(z)}|\nabla v_\varepsilon|^2
\big)^{\frac{p}{2}}\Big\rangle^{q_1\gamma}
\bigg)^{\frac{1}{q_1\gamma p}}\Big(\dashint_{Q}
\omega\Big)^{\frac{1}{q_1}}\\
&\lesssim
\bigg\langle
\bigg(\dashint_{\frac{1}{8}D}dz
\Big(
\dashint_{U_{*,\varepsilon}(z)}
|\nabla v_\varepsilon|^2\Big)^{\frac{pq_1\gamma}{2}}
\bigg)^{\frac{1}{q_1\gamma}}\bigg\rangle^{\frac{1}{p}}
\Big(\dashint_{Q}
\omega\Big)^{\frac{1}{q_1}}.
\end{aligned}
\end{equation}

On account of the estimate $\eqref{f:6.25}$,
the right-hand side of $\eqref{f:6.24}$ is bounded by
\begin{equation}\label{f:6.26}
\bigg\langle
\bigg(\dashint_{\frac{1}{2}D}dz
\Big(
\dashint_{U_{*,\varepsilon}(z)}
|\nabla v_\varepsilon|^2\Big)^{\frac{1}{2}}\bigg)^{p}\bigg\rangle^{\frac{1}{p}}
\Big(\dashint_{Q}
\omega\Big)^{\frac{1}{q_1}}
\leq
\bigg(\dashint_{\frac{1}{2}D}dz
\Big\langle\Big(
\dashint_{U_{*,\varepsilon}(z)}
|\nabla v_\varepsilon|^2\Big)^{\frac{p}{2}}
\Big\rangle^{\frac{1}{p}}\bigg)
\Big(\dashint_{Q}
\omega\Big)^{\frac{1}{q_1}},
\end{equation}
where Minkowski's inequality is also employed for the inequality above.
By plugging $\eqref{f:6.26}$ back into $\eqref{f:6.24}$, it can be obtained  that
\begin{equation*}
\begin{aligned}
\Big(\dashint_{\frac{1}{8}D}V_Q^{q_1}\omega\Big)^{\frac{1}{q_1}}
&\lesssim
\bigg(\dashint_{\frac{1}{2}D}
V_Q
\bigg)
\Big(\dashint_{Q}
\omega\Big)^{\frac{1}{q_1}}
\lesssim
\Big(\dashint_{\frac{1}{2}D}
V_Q^{q_0}
\Big)^{\frac{1}{q_0}}
\Big(\dashint_{Q}
\omega\Big)^{\frac{1}{q_1}}\\
&\lesssim
\Big(\dashint_{\frac{1}{2}D}
(F^{q_0}+W_Q^{q_0})
\Big)^{\frac{1}{q_0}}
\Big(\dashint_{Q}
\omega\Big)^{\frac{1}{q_1}}
\lesssim^{\eqref{pri:6.6a}}
\Big(\dashint_{7D}
(F^{q_0}+g^{q_0})
\Big)^{\frac{1}{q_0}}
\Big(\dashint_{Q}
\omega\Big)^{\frac{1}{q_1}},
\end{aligned}
\end{equation*}
which consequently leads to $\eqref{pri:6.6b}$.
This concludes the proof.

\subsection{Proof of Theorem $\ref{thm:C-Z}$}

\noindent
Based on the results stated in Proposition $\ref{P:7.1}$,
to obtain Theorem $\ref{thm:C-Z}$, it suffices to transfer
the large-scale average ``$\dashint_{U_{*,\varepsilon}(z)}$''
in the estimates
to the small-scale counterpart ``$\dashint_{U_{\varepsilon}(z)}$'';
the price to pay is the need for higher integrability
with respect to the probability index.
The demonstration is indeed independent of partial differential equations,
and the related results are
stated below.

\begin{lemma}\label{lemma:7.1}
Let $\chi_{*}$ be given as in Lemma $\ref{lemma:approxi}$ with the  $\frac{1}{L}$-Lipschitz continuity.
Let
$1\leq p\leq q<\infty$ and $1\leq s<\infty$.
Then, for any $p_1>p$, the following hold:
\begin{subequations}
\begin{align}
&\bigg(\int_{\Omega_T}dz\Big\langle\big(
\dashint_{U_{*,\varepsilon}(x)}|F|^s\big)^{\frac{p}{s}}
\Big\rangle^{\frac{q}{p}}\omega(z) \bigg)^{\frac{1}{q}}
\lesssim
\bigg(\int_{\Omega_T}dz\Big\langle\big(
\dashint_{U_\varepsilon(z)}|F|^s\big)^{\frac{p_1}{s}}
\Big\rangle^{\frac{q}{p_1}}\dashint_{Q_{\varepsilon}(z)}\omega \bigg)^{\frac{1}{q}};
\label{pri:7.8a}\\
& \bigg(\int_{\Omega_T}dz\Big\langle\big(
\dashint_{U_{\varepsilon}(z)}|F|^s\big)^{\frac{p}{s}}
\Big\rangle^{\frac{q}{p}}
\dashint_{Q_{\varepsilon}(z)}\omega \bigg)^{\frac{1}{q}}
\lesssim
\bigg(\int_{\Omega_T}dz\Big\langle\big(
\dashint_{U_{*,\varepsilon}(z)}|F|^s\big)^{\frac{p_1}{s}}
\Big\rangle^{\frac{q}{p_1}}\omega(z) \bigg)^{\frac{1}{q}},
\label{pri:7.8b}
\end{align}
\end{subequations}
for any $\omega\in A_q$, where the multiplicative constant depends
on $\mu,\lambda_1,d,\partial\Omega,s,p_1,p,q$, and $[\omega]_{A_q}$.
\end{lemma}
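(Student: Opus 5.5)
The plan is to compare the two kinds of local averages pointwise in the randomness, using only two facts about $\chi_*$: its $\frac1L$-Lipschitz continuity (Remark \ref{remark:3}) and its moment bound \eqref{m-1}. The probabilistic loss $p\to p_1$ will pay for the randomness of the radius $\varepsilon\chi_*$. Throughout I write
\begin{equation*}
h(z'):=\dashint_{U_\varepsilon(z')}|F|^s,
\qquad
G(z'):=\big\langle h(z')^{p_1/s}\big\rangle^{s/p_1}.
\end{equation*}

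\emph{Proof of \eqref{pri:7.8a}.}
\begin{enumerate}
\item \emph{Geometric comparison.} Fix $z\in\Omega_T$ and dyadically decompose the event $\chi_*(z/\varepsilon)\in[2^{k-1},2^k)$, $k\geq1$. On this event, $U_{*,\varepsilon}(z)\subset Q_{2^k\varepsilon}(z)\cap\Omega_T$. A covering argument for the Lipschitz cylinder (the parabolic analogue of Lemma \ref{lemma:integral geometry}) gives
\begin{equation*}
\dashint_{U_{*,\varepsilon}(z)}|F|^s\lesssim \dashint_{Q_{2^{k+1}\varepsilon}(z)\cap\Omega_T}h,
\end{equation*}
with a constant depending only on $d$ and the Lipschitz character, not on $k$.
\item \emph{Probabilistic step.} Take $\langle(\cdot)^{p/s}\rangle^{1/p}$. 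Apply H\"older in probability with exponents $p_1/p$ and its dual to $\mathbf 1_{\{\chi_*(z/\varepsilon)\ge 2^{k-1}\}}$ times the average. By \eqref{m-1}, stationarity and Chebyshev, the $k$-th term gains the factor $\langle\mathbf 1\rangle^{1/p-1/p_1}\lesssim_\beta 2^{-k\beta(1/p-1/p_1)}$.
\item \emph{Minkowski.} If $p_1\ge s$, Minkowski's inequality moves $\langle\cdot\rangle$ inside the spatial average, leaving $\big(\dashint_{Q_{2^{k+1}\varepsilon}(z)}G\big)^{1/s}$. If $p_1<s$, I would first use Jensen to replace $s$ by a smaller exponent.
\item \emph{Weighted bound.} For $\omega\in A_q$, bound the $L^q(\omega)$ norm of $z\mapsto\big(\dashint_{Q_R(z)}G\big)^{1/s}$ by the weighted Hardy--Littlewood maximal inequality. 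Since $q\geq p$, the needed exponent is admissible, possibly after lowering $s$ slightly and invoking the openness of $A_q$. This bound is uniform in $R=2^{k+1}\varepsilon$.
\item \emph{Summation.} Summing over $k$ converges because of the geometric decay from the second item, once $\beta$ is chosen large.
\item \emph{Replacing the weight.} Finally replace $\omega(z)$ by $\dashint_{Q_\varepsilon(z)}\omega$. Since $h$ is itself an $\varepsilon$-scale average, one has $h(z)\lesssim\dashint_{Q_{2\varepsilon}(z)}h$. Fubini then moves the weight onto $\varepsilon$-averages, and the doubling property of $A_q$ weights finishes this step.
\end{enumerate}

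\emph{Proof of \eqref{pri:7.8b}.} Here the argument is easier. Because $\chi_*\ge1$ and $\chi_*$ is $\frac1L$-Lipschitz, for $z'\in Q_\varepsilon(z)$ we have $U_\varepsilon(z)\subset U_{*,\varepsilon}(z')$ up to a fixed dilation. Hence
\begin{equation*}
h(z)\lesssim \chi_*^{d+2}(z'/\varepsilon)\dashint_{U_{*,\varepsilon}(z')}|F|^s
\qquad\text{for all } z'\in Q_\varepsilon(z).
\end{equation*}
Average this in $z'$ over $Q_\varepsilon(z)$ and take $\langle(\cdot)^{p/s}\rangle^{1/p}$. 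H\"older in probability, again with exponents $p_1/p$ and its dual, together with \eqref{m-1}, absorbs the factor $\chi_*^{d+2}$ at the cost $p\to p_1$. After Minkowski's inequality, Fubini turns $\int dz\,(\dashint_{Q_\varepsilon(z)}\cdots)\dashint_{Q_\varepsilon(z)}\omega$ into $\int dz'\,(\cdots)\,\omega(z')$, using doubling of $\omega$.

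The main obstacle is the weighted step in part (a). One must keep the constants of the dyadic averaging operators uniform in the scale $2^k\varepsilon$ while matching the exponents $q/s$ to the $A_q$ class. One must also control the boundary covering constants uniformly near $\partial_{\shortparallel}\Omega_T$ and near the bottom and top of the cylinder. If the maximal-function route fails because of the exponent mismatch, I would instead prove the estimate first for $\omega=1$ and then transfer it to weights through an extrapolation argument.
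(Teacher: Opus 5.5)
Your skeleton for \eqref{pri:7.8a} is right: dyadic decomposition of $\chi_*(z/\varepsilon)$, H\"older in probability, and summation via \eqref{m-1}. The weighted step, however, fails as designed.

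\textbf{The maximal-function step.} A bound on $\|(\dashint_{Q_R(\cdot)}G)^{1/s}\|_{L^q(\omega)}$ that is uniform in $R$ is boundedness of averaging or maximal operators on $L^{q/s}(\omega)$. That requires $q\ge s$ and $\omega\in A_{q/s}$; the relevant comparison is $q$ versus $s$, not $q$ versus $p$. You only have $\omega\in A_q$, a strictly larger class than $A_{q/s}$ once $s>1$. Openness of $A_q$ only gives $A_{q-\gamma}$ with $\gamma$ small. For $q<s$ (e.g.\ $s=2$, $q\in(1,2)$, which Theorem \ref{thm:C-Z} needs) no bound uniform in $R$ holds even for $\omega=1$.

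Neither remedy you offer helps. Lowering $s$ by Jensen makes $(\dashint|F|^s)^{1/s}$ smaller, so it yields a lower bound for the left-hand side; this also invalidates your treatment of $p_1<s$. Extrapolation cannot start from the single weight $\omega=1$.

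\textbf{What fixes (a).} Uniformity in $R$ is not needed. Since \eqref{m-1} holds for every $\beta$, the $k$-th term carries $2^{-k\beta(1/p-1/p_1)}$ with $\beta$ arbitrary, so any loss polynomial in $2^k$ is affordable.
\begin{itemize}
\item Cover $U_{2^k\varepsilon}(z)$ by $N\lesssim 2^{k(d+2)}$ sets $P_i=Q_{\varepsilon/8}(y_i)\cap\Omega_T$ with $y_i\in\Omega_T$. Then $\int_{P_i}|F|^s\lesssim\varepsilon^{d+2}h(w)$ for every $w\in P_i$.
\item The inequality $(\sum_{i\le N}a_i)^r\le N^{(r-1)_+}\sum_i a_i^r$, valid for all $r>0$, replaces Minkowski and Jensen for the exponents $p_1/s$ and $q/p_1$.
\item Then use $\inf_{P_i}\le\dashint_{P_i}$, Fubini, and \eqref{f:apw-2} in the form $\dashint_{Q_R(w)}\omega\le[\omega]_{A_q}(R/\varepsilon)^{(d+2)(q-1)}\dashint_{Q_\varepsilon(w)}\omega$.
\end{itemize}
This gives the right-hand side of \eqref{pri:7.8a} with a factor polynomial in $2^k$, which the decay absorbs.

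\textbf{The weight in (b).} In \eqref{pri:7.8b} the weight must appear pointwise on the right, and your last step does not produce that. Fubini turns $\int dz\,(\dashint_{Q_\varepsilon(z)}\Phi)\dashint_{Q_\varepsilon(z)}\omega$ into $\int\Phi(z')\big(\dashint_{Q_\varepsilon(z')}\dashint_{Q_\varepsilon(y)}\omega\,dy\big)dz'$. Bounding this smoothed weight by $\omega(z')$ is the $A_1$ condition, not a consequence of doubling.

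The pointwise claim before it is also false. For $z'\in Q_\varepsilon(z)$ you only get $U_\varepsilon(z)\subset Q_{2\varepsilon}(z')$, which is not inside $U_{*,\varepsilon}(z')$ when $\chi_*(z'/\varepsilon)<2$. Dilating changes the set appearing in \eqref{pri:7.8b}.

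\textbf{What fixes (b).}
\begin{itemize}
\item Split $U_\varepsilon(z)$ into boundedly many $P_j=Q_{\varepsilon/8}(y_j)\cap\Omega_T$. For every $w\in P_j$ one has $P_j\subset U_{*,\varepsilon}(w)$; this uses only $\chi_*\ge1$, not the Lipschitz property.
\item H\"older in probability and \eqref{m-1} then give $\langle h(z)^{p/s}\rangle^{1/p}\lesssim\sum_j\inf_{w\in P_j}\Psi(w)$, where $\Psi(w):=\langle(\dashint_{U_{*,\varepsilon}(w)}|F|^s)^{p_1/s}\rangle^{1/p_1}$.
\item Bound $\inf_{P_j}\Psi^q$ by the $\omega$-weighted average $\omega(P_j)^{-1}\int_{P_j}\Psi^q\omega$, not by a Lebesgue average.
\item After Fubini, $\Psi^q(w)\omega(w)$ is multiplied by an average, over $z$ in a cube of radius $\sim\varepsilon$ around $w$, of $\omega(Q_\varepsilon(z))/\omega(P_j(z))$. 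This is bounded by \eqref{f:apw-2}, since $P_j(z)\subset Q_{2\varepsilon}(z)$ and $|P_j(z)|\gtrsim\varepsilon^{d+2}$.
\end{itemize}
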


\begin{proof}
See \cite[Lemma 3.10]{Wang-Xu-25} for the details.
\end{proof}

\medskip

The flow of the proofs is presented in Fig.$\ref{pic:4.1}$.

\begin{figure}[htbp]
\centering 
\resizebox{0.95\textwidth}{!}{
\begin{tikzpicture}[
    node distance=0.7cm and 1cm,
    >=Stealth, 
    box/.style={
        rectangle, draw,
        minimum width=3.2cm, minimum height=0.9cm,
        align=center, text width=3cm
    },
    smallbox/.style={
        rectangle, draw,
        minimum width=3.5cm, minimum height=0.7cm,
        align=center, text width=3cm
    },
    bigbox/.style={
    rectangle, draw,
        minimum width=6cm, minimum height=1.5cm,
        align=center, text width=5.5cm
    },
    cbigbox/.style={
    rectangle, draw, dashed,
    inner sep=10pt, 
    rounded corners=3pt 
    }
]

\node[smallbox] (N1) {Lemma \ref{lemma:6.1}};
\node[smallbox, left=of N1] (N2) {Lemma \ref{lemma:integral geometry}};
\node[smallbox, right=of N1] (N3) {Lemma \ref{lemma:approxi}};
\node[smallbox, right=of N3, draw=white, text opacity=0] (N4) {}; 
\node[smallbox, below=of N1, draw=white, text opacity=0] (B1) {}; 
\node[smallbox, right=of B1, draw=white, text opacity=0] (B2) {}; 
\node[smallbox, right=of B2, draw=white, text opacity=0] (B2*) {}; 
\node[smallbox, right=of B2*, draw=white, text opacity=0] (B3) {}; 
\node[bigbox, below =of B1] (N5) {Proposition \ref{P:7.1}\\\small(Quenched C-Z estimate \eqref{pri:7.1})};
\node[smallbox, right=of N5, draw=white, text opacity=0] (N6) {}; 
\node[smallbox, right=of N6, draw=white, text opacity=0] (N7) {}; 
\node[bigbox, below=of B3] (N8) {Proposition $\ref{P:7.1}$\\\small(Annealed C-Z estimate \eqref{pri:7.2})};
\node[bigbox, above=of B3] (N9) {\textbf{Theorem \ref{thm:C-Z}}\\\small (Calder\'on-Zygmund theory)};

\node[cbigbox,fit=(N1) (N2) (N3)] (N0){};


\draw[->,thick] (N0) -- node[midway, right, align=center, text width=4cm] {
    $+$~~Shen's Lemma\\
    Lemma \ref{lemma:weight}
} (N5);     

\draw[->,thick] (N5) -- node[midway, above, align=center, text width=4cm] {
    $+$~~Shen's Lemma
} (N8);     

\draw[->,thick] (N8) -- node[midway, right, align=center, text width=4cm] {
    $+$~~Lemma $\ref{lemma:7.1}$
} (N9);     

\draw[->] (N5) -- (N9);     


\end{tikzpicture} %
}
\caption{On the proof structure of Theorem $\ref{thm:C-Z}$} 
\label{pic:4.1}
\end{figure}
\noindent
\textbf{Proof of Theorem $\ref{thm:C-Z}$}.
The estimate $\eqref{pri:A}$ has already been proven in Subsection $\ref{subsec:6.2}$. Let $p_2>p_1>p>1$ and $\omega\in A_q$ be arbitrary.
In view of Lemma $\ref{lemma:7.1}$, it follows
from Proposition $\ref{P:7.1}$ that
\begin{equation}\label{f:6.28}
\begin{aligned}
&\Bigg(\int_{\Omega_T}dz\Big\langle
\Big(\dashint_{
U_\varepsilon(z)}
|\nabla u_\varepsilon|^2 \Big)^{\frac{p}{2}}
\Big\rangle^{\frac{q}{p}}
\dashint_{Q_{\varepsilon}(z)}\omega\Bigg)^{\frac{1}{q}}
\lesssim^{\eqref{pri:7.8b}}
\Bigg(\int_{\Omega_T}dz\Big\langle
\Big(\dashint_{
U_{*,\varepsilon}(z)}
|\nabla u_\varepsilon|^2 \Big)^{\frac{p_1}{2}}
\Big\rangle^{\frac{q}{p_1}}\omega(z)\Bigg)^{\frac{1}{q}}\\
&\lesssim^{\eqref{pri:7.2}}
\Bigg(\int_{\Omega_T}dz
\Big\langle\Big(\dashint_{U_{*,\varepsilon}(z)}
|f|^2\Big)^{\frac{p_1}{2}}\Big\rangle^{\frac{q}{p_1}}\omega(z)
\Bigg)^{\frac{1}{q}}
\lesssim^{\eqref{pri:7.8a}}
\Bigg(\int_{\Omega_T}dz
\Big\langle\Big(\dashint_{U_{\varepsilon}(z)}
|f|^2\Big)^{\frac{p_2}{2}}\Big\rangle^{\frac{q}{p_2}}
\dashint_{Q_\varepsilon(z)}\omega
\Bigg)^{\frac{1}{q}}.
\end{aligned}
\end{equation}

If the parabolic cylinder $\Omega_T$ is bounded, take
$\omega(z)=\omega_\sigma(z):=[\text{dist}(x,\partial\Omega_0)]^{\underline{q}-1}$
with $0<\underline{q}<q$ and
$z=(x,t)\in\Omega_T$. Then, for any $z\in\Omega_T$, similar to
$\eqref{f:3.41-ap}$,
it can be derived that
\begin{equation*}
\dashint_{Q_\varepsilon(z)}\omega_\sigma \sim \omega_\sigma(z).
\end{equation*}
If $\Omega_T=\mathbb{R}^{d+1}$, set
$\omega(z)=\omega_{\sigma}(z)=\text{d}(z+\varepsilon,0)^q$ with $z\in\mathbb{R}^{d+1}$; the above relationship
also holds. Then, by substituting this back into
$\eqref{f:6.28}$, the desired estimate $\eqref{pri:B}$ is obtained,
and the proof is hereby completed.
\qed

\section{Oscillation and fluctuation of homogenization errors}\label{section:3}

\noindent
This section is primarily devoted to presenting the proofs of Theorems $\ref{thm:1.1}$ and $\ref{thm:1.2}$. If readers want to have a prior understanding of the proof structure, they are advised to refer to Subsection $\ref{subsection:5.4}$.

\subsection{A duality argument}

\noindent
Recall that ${_0H^{1,1/2}(\partial_{\shortparallel}\Omega_T)}$ denotes
the Hilbert space of functions possessing one spatial derivative and half
of a time derivative in $L^2(\partial_{\shortparallel}\Omega_T)$,
with each element required to vanish on $\partial\Omega\times\{t=0\}$ (see
e.g., \cite[pp.353]{Brown-89}). It is noted that the following conclusion also holds for nonzero initial values (such as $H_0^1(\Omega)$ data), but requires a more delicate analysis.

\begin{proposition}\label{P:4.2}
Let $\Omega\subset\mathbb{R}^d$ be a bounded $C^1$ domain with
$d\geq 2$, $\varepsilon\in(0,1]$, and $T>0$.
Suppose that the ensemble $\langle\cdot\rangle$ is stationary
with respect to $\eqref{c:1}$ and satisfies
the spectral gap condition $\eqref{c:3}$.
Given $F\in L^2(\Omega_T)$ and  $g\in{_0H^{1,1/2}(\partial_{\shortparallel}\Omega_T)}$  satisfying $\|F\|_{L^2(\Omega_T)}
+\|g\|_{H^{1,1/2}(\partial_{\shortparallel}\Omega_T)}=1$, let
$u_\varepsilon$ and $u_0$ be the weak solutions of the following  initial-Dirichlet problems:
\begin{equation}\label{pde:3.1*}
(\emph{DP}_\varepsilon)\left\{\begin{aligned}
\big(\partial_t + \mathcal{L}_\varepsilon\big)(u_\varepsilon)
& = F &~&\text{in}~ \Omega_T;\\
u_\varepsilon & = g &~&\text{on}~ \partial_{\shortparallel}\Omega_T;\\
u_\varepsilon(\cdot,0) & = 0 &~&\text{on}~ \Omega,
\end{aligned}\right.
\qquad
(\emph{DP}_0)\left\{\begin{aligned}
\big(\partial_t + \mathcal{L}_0\big)(u_0)
& = F &~&\text{in}~ \Omega_T;\\
u_0 & = g &~&\text{on}~ \partial_{\shortparallel}\Omega_T;\\
u_0(\cdot,0) & = 0 &~&\text{on}~ \Omega,
\end{aligned}\right.
\end{equation}
respectively.
Then, for any $p\in[1,\infty)$, it holds that
\begin{equation}\label{pri:3.0}
\Big\langle\Big(\int_{\Omega_T}|u_\varepsilon - u_0|^2\Big)^p\Big\rangle^{\frac{1}{p}}
\lesssim  \varepsilon\Big[
\mu_d^2(R_0/\varepsilon)
+\mu_d(R_0/\varepsilon)\ln(R_0/\varepsilon)\Big],
\end{equation}
where $\mu_d$ is given by $\eqref{pri:5.5**}$,  
and the multiplicative constant depends only on $\mu, \lambda_1, d,T$, and $\Omega$.
\end{proposition}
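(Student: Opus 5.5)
We will follow the duality (Aubin--Nitsche) strategy. Since $u_\varepsilon-u_0$ vanishes on $\partial_p\Omega_T$, for any $G\in L^2(\Omega_T)$ with $\|G\|_{L^2(\Omega_T)}=1$ we introduce the backward adjoint problems
\begin{equation*}
(-\partial_t+\mathcal{L}^*_\varepsilon)(v_\varepsilon)=G,\qquad (-\partial_t+\mathcal{L}^*_0)(v_0)=G\quad\text{in}~\Omega_T,
\end{equation*}
with $v_\varepsilon=v_0=0$ on $\partial\Omega\times(0,T)$ and at $t=T$. The problem then reduces to bounding $\int_{\Omega_T}(u_\varepsilon-u_0)G$.

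First, we apply Lemma \ref{lemma:3.4} to $u_\varepsilon$ and $u_0$ with the cut-off corrector choice $\varphi_j:=S_\varepsilon(\psi_\varepsilon\partial_j u_0)$. Here $\psi_\varepsilon$ is a cut-off equal to $1$ on the co-layer $\Sigma^T_{4\varepsilon}$ and vanishing in $\boxbox_{2\varepsilon}$. The shift constants in \eqref{eq:3.5} are fixed at scale $R_0/\varepsilon$, say anchored on $Q_1$. The expansion $w_\varepsilon$ then solves \eqref{eq:3.2}. We do the same for $v_\varepsilon$, using adjoint correctors (those of $a^*$ with time reversed); Theorem \ref{thm:1.0} applies to these as well. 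Writing
\begin{equation*}
u_\varepsilon-u_0=w_\varepsilon+\varepsilon\bar\phi^\varepsilon_j\varphi_j+\varepsilon^2\tilde\sigma^\varepsilon\partial\varphi_j,
\end{equation*}
the last two terms are estimated directly in $L^2$. We use Corollary \ref{corollary:2.1}, in particular \eqref{pri:2.19} for $\sigma_{d+1}$, together with Lemma \ref{lemma:5} (random cancellation, with weight $\theta=\bar\mu_d$). Their contribution is $\lesssim\varepsilon\mu_d(R_0/\varepsilon)$ times $\|\nabla u_0\|$ and $\varepsilon\|\nabla^2u_0,\partial_tu_0\|$ weighted appropriately. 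Next, $\int w_\varepsilon G$ is rewritten via the adjoint equation as $\int f_\varepsilon\cdot\nabla v_\varepsilon$ plus the $\pi$-term. In $f_\varepsilon$ from \eqref{eq:3.3}, the leading term $(a^\varepsilon-\bar a)(\nabla u_0-\varphi)$ is supported in the layer $\boxbox_{4\varepsilon}$ up to a smoothing error controlled by Lemma \ref{lemma:2.5}. The remaining terms carry an explicit factor $\varepsilon$ times correctors. Pairing them with $\nabla v_\varepsilon$, and then replacing $\nabla v_\varepsilon$ by its own two-scale expansion $(e+\nabla\phi^{*,\varepsilon})\varphi^*$ plus an error, yields terms of the form $\varepsilon\int\varpi^\varepsilon\,\nabla^2u_0\,\nabla v_0$. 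We control these by H\"older's inequality in probability and the moment bounds \eqref{pri:5.2}--\eqref{pri:5.4b}.

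The central technical input is the set of regularity estimates for $u_0$ and $v_0$ in a $C^1$ cylinder, which are only borderline. We need layer estimates
\begin{equation*}
\|\nabla u_0\|_{L^2(\boxbox_{r})}\lesssim r^{1/2}\,\|(\nabla u_0)^*\|_{L^2(\partial_{\shortparallel}\Omega_T)}\lesssim r^{1/2},
\end{equation*}
obtained from nontangential maximal function estimates for $L^2$ data $F$ and ${}_0H^{1,1/2}$ data $g$ (Brown, Fabes--Rivi\`ere). We also need co-layer weighted second-derivative estimates
\begin{equation*}
\int_{\Sigma^T_r}|(\nabla^2u_0,\partial_t u_0)|^2\delta\lesssim|\ln r|.
\end{equation*}
The corresponding statements for $v_0$ follow identically. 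Combining these, the layer terms give $\varepsilon^{1/2}\cdot\varepsilon^{1/2}=\varepsilon$ (with the factor $\mu_d$ coming from the moments of $\phi,\sigma$). The co-layer terms give $\varepsilon\mu_d(R_0/\varepsilon)\ln(R_0/\varepsilon)$ after applying Cauchy--Schwarz with weights $\delta^{\pm1}$. The $\pi$-term of \eqref{eq:3.2} is $\varepsilon|\dashint\bar\phi|$, and its contribution by \eqref{f:6.17-ap} is $O(\varepsilon\mu_d)$. Cross terms such as $\varepsilon^2\tilde\sigma^\varepsilon_{(d+1)}\partial_t\varphi$ produce the $\mu_d^2$ contribution via \eqref{pri:5.4b}.

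Finally, we take $\langle(\cdot)^p\rangle^{1/p}$ and the supremum over $G$. The supremum over $G$ cannot be interchanged with the expectation naively. To handle this, we bound $\|u_\varepsilon-u_0\|_{L^2}$ pathwise by an expression linear in random quantities such as $\int|\varpi^\varepsilon|^2h$ with deterministic $h$, whose moments are controlled uniformly; the estimate $\|\nabla v_\varepsilon\|_{L^2}\lesssim 1$ is deterministic.

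The main obstacle is the $C^1$ co-layer estimate with only a logarithmic loss, together with controlling $\nabla v_\varepsilon$ near the boundary, where there is no quenched Lipschitz bound. For the latter, we will try to use only energy bounds on $\nabla v_\varepsilon$ in the layer and place all higher regularity on $v_0$ in the co-layer.
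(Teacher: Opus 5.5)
For a fixed test function, your plan is the paper's argument. It uses the duality identity \eqref{eq:4.1} of Lemma~\ref{lemma:4.3}, two-scale expansions with boundary cut-offs for $u_\varepsilon$ and for the backward solution $v_\varepsilon$, the replacement $\nabla v_\varepsilon=\mathcal{R}_{\varepsilon,1}+\mathcal{R}_{\varepsilon,2}$, Cauchy--Schwarz with the weights $\delta^{\pm1/2}$, and Lemma~\ref{lemma:3.2-ap}. The terms $\varepsilon\bar\phi^\varepsilon_j\varphi_j+\varepsilon^2\tilde\sigma^\varepsilon_{l(d+1)j}\partial_l\varphi_j$ are then added by the triangle inequality.

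One small correction: take $\alpha=\langle\phi_j\rangle$ and $\pi_{ilj}=0$, as the paper does. Then by \eqref{eq:5.1c} the $\pi$-term in \eqref{eq:3.2} vanishes identically. With shifts anchored on $Q_1$ you instead get $\varepsilon c_j\int_{\Omega_T}\partial_t\varphi_j\,v_\varepsilon$ with a random constant $c_j$ of order one. \eqref{f:6.17-ap} does not make this $O(\varepsilon\mu_d)$; it needs an integration by parts in time and the same weighted treatment as the co-layer terms.

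\textbf{The gap is your last step.} You are right that Lemma~\ref{lemma:4.3} controls $\langle|\int_{\Omega_T}w_\varepsilon\Phi|^p\rangle$ only for each fixed deterministic $\Phi$. By contrast, $\langle\|w_\varepsilon\|^p_{L^2(\Omega_T)}\rangle$ has the supremum over $\Phi$ inside the expectation. The paper passes from Lemma~\ref{lemma:4.3} to \eqref{f:4.5} without addressing this, so the concern is legitimate. Your remedy, however, fails as stated.

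If the only $G$-uniform information on $v_\varepsilon$ is the deterministic bound $\|\nabla v_\varepsilon\|_{L^2(\Omega_T)}\lesssim1$, you only get $\sup_G|\int w_\varepsilon G|\lesssim\|f_\varepsilon\|_{L^2(\Omega_T)}$. Its moments are of order $\varepsilon^{1/2}$ up to $\mu_d$ factors, which is the unaccelerated rate of Lemma~\ref{lemma:3.1}: both $\|\nabla u_0\|_{L^2(\boxbox_{4\varepsilon})}$ and $\varepsilon\|\nabla^2u_0\|_{L^2((\Omega\setminus O_{2\varepsilon})_T)}$ are of order $\varepsilon^{1/2}$. Likewise, energy bounds on $\nabla v_\varepsilon$ alone give only $\varepsilon^{1/2}$ for $\int_{\boxbox_{8\varepsilon}}|\nabla u_0||\nabla v_\varepsilon|$.

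The gain to $\varepsilon$ needs two inputs from the adjoint expansion:
\begin{itemize}
\item $\|\nabla v_\varepsilon\|_{L^2(\boxbox_{8\varepsilon})}\le\|\nabla\breve w_\varepsilon\|_{L^2(\Omega_T)}+\|\nabla v_0\|_{L^2(\boxbox_{8\varepsilon})}$;
\item the $\delta^{-1/2}$-weighted bound for $\mathcal{R}_{\varepsilon,2}$.
\end{itemize}
Once you use the adjoint expansion, the random quantities are no longer of the form $\int|\varpi^\varepsilon|^2h$ with $h$ independent of $G$. For the explicit products, e.g.\ the adjoint corrector gradient times $\tilde\varphi_j$, you can still move all randomness onto the $G$-independent factor by Cauchy--Schwarz. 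But $\breve w_\varepsilon$ solves a random equation whose flux contains $\varepsilon\varpi^{*\varepsilon}\nabla\tilde\varphi_j$, with $\tilde\varphi_j$ built from $v_0$ and hence from $G$. The energy estimate decouples the two factors. So $\sup_{\|G\|\le1}$ produces a random operator norm of the type $\sup_{\|G\|\le1}\varepsilon\|\varpi^{*\varepsilon}\nabla\tilde\varphi_j\|_{L^2(\Omega_T)}$. Meanwhile \eqref{f:4.12}, obtained via Lemma~\ref{lemma:5}, holds only for each fixed deterministic $\tilde\varphi_j$. Nothing in your plan controls that supremum.

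\textbf{A possible repair.} One way to close the gap is a second duality for the $\breve w_\varepsilon$-term only. Let $X$ denote the $G$-independent random field multiplying $\nabla\breve w_\varepsilon$ in $\int f_\varepsilon\cdot\nabla v_\varepsilon$. Let $z$ solve $(\partial_t+\mathcal{L}_\varepsilon)z=\nabla\cdot X$ in $\Omega_T$ with $z=0$ on $\partial_p\Omega_T$. Since the adjoint error vanishes on the lateral boundary and at $t=T$, that term becomes (up to sign) $\int\nabla z\cdot\breve f_G$, where $\breve f_G$ is the flux of the adjoint expansion.

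Cauchy--Schwarz now puts $G$ only into deterministic norms of $v_0$ and $\tilde\varphi_j$. These are bounded uniformly in $\|G\|_{L^2}\le1$ by Lemma~\ref{lemma:3.2-ap}. The random factors, such as $\int|\nabla z|^2|\varpi^{*\varepsilon}|^2$, are $G$-independent. They can be controlled by the annealed estimate of Theorem~\ref{thm:C-Z} together with the local Meyers estimates of Lemma~\ref{lemma:5.2} and Corollary~\ref{corollary:2.1}. Only after this rearrangement can you take the supremum over $G$ pathwise before taking moments.
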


Let
$\Psi_{r}\in C_0^{2}(\Omega_T)$ be a cut-off function satisfying
$\Psi_{r} = 1$ on $\Sigma_{4r}^T$,
$\Psi_{r} = 0$ outside $\Sigma_{2r}^T$, and
\begin{equation}\label{f:4.1*}
(1/r)|\nabla\Psi_{r}|
+|\nabla^2\Psi_{r}|+|\partial_t\Psi_{r}|
\lesssim_d 1/r^2
\qquad\text{in}
\quad \Omega_T.
\end{equation}

\begin{lemma}[Energy estimates]\label{lemma:3.1}
Let $\Omega\subset\mathbb{R}^d$ be a (bounded) Lipschitz domain with
$d\geq 2$, $\varepsilon\in(0,1]$, and $T>0$.
Suppose that the ensemble $\langle\cdot\rangle$ is stationary
with respect to $\eqref{c:1}$ and satisfies
the spectral gap condition $\eqref{c:3}$.
Given $u_0\in W_{2}^{2,1}(\Omega_T)$,
let $w_\varepsilon$ be given as in $\eqref{eq:3.1}$ and
satisfy the equations $\eqref{eq:3.2}$.
Then, by choosing $\varphi_j =
S_\varepsilon(\Psi_{\varepsilon}\nabla_ju_0)$
in $\eqref{eq:3.1}$ and setting $\pi_{ilj}=0$ in $\eqref{eq:3.5}$, for any $p\in[1,\infty)$, it can be obtained that
\begin{equation}\label{pri:3.1}
\begin{aligned}
&\bigg\langle\Big(\int_{\Omega_T}|\nabla w_\varepsilon|^2\Big)^{\frac{p}{2}}\bigg\rangle^{\frac{1}{p}}
+\bigg\langle\Big(\sup_{0\leq t\leq T}\big\|w_\varepsilon(\cdot ,t)\big\|_{L^2(\Omega)}\Big)^p\bigg\rangle^{\frac{1}{p}}\\
&\lesssim \mu_d(1/\varepsilon)\bigg\{
\|\bar{\mu}_d\nabla u_0\|_{L^2(\boxbox_{4\varepsilon})}
 + \varepsilon\Big(
\|\bar{\mu}_d\nabla^2 u_0 \|_{L^2((\Omega\setminus O_{2\varepsilon})_T)}
+\|\bar{\mu}_d\partial_t u_0 \|_{L^2((\Omega\setminus O_{2\varepsilon})_T)}\Big)\bigg\},
\end{aligned}
\end{equation}
where we recall that $\boxbox_{r}:=\Omega_T\setminus \Sigma_{r}^T$ 
and $(\Omega\setminus O_r)_T:=
(\Omega\setminus O_r)\times [0,T]$,
and the multiplicative constant depends only on $\mu,d,p$,
and $\lambda_1$.
\end{lemma}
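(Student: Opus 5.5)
The plan is to test the equation \eqref{eq:3.2} for $w_\varepsilon$ against $w_\varepsilon$ itself. Since $\pi_{ilj}=0$, the source term $\varepsilon\sum_l\pi_{llj}\partial_t\varphi_j$ disappears, and $w_\varepsilon$ vanishes on $\partial_p\Omega_T$. The standard parabolic energy estimate therefore gives, for a.e.\ realization,
\begin{equation*}
\sup_{0\leq t\leq T}\|w_\varepsilon(\cdot,t)\|_{L^2(\Omega)}^2+\int_{\Omega_T}|\nabla w_\varepsilon|^2\lesssim_\mu \int_{\Omega_T}|f_\varepsilon|^2,
\end{equation*}
with $f_\varepsilon$ given by \eqref{eq:3.3}. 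Taking $\langle(\cdot)^{p/2}\rangle^{1/p}$, it then suffices to bound $\langle\|f_\varepsilon\|_{L^2(\Omega_T)}^p\rangle^{1/p}$ term by term, using Minkowski's inequality in probability.

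The first term is $(a^\varepsilon-\bar a)(\nabla u_0-\varphi)$ with $\varphi_j=S_\varepsilon(\Psi_\varepsilon\partial_ju_0)$. It is bounded deterministically by
\begin{equation*}
\|\nabla u_0-S_\varepsilon(\Psi_\varepsilon\nabla u_0)\|_{L^2}\leq \|(1-\Psi_\varepsilon)\nabla u_0\|_{L^2}+\|\Psi_\varepsilon\nabla u_0-S_\varepsilon(\Psi_\varepsilon\nabla u_0)\|_{L^2}.
\end{equation*}
The first piece is at most $\|\nabla u_0\|_{L^2(\boxbox_{4\varepsilon})}$. The second piece is controlled by \eqref{pri:2.14} through $\varepsilon\|\nabla(\Psi_\varepsilon\nabla u_0)\|+\varepsilon^2\|\partial_t(\Psi_\varepsilon\nabla u_0)\|$. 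Derivatives falling on $\Psi_\varepsilon$ cost $\varepsilon^{-1}$ and $\varepsilon^{-2}$, respectively, but they are supported in the layer, so they again give $\|\nabla u_0\|_{L^2(\boxbox_{4\varepsilon})}$. The remaining terms are $\varepsilon\|\nabla^2u_0\|$ and $\varepsilon^2\|\partial_t\nabla u_0\|$ on the co-layer. To keep only $\partial_tu_0$ rather than $\partial_t\nabla u_0$, I would not apply \eqref{pri:2.14} to the time part directly. Instead, I would write the time mollification error as a spatial divergence, or commute $\nabla$ onto $\zeta_\varepsilon$, which costs $\varepsilon^{-1}$ and turns $\varepsilon^2\|\partial_t\nabla u_0\|$ into $\varepsilon\|\partial_tu_0\|$. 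The weight $\bar\mu_d\ge1$ is harmless here.

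The random terms carry the factors $\varepsilon\varpi^\varepsilon\nabla S_\varepsilon(\Psi_\varepsilon\nabla u_0)$ with $\varpi=(\bar\phi,\bar\sigma,\nabla\tilde\sigma_{(d+1)})$, and $\varepsilon^2\tilde\sigma_{(d+1)}^\varepsilon(\nabla^2,\partial_t)S_\varepsilon(\Psi_\varepsilon\nabla u_0)$. For these I would invoke Lemma \ref{lemma:5} in the form \eqref{pri:3.42c}--\eqref{pri:3.42d}, with $\alpha=0$ and with the moment bound \eqref{pri:5.8} supplied by Corollary \ref{corollary:2.1}: \eqref{pri:2.18} for $(\phi,\sigma_k,\nabla\sigma_{d+1})$ and \eqref{pri:2.19} for $\sigma_{d+1}$. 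The shifts $\alpha,\beta,\gamma$ are constants (or functions of time only), and they will be chosen as deterministic anchors, e.g.\ the averages over $Q_1(0)$. By \eqref{pri:5.4} and \eqref{pri:5.4b} this yields $\theta(z)\lesssim\bar\mu_d(z)$.

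The second-order term needs one more step. By \eqref{pri:3.42d}, one derivative of $S_\varepsilon$ costs $\varepsilon^{-1}$, which converts $\varepsilon^2(\nabla^2,\partial_t)S_\varepsilon(\Psi_\varepsilon\nabla u_0)$ into $\varepsilon\,\|\bar\mu_d(\cdot/\varepsilon)(\nabla,\varepsilon\partial_t)(\Psi_\varepsilon\nabla u_0)\|$. The $\partial_t$ part is then handled as in the first term. Since $\bar\mu_d(z/\varepsilon)\lesssim\mu_d(1/\varepsilon)\bar\mu_d(z)$ (logarithms are subadditive up to constants), we can pull out the prefactor $\mu_d(1/\varepsilon)$. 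All resulting terms are either layer terms, bounded by $\|\bar\mu_d\nabla u_0\|_{L^2(\boxbox_{4\varepsilon})}$, or co-layer terms of order $\varepsilon$. This gives \eqref{pri:3.1}.

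The main obstacle is exactly this handling of the time derivative and of $\sigma_{(d+1)}$. The bound must not involve $\partial_t\nabla u_0$, and the non-stationary growth $\bar\mu_d$ of $\sigma_{(d+1)}$ must be tracked consistently through the rescaling. I would first try the integration-by-parts trick on $\zeta_\varepsilon$ described above. If that fails, I would use the $\varepsilon^2\tilde\sigma_{(d+1)}\partial_t\varphi$ structure together with the skew-symmetry identity \eqref{eq:5.1c} to rewrite the term in divergence form.
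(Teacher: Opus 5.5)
Your proposal is correct and follows essentially the paper's route. The energy estimate reduces everything to $\|f_\varepsilon\|_{L^2(\Omega_T)}$; the non-random term is handled by the smoothing estimates of Lemma \ref{lemma:2.4}; and the corrector terms are handled by the random cancellation Lemma \ref{lemma:5}, using the moment bounds of Theorem \ref{thm:1.0} (equivalently Corollary \ref{corollary:2.1}) together with $\bar{\mu}_d(\cdot/\varepsilon)\lesssim\mu_d(1/\varepsilon)\bar{\mu}_d$. The only difference is cosmetic: the paper's proof actually takes $\varphi_j=S_\varepsilon K_\varepsilon(\Psi_\varepsilon\partial_j u_0)$ and avoids $\partial_t\nabla u_0$ via the identity $\partial_t K_\varepsilon(\Psi_\varepsilon\nabla u_0)=K_\varepsilon(\partial_t\Psi_\varepsilon\nabla u_0)+\nabla K_\varepsilon(\Psi_\varepsilon\partial_t u_0)-K_\varepsilon(\nabla\Psi_\varepsilon\partial_t u_0)$ (and its $S_\varepsilon K_\varepsilon$ analogue for the $\tilde{\sigma}_{(d+1)}$ term). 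This is your trick of moving the spatial gradient onto the mollifier, done with an extra spatial smoothing $K_\varepsilon$ rather than directly on $\zeta_\varepsilon$.
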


\begin{proof}
It suffices to prove the statement for $p\geq 2$, since the probability measure is finite.

\textbf{Step 1.} Outline of the proof.
Starting from
the energy estimate and the expression $\eqref{eq:3.3}$, it follows that
\begin{equation*}
\begin{aligned}
&\|w_\varepsilon\|_{L^2(0,T;
H_0^1(\Omega))} + \sup_{0\leq t\leq T}\|w_\varepsilon(\cdot,t)\|_{L^2(\Omega)}
\lesssim\|\tilde{f}\|_{L^2(\Omega_T)}
\lesssim
\big\|\nabla u_0
- S_\varepsilon K_\varepsilon(\Psi_{\varepsilon}\nabla u_0 )\big\|_{L^2(\Omega_T)} \\
& +\varepsilon\big\|\varpi(\cdot/\varepsilon,\cdot/\varepsilon^2)\nabla S_\varepsilon K_\varepsilon(\Psi_{\varepsilon}\nabla u_0)\big\|_{L^2(\Omega_T)}
+\varepsilon^2 \big\|\tilde{\varpi}(\cdot/\varepsilon,\cdot/\varepsilon^2)
\big(\nabla^2+\partial_t\big)S_\varepsilon K_\varepsilon
(\Psi_{\varepsilon}\nabla u_0)\big\|_{L^2(\Omega_T)}\\
&=: I_1 + I_2 + I_3,
\end{aligned}
\end{equation*}
where $\varpi_j:= a\bar{\phi}_j
-\bar{\sigma}_{j}
+ a(\partial \tilde{\sigma}_{(d+1)j})$ and
$\tilde{\varpi}_j = \big(a \otimes \tilde{\sigma}_{(d+1)j},
\tilde{\sigma}_{(d+1)j}\big)$. Obviously, the first term $I_1$ is deterministic, while
the other two terms are random and must be estimated
in terms of their $p$-moments.
For the stated estimate $\eqref{pri:3.1}$, by taking
$\langle|\cdot|^p\rangle^{\frac{1}{p}}$ with $p\geq 2$
on the both sides of the above equality, it is obtained that
\begin{equation}\label{f:3.1}
\begin{aligned}
\big\langle\|\nabla w_\varepsilon\|_{L^2(\Omega_T)}^p\big\rangle^{\frac{1}{p}} +
\big\langle\big(\sup_{0\leq t\leq T}\|w_\varepsilon(\cdot,t)\|_{L^2(\Omega)}\big)^{p}\big\rangle^{\frac{1}{p}}
\lesssim I_1
+ \langle I_2^p\rangle^{\frac{1}{p}} + \langle I_3^p\rangle^{\frac{1}{p}}.
\end{aligned}
\end{equation}
The remainder of the proof concerning $I_1$, $I_2$, and $I_3$ is carried out in the following two steps.

\textbf{Step 2.} Arguments for $I_1$. The following fact is noted:
\begin{equation}\label{f:3.5}
\begin{aligned}
\nabla u_0
&- S_\varepsilon K_\varepsilon(\Psi_{\varepsilon}\nabla u_0)
= \Psi_{\varepsilon}\nabla u_0
-  S_\varepsilon K_\varepsilon(\Psi_{\varepsilon}\nabla u_0)
+ (1 -\Psi_{\varepsilon})\nabla u_0\\
&= \Psi_{\varepsilon}\nabla u_0
-  K_\varepsilon(\Psi_{\varepsilon}\nabla u_0)
+ K_\varepsilon(\Psi_{\varepsilon}\nabla u_0)
-S_\varepsilon K_\varepsilon(\Psi_{\varepsilon}\nabla u_0)
+ (1 -\Psi_{\varepsilon})\nabla u_0,
\end{aligned}
\end{equation}
and it therefore follows from Lemma $\ref{lemma:2.4}$ that
\begin{equation}\label{f:3.2}
\begin{aligned}
I_1&\leq \Bigg\{\|
\Psi_{\varepsilon}\nabla u_0
-  SK_\varepsilon(\Psi_{\varepsilon}\nabla u_0)\|_{L^2(\Omega_T)}
+\|(1-\Psi_{\varepsilon})\nabla u_0\|_{L^2(\Omega_T)}\Bigg\}\\
&\lesssim^{\eqref{f:3.5}}_{\eqref{pri:2.13},\eqref{pri:2.14}} \varepsilon
\bigg\{\|\nabla(\Psi_{\varepsilon}\nabla u_0)\|_{L^2(\Omega_T)}
+ \varepsilon\|\partial_tK_\varepsilon(\Psi_{\varepsilon}\nabla u_0)\|_{L^2(\Omega_T)}
\bigg\}
+\|\nabla u_0\|_{L^2(\boxbox_{4\varepsilon})}\\
&\lesssim^{\eqref{pri:2.17}}
\|\nabla u_0\|_{L^2(\boxbox_{4\varepsilon})}
+ \varepsilon\Big\{\|\nabla^2 u_0\|_{L^2((\Omega\setminus O_{2\varepsilon})_T)}
+\|\partial_t u_0\|_{L^2((\Omega\setminus O_{2\varepsilon})_T)}\Big\},
\end{aligned}
\end{equation}
where it is recalled that $\boxbox_{\varepsilon}:=\Omega_T\setminus \Sigma_{\varepsilon}^T$
and $(\Omega\setminus O_\varepsilon)_T:=
(\Omega\setminus O_\varepsilon)\times [0,T]$, and
the following identity is also used:
\begin{equation}\label{f:3.28}
\begin{aligned}
\partial_t K_\varepsilon(\Psi_{\varepsilon}\nabla u_0)
= K_\varepsilon(\partial_t\Psi_{\varepsilon}\nabla u_0)
+ \nabla K_\varepsilon(\Psi_{\varepsilon}\partial_t u_0)
- K_\varepsilon(\nabla\Psi_{\varepsilon}\partial_t u_0)
\end{aligned}
\end{equation}
in the last step.

\textbf{Step 3.} Arguments for $I_2$ and $I_3$. By appealing to
Lemma $\ref{lemma:5}$ (with $\delta=1$) and to
Theorem $\ref{thm:1.0}$, it is obtained that
\begin{equation}\label{f:3.3}
\begin{aligned}
\langle I_2^p\rangle^{\frac{1}{p}}
&\lesssim^{\eqref{pri:3.42c}}_{\eqref{pri:5.2},\eqref{pri:5.4}}
\varepsilon\bigg(
\int_{\Omega_T}dz|K_\varepsilon(\nabla(\Psi_{\varepsilon}\nabla u_0))(z)|^2
\bigg)^{\frac{1}{2}}\\
&
\lesssim^{\eqref{pri:2.17}}
\|\nabla u_0\|_{L^2(\boxbox_{4\varepsilon})}
+\varepsilon\|\nabla^2 u_0\|_{L^2((\Omega\setminus O_{2\varepsilon})_T)}.
\end{aligned}
\end{equation}

To estimate $I_3$, begin by observing that
\begin{equation}\label{f:4.19}
\begin{aligned}
&\nabla^2S_\varepsilon K_\varepsilon(\Psi_{\varepsilon}\nabla u_0)
= \nabla S_\varepsilon K_\varepsilon(\nabla(\Psi_{\varepsilon}\nabla u_0));\\
&\partial_t S_\varepsilon K_\varepsilon(\Psi_{\varepsilon}\nabla u_0)
= S_\varepsilon K_\varepsilon(\partial_t\Psi_{\varepsilon}\nabla u_0)
+ \nabla S_\varepsilon K_\varepsilon(\Psi_{\varepsilon}\partial_t u_0)
- S_\varepsilon K_\varepsilon(\nabla\Psi_{\varepsilon}\partial_t u_0).
\end{aligned}
\end{equation}
Then, it follows from Lemmas $\ref{lemma:5}$, $\ref{lemma:2.5}$, and Theorem $\ref{thm:1.0}$ that
\begin{equation}\label{f:3.4}
\begin{aligned}
\langle I_3^p\rangle^{\frac{1}{p}}
&\lesssim_{\eqref{pri:5.6a},\eqref{pri:5.7c}}^{\eqref{pri:3.42c},\eqref{pri:3.42d}}
\varepsilon\bigg(\int_{\Omega_T}
\bar{\mu}_d^2(\cdot/\varepsilon)
\Big|K_\varepsilon\big(|\nabla(\Psi_{\varepsilon}\nabla u_0)|
+|\Psi_{\varepsilon}\partial_t u_0|
+|\varepsilon(\partial_t\Psi_{\varepsilon}\nabla u_0)|
+|\varepsilon\nabla\Psi_{\varepsilon}\partial_t u_0|\big)\Big|^2
\bigg)^{\frac{1}{2}}\\
&\lesssim^{\eqref{pri:2.8}} \mu_d(1/\varepsilon)
\bigg(
\|\bar{\mu}_d\nabla u_0\|_{L^2(\boxbox_{4\varepsilon})}
+\varepsilon\|\bar{\mu}_d\nabla^2 u_0\|_{L^2((\Omega\setminus O_{2\varepsilon})_T)}
+\varepsilon\|\bar{\mu}_d\partial_t u_0\|_{L^2((\Omega\setminus O_{2\varepsilon})_T)}
\bigg),
\end{aligned}
\end{equation}
where the following fact is also employed:
\begin{equation}\label{f:4.28}
 \bar{\mu}_d(\cdot/\varepsilon)
 \leq^{\eqref{pri:5.5}} \mu_d(1/\varepsilon)\bar{\mu}_d(\cdot)
\end{equation}
in the last inequality.

Consequently, by substituting $\eqref{f:3.2}$, $\eqref{f:3.3}$, and $\eqref{f:3.4}$
back into $\eqref{f:3.1}$, the desired result is obtained, and
the proof is hereby completed.
\end{proof}

In order to accelerate the convergence rate, the so-called duality methods is employed. To this end, first consider the adjoint initial-Dirichlet problems: given $\Phi\in L^2(\Omega_T)$, let
$v_\varepsilon$ and $v_0$ be the weak solutions to
\begin{equation}\label{pde:3.1}
(\text{DP}_\varepsilon^*)\left\{\begin{aligned}
-\partial_t v_\varepsilon  + \mathcal{L}_\varepsilon^*(v_\varepsilon)
& = \Phi &~&\text{in}~ ~\Omega_T;\\
v_\varepsilon & = 0 &~&\text{on}~ \partial_{\shortparallel}\Omega_T;\\
v_\varepsilon(\cdot,T) & = 0 &~&\text{on}~ \Omega,
\end{aligned}\right.
\quad
(\text{DP}_0^*)\left\{\begin{aligned}
-\partial_t v_0 + \mathcal{L}_0^*(v_0)
& = \Phi &~&\text{in}~ ~\Omega_T;\\
v_0 & = 0 &~&\text{on}~ \partial_{\shortparallel}\Omega_T;\\
v_0(\cdot,T) & = 0 &~&\text{on}~ \Omega,
\end{aligned}\right.
\end{equation}
respectively. Here $\mathcal{L}_\varepsilon^*$ is the adjoint operator of $\mathcal{L}_\varepsilon$.
Let
$\tilde{v}_\varepsilon(x,t)=v_\varepsilon(x,T-t)$ and
$\tilde{v}_0(x,t)=v_0(x,T-t)$; these functions precisely solve the following initial-Dirichlet problems:
\begin{equation*}
\left\{\begin{aligned}
\frac{\partial \tilde{v}_\varepsilon^\alpha}{\partial t}
-\frac{\partial}{\partial x_i}\Big\{a_{ji}\Big(\frac{x}{\varepsilon},
\frac{T-t}{\varepsilon^2}\Big)\frac{\partial \tilde{v}_\varepsilon^\beta}{\partial x_j}\Big\}
& = \tilde{\Phi} &~&\text{in}~ ~\Omega_T;\\
\tilde{v}_\varepsilon & = 0 &~&\text{on}~ \partial_p\Omega_T,
\end{aligned}\right.
\quad
\text{and}
\quad
\left\{\begin{aligned}
\frac{\partial \tilde{v}_0}{\partial t} + \mathcal{L}_0(\tilde{v}_0)
& = \tilde{\Phi} &~&\text{in}~ ~\Omega_T;\\
\tilde{v}_0 & = 0 &~&\text{on}~ \partial_p\Omega_T,
\end{aligned}\right.
\end{equation*}
where $\tilde{\Phi}(x,t):=\Phi(x,T-t)$.
It is noted that $\phi^{*}$ and
$\sigma^{*}$ are the respective correctors and flux correctors
associated with $a^*(x,T-t)$. Therefore,
the same result as stated in Lemma $\ref{lemma:3.1}$ can be derived for
the adjoint problems in $\eqref{pde:3.1}$, because
the conclusions concerning
$\phi^{*}$ and $\sigma^{*}$ are consistent with those of Theorem $\ref{thm:1.0}$.

\begin{lemma}[Duality lemma]\label{lemma:4.3}
Let $\Omega\subset\mathbb{R}^d$ be a bounded Lipschitz domain with
$d\geq 2$, $\varepsilon\in(0,1]$, and $T>0$.
Suppose that the ensemble $\langle\cdot\rangle$ is stationary
with respect to $\eqref{c:1}$ and satisfies $\eqref{c:3}$.
Let $u_\varepsilon,u_0$ be the weak solutions of $(\emph{PD}_\varepsilon)$ and
$(\emph{PD}_0)$, respectively, in $\eqref{pde:3.1*}$.
Given any $\Phi\in L^2(\Omega_T)$,
let
$v_\varepsilon$ and $v_0$ be the weak solutions to
the adjoint problems $(\emph{PD}_\varepsilon^*)$ and $(\emph{PD}_0^*)$, respectively, in $\eqref{pde:3.1}$. Then,
by choosing $\varphi_j =
S_\varepsilon(\Psi_{2\varepsilon}\nabla_ju_0)$
in $\eqref{eq:3.1}$ and setting $\pi_{ilj}=0$ in $\eqref{eq:3.5}$, it holds that
\begin{equation}\label{eq:4.1}
\int_{\Omega_T} w_\varepsilon \Phi dx dt
= -\int_{\Omega_T} \tilde{f}\cdot\nabla v_\varepsilon dx dt,
\end{equation}
where $\tilde{f}$ is given in $\eqref{eq:3.3}$. Moreover, if it is assumed that
\begin{equation}\label{eq:4.2}
\begin{aligned}
\breve{w}_\varepsilon =
\tilde{v}_\varepsilon - \tilde{v}_0
-\varepsilon (\phi^{*}_j)_T^{\varepsilon}\tilde{\varphi}_j
-\varepsilon^2 (\sigma_{l(d+1)j}^{*})_{T}^{\varepsilon}
\partial_l\tilde{\varphi}_j,
\end{aligned}
\end{equation}
where
\begin{equation*}
\begin{aligned}
&\tilde{\varphi}_j:= S_\varepsilon K_\varepsilon(
\Psi_{4\varepsilon}\partial_j\tilde{v}_0);\\
&(\phi^{*}_{j})_T^{\varepsilon}(x,t)
:=\phi^{*}_j(x/\varepsilon,(T-t)/\varepsilon^2)
-\alpha;\\
&(\sigma^{*}_{l(d+1)j})^{\varepsilon}_{T}(x,t)
:=\sigma_{l(d+1)j}^{*}(x/\varepsilon,(T-t)/\varepsilon^2)
-\gamma
\quad\text{with}~ \alpha,\gamma\in\mathbb{R}.
\end{aligned}
\end{equation*}
Then, for any $p\in[1,\infty)$, there holds
\begin{equation}\label{pri:4.1}
\begin{aligned}
\bigg\langle\Big|\int_{\Omega_T} w_\varepsilon
\Phi\Big|^p\bigg\rangle^{\frac{1}{p}}
&\lesssim \mu_d^2\big(\frac{R_0}{\varepsilon}\big)
\Big\{
\|\nabla u_0\|_{L^2(\boxbox_{8\varepsilon})}
+\varepsilon\|\nabla^2 u_0\|_{L^2((\Omega\setminus O_{4\varepsilon})_T)}
+\varepsilon\|\partial_t u_0\|_{L^2((\Omega\setminus O_{4\varepsilon})_T)}
\Big\}\\
&\qquad\times\bigg\{
\|\nabla v_0\|_{L^2(\boxbox_{16\varepsilon})}
 + \varepsilon\|\nabla^2 v_0 \|_{L^2((\Omega\setminus O_{8\varepsilon})_T)}
+\varepsilon\|\partial_t v_0 \|_{L^2((\Omega\setminus O_{8\varepsilon})_T)}\bigg\}\\
&+\mu_d(\frac{R_0}{\varepsilon})
\bigg\{
\|\delta^{\frac{1}{2}}\nabla u_0\|_{L^2(\boxbox_{8\varepsilon})}
+\varepsilon\|\delta^{\frac{1}{2}}\nabla^2 u_0\|_{L^2((\Omega\setminus O_{4\varepsilon})_T)}\\
&\qquad\qquad\qquad\qquad\qquad\qquad
+\varepsilon\|\delta^{\frac{1}{2}}\partial_t u_0\|_{L^2((\Omega\setminus O_{4\varepsilon})_T)}
\bigg\}\|\delta^{-\frac{1}{2}}\nabla \tilde{v}_0\|_{L^2(\Sigma_{2\varepsilon}^T)},
\end{aligned}
\end{equation}
where the weight $\delta$ is defined in
$\eqref{eq:2.4}$, and the multiplicative constant depends on
$\mu,d,p$, and $\lambda_1$.
\end{lemma}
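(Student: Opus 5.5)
The plan has two parts: first the identity \eqref{eq:4.1}, then a two-scale expansion of the dual solution that is fed back into \eqref{eq:4.1}.

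\textbf{The identity.} Test the equation \eqref{eq:3.2} for $w_\varepsilon$ (with $\pi_{ilj}=0$) against $v_\varepsilon$, and test $(\text{DP}_\varepsilon^*)$ against $w_\varepsilon$. Both $w_\varepsilon$ and $v_\varepsilon$ vanish on $\partial_{\shortparallel}\Omega_T$; moreover $w_\varepsilon(\cdot,0)=0$ and $v_\varepsilon(\cdot,T)=0$. Hence the time-derivative terms cancel after integration by parts in $t$, and we obtain
\[
\int_{\Omega_T}w_\varepsilon\Phi=\int_{\Omega_T}\big(\partial_t w_\varepsilon+\mathcal{L}_\varepsilon w_\varepsilon\big)v_\varepsilon=-\int_{\Omega_T}\tilde f\cdot\nabla v_\varepsilon ,
\]
which is \eqref{eq:4.1}.

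\textbf{Splitting the right-hand side.} Undo the time reversal in \eqref{eq:4.2} and write
\[
\nabla v_\varepsilon=\nabla\breve w_\varepsilon+\big(e_j+\nabla\phi_j^{*}\big)\tilde\varphi_j+\varepsilon\,\big(\text{terms with }\phi^*,\nabla\sigma^*_{(d+1)}\big)\nabla\tilde\varphi_j+\varepsilon^2\,\sigma^*\nabla^2\tilde\varphi_j .
\]
This splits $\int\tilde f\cdot\nabla v_\varepsilon$ into two pieces.

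\emph{(a) The error piece} $\int\tilde f\cdot\nabla\breve w_\varepsilon$. Apply Cauchy--Schwarz and Hölder in probability, bounding $\langle\|\tilde f\|_{L^2}^{2p}\rangle^{1/(2p)}$ by Steps 2--3 of Lemma~\ref{lemma:3.1}. The cutoff is $\Psi_{2\varepsilon}$, which explains the radii $8\varepsilon,4\varepsilon$. Bound $\langle\|\nabla\breve w_\varepsilon\|_{L^2}^{2p}\rangle^{1/(2p)}$ by the adjoint version of Lemma~\ref{lemma:3.1}, valid because $\phi^*,\sigma^*$ satisfy Theorem~\ref{thm:1.0}; here the cutoff is $\Psi_{4\varepsilon}$. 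On the bounded domain, the weights $\bar\mu_d$ are bounded by $\mu_d(R_0/\varepsilon)$. This gives the first product term in \eqref{pri:4.1}, with the factor $\mu_d^2$.

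\emph{(b) The main piece} $\int\tilde f\cdot\big(e_j+\nabla\phi^*_j\big)\tilde\varphi_j$ and the $\varepsilon$-remainders. The boundary-layer part of $\tilde f$, namely $(a^\varepsilon-\bar a)(1-\Psi)\nabla u_0$, is supported where $\tilde\varphi_j=0$, because $\Psi_{4\varepsilon}$ lives deeper inside. So only the interior terms remain:
\begin{itemize}
\item $(a^\varepsilon-\bar a)\big(\Psi\nabla u_0-\varphi\big)$, which is of size $O(\varepsilon)$ in derivatives of $u_0$ by Lemma~\ref{lemma:2.5};
\item the $\varepsilon$- and $\varepsilon^2$-corrector terms.
\end{itemize}
For these interior terms we cannot afford $\|\nabla v_0\|_{L^2}$ on the whole domain. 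Instead, insert the weights $\delta^{1/2}\cdot\delta^{-1/2}$ and apply Lemma~\ref{lemma:5} (\eqref{pri:3.42c}, \eqref{pri:3.42d}) with $\alpha=\pm1$, together with the weighted smoothing bounds of Lemma~\ref{lemma:2.5}. The moments of $\varpi,\phi^*$ are controlled by Theorem~\ref{thm:1.0} and Corollary~\ref{corollary:2.1}, with $\bar\mu_d(\cdot/\varepsilon)\le\mu_d(R_0/\varepsilon)$. This produces the factors $\|\delta^{1/2}(\nabla u_0,\varepsilon\nabla^2u_0,\varepsilon\partial_tu_0)\|$ and $\|\delta^{-1/2}\nabla\tilde v_0\|_{L^2(\Sigma^T_{2\varepsilon})}$.

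One term needs separate care: $(a^\varepsilon-\bar a)(e_j+\nabla\phi_j)\varphi_j$ paired with $\big(e_k+\nabla\phi^*_k\big)\tilde\varphi_k$. This is a product of two random factors, $\varphi_j$ times a random field; we treat it by the same Lemma~\ref{lemma:5} after Hölder in probability (using $\beta\ge p$ there).

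\textbf{Main obstacle.} I expect the difficulty to lie in this last interior term, which is not small pointwise. To extract the gain, I would rewrite $\big(a^\varepsilon-\bar a\big)(e+\nabla\phi)$ through the flux identity $q=\nabla\cdot\sigma$ from \eqref{eq:5.1b}, then integrate by parts, turning it into $\varepsilon\,\sigma\,\nabla(\varphi\tilde\varphi)$ plus a $\partial_t\sigma_{(d+1)}$ term. The derivatives falling on $\Psi$ are localized to the layer and absorbed into the $\boxbox$ norms. If that bookkeeping fails, fall back on simply bounding this term by the weighted Cauchy--Schwarz described in (b).
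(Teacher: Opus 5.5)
Your architecture matches the paper's: the identity by testing with $v_\varepsilon$, the adjoint expansion \eqref{eq:4.2}, an unweighted pairing with $\nabla\breve w_\varepsilon$ via the adjoint Lemma~\ref{lemma:3.1}, and a $\delta^{1/2}\cdot\delta^{-1/2}$ pairing of the interior part of $\tilde f$ via Lemmas~\ref{lemma:5} and~\ref{lemma:2.5}. However, your expansion of $\nabla v_\varepsilon$ is wrong, and the error hides a term.

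Differentiating \eqref{eq:4.2} and undoing the time reversal gives $\nabla v_\varepsilon=\big[\nabla\breve w_\varepsilon+\nabla\tilde v_0+(\nabla\phi^*_j)^\varepsilon_T\tilde\varphi_j+\varepsilon(\cdots)\nabla\tilde\varphi_j+\varepsilon^2(\cdots)\nabla^2\tilde\varphi_j\big](\cdot,T-\cdot)$. The leading term is $\nabla\tilde v_0$, not $e_j\tilde\varphi_j$, and $\nabla\tilde v_0-\tilde\varphi\neq0$. In particular, on the support of $1-\Psi_{2\varepsilon}$, where all your $\tilde\varphi$-terms vanish, $\nabla v_\varepsilon$ still contains $\nabla v_0$. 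So the boundary-layer part $(a^\varepsilon-\bar a)(1-\Psi_{2\varepsilon})\nabla u_0$ of $\tilde f$ is \emph{not} annihilated.

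This pairing cannot go through your weighted Cauchy--Schwarz. The only weighted norm of $v_0$ available in \eqref{pri:4.1} lives on $\Sigma^T_{2\varepsilon}$, and over the full layer $\|\delta^{-1/2}\nabla v_0\|_{L^2}$ generally diverges logarithmically at the lateral boundary. The paper treats this term (its $I_{12}$) by the unweighted product $\|\nabla u_0\|_{L^2(\boxbox_{8\varepsilon})}\big(\|\nabla\breve w_\varepsilon\|_{L^2(\Omega_T)}+\|\nabla v_0\|_{L^2(\boxbox_{8\varepsilon})}\big)$. That is exactly where the layer norm of $v_0$ in the first line of \eqref{pri:4.1} comes from. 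In the interior, the paper keeps $\nabla\tilde v_0$ whole inside $\mathcal{R}_{\varepsilon,2}=\nabla\tilde v_0+(\nabla\phi^*_j)^\varepsilon_T\tilde\varphi_j$ and bounds it directly in the $\delta^{-1/2}$-norm, as in \eqref{pri:4.6}. With this correction the argument closes, up to the bookkeeping remark below.

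The term you flag as the main obstacle, $(a^\varepsilon-\bar a)(e_j+\nabla\phi_j)\varphi_j$, does not occur in $\tilde f$. Lemma~\ref{lemma:3.4} has already used \eqref{eq:5.1b}, \eqref{eq:5.1c} and the skew-symmetry of $\sigma$ to convert the $O(1)$ flux $q_{ij}\varphi_j$ into the $\varepsilon$- and $\varepsilon^2$-corrector terms of \eqref{eq:3.3}. So every term of $\tilde f$ except $(a^\varepsilon-\bar a)(\nabla u_0-\varphi)$ carries an explicit power of $\varepsilon$. This matters, because your fallback would fail for a genuine $O(1)$ term: weighted Cauchy--Schwarz would only give $\|\delta^{1/2}\nabla u_0\|_{L^2}\,\|\delta^{-1/2}\nabla\tilde v_0\|_{L^2}$, with no rate.

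Finally, route the $\varepsilon$- and $\varepsilon^2$-remainders of $\nabla v_\varepsilon$ through the unweighted pairing, as the paper does with $\mathcal{R}_{\varepsilon,1}$ and \eqref{pri:4.5}. If you put them in the weighted pairing, the remainder $\varepsilon^2(\sigma^*_{l(d+1)j})^\varepsilon_T\nabla\partial_l\tilde\varphi_j$ contributes its own factor $\mu_d(R_0/\varepsilon)$. The second line of your bound would then carry $\mu_d^2$ instead of $\mu_d$, which is weaker than \eqref{pri:4.1} when $d=2$.
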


\begin{proof}
The basic idea can be found in \cite{Xu-16,Xu-Zhou-17}, originally inspired by \cite{Suslina-13,Kenig-Lin-Shen-12}.
First, it is not hard to see that the equality $\eqref{eq:4.1}$ follows from integration by parts
\begin{equation*}
\begin{aligned}
\int_{\Omega_T}w_\varepsilon \Phi dxdt
&= \int_0^T \big<w_\varepsilon,(-\partial_t+\mathcal{L}_\varepsilon^*)v_\varepsilon\big>dt \\
&= \int_0^T \big<(\partial_t+\mathcal{L}_\varepsilon)w_\varepsilon,v_\varepsilon\big>dt
+ \int_\Omega w_\varepsilon(x,T)v_\varepsilon(x,T)dx
- \int_\Omega w_\varepsilon(x,0)v_\varepsilon(x,0)dx \\
&= - \int_{\Omega_T}\tilde{f}\cdot\nabla v_\varepsilon dxdt,
\end{aligned}
\end{equation*}
where $w_\varepsilon$ and $v_\varepsilon$ are weak solutions of $\eqref{eq:3.2}$ and $(\text{DP}_\varepsilon^*)$, respectively, and the initial-boundary conditions
$w_\varepsilon = v_\varepsilon = 0$ on $\partial_{\shortparallel}\Omega_T$
are employed in the second step, while
$w_\varepsilon(x,0) = v_\varepsilon(x,T) = 0$ is used in the last step.

\textbf{Step 1.} Outline of the proof of $\eqref{pri:4.1}$ and the main idea. Let $\varpi$ and $\tilde{\varpi}$
be given as in the proof of Lemma $\ref{lemma:3.1}$.
Thus, by virtue of $\eqref{eq:4.1}$ and $\eqref{eq:3.3}$, it follows that
\begin{equation*}
\begin{aligned}
\Big|\int_{\Omega_T}w_\varepsilon \Phi \Big|
&\leq \int_{\Omega_T}
\big|\nabla u_0 - S_\varepsilon K_\varepsilon(\Psi_{2\varepsilon}\nabla u_0)\big|
\big|\nabla v_\varepsilon\big| \\
& +\varepsilon\int_{\Omega_T}
\big|\varpi^\varepsilon
\nabla S_\varepsilon K_\varepsilon(\Psi_{2\varepsilon}\nabla u_0)\big|
\big|\nabla v_\varepsilon\big| \\
&\quad +\varepsilon^2
\int_{\Omega_T}
\big|\tilde{\varpi}^\varepsilon
(\nabla^2 + \partial_t)S_\varepsilon K_\varepsilon(\Psi_{2\varepsilon}\nabla u_0)\big|
\big|\nabla v_\varepsilon\big|
=: I_1 + I_2 + I_3,
\end{aligned}
\end{equation*}
where it is recalled that
$\varpi^\varepsilon:=\varpi(\cdot/\varepsilon,\cdot/\varepsilon^2)$
and $\tilde{\varpi}^\varepsilon
:=\tilde{\varpi}(\cdot/\varepsilon,\cdot/\varepsilon^2)$.
Therefore, the desired estimate $\eqref{pri:4.1}$ follows from
estimating the right-hand side of the following inequality:
\begin{equation}\label{f:4.20}
\Big\langle\Big|\int_{\Omega_T}w_\varepsilon \Phi \Big|^p\Big\rangle^{\frac{1}{p}}
\leq \langle I_{1}^p\rangle^{\frac{1}{p}}
+ \langle I_{2}^p\rangle^{\frac{1}{p}}
+ \langle I_{3}^p\rangle^{\frac{1}{p}}.
\end{equation}

Before proceeding further, we plan to show the main ideas on accelerating the convergence rates. The key step is to replace $\nabla v_\varepsilon$ by
\begin{equation}\label{f:4.1**}
\begin{aligned}
&\underbrace{\big[\nabla \breve{w}_\varepsilon
+\varepsilon(\phi^{*}_j)_{T}^{\varepsilon}\nabla\tilde{\varphi}_j
 +\varepsilon (\nabla\sigma_{l(d+1)j}^{*})_{T}^{\varepsilon}
\partial_l\tilde{\varphi}_j
+ \varepsilon^2 (\sigma_{l(d+1)j}^{*})_{T}^{\varepsilon}
\nabla\partial_l\tilde{\varphi}_j\big](\cdot,T-\cdot)
}_{\mathcal{R}_{\varepsilon,1}}\\
&+ \underbrace{\big[\nabla \tilde{v}_0
+(\nabla\phi^{*}_j)_{T}^{\varepsilon}
\tilde{\varphi}_j\big](\cdot,T-\cdot)}_{\mathcal{R}_{\varepsilon,2}},
\end{aligned}
\end{equation}
where $\breve{w}$ is given by $\eqref{eq:4.2}$. It follows from Lemma  $\ref{lemma:3.1}$ (in the case where $\Omega$ is bounded) that
\begin{equation}\label{f:4.12}
\begin{aligned}
\bigg\langle\Big(\int_{\Omega_T}|\nabla \breve{w}_\varepsilon|^2\Big)^{\frac{p}{2}}\bigg\rangle^{\frac{1}{p}}
&\lesssim \mu_d\big(\frac{R_0}{\varepsilon}\big)\bigg\{
\|\nabla v_0\|_{L^2(\boxbox_{16\varepsilon})} \\
&\qquad\qquad + \varepsilon\Big(
\|\nabla^2 v_0 \|_{L^2((\Omega\setminus O_{8\varepsilon})_T)}
+\|\partial_t v_0 \|_{L^2((\Omega\setminus O_{8\varepsilon})_T)}\Big)\bigg\},
\end{aligned}
\end{equation}
where we recall $R_0:=\text{diam}(\Omega)$.
By repeatedly using the same arguments as those given for $I_2$ and $I_3$ in the proof of Lemma $\ref{lemma:3.1}$, the term
$\mathcal{R}_{\varepsilon,1}-\nabla \breve{w}_\varepsilon$ can be bounded by the same right-hand side above. Therefore,
the triangle inequality yields
\begin{equation}\label{pri:4.5}
\begin{aligned}
\bigg\langle\Big(\int_{\Omega_T}|\mathcal{R}_{\varepsilon,1}|^2\Big)^{\frac{p}{2}}\bigg\rangle^{\frac{1}{p}}
&\lesssim \mu_d\big(\frac{R_0}{\varepsilon}\big)\bigg\{
\|\nabla v_0\|_{L^2(\boxbox_{16\varepsilon})} \\
&\qquad\qquad + \varepsilon\Big(
\|\nabla^2 v_0 \|_{L^2((\Omega\setminus O_{8\varepsilon})_T)}
+\|\partial_t v_0 \|_{L^2((\Omega\setminus O_{8\varepsilon})_T)}\Big)\bigg\},
\end{aligned}
\end{equation}
which consequently produces $O(\mu_d(R_0/\varepsilon)\varepsilon^{1/2})$.
For $\mathcal{R}_{\varepsilon,2}$, a weighted estimate is appealed to
accelerating the convergence rate, and it is thereby obtained that
\begin{equation}\label{pri:4.6}
\begin{aligned}
\bigg\langle\Big(\int_{\Sigma_{2\varepsilon}^T}dz
&|\mathcal{R}_{\varepsilon,2}(z)|^2 \delta^{-1}(z)
\Big)^{\frac{p}{2}}\bigg\rangle^{\frac{1}{p}}\\
&\lesssim \|\delta^{-\frac{1}{2}}\nabla v_0\|_{L^2(\Sigma_{2\varepsilon}^T)}
+ \Big\langle\Big(\int_{\Omega_T}dz|(\nabla\phi^{*}_j)^{\varepsilon}(z)
(\tilde{\varphi}_j)_T(z)|^2 \delta^{-1}(z)
\Big)^{\frac{p}{2}}\Big\rangle^{\frac{1}{p}}\\
&\lesssim^{\eqref{pri:3.42c},\eqref{pri:5.2}}
\|\delta^{-\frac{1}{2}}\nabla \tilde{v}_0\|_{L^2(\Sigma_{2\varepsilon}^T)},
\end{aligned}
\end{equation}
which finally yields $O(\ln(1/\varepsilon))$; however,
its counterpart estimate will benefit from a weight factor $\delta^{1/2}$ later, where $\delta$ is defined in $\eqref{eq:2.4}$,
and $(\tilde{\varphi}_j)_T(z):=\tilde{\varphi}_j(x,T-t)$.


\textbf{Step 2.} Arguments for $I_1$. This is
divided into two parts:
\begin{equation*}
\begin{aligned}
\underbrace{\int_{\Omega_T}
\big|\Psi_{2\varepsilon}
\nabla u_0 - S_\varepsilon K_\varepsilon(\Psi_{2\varepsilon}\nabla u_0)\big|
\big|\nabla v_\varepsilon\big|}_{I_{11}}
\text{\quad and\quad}\underbrace{\int_{\Omega_T}
(1-\Psi_{2\varepsilon})\big|\nabla u_0\big|
\big|\nabla v_\varepsilon\big|}_{I_{12}}.
\end{aligned}
\end{equation*}
We first handle $I_{12}$ as follows:
\begin{equation*}\label{f:4.3}
\begin{aligned}
I_{12}
\lesssim \|\nabla u_0\|_{L^2(\boxbox_{8\varepsilon})}
\|\nabla v_\varepsilon\|_{L^2(\boxbox_{8\varepsilon})}
\lesssim \|\nabla u_0\|_{L^2(\boxbox_{8\varepsilon})}
\Big\{
\|\nabla \breve{w}_\varepsilon\|_{L^2(\Omega_T)}
+\|\nabla v_0\|_{L^2(\boxbox_{8\varepsilon})}
\Big\},
\end{aligned}
\end{equation*}
where we replace $\nabla v_\varepsilon$ with $\eqref{f:4.1**}$ in the last step, and use
the fact that $\text{supp}(\tilde{\varphi})\subset\Sigma_{4\varepsilon}^T$.
This further leads to
\begin{equation}\label{f:4.13}
\begin{aligned}
&\langle I_{12}^p\rangle^{\frac{1}{p}}
\lesssim \|\nabla u_0\|_{L^2(\boxbox_{8\varepsilon})}
\Big\{
\big\langle\|\nabla \breve{w}_\varepsilon\|_{L^2(\Omega_T)}^p\big\rangle^{\frac{1}{p}}
+\|\nabla v_0\|_{L^2(\boxbox_{8\varepsilon})}
\Big\}\\
&\lesssim^{\eqref{f:4.12}}
\mu_d\big(\frac{R_0}{\varepsilon}\big)
\|\nabla u_0\|_{L^2(\boxbox_{8\varepsilon})}\bigg\{
\|\nabla v_0\|_{L^2(\boxbox_{16\varepsilon})}
 + \varepsilon\Big(
\|\nabla^2 v_0 \|_{L^2((\Omega\setminus O_{8\varepsilon})_T)}
+\|\partial_t v_0 \|_{L^2((\Omega\setminus O_{8\varepsilon})_T)}\Big)\bigg\}.
\end{aligned}
\end{equation}
Attention is then turned to $I_{11}$. It can be further
decomposed into two parts:
\begin{equation*}
\begin{aligned}
I_{11} &\leq \int_{\Omega_T}
\big|\Psi_{2\varepsilon}
\nabla u_0 - S_\varepsilon K_\varepsilon(\Psi_{2\varepsilon}\nabla u_0)\big|
\big|\mathcal{R}_{\varepsilon,1}+\mathcal{R}_{\varepsilon,2}\big|\\
&\qquad\leq \Big(\int_{\Omega_T}\big|\Psi_{2\varepsilon}
\nabla u_0 - S_\varepsilon K_\varepsilon(\Psi_{2\varepsilon}\nabla u_0)\big|^2\Big)^{\frac{1}{2}}\|\mathcal{R}_{\varepsilon,1}\|_{L^2(\Omega_T)}\\
&\qquad\qquad\qquad+ \Big(\int_{\Omega_T}\big|\Psi_{2\varepsilon}
\nabla u_0 - S_\varepsilon K_\varepsilon(\Psi_{2\varepsilon}\nabla u_0)\big|^2\delta\Big)^{\frac{1}{2}}
\|\delta^{\frac{1}{2}}\mathcal{R}_{\varepsilon,1}\|_{L^2(\Sigma_{2\varepsilon}^T)}
:= I_{111}+I_{112}.
\end{aligned}
\end{equation*}
Taking the $p$-moment estimates into account, one can derive that
\begin{equation}\label{f:4.14}
\begin{aligned}
\langle I_{111}^p\rangle^{\frac{1}{p}}
&\leq \Big(\int_{\Omega_T}\big|\Psi_{2\varepsilon}
\nabla u_0 - S_\varepsilon K_\varepsilon(\Psi_{2\varepsilon}\nabla u_0)\big|^2\Big)^{\frac{1}{2}}
\Big\langle\Big(\int_{\Omega_T}
\big|\mathcal{R}_{\varepsilon,1}\big|^{2}\Big)^{\frac{p}{2}}
\Big\rangle^{\frac{1}{p}} \\
&\lesssim^{\eqref{f:3.2},\eqref{pri:4.5}}
\mu_d\big(\frac{R_0}{\varepsilon}\big)\Big\{\|\nabla u_0\|_{L^2(\boxbox_{8\varepsilon})}
+ \varepsilon\big(\|\nabla^2 u_0\|_{L^2((\Omega\setminus O_{4\varepsilon})_T)}
+\|\partial_t u_0\|_{L^2((\Omega\setminus O_{4\varepsilon})_T)}\big)\Big\}\\
&\qquad\quad \times
\Big\{
\|\nabla v_0\|_{L^2(\boxbox_{16\varepsilon})}
 + \varepsilon\big(
\|\nabla^2 v_0 \|_{L^2((\Omega\setminus O_{8\varepsilon})_T)}
+\|\partial_t v_0 \|_{L^2((\Omega\setminus O_{8\varepsilon})_T)}\big)\Big\}.
\end{aligned}
\end{equation}
Furthermore, a similar computation as that given for $\eqref{f:3.2}$ (where the original proof is merely modified to accommodate the weighted estimates) yields
\begin{equation}\label{f:4.15}
\begin{aligned}
\langle I_{112}^p\rangle^{\frac{1}{p}}
&\leq \Big(\int_{\Omega_T}\big|\Psi_{2\varepsilon}
\nabla u_0 - S_\varepsilon K_\varepsilon(\Psi_{2\varepsilon}\nabla u_0)\big|^2\delta\Big)^{\frac{1}{2}}
\Big\langle\Big(\int_{\Sigma_{2\varepsilon}^T}
\big|\mathcal{R}_{\varepsilon,2}\big|^{2}\delta^{-1}\Big)^{\frac{p}{2}}
\Big\rangle^{\frac{1}{p}} \\
&\lesssim^{\eqref{pri:2.10},\eqref{pri:2.12}}_{\eqref{pri:4.6}}
\Big\{\|\delta^{\frac{1}{2}}\nabla u_0\|_{L^2(\boxbox_{8\varepsilon})}
+ \varepsilon\big(\|\delta^{\frac{1}{2}}\nabla^2 u_0\|_{L^2((\Omega\setminus O_{4\varepsilon})_T)}
+\|\delta^{\frac{1}{2}}\partial_t u_0\|_{L^2((\Omega\setminus O_{4\varepsilon})_T)}\big)\Big\}\\
&\qquad\quad \times
\|\delta^{-\frac{1}{2}}\nabla \tilde{v}_0\|_{L^2(\Sigma_{2\varepsilon}^T)}.
\end{aligned}
\end{equation}
Combining the estimates $\eqref{f:4.13}$, $\eqref{f:4.14}$,
and $\eqref{f:4.15}$, one can acquire that
\begin{equation}\label{f:4.2}
\begin{aligned}
\langle I_{1}^p\rangle^{\frac{1}{p}}
&\leq \langle I_{11}^p\rangle^{\frac{1}{p}}
+\langle I_{12}^p\rangle^{\frac{1}{p}}
\leq \langle I_{11}^p\rangle^{\frac{1}{p}}
+\langle I_{111}^p\rangle^{\frac{1}{p}}
+\langle I_{112}^p\rangle^{\frac{1}{p}}\\
&\lesssim
\mu_d\big(\frac{R_0}{\varepsilon}\big)\Big\{\|\nabla u_0\|_{L^2(\boxbox_{8\varepsilon})}
+ \varepsilon\big(\|\nabla^2 u_0\|_{L^2((\Omega\setminus O_{4\varepsilon})_T)}
+\|\partial_t u_0\|_{L^2((\Omega\setminus O_{4\varepsilon})_T)}\big)\Big\}\\
&\qquad\qquad\qquad\qquad \times
\Big\{
\|\nabla v_0\|_{L^2(\boxbox_{16\varepsilon})}
 + \varepsilon\big(
\|\nabla^2 v_0 \|_{L^2((\Omega\setminus O_{8\varepsilon})_T)}
+\|\partial_t v_0 \|_{L^2((\Omega\setminus O_{8\varepsilon})_T)}\big)\Big\}\\
& +
\Big\{\|\delta^{\frac{1}{2}}\nabla u_0\|_{L^2(\boxbox_{8\varepsilon}^T)}
+ \varepsilon\big(\|\delta^{\frac{1}{2}}\nabla^2 u_0\|_{L^2((\Omega\setminus O_{4\varepsilon})_T)}
+\|\delta^{\frac{1}{2}}\partial_t u_0\|_{L^2((\Omega\setminus O_{4\varepsilon})_T)}\big)\Big\}\\
&\qquad\qquad\qquad\qquad \times
\|\delta^{-\frac{1}{2}}\nabla \tilde{v}_0\|_{L^2(\Sigma_{2\varepsilon}^T)}.
\end{aligned}
\end{equation}

\textbf{Step 3.}
Arguments for $I_2$.
Proceeding as in the proof for $I_1$, it is first found that
\begin{equation*}
\begin{aligned}
I_2&\leq \varepsilon\int_{\Omega_T}
\big|\varpi^\varepsilon
\nabla S_\varepsilon K_\varepsilon(\Psi_{2\varepsilon}\nabla u_0)\big|
\big|\mathcal{R}_{\varepsilon,1}+\mathcal{R}_{\varepsilon,2}\big| \\
&\qquad\qquad\leq \varepsilon\bigg\{
\big\|\varpi^\varepsilon\nabla S_\varepsilon K_\varepsilon(\Psi_{2\varepsilon}\nabla u_0)\big\|_{L^2(\Omega_T)}\big\|\mathcal{R}_{\varepsilon,1}
\big\|_{L^2(\Omega_T)}\\
&\qquad\qquad\qquad\qquad+ \big\|\delta^{\frac{1}{2}}\varpi^\varepsilon\nabla S_\varepsilon K_\varepsilon(\Psi_{2\varepsilon
}\nabla u_0)
\big\|_{L^2(\Omega_T)}
\big\|\delta^{-\frac{1}{2}}\mathcal{R}_{\varepsilon,2}
\big\|_{L^2(\Sigma_{2\varepsilon}^T)}
\bigg\}=:I_{21} + I_{22}.
\end{aligned}
\end{equation*}
Let $1/p = 1/p_1 + 1/p_2$ with $p_1,p_2\geq 1$.
By taking $\langle|\cdot|^p\rangle^{\frac{1}{p}}$ on both sides above,
it can be derived that
\begin{equation}\label{f:4.16}
\begin{aligned}
\langle |I_{21}|^p\rangle^{\frac{1}{p}}
&\leq \varepsilon
\big\langle\|\varpi^\varepsilon\nabla S_\varepsilon K_\varepsilon(\Psi_{2\varepsilon}\nabla u_0)\|_{L^2(\Omega_T)}^{p_1}\big\rangle^{\frac{1}{p_1}}
\big\langle\|\mathcal{R}_{\varepsilon,1}
\|_{L^2(\Omega_T)}^{p_2}\rangle^{\frac{1}{p_2}}\\
&\lesssim^{\eqref{f:3.3},\eqref{pri:4.5}}
\mu_d\big(\frac{R_0}{\varepsilon}\big)
\Big\{
\|\nabla u_0\|_{L^2(\boxbox_{8\varepsilon})}
+\varepsilon\|\nabla^2 u_0\|_{L^2((\Omega\setminus O_{4\varepsilon})_T)}
\Big\}\\
&\times
\bigg\{
\|\nabla v_0\|_{L^2(\boxbox_{16\varepsilon})}
 + \varepsilon\Big(
\|\nabla^2 v_0 \|_{L^2((\Omega\setminus O_{8\varepsilon})_T)}
+\|\partial_t v_0 \|_{L^2((\Omega\setminus O_{8\varepsilon})_T)}\Big)\bigg\}.
\end{aligned}
\end{equation}
By the same token, it is found that
\begin{equation}\label{f:4.17}
\begin{aligned}
\langle |I_{22}|^p\rangle^{\frac{1}{p}}
&\leq \varepsilon
\big\langle\|\delta^{\frac{1}{2}}\varpi^\varepsilon\nabla S_\varepsilon K_\varepsilon(\Psi_{2\varepsilon}\nabla u_0)\|_{L^2(\Omega_T)}^{p_1}\big\rangle^{\frac{1}{p_1}}
\big\langle\|\delta^{-\frac{1}{2}}\mathcal{R}_{\varepsilon,2}
\|_{L^2(\Sigma_{2\varepsilon}^{T})}^{p_2}\rangle^{\frac{1}{p_2}}\\
&\lesssim^{\eqref{pri:3.42c},\eqref{pri:4.6}}
\Big\{
\|\delta^{\frac{1}{2}}\nabla u_0\|_{L^2(\boxbox_{8\varepsilon})}
+\varepsilon\|\delta^{\frac{1}{2}}\nabla^2 u_0\|_{L^2((\Omega\setminus O_{4\varepsilon})_T)}
\Big\}\|\delta^{-\frac{1}{2}}\nabla \tilde{v}_0\|_{L^2(\Sigma_{2\varepsilon}^T)}.
\end{aligned}
\end{equation}
Thus, it follows that
\begin{equation}\label{f:4.21}
\begin{aligned}
\langle |I_{2}|^p\rangle^{\frac{1}{p}}
&\leq \langle |I_{21}|^p\rangle^{\frac{1}{p}} +
\langle |I_{22}|^p\rangle^{\frac{1}{p}} \\
&\lesssim^{\eqref{f:4.16},\eqref{f:4.17}}
\mu_d\big(\frac{R_0}{\varepsilon}\big)
\Big\{
\|\nabla u_0\|_{L^2(\boxbox_{8\varepsilon})}
+\varepsilon\|\nabla^2 u_0\|_{L^2((\Omega\setminus O_{4\varepsilon})_T)}
\Big\}\\
&\qquad\times
\Big\{
\|\nabla v_0\|_{L^2(\boxbox_{16\varepsilon})}
 + \varepsilon
\|\nabla^2 v_0 \|_{L^2((\Omega\setminus O_{8\varepsilon})_T)}
+\varepsilon\|\partial_t v_0 \|_{L^2((\Omega\setminus O_{8\varepsilon})_T)}\Big\}\\
&\qquad\qquad\qquad+\Big\{
\|\delta^{\frac{1}{2}}\nabla u_0\|_{L^2(\boxbox_{8\varepsilon})}
+\varepsilon\|\delta^{\frac{1}{2}}\nabla^2 u_0\|_{L^2((\Omega\setminus O_{4\varepsilon})_T)}
\Big\}\|\delta^{-\frac{1}{2}}\nabla \tilde{v}_0\|_{L^2(\Sigma_{2\varepsilon}^T)}.
\end{aligned}
\end{equation}

\textbf{Step 4.} Arguments for $I_3$.
By an argument analogous to that given for $I_2$, it suffices to estimate
\begin{equation*}
\begin{aligned}
I_3&\leq \varepsilon^2\int_{\Omega_T}
\big|\tilde{\varpi}^\varepsilon
(\nabla^2 + \partial_t)S_\varepsilon K_\varepsilon(\Psi_{2\varepsilon}\nabla u_0)\big|
\big|\mathcal{R}_{\varepsilon,1}+\mathcal{R}_{\varepsilon,2}\big| \\
&\qquad\leq \varepsilon^2\bigg\{
\big\|\tilde{\varpi}^\varepsilon(\nabla^2 + \partial_t) S_\varepsilon K_\varepsilon(\Psi_{2\varepsilon}\nabla u_0)\big\|_{L^2(\Omega_T)}\big\|\mathcal{R}_{\varepsilon,1}
\big\|_{L^2(\Omega_T)}\\
&\qquad\qquad+ \big\|\delta^{\frac{1}{2}}\tilde{\varpi}^\varepsilon
(\nabla^2 + \partial_t)S_\varepsilon K_\varepsilon(\Psi_{2\varepsilon
}\nabla u_0)
\big\|_{L^2(\Omega_T)}
\big\|\delta^{-\frac{1}{2}}\mathcal{R}_{\varepsilon,2}
\big\|_{L^2(\Sigma_{2\varepsilon}^T)}
\bigg\}=:I_{31} + I_{32}.
\end{aligned}
\end{equation*}
Based on the results developed for $I_3$ in the proof of
Lemma $\ref{lemma:3.1}$, it follows from that
\begin{equation}\label{f:4.18}
\begin{aligned}
&\langle |I_{31}|^p\rangle^{\frac{1}{p}}
\leq \varepsilon^2
\big\langle\|\tilde{\varpi}^\varepsilon
(\nabla^2 + \partial_t) S_\varepsilon K_\varepsilon(\Psi_{2\varepsilon}\nabla u_0)\|_{L^2(\Omega_T)}^{p_1}\big\rangle^{\frac{1}{p_1}}
\big\langle\|\mathcal{R}_{\varepsilon,1}
\|_{L^2(\Omega_T)}^{p_2}\rangle^{\frac{1}{p_2}}\\
&\lesssim^{\eqref{f:3.4},\eqref{pri:4.5}}
\mu_d^2\big(\frac{R_0}{\varepsilon}\big)
\Big\{
\|\nabla u_0\|_{L^2(\boxbox_{8\varepsilon})}
+\varepsilon\|\nabla^2 u_0\|_{L^2((\Omega\setminus O_{4\varepsilon})_T)}
+\varepsilon\|\partial_t u_0\|_{L^2((\Omega\setminus O_{4\varepsilon})_T)}
\Big\}\\
&\times
\bigg\{
\|\nabla v_0\|_{L^2(\boxbox_{16\varepsilon})}
 + \varepsilon\Big(
\|\nabla^2 v_0 \|_{L^2((\Omega\setminus O_{8\varepsilon})_T)}
+\|\partial_t v_0 \|_{L^2((\Omega\setminus O_{8\varepsilon})_T)}\Big)\bigg\}.
\end{aligned}
\end{equation}
For the term $\langle I_{32}^p\rangle^{\frac{1}{p}}$, appealing to
Lemmas $\ref{lemma:5}$, $\ref{lemma:2.5}$, and the relationship $\eqref{f:4.19}$,
one can derive that
\begin{equation*}
\begin{aligned}
&\varepsilon^2\big\langle\|\delta^{\frac{1}{2}}\tilde{\varpi}^\varepsilon
(\nabla^2 + \partial_t)S_\varepsilon K_\varepsilon(\Psi_{2\varepsilon
}\nabla u_0)
\|_{L^2(\Omega_T)}^{p_1}\big\rangle^{\frac{1}{p_1}}\\
&\lesssim^{\eqref{pri:3.42c},\eqref{pri:3.42d}}
\varepsilon\mu_d(\frac{R_0}{\varepsilon})\bigg(
\int_{\Omega_T}
\Big|K_\varepsilon\big(|\nabla(\Psi_{2\varepsilon}\nabla u_0)|
+|\Psi_{\varepsilon}\partial_t u_0|
+|\varepsilon(\partial_t\Psi_{2\varepsilon}\nabla u_0)|
+|\varepsilon\nabla\Psi_{2\varepsilon}\partial_t u_0|\big)\Big|^2
\delta
\bigg)^{\frac{1}{2}}\\
&\lesssim^{\eqref{pri:2.8}}
\mu_d(\frac{R_0}{\varepsilon})
\bigg(
\|\delta^{\frac{1}{2}}\nabla u_0\|_{L^2(\boxbox_{8\varepsilon})}
+\varepsilon\|\delta^{\frac{1}{2}}\nabla^2 u_0\|_{L^2((\Omega\setminus O_{4\varepsilon})_T)}
+\varepsilon\|\delta^{\frac{1}{2}}\partial_t u_0\|_{L^2((\Omega\setminus O_{4\varepsilon})_T)}
\bigg).
\end{aligned}
\end{equation*}
This, together with $\eqref{pri:4.6}$, consequently leads to
\begin{equation}\label{f:4.9}
\begin{aligned}
\langle |I_{32}|^p\rangle^{\frac{1}{p}}
&\leq \varepsilon^2
\big\langle\|\delta^{\frac{1}{2}}\tilde{\varpi}^\varepsilon
(\nabla^2 + \partial_t)S_\varepsilon K_\varepsilon(\Psi_{2\varepsilon
}\nabla u_0)\|_{L^2(\Omega_T)}^{p_1}\big\rangle^{\frac{1}{p_1}}
\big\langle\|\delta^{-\frac{1}{2}}\mathcal{R}_{\varepsilon,1}
\|_{L^2(\Sigma_{2\varepsilon}^T)}^{p_2}\rangle^{\frac{1}{p_2}}\\
&\lesssim
\mu_d(\frac{R_0}{\varepsilon})
\Big\{
\|\delta^{\frac{1}{2}}\nabla u_0\|_{L^2(\boxbox_{8\varepsilon})}
+\varepsilon\|\delta^{\frac{1}{2}}\nabla^2 u_0\|_{L^2((\Omega\setminus O_{4\varepsilon})_T)} \\
&\qquad\qquad\qquad+\varepsilon\|\delta^{\frac{1}{2}}\partial_t u_0\|_{L^2((\Omega\setminus O_{4\varepsilon})_T)}
\Big\}\|\delta^{-\frac{1}{2}}\nabla v_0\|_{L^2(\Sigma_{2\varepsilon}^T)}.
\end{aligned}
\end{equation}
Hence, one can acquire that
\begin{equation}\label{f:4.22}
\begin{aligned}
&\langle |I_{3}|^p\rangle^{\frac{1}{p}}
\leq \langle |I_{31}|^p\rangle^{\frac{1}{p}} +
\langle |I_{32}|^p\rangle^{\frac{1}{p}} \\
&\lesssim^{\eqref{f:4.18},\eqref{f:4.9}}
\mu_d^2\big(\frac{R_0}{\varepsilon}\big)
\Big\{
\|\nabla u_0\|_{L^2(\boxbox_{8\varepsilon})}
+\varepsilon\|\nabla^2 u_0\|_{L^2((\Omega\setminus O_{4\varepsilon})_T)}
+\varepsilon\|\partial_t u_0\|_{L^2((\Omega\setminus O_{4\varepsilon})_T)}
\Big\}\\
&\times
\bigg\{
\|\nabla v_0\|_{L^2(\boxbox_{16\varepsilon})}
 + \varepsilon\|\nabla^2 v_0 \|_{L^2((\Omega\setminus O_{8\varepsilon})_T)}
+\varepsilon\|\partial_t v_0 \|_{L^2((\Omega\setminus O_{8\varepsilon})_T)}\bigg\}\\
&+\mu_d(\frac{R_0}{\varepsilon})
\Big\{
\|\delta^{\frac{1}{2}}\nabla u_0\|_{L^2(\boxbox_{8\varepsilon})}
+\varepsilon\|\delta^{\frac{1}{2}}\nabla^2 u_0\|_{L^2((\Omega\setminus O_{4\varepsilon})_T)}
+\varepsilon\|\delta^{\frac{1}{2}}\partial_t u_0\|_{L^2((\Omega\setminus O_{4\varepsilon})_T)}
\Big\}\|\delta^{-\frac{1}{2}}\nabla v_0\|_{L^2(\Sigma_{2\varepsilon}^T)}.
\end{aligned}
\end{equation}

As a result,
combining the estimates $\eqref{f:4.20}$, $\eqref{f:4.2}$, $\eqref{f:4.21}$, and $\eqref{f:4.22}$
gives the desired estimate $\eqref{pri:4.1}$. We have completed
the whole proof.
\end{proof}

\medskip
\noindent
\textbf{Proof of Proposition $\ref{P:4.2}$.}
Based on Lemmas $\ref{lemma:4.3}$ and \ref{lemma:3.2-ap}, the desired estimate $\eqref{pri:3.0}$ follows essentially from the triangle inequality.
By the results of Lemma \ref{lemma:3.2-ap}, we can rewrite the result of Lemma $\ref{lemma:4.3}$ as follows:
\begin{equation}\label{f:4.5}
 \big\langle\|w_\varepsilon\|_{L^2(\Omega_T)}^{p}\big\rangle^{\frac{1}{p}}
 \lesssim^{\eqref{pri:3.2-ap},\eqref{pri:3.6-ap}} \varepsilon\mu_d^2(R_0/\varepsilon) +
 \varepsilon\mu_d(R_0/\varepsilon)\ln(R_0/\varepsilon).
\end{equation}
Recalling that $\varphi_j=S_\varepsilon(\Psi_{2\varepsilon}\nabla_ju_0)$,
it follows from Lemmas $\ref{lemma:5}$ and $\ref{lemma:2.5}$,
Theorem $\ref{thm:1.0}$, and Proposition $\ref{P:5.3}$ that
\begin{equation*}
\big\langle\|\varepsilon\bar{\phi}_j^\varepsilon\varphi_j
+\varepsilon^2 \tilde{\sigma}_{l(d+1)j}^\varepsilon\partial_l\varphi_j
\|^{p}_{L^2(\Omega_T)}\rangle^{\frac{1}{p}}
\lesssim^{\eqref{pri:3.42c},\eqref{pri:3.42d}
}_{\eqref{pri:5.4},\eqref{pri:5.7c},\eqref{pri:2.8}} \big(\varepsilon+\varepsilon\mu_d(R_0/\varepsilon)\big)\|\nabla u_0\|_{L^2(\Omega_T)}
\lesssim \varepsilon+\varepsilon\mu_d(R_0/\varepsilon).
\end{equation*}
This, together with $\eqref{f:4.5}$, gives the stated estimate
$\eqref{pri:3.0}$, and thereby concludes the proof.
\qed

\subsection{A weighted argument}

\begin{proposition}\label{P:4.1}
Let $2\leq q<\infty$, $\varepsilon\in(0,1]$, and $T>0$.
Suppose that the ensemble $\langle\cdot\rangle$ is stationary
with respect to $\eqref{c:1}$ and satisfies
the spectral gap condition $\eqref{c:3}$.
Given $F\in L^q(\Omega_T)$, let
$u_\varepsilon$ and $u_0$ be the weak solutions of the zero initial-Dirichlet problems
in $\eqref{pde:1.1}$ and $\eqref{pde:1.2}$, respectively.
Then, for any $p\in[1,\infty)$, the following results hold:
\begin{itemize}
  \item If $\Omega\subset\mathbb{R}^d$
   with $d\geq 2$ is a bounded $C^1$ domain, then
\begin{equation}\label{pri:4.9}
\Bigg\langle\bigg(\int_{\Omega_T}
\Big(\dashint_{U_\varepsilon(z)}|u_\varepsilon-u_0|^2
\Big)^{\frac{q}{2}}\bigg)^{\frac{p}{q}}\Bigg\rangle^{\frac{1}{p}}
\lesssim
\varepsilon^{1-}\mu_d(R_0/\varepsilon)\|F\|_{L^q(\Omega_T)},
\end{equation}
where $1-:=1-\sigma$ and $0<\sigma\ll 1$;


  \item For the whole time-spacial region (i.e., $\Omega_T=\mathbb{R}^{d+1}$ with $d\geq 2$), it can be acquired  that
\begin{equation}\label{pri:4.10}
\Bigg\langle\bigg(\int_{\mathbb{R}^{d+1}}
\Big(\dashint_{U_\varepsilon(z)}|u_\varepsilon-u_0|^2
\Big)^{\frac{q(d+2)}{2d}}\bigg)^{\frac{pd}{q(d+2)}}\Bigg\rangle^{\frac{1}{p}}
\lesssim
\varepsilon\mu_d(1/\varepsilon)
\Big\{\|F\|_{L^2(\mathbb{R}^{d+1})}
+\|\bar{\mu}_{d}^{\alpha}F\|_{L^q(\mathbb{R}^{d+1})}\Big\},
\end{equation}
\end{itemize}
in which $\alpha=1+\frac{2}{d}$. The multiplicative constant in $\eqref{pri:4.9}$ depends at most only on $\mu, \lambda_1, d, p, q, T$, and $\Omega$, while its counterpart in $\eqref{pri:4.10}$ is independent of $\Omega$ and $T$.
\end{proposition}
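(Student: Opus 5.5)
We argue through the two-scale error $w_\varepsilon$ of Lemma \ref{lemma:3.4}, with $\varphi_j=S_\varepsilon K_\varepsilon(\Psi_{\varepsilon}\partial_j u_0)$ and $\pi_{ilj}=0$. Since the first-order correction terms $\varepsilon\bar\phi^\varepsilon_j\varphi_j+\varepsilon^2\tilde\sigma^\varepsilon_{l(d+1)j}\partial_l\varphi_j$ are controlled in the $L^q$-type local-average norm by Lemma \ref{lemma:5} together with Theorem \ref{thm:1.0} and Corollary \ref{corollary:2.1}, it suffices to bound $w_\varepsilon$. Here $\varepsilon\|\nabla u_0\|_{L^q}\lesssim\varepsilon\|F\|_{L^q}$ by parabolic $W^{2,1}_q$ theory on $C^1$ cylinders, and in $d=2$ the factor $\mu_d(R_0/\varepsilon)$ comes from \eqref{pri:5.7c} via \eqref{f:4.28}.

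The main step is to estimate $w_\varepsilon$ at the level of its gradient, using the equation $(\partial_t+\mathcal{L}_\varepsilon)w_\varepsilon=\nabla\cdot f_\varepsilon$ from \eqref{eq:3.2} and the weighted annealed Calder\'on--Zygmund estimate \eqref{pri:B} of Theorem \ref{thm:C-Z}. We apply it with the weight $\omega_\sigma(z)=\mathrm{dist}(x,\partial\Omega_0)^{\underline q-1}$ and take $\underline q$ close to $q$, or rather the complementary choice, so that the boundary layer is penalized. To pass from $\nabla w_\varepsilon$ to $w_\varepsilon$ we use the weighted Hardy inequality $\int|w|^q\,\mathrm{dist}^{-q}\omega\lesssim\int|\nabla w|^q\omega$ for functions vanishing on $\partial_\shortparallel\Omega_T$, and then remove the weight on the interior.

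The source $f_\varepsilon$ in \eqref{eq:3.3} splits into three parts:
\begin{enumerate}
\item[(i)] the layer term $(a^\varepsilon-\bar a)(\nabla u_0-\varphi)$, which is supported essentially in $\boxbox_{4\varepsilon}$ plus a smoothing error of size $\varepsilon(\nabla^2u_0,\varepsilon\partial_t\nabla u_0)$;
\item[(ii)] the corrector terms $\varepsilon[\cdots]\nabla\varphi_j$;
\item[(iii)] the $\varepsilon^2$ terms.
\end{enumerate}
Parts (ii) and (iii) are handled by Lemma \ref{lemma:5}, with $\theta=\bar\mu_d$ and the moment bounds of Corollary \ref{corollary:2.1}, combined with the weighted smoothing Lemma \ref{lemma:2.5}. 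This gives $\varepsilon\mu_d(R_0/\varepsilon)\|\delta^{\cdot}(\nabla^2u_0,\partial_tu_0)\|_{L^q}$. For part (i), the layer term contributes $\|\omega^{1/q}\nabla u_0\|_{L^q(\boxbox_{4\varepsilon})}$. With the weight $\mathrm{dist}^{\underline q-1}$, this layer of width $\varepsilon$ yields $\varepsilon^{\underline q/q}\|\nabla u_0\|_{L^\infty\text{-ish}}$. Since $\nabla u_0$ is only in $L^{q'}$ for large $q'$, we use H\"older in the thin layer, and this is exactly where $\varepsilon^{1-}$ appears.

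The main obstacle is the co-layer estimate. On $C^1$ cylinders we do not have $\nabla^2u_0\in L^q$ up to the boundary uniformly with the right scaling. We therefore need weighted bounds such as
\[
\|\delta^{1-\kappa}\nabla^2 u_0\|_{L^q(\Sigma_{\varepsilon}^T)}+\|\delta^{1-\kappa}\partial_t u_0\|_{L^q}\lesssim\|F\|_{L^q},
\qquad
\|\nabla u_0\|_{L^q(\boxbox_\varepsilon)}\lesssim\varepsilon^{1/q-}\|F\|_{L^q},
\]
obtained from interior estimates on Whitney cubes plus global $W^{1,s}$ estimates for all $s<\infty$. These are presumably the content of Lemma \ref{lemma:3.2-ap}, which we take as given. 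We choose the weight exponent so that the factor $\delta^{-1+\kappa}$ is absorbed, losing only $\varepsilon^{-\kappa}$.

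For the whole space, \eqref{pri:4.10}, there are no layers. We take $\varphi=S_\varepsilon K_\varepsilon\nabla u_0$ and bound $\nabla w_\varepsilon$ in $L^{q}$ via the whole-space version of \eqref{pri:B} with polynomial weight. We then use the parabolic Sobolev embedding $L^{q}\to L^{q(d+2)/d}$ (for the local averages) together with energy estimates. The factor $\bar\mu_d^{1+2/d}$ on $F$ absorbs the logarithmic growth of $\sigma_{d+1}$ through Lemma \ref{lemma:5} with $\theta=\bar\mu_d$, after Sobolev rescaling of exponents.
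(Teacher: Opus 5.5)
\textbf{The gap is a temporal boundary layer.} Your architecture is the paper's: the two-scale error $w_\varepsilon$; the weighted annealed estimate \eqref{pri:B} with $\omega_\sigma=\mathrm{dist}(x,\partial\Omega_0)^{\underline q-1}$ and $\underline q\uparrow q$; weighted Hardy to pass from $\nabla W_\varepsilon$ to $W_\varepsilon$; Lemma \ref{lemma:5} for the corrector terms; parabolic Gagliardo--Nirenberg plus energy bounds on $\mathbb{R}^{d+1}$. But in the bounded case your step for part (i) fails because you chose $\varphi_j=S_\varepsilon K_\varepsilon(\Psi_\varepsilon\partial_ju_0)$.

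The cut-off $\Psi_\varepsilon$ also vanishes for $t\notin[4\varepsilon^2,T-4\varepsilon^2]$. So the layer term lives on all of $\boxbox_{4\varepsilon}$, including the temporal slabs. Likewise, $\varepsilon^2\tilde\sigma^\varepsilon_{(d+1)j}\partial_t\varphi_j$ contains $\varepsilon^2\tilde\sigma^\varepsilon_{(d+1)j}S_\varepsilon K_\varepsilon\big((\partial_t\Psi_\varepsilon)\partial_ju_0\big)$, which is of the size of $\tilde\sigma^\varepsilon_{(d+1)j}\nabla u_0$ there.

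Your weight does not help on these slabs. $\omega_\sigma$ depends on $x$ only and is of order one away from $\partial\Omega$. Hence the slab just below $t=T$ contributes about $\varepsilon^{2/q}\|\nabla u_0(\cdot,T)\|_{L^q}$, and nothing makes $\nabla u_0$ small at the final time.

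This is not a lossy bound that a sharper estimate could repair inside your scheme. On a slab of time-thickness $\sim\varepsilon^2$ below $T$ one has $\varphi\equiv0$, so $\nabla w_\varepsilon=\nabla u_\varepsilon-\nabla u_0$ is genuinely of order one there. The spatial Hardy inequality controls $W_\varepsilon(\cdot,t)$ only through $\nabla W_\varepsilon(\cdot,t)$, so it returns $O(1)$ on those slices. Your chain therefore gives at best $\varepsilon^{2/q}$, which is worse than $\varepsilon^{1-}$ as soon as $q>2$. Your own displayed bound $\|\nabla u_0\|_{L^q(\boxbox_\varepsilon)}\lesssim\varepsilon^{1/q-}$ is a symptom of this.

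\textbf{The fix.} The missing idea is the one the paper calls essential in Lemma \ref{lemma:4.4}:
\begin{itemize}
\item Take a purely spatial cut-off $\psi_\varepsilon$, vanishing on $O_{3\varepsilon}$ and equal to $1$ off $O_{4\varepsilon}$.
\item Extend $u_\varepsilon,u_0,F$ by zero to $\tilde\Omega_T=\Omega\times(-1,T]$. This is legitimate exactly because $u_\varepsilon(\cdot,0)=u_0(\cdot,0)=0$.
\item Continue the equations beyond $T$ so that $S_\varepsilon$ is defined; by causality this does not change $w_\varepsilon$ for $t\le T$.
\end{itemize}
Then $f_\varepsilon$ carries only a lateral layer. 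There \eqref{pri:3.4-ap} and \eqref{pri:3.5-ap} give $\varepsilon^{\underline q/q}$ and $\varepsilon\cdot\varepsilon^{\underline q/q-1}$ for the layer and co-layer terms. The $\varepsilon^{1-}$ reflects the constraint $\underline q<q$, which is needed both for $\omega_\sigma\in A_q$ and for the Hardy inequality.

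\textbf{Two smaller corrections.} First, the lateral gain comes from the nontangential maximal function bound and the co-area formula in Lemma \ref{lemma:3.2-ap}. It does not come from H\"older in the thin layer with $\nabla u_0\in L^{q'}$ for large $q'$, which in general needs more than $F\in L^q$. Second, on $C^1$ cylinders you have $L^q$ gradient bounds but not $W^{2,1}_q$ theory.

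Your whole-space argument is essentially the paper's; there the weight in \eqref{pri:B} can simply be $1$.
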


\begin{lemma}\label{lemma:4.4}
Let $2\leq q<\infty$.
Let $\Omega\subset\mathbb{R}^d$ be a bounded Lipschitz domain with
$d\geq 2$, $\varepsilon\in(0,1]$, and $T>0$.
Suppose that the ensemble $\langle\cdot\rangle$ is stationary
with $\eqref{c:1}$ and $\eqref{c:3}$.
Assume that $u_\varepsilon, u_0\in L^2(0,T;H^1_0(\Omega))
\cap W_p^{2,1}(\Omega_T)$ are the weak solutions of the zero initial-Dirichlet problems
in $\eqref{pde:1.1}$ and $\eqref{pde:1.2}$, respectively.
Let $w_\varepsilon$ be given as in $\eqref{eq:3.1}$.
Then, by suitably choosing $\varphi_j$
in $\eqref{eq:3.1}$, for any $p\in[1,\infty)$, it can be obtained that
\begin{equation}\label{pri:4.2}
\begin{aligned}
&\Bigg\langle
\bigg(\int_{\Omega_T}dz \Big(\dashint_{U_\varepsilon(z)}
|\nabla w_\varepsilon|^2\Big)^{\frac{q}{2}}\omega_\sigma(z)
\bigg)^{\frac{p}{q}}\Bigg\rangle^{\frac{1}{p}}\\
&\lesssim_{\mu,d,\lambda_1,p,q} \mu_d(R_0/\varepsilon)
\bigg\{
\Big(\int_{(O_{2\varepsilon})_T}|\nabla u_0|^q\omega_\sigma
\Big)^{\frac{1}{q}}
+ \varepsilon
\Big(\int_{(\Omega\setminus O_{\varepsilon})_T}\big(|\nabla^2 u_0|^q
+|\partial_t u_0|^q\big)\omega_\sigma
\Big)^{\frac{1}{q}}\bigg\},
\end{aligned}
\end{equation}
where it is recalled that $(O_{2\varepsilon})_T:=O_{2\varepsilon}\times(0,T]$
and $(\Omega\setminus O_\varepsilon)_T:=
(\Omega\setminus O_\varepsilon)\times (0,T]$,
and $\omega_\sigma$ is the weight function given as in Theorem $\ref{thm:C-Z}$.
\end{lemma}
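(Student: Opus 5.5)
The plan is to combine the two-scale error equation of Lemma \ref{lemma:3.4} with the weighted annealed Calder\'on-Zygmund estimate \eqref{pri:B} of Theorem \ref{thm:C-Z}, and then control the resulting right-hand side with the random cancellation Lemma \ref{lemma:5} and the weighted smoothing bounds of Lemma \ref{lemma:2.5}.

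First, the setup. I will choose $\varphi_j := S_\varepsilon K_\varepsilon(\Psi_\varepsilon\partial_j u_0)$ as in Lemma \ref{lemma:3.1}, set $\pi_{ilj}=0$, and take $\alpha,\gamma$, $\beta(\cdot)$ as the plain stationary/anchored choices of Theorem \ref{thm:1.0}. By Lemma \ref{lemma:3.4}, $w_\varepsilon$ then solves $(\partial_t+\mathcal{L}_\varepsilon)w_\varepsilon=\nabla\cdot f_\varepsilon$ in $\Omega_T$ with zero parabolic boundary data. Here $f_\varepsilon$ is given by \eqref{eq:3.3}:
\[
f_\varepsilon=(a^\varepsilon-\bar a)(\nabla u_0-\varphi)+\varepsilon\varpi^\varepsilon\nabla\varphi+\varepsilon^2\tilde\varpi^\varepsilon(\nabla^2+\partial_t)\varphi,
\]
with $\varpi,\tilde\varpi$ defined as in the proof of Lemma \ref{lemma:3.1}.

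Second, I apply \eqref{pri:B} with weight $\omega_\sigma$. Since $q\ge2$, I pass from the quenched $\langle(\int\cdots)^{p/q}\rangle^{1/p}$ on the left of \eqref{pri:4.2} to the annealed form $(\int\langle\cdots^{p}\rangle^{q/p}\omega_\sigma)^{1/q}$ using Minkowski's inequality, assuming $p\ge q$; smaller $p$ follow by H\"older in probability. This bounds the left-hand side by
\[
\Big(\int_{\Omega_T}\big\langle(\dashint_{U_\varepsilon(z)}|f_\varepsilon|^2)^{\bar p/2}\big\rangle^{q/\bar p}\omega_\sigma\Big)^{1/q}
\]
for some $\bar p>p$. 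I then split $f_\varepsilon$ into three terms.

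For the deterministic first term, I use $|(a^\varepsilon-\bar a)(\nabla u_0-\varphi)|\lesssim|\nabla u_0-\varphi|$. I then argue as in \eqref{f:3.2}, via the decomposition \eqref{f:3.5}, but replace the unweighted bounds by the weighted ones \eqref{pri:2.10} and \eqref{pri:2.12} with $\delta$ the distance weight, so that $\omega_\sigma$ is carried through. The term $(1-\Psi_\varepsilon)\nabla u_0$ produces the layer integral $\int_{(O_{2\varepsilon})_T}|\nabla u_0|^q\omega_\sigma$. The smoothing errors produce $\varepsilon$ times the $\nabla^2u_0$ and $\partial_t u_0$ terms on $(\Omega\setminus O_\varepsilon)_T$. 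The local averages over $U_\varepsilon(z)$ are removed because $\omega_\sigma$ is essentially constant on $\varepsilon$-cubes, as in \eqref{f:3.41-ap}.

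For the random terms, I use \eqref{pri:3.42a} and \eqref{pri:3.42b} with $\varpi$ (respectively $\tilde\varpi$) as the random field. The moment hypothesis \eqref{pri:5.8} holds with $\theta\equiv1$ for $\varpi$ by Corollary \ref{corollary:2.1} \eqref{pri:2.18}, and with $\theta=\bar\mu_d$ for $\tilde\varpi$ by \eqref{pri:2.19}. For the $\varepsilon^2$ term, I first rewrite $(\nabla^2+\partial_t)\varphi$ through the identities \eqref{f:4.19} so that at most one derivative falls on $S_\varepsilon$. The factor $\varepsilon^{-1}$ from \eqref{pri:3.42b} then cancels one power of $\varepsilon$. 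The remaining $K_\varepsilon$-terms are bounded with the weighted estimate \eqref{pri:2.8}, using $\bar\mu_d(\cdot/\varepsilon)\lesssim\mu_d(R_0/\varepsilon)$ on the bounded $\Omega_T$; this is the source of the prefactor $\mu_d(R_0/\varepsilon)$.

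I expect the main obstacle to be transferring weights through the smoothing operators. Lemma \ref{lemma:2.5} is stated for the weight $\delta=\mathrm{dist}(z,\partial_p\Omega_T)$ and for functions supported in $\Sigma^T_{2\varepsilon}$, whereas $\omega_\sigma=\mathrm{dist}(x,\partial\Omega_0)^{\underline q-1}$ is a lateral-distance weight. It is comparable to its averages on $Q_\varepsilon$ because $\partial\Omega_0$ lies at distance $4\varepsilon$ from $\partial\Omega$, so the Step 1 argument of Lemma \ref{lemma:2.5} should go through verbatim with $\omega_\sigma$ in place of $\delta^\alpha$. I will check this comparability, together with the handling of the cutoff-derivative terms $\nabla\Psi_\varepsilon,\partial_t\Psi_\varepsilon$, which are supported in the layer and must be absorbed into the $(O_{2\varepsilon})_T$ integral. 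I will check these first.
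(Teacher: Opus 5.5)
Most of your plan is the paper's proof: the weighted annealed estimate \eqref{pri:B}, Minkowski for $p\ge q$, Lemma~\ref{lemma:5} with Corollary~\ref{corollary:2.1}, the rewriting \eqref{f:4.19}, and the weighted smoothing bounds. The gap is the choice $\varphi_j=S_\varepsilon K_\varepsilon(\Psi_\varepsilon\partial_ju_0)$ with the \emph{space--time} cut-off $\Psi_\varepsilon$ of Lemma~\ref{lemma:3.1}; with it you do not get \eqref{pri:4.2}.

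Since $\Psi_\varepsilon$ vanishes outside $\Sigma^T_{2\varepsilon}=\Sigma_{2\varepsilon}\times[4\varepsilon^2,T-4\varepsilon^2]$, two kinds of terms appear on interior time slabs.
\begin{itemize}
\item The remainder $(1-\Psi_\varepsilon)\nabla u_0$ lives on all of $\boxbox_{4\varepsilon}$. Besides the lateral layer, this contains the interior slabs $\Sigma_{4\varepsilon}\times\big((0,16\varepsilon^2)\cup(T-16\varepsilon^2,T]\big)$.
\item In $\varepsilon^2\tilde\varpi^\varepsilon\partial_t\varphi$, the piece $\varepsilon^2\tilde\varpi^\varepsilon S_\varepsilon K_\varepsilon(\partial_t\Psi_\varepsilon\nabla u_0)$ has size $|\nabla u_0|$ on the same slabs, because $|\partial_t\Psi_\varepsilon|\sim\varepsilon^{-2}$.
\end{itemize}
These terms are not supported in $(O_{2\varepsilon})_T$ and are not of the form $\varepsilon(|\nabla^2u_0|+|\partial_tu_0|)$. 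So your plan to absorb them into the lateral-layer integral cannot work. The weight does not help either, since $\omega_\sigma\sim1$ away from $\partial\Omega$.

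Near $t=T$ nothing vanishes, so for generic $u_0$ these terms are of order $\varepsilon^{2/q}$. By contrast, for $q>2$ and $\underline q$ close to $q$ (the regime needed for the $\varepsilon^{1-}$ rate in Proposition~\ref{P:4.1}), the right-hand side of \eqref{pri:4.2} is only $O(\varepsilon^{\underline q/q})$, which is much smaller. This is exactly the time-layer loss the paper says its weighted method cannot absorb.

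The missing idea is the one the paper flags as essential: exploit $u_\varepsilon(\cdot,0)=u_0(\cdot,0)=0$ to avoid time layers altogether.
\begin{itemize}
\item Extend both solutions by zero to $\tilde\Omega_T:=\Omega\times(-1,T]$, and let them evolve naturally past $t=T$ where $S_\varepsilon$ needs values there.
\item Take a \emph{purely spatial} cut-off $\psi_\varepsilon\in C^2_0(\Omega)$ with $\psi_\varepsilon=1$ on $\Omega\setminus O_{4\varepsilon}$ and $\psi_\varepsilon=0$ on $O_{3\varepsilon}$, and set $\varphi_j=S_\varepsilon K_\varepsilon(\psi_\varepsilon\partial_ju_0)$.
\end{itemize}
Then $w_\varepsilon$ solves the error equation on $\tilde\Omega_T$ with zero parabolic data, and \eqref{pri:B} is applied on $\tilde\Omega_T$. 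Since $\partial_t\psi_\varepsilon=0$, \eqref{f:4.19} reduces to \eqref{f:4.23}. Every cut-off error ($(1-\psi_\varepsilon)\nabla u_0$, $\nabla\psi_\varepsilon\otimes\nabla u_0$, $\varepsilon\nabla\psi_\varepsilon\,\partial_tu_0$) is then supported in a lateral strip of width $O(\varepsilon)$, which yields exactly the two terms of \eqref{pri:4.2}.

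This also settles the concern you raised about Lemma~\ref{lemma:2.5}. The weight $\omega_\sigma$ is independent of $t$, and $\dashint_{Q_\varepsilon(z)}\omega_\sigma\sim\omega_\sigma(z)$ on $\Omega$ because $\mathrm{dist}(x,\partial\Omega_0)\ge4\varepsilon$. So its proof goes through without any support condition in the time variable.
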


\begin{proof}
The basic idea is to apply the weighted annealed Calder\'on-Zygmund estimates
stated in Theorem $\ref{thm:C-Z}$ to accelerate the convergence rates
near the lateral region. However, this method does not easily handle the loss in the time-layer caused by the truncated initial value, which is an aspect quite different from the duality argument. Therefore, the assumption of $u_\varepsilon(\cdot,0) =u_0(\cdot,0)=0$ is essential here\footnote{
This enables us to conveniently extend the solution in the time direction, thereby avoiding the discussion on time layers.}.
It suffices to prove the stated estimate $\eqref{pri:4.2}$ for
$p\geq q$; the case $p\in[1,q)$ then follows directly from H\"older's inequality. The proof is similarly divided into three steps.

\textbf{Step 1.} Outline of the proof.
Let
$\psi_{\varepsilon}\in C_0^{2}(\Omega)$ be a cut-off function satisfying
$\psi_{\varepsilon} = 1$ on $\Omega\setminus O_{4\varepsilon}$,
$\psi_{\varepsilon} = 0$ in $O_{3\varepsilon}$, and
\begin{equation}\label{cut-off}
(1/\varepsilon)|\nabla\psi_{\varepsilon}|
+|\nabla^2\psi_{\varepsilon}|
\lesssim_d 1/\varepsilon^2
\qquad\text{in}
\quad \Omega.
\end{equation}

Choose $\varphi_j =
S_\varepsilon K_\varepsilon(\psi_{\varepsilon}\partial_ju_0)$ in $\eqref{eq:3.1}$ so that $w_\varepsilon$ satisfies
\begin{equation*}
\left\{\begin{aligned}
\partial_t w_\varepsilon + \mathcal{L}_\varepsilon(w_\varepsilon)
& = \nabla\cdot f_\varepsilon &\quad&\emph{in}\quad ~\tilde{\Omega}_T;\\
w_\varepsilon & = 0 &\quad&\emph{on}\quad \partial_p\tilde{\Omega}_T,
\end{aligned}\right.
\end{equation*}
where $\tilde{\Omega}_T:=\Omega\times(-1,T]$, and
$u_\varepsilon$ and $u_0$ are extended by zero from $\Omega_T$ to $\tilde{\Omega}_T$, with the extended functions still denoted by $u_\varepsilon$ and $u_0$. It is recalled that
$\Omega_0\supseteq\Omega$
is a  $C^1$ domain such that $\partial\Omega_0$ is
the hypersurface at a distance of $4\varepsilon$ from $\partial\Omega$
and parallel to it.
Let $\omega_\sigma(z):=[\text{dist}(x,\partial\Omega_0)]^{\underline{q}-1}$ with $0<\underline{q}<q$ and $z=(x,t)\in\tilde{\Omega}_T$.
Thus, it follows from
Theorem $\ref{thm:C-Z}$ and the expression $\eqref{eq:3.3}$ that
\begin{equation}\label{f:4.24}
\begin{aligned}
& \bigg(\int_{\tilde{\Omega}_T}dz\Big\langle\big(\dashint_{U_\varepsilon(z)}
|\nabla w_\varepsilon|^2\big)^{\frac{p}{2}}
\Big\rangle^{\frac{q}{p}}\omega_\sigma(z)\bigg)^{\frac{1}{q}}
\lesssim^{\eqref{pri:B}}
\bigg(\int_{\tilde{\Omega}_T}dz\Big\langle\big(\dashint_{U_\varepsilon(z)}
|f_\varepsilon|^2
\big)^{\frac{\bar{p}}{2}}
\Big\rangle^{\frac{q}{\bar{p}}}\omega_\sigma(z)\bigg)^{\frac{1}{q}}\\
&\leq
\bigg(\int_{\tilde{\Omega}_T}dz\Big\langle\big(\dashint_{U_\varepsilon(z)}
|\nabla u_0
- S_\varepsilon K_\varepsilon(\psi_{\varepsilon}\nabla u_0)|^2
\big)^{\frac{\bar{p}}{2}}
\Big\rangle^{\frac{q}{\bar{p}}}\omega_\sigma(z)\bigg)^{\frac{1}{q}}
 \\
& +\varepsilon
\bigg(\int_{\tilde{\Omega}_T}dz\Big\langle\big(\dashint_{U_\varepsilon(z)}
|\varpi(\cdot/\varepsilon,\cdot/\varepsilon^2)\nabla  S_\varepsilon K_\varepsilon(\psi_{\varepsilon}\nabla u_0)|^2
\big)^{\frac{\bar{p}}{2}}
\Big\rangle^{\frac{q}{\bar{p}}}\omega_\sigma(z)\bigg)^{\frac{1}{q}} \\
&+\varepsilon^2
\bigg(\int_{\tilde{\Omega}_T}dz\Big\langle\big(\dashint_{U_\varepsilon(z)}
|\tilde{\varpi}(\cdot/\varepsilon,\cdot/\varepsilon^2)
\big(\nabla^2+\partial_t\big) S_\varepsilon K_\varepsilon
(\psi_{\varepsilon}\nabla u_0)|^2
\big)^{\frac{\bar{p}}{2}}
\Big\rangle^{\frac{q}{\bar{p}}}\omega_\sigma(z)\bigg)^{\frac{1}{q}}
=: I_1 + I_2 + I_3,
\end{aligned}
\end{equation}
where $\varpi_j:= a\bar{\phi}_j
-\bar{\sigma}_{j}
+ a(\partial \tilde{\sigma}_{(d+1)j})$ and
$\tilde{\varpi}_j = \big(a \otimes \tilde{\sigma}_{(d+1)j},
\tilde{\sigma}_{(d+1)j}\big)$. Therefore,
the desired estimate $\eqref{pri:4.2}$ is reduced to estimating the right-hand sides above.

\textbf{Step 2.} Arguments for $I_1$. In view of
the relationships $\eqref{f:3.41-ap}$ and $\eqref{f:3.5}$
(with $\Psi_\varepsilon$ replaced by $\psi_\varepsilon$),
it therefore follows from Fubini's theorem and
Lemma $\ref{lemma:2.5}$ that
\begin{equation}\label{f:4.25}
\begin{aligned}
I_1&\leq
\bigg(\int_{\tilde{\Omega}_T}dz
\Big(\dashint_{U_\varepsilon(z)}
|\psi_{\varepsilon}\nabla u_0
-  S_\varepsilon K_\varepsilon(\psi_{\varepsilon}\nabla u_0)|^2\Big)^{\frac{q}{2}}\omega_\sigma(z)\bigg)^{\frac{1}{q}}
+\bigg(\int_{\tilde{\Omega}_T}dz
\Big(\dashint_{U_\varepsilon(z)}
|\textbf{1}_{O_{2\varepsilon}}\nabla u_0|^2\Big)^{\frac{q}{2}}\omega_\sigma(z)\bigg)^{\frac{1}{q}}\\
&\lesssim
\bigg(\int_{\tilde{\Omega}_T}dz
|\psi_{\varepsilon}\nabla u_0
-  S_\varepsilon K_\varepsilon(\psi_{\varepsilon}\nabla u_0)|^q\dashint_{Q_\varepsilon(z)}\omega_\sigma\bigg)^{\frac{1}{q}}
+\bigg(\int_{\tilde{\Omega}_T}dz
|\textbf{1}_{O_{2\varepsilon}}\nabla u_0|^q\dashint_{Q_\varepsilon(z)}\omega_\sigma \bigg)^{\frac{1}{q}}\\
&\lesssim^{\eqref{f:3.41-ap}}
\bigg(\int_{\tilde{\Omega}_T}
|\psi_{\varepsilon}\nabla u_0
-  S_\varepsilon K_\varepsilon(\psi_{\varepsilon}\nabla u_0)|^q \omega_\sigma\bigg)^{\frac{1}{q}}
+ \bigg(\int_{(O_{2\varepsilon})_T}|\nabla u_0|^q\omega_\sigma
\bigg)^{\frac{1}{q}} \\
&\lesssim_{\eqref{pri:2.10},\eqref{pri:2.12}}^{\eqref{f:3.28}}
\bigg(\int_{(O_{2\varepsilon})_T}|\nabla u_0|^q\omega_\sigma
\bigg)^{\frac{1}{q}}
+ \varepsilon
\bigg(\int_{(\Omega\setminus O_{\varepsilon})_T}
\big(|\nabla^2 u_0|^q + |\partial_t u_0|^q
\big)\omega_\sigma
\bigg)^{\frac{1}{q}},
\end{aligned}
\end{equation}
where we recall that $(O_{2\varepsilon})_T:=O_{2\varepsilon}\times[0,T]$
and $(\Omega\setminus O_\varepsilon)_T:=
(\Omega\setminus O_\varepsilon)\times [0,T]$ in this proof.

\textbf{Step 3.} Arguments for $I_2$ and $I_3$. Beginning with the estimates for $I_2$,
it follows from
Lemmas $\ref{lemma:5}$, $\ref{lemma:2.5}$, and Corollary $\ref{corollary:2.1}$ that
\begin{equation}\label{f:4.26}
\begin{aligned}
I_2
&\lesssim^{\eqref{pri:3.42a},\eqref{pri:2.18}} \varepsilon
\Big(
\int_{\tilde{\Omega}_T} |\nabla K_\varepsilon(\psi_{\varepsilon}\nabla u_0)|^q
\omega_\sigma
\Big)^{\frac{1}{q}}\\
&\lesssim^{\eqref{pri:2.8}}
\varepsilon
\Big(
\int_{\tilde{\Omega}_T} |\nabla(\psi_{\varepsilon}\nabla u_0)|^q
\omega_\sigma
\Big)^{\frac{1}{q}}
\lesssim
\bigg(\int_{(O_{2\varepsilon})_T}|\nabla u_0|^q\omega_\sigma
\bigg)^{\frac{1}{q}}
+ \varepsilon
\bigg(\int_{(\Omega\setminus O_{\varepsilon})_T}|\nabla^2 u_0|^q\omega_\sigma
\bigg)^{\frac{1}{q}}.
\end{aligned}
\end{equation}

By $\eqref{f:4.19}$, we first observe that
\begin{equation}\label{f:4.23}
\begin{aligned}
&\nabla^2 S_\varepsilon K_\varepsilon(\psi_{\varepsilon}\nabla u_0)
= \nabla S_\varepsilon K_\varepsilon(\nabla(\psi_{\varepsilon}\nabla u_0));\\
&\partial_t S_\varepsilon K_\varepsilon(\psi_{\varepsilon}\nabla u_0)
=\nabla S_\varepsilon K_\varepsilon(\psi_{\varepsilon}\partial_t u_0)
- S_\varepsilon K_\varepsilon(\nabla\psi_{\varepsilon}\partial_t u_0).
\end{aligned}
\end{equation}
Then, by using Lemmas $\ref{lemma:5}$, $\ref{lemma:2.5}$, and Corollary $\ref{corollary:2.1}$ again,
it can be derived that
\begin{equation}\label{f:4.27}
\begin{aligned}
I_3
&\lesssim^{\eqref{f:4.23},\eqref{pri:3.42a}
}_{\eqref{pri:3.42b},\eqref{pri:2.19}}
\varepsilon
\Big(
\int_{\tilde{\Omega}_T} |
K_\varepsilon(\nabla(\psi_{\varepsilon}\nabla u_0)+\psi_\varepsilon\partial_tu_0)|^q
\bar{\mu}_d^q(\cdot/\varepsilon)\omega_\sigma
\Big)^{\frac{1}{q}}
+ \varepsilon^2
\Big(
\int_{\tilde{\Omega}_T} |K_\varepsilon(
\nabla\psi_\varepsilon\partial_t u_0)|^q
\bar{\mu}_d^q(\cdot/\varepsilon)
\omega_\sigma
\Big)^{\frac{1}{q}}\\
&\lesssim^{\eqref{f:4.28}}_{\eqref{pri:2.8}} \mu_d(R_0/\varepsilon)
\bigg\{
\Big(\int_{(O_{2\varepsilon})_T}|\nabla u_0|^q\omega_\sigma
\Big)^{\frac{1}{q}}
+ \varepsilon
\Big(\int_{(\Omega\setminus O_{\varepsilon})_T}
\big(|\partial_t u_0|^q+|\nabla^2 u_0|^q\big)\omega_\sigma
\Big)^{\frac{1}{q}}
\bigg\}.
\end{aligned}
\end{equation}

Consequently, plugging $\eqref{f:4.25}$, $\eqref{f:4.26}$, and $\eqref{f:4.27}$
back into $\eqref{f:4.24}$ leads to the desired result,
where we also employ Minkowski's inequality on
the left-hand side of $\eqref{f:4.24}$. This ends the proof.
\end{proof}

\begin{lemma}\label{lemma:4.2}
Let $2\leq q<\infty$.
Assume that $u_\varepsilon$ and $u_0$ are associated by the following equation:
\begin{equation*}
\partial_t u_\varepsilon + \mathcal{L}_\varepsilon(u_\varepsilon)
= \partial_t u_0 + \mathcal{L}_0(u_0) \quad \emph{in}\quad ~\mathbb{R}^{d+1}.
\end{equation*}
Suppose that the ensemble $\langle\cdot\rangle$ is stationary
with respect to $\eqref{c:1}$ and satisfies $\eqref{c:3}$.
Let $w_\varepsilon$ be given as in $\eqref{eq:3.1}$
by setting $\varphi_j = S_\varepsilon K_\varepsilon(\partial_ju_0)$
therein. Then, for any $p\in[1,\infty)$, it can be obtained that
\begin{subequations}
\begin{align}
\bigg\langle\Big(\int_{\mathbb{R}^{d+1}}|\nabla w_\varepsilon|^2\Big)^{\frac{p}{2}}\bigg\rangle^{\frac{1}{p}}
+\bigg\langle\Big(
&\sup_{t\in\mathbb{R}}\big\|w_\varepsilon(\cdot ,t)
\big\|_{L^2(\Omega)}\Big)^p\bigg\rangle^{\frac{1}{p}}
+\Big\langle\sup_{t\in\mathbb{R}}\Big(\int_{\mathbb{R}^d}
  \dashint_{Q_\varepsilon(\cdot,t)}|w_\varepsilon|^2
  \Big)^{\frac{p}{2}}\Big\rangle^{\frac{1}{p}}\nonumber\\
&\lesssim \varepsilon\mu_d(1/\varepsilon)
\Big\{\|\mu_d\nabla^2 u_0\|_{L^2(\mathbb{R}^{d+1})}
+ \|\mu_d \partial_t u_0\|_{L^2(\mathbb{R}^{d+1})}\Big\};
\label{pri:4.7}\\
\bigg\langle
\bigg(\int_{\mathbb{R}^{d+1}}dz \Big(\dashint_{Q_\varepsilon(z)}
|\nabla w_\varepsilon|^2\Big)^{\frac{q}{2}}
\bigg)^{\frac{p}{q}}\bigg\rangle^{\frac{1}{p}}
&\lesssim \varepsilon\mu_d(1/\varepsilon)
\Big\{\|\bar{\mu}_d\nabla^2 u_0\|_{L^q(\mathbb{R}^{d+1})}
+ \|\bar{\mu}_d \partial_t u_0\|_{L^q(\mathbb{R}^{d+1})}\Big\},
\label{pri:4.8}
\end{align}
\end{subequations}
where the multiplicative constants depend
at most on $\mu,\lambda_1,d,p$, and $q$.
\end{lemma}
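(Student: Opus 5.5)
The proof will mirror Lemma \ref{lemma:3.1} and Lemma \ref{lemma:4.4}, with two simplifications on the whole space $\mathbb{R}^{d+1}$. There is no boundary layer, so no cut-off is needed and $\varphi_j=S_\varepsilon K_\varepsilon(\partial_j u_0)$. We also set $\pi_{ilj}=0$ in \eqref{eq:3.5}; this is the natural choice whenever $\langle\phi_j\rangle$ is used as the anchor in \eqref{eq:5.1c}, so the extra term $\varepsilon\sum_l\pi_{llj}\partial_t\varphi_j$ in \eqref{eq:3.2} disappears. By Lemma \ref{lemma:3.4}, $w_\varepsilon$ then solves $(\partial_t+\mathcal{L}_\varepsilon)w_\varepsilon=\nabla\cdot f_\varepsilon$ in $\mathbb{R}^{d+1}$, with $f_\varepsilon$ given by \eqref{eq:3.3}. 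We split $f_\varepsilon$ into three pieces:
\begin{equation*}
I_1:=(a^\varepsilon-\bar a)(\nabla u_0-\varphi),\qquad
I_2:=\varepsilon\,\varpi^\varepsilon\nabla\varphi,\qquad
I_3:=\varepsilon^2\tilde\varpi^\varepsilon(\nabla^2+\partial_t)\varphi,
\end{equation*}
where $\varpi$ and $\tilde\varpi$ are as in the proof of Lemma \ref{lemma:3.1}.

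For \eqref{pri:4.7} we use the standard energy estimate on $\mathbb{R}^{d+1}$, which controls $\|\nabla w_\varepsilon\|_{L^2}+\sup_t\|w_\varepsilon(\cdot,t)\|_{L^2}$ by $\|f_\varepsilon\|_{L^2}$. The third quantity on the left of \eqref{pri:4.7} is dominated by $\sup_t\|w_\varepsilon(t)\|_{L^2}$ after averaging in time over windows of length $\varepsilon^2$. We then take $\langle|\cdot|^p\rangle^{1/p}$ and bound the three pieces.
\begin{itemize}
\item[(i)] For $I_1$, the difference $\nabla u_0-S_\varepsilon K_\varepsilon\nabla u_0$ is controlled by \eqref{pri:2.13} and \eqref{pri:2.14}, together with the identity \eqref{f:3.28} for $\partial_tK_\varepsilon$. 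This gives $\varepsilon(\|\nabla^2u_0\|_{L^2}+\|\partial_tu_0\|_{L^2})$.
\item[(ii)] For $I_2$, we apply \eqref{pri:3.42c} with the stationary bounds \eqref{pri:5.2}, \eqref{pri:5.4} and \eqref{pri:5.6a} (here $\theta\equiv1$). This gives $\varepsilon\|\nabla^2u_0\|_{L^2}$.
\item[(iii)] For $I_3$, we first rewrite $(\nabla^2+\partial_t)\varphi$ via \eqref{f:4.19}, dropping the cut-off terms, so that it becomes $\nabla S_\varepsilon K_\varepsilon(\nabla^2u_0)+\nabla S_\varepsilon K_\varepsilon(\partial_tu_0)$. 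The derivative costs $\varepsilon^{-1}$ by \eqref{pri:3.42d}. The weight is $\theta=\bar\mu_d$, coming from \eqref{pri:5.7c} and \eqref{pri:2.19}. Using $\bar\mu_d(\cdot/\varepsilon)\le\mu_d(1/\varepsilon)\bar\mu_d$ from \eqref{f:4.28}, and moving the weight through $K_\varepsilon$ by \eqref{pri:2.8} with $\delta=\bar\mu_d$, we get $\varepsilon\mu_d(1/\varepsilon)\|\bar\mu_d(\nabla^2u_0,\partial_tu_0)\|_{L^2}$.
\end{itemize}

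The estimate \eqref{pri:4.8} follows the same route. The energy estimate is replaced by the whole-space version of the annealed estimate \eqref{pri:B} with $\omega\equiv1$, and then Minkowski's inequality moves the $\langle\cdot\rangle$ outside for $p\ge q$; the case $p<q$ follows by Hölder. The three pieces are then estimated in $L^q$ instead of $L^2$: for $I_1$ via Lemma \ref{lemma:2.4}, and for $I_2,I_3$ via \eqref{pri:3.42a} and \eqref{pri:3.42b}, with Corollary \ref{corollary:2.1} supplying the moment hypothesis \eqref{pri:5.8}.

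I expect the main obstacle to be the nonstationary term $\tilde\sigma_{(d+1)}$. Its moments grow like $\bar\mu_d(z)$, so the weight must be absorbed into $u_0$ through the multiplicative bound \eqref{f:4.28}. We also need that $\bar\mu_d$ varies only boundedly on $\varepsilon$-cubes, so that \eqref{pri:2.8} applies on all of $\mathbb{R}^{d+1}$. A second point to check is that the anchoring constants $\alpha,\gamma$ in \eqref{eq:3.5} can be fixed once, at the origin, consistently with Theorem \ref{thm:1.0}. Finally, the choice $\pi=0$ requires the identity $\partial_i\sigma_{(d+1)ij}=\langle\phi_j\rangle-\phi_j$ with the same shift $\alpha=\langle\phi_j\rangle$, and this must be verified.
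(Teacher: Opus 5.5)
Your proposal is correct and follows essentially the same route as the paper. The paper uses the energy estimate for \eqref{pri:4.7} and the whole-space annealed estimate \eqref{pri:B} with trivial weight for \eqref{pri:4.8}, applies them to the error equation of Lemma~\ref{lemma:3.4} with the same three-term splitting of $f_\varepsilon$, and bounds the pieces by Lemma~\ref{lemma:2.4}, Lemma~\ref{lemma:5}, the corrector moment bounds, \eqref{f:4.28} and \eqref{pri:2.8}.

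The extra points you raise are details the paper leaves implicit, and they check out: passing $\langle\cdot\rangle$ outside the spatial integral in \eqref{pri:4.8} via Minkowski for $p\ge q$ and H\"older for $p<q$, bounding the $Q_\varepsilon$-averaged term by $\sup_t\|w_\varepsilon(\cdot,t)\|_{L^2}$, and taking $\pi=0$ with $\alpha=\langle\phi_j\rangle$. One small correction: the condition $\alpha=M_j$ is what Lemma~\ref{lemma:3.4} needs for any $\pi$. The choice $\pi=0$ only removes the $\partial_t\varphi_j$ term, and it is admissible because \eqref{pri:5.6a} bounds $\nabla\sigma_{(d+1)}$ uniformly.
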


\begin{proof}
The proof of $\eqref{pri:4.7}$ is analogous to that given for Lemma $\ref{lemma:3.1}$, while the proof
of $\eqref{pri:4.8}$ follows a computation similar to that shown in
Lemma $\ref{lemma:4.4}$. The proof is divided into two steps.

\textbf{Step 1.} Arguments for $\eqref{pri:4.7}$.
Starting from the energy estimate, it follows that
\begin{equation*}
\begin{aligned}
&\|\nabla w_\varepsilon\|_{L^2(\mathbb{R}^{d+1})}
+ \sup_{t\in\mathbb{R}}\|w_\varepsilon(\cdot,t)\|_{L^2(\mathbb{R}^{d})}
\lesssim\|\tilde{f}\|_{L^2(\mathbb{R}^{d+1})}
\lesssim
\big\|\nabla u_0
- S_\varepsilon K_\varepsilon(\nabla u_0)\big\|_{L^2(\mathbb{R}^{d+1})} \\
& +\varepsilon\big\|\varpi(\cdot/\varepsilon,\cdot/\varepsilon^2)\nabla S_\varepsilon K_\varepsilon(\nabla u_0)\big\|_{L^2(\mathbb{R}^{d+1})}
+\varepsilon^2 \big\|\tilde{\varpi}(\cdot/\varepsilon,\cdot/\varepsilon^2)
\big(\nabla^2+\partial_t\big)S_\varepsilon K_\varepsilon
(\nabla u_0)\big\|_{L^2(\mathbb{R}^{d+1})}\\
&=: I_1 + I_2 + I_3,
\end{aligned}
\end{equation*}
where $\varpi_j:= a\bar{\phi}_j
-\bar{\sigma}_{j}
+ a(\partial \tilde{\sigma}_{(d+1)j})$ and
$\tilde{\varpi}_j = \big(a \otimes \tilde{\sigma}_{(d+1)j},
\tilde{\sigma}_{(d+1)j}\big)$.
This further implies that
\begin{equation}\label{f:4.29}
\begin{aligned}
\bigg\langle\sup_{t\in\mathbb{R}}\Big(\int_{\mathbb{R}^d}
  \dashint_{Q_\varepsilon(\cdot,t)}|w_\varepsilon|^2
  \Big)^{\frac{p}{2}}\bigg\rangle^{\frac{1}{p}}
&+
\Big\langle\big(\sup_{t\in\mathbb{R}}\|w_\varepsilon(\cdot,t)\|_{L^2(\mathbb{R}^{d})}\big)^{p}\Big\rangle^{\frac{1}{p}}\\
&+
\Big\langle\|\nabla w_\varepsilon\|_{L^2(\mathbb{R}^{d+1})}^p\Big\rangle^{\frac{1}{p}}
\lesssim I_1
+ \langle I_2^p\rangle^{\frac{1}{p}} + \langle I_3^p\rangle^{\frac{1}{p}}.
\end{aligned}
\end{equation}
Similar to the computations given for $\eqref{f:3.2}$, it follows from
Lemma $\ref{lemma:2.4}$ that
\begin{equation}\label{f:4.30}
\begin{aligned}
I_1
&\lesssim^{\eqref{pri:2.14},\eqref{pri:2.13}} \varepsilon
\bigg\{\|\nabla^2 u_0\|_{L^2(\mathbb{R}^{d+1})}
+ \|\nabla K_\varepsilon(\nabla u_0)\|_{L^2(\mathbb{R}^{d+1})}
+ \varepsilon\|
\nabla K_\varepsilon(\partial_t u_0)\|_{L^2(\mathbb{R}^{d+1})}
\bigg\}\\
&\lesssim^{\eqref{pri:2.17}}
\varepsilon\Big\{\|\nabla^2 u_0\|_{L^2(\mathbb{R}^{d+1})}
+\|\partial_t u_0\|_{L^2(\mathbb{R}^{d+1})}\Big\}.
\end{aligned}
\end{equation}
In view of Lemmas $\ref{lemma:5}$, $\ref{lemma:2.5}$, $\ref{lemma:2.4}$,
Theorem $\ref{thm:1.0}$, and Proposition $\ref{P:5.3}$, it is obtained that
\begin{equation}\label{f:4.31}
\begin{aligned}
&\langle I_2^p\rangle^{\frac{1}{p}}
\lesssim^{\eqref{pri:3.42c},\eqref{pri:5.4},
\eqref{pri:5.6a},\eqref{pri:5.6c}}
\varepsilon \|\nabla K_\varepsilon(\nabla u_0)\|_{L^2(\mathbb{R}^{d+1})}
\lesssim^{\eqref{pri:2.17}}
\varepsilon \|\nabla^2 u_0\|_{L^2(\mathbb{R}^{d+1})};\\
&\langle I_3^p\rangle^{\frac{1}{p}}
\lesssim^{\eqref{pri:3.42d},\eqref{pri:5.7c},\eqref{f:4.28}}
\varepsilon\mu_d(1/\varepsilon)\Big\{\|\bar{\mu}_d\nabla K_\varepsilon(\nabla u_0)\|_{L^2(\mathbb{R}^{d+1})}
+ \|\bar{\mu}_dK_\varepsilon(\partial_t u_0)\|_{L^2(\mathbb{R}^{d+1})}\Big\}\\
&\qquad~\lesssim^{\eqref{pri:2.8}}
\varepsilon\mu_d(1/\varepsilon)\Big\{\|\bar{\mu}_d\nabla^2 u_0\|_{L^2(\mathbb{R}^{d+1})}
+ \|\bar{\mu}_d \partial_t u_0\|_{L^2(\mathbb{R}^{d+1})}\Big\}.
\end{aligned}
\end{equation}

Combining the estimates $\eqref{f:4.29}$, $\eqref{f:4.30}$,
and $\eqref{f:4.31}$ leads to the desired estimate $\eqref{pri:4.7}$.

\textbf{Step 2.} Arguments for $\eqref{pri:4.8}$.
In view of Theorem $\ref{thm:C-Z}$ with $\omega_\sigma=1$, we obtain that
\begin{equation}\label{f:4.32}
\begin{aligned}
&\Bigg\langle
\bigg(\int_{\mathbb{R}^{d+1}}dz \Big(\dashint_{Q_\varepsilon(z)}
|\nabla w_\varepsilon|^2\Big)^{\frac{q}{2}}
\bigg)^{\frac{p}{q}}\Bigg\rangle^{\frac{1}{p}}
\lesssim^{\eqref{pri:B}}
\Bigg\langle
\bigg(\int_{\mathbb{R}^{d+1}}dz \Big(\dashint_{Q_\varepsilon(z)}
|\tilde{f}|^2\Big)^{\frac{q}{2}}
\bigg)^{\frac{p}{q}}\Bigg\rangle^{\frac{1}{p}}\\
&\lesssim
\bigg(\int_{\mathbb{R}^{d+1}}dz\Big\langle\big(\dashint_{Q_\varepsilon(z)}
|\nabla u_0
- S_\varepsilon K_\varepsilon(\nabla u_0)|^2
\big)^{\frac{\bar{p}}{2}}
\Big\rangle^{\frac{q}{\bar{p}}}\bigg)^{\frac{1}{q}} \\
& +\varepsilon
\bigg(\int_{\mathbb{R}^{d+1}}dz\Big\langle\big(\dashint_{Q_\varepsilon(z)}
|\varpi(\cdot/\varepsilon,\cdot/\varepsilon^2)\nabla  S_\varepsilon K_\varepsilon(\nabla u_0)|^2
\big)^{\frac{\bar{p}}{2}}
\Big\rangle^{\frac{q}{\bar{p}}}\bigg)^{\frac{1}{q}} \\
&+\varepsilon^2
\bigg(\int_{\mathbb{R}^{d+1}}dz\Big\langle
\big(\dashint_{Q_\varepsilon(z)}
|\tilde{\varpi}(\cdot/\varepsilon,\cdot/\varepsilon^2)
(\nabla^2+\partial_t) S_\varepsilon K_\varepsilon
(\nabla u_0)|^2
\big)^{\frac{\bar{p}}{2}}
\Big\rangle^{\frac{q}{\bar{p}}}\bigg)^{\frac{1}{q}}
=: I_1 + I_2 + I_3.
\end{aligned}
\end{equation}
On account of Lemma $\ref{lemma:2.4}$, it follows from
H\"older's inequality and Fubini's theorem that
\begin{equation}\label{f:4.33}
\begin{aligned}
I_1&\leq
\bigg(\int_{\mathbb{R}^{d+1}}dz
\Big(\dashint_{Q_\varepsilon(z)}
|\nabla u_0
-  S_\varepsilon K_\varepsilon(\nabla u_0)|^2\Big)^{\frac{q}{2}}\bigg)^{\frac{1}{q}}\\
&\lesssim
\bigg(\int_{\mathbb{R}^{d+1}}
|\nabla u_0
-  S_\varepsilon K_\varepsilon(\nabla u_0)|^q\bigg)^{\frac{1}{q}}
\lesssim^{\eqref{pri:2.13},\eqref{pri:2.14}}
\varepsilon
\bigg(\int_{\mathbb{R}^{d+1}}
\big(|\nabla^2 u_0|^q + |\partial_t u_0|^q
\big)
\bigg)^{\frac{1}{q}}.
\end{aligned}
\end{equation}

Moreover, by using Lemmas $\ref{lemma:5}$, $\ref{lemma:2.5}$,
Theorem $\ref{thm:1.0}$, Proposition $\ref{P:5.3}$, one can derive that
\begin{equation}\label{f:4.34}
\begin{aligned}
I_2
&\lesssim^{\eqref{pri:3.42a},\eqref{pri:5.4},
\eqref{pri:5.6a},\eqref{pri:5.6c}} \varepsilon
\Big(
\int_{\mathbb{R}^{d+1}} |\nabla K_\varepsilon(\nabla u_0)|^q
\Big)^{\frac{1}{q}}
\lesssim^{\eqref{pri:2.17}}
\varepsilon
\Big(
\int_{\mathbb{R}^{d+1}} |\nabla^2 u_0|^q
\Big)^{\frac{1}{q}}; \\
I_3
&\lesssim^{\eqref{f:4.23},\eqref{pri:3.42a}
}_{\eqref{pri:3.42b},\eqref{pri:5.7c}}
\varepsilon
\Big(
\int_{\mathbb{R}^{d+1}} |
K_\varepsilon(\nabla^2 u_0)|^q
\bar{\mu}_d^q(\cdot/\varepsilon)
\Big)^{\frac{1}{q}}
+ \varepsilon
\Big(
\int_{\mathbb{R}^{d+1}} |K_\varepsilon(\partial_t u_0)|^q
\bar{\mu}_d^q(\cdot/\varepsilon)
\Big)^{\frac{1}{q}}\\
&\lesssim^{\eqref{f:4.28},\eqref{pri:2.8}} \varepsilon\mu_d(1/\varepsilon)
\Big(\int_{\mathbb{R}^{d+1}}
\big(|\partial_t u_0|^q+|\nabla^2 u_0|^q\big)\bar{\mu}_d^q
\Big)^{\frac{1}{q}}.
\end{aligned}
\end{equation}

Combining the estimates $\eqref{f:4.32}$, $\eqref{f:4.33}$, and
$\eqref{f:4.34}$ leads to the stated estimate $\eqref{pri:4.8}$.
We have completed the whole proof.
\end{proof}

\medskip
\noindent
\textbf{Proof of Proposition $\ref{P:4.1}$.}
Based on Lemmas $\ref{lemma:4.4}$ and $\ref{lemma:4.2}$,
the remainder of the proof consists mainly of transferring the estimates of the two-scale error at the gradient level to the estimates on the two-scale expansion error $w_\varepsilon$ itself. The main idea is to use the weighted Hardy inequality and the Sobolev inequality (i.e.,
Lemma \ref{lemma:4.1-ap}). The proof is therefore divided into two steps.

\textbf{Step 1.}  Arguments for $\eqref{pri:4.9}$.
It suffices to handle the left-hand side of $\eqref{pri:4.2}$ as stated in
Lemma $\ref{lemma:4.4}$.
Let $W_\varepsilon(z):=\big(\dashint_{U_\varepsilon(z)}
|w_\varepsilon|^2\big)^{\frac{1}{2}}$. A routine computation leads to
\begin{equation}\label{f:4.40}
  |\nabla W_\varepsilon(z)|\leq \Big(\dashint_{U_\varepsilon(z)}
|\nabla w_\varepsilon|^2\Big)^{\frac{1}{2}}.
\end{equation}
The left-hand side of $\eqref{pri:4.2}$ is then handled as follows:
\begin{equation}\label{f:4.36}
\Bigg\langle
\bigg(\int_{\Omega_T}dz \Big(\dashint_{U_\varepsilon(z)}
|\nabla w_\varepsilon|^2\Big)^{\frac{q}{2}}\omega_\sigma(z)
\bigg)^{\frac{p}{q}}\Bigg\rangle^{\frac{1}{p}}
\geq \bigg\langle
\Big(\int_{\Omega_T} |\nabla W_\varepsilon|^q\omega_\sigma
\Big)^{\frac{p}{q}}\bigg\rangle^{\frac{1}{p}}.
\end{equation}
Then, by $\eqref{eq:2.5}$, it is recalled that  $\omega_\sigma(z)=[\text{dist}(x,\partial\Omega_0)]^{\underline{q}-1}
=[\sigma(z)]^{\underline{q}-1}$
with $0<\underline{q}<q$ and
$z=(x,t)\in\Omega_T$. Using the weighted Hardy inequality (see e.g., \cite[Theorem 1.1]{Lehrback-14}), we obtain that
\begin{equation}\label{f:4.37}
\bigg\langle
\Big(\int_{\Omega_T} |\nabla W_\varepsilon|^q \sigma^{\underline{q}-1}
\Big)^{\frac{p}{q}}\bigg\rangle^{\frac{1}{p}}
\gtrsim \bigg\langle
\Big(\int_{\Omega_T} |W_\varepsilon|^q \sigma^{\underline{q}-1-q}
\Big)^{\frac{p}{q}}\bigg\rangle^{\frac{1}{p}}
\gtrsim_{R_0}
\bigg\langle
\Big(\int_{\Omega_T} |W_\varepsilon|^q
\Big)^{\frac{p}{q}}\bigg\rangle^{\frac{1}{p}},
\end{equation}
where the fact that $\max_{z\in\Omega_T}\sigma(z)\lesssim R_0$ is also
employed in the last inequality. Thus, it follows that
\begin{equation}\label{f:4.38}
\begin{aligned}
&\bigg\langle\bigg(\int_{\Omega_T}dz
\Big(\dashint_{U_\varepsilon(z)}|u_\varepsilon-u_0|^2
\Big)^{\frac{q}{2}}\bigg)^{\frac{p}{q}}\bigg\rangle^{\frac{1}{p}}\\
&\leq \bigg\langle
\Big(\int_{\Omega_T} |W_\varepsilon|^q
\Big)^{\frac{p}{q}}\bigg\rangle^{\frac{1}{p}}
+\bigg\langle\bigg(\int_{\Omega_T}
\Big(\dashint_{U_\varepsilon(\cdot)}|\varepsilon\bar{\phi}_j^\varepsilon\varphi_j
+\varepsilon^2 \tilde{\sigma}_{l(d+1)j}^\varepsilon\partial_l\varphi_j|^2
\Big)^{\frac{q}{2}}\bigg)^{\frac{p}{q}}\bigg\rangle^{\frac{1}{p}}\\
&\lesssim^{\eqref{f:4.36}}_{\eqref{f:4.37}}
\bigg\langle
\bigg(\int_{\Omega_T} \Big(\dashint_{U_\varepsilon(\cdot)}
|\nabla w_\varepsilon|^2\Big)^{\frac{q}{2}}\omega_\sigma
\bigg)^{\frac{p}{q}}\bigg\rangle^{\frac{1}{p}}
+\bigg(\int_{\Omega_T}\Big\langle
\Big(\dashint_{U_\varepsilon(\cdot)}|\varepsilon\bar{\phi}_j^\varepsilon\varphi_j
+\varepsilon^2 \tilde{\sigma}_{l(d+1)j}^\varepsilon\partial_l\varphi_j|^2
\Big)^{\frac{p}{2}}\Big\rangle^{\frac{q}{p}}\bigg)^{\frac{1}{q}},
\end{aligned}
\end{equation}
where the integration variable is omitted and ``$U_\varepsilon(\cdot)$'' is used instead, and it is recalled that
$\varphi_j =
S_\varepsilon K_\varepsilon(\psi_{\varepsilon}\partial_ju_0)$.

Furthermore, similarly to the computations given for $I_2$ and $I_3$ in the proof Lemma $\ref{lemma:4.4}$,
\begin{equation}\label{f:4.39}
\begin{aligned}
&\bigg(\int_{\Omega_T}\Big\langle
\Big(\dashint_{U_\varepsilon(\cdot)}|\varepsilon\bar{\phi}_j^\varepsilon\varphi_j
+\varepsilon^2 \tilde{\sigma}_{l(d+1)j}^\varepsilon\partial_l\varphi_j|^2
\Big)^{\frac{p}{2}}\Big\rangle^{\frac{q}{p}}\bigg)^{\frac{1}{q}}\\
&\lesssim^{\eqref{pri:3.42a},\eqref{pri:5.4*}}_{\eqref{pri:3.42b},
\eqref{pri:5.7c}} \varepsilon\mu_d(R_0/\varepsilon)
\Big(
\int_{\Omega_T} |K_\varepsilon(\psi_{\varepsilon}\nabla u_0)|^q
\Big)^{\frac{1}{q}}
\lesssim^{\eqref{pri:2.17}} \varepsilon\mu_d(R_0/\varepsilon)\|\nabla u_0\|_{L^q(
\Omega_T)}.
\end{aligned}
\end{equation}
Plugging the estimates $\eqref{f:4.39}$ and $\eqref{pri:4.2}$
back into $\eqref{f:4.38}$, we obtain that
\begin{equation*}
\begin{aligned}
&\bigg\langle\bigg(\int_{\Omega_T}dz
\Big(\dashint_{D_\varepsilon(z)}|u_\varepsilon-u_0|^2
\Big)^{\frac{q}{2}}\bigg)^{\frac{p}{q}}\bigg\rangle^{\frac{1}{p}}\\
&\lesssim \mu_d(R_0/\varepsilon)
\bigg\{
\Big(\int_{(O_{2\varepsilon})_T}|\nabla u_0|^q\omega_\sigma
\Big)^{\frac{1}{q}}
+ \varepsilon
\Big(\int_{(\Omega\setminus O_{\varepsilon})_T}|\nabla^2 u_0|^q\omega_\sigma
\Big)^{\frac{1}{q}}\bigg\}
+\varepsilon\mu_d(R_0/\varepsilon)\|\nabla u_0\|_{L^q(
\Omega_T)}.
\end{aligned}
\end{equation*}
This, together with
the estimates $\eqref{pri:3.4-ap}$ and  
$\eqref{pri:3.5-ap}$,
yields the desired estimate $\eqref{pri:4.9}$.

\textbf{Step 2.} Arguments for $\eqref{pri:4.10}$.
Let $q^*:=\frac{q(d+2)}{d}$,
based on Lemma $\ref{lemma:4.1-ap}$, and start from the following estimate:
\begin{equation*}
\begin{aligned}
 \|W_\varepsilon\|_{L^{q^{*}}(\mathbb{R}^{d+1})}
\lesssim_{q,d} \sup_{t\in\mathbb{R}}\|W_\varepsilon(\cdot,t)\|_{L^2(\mathbb{R}^d)}
  + \|\nabla W_\varepsilon\|_{L^q(\mathbb{R}^{d+1})}.
\end{aligned}
\end{equation*}
By taking $\langle|\cdot|^p\rangle^{\frac{1}{p}}$ on the both sides above,
it thereby follows from Lemma $\ref{lemma:4.2}$ that
\begin{equation}\label{f:4.41}
\begin{aligned}
&\big\langle\|W_\varepsilon
\|_{L^{q^{*}}(\mathbb{R}^{d+1})}^p\big\rangle^{\frac{1}{p}}
\lesssim^{\eqref{f:4.40}} \Big\langle\sup_{t\in\mathbb{R}}\Big(\int_{\mathbb{R}^d}
  \dashint_{Q_\varepsilon(\cdot,t)}|w_\varepsilon|^2
  \Big)^{\frac{p}{2}}\Big\rangle^{\frac{1}{p}}
  + \bigg\langle
\bigg(\int_{\mathbb{R}^{d+1}}dz \Big(\dashint_{Q_\varepsilon(z)}
|\nabla w_\varepsilon|^2\Big)^{\frac{q}{2}}
\bigg)^{\frac{p}{q}}\bigg\rangle^{\frac{1}{p}}\\
&\lesssim^{\eqref{pri:4.7},\eqref{pri:4.8}}
\varepsilon\mu_d(1/\varepsilon)
\Big\{\|\bar{\mu}_{d}(\nabla^2 u_0,\partial_t u_0)\|_{L^2(\mathbb{R}^{d+1})}
+ \|\bar{\mu}_{d} (\nabla^2 u_0,\partial_t u_0)\|_{L^q(\mathbb{R}^{d+1})}\Big\} \\
&\lesssim \varepsilon\mu_d(1/\varepsilon)
\Big\{\|\bar{\mu}_dF\|_{L^2(\mathbb{R}^{d+1})}
+\|\bar{\mu}_dF\|_{L^q(\mathbb{R}^{d+1})}\Big\},
\end{aligned}
\end{equation}
where the fact that $\bar{\mu}_{d}\in A_2\cap A_q$ is also employed, and the last inequality follows from the weighted
Calder\'on-Zygmund estimates\footnote{By Mihlin’ theorem,
one can derive the Calder\'on-Zygmund estimates (see
arguments given for $\eqref{f:5.27-ap}$ in Appendix), and then
a routine argument (see e.g. \cite[Theorem 7.11]{Duoandikoetxea01} or using Shen's lemma $\ref{shen's lemma2}$)
leads to the desired results.} for $\partial_t u_0
+\mathcal{L}_0(u_0) = F$ in $\mathbb{R}^{d+1}$.

Moreover, similarly to the computations given for $\eqref{f:4.39}$,
it is found that
\begin{equation*}
\begin{aligned}
&
\bigg(\int_{\mathbb{R}^{d+1}}dz
\bigg\langle\Big(\dashint_{Q_\varepsilon(z)}|\varepsilon\bar{\phi}_j^\varepsilon\varphi_j
+\varepsilon^2 \tilde{\sigma}_{l(d+1)j}^\varepsilon\partial_l\varphi_j|^2
\Big)^{\frac{p}{2}}\bigg\rangle^{\frac{q^{*}}{p}}\bigg)^{\frac{1}{q^{*}}}\\
&\lesssim^{\eqref{pri:3.42a},\eqref{pri:5.4*}}_{\eqref{pri:3.42b},
\eqref{pri:5.7c}}
\varepsilon\mu_d(1/\varepsilon)
\Big(
\int_{\mathbb{R}^{d+1}} |K_\varepsilon(\nabla u_0)|^{q^{*}}
(1+\bar{\mu}_d^{q^{*}})
\Big)^{\frac{1}{q^{*}}}
\lesssim^{\eqref{pri:2.17}}_{\eqref{pri:2.8}}
\varepsilon\mu_d(1/\varepsilon)\|\bar{\mu}_{d} \nabla u_0\|_{L^{q^{*}}(\mathbb{R}^{d+1})}\\
&\lesssim^{\eqref{pri:6.9-ap}}
\varepsilon\mu_d(1/\varepsilon)
\Big\{\sup_{t\in\mathbb{R}^d}\|\nabla u_0\|_{L^2(\mathbb{R}^d)}
+ \|\bar{\mu}_{d}^{\alpha} \nabla^2 u_0\|_{L^{q}(\mathbb{R}^{d+1})}\Big\}\\
&\lesssim
\varepsilon\mu_d(1/\varepsilon)
\Big\{\|\nabla^2 u_0\|_{L^{2}(\mathbb{R}^{d+1})} +
\|\partial_t u_0\|_{L^{2}(\mathbb{R}^{d+1})}
+ \|\bar{\mu}_{d}^{\alpha} \nabla^2 u_0\|_{L^{q}(\mathbb{R}^{d+1})}\Big\}
\lesssim \|F\|_{L^{2}(\mathbb{R}^{d+1})}
+\|\bar{\mu}_{d}^{\alpha} F\|_{L^{q}(\mathbb{R}^{d+1})},
\end{aligned}
\end{equation*}
where $\alpha = 1+\frac{2}{d}$, and the (weighted) Calder\'on-Zygmund estimates are also used in
the last inequality (in which one employed the fact that $\bar{\mu}_2^2\in A_q$ for any $q\in[2,\infty)$). 
This, together with $\eqref{f:4.41}$, yields the desired estimate
$\eqref{pri:4.10}$. We have completed the whole proof.
\qed

\subsection{Weak norm estimates}

\begin{proposition}\label{P:3.1}
Let $\Omega\subset\mathbb{R}^d$ be a bounded $C^1$ domain with
$d\geq 2$, $\varepsilon\in(0,1]$, and $T>0$.
Suppose that the ensemble $\langle\cdot\rangle$ is stationary
with respect to $\eqref{c:1}$, and satisfies the spectral gap condition $\eqref{c:3}$.
Given $f\in C_0^{1}(\Omega_T)$, let
$u_\varepsilon$ and $u_0$ be the weak solutions of the initial-Dirichlet problems:
\begin{equation}\label{pde:3-5-4}
\left\{\begin{aligned}
(\partial_t+\mathcal{L}_\varepsilon)(u_\varepsilon)
&= \nabla\cdot f &\quad&\text{in}\quad\Omega_T;\\
u_\varepsilon
&= 0 &\quad&\text{on}\quad\partial_p\Omega_T,
\end{aligned}\right.\quad
\text{and}
\quad
\left\{\begin{aligned}
(\partial_t+\mathcal{L}_0)(u_0)
&= \nabla\cdot f &\quad&\text{in}\quad\Omega_T;\\
u_0 &= 0 &\quad&\text{on}\quad\partial_p\Omega_T.
\end{aligned}\right.
\end{equation}
For any $h\in C_0^{2}(\Omega_T;\mathbb{R}^{d+1})$ adhering to
a parabolic scaling,
and for some $\varphi_i\in W_{2}^{2,1}(\Omega_T)$ vanishing near $\partial_p\Omega_T$ with $i=1,\cdots,d$, the random
variable $H^\varepsilon$ is defined as in $\eqref{eq:3-5-1}$.
Then, for any $p\in[1,\infty)$, it holds that
\begin{equation}\label{pri:3-5-1}
\begin{aligned}
&\varepsilon^{-\frac{d+2}{2}} \big\langle (H^\varepsilon
-\langle H^\varepsilon\rangle)^{2p} \big\rangle^{\frac{1}{2p}}\\
&\lesssim
\bigg[\varepsilon^{1-}\mu_{d}(R_0/\varepsilon)
\Big(\int_{\Omega_T}|\nabla h|^{2s}\Big)^{\frac{1}{s}}
+ \varepsilon^{2}\mu_{d}(R_0/\varepsilon)
\Big(\int_{\Omega_T}|(\partial_t,\partial^2) h|^{2s}\Big)^{\frac{1}{s}}
\bigg]
\Big(\int_{\Omega_T}
 |R_0\nabla f|^{2s'}\Big)^{\frac{1}{s'}},
\end{aligned}
\end{equation}
where $s$ and $s'$ are associated with $1/s'+1/s=1$ and $0<s'-1\ll 1$.
\end{proposition}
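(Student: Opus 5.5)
The estimate concerns fluctuations, so I would run it through the spectral gap inequality \eqref{c:3} in its $L^p$ form, exactly as in Proposition~\ref{P:5.1*}. The goal is an explicit formula for the functional derivative $\partial H^\varepsilon/\partial a$, followed by bounds on the sensitivity norm using the weighted annealed Calder\'on-Zygmund estimate \eqref{pri:B} and the error estimate \eqref{pri:1.3}. I would take $\varphi_i=S_\varepsilon K_\varepsilon(\psi_\varepsilon\partial_iu_0)$ as in Lemma~\ref{lemma:4.4}. Since $u_0$ and $\varphi$ are deterministic, only $a^\varepsilon$, $u_\varepsilon$ and $\phi^\varepsilon$ depend on $a$.

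\textbf{Computing the derivative.} I would perturb $a\mapsto a+\delta a$ and write $\delta a^\varepsilon(z)=\delta a(z/\varepsilon,z/\varepsilon^2)$. The first variation splits into three pieces.
\begin{itemize}
\item[(i)] A direct term, $\int h\cdot\delta a^\varepsilon(\nabla u_\varepsilon-\nabla u_0-\nabla\phi^\varepsilon_i\varphi_i)$.
\item[(ii)] A term through $\delta u_\varepsilon$. It solves $(\partial_t+\mathcal L_\varepsilon)\delta u_\varepsilon=\nabla\cdot\delta a^\varepsilon\nabla u_\varepsilon$ with zero data. I would introduce the adjoint solution $(-\partial_t+\mathcal L_\varepsilon^*)v_\varepsilon=-\nabla\cdot(a^\varepsilon-\bar a)^*h$ with zero terminal and lateral data. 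This turns the term into $-\int\nabla v_\varepsilon\cdot\delta a^\varepsilon\nabla u_\varepsilon$.
\item[(iii)] A term through $\delta\phi$. Using the massive or whole-space corrector sensitivity from Step 2 of the proof of Proposition~\ref{P:5.1*}, together with an auxiliary problem $(-\partial_t-\nabla\cdot a^*\nabla)\tilde v=\nabla\cdot(a-\bar a)^*(h\varphi)(\varepsilon\cdot,\varepsilon^2\cdot)$ on the microscopic scale, it becomes $-\int\nabla\tilde v^\varepsilon\cdot\delta a^\varepsilon(e_i+\nabla\phi_i^\varepsilon)\varphi_i$.
\end{itemize}
After rescaling, the Jacobian in $dz\mapsto\varepsilon^{d+2}dz$ produces the CLT factor $\varepsilon^{d+2}$ in $\int dz\,(\dashint_{Q_1(z)}|\partial H/\partial a|)^2$. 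That factor cancels the prefactor $\varepsilon^{-(d+2)/2}$ squared, leaving a sum of integrals of the form $\int dz\,(\dashint_{U_\varepsilon(z)}|X\otimes Y|)^2$.

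\textbf{Rearranging to extract the rate.} The derivative must be reorganized so that every product contains one factor of order $\varepsilon$. I would group (ii) and (iii) as
$$\nabla v_\varepsilon\otimes\nabla u_\varepsilon-\nabla\tilde v^\varepsilon\otimes(e+\nabla\phi^\varepsilon)\varphi=\nabla v_\varepsilon\otimes\big(\nabla u_\varepsilon-(e+\nabla\phi^\varepsilon)\varphi\big)+\big(\nabla v_\varepsilon-\nabla\tilde v^\varepsilon\big)\otimes(e+\nabla\phi^\varepsilon)\varphi.$$
In the first summand, the factor $\nabla u_\varepsilon-(e+\nabla\phi^\varepsilon)\varphi$ equals $\nabla w_\varepsilon$ plus terms of order $\varepsilon$ times correctors. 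It is bounded in a weighted $L^{2s'}$ annealed norm by Lemma~\ref{lemma:4.4}, which yields $\varepsilon^{1-}\mu_d\|R_0\nabla f\|_{L^{2s'}}$ after the co-layer estimates for $u_0$ with $F=\nabla\cdot f$. The factor $\nabla v_\varepsilon$ is bounded in $L^{2s}$ by \eqref{pri:B} with right-hand side $\|h\|$. For the second summand I would show that $\tilde v^\varepsilon$ is the two-scale approximation of $v_\varepsilon$, namely $v_0$ corrected by $\phi^*$, with error $O(\varepsilon)$; this uses Lemma~\ref{lemma:3.4} for the adjoint problem, again with \eqref{pri:B}. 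Term (i) is handled directly by \eqref{pri:1.3}. The $\varepsilon^2(\partial_t,\partial^2)h$ contribution comes from the $\varepsilon^2\tilde\sigma_{(d+1)}$ term in the adjoint expansion. I would separate the random factors from the deterministic ones by H\"older in probability, via the duality trick with $H$, $\langle H^{2p'}\rangle=1$, from Step 3 of Proposition~\ref{P:5.1*}, combined with H\"older in space with exponents $s,s'$. The stationary moment bounds \eqref{pri:5.2} then control the corrector factors through Lemma~\ref{lemma:5}.

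\textbf{Main obstacle.} The difficult step is the comparison $\nabla v_\varepsilon-\nabla\tilde v^\varepsilon=O(\varepsilon^{1-})$. The right-hand side of $v_\varepsilon$'s equation, $(a^\varepsilon-\bar a)^*h$, is itself oscillatory and random, so $v_\varepsilon$ does not homogenize to a deterministic profile in the naive way. I would first handle it by adding a corrector: set $h$-weighted $\nabla\cdot(a-\bar a)^*(e+\nabla\phi^*)$ type terms and write the right-hand side as a divergence using the flux corrector $\sigma$ of Theorem~\ref{thm:1.0}. This reduces the equation to one with right-hand side $\varepsilon\nabla\cdot(\sigma^\varepsilon\nabla h)$ plus $\varepsilon^2$ terms, to which \eqref{pri:B} applies. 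Boundary layers near $\partial\Omega$ would be absorbed into the weight $\omega_\sigma$, at the cost of the $\varepsilon^{1-}$ loss.
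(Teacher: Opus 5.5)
Your architecture matches the paper's. It uses the $L^p$ spectral gap, an adjoint problem on $\Omega_T$ for $\delta u_\varepsilon$ and one on $\mathbb{R}^{d+1}$ for $\delta\phi$, and two-scale expansions of these adjoint solutions via $\phi^*,\sigma^*$ so that their remainders have $O(\varepsilon)$ right-hand sides amenable to \eqref{pri:B}. It then uses Lemma~\ref{lemma:4.4} for the two-scale error of $u_\varepsilon$. After expanding $\nabla v_\varepsilon$ and $\nabla\tilde v^\varepsilon$, your three pieces sum, up to sign conventions, to exactly the terms of \eqref{eq:3-5-3}.

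The gap is in how you group and estimate them, specifically the first summand $\nabla v_\varepsilon\otimes\big(\nabla u_\varepsilon-(e+\nabla\phi^\varepsilon)\varphi\big)$. Its second factor is \emph{not} $O(\varepsilon^{1-})$ in an unweighted norm. On the strip of width $\sim\varepsilon$ along $\partial\Omega$ where the cut-off makes $\varphi$ vanish, it equals $\nabla u_\varepsilon=O(1)$. Its unweighted annealed $L^{2s'}$ norm is therefore only $O(\varepsilon^{1/(2s')})\approx\varepsilon^{1/2}$.

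Lemma~\ref{lemma:4.4} with \eqref{pri:3.4-ap}--\eqref{pri:3.5-ap} gives $\varepsilon^{1-}$ only with the weight $\sigma^{2s'-1}$, where $\sigma=\mathrm{dist}(\cdot,\partial\Omega_0)$. H\"older in $z$ then forces the dual weight $\sigma^{-(s+1)}$ on the partner factor. \eqref{pri:B} cannot supply $\nabla v_\varepsilon$ in $L^{2s}(\sigma^{-(s+1)})$: its admissible weights are $\sigma^{\underline q-1}$ with $0<\underline q<2s$, and $-(s+1)<-1$ lies outside that range. So ``$\nabla v_\varepsilon$ bounded in $L^{2s}$ by \eqref{pri:B}'' paired with a weighted bound on the error does not close. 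Doing it unweighted returns only $\varepsilon^{1/2}$ instead of $\varepsilon^{1-}$. Your term (i) has the same issue, since \eqref{pri:1.3} is itself a weighted bound.

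The missing idea is to place the negative weight only on a factor of the form ($h$ times a stationary field). You then remove it with the weighted Hardy inequality $\int|h|^{2s}\sigma^{-(s+1)}\lesssim\int|\nabla h|^{2s}\sigma^{s-1}$, which is legitimate because $h$ vanishes near $\partial\Omega$. This is where $\|\nabla h\|_{L^{2s}}$ in \eqref{pri:3-5-1} comes from.

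Concretely, split $\nabla v_\varepsilon=(\nabla\phi_k^*)^\varepsilon h_k+\nabla(\text{remainder})$ using the corrector/flux-corrector construction of your last paragraph.
\begin{itemize}
\item The leading part, combined with (i), gives $h_k(e_k+\nabla\phi_k^*)^\varepsilon\otimes(\text{two-scale error})$ and $(\nabla\phi_k^*)^\varepsilon h_k\otimes(\nabla u_0-\varphi)$. Both are handled with the weight plus Hardy, and Corollary~\ref{corollary:2.1} controls the stationary factor.
\item The remainder is already $O(\varepsilon)$ by \eqref{pri:B}, so it can be paired with an unweighted $O(1)$ factor.
\end{itemize}
This regrouping is precisely the paper's \eqref{eq:3-5-3}.

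Also, $\tilde v^\varepsilon$ is not a two-scale approximation of $v_\varepsilon$, and Lemma~\ref{lemma:3.4} does not apply to the oscillatory right-hand side. The homogenized limit of $v_\varepsilon$ is zero, and $v_\varepsilon\varphi_i$ and $\tilde v_i$ only share the leading gradient $(\nabla\phi_k^*)^\varepsilon h_k\varphi_i$. The right tool is your last paragraph's construction, which is the paper's $V^*$ and $Z_j^*$ in \eqref{f:3-5-1}. The $h\nabla\varphi$ terms it produces again need the weight-plus-Hardy treatment.
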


\begin{proof}
The main idea is to appeal to the sensitive arguments employed
in Propositions $\ref{P:5.1*}$ and $\ref{P:5.3}$, which are also
similar to that given in \cite[Theorem 1.5]{Wang-Xu-25}, originally inspired by \cite[Proposition 6.1]{Josien-Otto-22}. The main contribution here lies in the technical aspect of constructing auxiliary equations  suitable for parabolic systems to achieve the goal of the optimal estimates. Let $s,s'>1$ be given as in the proposition above, and the whole proof is divided into four steps.

\textbf{Step 1.}
Outline of the proof and reduction.
Let $\check{h},\hat{h},\tilde{h}$ represent the different rescalings
of $h$, as follows:
\begin{equation}\label{eq:3-5-2}
 \check{h}:=\varepsilon^{d+2} h(\varepsilon\cdot);
 \quad \hat{h}:= h(\varepsilon\cdot);
 \quad \tilde{h}:= \frac{1}{\varepsilon}h(\varepsilon\cdot).
\end{equation}
Throughout the proof, the corresponding derivative symbols will be distinguished, since the symbols of the functions under different scale transformations are marked as in $\eqref{eq:3-5-2}$.
Thus, in view of the equations $\eqref{pde:3-5-4}$, they can be rewritten as
\begin{equation*}
\left\{\begin{aligned}
\big(\partial_t+\mathcal{L}_1\big)(\tilde{u}_{\varepsilon})
&= \nabla\cdot \hat{f}
&\quad&\text{in}~~\tilde{\Omega}_T/\varepsilon;\\
\tilde{u}_{\varepsilon}
&= 0
&\quad&\text{on}~\partial_p(\tilde{\Omega}_T/\varepsilon),
\end{aligned}\right.
\qquad
\left\{\begin{aligned}
\big(\partial_t+\mathcal{L}_1\big)(\tilde{u}_0)
&= \nabla\cdot \hat{f}
&\quad&\text{in}~~\tilde{\Omega}_T/\varepsilon;\\
\tilde{u}_0
&= 0
&\quad&\text{on}~\partial_p(\tilde{\Omega}_T/\varepsilon),
\end{aligned}\right.
\end{equation*}
where $\tilde{\Omega}_T:=\Omega\times(-1,T]$; $\tilde{\Omega}_T/\varepsilon:=(\Omega/\varepsilon)\times (-1/\varepsilon^2,T/\varepsilon^2]$; $u_\varepsilon$ and $u_0$ have zero-extension from $\Omega_T$ to $\tilde{\Omega}_T$, still denoted by $u_\varepsilon$ and $u_0$.
Recall that $w_\varepsilon:= u_\varepsilon-u_0-\varepsilon\phi_j^\varepsilon\varphi_j-
\varepsilon^2\sigma_{l(d+1)j}^{\varepsilon}\partial_{l}\varphi_j$,
and we now fix $\varphi_j = S_\varepsilon K_\varepsilon(\eta\partial_ju_0)$, where $\eta\in C_0^1(\Omega)$ is a cut-off function satisfying
$\eqref{cut-off}$.
It follows that
\begin{equation}\label{eq:3-5-4}
\hat{\varphi}_j = S_1 K_1(\hat{\eta}\partial_j\tilde{u}_0),
\quad \tilde{w}_{\varepsilon} = \tilde{u}_{\varepsilon} - \tilde{u}_0-\phi_j\hat{\varphi}_j-\sigma_{l(d+1)j}\partial_{l}\hat{\varphi}_j,
\quad\text{and}\quad
\nabla\tilde{w}_{\varepsilon} = \nabla w_\varepsilon.
\end{equation}

Therefore, the error of two-scale expansions satisfies
\begin{equation*}
\begin{aligned}
\big(\partial_t+\mathcal{L}_1\big)(\tilde{w}_\varepsilon)
= \nabla\cdot \Big[(a-\bar{a})(1-\hat{\eta})\nabla\tilde{u}_0
&+\big(a\phi_j-\sigma_j+a\partial\sigma_{(d+1)j}\big)\nabla\hat{\varphi}_j \\
&+a\otimes\sigma_{(d+1)j}:\partial^2\hat{\varphi}_j-\sigma_{(d+1)j}
\partial_t\hat{\varphi}_j\Big]
\quad \text{in}\quad \tilde{\Omega}_T/\varepsilon,
\end{aligned}
\end{equation*}
with $\tilde{w}_{\varepsilon} = 0$ on $\partial_p(\tilde{\Omega}_T/\varepsilon)$. The random variable
is now denoted  by
\begin{equation*}
H_\varepsilon : = \int_{\tilde{\Omega}_T/\varepsilon}
\check{h} \cdot (a-\bar{a})\big(\nabla \tilde{u}_{\varepsilon}
-\nabla\tilde{u}_0
-\nabla\phi_i\hat{\varphi}_i\big),
\end{equation*}
and it follows from a change of variable in $\eqref{eq:3-5-1}$ that $H_{\varepsilon}=H^{\varepsilon}$.
The advantage of the expression for $H_\varepsilon$ is that one can
directly appeal to the $L^p$-version
spectral gap inequality\footnote{It can be derived from
the spectral gap condition $\eqref{c:3}$ (see e.g.
\cite[pp.17-18]{Josien-Otto-22}).}
\begin{equation}\label{pri:3-5-4}
\big\langle (H_\varepsilon -
\langle H_\varepsilon \rangle)^{2p} \big\rangle^{\frac{1}{p}}
\lesssim \Big\langle\Big(\int_{\mathbb{R}^{d+1}}dz
\big(\dashint_{Q_1(z)}|\frac{\partial H_\varepsilon}{\partial a}|\big)^2 \Big)^p\Big\rangle^{\frac{1}{p}}
\end{equation}
to obtain the desired estimate $\eqref{pri:3-5-1}$, provided that
the concrete expression for $\frac{\partial H_\varepsilon}{\partial a}$
is available, namely
\begin{equation}\label{eq:3-5-3}
\begin{aligned}
\frac{\partial H_\varepsilon}{\partial a}
&= \check{h}_j(e_j+\nabla\phi_j^{*})\otimes \big[\nabla\tilde{w}_\varepsilon+\phi_j\nabla\hat{\varphi}_j+
\nabla(\sigma_{l(d+1)j}\partial_l\hat{\varphi}_j)\big]
+ \nabla\phi_j^*\check{h}_j\otimes(\nabla\tilde{u}_0-\hat{\varphi})\\
&-\big[\nabla z_j^* + \phi_i^*\nabla(\check{h}_i\hat{\varphi}_j)
+\nabla(\sigma_{(d+1)lk}^{*}\partial_l(\check{h}_k\hat{\varphi}_j))\big]
\otimes (\nabla\phi_j+e_j) \\
&+ \textbf{1}_{\Omega_T/\varepsilon}\big[\nabla v^* + \phi^{*}_j\nabla\check{h}_j
 +\nabla(\sigma_{(d+1)lj}^{*}\partial_{l}\check{h}_j)\big]
 \otimes\nabla\tilde{u}_\varepsilon,
\end{aligned}
\end{equation}
where the auxiliary functions $v^{*}$ and $z_j^{*}$ satisfy the following equations:
\begin{equation}\label{pde:3-5-3}
\left\{\begin{aligned}
&\big(-\partial_t+\mathcal{L}_1^*\big)(v^*)
=\nabla\cdot\big[(a^{*}\phi_{j}^{*}-\sigma_{j}^{*}
-a^*\partial\sigma_{(d+1)j}^{*})\nabla\check{h}_j \\
&\qquad\qquad
\qquad\qquad\qquad\quad-a^{*}\otimes\sigma_{(d+1)j}^{*}:\partial^2 \check{h}_j-\sigma_{(d+1)j}^{*}\partial_t\check{h}_j\big]
\quad \text{in}~~\tilde{\Omega}_T/\varepsilon;\\
& v^* =0 \quad \text{on}~~\tilde{S}_T/\varepsilon;
\qquad
v^*(\cdot,T/\varepsilon^2) =0 \quad\text{on}~~\Omega/\varepsilon,
\end{aligned}\right.
\end{equation}
in which $-\partial_t+\mathcal{L}_1^{*}$ is the adjoint operator of
$\partial_t+\mathcal{L}_1$, and $\tilde{S}_T/\varepsilon:=(\partial\Omega/\varepsilon)\times
(-1/\varepsilon^2,T/\varepsilon^2]$;
\begin{equation}\label{pde:3-5-5}
\begin{aligned}
\big(-\partial_t+\mathcal{L}_1^{*}\big)(z_j^*)
=\nabla\cdot\Big[\big(a^{*}\phi_{k}^{*}
&-\sigma_{k}^{*}
-a^*\partial\sigma_{(d+1)k}^{*}\big)
\nabla(\check{h}_k\hat{\varphi}_j)\\
&- a^{*}\otimes\sigma_{(d+1)k}^{*}:\partial^2(\check{h}_k\hat{\varphi}_j)
-\sigma_{(d+1)k}^{*}\partial_t(\check{h}_k\hat{\varphi}_j)\Big]
\quad \text{in}~~\mathbb{R}^{d+1}.
\end{aligned}
\end{equation}
The notation $(\phi^{*},\sigma^{*})$ denotes the corresponding
extended corrector, satisfying the following equations:
\begin{equation*}
\begin{aligned}
\partial_t\phi_j^{*} + \nabla\cdot \big(a^*\nabla\phi_j^{*} + e_j\big)
&= 0; \\
\Delta_{d+1}\sigma_{kij}^{*}
 &=\partial_{k}q_{ij}^{*}-\partial_i q_{kj}^{*};\\
 \partial_{k'}\sigma_{k'ij}^{*}
 &= q_{ij}^{*};\\
 \partial_i\sigma_{(d+1)ij}^{*}
 &= \phi_j^{*} + \langle \phi_j^{*}\rangle,
\end{aligned}
\end{equation*}
where $q_{ij}^{*}
:= \bar{a}_{ij}^{*}
- a_{ij}^{*}
- a_{ik}^{*}\partial_k\phi_j^{*}$ and $q_{(d+1)j}^{*} :=\langle\phi_j^{*}\rangle-\phi_j^{*}$.

To complete the proof, the following estimates must be
established:
\begin{subequations}
\begin{align}
&\begin{aligned}
&\bigg\langle
 \bigg(\int_{\mathbb{R}^{d+1}}dz
 \Big(\dashint_{Q_1(z)}\big|
 \check{h}_j(e_j+\nabla\phi_j^{*})\otimes \big(\nabla\tilde{w}_\varepsilon+\phi_j\nabla\hat{\varphi}_j+
\nabla(\sigma_{l(d+1)j}\partial_l\hat{\varphi}_j)\big)\big|\Big)^2
 \bigg)^{p}\bigg\rangle^{\frac{1}{p}}\\
&\qquad\qquad\qquad \qquad\qquad
\lesssim \varepsilon^{d+4-}\mu_d^{2}(R_0/\varepsilon)
\Big(\int_{\Omega_T} |\nabla h|^{2s} dx\Big)^{\frac{1}{s}}
\Big(\int_{\Omega_T} |R_0\nabla f|^{2s'} dx\Big)^{\frac{1}{s'}};
\end{aligned} \label{pri:3-5-2a}\\
 & \begin{aligned}
\bigg\langle
 \bigg(\int_{\tilde{\Omega}_T/\varepsilon}dz
 \Big(\dashint_{U_1(z)}\big|
 \nabla\phi^{*}_j \check{h}_j \otimes
& (\nabla\tilde{u}_0-\hat{\varphi})\big|\Big)^2
 \bigg)^{p}\bigg\rangle^{\frac{1}{p}}\\
& \lesssim  
\varepsilon^{d+4-}\Big(\int_{\Omega_T}
|\nabla h|^{2s}\Big)^{\frac{1}{s}}
\Big(\int_{\Omega_T}
|R_0\nabla f|^{2s'}
\Big)^{\frac{1}{s'}},
\end{aligned} \label{pri:3-5-2b}
\end{align}
\end{subequations}
and
\begin{subequations}
\begin{align}
&\begin{aligned}
\bigg\langle
 \bigg(\int_{\Omega_T/\varepsilon}dz
& \Big(\dashint_{U_1(z)}\big|
 \big(\nabla v^* + \phi^{*}_j\nabla\check{h}_j
 +\nabla(\sigma_{(d+1)lj}^{*}\partial_{l}\check{h}_j)\big)
 \otimes\nabla\tilde{u}_\varepsilon\big|\Big)^2
 \bigg)^{p}\bigg\rangle^{\frac{1}{p}}\\
&\lesssim \bigg[\varepsilon^{d+4}
\Big(\int_{\Omega_T}|\nabla h|^{2s}\Big)^{\frac{1}{s}}
+ \varepsilon^{d+6}\mu_{d}^2(R_0/\varepsilon)
\Big(\int_{\Omega_T}|(\partial_t,\partial^2) h|^{2s}\Big)^{\frac{1}{s}}
\bigg]
\Big(\int_{\Omega_T}
 |f|^{2s'}\Big)^{\frac{1}{s'}};
\end{aligned}  \label{pri:3-5-3a}\\
&\begin{aligned}
&\bigg\langle
 \bigg(\int_{\mathbb{R}^{d+1}}dz
 \Big(\dashint_{Q_1(z)}
 \big|
\big(\nabla z_j^{*}+\phi_k^{*}\nabla(\check{h}_k\hat{\varphi}_j)
+\nabla(\sigma_{(d+1)lk}^{*}\partial_l(\check{h}_k\hat{\varphi}_j))\big)\otimes
 (\nabla\phi_j+e_j)\big|\Big)^2
 \bigg)^{p}\bigg\rangle^{\frac{1}{p}}\\
&\lesssim
\bigg[\varepsilon^{d+4-}\mu_{d}^2(R_0/\varepsilon)
\Big(\int_{\Omega_T}|\nabla h|^{2s}\Big)^{\frac{1}{s}}
+ \varepsilon^{d+6}\mu_{d}^2(R_0/\varepsilon)
\Big(\int_{\Omega_T}|(\partial_t,\partial^2) h|^{2s}\Big)^{\frac{1}{s}}
\bigg]
\Big(\int_{\Omega_T}
 |R_0\nabla f|^{2s'}\Big)^{\frac{1}{s'}}
\end{aligned}\label{pri:3-5-3b},
\end{align}
\end{subequations}
where the multiplicative constant depends on
$\mu,\lambda_1,d,s,p$, and $\partial\Omega$.

Admitting them for a while, combining the estimates $\eqref{pri:3-5-4}$, $\eqref{eq:3-5-3}$, $\eqref{pri:3-5-2a}$,
$\eqref{pri:3-5-2b}$, $\eqref{pri:3-5-3a}$ and $\eqref{pri:3-5-3b}$ yields  the desired estimate $\eqref{pri:3-5-1}$.

\textbf{Step 2.} Arguments for the equality $\eqref{eq:3-5-3}$.
To this end, using the auxiliary equations
$\eqref{pde:3-5-3}$ and $\eqref{pde:3-5-5}$, the auxiliary equations
can be constructed as follows:
\begin{equation}\label{f:3-5-1}
\begin{aligned}
&\big(-\partial_t+\mathcal{L}_1^{*}\big)(\underbrace{v^*+\phi_j^{*}\check{h}_j
+\sigma_{(d+1)lj}^{*}\partial_l\check{h}_j}_{V^{*}})
=\nabla\cdot \big[(a^*-\bar{a}^*)\check{h}\big]
\qquad\text{in}\quad  \tilde{\Omega}_T/\varepsilon; \\
&\big(-\partial_t+\mathcal{L}_1^{*}\big)\big(\underbrace{z_j^*
+\phi_k^{*}\check{h}_k\hat{\varphi}_j
+\sigma_{(d+1)lk}^{*}\partial_l(\check{h}_k\hat{\varphi}_j)}_{Z_j^{*}}\big)
=\nabla\cdot\big[(a^*-\bar{a}^*)\check{h}\hat{\varphi}_j\big]
\qquad \text{in}\quad \mathbb{R}^{d+1}.
\end{aligned}
\end{equation}

We now start from
\begin{equation}\label{f:3-5-4}
\begin{aligned}
\delta H_\varepsilon =
\int_{\Omega_T/\varepsilon}
\check{h} \cdot \delta a\big(\nabla \tilde{u}_{\varepsilon}
-\nabla\tilde{u}_0
-\nabla\phi_i\hat{\varphi}_i\big)
+
\int_{\Omega_T/\varepsilon}
\check{h} \cdot (a-\bar{a})\big[\nabla (\delta\tilde{u}_{\varepsilon})
-\nabla(\delta\phi_j)\hat{\varphi}_j\big]
\end{aligned}
\end{equation}
and the second term in the right-hand side above can be rewritten as
\begin{equation}\label{f:3-5-5}
\begin{aligned}
&\int_{\Omega_T/\varepsilon}
\check{h} \cdot (a-\bar{a})\big[\nabla (\delta\tilde{u}_{\varepsilon})
-\nabla(\delta\phi_j)\hat{\varphi}_j\big]\\
&= \int_{\Omega_T/\varepsilon}
\nabla (\delta\tilde{u}_{\varepsilon})\cdot (a^{*}-\bar{a}^{*})\check{h}
-\int_{\Omega_T/\varepsilon}
\nabla (\delta\phi_j)\cdot (a^{*}-\bar{a}^{*})\check{h}\hat{\varphi}_j\\
&= -\int_{\Omega_T/\varepsilon}
\delta\tilde{u}_{\varepsilon}\nabla\cdot \big[(a^{*}-\bar{a}^{*})\check{h}\big]
+\int_{\mathbb{R}^{d+1}}
 \delta\phi_j\nabla\cdot\big[(a^{*}-\bar{a}^{*})\check{h}\hat{\varphi}_j\big]\\
&=^{\eqref{f:3-5-1}}-\int_{\Omega_T/\varepsilon}
\delta\tilde{u}_{\varepsilon}\big(-\partial_t+\mathcal{L}_1^{*}\big)(V^{*})
+\int_{\mathbb{R}^{d+1}}
 \delta\phi_j
 \big(-\partial_t+\mathcal{L}_1^{*}\big)(Z_j^{*})=:I_1 + I_2.
\end{aligned}
\end{equation}
The terms $I_1$ and $I_2$ are dealt with separately. It is recalled that $V^{*} = 0$ on $\tilde{S}_T/\varepsilon$, $V^{*}(\cdot,T/\varepsilon^2) = 0$ on $\Omega/\varepsilon$, and $\delta\tilde{u}_{\varepsilon}$ satisfies
the following equations:
\begin{equation}\label{pde:3-5-1}
\left\{\begin{aligned}
\big(\partial_t+\mathcal{L}_1\big)(\delta\tilde{u}_{\varepsilon})
&= \nabla\cdot \delta a\nabla \tilde{u}_{\varepsilon}
&\quad &\text{in}\quad \tilde{\Omega}_T/\varepsilon;\\
\delta\tilde{u}_{\varepsilon} &= 0
&\quad &\text{on}\quad \partial_p\tilde{\Omega}_T/\varepsilon.
\end{aligned}\right.
\end{equation}
By integration by parts, it is obtained that
\begin{equation}\label{f:3-5-2}
\begin{aligned}
I_1 &= -\int_{\tilde{\Omega}_T/\varepsilon}
\big(\partial_t+\mathcal{L}_1\big)
(\delta\tilde{u}_{\varepsilon})V^{*}
=^{\eqref{pde:3-5-1}} -\int_{\tilde{\Omega}_T/\varepsilon}
\nabla\cdot \delta a\nabla \tilde{u}_{\varepsilon}V^{*}
= \int_{\tilde{\Omega}_T/\varepsilon}
\nabla V^{*}\cdot \delta a\nabla \tilde{u}_{\varepsilon}\\
&=\int_{\tilde{\Omega}_T/\varepsilon}
\big(\nabla v^*+\phi_j^{*}\nabla\check{h}_j+\nabla(\sigma_{(d+1)lj}^{*}\partial_{l}\check{h}_j)\big)
\cdot \delta a\nabla \tilde{u}_{\varepsilon}
+ \int_{\tilde{\Omega}_T/\varepsilon}
\nabla\phi_j^{*}\check{h}_j\cdot \delta a\nabla \tilde{u}_{\varepsilon}.
\end{aligned}
\end{equation}
By the same token, noting that
\begin{equation}\label{pde:3-5-2}
\big(\partial_t+\mathcal{L}_1\big)(\delta\phi_j)=\nabla\cdot \delta a(\nabla\phi_j+e_j)
\qquad \text{in}\quad\mathbb{R}^{d+1},
\end{equation}
there holds
\begin{equation}\label{f:3-5-3}
\begin{aligned}
I_2 &= \int_{\mathbb{R}^{d+1}}
\big(\partial_t+\mathcal{L}_1\big)(\delta\phi_j)Z_j^{*}
=^{\eqref{pde:3-5-2}}
\int_{\mathbb{R}^{d+1}}
\nabla\cdot \delta a(\nabla\phi_j+e_j)Z_j^{*}
=-\int_{\mathbb{R}^{d+1}}
\nabla Z_j^{*}\cdot \delta a(\nabla\phi_j+e_j)\\
&=-\int_{\mathbb{R}^{d+1}}
\big(\nabla z_j^{*}+\phi_k^{*}\nabla(\check{h}_k\hat{\varphi}_j)
+\nabla(\sigma_{(d+1)lk}^{*}\partial_l(\check{h}_k\hat{\varphi}_j))\big)\cdot \delta a(\nabla\phi_j+e_j)\\
& \qquad\qquad -
\int_{\mathbb{R}^{d+1}}
\nabla\phi_k^{*}\check{h}_k\cdot \delta a(\nabla\phi_j+e_j)\hat{\varphi}_j.
\end{aligned}
\end{equation}
Taking the boundary layer into account, and combining the last terms in the right-hand side of $\eqref{f:3-5-2}$ and $\eqref{f:3-5-3}$, we have
\begin{equation}\label{f:3-5-6}
\begin{aligned}
&\int_{\tilde{\Omega}_T/\varepsilon}
\nabla\phi_j^{*}\check{h}_j\cdot \delta a\nabla \tilde{u}_{\varepsilon}
- \int_{\mathbb{R}^{d+1}}
\nabla\phi_k^{*}\check{h}_k\cdot \delta a(\nabla\phi_j+e_j)\hat{\varphi}_j \\
&=\int_{\tilde{\Omega}_T/\varepsilon}
\nabla\phi_k^{*}\check{h}_k\cdot \delta a
\big(\nabla \tilde{u}_{\varepsilon} - \nabla\tilde{u}_0 - \nabla\phi_j\hat{\varphi}_j\big)
+ \int_{\tilde{\Omega}_T/\varepsilon}
\nabla\phi_k^{*}\check{h}_k\cdot \delta a \big(\nabla\tilde{u}_0-\hat{\varphi}\big).
\end{aligned}
\end{equation}

Plugging the equalities $\eqref{f:3-5-2}$, $\eqref{f:3-5-3}$, and $\eqref{f:3-5-6}$ back into $\eqref{f:3-5-5}$, and then combining
the equality $\eqref{f:3-5-4}$, it follows that
\begin{equation*}
\begin{aligned}
\delta H_\varepsilon &=
\int_{\tilde{\Omega}_T/\varepsilon} \big(\check{h} +
\nabla\phi_k^{*}\check{h}_k\big)\cdot \delta a
\big(\nabla \tilde{u}_{\varepsilon} - \nabla\tilde{u}_0 - \nabla\phi_j\hat{\varphi}_j\big)
+ \int_{\tilde{\Omega}_T/\varepsilon}
\nabla\phi_k^{*}\check{h}_k\cdot \delta a \big(\nabla\tilde{u}_0-\hat{\varphi}\big) \\
&-\int_{\mathbb{R}^{d+1}}
\big(\nabla z_j^{*}+\phi_k^{*}\nabla(\check{h}_k\hat{\varphi}_j)
+\nabla(\sigma_{(d+1)lk}^{*}\partial_l(\check{h}_k\hat{\varphi}_j))\big)\cdot \delta a(\nabla\phi_j+e_j)\\
&+\int_{\tilde{\Omega}_T/\varepsilon}
\big(\nabla v^*+\phi_j^{*}\nabla\check{h}_j
+\nabla(\sigma_{(d+1)lj}^{*}\partial_{l}\check{h}_j)\big)
\cdot \delta a\nabla \tilde{u}_{\varepsilon}.
\end{aligned}
\end{equation*}
This yields the desired equality $\eqref{eq:3-5-3}$.

\textbf{Step 3.} Arguments for the estimates $\eqref{pri:3-5-2a}$ and $\eqref{pri:3-5-2b}$.
By a change of variables, we have
\begin{equation}\label{f:3-5-9}
\begin{aligned}
& \bigg\langle
 \bigg(\int_{\mathbb{R}^{d+1}}dz
 \Big(\dashint_{Q_1(z)}\big|
 \check{h}_j(e_j+\nabla\phi_j^{*})\otimes
 \big(\nabla\tilde{w}_\varepsilon+\phi_j\nabla\hat{\varphi}_j+
\nabla(\sigma_{l(d+1)j}\partial_l\hat{\varphi}_j)\big)\big|\Big)^2
 \bigg)^{p}\bigg\rangle^{\frac{1}{p}}\\
&= \varepsilon^{d+2} \bigg\langle
 \bigg(\int_{\mathbb{R}^{d+1}}
 \Big(\dashint_{Q_\varepsilon(\cdot)}\big|
 h_j(e_j+\nabla\phi_j^{*\varepsilon})\otimes
 (\underbrace{\nabla w_\varepsilon+\varepsilon\phi_i^\varepsilon
 \nabla \varphi_i+\varepsilon^2\nabla(\sigma_{l(d+1)j}^{\varepsilon}
 \partial_l\varphi_j)}_{F_\varepsilon})\big|\Big)^2
 \bigg)^{p}\bigg\rangle^{\frac{1}{p}}.
\end{aligned}
\end{equation}
Then, by using H\"older's inequality, Minkowski's inequality,
and Corollary $\ref{corollary:2.1}$ (in that order), it is derived that
\begin{equation}\label{f:3-5-8}
\begin{aligned}
&\bigg\langle
 \bigg(\int_{\mathbb{R}^{d+1}}
 \Big(\dashint_{Q_\varepsilon(\cdot)}\big|
 h_j(e_j+\nabla\phi_j^{*\varepsilon})\otimes F_\varepsilon\big|\Big)^2
 \bigg)^{p}\bigg\rangle^{\frac{1}{p}}\\
&\leq
 \int_{\mathbb{R}^{d+1}}
 \Big\langle
 \Big(\dashint_{Q_\varepsilon(\cdot)}|
 (e_j+\nabla\phi_j^{*\varepsilon})|^{2s'}\Big)^{\frac{sp}{s'}}
 \Big\rangle^{\frac{1}{sp}}
 \Big\langle\Big(\dashint_{U_\varepsilon(\cdot)}|
 F_\varepsilon|^{2}\Big)^{s'p}\Big\rangle^{\frac{1}{s'p}}
 \Big(\dashint_{U_\varepsilon(\cdot)}|
 h_j|^{2s}\Big)^{\frac{1}{s}} dx\\
&\lesssim^{\eqref{pri:2.18}}
 \int_{\tilde{\Omega}_T}
 \Big\langle\Big(\dashint_{U_\varepsilon(\cdot)}|
 F_\varepsilon|^{2}\Big)^{s'p}\Big\rangle^{\frac{1}{s'p}}
 \Big(\dashint_{U_\varepsilon(\cdot)}|
 h_j|^{2s}\Big)^{\frac{1}{s}}\\
&\lesssim
\bigg(\int_{\tilde{\Omega}_T}
 \Big\langle\Big(\dashint_{U_\varepsilon(\cdot)}|
 F_\varepsilon|^{2}\Big)^{s'p}\Big\rangle^{\frac{1}{p}}
 \sigma^{2s'-1}\bigg)^{\frac{1}{s'}}
\bigg(\int_{\tilde{\Omega}_T}
\dashint_{U_\varepsilon(\cdot)}|
 h_j|^{2s}\sigma^{\frac{s}{s'}-2s}\bigg)^{\frac{1}{s}},
\end{aligned}
\end{equation}
where $\sigma(z):=\text{dist}(x,\partial\Omega_0)$ is defined similarly
as in $\eqref{eq:2.5}$, and
$\Omega_0\supseteq\Omega$ is a $C^1$ domain such that $\partial\Omega_0$ is
the hypersurface at a distance of $4\varepsilon$ from $\partial\Omega$
and parallel to it.

On the one hand, from  Fubini's theorem and the weighted Hardy inequality
(see e.g., \cite[Theorem 1.1]{Lehrback-14}), it follows that
\begin{equation}\label{f:3-5-7}
\begin{aligned}
\bigg(\int_{\tilde{\Omega}_T}
\dashint_{U_\varepsilon(\cdot)}|
 h|^{2s}
&{\sigma}^{\frac{s}{s'}-2s}\bigg)^{\frac{1}{s}}
\lesssim_d \bigg(\int_{\tilde{\Omega}_T}
|h|^{2s} \dashint_{Q_\varepsilon(\cdot)}{\sigma}^{\frac{s}{s'}-2s} \bigg)^{\frac{1}{s}}
\lesssim^{\eqref{f:3.41-ap}}_{d,s}
\bigg(\int_{\tilde{\Omega}_T}
|h|^{2s}{\sigma}^{\frac{s}{s'}-2s} dx\bigg)^{\frac{1}{s}}\\
&\lesssim \bigg(\int_{\tilde{\Omega}_{0T}}
|h|^{2s} {\sigma}^{\frac{s}{s'}-2s} \bigg)^{\frac{1}{s}}
 \lesssim \Big(\int_{\tilde{\Omega}_{0T}} |\nabla h|^{2s} {\sigma}^{\frac{s}{s'}}\Big)^{\frac{1}{s}}
 \lesssim R_0^{\frac{1}{s'}}\Big(\int_{\Omega_T} |\nabla h|^{2s} \Big)^{\frac{1}{s}},
\end{aligned}
\end{equation}
where $\tilde{\Omega}_{0T}:=\Omega_0\times(-1,T]$, and it is also noted that $\text{supp}(h)\subset\Omega_T$.
On the other hand, in view of Lemmas $\ref{lemma:4.4}$ and \ref{lemma:3.2-ap},
by setting the weight $\omega_{\sigma}:=\sigma^{2\underline{s}'-1}$ with $\underline{s}'$ being such that $0<s-\underline{s}'\ll 1$,
it is obtained that
\begin{equation*}
\begin{aligned}
&\bigg(\int_{\tilde{\Omega}_T}
 \Big\langle\Big(\dashint_{U_\varepsilon(\cdot)}|
 F_\varepsilon|^{2}\Big)^{s'p}\Big\rangle^{\frac{1}{p}}
{\sigma}^{2s'-1}\bigg)^{\frac{1}{s'}}
\lesssim \bigg(\int_{\tilde{\Omega}_T}
\Big\langle
\big(\dashint_{U_\varepsilon(\cdot)}|\nabla w_\varepsilon|^2\big)^{s'p}\Big\rangle^{\frac{1}{p}}
\omega_{\sigma}\bigg)^{\frac{1}{s'}}\\
&\qquad\qquad\qquad\qquad\qquad+
\bigg(\int_{\tilde{\Omega}_T}
 \Big\langle\Big(\dashint_{U_\varepsilon(\cdot)}|
 \varepsilon\phi_j^\varepsilon
 \nabla \varphi_j+\varepsilon^2\nabla(\sigma_{l(d+1)j}^{\varepsilon}
 \partial_l\varphi_j)|^{2}\Big)^{s'p}\Big\rangle^{\frac{1}{p}}
 \omega_{\sigma}^{2s'-1}\bigg)^{\frac{1}{s'}}\\
&\lesssim^{\eqref{pri:4.2}} \mu_d^2(R_0/\varepsilon)
\bigg\{
\Big(\int_{(O_{2\varepsilon})_T}|\nabla u_0|^{2s'}\omega_{\sigma}
\Big)^{\frac{1}{s'}}
+ \varepsilon^2
\Big(\int_{(\Omega\setminus O_{\varepsilon})_T}\big(|\nabla^2 u_0|^{2s'}
+|\partial_t u_0|^{2s'}\big)\sigma^{2s'-1}
\Big)^{\frac{1}{s'}}\bigg\}\\
&\lesssim^{\eqref{pri:3.4-ap},\eqref{pri:3.5-ap}}R_0^{2-\frac{1}{s'}}
\varepsilon^{2-}\mu_d^2(R_0/\varepsilon)\|\nabla f\|_{L^{2s'}(\Omega_T)}^2.
\end{aligned}
\end{equation*}
This, together with the estimates $\eqref{f:3-5-7}$, $\eqref{f:3-5-8}$, and $\eqref{f:3-5-9}$, yields
the stated estimate $\eqref{pri:3-5-2a}$.

Attention is now turned to the estimate $\eqref{pri:3-5-2b}$. In view of the notation in $\eqref{eq:3-5-2}$ and
$\eqref{eq:3-5-4}$, it follows that
\begin{equation}\label{f:3-5-12}
\begin{aligned}
&\bigg\langle
 \bigg(\int_{\tilde{\Omega}_T/\varepsilon}dz
 \Big(\dashint_{U_1(z)}\big|
 \nabla\phi^{*}_j \check{h}_j \otimes
 (\nabla\tilde{u}_0-\hat{\varphi})\big|\Big)^2
 \bigg)^{p}\bigg\rangle^{\frac{1}{p}} \\
&= \varepsilon^{d+2}\bigg\langle
 \bigg(\int_{\tilde{\Omega}_T}
 \Big(\dashint_{U_\varepsilon(\cdot)}\big|
 (\nabla\phi^{*\varepsilon}_j h_j)\otimes \big(
 \nabla u_0
- S_\varepsilon K_\varepsilon(\psi_{\varepsilon}\nabla u_0)\big)\big|\Big)^2
 \bigg)^{p}\bigg\rangle^{\frac{1}{p}},
\end{aligned}
\end{equation}
and it follows from Minkowski's inequality, H\"older's inequality, and Corollary $\ref{corollary:2.1}$ that
\begin{equation}\label{f:3-5-10}
\begin{aligned}
&\bigg\langle
 \bigg(\int_{\tilde{\Omega}_T}dz
 \Big(\dashint_{U_\varepsilon(z)}\big|
 (\nabla\phi^{*\varepsilon}_j h_j)\otimes \big(
 \nabla u_0
- S_\varepsilon K_\varepsilon(\psi_{\varepsilon}\nabla u_0)\big)\big|\Big)^2
 \bigg)^{p}\bigg\rangle^{\frac{1}{p}}\\
&\leq
\int_{\tilde{\Omega}_T}
\Big(\dashint_{U_\varepsilon(\cdot)}|h_j|^{2s}\Big)^{\frac{1}{s}}
\Big\langle\big(\dashint_{U_\varepsilon(\cdot)}\big|\nabla\phi_j^{*\varepsilon}\big|^{2s'}
\big)^{\frac{sp}{s'}}\Big\rangle^{\frac{1}{sp}}
\Big\langle\Big(\dashint_{U_\varepsilon(\cdot)}
\big|
\nabla u_0
- S_\varepsilon K_\varepsilon(\psi_{\varepsilon}\nabla u_0)\big|^2\Big)^{s'p}\Big\rangle^{\frac{1}{s'p}} \\
&\lesssim^{\eqref{pri:2.18}}_{\mu,\lambda_1,d,p}
\int_{\tilde{\Omega}_T}
\Big(\dashint_{U_\varepsilon(\cdot)}|h_j|^{2s}\Big)^{\frac{1}{s}}
\Big\langle\Big(\dashint_{U_\varepsilon(\cdot)}
\big|
\nabla u_0
- S_\varepsilon K_\varepsilon(\psi_{\varepsilon}\nabla u_0)\big|^2\Big)^{s'p}\Big\rangle^{\frac{1}{s'p}}\\
&\lesssim
\bigg(\int_{\tilde{\Omega}_T}
\big(\dashint_{U_\varepsilon(\cdot)}|
 h_j|^{2s}\big){\sigma}^{\frac{s}{s'}-2s}\bigg)^{\frac{1}{s}}
\bigg(\int_{\tilde{\Omega}_T}
 \Big\langle\Big(\dashint_{U_\varepsilon(\cdot)}|
 \nabla u_0
- S_\varepsilon K_\varepsilon(\psi_{\varepsilon}\nabla u_0)|^{2}\Big)^{s'p}\Big\rangle^{\frac{1}{p}}
 {\sigma}^{2s'-1}\bigg)^{\frac{1}{s'}}.
\end{aligned}
\end{equation}
A method similar to that used for $I_1$ in the estimate $\eqref{f:4.24}$ yields the following computation:
\begin{equation}\label{f:3-5-11}
\begin{aligned}
&\bigg(\int_{\tilde{\Omega}_T}
 \Big\langle\Big(\dashint_{U_\varepsilon(\cdot)}|
 \nabla u_0
- S_\varepsilon K_\varepsilon(\psi_{\varepsilon}\nabla u_0)|^{2}\Big)^{s'p}\Big\rangle^{\frac{1}{p}}
 {\sigma}^{2s'-1}\bigg)^{\frac{1}{s'}} \\
&\lesssim
\bigg(\int_{(O_{2\varepsilon})_T}|\nabla u_0|^{2s'}{\sigma}^{2s'-1}
\bigg)^{\frac{1}{s'}}
+ \varepsilon^2
\bigg(\int_{(\Omega\setminus O_{\varepsilon})_T}
\big(|\nabla^2 u_0|^{2s'} + |\partial_t u_0|^{2s'}
\big){\sigma}^{2s'-1}
\bigg)^{\frac{1}{s'}}\\
&\lesssim^{\eqref{pri:3.4-ap},\eqref{pri:3.5-ap}}R_0^{2-\frac{1}{s'}}
\varepsilon^{2-}\|\nabla f\|_{L^{2s'}(\Omega_T)}^2.
\end{aligned}
\end{equation}

By substituting the estimates $\eqref{f:3-5-7}$ and $\eqref{f:3-5-11}$
back into $\eqref{f:3-5-10}$, it is obtained that
\begin{equation*}
\bigg\langle
 \bigg(\int_{\Omega_T}
 \Big(\dashint_{U_\varepsilon(\cdot)}\big|
 (\nabla\phi^{*\varepsilon}_j h_j)\otimes \big(
 \nabla u_0
- S_\varepsilon K_\varepsilon(\psi_{\varepsilon}\nabla u_0)\big)\big|\Big)^2
 \bigg)^{p}\bigg\rangle^{\frac{1}{p}}
\lesssim
\varepsilon^{2-}\|R_0 \nabla f\|_{L^{2s'}(\Omega_T)}^2
\|\nabla h\|_{L^{2s}(\Omega_T)}^2,
\end{equation*}
and this together with $\eqref{f:3-5-12}$ implies the desired estimate $\eqref{pri:3-5-2b}$.

\textbf{Step 4.} Arguments for the estimates $\eqref{pri:3-5-3a}$ and $\eqref{pri:3-5-3b}$. The proof begins with the estimate $\eqref{pri:3-5-3a}$.
By a rescaling argument,
the auxiliary equations in $\eqref{pde:3-5-3}$ can be rewritten as
\begin{equation}\label{pde:3-5-3*}
\left\{\begin{aligned}
&\big(-\partial_t+\mathcal{L}_\varepsilon^*\big)(v^*_\varepsilon)
=\varepsilon^{d+2}\nabla\cdot\big[\varepsilon(a^{*\varepsilon}
\phi_{j}^{*\varepsilon}-\sigma_{j}^{*\varepsilon}
-a^{*\varepsilon}\partial\sigma_{(d+1)j}^{*\varepsilon})\nabla h_j \\
&\qquad\qquad
\qquad\qquad\qquad\quad-
\varepsilon^2 a^{*\varepsilon}\otimes\sigma_{(d+1)j}^{*\varepsilon}:\partial^2 h_j-\varepsilon^2\sigma_{(d+1)j}^{*\varepsilon}\partial_t h_j\big]
\quad \text{in}~~\tilde{\Omega}_T;\\
& v^*_\varepsilon =0 \quad \text{on}~~\tilde{S}_T;
\qquad
v^*_\varepsilon(\cdot,T) =0 \quad\text{on}~~\Omega,
\end{aligned}\right.
\end{equation}
where $\tilde{S}_T:=(\partial\Omega)\times
(-1,T]$ and
$v_\varepsilon^{*}(x,t):=\varepsilon v^{*}(x/\varepsilon,t/\varepsilon^2)=\varepsilon v^{*}(y,\tau)$ with $y\in\Omega/\varepsilon$ and $\tau\in(-1,T]/\varepsilon^2$.
It is also found that
\begin{equation*}
\begin{aligned}
&\bigg\langle
 \bigg(\int_{\tilde{\Omega}_T/\varepsilon}dz
 \Big(\dashint_{U_1(z)}
 \big|
 \big(\nabla v^* + \phi^{*}_j\nabla\check{h}_j
 +\nabla(\sigma_{(d+1)lj}^{*}\partial_{l}\check{h}_j)\big)
 \otimes\nabla\tilde{u}_\varepsilon\big|\Big)^2
 \bigg)^{p}\bigg\rangle^{\frac{1}{p}}\\
 & = \bigg\langle
 \bigg(\int_{\tilde{\Omega}_T}
 \Big(\dashint_{U_\varepsilon(\cdot)}\big|
 \big(\varepsilon^{-\frac{d+2}{2}}\nabla v_\varepsilon^{*}+\varepsilon^{1+\frac{d+2}{2}}\phi_j^{*\varepsilon}\nabla h_j+\varepsilon^{2+\frac{d+2}{2}}
 \nabla(\sigma_{(d+1)lj}^{*\varepsilon}\partial_{l}h_j)\big)\otimes \nabla u_\varepsilon\big|\Big)^2
 \bigg)^{p}\bigg\rangle^{\frac{1}{p}}.
\end{aligned}
\end{equation*}
Therefore, the right-hand side above can be bounded by
\begin{equation}\label{f:3-5-20}
\begin{aligned}
&\varepsilon^{-d-2}\underbrace{\bigg\langle
 \bigg(\int_{\tilde{\Omega}_T}
 \dashint_{U_\varepsilon(\cdot)}\big|
 \nabla v_\varepsilon^{*}|^2
 \dashint_{U_\varepsilon(\cdot)}
 |\nabla u_\varepsilon|^2
 \bigg)^{p}\bigg\rangle^{\frac{1}{p}}}_{I_1} \\
&\qquad\qquad +\varepsilon^{4+d}
 \underbrace{\bigg\langle
 \bigg(\int_{\tilde{\Omega}_T}\dashint_{U_\varepsilon(\cdot)}
 |\phi_j^{*\varepsilon}\nabla h_j|^2
 \dashint_{U_\varepsilon(\cdot)}|\nabla u_\varepsilon|^2\bigg)^p
 \bigg\rangle^{\frac{1}{p}}}_{I_2}\\
&\qquad\qquad \qquad\qquad
\varepsilon^{6+d}
 \underbrace{\bigg\langle
 \bigg(\int_{\tilde{\Omega}_T}\dashint_{U_\varepsilon(\cdot)}
 |\nabla(\sigma_{(d+1)lj}^{*\varepsilon}\partial_{l}h_j)|^2
 \dashint_{U_\varepsilon(\cdot)}|\nabla u_\varepsilon|^2\bigg)^p
 \bigg\rangle^{\frac{1}{p}}}_{I_3}.
\end{aligned}
\end{equation}

Now, the corresponding estimates are handled term by term.
$I_1$ is addressed first. It follows from Minkowski's inequality and H\"older's inequality that,
\begin{equation}\label{f:3-5-18}
\begin{aligned}
I_1
&\leq
 \int_{\tilde{\Omega}_T}
 \Big\langle\Big(
 \dashint_{U_\varepsilon(\cdot)}\big|
 \nabla v_\varepsilon^{*}|^2\Big)^{sp}
 \Big\rangle^{\frac{1}{sp}}
 \Big\langle
 \Big(\dashint_{U_\varepsilon(\cdot)}
 |\nabla u_\varepsilon|^2\Big)^{s'p}
 \Big\rangle^{\frac{1}{s'p}} \\
& \leq
\bigg(\int_{\tilde{\Omega}_T}
 \Big\langle\Big(
 \dashint_{U_\varepsilon(\cdot)}\big|
 \nabla v_\varepsilon^{*}|^2\Big)^{sp}
 \Big\rangle^{\frac{1}{p}}\bigg)^{\frac{1}{s}}
 \bigg(\int_{\tilde{\Omega}_T}
 \Big\langle
 \Big(\dashint_{U_\varepsilon(\cdot)}
 |\nabla u_\varepsilon|^2\Big)^{s'p}
 \Big\rangle^{\frac{1}{p}}\bigg)^{\frac{1}{s'}}.
\end{aligned}
\end{equation}
In view of the equations $\eqref{pde:3-5-3*}$, by using Theorem
$\ref{thm:C-Z}$ (with $\omega_\sigma =1$) and
Corollary $\ref{corollary:2.1}$, it is obtained that
\begin{equation}\label{f:3-5-19}
\begin{aligned}
&\bigg(\int_{\tilde{\Omega}_T}
 \Big\langle\Big(
 \dashint_{U_\varepsilon(x)}\big|
 \nabla v_\varepsilon^{*}|^2\Big)^{sp}
 \Big\rangle^{\frac{1}{p}}dx\bigg)^{\frac{1}{s}}\\
&\lesssim^{\eqref{pri:B}} \varepsilon^{2d+4}
\bigg(\int_{\tilde{\Omega}_T}
 \Big\langle\Big(
 \dashint_{U_\varepsilon(\cdot)}\big|
 \varepsilon\varpi_j^{*\varepsilon}\nabla h_j
 +\varepsilon^2\tilde{\varpi}_j^{*\varepsilon}(\partial_t,\partial^2)h_j|^2\Big)^{s\bar p}
 \Big\rangle^{\frac{1}{\bar p}}\bigg)^{\frac{1}{s}}\\
& \lesssim
\varepsilon^{2d+4}
\bigg(\int_{\tilde{\Omega}_T}
 \Big\langle
 \Big(\dashint_{U_\varepsilon(\cdot)}|
 \varepsilon\varpi_j^{*\varepsilon}|^{2s'}\Big)^{\frac{s\bar{p}}{s'}}
 \Big\rangle^{\frac{1}{\bar{p}}}
 \dashint_{U_\varepsilon(\cdot)}|\nabla h_j|^{2s}
 \bigg)^{\frac{1}{s}} \\
&\qquad\qquad\qquad\qquad +
\varepsilon^{2d+4}
\bigg(\int_{\tilde{\Omega}_T}
 \Big\langle
 \Big(\dashint_{U_\varepsilon(\cdot)}|
 \varepsilon^2\tilde{\varpi}_j^{*\varepsilon}|^{2s'}\Big)^{\frac{s\bar{p}}{s'}}
 \Big\rangle^{\frac{1}{\bar{p}}}
 \dashint_{U_\varepsilon(\cdot)}|(\partial_t,\partial^2) h_j|^{2s}
 \bigg)^{\frac{1}{s}}\\
& \lesssim^{\eqref{pri:2.18},\eqref{pri:2.19}}
\varepsilon^{2d+6}
\Big(\int_{\Omega_T}|\nabla h|^{2s}\Big)^{\frac{1}{s}}
+ \varepsilon^{2d+8}\mu_{d}^2(R_0/\varepsilon)
\Big(\int_{\Omega_T}|(\partial_t,\partial^2) h|^{2s}\Big)^{\frac{1}{s}}.
\end{aligned}
\end{equation}
On account of the first equation in $\eqref{pde:3-5-4}$, employing
Theorem $\ref{thm:C-Z}$ again, we have
\begin{equation}\label{f:3-5-13}
\begin{aligned}
\bigg(\int_{\Omega_T}
 \Big\langle
 \Big(\dashint_{U_\varepsilon(\cdot)}
 |\nabla u_\varepsilon|^2\Big)^{s'p}
 \Big\rangle^{\frac{1}{p}}\bigg)^{\frac{1}{s'}}
\lesssim^{\eqref{pri:B}}
\bigg(\int_{\Omega_T}
 \Big\langle
 \Big(\dashint_{U_\varepsilon(\cdot)}
 |f|^2\Big)^{s'\bar{p}}
 \Big\rangle^{\frac{1}{\bar{p}}}\bigg)^{\frac{1}{s'}}
\lesssim
\Big(\int_{\Omega_T}
 |f|^{2s'}\Big)^{\frac{1}{s'}}.
\end{aligned}
\end{equation}
Collecting the estimates $\eqref{f:3-5-18}$, $\eqref{f:3-5-19}$, and $\eqref{f:3-5-13}$, one arrives at
\begin{equation}\label{f:3-5-21}
\varepsilon^{-d-2} I_1 \lesssim \bigg[\varepsilon^{d+4}
\Big(\int_{\Omega_T}|\nabla h|^{2s}\Big)^{\frac{1}{s}}
+ \varepsilon^{d+6}\mu_{d}^2(R_0/\varepsilon)
\Big(\int_{\Omega_T}|(\partial_t,\partial^2) h|^{2s}\Big)^{\frac{1}{s}}
\bigg]
\Big(\int_{\Omega_T}
 |f|^{2s'}\Big)^{\frac{1}{s'}}.
\end{equation}

We continue to handle $I_2$ and $I_3$ in $\eqref{f:3-5-20}$, a computation similar to that for $I_1$ yields
\begin{equation*}
\begin{aligned}
I_2
&\leq
\int_{\tilde{\Omega}_T}
 \Big\langle\Big(
 \dashint_{U_\varepsilon(\cdot)}\big|
 \phi_j^{*\varepsilon}\nabla h_j|^2\Big)^{sp}
 \Big\rangle^{\frac{1}{sp}}
 \Big\langle
 \Big(\dashint_{U_\varepsilon(\cdot)}
 |\nabla u_\varepsilon|^2\Big)^{s'p}
 \Big\rangle^{\frac{1}{s'p}}\\
& \leq
\bigg(\int_{\tilde{\Omega}_T}
 \Big\langle\Big(
 \dashint_{U_\varepsilon(\cdot)}\big|
 \phi_j^{*\varepsilon}\nabla h_j|^2\Big)^{sp}
 \Big\rangle^{\frac{1}{p}}\bigg)^{\frac{1}{s}}
 \bigg(\int_{\Omega_T}
 \Big\langle
 \Big(\dashint_{U_\varepsilon(\cdot)}
 |\nabla u_\varepsilon|^2\Big)^{s'p}
 \Big\rangle^{\frac{1}{p}}dx\bigg)^{\frac{1}{s'}} \\
& \lesssim^{\eqref{pri:2.18},\eqref{f:3-5-13}}
\Big(\int_{\Omega_T}|\nabla h|^{2s}\Big)^{\frac{1}{s}}
\Big(\int_{\Omega_T}
 |f|^{2s'}\Big)^{\frac{1}{s'}},
\end{aligned}
\end{equation*}
and
\begin{equation*}
\begin{aligned}
I_3
&\leq
\int_{\tilde{\Omega}_T}
 \Big\langle\Big(
 \dashint_{U_\varepsilon(\cdot)}|\nabla(\sigma_{(d+1)lj}^{*\varepsilon}\partial_{l}h_j)
 |^2\Big)^{sp}
 \Big\rangle^{\frac{1}{sp}}
 \Big\langle
 \Big(\dashint_{U_\varepsilon(\cdot)}
 |\nabla u_\varepsilon|^2\Big)^{s'p}
 \Big\rangle^{\frac{1}{s'p}}\\
& \lesssim^{\eqref{pri:2.19},\eqref{f:3-5-13}} \bigg[\varepsilon^{-2}
\Big(\int_{\Omega_T}|\nabla h|^{2s}\Big)^{\frac{1}{s}}
+ \mu_{d}^2(R_0/\varepsilon)
\Big(\int_{\Omega_T}|\nabla^2h|^{2s}\Big)^{\frac{1}{s}}
\bigg]
\Big(\int_{\Omega_T}
 |f|^{2s'}\Big)^{\frac{1}{s'}}.
\end{aligned}
\end{equation*}
As a result, plugging the above two estimates and $\eqref{f:3-5-21}$ back into the estimate $\eqref{f:3-5-20}$,
we can derive the stated estimate $\eqref{pri:3-5-3a}$.

Finally, we turn to the estimate $\eqref{pri:3-5-3b}$. Before the proof is formally begun, as a preparation, we assert that there hold the following estimates:
\begin{subequations}
\begin{align}
&\bigg(\int_{\tilde{\Omega}_T}
\Big\langle
\Big(\dashint_{U_\varepsilon(\cdot)}\big|\varepsilon
 \varpi_k^{*\varepsilon}\varphi_j|^{2s'}\Big)^{\frac{s\bar{p}}{s'}}
 \Big\rangle^{\frac{s'}{s\bar{p}}}\bigg)^{\frac{1}{s'}}
\lesssim
\varepsilon^2
\Big(\int_{\Omega_T} |f|^{2s'}\Big)^{\frac{1}{s'}};
\label{f:3-5-14}\\
& \bigg(\int_{\tilde{\Omega}_T}
\Big\langle
\Big(\dashint_{U_\varepsilon(\cdot)}\big|\varepsilon^2
 \tilde{\varpi}_k^{*\varepsilon}\varphi_j|^{2s'}\Big)^{\frac{s\bar{p}}{s'}}
\Big\rangle^{\frac{s'}{s\bar{p}}}\bigg)^{\frac{1}{s'}}
\lesssim \varepsilon^4\mu_d^2(R_0/\varepsilon)
\Big(\int_{\Omega_T} |f|^{2s'}\Big)^{\frac{1}{s'}};
\label{f:3-5-17}\\
&\bigg(\int_{\tilde{\Omega}_T}
\Big\langle
\Big(\dashint_{U_\varepsilon(\cdot)}\big|\varepsilon
 \varpi_k^{*\varepsilon}\nabla\varphi_j|^{2s'}\Big)^{\frac{s\bar{p}}{s'}}
 \Big\rangle^{\frac{s'}{s\bar{p}}}\omega_{\sigma}\bigg)^{\frac{1}{s'}}
\lesssim R_0^{2-\frac{1}{s'}}
\varepsilon^{2-}\|\nabla f\|_{L^{2s'}(\Omega_T)}^2;
\label{f:3-5-15}\\
&\bigg(\int_{\tilde{\Omega}_T}
\Big\langle
\Big(\dashint_{U_\varepsilon(\cdot)}\big|\varepsilon^2
 \tilde{\varpi}_k^{*\varepsilon}(\partial_t,\partial^2)(\varphi_j)
 |^{2s'}\Big)^{\frac{s\bar{p}}{s'}}
 \Big\rangle^{\frac{s'}{s\bar{p}}}\omega_{\sigma}\bigg)^{\frac{1}{s'}}
\lesssim
R_0^{2-\frac{1}{s'}}
\varepsilon^{2-}\mu_d^2(R_0/\varepsilon)\|\nabla f\|_{L^{2s'}(\Omega_T)}^2.
\label{f:3-5-16}
\end{align}
\end{subequations}

We now show the proof of the estimate $\eqref{pri:3-5-3b}$.
By H\"older's inequality, Minkowski's inequality,
Corollary $\ref{corollary:2.1}$, and a triangle inequality, it is obtained that
\begin{equation}\label{f:3-5-22}
\begin{aligned}
&\bigg\langle
 \bigg(\int_{\mathbb{R}^{d+1}}
 \Big(\dashint_{Q_1(\cdot)}\big|
 \big(\nabla z_j^{*}+\phi_k^{*}\nabla(\check{h}_k\hat{\varphi}_j)
+\nabla(\sigma_{(d+1)lk}^{*}\partial_l(\check{h}_k\hat{\varphi}_j))\big)\otimes
 (\nabla\phi_j+e_j)\big|\Big)^2
 \bigg)^{p}\bigg\rangle^{\frac{1}{p}}\\
&\lesssim^{\eqref{pri:2.18}}
\int_{\mathbb{R}^{d+1}}
\Big\langle \big(\dashint_{Q_1(\cdot)}\big|
 \big(\nabla z^*+\phi_k^{*}\nabla(\check{h}_k\hat{\varphi})
 +\nabla(\sigma_{(d+1)lk}^{*}\partial_l(\check{h}_k\hat{\varphi}))
 \big)\big|^2\big)^{ps}
\Big\rangle^{\frac{1}{ps}}\\
&\lesssim \int_{\mathbb{R}^{d+1}}
\Big\langle \big(\dashint_{Q_1(\cdot)}\big|
 \nabla z^*\big|^2\big)^{ps}
\Big\rangle^{\frac{1}{ps}}
+
\int_{\mathbb{R}^{d+1}}
\Big\langle \big(\dashint_{Q_1(\cdot)}\big|\phi_k^{*}\nabla(\check{h}_k\hat{\varphi})
+\nabla(\sigma_{(d+1)lk}^{*}\partial_l(\check{h}_k\hat{\varphi}))\big|^2\big)^{ps}
\Big\rangle^{\frac{1}{ps}}.
\end{aligned}
\end{equation}

Using the annealed Calder\'on-Zygmund estimate again (see Theorem $\ref{thm:C-Z}$ for the case of $\mathbb{R}^{d+1}$), we can derive that
\begin{equation*}
\begin{aligned}
&\int_{\mathbb{R}^{d+1}}dz
\Big\langle \big(\dashint_{Q_1(z)}\big|
 \nabla z_j^*\big|^2\big)^{ps}
\Big\rangle^{\frac{1}{sp}}
\lesssim^{\eqref{pri:B}}
\int_{\mathbb{R}^{d+1}}dz
 \Big\langle\Big(
 \dashint_{Q_1(z)}\big|
 \varpi_k^{*}\nabla (\check{h}_k\hat{\varphi}_j)
 +\tilde{\varpi}_k^{*}(\partial_t,\partial^2)(\check{h}_k\hat{\varphi}_j)|^2
 \Big)^{s\bar p}
 \Big\rangle^{\frac{1}{s\bar p}}\\
&=\varepsilon^{d+2}\underbrace{\int_{\mathbb{R}^{d+1}}
 \Big\langle\Big(
 \dashint_{U_\varepsilon(\cdot)}\big|
 \varepsilon\varpi_k^{*\varepsilon}\nabla (h_k\varphi_j)|^2
 \Big)^{s\bar p}
 \Big\rangle^{\frac{1}{s\bar p}}}_{J_1}
 +
 \varepsilon^{d+2}\underbrace{\int_{\mathbb{R}^{d+1}}
 \Big\langle\Big(
 \dashint_{U_\varepsilon(\cdot)}\big|\varepsilon^2
 \tilde{\varpi}_k^{*\varepsilon}
 (\partial_t,\partial^2)(h_k\varphi_j)|^2
 \Big)^{s\bar p}
 \Big\rangle^{\frac{1}{s\bar p}}}_{J_2}.
\end{aligned}
\end{equation*}
It follows from H\"older's inequality and the estimates prepared above that
\begin{equation*}
\begin{aligned}
J_1
&\lesssim
\int_{\mathbb{R}^{d+1}}
\Big\langle
\Big(\dashint_{U_\varepsilon(\cdot)}\big|\varepsilon
 \varpi_k^{*\varepsilon}\varphi_j\nabla h_k|^2\Big)^{s\bar{p}}\Big\rangle^{\frac{1}{s\bar{p}}}
+
\int_{\mathbb{R}^{d+1}}
\Big\langle
\Big(\dashint_{U_\varepsilon(\cdot)}\big|\varepsilon
 \varpi_k^{*\varepsilon}\nabla \varphi_j h_k|^2\Big)^{s\bar{p}}\Big\rangle^{\frac{1}{s\bar{p}}} \\
& \lesssim
\int_{\tilde{\Omega}_T}
\Big\langle
\Big(\dashint_{U_\varepsilon(\cdot)}\big|\varepsilon
 \varpi_k^{*\varepsilon}\varphi_j|^{2s'}\Big)^{\frac{s\bar{p}}{s'}}
 \Big\rangle^{\frac{1}{s\bar{p}}}
 \Big(\dashint_{U_\varepsilon(\cdot)}|\nabla h_k|^{2s}\Big)^{\frac{1}{s}} \\
&\qquad\qquad +
\bigg(\int_{\tilde{\Omega}_T}
\Big\langle
\Big(\dashint_{U_\varepsilon(\cdot)}\big|\varepsilon
 \varpi_k^{*\varepsilon}\nabla\varphi_j|^{2s'}\Big)^{\frac{s\bar{p}}{s'}}
 \Big\rangle^{\frac{s'}{s\bar{p}}}{\sigma}^{2s'-1}\bigg)^{\frac{1}{s'}}
 \bigg(\int_{\tilde{\Omega}_T}\Big(\dashint_{U_\varepsilon(\cdot)}|h_k|^{2s}\Big)
 {\sigma}^{\frac{s}{s'}-2s}\bigg)^{\frac{1}{s}}\\
&\lesssim^{\eqref{f:3-5-14},\eqref{f:3-5-15}}_{\eqref{f:3-5-7}}
\varepsilon^2\bigg(\int_{\Omega_T}
|f|^{2s'}\bigg)^{\frac{1}{s'}}
\bigg(\int_{\Omega_T}
|\nabla h|^{2s}\bigg)^{\frac{1}{s}}
+ \varepsilon^{2-}
\bigg(\int_{\Omega_T}
|R_0\nabla f|^{2s'}\bigg)^{\frac{1}{s'}}
\bigg(\int_{\Omega_T}
|\nabla h|^{2s}\bigg)^{\frac{1}{s}}.
\end{aligned}
\end{equation*}
A computation similar to that given for $J_1$ leads to
\begin{equation*}
\begin{aligned}
J_2
&\lesssim
\int_{\mathbb{R}^{d+1}}
\Big\langle
\Big(\dashint_{U_\varepsilon(\cdot)}\big|\varepsilon^2
 \tilde{\varpi}_k^{*\varepsilon}(\partial_t,\partial^2)(\varphi_j)h_k|^2
 \Big)^{s\bar{p}}\Big\rangle^{\frac{1}{s\bar{p}}}
 +
\int_{\mathbb{R}^{d+1}}
\Big\langle
\Big(\dashint_{U_\varepsilon(\cdot)}\big|\varepsilon^2
 \tilde{\varpi}_k^{*\varepsilon}\varphi_j(\partial_t,\partial^2)(h_k)|^2
 \Big)^{s\bar{p}}\Big\rangle^{\frac{1}{s\bar{p}}}\\
&\lesssim^{\eqref{f:3-5-16},\eqref{f:3-5-7}}_{\eqref{f:3-5-17}} \varepsilon^{2-}\mu_d^2(R_0/\varepsilon)
\bigg(\int_{\Omega_T}
|R_0\nabla f|^{2s'}\bigg)^{\frac{1}{s'}}
\bigg(\int_{\Omega_T}
|\nabla h|^{2s}\bigg)^{\frac{1}{s}}\\
&\qquad\qquad\qquad\qquad + \varepsilon^{4}\mu_d^2(R_0/\varepsilon)
\bigg(\int_{\Omega_T}
|f|^{2s'}\bigg)^{\frac{1}{s'}}
\bigg(\int_{\Omega_T}
|(\partial_t,\partial^2)(h)|^{2s}\bigg)^{\frac{1}{s}}.
\end{aligned}
\end{equation*}

Then, by combining the above two estimates, it is found that
\begin{equation*}
\begin{aligned}
\int_{\mathbb{R}^{d+1}}dz
\Big\langle \big(\dashint_{Q_1(z)}\big|
 \nabla z_j^*\big|^2\big)^{ps}
\Big\rangle^{\frac{1}{sp}}
&\lesssim
\varepsilon^{d+4-}\mu_d^2(R_0/\varepsilon)
\bigg(\int_{\Omega_T}
|R_0\nabla f|^{2s'}\bigg)^{\frac{1}{s'}}
\bigg(\int_{\Omega_T}
|\nabla h|^{2s}\bigg)^{\frac{1}{s}}\\
&+ \varepsilon^{d+6}\mu_d^2(R_0/\varepsilon)
\bigg(\int_{\Omega_T}
|f|^{2s'}\bigg)^{\frac{1}{s'}}
\bigg(\int_{\Omega_T}
|(\partial_t,\partial^2)(h)|^{2s}\bigg)^{\frac{1}{s}}.
\end{aligned}
\end{equation*}
It is noted that the estimate of the last term in $\eqref{f:3-5-22}$ can be absorbed into the estimates for $J_1$ and $J_2$. Therefore, the above estimate has already implied the desired estimate $\eqref{pri:3-5-3b}$.

Finally, it is pointed out that the proofs of the above assertions from $\eqref{f:3-5-14}$ to $\eqref{f:3-5-16}$ merely rely on the conclusion of Corollary $\ref{corollary:2.1}$ and Lemmas $\ref{lemma:5}$, $\ref{lemma:2.4}$, and \ref{lemma:3.2-ap}. To be short, here we briefly indicate the proof of the estimate $\eqref{f:3-5-15}$, i.e.,
\begin{equation*}
\begin{aligned}
&\bigg(\int_{\tilde{\Omega}_T}
\Big\langle
\Big(\dashint_{U_\varepsilon(\cdot)}\big|\varepsilon
 \varpi_k^{*\varepsilon}\nabla\varphi_j|^{2s'}\Big)^{\frac{s\bar{p}}{s'}}
 \Big\rangle^{\frac{s'}{s\bar{p}}}\omega_{\sigma}\bigg)^{\frac{1}{s'}}
\lesssim^{\eqref{pri:2.18},\eqref{pri:3.42a},\eqref{pri:2.17}}
\Big(\int_{\Omega_T} |\nabla(\eta\partial_ju_0)|^{2s'}
{\sigma}^{2\underline{s}'-1}
\Big)^{\frac{1}{s'}} \\
&\lesssim
\bigg(\int_{(O_{2\varepsilon})_T}|\nabla u_0|^{2s'}{\sigma}^{2\underline{s}'-1}
\bigg)^{\frac{1}{s'}}
+\varepsilon^2
\bigg(\int_{(\Omega\setminus O_{\varepsilon})_T}|\nabla^2 u_0|^{2s'}{\sigma}^{2\underline{s}'-1}
\bigg)^{\frac{1}{s'}}
\lesssim^{\eqref{pri:3.4-ap},\eqref{pri:3.5-ap}} R_0^{2-\frac{1}{s'}}
\varepsilon^{2-}\|\nabla f\|_{L^{2s'}(\Omega_T)}^2,
\end{aligned}
\end{equation*}
and the remaining estimates can be obtained similarly. This concludes the entire proof.
\end{proof}

\subsection{Proof of Theorems $\ref{thm:1.1}$ and  $\ref{thm:1.2}$}\label{subsection:5.4}

\noindent
Given that the properties and lemmas required to prove Theorems $\ref{thm:1.1}$ and $\ref{thm:1.2}$ are all provided with detailed proofs
in this section, we mainly present the flow of the proofs in Fig. $\ref{pic:5.1}$ and Fig. $\ref{pic:5.2}$, respectively.

\begin{figure}[htbp]
\centering 
\resizebox{0.85\textwidth}{!}{
\begin{tikzpicture}[
    node distance=0.7cm and 0.8cm,
    >=Stealth, 
    box/.style={
        rectangle, draw,
        minimum width=3.2cm, minimum height=0.9cm,
        align=center, text width=3cm
    },
    smallbox/.style={
        rectangle, draw,
        minimum width=3.5cm, minimum height=0.7cm,
        align=center, text width=3cm
    },
    bigbox/.style={
    rectangle, draw,
        minimum width=6cm, minimum height=1.5cm,
        align=center, text width=5.5cm
    },
    bbigbox/.style={
    rectangle, draw,
        minimum width=8cm, minimum height=1.8cm,
        align=center, text width=7.5cm
    },
    cbigbox/.style={
    rectangle, draw, dashed,
    inner sep=10pt, 
    rounded corners=3pt 
    }
]

\node[smallbox] (n11) {Lemma $\ref{lemma:3.1}$};
\node[smallbox, below=of n11] (n21) {Lemma $\ref{lemma:4.3}$};
\node[smallbox, below=of n21] (n31) {Lemma $\ref{lemma:4.4}$};
\node[smallbox, below=of n31] (n41) {Lemma $\ref{lemma:4.2}$};

\node[smallbox, left=1.5cm of n11] (n10) {Theorem $\ref{thm:1.0}$};
\node[smallbox, left=1.5cm of n21] (n20) {Lemma $\ref{lemma:2.4}$};
\node[smallbox, left=1.5cm of n31] (n30) {Lemma $\ref{lemma:2.5}$};
\node[smallbox, left=1.5cm of n41] (n40) {Lemma $\ref{lemma:5}$};

\node[smallbox, left=of n20, draw=white, text opacity=0] (n2r) {};
\node[box, below=0.15cm of n2r] (n3r) {Lemma $\ref{lemma:3.4}$};

\node[box, right=3cm of n21] (n22) {Proposition $\ref{P:4.2}$};
\node[box, right=3cm of n31] (n23) {Proposition $\ref{P:4.1}$};


\node[cbigbox,fit=(n10) (n20) (n30) (n40)] (f1*){};

\node[dashed, draw, fit = (n11) (n21), inner sep = 0.2cm](ff1){};

\node[dashed, draw, fit= (n31) (n41), inner sep=0.2cm] (ff3) {};

\node[box] (n31) at([xshift=10cm,yshift=1.5cm]ff3.east){\textbf{Theorem \ref{thm:1.1}}};



\draw[->] (n3r) to[bend left=15] node[pos=0.5, left, sloped] {} (ff1);
\draw[->] (n3r) to[bend right=15] node[pos=0.8, above, sloped] {
$+$Theorem  $\ref{thm:C-Z}$} (ff3);

\draw[->] (ff3)
-- node[pos=0.5, above, sloped] {$+$Lemma \ref{lemma:3.2-ap}} (n23);
\draw[->] (ff1)
-- node[pos=0.5, below, sloped] {$+$Lemma \ref{lemma:3.2-ap}} (n22);

\draw[->] (n23)
-- (n31);

\draw[->] (n22)
-- (n31);




%


%

\end{tikzpicture} %
}
\caption{On the proof structure of Theorem $\ref{thm:1.1}$} 
\label{pic:5.1}
\end{figure}

\begin{figure}[htbp]
\centering 
\resizebox{0.9\textwidth}{!}{
\begin{tikzpicture}[
    node distance=0.7cm and 0.8cm,
    >=Stealth, 
    box/.style={
        rectangle, draw,
        minimum width=3.2cm, minimum height=0.9cm,
        align=center, text width=3cm
    },
    smallbox/.style={
        rectangle, draw,
        minimum width=3.5cm, minimum height=0.7cm,
        align=center, text width=3cm
    },
    bigbox/.style={
    rectangle, draw,
        minimum width=6cm, minimum height=1.5cm,
        align=center, text width=5.5cm
    },
    bbigbox/.style={
    rectangle, draw,
        minimum width=8cm, minimum height=1.7cm,
        align=center, text width=7.5cm
    },
    cbigbox/.style={
    rectangle, draw, dashed,
    inner sep=10pt, 
    rounded corners=3pt 
    }
]

\node[bigbox, draw=white, text opacity=0] (k11) {}; 
\node[smallbox, right=of k11, draw=white, text opacity=0] (k12) {}; 
\node[bigbox, right=of k12] (K13) {Lemma $\ref{lemma:4.4}$};
\node[smallbox, right=of K13, draw=white, text opacity=0] (k14) {}; 
\node[bigbox, right=2cm of k14, draw=white, text opacity=0] (k15) {}; 
\node[bigbox,below=of k11] (K21) {Theorem $\ref{thm:C-Z}$};
\node[smallbox, right=of K21,draw=white, text opacity=0] (k22) {Lemma}; 
\node[bbigbox, below=of K13] (K23) {Theorem $\ref{thm:1.0}$;\qquad Lemma $\ref{lemma:2.4}$};
\node[smallbox, right=of K23, draw=white, text opacity=0] (k24) {Lemma}; 
\node[bigbox, below=3.8cm of k15] (k25) {\textbf{Theorem \ref{thm:1.2}}};

\node[smallbox, below=1.5cm of K21] (K31) {Corollary $\ref{corollary:2.1}$};
\node[smallbox, right=of K31,draw=white, text opacity=0] (k32) {Lemma}; 
\node[bbigbox, below=of K23] (K33) {Lemma $\ref{lemma:2.5}$;\qquad Lemma $\ref{lemma:5}$};
\node[smallbox, right=of K33, draw=white, text opacity=0] (k34) {Lemma}; 
\node[smallbox, right=of k34,draw=white, text opacity=0] (k25) {}; 

\node[smallbox,below=of K31] (K41) {Lemma \ref{lemma:3.2-ap}};
\node[bigbox, below=of K33] (K43) {Proposition $\ref{P:3.1}$};


\node[cbigbox,fit=(K31) (K41)] (f1){};

\node[cbigbox, fit=(K23) (K33)] (f2) {~\\~\textbf{Weights} $\&$ \textbf{Correctors}~};

\draw[->,thick] (K21) to[bend right=30]
coordinate[pos=0.5] (mid1)
node[pos=0.5, below, sloped]
 {} (K43);
\draw[->] (K21) to[bend right=18] node[pos=0.5, left, sloped] {} (K13);


\draw[thick,
decoration={markings, mark=at position 0.7 with {\arrow{>}}},
postaction={decorate}] (f2) to[bend right=25]
    (mid1) node[midway, right] {};

\draw[->] (K13) to[bend left=18] node[pos=0.5, above, sloped] {$+$ Lemma \ref{lemma:3.2-ap}} (k25);

\draw[->,thick] (K43) -- (k25);

\draw[->,dashed]  (K13) -- (K43);

%
%
%

\node at ($(K21.south)!0.5!(f1.north)$) {$+$};

\node at ($(K21.east)!0.7!(f2.west)$) {$+$};

\end{tikzpicture} %
}
\caption{On the proof structure of Theorem $\ref{thm:1.2}$} 
\label{pic:5.2}
\end{figure}

\medskip
\noindent
\textbf{Proof of Theorem $\ref{thm:1.2}$.}
The desired estimate $\eqref{pri:1.4}$ can be easily derived from
the corresponding result $\eqref{pri:3-5-1}$ in Proposition $\ref{P:3.1}$, while the estimate $\eqref{pri:1.3}$ follows from
Lemma $\ref{lemma:4.4}$ coupled with Lemma $\ref{lemma:3.2-ap}$
as shown in Fig. $\ref{pic:5.2}$, and the details indeed have been presented in this section.
\qed

\bigskip
\noindent
\textbf{Acknowledgements}~
The second author would like to express the gratitude to
Prof. Felix Otto and Prof. Yu Gu for
the insightful discussions on the estimates of correctors during the summer of 2025 and 2026.
The first author was supported by the National Natural Science Foundation of China (Grant NO. 12371096).
The second author was supported by the Natural Science Foundation of Gansu Province, China (Grant No. 26JRRA174).
This project partially supported by the Tianyuan Fund for Mathematics of the National Natural Science Foundation of China (Grant No. 12326102).

\bigskip
\noindent
\textbf{Data Availability Statement}  Data sharing is not applicable to this article as no datasets were generated or analysed during the current study.

\bigskip
\noindent
\textbf{Declarations}

\noindent
\medskip
\textbf{Conflicts of Interest}
Both authors confirm that we do not have any conflict of interest.

\section*{Appendix}  
\addcontentsline{toc}{section}{Appendix}  

\renewcommand{\thesubsection}{\Alph{subsection}}  
\setcounter{subsection}{0}  

\numberwithin{equation}{subsection}  
\renewcommand{\theequation}{\thesubsection.\arabic{equation}}  

\makeatletter
\newtheoremstyle{boldstyle}  
  {\topsep}   
  {\topsep}   
  {\normalfont}  
  {0pt}       
  {\bfseries} 
  {.}         
  {5pt plus 1pt minus 1pt}  
  {}          
\makeatother

\theoremstyle{boldstyle}
\newtheorem{appendixlemma}{Lemma}[subsection]
\renewcommand{\theappendixlemma}{\thesubsection.\arabic{appendixlemma}}

\subsection{Properties of weights and interpolations}
\begin{appendixlemma}[\cite{Duoandikoetxea01}]\label{lemma:weight}
Let $\omega\in A_p$, $1\leq p<\infty$. Then, we have the following properties:
\begin{subequations}
\begin{align}
& [\tilde{\omega}]_{A_p} = [\omega]_{A_p},
~\text{where}~\tilde{\omega} = \omega(\varepsilon\cdot);  \label{f:apw-1}\\
& \omega(Q)\big(\frac{|S|}{|Q|}\big)^{p}\leq [\omega]_{A_p}\omega(S)
\quad \forall S\subset Q\subset\mathbb{R}^{d+1}. \label{f:apw-2}
\end{align}
\end{subequations}
Also, there exists $\epsilon>0$, depending only on $p$ and $[\omega]_{A_p}$, such that
for any $Q$, it holds that
\begin{subequations}
\begin{align}
&\Big(\dashint_{Q}\omega^{1+\epsilon}\Big)^{\frac{1}{1+\epsilon}}
\lesssim_{d,p,[\omega]_{A_p}}\dashint_{Q}\omega;\label{f:apw-3} \\
& \omega(S)\lesssim_{d,p,[\omega]_{A_p}}  \big(\frac{|S|}{|Q|}\big)^{\delta_0}\omega(Q),
\qquad \forall S\subset Q,
\label{f:apw-4}
\end{align}
\end{subequations}
where $\delta_0 = \epsilon/(1+\epsilon)$. Moreover, given any $1<p<\infty$ and $c_0>1$
there exist positive constants $C=C(d,p,c_0)$ and $\gamma=\gamma(d,p,c_0)$ such that
for all $\omega\in A_p$ we have
\begin{equation}\label{f:apw-5}
  [\omega]_{A_p}\leq c_0
\quad \Rightarrow \quad
  [\omega]_{A_{p-\gamma}}\leq C.
\end{equation}
\end{appendixlemma}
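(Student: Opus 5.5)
The statement is Lemma~\ref{lemma:weight}: the standard properties of Muckenhoupt weights, here taken with respect to parabolic cubes $Q_r(z)=B_r(x)\times(t-r^2,t+r^2)$. The plan is to treat $(\mathbb{R}^{d+1},\text{d},dz)$ as a space of homogeneous type. The measure is Lebesgue, parabolic cubes are the metric balls up to constants, and $|Q_{2r}|=2^{d+2}|Q_r|$, so the measure is doubling. Every classical argument in \cite{Duoandikoetxea01} uses only doubling plus a Calder\'on--Zygmund/Vitali covering. Such coverings exist here because the dyadic decomposition adapted to the scaling $(2^{-k},4^{-k})$ in $(x,t)$ gives a nested family of parabolic dyadic cubes. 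I would therefore run the Euclidean proofs line by line with this dyadic grid.

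For \eqref{f:apw-1}, note that $\omega(\varepsilon\cdot)$ must be read with the parabolic dilation $(x,t)\mapsto(\varepsilon x,\varepsilon^2 t)$. This map sends parabolic cubes to parabolic cubes, and averages are invariant under it by change of variables. Hence the supremum defining $[\cdot]_{A_p}$ is unchanged.

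For \eqref{f:apw-2}, apply H\"older's inequality as $|S|=\int_S \omega^{1/p}\omega^{-1/p}\le \omega(S)^{1/p}\big(\int_Q\omega^{-1/(p-1)}\big)^{(p-1)/p}$. Raising to the power $p$ and using \eqref{f:w-1} on $Q$ gives the bound. The case $p=1$ is the same argument with the essential supremum, via \eqref{f:w-2}.

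The main step is the reverse H\"older inequality \eqref{f:apw-3}. I would first show, from \eqref{f:apw-2} applied to $Q\setminus S$, that $|S|\le\alpha|Q|$ implies $\omega(S)\le\beta\,\omega(Q)$ with some $\beta<1$. Next I would run a Calder\'on--Zygmund stopping-time decomposition of $\omega$ on the parabolic dyadic subcubes of $Q$ at levels $\lambda_k=\lambda_0 2^{(d+2)k}\dashint_Q\omega$. This gives $\omega(\{\omega>\lambda_{k+1}\})\le\beta\,\omega(\{\omega>\lambda_k\})$, hence geometric decay of the distribution function of $\omega$ with respect to $\omega\,dz$. Integrating the layer cake formula with a small exponent $\epsilon$ then yields \eqref{f:apw-3}. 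I expect this step to be the main obstacle. The point needing care is that a stopping cube's parent is again a parabolic cube of comparable measure, ratio $2^{d+2}$, which the dyadic grid guarantees.

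Given \eqref{f:apw-3}, \eqref{f:apw-4} follows from H\"older's inequality: $\omega(S)\le|S|^{\epsilon/(1+\epsilon)}\big(\int_Q\omega^{1+\epsilon}\big)^{1/(1+\epsilon)}\lesssim(|S|/|Q|)^{\delta_0}\omega(Q)$.

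Finally, for \eqref{f:apw-5}, I would use that $\sigma=\omega^{-1/(p-1)}\in A_{p'}$ with $[\sigma]_{A_{p'}}=[\omega]_{A_p}^{1/(p-1)}$. Applying \eqref{f:apw-3} to $\sigma$ with exponent $1+\epsilon$ gives $\omega\in A_{p-\gamma}$, where $(p-\gamma-1)(1+\epsilon)=p-1$. Tracking the constants in the reverse H\"older proof shows that $\epsilon$ and the implicit constant depend only on $d,p$ and an upper bound $c_0$ for the characteristic. This yields uniform $\gamma$ and $C$.
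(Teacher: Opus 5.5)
Your route is the standard one behind the citation \cite{Duoandikoetxea01}; the paper gives no proof of its own. Transplanted to the parabolic dyadic grid, your arguments for \eqref{f:apw-1}, \eqref{f:apw-2}, \eqref{f:apw-4} and \eqref{f:apw-5} are correct. Reading $\omega(\varepsilon\cdot)$ as the parabolic dilation $(x,t)\mapsto(\varepsilon x,\varepsilon^2 t)$ is in fact necessary, since isotropic dilations do not map parabolic cubes to parabolic cubes.

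\textbf{The gap: the stopping levels in the reverse H\"older argument.} A parabolic dyadic box has exactly $2^{d+2}$ children. So a maximal stopping box $Q_{k,j}$ at level $\lambda_k$ satisfies only $\lambda_k<\dashint_{Q_{k,j}}\omega\le 2^{d+2}\lambda_k$. Writing $E_k:=\bigcup_j Q_{k,j}$, the best density bound you get is
\[
|Q_{k,j}\cap E_{k+1}|\le\lambda_{k+1}^{-1}\,\omega(Q_{k,j})\le \frac{2^{d+2}\lambda_k}{\lambda_{k+1}}\,|Q_{k,j}|.
\]
For your ratio $\lambda_{k+1}/\lambda_k=2^{d+2}$ this is the vacuous $|Q_{k,j}\cap E_{k+1}|\le|Q_{k,j}|$. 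There is no uniform $\alpha<1$, so your lemma ``$|S|\le\alpha|Q|\Rightarrow\omega(S)\le\beta\,\omega(Q)$'' never applies, and no geometric decay follows.

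\textbf{The fix.} The ratio of consecutive levels must strictly exceed the parent-to-child volume ratio, so take $\lambda_k=2^{(d+3)k}\dashint_Q\omega$. Inserting the space-time dimension $d+1$ into the Euclidean recipe $2^{(n+1)k}$ lands exactly on the borderline, because the parabolic grid has $2^{d+2}$ rather than $2^{d+1}$ children. With the corrected levels, $\alpha=\tfrac12$ and $\omega(E_k)\le\beta^k\omega(Q)$. With $\lambda_0=\dashint_Q\omega$,
\[
\int_Q\omega^{1+\epsilon}\le\lambda_0^{\epsilon}\,\omega(Q)+\sum_{k\ge0}\lambda_{k+1}^{\epsilon}\,\omega(E_k)\le \lambda_0^{\epsilon}\,\omega(Q)\Big(1+\sum_{k\ge0}2^{(d+3)(k+1)\epsilon}\beta^k\Big),
\]
which is finite once $2^{(d+3)\epsilon}\beta<1$.

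\textbf{Two smaller points.}
\begin{enumerate}
\item The decay you wrote for the level sets $\{\omega>\lambda_k\}$ should be stated for the stopping sets $E_k$. One only has $\{\omega>\lambda_k\}\cap Q\subset E_k$ up to a null set, by differentiation along the parabolic dyadic grid. That inclusion is exactly what the layer-cake bound above uses.
\item The cube $Q_r(z)=B_r(x)\times(t-r^2,t+r^2)$ is not tiled by dyadic parabolic boxes. Run the decomposition on a box $R$ of spatial side $\ell$ and temporal length $\ell^2$. The $A_p$ condition and \eqref{f:apw-2} hold on such boxes with constants depending on $d$, since each lies in a parabolic cube of comparable measure. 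Then return to $Q_r(z)$ by inclusion and the doubling property that \eqref{f:apw-2} provides.
\end{enumerate}

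With these repairs, $\beta$, $\epsilon$ and the implicit constant depend monotonically on an upper bound for the characteristic. So the uniformity you need for \eqref{f:apw-5} goes through as you describe.
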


\begin{appendixlemma}\label{lemma:4.1-ap}
For any $1\leq q<\infty$, let
\begin{equation*}
  \|u\|_{V_{q,2}(\mathbb{R}^{d+1})}
 :=\sup_{t\in\mathbb{R}}\|u(\cdot,t)\|_{L^2(\mathbb{R}^d)}
  + \|\nabla u\|_{L^q(\mathbb{R}^{d+1})}.
\end{equation*}
Then, there exists $C_d>0$, depending only on $d$ and $q$, such that
\begin{subequations}
\begin{align}
&\|u\|_{L^{\frac{q(d+2)}{d}}(\mathbb{R}^{d+1})}
 \leq C_d\|u\|_{V_{q,2}(\mathbb{R}^{d+1})};
 \label{pri:6.8-ap}  \\
&\|\mu_{d} u\|_{L^{\frac{q(d+2)}{d}}(\mathbb{R}^{d+1})}
\lesssim_d \sup_{t\in\mathbb{R}}\|u\|_{L^2(\mathbb{R}^d)}
+\|\mu_{d}^{\alpha}\nabla u\|_{L^q(\mathbb{R}^{d+1})};
\label{pri:6.9-ap}
\end{align}
\end{subequations}
where $\alpha := 1+2/d$, 
and $\mu_d$ is given as
in Theorem $\ref{thm:1.1}$.
\end{appendixlemma}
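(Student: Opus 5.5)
The unweighted bound \eqref{pri:6.8-ap} is the parabolic Gagliardo--Nirenberg (Ladyzhenskaya) interpolation, proved slice by slice in time. For fixed $t$, apply the spatial Gagliardo--Nirenberg inequality to $u(\cdot,t)$ with target exponent $r:=q(d+2)/d$, interpolating between $\|u(\cdot,t)\|_{L^2(\mathbb{R}^d)}$ and $\|\nabla u(\cdot,t)\|_{L^q(\mathbb{R}^d)}$:
\[
\|u(\cdot,t)\|_{L^{r}(\mathbb{R}^d)}\lesssim \|\nabla u(\cdot,t)\|_{L^q(\mathbb{R}^d)}^{\theta}\,\|u(\cdot,t)\|_{L^2(\mathbb{R}^d)}^{1-\theta}.
\]
The scaling relation $\frac1r=\theta(\frac1q-\frac1d)+\frac{1-\theta}{2}$ gives $\theta=q/r=d/(d+2)$, so $r\theta=q$. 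This choice is exactly what makes the time integration close. Raising to the power $r$, bounding the $L^2$ factor by its supremum in $t$, and integrating in $t$ gives
\[
\|u\|_{L^r(\mathbb{R}^{d+1})}^r\lesssim \big(\sup_t\|u(\cdot,t)\|_{L^2}\big)^{r(1-\theta)}\,\|\nabla u\|_{L^q(\mathbb{R}^{d+1})}^{q}.
\]
Taking $r$-th roots and applying Young's inequality with exponents $1/(1-\theta)$ and $1/\theta$ yields \eqref{pri:6.8-ap}. One must check that $\theta\in[0,1]$ is admissible for the given $q,d$. For large $q$ (with $q>d$) the Gagliardo--Nirenberg inequality still holds, since $r>2$ and the relation is scale consistent. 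When $q\ge d$ the endpoint subtleties are the standard ones, and I would simply quote the Gagliardo--Nirenberg theorem. Everything here depends only on $d$ and $q$.

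For \eqref{pri:6.9-ap} with $d>2$ we have $\mu_d\equiv1$, so there is nothing new. For $d=2$ I read $\mu_d$ as the space-time weight $\bar\mu_2(z)=\ln^{3/4}(2+\text{d}(z,0))$, which varies slowly. The plan is to apply \eqref{pri:6.8-ap} to the product $\mu_d u$. Since $|\nabla\bar\mu_2|\lesssim \bar\mu_2^{-1/3}(2+|x|)^{-1}\lesssim1$, we get
\[
\nabla(\mu_d u)=\mu_d\nabla u+u\nabla\mu_d ,\qquad |u\nabla\mu_d|\lesssim |u|\,\text{d}(z,0)^{-1}\mu_d^{-1/3}.
\]
The difficulty is the zeroth-order weight on the left-hand side. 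The sup-in-time $L^2$ norm of $\mu_d u$ is not controlled by $\sup_t\|u\|_{L^2}$, because $\mu_d$ is unbounded; this is why the stated right-hand side carries the larger power $\mu_d^{\alpha}$ only on the gradient term.

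So rather than working with the product, I would do a dyadic decomposition in parabolic annuli $A_k=\{2^{k}\le \text{d}(z,0)+2<2^{k+1}\}$, on which $\mu_d\sim k^{3/4}$. On each annulus, apply the time-slice interpolation above localized to $A_k$ with a cutoff adapted to the annulus, taking all time slices. This gives
\[
\|u\|_{L^r(A_k)}^r\lesssim (\sup_t\|u\|_{L^2})^{r(1-\theta)}\|\nabla u\|_{L^q(\tilde A_k)}^{q}+\text{(cutoff terms)},
\]
where $\tilde A_k$ is a slightly enlarged annulus. Multiplying by $k^{3r/4}$ and using $r=q\alpha$, $q=r\theta$, the weight on the gradient term becomes
\[
k^{3r/4}=\bigl(k^{3\alpha/4}\bigr)^{q}\sim \mu_d^{\alpha q}.
\]
This explains the exponent $\alpha=1+2/d$: the $r$-th power of $\mu_d$ must be absorbed entirely into the $q$-th power of the gradient weight. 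Summing over $k$ with bounded overlap of the $\tilde A_k$, and applying Young's inequality as before, gives \eqref{pri:6.9-ap}.

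The main obstacle is the cutoff terms, of the form $|u|\,|\nabla\eta_k|\sim |u|2^{-k}$ on $\tilde A_k$. These have to be absorbed, either by a Hardy-type inequality or by noting that the factor $2^{-k}$ beats the logarithmic weight, so that they sum to a term bounded by $\sup_t\|u\|_{L^2}$ via Hölder on the bounded-measure time-slices. I expect this step to require the most care.
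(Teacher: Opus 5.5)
Your argument for \eqref{pri:6.8-ap} is the paper's argument: slice-wise Gagliardo--Nirenberg with $\theta=d/(d+2)$, so that $r\theta=q$, then time integration and Young. Since $\theta<1$, no exceptional case arises.

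The gap is in \eqref{pri:6.9-ap}, at the step you flagged, and neither of your proposed fixes works. The annuli $\tilde A_k$ do not have bounded measure. Their time extent is $\sim 4^k$ and their spatial slices have measure $\sim 2^{kd}$, so $|\tilde A_k|\sim 2^{k(d+2)}$. It is this volume growth, not the logarithm, that the factor $|\nabla\eta_k|^q\sim 2^{-kq}$ has to beat. Write $m_k\sim k^{3/4}$ for the value of $\bar\mu_2$ on $\tilde A_k$ and $S:=\sup_t\|u(\cdot,t)\|_{L^2}$. Hölder on the slices, which is available only for $q\le 2$, gives
\[
m_k^{\alpha q}\,2^{-kq}\,\|u\|_{L^q(\tilde A_k)}^{q}\lesssim m_k^{\alpha q}\,2^{k(d+2)(1-q/2)}\,S^{q}.
\]
For $d=2$ this grows exponentially in $k$ when $q<2$, and it equals $m_k^{4}S^{2}$ when $q=2$. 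Either way the sum over $k$ diverges, and for $q>2$ Hölder runs in the wrong direction. A spatial Hardy inequality needs $q<d$, i.e.\ $q<2$ in the only nontrivial case $d=2$. It therefore misses the range $q\ge 2$ in which \eqref{pri:6.9-ap} is used in Proposition~\ref{P:4.1}. It would also have to be a weighted Hardy inequality, since it bounds the cutoff error by the gradient over all scales, not just on $\tilde A_k$.

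The paper avoids cutoffs entirely. On each time slice it invokes a weighted Gagliardo--Nirenberg inequality
\[
\|\bar\mu_2 u(\cdot,t)\|_{L^{r}(\mathbb{R}^d)}\lesssim\|\bar\mu_2^{1/\theta}\nabla u(\cdot,t)\|_{L^q(\mathbb{R}^d)}^{\theta}\|u(\cdot,t)\|_{L^2(\mathbb{R}^d)}^{1-\theta},
\]
quoted from Duarte--Drumond Silva; note $1/\theta=\alpha$ and the $L^2$ factor is unweighted. It then repeats the time integration and Young step of \eqref{pri:6.8-ap} verbatim. Your bookkeeping $m_k^{r}=(m_k^{\alpha})^{q}$ is exactly the scaling check for that inequality.

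If you want to keep the dyadic scheme, the cutoff error must be absorbed into the left-hand side rather than bounded by $S$:
\begin{itemize}
\item Since $(d+2)(1-\theta)=2$, Hölder on $\tilde A_k$ gives $2^{-kq}\|u\|^q_{L^q(\tilde A_k)}\lesssim 2^{k(2-q)}\|u\|^q_{L^r(\tilde A_k)}$.
\item For $q>2$, a Hölder inequality in $k$ then bounds the total cutoff error by $C X^{q}$, with $X:=\|\bar\mu_2 u\|_{L^r}$.
\item Since $r-q=r(1-\theta)$, Young turns $X^r\lesssim S^{r(1-\theta)}\big(\|\bar\mu_2^{\alpha}\nabla u\|^q_{L^q}+CX^q\big)$ into $X\lesssim S+\|\bar\mu_2^{\alpha}\nabla u\|_{L^q}$. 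You must first truncate the weight to $\min\{\bar\mu_2,N\}$ so that $X<\infty$ a priori.
\end{itemize}
At $q=2$ the geometric factor is $2^{0}$, and the constant in the $k$-Hölder step becomes $\sum_k m_k^{4}=\infty$. So the localization does not close at the endpoint $q=2$; that case needs the slice-wise weighted inequality or some other idea.
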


\begin{proof}
Arguments for $\eqref{pri:6.8-ap}$.
Let $q^*:=\frac{q(d+2)}{d}=q/\theta$ and $\theta:=\frac{d}{d+2}$.
Starting from Galiardo-Nirenberg inequality (see e.g., \cite[Theorem 1.1]{Duarte-DrumondSilva-23}), it follows that
\begin{equation}\label{f:6.29-ap}
 \|u(\cdot,t)\|_{L^{q^*}(\mathbb{R}^{d})}
 \lesssim_{d,q}\|\nabla u(\cdot,t)\|_{L^q(\mathbb{R}^d)}^\theta
 \|u(\cdot,t)\|_{L^2(\mathbb{R}^d)}^{1-\theta} \qquad\forall t\in\mathbb{R},
\end{equation}
where $q^*, q, \theta$ satisfy the relationship
$1/q^{*}=\theta\big(1/q-1/d\big)+(1-\theta)/2$. Then, one can derive that
\begin{equation*}
\begin{aligned}
 \int_{\mathbb{R}}dt\|u(\cdot,t)\|_{L^{q^*}(\mathbb{R}^{d})}^{q^*}
& \lesssim^{\eqref{f:6.29-ap}}
 \int_{\mathbb{R}}dt
 \|\nabla u(\cdot,t)\|_{L^q(\mathbb{R}^d)}^{\theta q^{*}}
 \|u(\cdot,t)\|_{L^2(\mathbb{R}^d)}^{(1-\theta)q^{*}}\\
& \lesssim \sup_{t\in\mathbb{R}}\|u(\cdot,t)\|_{L^2(\mathbb{R}^d)}^{(1-\theta)q^{*}}
\int_{\mathbb{R}}dt
 \|\nabla u(\cdot,t)\|_{L^q(\mathbb{R}^d)}^{q}
\lesssim \|u\|_{V_{q,2}(\mathbb{R}^{d+1})}^{q^{*}},
\end{aligned}
\end{equation*}
where the second inequality is due to $q^*\theta = q$, and
the last one follows from Young's inequality.
Taking the $q^*$-th root on the both sides of the above
inequality leads to the stated result $\eqref{pri:6.8-ap}$.

Arguments for $\eqref{pri:6.9-ap}$. To this end, it suffices to modify $\eqref{f:6.29-ap}$ as follows:
\begin{equation*}
 \|\mu_*u(\cdot,t)\|_{L^{q^*}(\mathbb{R}^{d})}
 \lesssim_{d,q}\|\mu_*^{\frac{1}{\theta}}\nabla u(\cdot,t)\|_{L^q(\mathbb{R}^d)}^\theta
 \|u(\cdot,t)\|_{L^2(\mathbb{R}^d)}^{1-\theta} \qquad\forall t\in\mathbb{R},
\end{equation*}
which is known as the weighted Galiardo-Nirenberg inequality (see e.g., \cite[Theorem 1.2]{Duarte-DrumondSilva-23}\footnote{The weighted
Galiardo-Nirenberg inequality is originally developed
by C.S. Lin for power law weights. Concerned with
the non-homogeneous
weights like $\mu_*$, it was recently shown by
Duarte and Drumond Silva \cite[Theorem 1.7]{Duarte-DrumondSilva-23}.}). The remainder of the proof is exactly the same as that given for $\eqref{pri:6.8-ap}$, and the details are omitted here.
\end{proof}

\subsection{Basic a priori estimates}

\begin{appendixlemma}[Caccioppoli's inequality]\label{lemma:5.1}
Let $u$ and $g$ together with $f$ be associated with
$\mathcal{A}_{\beta}^* (u_\beta) =
\nabla\cdot f + \beta g$
or $\mathcal{A}_{\beta} (u_\beta) = \nabla\cdot f +\beta g$ in
$Q_{4R}$ with $R>0$ and $\beta\geq 0$,
where $\mathcal{A}_{\beta}$ and $\mathcal{A}_{\beta}^*$ are given in
$\eqref{massive-corrector}$, $\eqref{pde:5.3-ap}$, respectively. Then, it holds that
\begin{subequations}
\begin{align}
\dashint_{Q_R}|(\sqrt{\beta} u_\beta,\nabla u_{\beta})|^2
&\lesssim_{\mu,d}
 R^{-2}\dashint_{Q_{2R}}|u_\beta|^2 + \dashint_{Q_{2R}}|(\sqrt{\beta} g,f)|^2;\label{pri:6.1-ap}  \\
\sup_{s\in I_{R}}\dashint_{B_{R}}|u_{\beta}(\cdot,s)|^2
&\lesssim_{\mu,d}
\dashint_{Q_{2R}}|u_\beta|^2 + R^{2}\dashint_{Q_{2R}}|(\sqrt{\beta} g,f)|^2,
\label{pri:6.2-ap}
\end{align}
\end{subequations}
where we recall the notation $I_r:=(-r^2,r^2)$. Moreover, if $R>r>0$,
and let $(u_{\beta})_{r}:=K_r(u_\beta)$, there exists $\theta\in (0,1)$, depending on $\mu,d$, such that
\begin{equation}\label{pri:6.3-ap}
\begin{aligned}
\dashint_{Q_R}|\nabla u_\beta|^2
&\leq
\theta \dashint_{I_{2R}}dt\dashint_{\partial B_{2R}}dx\dashint_{B_r(x)}\big(|\nabla u_\beta|^2+|f|^2\big)(\cdot,t) \\
&+ C_{\theta}\Big(\frac{r}{R}\Big)^2\dashint_{Q_{3R}}|\nabla u_{\beta}|^2 +
 C_{\theta}\dashint_{Q_{2R}}|\nabla (u_{\beta})_r|^2
 + C_\theta\dashint_{Q_{2R}}|(\sqrt{\beta} g,f)|^2,
\end{aligned}
\end{equation}
where $C_\theta$ depends on $\theta,d$, and $\mu$.
\end{appendixlemma}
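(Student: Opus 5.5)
The plan is to test the equation against cut-off multiples of $u_\beta$, in the standard way for Caccioppoli inequalities. For \eqref{pri:6.1-ap} and \eqref{pri:6.2-ap}, take $\eta\in C_0^\infty(Q_{2R})$ with $\eta=1$ on $Q_R$, $|\nabla\eta|\lesssim 1/R$ and $|\partial_t\eta|\lesssim 1/R^2$. For the forward operator $\mathcal{A}_\beta$, choose $\eta$ to vanish at the bottom $t=-4R^2$; for $\mathcal{A}_\beta^*$, where time is reversed, choose it to vanish at the top.

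Test the equation with $u_\beta\eta^2\mathbf{1}_{t<s}$ for $s\in I_R$, after a Steklov time regularization to justify the time derivative. This produces three groups of terms:
\begin{itemize}
\item $\frac12\int_{B_{2R}}u_\beta^2\eta^2(\cdot,s)$ from the time derivative, together with the error term $\int u_\beta^2\eta|\partial_t\eta|$;
\item $\beta\int u_\beta^2\eta^2$ and $\mu\int|\nabla u_\beta|^2\eta^2$ from the massive and elliptic parts, using \eqref{c:1};
\item the cross terms $\int|\nabla u_\beta||u_\beta|\eta|\nabla\eta|$, together with $\int|f|\,|\nabla(u_\beta\eta^2)|$ and $\beta\int|g||u_\beta|\eta^2$ from the right-hand side.
\end{itemize}
Apply Young's inequality to all cross and right-hand-side terms, absorbing the $\nabla u_\beta$ and $\beta u_\beta^2$ pieces into the left. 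What remains is bounded by $R^{-2}\int_{Q_{2R}}u_\beta^2+\int_{Q_{2R}}|(f,\sqrt\beta g)|^2$. Taking the supremum over $s$ and normalizing gives both \eqref{pri:6.1-ap} and \eqref{pri:6.2-ap}; the factor $R^2$ in \eqref{pri:6.2-ap} comes from normalizing by $|B_R|$ instead of $|Q_R|$.

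For \eqref{pri:6.3-ap} I would compare $u_\beta$ with its spatial mollification $(u_\beta)_r=K_r(u_\beta)$. Start from \eqref{pri:6.1-ap}, but with $u_\beta$ replaced by $u_\beta-c(t)$ for a time-dependent spatial average $c(t)$; this is legitimate because $\partial_t c$ does not see the spatial gradient. However, this produces a term $\partial_t c$ that must be controlled. It is better to use the energy identity with a cut-off $\eta(x)$ depending only on the spatial variable, supported in $B_{2R}$ and with $|\nabla\eta|$ supported in an annulus near $\partial B_{2R}$.

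The key decomposition writes $u_\beta\eta=(u_\beta)_r\eta+(u_\beta-(u_\beta)_r)\eta$. The first piece has gradient bounded by $\nabla(u_\beta)_r$, which gives the $C_\theta\dashint|\nabla(u_\beta)_r|^2$ term. The second piece satisfies $\|u-K_ru\|_{L^2}\lesssim r\|\nabla u\|_{L^2}$ by \eqref{pri:2.13}. When multiplied against $\nabla\eta\sim 1/R$, it produces the $(r/R)^2\dashint_{Q_{3R}}|\nabla u_\beta|^2$ term. The contribution localized near $\partial B_{2R}$ at scale $r$ produces the boundary-layer average $\dashint_{I_{2R}}\dashint_{\partial B_{2R}}\dashint_{B_r(x)}$, weighted by a small $\theta$ chosen through Young's inequality. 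The time-derivative term is handled as in the first part, with the time cut-off vanishing at the appropriate end.

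I expect the main obstacle to be the precise placement of the cut-off so that the terms with small factor $\theta$ live only on the $r$-neighborhood of $\partial B_{2R}$. I would first try choosing $\eta$ radial with its gradient concentrated in $B_{2R}\setminus B_{2R-r}$, so that $|\nabla\eta|\sim 1/r$ there. This converts $\int u\nabla u\,\eta\nabla\eta$ into an integral over that shell. I would then rewrite $u$ on the shell as $(u-(u)_r)+(u)_r$, and average over radii in $[2R-r,2R]$ to pass to the spherical mean $\dashint_{\partial B_{2R}}\dashint_{B_r(x)}$.
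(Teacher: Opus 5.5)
\textbf{There is a genuine gap, in \eqref{pri:6.3-ap}.} Your argument for \eqref{pri:6.1-ap} and \eqref{pri:6.2-ap} is the standard one and agrees with the paper's Steps 1--2.

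The right-hand side of \eqref{pri:6.3-ap} contains no zeroth-order quantity in $u_\beta$. So every occurrence of $u_\beta$ must be converted into $\nabla u_\beta$ or $\nabla(u_\beta)_r$ by Poincar\'e, and that requires subtracting a time-dependent spatial mean. Your plan never does this, so it does not close:
\begin{itemize}
\item The claim that $(u_\beta)_r\eta$ has gradient bounded by $\nabla(u_\beta)_r$ is false, since $\nabla\big((u_\beta)_r\eta\big)=\eta\nabla(u_\beta)_r+(u_\beta)_r\nabla\eta$.
\item The time cut-off, ``handled as in the first part'', leaves $R^{-2}\int_{Q_{2R}}|u_\beta|^2$.
\item Taking $|\nabla\eta|\sim 1/r$ on a thin shell does not help: $(u_\beta)_r\nabla\eta$ is still zeroth order, now with coefficient $1/r$.
\end{itemize}
You are therefore forced back to $u_\beta-c(t)$, and the term $\partial_t c$ you set aside is the heart of the matter. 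A spatial-only cut-off with an unweighted mean does not remove it: testing with $\psi(t)^2\eta(x)^2(u_\beta-c)$ leaves $\int\psi^2c'(t)\int\eta^2(u_\beta-c)\,dx\,dt$. Relatedly, in the paper the boundary-layer average does not come from the support of $\nabla\eta$; it comes from controlling $\partial_t c$.

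\textbf{How the paper closes it.} It sets $c(t):=\dashint_{B_{2R}}K_r(u_\beta)(\cdot,t)$ and tests the equation for $\tilde u_\beta=u_\beta-c(t)$ with $\varphi^2\tilde u_\beta$. The extra source $\partial_t c-\beta c=\dashint_{B_{2R}}K_r(\partial_t u_\beta-\beta u_\beta)$ is computed from the equation itself (for $\mathcal A_\beta^*$; analogously for $\mathcal A_\beta$). It equals $-\dashint_{B_{2R}}\nabla\cdot K_r(a^*\nabla u_\beta+f)-\beta\dashint_{B_{2R}}K_r(g)$. By the divergence theorem, the first part is a flux through $\partial B_{2R}$ bounded by $R^{-1}\dashint_{\partial B_{2R}}\dashint_{B_r(x)}(|\nabla u_\beta|+|f|)$. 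That is the true origin of the boundary term. Young's inequality splits it into $\theta$ times that average plus $C_\theta R^{-2}\dashint_{Q_{2R}}|\tilde u_\beta|^2$. Then \eqref{pri:2.13} and Poincar\'e for $(u_\beta)_r$ give $\dashint_{Q_{2R}}|\tilde u_\beta|^2\lesssim r^2\dashint_{Q_{3R}}|\nabla u_\beta|^2+R^2\dashint_{Q_{2R}}|\nabla(u_\beta)_r|^2$.

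\textbf{Alternative rescue of your route.} Your time-independent spatial cut-off can work, but only with an idea you did not state. Subtract the weighted mean $c_\eta(t):=\int\eta^2u_\beta\,dx/\int\eta^2\,dx$ and test with $\psi(t)^2\eta(x)^2(u_\beta-c_\eta)$. Since $\int\eta^2(u_\beta-c_\eta)\,dx=0$, both the $c_\eta'$ and the $\beta c_\eta$ contributions vanish, and the time term becomes $\frac12\frac{d}{dt}\int\eta^2(u_\beta-c_\eta)^2$. The same Poincar\'e step then yields \eqref{pri:6.3-ap}, in fact without the boundary-layer term.
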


It is recalled that $C_0^1(Q_{R})$ is the Banach space of functions with continuously one-order derivative with respect to spatial or time variables, requiring its element to vanish near $\partial Q_{R}$.

\begin{proof}
The idea of the proof is standard, and we provide a proof for the sake of the reader's convenience.
Let $\varphi\in C_0^1(Q_{2R})$ be a cut-off function satisfying
$\varphi = 1$ in $Q_{R}$ and $|\nabla\varphi|^2+|\partial_t\varphi|\lesssim_d R^{-2}$.
We start from the weak formulation
\begin{equation}\label{f:6.0-ap}
 \int_{Q_{2R}}\beta u_\beta v-\partial_t u_\beta v
 +a^*\nabla u_\beta\cdot \nabla v =
 \int_{Q_{2R}}\beta g v - f\cdot\nabla v
 \qquad \forall v\in C^{1}_{0}(Q_{2R}).
\end{equation}
Then, by choosing different test functions, the proof is divided into three steps.

\textbf{Step 1.} Arguments for $\eqref{pri:6.1-ap}$.
Let $v = \varphi^2 u_\beta$. On the one hand, it is found that
\begin{equation}\label{f:6.1-ap}
 \int_{Q_{2R}} \partial_t u_\beta\varphi^2 u_\beta
 = \frac{1}{2}\int_{Q_{2R}}\partial_t\big(u^2_\beta\varphi^2\big) - \int_{Q_{2R}}\varphi\partial_t \varphi u^2_\beta
 = - \int_{Q_{2R}}\varphi\partial_t\varphi u^2_\beta.
\end{equation}
On the other hand, a routine computation leads to
\begin{equation}\label{f:6.2-ap}
\begin{aligned}
&\int_{Q_{2R}}\big(a^*\nabla u_\beta+ f\big)\cdot\nabla v
=\int_{Q_{2R}}\big(a^*\nabla u_\beta+f\big)\cdot\big(\nabla u_\beta\varphi^2 + 2\varphi\nabla\varphi u_\beta\big)\\
&=\int_{Q_{2R}}\varphi^2 a^*\nabla u_\beta\cdot\nabla u_\beta
-\int_{Q_{2R}}f\cdot\nabla u_\beta\varphi^2
+\int_{Q_{2R}}2\varphi a^{*}\nabla u_\beta\cdot\nabla\varphi u_\beta
-\int_{Q_{2R}}2\varphi f\cdot\nabla\varphi u_\beta\\
&\geq \frac{\mu}{4}\int_{Q_{2R}}\varphi^2 |\nabla u_\beta|^2
-C_\mu \int_{Q_{2R}}\varphi^2|f|^2
-C_\mu \int_{Q_{2R}}|\nabla\varphi|^2|u_\beta|^2,
\end{aligned}
\end{equation}
and
\begin{equation}\label{f:6.2-2-ap}
\begin{aligned}
\beta\int_{Q_{2R}} g v
= \beta\int_{Q_{2R}} \varphi^2 g u_\beta
\leq \frac{\beta}{2}\int_{Q_{2R}}\varphi^2u_\beta^2
+  \beta \int_{Q_{2R}}\varphi^2 g^2.
\end{aligned}
\end{equation}

Combining the estimates $\eqref{f:6.0-ap}$, $\eqref{f:6.1-ap}$, $\eqref{f:6.2-ap}$, and $\eqref{f:6.2-2-ap}$, we obtain that
\begin{equation*}
 \int_{Q_{2R}}\varphi^2
 \big(\beta u_\beta^2 + |\nabla u_\beta|^2\big)
 \lesssim_{\mu,d} \int_{Q_{2R}}(|\partial_t\varphi| + |\nabla \varphi|^2)|u_\beta|^2
 +  \int_{Q_{2R}}\varphi^2\big(|f|^2+\beta g^2\big),
\end{equation*}
which implies the stated estimate $\eqref{pri:6.1-ap}$ upon dividing the both sides by $R^{d+2}$.

\textbf{Step 2.} Arguments for $\eqref{pri:6.2-ap}$. Take $v=\textbf{1}_{s<t}\varphi^2u_\beta$ in $\eqref{f:6.0-ap}$
with any fixed $s\in I_{2R}$. Then, the left-hand side of $\eqref{f:6.1-ap}$ turns into
\begin{equation}\label{f:6.3-ap}
\begin{aligned}
\int_{Q_{2R}}\partial_t u_\beta v
&= \int_{I_{2R}}\textbf{1}_{s<t}\int_{B_{2R}}\partial_t
u_\beta\varphi^2 u_\beta \\
&= \frac{1}{2}\Big(\int_{B_{2R}}\varphi^2u_\beta^2(\cdot,t)-
\int_{B_{2R}}\varphi^2u_\beta^2(\cdot,s)\Big)
-\int_{I_{2R}}\textbf{1}_{s<t}\int_{B_{2R}}\varphi\partial_t\varphi u_\beta^2.
\end{aligned}
\end{equation}
One can choose $t\in I_{2R}$ such that $\int_{B_{2R}}\varphi^2u_\beta^2(\cdot,t)=0$, and note that
\begin{equation*}
  \bigg|\int_{I_{2R}}\textbf{1}_{s<t}\int_{B_{2R}}\varphi\partial_t\varphi u_\beta^2\bigg|
  \leq \frac{C}{R^2}\int_{Q_{2R}}|u_\beta|^2.
\end{equation*}

In view of the estimates of $\eqref{f:6.0-ap}$, $\eqref{f:6.2-ap}$, $\eqref{f:6.2-2-ap}$, and $\eqref{f:6.3-ap}$, for any $s\in I_{2R}$, one can derive that
\begin{equation*}
 \int_{B_{R}}u_\beta^2(\cdot,s) + \int_{Q_{R}}
 |(\sqrt{\beta} u_\beta,\nabla u_\beta)|^2
 \lesssim \frac{1}{R^2}\int_{Q_{2R}}|u_\beta|^2 + \int_{Q_{2R}}|(f,\sqrt{\beta} g)|^2.
\end{equation*}
This gives the desired estimate $\eqref{pri:6.2-ap}$ by multiplying $R^{-d-2}$ on the both sides above.

\textbf{Step 3.} Arguments for $\eqref{pri:6.3-ap}$. For a new parameter $r\in(0,R)$, we consider
\begin{equation*}
   \tilde{u}_\beta(x,t) = u_\beta(x,t) - c(t) \qquad \text{and}\qquad
   c(t):=\dashint_{B_{2R}} K_r(u_\beta)(\cdot,t),
\end{equation*}
which satisfies the following equations:
\begin{equation*}
\beta\tilde{u}_\beta  -\partial_t \tilde{u}_\beta - \nabla\cdot a^*\nabla \tilde{u}_\beta
  = \nabla\cdot f + \beta \big(g-c(t)\big) + \partial_t c
  \qquad \text{in}\quad Q_{2R}.
\end{equation*}

It is noted that $\nabla\tilde{u}_\beta = \nabla u_\beta$, and the demonstration is similar to that given in \textbf{Step 1}.
The weak formulation $\eqref{f:6.0-ap}$ can be rewritten as
\begin{equation}\label{f:6.0*-ap}
 \int_{Q_{2R}}
 \big(\beta\tilde{u}_\beta-\partial_t \tilde{u}_\beta
  \big) v = \int_{Q_{2R}}\big(a^*\nabla \tilde{u}_\beta+f\big)\cdot\nabla v + \int_{Q_{2R}}
  \big(\beta(g-c(t))+\partial_t c \big)v
 \qquad \forall v\in C^{1}_{0}(Q_{2R}).
\end{equation}
Now, we choose $v=\varphi^2\tilde{u}_\beta$. Similar to the computation given for $\eqref{pri:6.1-ap}$, there holds
\begin{equation}\label{f:6.1*-ap}
\begin{aligned}
\dashint_{Q_R}|(\sqrt{\beta} \tilde{u}_\beta,\nabla \tilde{u}_{\beta})|^2
\lesssim_{\mu,d}
 R^{-2}\dashint_{Q_{2R}}|\tilde{u}_\beta|^2 + \dashint_{Q_{2R}}|(\sqrt{\beta} g,f)|^2
 + \Big|\dashint_{Q_{2R}}
 \big(\partial_t c -\beta c(t)\big)v\Big|.
\end{aligned}
\end{equation}
We proceed to estimate the last term in the right-hand side of $\eqref{f:6.1*-ap}$. It is noted that
$\partial_t c -\beta c(t) = \dashint_{B_{2R}} K_r(\partial_t u_\beta -\beta u_\beta)(\cdot,t)$, and by recalling the equation that $u_\beta$ satisfies, it follows that
\begin{equation*}
\begin{aligned}
\partial_t c -\beta c(t) &= -\dashint_{B_{2R}}
 K_r(\nabla\cdot(f+a^*\nabla u_\beta)+\beta g)(\cdot,t) \\
 &=-\dashint_{B_{2R}}\nabla\cdot K_r(a^*\nabla u_\beta+f)(\cdot,t)
 -\beta \dashint_{B_{2R}}
 K_r(g)(\cdot,t) \\
 &\lesssim_{d,\mu} R^{-1}\dashint_{\partial B_{2R}}dx\dashint_{B_r(x)} \big(|\nabla u_\beta|+|f|\big)(\cdot,t)
 + \beta\dashint_{B_{2R}}dx\dashint_{B_r(x)}|g|(\cdot,t).
\end{aligned}
\end{equation*}
This implies that
\begin{equation}\label{f:6.3*-ap}
\begin{aligned}
&\bigg|\dashint_{Q_{2R}}\big(\partial_tc-\beta c(t)\big) v\bigg|
\lesssim_{\mu,d} \frac{1}{R}\dashint_{I_{2R}}dt
\bigg(\dashint_{\partial B_{2R}}dx\dashint_{B_r(x)} \big(|\nabla u_\beta|+|f|\big)(\cdot,t)\bigg)
\dashint_{B_{2R}}|\tilde{u}_\beta(\cdot,t)|\\
&+ \beta\dashint_{I_{2R}}dt\dashint_{B_{2R}}dx\dashint_{B_r(x)} |g(\cdot,t)|\dashint_{B_{2R}}|\tilde{u}_\beta(\cdot,t)|\\
&\leq \theta \dashint_{I_{2R}}dt\dashint_{\partial B_{2R}}dx\dashint_{B_r(x)} \big(|\nabla u_\beta|^2+|f|^2\big)(\cdot,t) + \frac{C_\theta}{R^2}\dashint_{Q_{2R}}|\tilde{u}_\beta|^2
+\frac{\beta}{4}\dashint_{Q_{2R}}|\tilde{u}_\beta|^2
+ \beta\dashint_{Q_{3R}}|g|^2,
\end{aligned}
\end{equation}
where Young's inequality with $\theta\in(0,1)$ is employed for the second inequality.

Combining the estimates $\eqref{f:6.1*-ap}$ and $\eqref{f:6.3*-ap}$,
we arrive at
\begin{equation}\label{f:6.4-ap}
\begin{aligned}
\dashint_{\mathcal{C}_{2R}^*}|(\sqrt{\beta}\tilde{u}_\beta,\nabla u_\beta)|^2
&\lesssim_{\mu,d} \theta \dashint_{I_{2R}}dt\dashint_{\partial B_{2R}}dx\dashint_{B_r(x)} \big(|\nabla u_\beta|^2+|f|^2\big)(\cdot,t)\\
&+ \frac{C_\theta}{R^2}\dashint_{Q_{2R}}|\tilde{u}_\beta|^2 +\dashint_{Q_{3R}}|(\sqrt{\beta} g,f)|^2.
\end{aligned}
\end{equation}
To obtain the desired estimate $\eqref{pri:6.3-ap}$, it follows from Lemma $\ref{lemma:2.5}$ and Poincar\'e's inequality that
\begin{equation}\label{f:6.5-ap}
\begin{aligned}
\dashint_{Q_{2R}}|\tilde{u}_\beta|^2
&\lesssim  \dashint_{Q_{2R}}|u_\beta-(u_\beta)_r|^2 + \dashint_{Q_{2R}}|(u_\beta)_r - c|^2 \\
&\lesssim^{\eqref{pri:2.10}} r^2 \dashint_{Q_{3R}}|\nabla u_\beta|^2 + R^2 \dashint_{Q_{2R}}|\nabla (u_\beta)_r|^2.
\end{aligned}
\end{equation}
Plugging the estimate $\eqref{f:6.5-ap}$ back into $\eqref{f:6.4-ap}$, we finally derive the desired estimate
$\eqref{pri:6.3-ap}$.
\end{proof}

\begin{appendixlemma}[Meyer-type estimates]\label{lemma:5.2}
Let $\beta\geq 0$, $p_0>0$, $|2-p|\ll 1$.
Suppose that $u_\beta$, $g$, and $f$ are associated with
$\mathcal{A}_{\beta}^* (u_\beta) =
\nabla\cdot f + \beta g$
(or $\mathcal{A}_{\beta} (u_\beta) = \nabla\cdot f +\beta g$) in
$Q_{4R}$ with $R>0$,
where $\mathcal{A}_{\beta}$ and $\mathcal{A}_{\beta}^*$ are given in
$\eqref{massive-corrector}$ and $\eqref{pde:5.3-ap}$, respectively.
Then, for any $\alpha>1$, the interior estimate holds:
\begin{equation}\label{pri:5.9-ap}
\Big(\dashint_{Q_R}|(\sqrt{\beta} u_\beta,\nabla u_{\beta})|^p
\Big)^{\frac{1}{p}}
\lesssim_{\mu,d,p,p_0,\alpha} \Big(\dashint_{\alpha Q_R}|(\sqrt{\beta} u_\beta,\nabla u_{\beta})|^{p_0}
\Big)^{\frac{1}{p_0}}
+ \Big(\dashint_{\alpha Q_R}|(\sqrt{\beta} g,f)|^p
\Big)^{\frac{1}{p}}.
\end{equation}
Similarly, if $u_\beta$ satisfies
the equations $\mathcal{A}_{\beta}^* (u_\beta) =
\nabla\cdot f + \beta g$
(or $\mathcal{A}_{\beta} (u_\beta) = \nabla\cdot f +\beta g$) in
$D_{4R}$ with $u_\beta = 0$ on $\Delta_{4R}$, then the corresponding
boundary estimate holds:
\begin{equation}\label{pri:5.12-ap}
\Big(\dashint_{D_R}|(\sqrt{\beta} u_\beta,\nabla u_{\beta})|^p
\Big)^{\frac{1}{p}}
\lesssim_{\mu,d,p,p_0,\alpha} \Big(\dashint_{\alpha D_R}|(\sqrt{\beta} u_\beta,\nabla u_{\beta})|^{p_0}
\Big)^{\frac{1}{p_0}}
+ \Big(\dashint_{\alpha D_R}|(\sqrt{\beta} g,f)|^p
\Big)^{\frac{1}{p}}.
\end{equation}
Moreover, let $u_\beta$, $g$, and $f$ be associated with
$\mathcal{A}_{\beta}^* (u_\beta) =
\nabla\cdot f + \beta g$
(or $\mathcal{A}_{\beta} (u_\beta) = \nabla\cdot f +\beta g$) in
$\mathbb{R}^{d+1}$. Then, there holds
\begin{equation}\label{pri:5.10-ap}
\Big(\int_{\mathbb{R}^{d+1}}|(\sqrt{\beta} u_\beta,\nabla u_{\beta})|^p
\Big)^{\frac{1}{p}}
\lesssim_{\mu,d,p}
\Big(\int_{\mathbb{R}^{d+1}}|(\sqrt{\beta} g,f)|^p
\Big)^{\frac{1}{p}}.
\end{equation}
\end{appendixlemma}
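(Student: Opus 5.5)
The plan is to follow the standard Gehring-lemma route for parabolic systems. First I derive a reverse Hölder inequality with increasing support for the quantity $G:=|(\sqrt{\beta}u_\beta,\nabla u_\beta)|$. Then I upgrade the exponent from $2$ to some $p>2$ via the parabolic (anisotropic) Gehring lemma. Finally I obtain the global estimate \eqref{pri:5.10-ap} either by Shen's real-variable argument or by a perturbative Calder\'on-Zygmund argument. Throughout, the parabolic cubes $Q_r$ form a space of homogeneous type, so the covering and maximal-function arguments are the same as in the Euclidean case.

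For \eqref{pri:5.9-ap}, I start from Caccioppoli's inequality \eqref{pri:6.1-ap}, but applied to $u_\beta-c(t)$ with $c(t)$ a spatial average, exactly as in Step 3 of Lemma \ref{lemma:5.1}. This is needed because only a spatial mean can be subtracted for a parabolic equation. The term $\dashint_{Q_{2R}}|u_\beta-c(t)|^2$ is then bounded by a parabolic Poincar\'e--Sobolev inequality. It combines the spatial Sobolev--Poincar\'e inequality at exponent $2_\flat:=\frac{2d}{d+2}<2$ with the $L^\infty_tL^2_x$ control from \eqref{pri:6.2-ap}. The time oscillation of the spatial averages is controlled by testing the equation against a spatial cutoff, which gives $|\partial_t\langle u\rangle_{B}|\lesssim R^{-1}\dashint_B(|\nabla u|+|f|)+\beta|g|+\beta|u|$. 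Interpolating as in Lemma \ref{lemma:4.1-ap}, I expect
\begin{equation*}
\dashint_{Q_R}G^2\leq \eta\dashint_{Q_{2R}}G^2+C_\eta\Big(\dashint_{Q_{2R}}G^{q_0}\Big)^{\frac{2}{q_0}}+C\dashint_{Q_{2R}}|(\sqrt{\beta}g,f)|^2
\end{equation*}
for some $q_0<2$. The $\sqrt{\beta}u_\beta$ part is harmless, since it appears on both sides with the right scaling.

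Gehring's lemma on parabolic cubes then gives $p>2$ and $\big(\dashint_{Q_R}G^p\big)^{1/p}\lesssim(\dashint_{\alpha Q_R}G^2)^{1/2}+(\dashint_{\alpha Q_R}|(\sqrt\beta g,f)|^p)^{1/p}$. Lowering the exponent $2$ on the right to an arbitrary $p_0>0$ is the standard convexity/iteration trick already cited in the paper (\cite[pp.173]{Fefferman-Stein72}). This is possible because the reverse Hölder inequality holds uniformly on all subcubes.

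The boundary version \eqref{pri:5.12-ap} follows identically. Since $u_\beta=0$ on $\Delta_{4R}$, no averages need to be subtracted, and Poincar\'e/Sobolev hold for functions vanishing on a set of positive capacity in each time slice. This uses the Lipschitz (here $C^1$) geometry, which I take as given. Near the bottom I extend by zero.

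For the global estimate \eqref{pri:5.10-ap}, I apply Shen's real-variable lemma (the unweighted case of Lemma \ref{shen's lemma2}) with $p_0=2$ and $p_1$ the Meyers exponent. On each cube $Q$ I split $u_\beta=w+v$, where $w$ solves the equation in $\mathbb{R}^{d+1}$ with data $\mathbf 1_{2Q}(f,g)$. Energy estimates give $\dashint_Q|(\sqrt\beta w,\nabla w)|^2\lesssim\dashint_{2Q}|(\sqrt\beta g,f)|^2$. The function $v$ solves the homogeneous equation in $2Q$, so the interior Meyers estimate provides the $L^{p_1}$ reverse Hölder bound. This yields \eqref{pri:5.10-ap} for $2\le p<p_1$. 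The range $p<2$ follows by duality with the adjoint operator $\mathcal{A}_\beta^*$, to which the same argument applies.

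The main obstacle is the first step, specifically the Poincar\'e-type control of $u_\beta-c(t)$ in time. It is there that the structure of the equation, rather than just the gradient, must be used, and all constants must be kept independent of $\beta$.
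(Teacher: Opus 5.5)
Your plan is essentially the paper's own (sketched) argument: a Caccioppoli-based reverse H\"older inequality, then the parabolic Gehring lemma, then the Fefferman--Stein convexity trick to lower the exponent to $p_0$, then the same scheme at the lateral boundary, and finally the global bound deduced from the local one. The only real difference is the last step: for $p>2$ Shen's lemma is more than you need, since letting $R\to\infty$ in \eqref{pri:5.9-ap} with $(\sqrt{\beta}u_\beta,\nabla u_\beta)\in L^2(\mathbb{R}^{d+1})$ from the energy estimate already gives \eqref{pri:5.10-ap}; your duality argument, however, usefully supplies the range $p<2$, which the paper's remark that \eqref{pri:5.10-ap} ``follows from'' \eqref{pri:5.9-ap} leaves implicit.
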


\begin{proof}
The proofs are standard (since included in \cite{Armstrong-Bordas-Mourrat-18,Auscher-Bortz-Egert-Saari-19}) and detailed proofs are not provided here. Roughly speaking, based on $\eqref{pri:6.1-ap}$, one can first establish $\eqref{pri:5.9-ap}$ for $p_0=2$ by using Gehring-type lemma (see e.g., \cite[Lemma B.5]{Armstrong-Bordas-Mourrat-18}).
Then, a convexity argument (see \cite[pp.173]{Fefferman-Stein72}) leads to the cases $p_0\in(0,2)$, and this method is independent of PDEs. By the same token, one can have the boundary estimate $\eqref{pri:5.12-ap}$.
The estimate $\eqref{pri:5.10-ap}$ consequently follows from $\eqref{pri:5.9-ap}$.
\end{proof}

\begin{appendixlemma}[Reverse H\"older inequality]\label{lemma:6.1}
Let $\Omega\subset\mathbb{R}^d$ be a bounded $C^1$ domain with $d\geq 2$.
Assume that $v$ is a solution of $\big(\partial_t-\nabla\cdot\bar{a}\nabla\big) v = 0$ in $D_{2r}$ with $v=0$ on $\Delta_{2r}$ with $r>0$. Then, for any
$2<p<\infty$, $p_0>0$, and $\alpha\in(1,2]$, it holds that
\begin{equation}\label{pri:5.11-ap}
\Big(\dashint_{D_r}|\nabla v|^p\Big)^{\frac{1}{p}}
\lesssim_{\mu,d,p,p_0,\alpha} \Big(\dashint_{D_{\alpha r}}|\nabla v|^{2}\Big)^{\frac{1}{2}}.
\end{equation}
\end{appendixlemma}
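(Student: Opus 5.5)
The statement to prove is Lemma~\ref{lemma:6.1}: a boundary reverse H\"older inequality for $\nabla v$, where $v$ is a caloric-type function of the constant-coefficient operator $\partial_t-\nabla\cdot\bar{a}\nabla$ vanishing on the lateral piece $\Delta_{2r}$ of a $C^1$ cylinder. Since $\bar{a}$ is constant and $\mu$-elliptic, this is a deterministic regularity statement. The plan is to reduce it to the scale-invariant boundary $W^{1,p}$ (Calder\'on--Zygmund) theory for constant-coefficient parabolic systems in $C^1$ cylinders, combined with a localization argument.

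\textbf{Step 1: normalization.} By parabolic rescaling $(x,t)\mapsto(x/r,t/r^2)$, assume $r=1$; the $C^1$ character of $\partial\Omega$ only improves under dilation with $r\le$ const, and for large $r$ a covering by unit boxes handles it. Fix $1<\alpha'<\alpha$ and a cutoff $\eta\in C_0^\infty(Q_{\alpha'})$ with $\eta=1$ on $Q_1$ and $|\nabla\eta|+|\partial_t\eta|^{1/2}\lesssim 1$. Because constants are not admissible (we need the zero lateral trace), keep $v$ itself: $w:=\eta v$ satisfies
$(\partial_t-\nabla\cdot\bar a\nabla)w=-\nabla\cdot(\bar a v\nabla\eta)-\bar a\nabla v\cdot\nabla\eta+v\partial_t\eta$
in $D_{2}$, with $w=0$ on the parabolic boundary of a slightly enlarged $C^1$ cylinder $\tilde D\supset D_{\alpha'}$.

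\textbf{Step 2: global $W^{1,p}$ estimate and bootstrap.} Invoke the $L^p$ theory for $\partial_t-\nabla\cdot\bar a\nabla$ with zero Dirichlet data on bounded $C^1$ cylinders (Byun--Wang or Krylov type, or Shen's real-variable Lemma~\ref{shen's lemma2} with $\omega=1$ fed by boundary $C^{1,\gamma}$-flattening approximations). This gives
$\|\nabla w\|_{L^p}\lesssim \|v\|_{L^p(D_{\alpha'})}+\|\nabla v\|_{L^{q}(D_{\alpha'})}$.
Here the lower-order terms $\bar a\nabla v\cdot\nabla\eta+v\partial_t\eta$ are handled as $\nabla\cdot$ of a Riesz-potential-type field, gaining integrability by the parabolic Sobolev exponent $q\mapsto q(d+2)/(d+2-q)$. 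Since $v=0$ on $\Delta_2$, the boundary Poincar\'e inequality (in space, slice by slice) gives $\|v\|_{L^q(D_{\alpha'})}\lesssim\|\nabla v\|_{L^q(D_{\alpha'})}$, and the Sobolev embedding of Lemma~\ref{lemma:4.1-ap} together with Caccioppoli \eqref{pri:6.2-ap} controls $\sup_t\|v\|_{L^2}$. Starting from $q=2$, iterate finitely many times over shrinking radii between $1$ and $\alpha$ to reach any $p<\infty$. The bound $\nabla v\in L^2(D_\alpha)$ is the energy/Caccioppoli input, the boundary version of \eqref{pri:6.1-ap}.

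\textbf{Step 3: remarks on $p_0$ and the main obstacle.} The parameter $p_0$ in the statement is not needed on the right-hand side, which is stated with exponent $2$. If one wanted $p_0<2$, the Fefferman--Stein convexity argument cited in Lemma~\ref{lemma:5.2} would apply. The main obstacle is the boundary $W^{1,p}$ estimate for arbitrarily large $p$ in merely $C^1$ (not Lipschitz) cylinders. This is exactly where $C^1$ regularity is needed, since for Lipschitz domains only $p<3+\epsilon$ is available. I would prove it through Lemma~\ref{shen's lemma2}: at each boundary point and scale, approximate $v$ by the solution in a half-cylinder after flattening, where $\nabla$ is bounded by reflection and Schauder theory. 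The small $C^1$ oscillation of the boundary makes the error term $\eta$ in \eqref{pri:7.4b} arbitrarily small, and this yields every finite $p$.
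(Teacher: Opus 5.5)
Your proposal is correct and follows essentially the same route as the paper, which offers no argument of its own and simply defers to the classical boundary regularity theory for constant-coefficient parabolic operators in $C^1$ cylinders (Geng--Shen, Wang--Xu); your localization $w=\eta v$, the global $W^{1,p}$ estimate, and the finite Sobolev bootstrap starting from $q=2$ are the standard way to extract the local reverse H\"older form from that theory. The one point to tidy up is the large-$r$ remark in Step~1: covering $D_r$ by boxes at the $C^1$ scale $r_0$ of $\partial\Omega$ costs a factor $(r/r_0)^{(d+2)(1/2-1/p)}$, so you should either restrict to $0<r\lesssim r_0$, as in the usual formulation, or let the constant depend on $\Omega$ (and the time horizon) for larger $r$.
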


\begin{proof}
See e.g., \cite[Theorem 9.1]{Geng-Shen-15}, or \cite[Lemma 5.4]{Wang-Xu-25}
for a different proof strategy.
\end{proof}

\begin{appendixlemma}[Fractional integral estimate]\label{lemma:6.1-n}
Let $1<p<q<\infty$ and $1/p = 1/q+1/(d+2)$. 
Let $v$ be a solution to $\big(\partial_t-\Delta\big)v = \nabla\cdot g$ in $\mathbb{R}^{d+1}$. Then, there holds the 
anisotropic fractional integration estimate: 
\begin{equation}\label{pri:6.1-n}
 \|v\|_{L^q(\mathbb{R}^{d+1})} \lesssim \|g\|_{L^p(\mathbb{R}^{d+1})}. 
\end{equation}
\end{appendixlemma}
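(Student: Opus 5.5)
The plan is to write $v$ explicitly through the heat kernel and then apply the Hardy--Littlewood--Sobolev inequality on the space of homogeneous type $(\mathbb{R}^{d+1},\text{d},dz)$, whose homogeneous dimension is $d+2$.

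Let $\Gamma$ be the fundamental solution of $\partial_t-\Delta$. Then
\begin{equation*}
v=\Gamma*(\nabla\cdot g)=(\nabla\Gamma)*g,
\end{equation*}
understood first for $g\in C_0^\infty$ and extended by density at the end. The pointwise bound I will use is the standard Gaussian estimate
\begin{equation*}
|\nabla\Gamma(x,t)|\lesssim t^{-\frac{d+1}{2}}e^{-c|x|^2/t}\mathbf{1}_{t>0}\lesssim [\text{d}((x,t),0)]^{-(d+1)}.
\end{equation*}
To see the second inequality, split into the regions $|x|\le\sqrt t$ and $|x|>\sqrt t$. On the second region, use $e^{-c|x|^2/t}\lesssim (t/|x|^2)^{(d+1)/2}$. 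This yields
\begin{equation*}
|v(z)|\lesssim \int_{\mathbb{R}^{d+1}}\frac{|g(w)|}{\text{d}(z,w)^{(d+2)-1}}\,dw=:I_1(|g|)(z),
\end{equation*}
which is the parabolic Riesz potential of order $1$.

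The main step is the boundedness $I_1:L^p\to L^q$ for $1/p-1/q=1/(d+2)$ and $1<p<q<\infty$. I will prove this by Hedberg's argument. For a parameter $\rho>0$, split the integral into the parts $\text{d}(z,w)<\rho$ and $\text{d}(z,w)\ge\rho$.
\begin{itemize}
\item For the near part, sum over dyadic parabolic shells $Q_{2^{-k}\rho}(z)\setminus Q_{2^{-k-1}\rho}(z)$. Since $|Q_r|\sim r^{d+2}$, this part is bounded by $\rho\,Mg(z)$, where $M$ is the parabolic Hardy--Littlewood maximal function.
\item For the far part, apply Hölder's inequality. The kernel $\text{d}^{-(d+1)}$ lies in $L^{p'}(\{\text{d}\ge\rho\})$ exactly when $(d+1)p'>d+2$, which is equivalent to $p<d+2$. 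This is guaranteed by $q<\infty$. The far part is then bounded by $\rho^{1-(d+2)/p}\|g\|_{L^p}$.
\end{itemize}
Optimizing in $\rho$ gives
\begin{equation*}
|I_1 g(z)|\lesssim (Mg(z))^{p/q}\,\|g\|_{L^p}^{1-p/q}.
\end{equation*}
Taking the $L^q$ norm and using the $L^p$ boundedness of $M$ on parabolic cubes (valid since $p>1$) gives $\|v\|_{L^q}\lesssim\|g\|_{L^p}$. A density argument then extends the estimate to general $g\in L^p$.

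I expect the only delicate points to be two. The first is checking the kernel bound uniformly across the anisotropic regions. The second is confirming that the maximal function relative to parabolic cubes is of strong type $(p,p)$. Both are standard on spaces of homogeneous type, so I anticipate no real obstacle.
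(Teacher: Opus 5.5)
Your proof is correct and takes the same route as the paper: both write $v=(\nabla\Gamma)*g$, dominate $|\nabla\Gamma|$ by the kernel of a parabolic Riesz potential of order one (the paper's $A_2^{1/2}$), and use its $L^p\to L^q$ boundedness. The differences are that the paper cites this boundedness from the literature while you prove it via Hedberg's pointwise inequality and the parabolic maximal function, and that your bound $|\nabla\Gamma|\lesssim t^{-\frac{d+1}{2}}e^{-c|x|^2/t}$ (with a reduced constant $c$) is the accurate version of the paper's claim $|\nabla_y\Gamma(y,s)|\lesssim s^{-1/2}\Gamma(y,s)$, which holds only after widening the Gaussian.
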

\begin{proof}
By the notion of parabolic Riesz potential \cite[Theorem 3.2]{Aliev-Sekin-20} (see also \cite[Theorem 3.1]{Gopala Rao-77}), denoted by $A_2^{\alpha}$, it is not hard to verify the stated estimate $\eqref{pri:6.1-n}$. Let $\Gamma$ be the heat kernel, and the solution $u$ can be controlled by
\begin{equation}
|u(x,t)|\lesssim \int_{\mathbb{R}^{d+1}}s_{+}^{-\frac{1}{2}}
\Gamma(y,s)f(x-y,t-s)dyds=:A_2^{\frac{1}{2}}(f)(x,t), 
\qquad \text{where}~s_{+}:=0\vee s 
\end{equation}
(see e.g. \cite[Definition 2.3]{Aliev-Sekin-20}), 
and we mention that $|\nabla_y\Gamma(y,s)|\lesssim s^{-1/2}\Gamma(y,s)$ for any $s>0$, and we therefore take $\alpha = 1/2$ correspondingly. We are done. 
\end{proof}

It is recalled that ${_0H^{1,1/2}_q(\partial_{\shortparallel}\Omega_T)}$ denotes
the Sobolev space of functions possessing one spatial derivative and half
of a time derivative in $L^q(\partial_{\shortparallel}\Omega_T)$, with  elements required to vanish on $\partial\Omega\times\{t=0\}$.

\begin{appendixlemma}[Layer and co-layer type estimates]\label{lemma:3.2-ap}
Let $\Omega\subset\mathbb{R}^d$ be a bounded $C^1$ domain with $d\geq2$,
$1<q<\infty$, and $T>0$. Given
$F\in L^q(\Omega_T)$ and
$g\in {_{0}H^{1,1/2}_{q}}(\partial_{\shortparallel}\Omega_T)$
satisfying $\|g\|_{H^{1,1/2}_{q}(\partial_{\shortparallel}\Omega_T)} = 1$,
let $u_0$ be the weak solution of $(\emph{DP}_0)$ in $\eqref{pde:3.1*}$. Then, the following layer-type estimates hold:
\begin{subequations}
\begin{align}
&\|\nabla u_0\|_{L^q(\boxbox_{2c\varepsilon})}
\lesssim\varepsilon^{\frac{1}{q}}\Big\{\|F\|_{L^q(\Omega_T)}
+ 1\Big\};
\label{pri:3.1-ap}\\
&\|\delta^{1/2}\nabla u_0\|_{L^2(\boxbox_{2c\varepsilon})}
\lesssim\varepsilon\Big\{\|F\|_{L^2(\Omega_T)}
+ 1\Big\};
\label{pri:3.2-ap}\\
&\|\omega_\sigma^{1/q}\nabla u_0\|_{L^q((O_{2c\varepsilon})_T)}
\lesssim\varepsilon^{\underline{q}/q}\Big\{\|F\|_{L^q(\Omega_T)}
+ 1\Big\},
\label{pri:3.4-ap}
\end{align}
\end{subequations}
in which $\omega_\sigma$ is given as in Theorem $\ref{thm:C-Z}$
with $0<\underline{q}<q$ and $\delta(z)
:=\text{dist}(z,\partial\Omega_T)$. Also, the following
co-layer type estimates hold:
\begin{subequations}
\begin{align}
\max\Big\{\|\nabla^2 u_0\|_{L^q((\Omega\setminus O_{c\varepsilon})_T)},
~\|\partial_t u_0\|_{L^q((\Omega\setminus O_{c\varepsilon})_T)}\Big\}
&\lesssim \varepsilon^{\frac{1}{q}-1}\Big\{\|F\|_{L^q(\Omega_T)}
+ 1\Big\};
\label{pri:3.3-ap}\\
\max\Big\{\|\omega_{\sigma}^{1/q}\nabla^2 u_0\|_{L^q((\Omega\setminus O_{c\varepsilon})_T)},
~\|\omega_{\sigma}^{1/q}\partial_t u_0\|_{L^q((\Omega\setminus O_{c\varepsilon})_T)}\Big\}
&\lesssim \varepsilon^{\underline{q}/q-1} \Big\{\|F\|_{L^q(\Omega_T)}
+ 1\Big\};
\label{pri:3.5-ap}\\
\|\delta^{\frac{1}{2}}(\nabla^2u_0+\partial_t u_0)\|_{L^2((\Omega\setminus O_{c\varepsilon})_T)}
+ \|\delta^{-\frac{1}{2}}\nabla u_0\|_{L^2(\Sigma_{c\varepsilon}^T)}
&\lesssim \ln^{\frac{1}{2}}(1/\varepsilon)\Big\{\|F\|_{L^2(\Omega_T)}
+ 1\Big\},
\label{pri:3.6-ap}
\end{align}
\end{subequations}
where $c\in[1,8]$, and the multiplicative constant depends at most on $\mu,d,q,T$, and $\Omega$. In particular, the estimates
$\eqref{pri:3.2-ap}$ and $\eqref{pri:3.6-ap}$ still hold
for a Lipschitz cylinder, and the symmetry condition
$a=a^*$ is required in such a case.
\end{appendixlemma}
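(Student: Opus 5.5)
The plan is to split $g$ off first and then treat the two pieces by complementary methods. The interior quantities (the co-layer estimates) will come from rescaled local regularity on parabolic cubes whose radius is comparable to the distance to the lateral boundary. The boundary-layer quantities will come from global $L^q$ bounds on nontangential maximal functions of $\nabla u_0$, as in the $C^1$ Dirichlet theory for constant-coefficient parabolic systems (Brown, Fabes--Rivi\`ere).

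\textbf{Step 1 (reduction).} Write $u_0=u_0^{(1)}+u_0^{(2)}$, where $u_0^{(1)}$ solves $(\text{DP}_0)$ with data $(F,0)$ and $u_0^{(2)}$ solves it with data $(0,g)$.
\begin{itemize}
\item For $u_0^{(1)}$, extend $F$ by zero and compare with the whole-space solution. Global $W^{2,1}_q$ theory on the $C^1$ cylinder gives $\|(\nabla^2 u_0^{(1)},\partial_t u_0^{(1)})\|_{L^q(\Omega_T)}\lesssim\|F\|_{L^q}$ and $\nabla u_0^{(1)}\in W^{1,q}$ (more precisely, bounded in $L^\infty$ near the lateral boundary if $q>d+2$, and in $L^{q}$ with one extra derivative otherwise). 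This piece is harmless for every estimate.
\item For $u_0^{(2)}$, the $L^q$ regularity (Neumann/regularity) problem for $\partial_t-\nabla\cdot\bar a\nabla$ on $C^1$ cylinders gives $\|(\nabla u_0^{(2)})^*\|_{L^q(\partial_\shortparallel\Omega_T)}\lesssim\|g\|_{H^{1,1/2}_q}=1$. This is where the $C^1$ hypothesis, or the Lipschitz hypothesis with $a=a^*$ and $q=2$, enters.
\end{itemize}

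\textbf{Step 2 (layer estimates \eqref{pri:3.1-ap}, \eqref{pri:3.2-ap}, \eqref{pri:3.4-ap}).}
\begin{itemize}
\item Lateral layer. Using coordinates $(x',s,t)$ with $s=\text{dist}(x,\partial\Omega)$, the co-area formula gives
\[
\int_{(O_{r})_T}|\nabla u_0|^q\omega\lesssim \int_0^{r}\omega(s)\,ds\;\|(\nabla u_0)^*\|_{L^q(\partial_\shortparallel\Omega_T)}^q .
\]
With $\omega=1$ this yields $\varepsilon^{1/q}$. With $\omega=\omega_\sigma$ we have $\int_0^{2c\varepsilon}(s+4\varepsilon)^{\underline q-1}ds\sim\varepsilon^{\underline q}$, which yields \eqref{pri:3.4-ap}. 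With $\omega=\delta\le s$ and $q=2$ it yields $\varepsilon$.
\item Temporal layers. The strips $t<(2c\varepsilon)^2$ and $T-t<(2c\varepsilon)^2$ have measure of order $\varepsilon^2$. Bound them via $\nabla u_0\in L^\infty_tL^q_x$, using $u_0(\cdot,0)=0$, the compatibility $g|_{t=0}=0$, and energy in time. This gives a contribution $\varepsilon^{2/q}\le\varepsilon^{1/q}$, and $\varepsilon^2$ when the weight $\delta\le\varepsilon$ is included in the $L^2$ case.
\end{itemize}

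\textbf{Step 3 (co-layer estimates \eqref{pri:3.3-ap}, \eqref{pri:3.5-ap}, \eqref{pri:3.6-ap}).} These are the main technical step.
\begin{itemize}
\item For $z$ with $\sigma(z)=\rho\ge c\varepsilon$, apply interior $C^{2,1}$ regularity for the constant-coefficient operator on $Q_{\rho/2}(z)$:
\[
|\nabla^2u_0(z)|+|\partial_tu_0(z)|\lesssim \rho^{-1}\Big(\dashint_{Q_{\rho/2}(z)}|\nabla u_0|^q\Big)^{1/q}+\text{(terms in $F$)}.
\]
\item Since $Q_{\rho/2}(z)$ lies in the nontangential cones of a boundary patch of size $\rho$, $\rho^{-1}|\nabla u_0|\lesssim\rho^{-1}(\nabla u_0)^*$ there. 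Integrating over dyadic shells $\rho\sim 2^k\varepsilon$ gives
\[
\int_{\Sigma_{c\varepsilon}}|\nabla^2u_0|^q\omega\lesssim\sum_k (2^k\varepsilon)^{1-q}\omega(2^k\varepsilon)\,\|(\nabla u_0)^*\|_q^q .
\]
\item For $\omega=1$ the sum is $\lesssim\varepsilon^{1-q}$, which gives \eqref{pri:3.3-ap}. For $\omega=\omega_\sigma$ it is $\lesssim\varepsilon^{\underline q-q}$ (convergent because $\underline q<q$), which gives \eqref{pri:3.5-ap}.
\item For \eqref{pri:3.6-ap}, with $q=2$ and weight $\delta^{\pm1}$, each shell contributes $O(1)$. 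Summing over the $\log(1/\varepsilon)$ shells gives the factor $\ln^{1/2}(1/\varepsilon)$. Near $t=0$ and $t=T$ the weight $\delta$ may be the temporal distance; handle this with the same dyadic decomposition in $\sqrt t$, using parabolic scaling of the interior estimate.
\end{itemize}

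\textbf{Expected obstacle.} The delicate point is the nontangential maximal bound for $(\text{DP}_0)$ with $H^{1,1/2}_q$ boundary data and $F\neq0$. I would quote the $C^1$ layer-potential theory for this, and reduce the $F$-part by subtracting a whole-space $W^{2,1}_q$ solution, whose trace is controlled in $H^{1,1/2}_q$.
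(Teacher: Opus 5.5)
Your overall architecture is the paper's. You use nontangential maximal bounds from the $C^1$ theory of Fabes--Rivi\`ere (Brown for Lipschitz cylinders, $q=2$, $a=a^*$), co-area over the level sets $\{\sigma=r\}$ for the lateral layer, and interior estimates integrated over $c\varepsilon\le\sigma\le c_0$ for the co-layers. Your weighted computations are correct: $\int_0^{2c\varepsilon}(s+4\varepsilon)^{\underline q-1}ds\sim\varepsilon^{\underline q}$, the convergent sum $\sum_k(2^k\varepsilon)^{\underline q-q}$, and $O(1)$ per shell for \eqref{pri:3.6-ap}. Two steps, however, fail as written.

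\textbf{Step 1 is wrong as stated.} $C^1$ regularity of $\partial\Omega$ does not give, in general, a global $W^{2,1}_q(\Omega_T)$ estimate for the Dirichlet problem with zero lateral data. For $q>d+2$ it would make $\nabla u_0^{(1)}$ bounded up to $\partial_\shortparallel\Omega_T$, which fails in $C^1$ domains without a Dini condition on the normal. On Lipschitz cylinders it fails already for $q=2$. So the $F$-piece is not harmless: nothing in your Step 1 controls $\nabla u_0^{(1)}$ near the lateral boundary, and both your co-area step and your dyadic-shell step need exactly that.

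The fix is what you sketch under ``Expected obstacle'', which is the paper's decomposition $u_0=v+w$. Here $\partial_tv+\mathcal{L}_0v=\tilde F$ in $\mathbb{R}^d\times\mathbb{R}_+$ with $v(\cdot,0)=0$. Mihlin gives $\nabla^2v,\partial_tv,D_t^{1/2}\nabla v\in L^q$, and a semigroup/Hardy--Littlewood--Sobolev bound gives $v\in L^q$. Then $w$ is caloric in $\Omega_T$ with lateral data $g-v$. Its $H^{1,1/2}_q$ norm is controlled by the trace theorem and $\|D_t^{1/2}v\|\lesssim\|\partial_tv\|^{1/2}\|v\|^{1/2}$, so $\|(\nabla w)^*\|_{L^q(\partial_\shortparallel\Omega_T)}\lesssim\|F\|_{L^q}+1$. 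Pointwise interior estimates are then applied only to $w$, on backward cubes after extending $w$ by zero to $t<0$, so times near $0$ and $T$ cause no trouble. Your ``terms in $F$'' become $\nabla^2v$ and $\partial_tv$, which are globally in $L^q$.

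\textbf{The temporal-layer step also fails.} $\nabla u_0\in L^\infty_tL^q_x(\Omega)$ does not follow from $\|g\|_{H^{1,1/2}_q}=1$, and energy in time does not give it. A boundary bump of parabolic size $r$, normalized in $H^{1,1/2}_q$, has amplitude $h=r^{1-(d+1)/q}$. It produces $\|\nabla u_0(\cdot,t)\|_{L^q(\Omega)}^q\gtrsim(h/r)^qr^d=r^{-1}$. Taking $r\sim\varepsilon$ and placing the bump in $(0,(2c\varepsilon)^2)$ shows that $\Sigma_{2c\varepsilon}\times(0,(2c\varepsilon)^2)$ carries $\int|\nabla u_0|^q\gtrsim\varepsilon$. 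So your $\varepsilon^{2/q}$ is false, and only the stated $\varepsilon^{1/q}$ is true.

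What does hold is a scaled bound. For the caloric part, interior estimates on backward cubes of radius $\sim\rho$, combined with co-area, give
\[
\sup_t\int_{\{\sigma\sim\rho\}}|\nabla w(\cdot,t)|^q\lesssim\rho^{-2}\int_0^T\int_{\{\sigma\sim\rho\}}|\nabla w|^q\lesssim\rho^{-1}\|(\nabla w)^*\|_{L^q}^q .
\]
Multiplying by the width $(2c\varepsilon)^2$ and summing over dyadic $\rho\ge2c\varepsilon$ gives $O(\varepsilon)$. The part $v$ is harmless, since $D_t^{1/2}\nabla v\in L^q$ yields smallness of $\int_0^{(2c\varepsilon)^2}\|\nabla v(\cdot,t)\|_{L^q}^q\,dt$.

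The same bound, $\sup_t\|\nabla u_0(\cdot,t)\|^2_{L^2(\Sigma_\rho)}\lesssim\rho^{-1}$, is what your dyadic decomposition in $\sqrt t$ for $\|\delta^{-1/2}\nabla u_0\|_{L^2(\Sigma_{c\varepsilon}^T)}$ actually needs. The paper obtains it from the embedding into $C([0,T];H^1(\Sigma_{t_k}))$ together with the co-layer estimate. Finally, \eqref{pri:3.2-ap} is simply \eqref{pri:3.1-ap} with $q=2$, combined with $\delta\le2c\varepsilon$ on $\boxbox_{2c\varepsilon}$.
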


\begin{proof}
The main idea in the proofs is analogous to that applied to elliptic operators but more involved, and we refer the
reader to \cite{Shen18,Xu-16} for the original idea.
We mainly prove $\eqref{pri:3.1-ap}$ and $\eqref{pri:3.6-ap}$. The corresponding weighted estimates (except of the second term in
the left-hand side of $\eqref{pri:3.6-ap}$) only require minor modifications to the original proofs, and therefore we leave them to the reader.
The proof is divided into six steps.

\textbf{Step 1.} Outline of the proof of $\eqref{pri:3.1-ap}$ and reduction.
Let $t_0:=(2c\varepsilon)^2$ and $t_*:=T$. By recalling the definition of
$\boxbox_{2c\varepsilon}$, it suffices to divide it to the lateral layer, denoted by $(O_{2c\varepsilon})_T$, and the time layer (see Fig.\ref{pic:1.2}). Correspondingly,
the triangle inequality leads to
\begin{equation}\label{f:3.8-ap}
\begin{aligned}
\|\nabla u_0\|_{L^q(\boxbox_{2c\varepsilon})}
\leq \|\nabla u_0\|_{L^q((O_{2c\varepsilon})_T)}
+ \sum_{t\in\{t_0,t_*\}}\Big(\int_{t-{(2c\varepsilon)}^2}^t
\int_{\Sigma_{2c\varepsilon}}|\nabla u_0|^q\Big)^{\frac{1}{q}}.
\end{aligned}
\end{equation}
Once we can establish the following estimates:
\begin{subequations}
\begin{align}
& \|\nabla u_0\|_{L^q((O_{2c\varepsilon})_T)}\lesssim
\varepsilon^{\frac{1}{q}} \Big\{\|F\|_{L^q(\Omega_T)}+1\Big\};
\label{f:3.6-a-ap} \\
& \sum_{t\in\{t_0,t_*\}}\Big(\int_{t-{(2c\varepsilon)}^2}^t
\int_{\Sigma_{2c\varepsilon}}|\nabla u_0|^q\Big)^{\frac{1}{q}}
\lesssim \varepsilon^{\frac{1}{q}} \Big\{\|F\|_{L^q(\Omega_T)}+1\Big\},
\label{f:3.6-b-ap}
\end{align}
\end{subequations}
by substituting $\eqref{f:3.6-a-ap}$ and $\eqref{f:3.6-b-ap}$ into the estimate $\eqref{f:3.8-ap}$, we will obtain the stated estimate $\eqref{pri:3.1-ap}$.

To show the estimates $\eqref{f:3.6-a-ap}$ and $\eqref{f:3.6-b-ap}$,
the solution $u_0$ is divided into two parts, i.e., $u_0 = v+w$,
where $v$ and $w$ satisfy
the following equations (i) and (ii), respectively.
\begin{equation}\label{pde:5.5}
(\text{i})\left\{\begin{aligned}
\partial_t v + \mathcal{L}_0(v) &= \tilde{F} &~&\text{in}~ \mathbb{R}^{d}\times\mathbb{R}_{+};\\
v(\cdot,0) & = 0 &~& \text{on}~ \mathbb{R}^d,
\end{aligned}\right.
\qquad
\quad
(\text{ii}) \left\{\begin{aligned}
\partial_t w + \mathcal{L}_0(w) &= 0 &\quad&\text{in}\quad \Omega_T;\\
w & = g-v &~& \text{on}\quad \partial_{\shortparallel}\Omega_T;\\
w(\cdot,0) & = 0 &~& \text{on}\quad \Omega,
\end{aligned}\right.
\end{equation}
where $\tilde{F}$ is the zero extension of $F$ from $\Omega_T$ to
$\mathbb{R}_{+}\times\mathbb{R}^d$, and $\mathbb{R}_{+}:=(0,\infty)$.
By the well-posedness of (i), it can be observed that
$g-v\in {_{0}H_p^{1,1/2}}(\partial_{\shortparallel}\Omega_T)$. It is
noted that the existence of $w$ has been shown in \cite[Theorem 3.4]{Fabes-Riviere-79}, and the
second equality in (ii) is to be understood in the sense of
the nontangential convergence (see e.g., \cite{Brown-89,Shen18}).

For $\eqref{f:3.6-a-ap}$ and $\eqref{f:3.6-b-ap}$,
the key ingredients are as follows:
\begin{subequations}\label{}
\begin{align}
&\|v\|_{W^{2,1}_{q}(\Omega_{T})}
\lesssim_{\mu,d,q,R_0,T} \|F\|_{L^q(\Omega_T)};
\label{f:3.7-ap}\\
&\|(\nabla w)^*\|_{L^q(\partial_{\shortparallel}\Omega_T)}
\lesssim \Big\{\|F\|_{L^q(\Omega_T)}+1\Big\},
\label{f:3.13-ap}
\end{align}
\end{subequations}
where the definition of the nontangential maximal function is given
as in $\eqref{def:nontangential}$ (see also Fig.\ref{pic:1.3}).
The stated estimate $\eqref{f:3.6-a-ap}$
follows from the above two estimates (see Step 3), while
one can also handle the time layer type estimate as follows:
\begin{equation}\label{f:3.30-ap}
\begin{aligned}
&\sum_{t\in\{t_0,t_*\}}\int_{t-(2c\varepsilon)^2}^t
\int_{\Sigma_{2c\varepsilon}}|\nabla u_0|^q
\lesssim \varepsilon^2
\sum_{t\in\{t_0,t_*\}}
\int_{\Sigma_{2c\varepsilon}}|\nabla u_0(\cdot,t')|^q
\lesssim  \varepsilon
\sum_{t\in\{t_0,\cdots,t_N,t_*\}}
\varepsilon\int_{\Sigma_{2c\varepsilon}}|\nabla u_0(\cdot,t')|^q\\
&\lesssim \varepsilon \int_{0}^{T}dt
\int_{\Sigma_{2c\varepsilon}}|\nabla u_0(\cdot,t)|^q
\lesssim \varepsilon \bigg\{\int_{0}^{T}dt
\int_{\Omega}(|\nabla^2 v|+|v|)^q(\cdot,t)
+\|(\nabla w)^*\|_{L^q(\partial_{\shortparallel}\Omega_T)}^q\bigg\}\\
&\lesssim^{\eqref{f:3.7-ap},\eqref{f:3.13-ap}} \varepsilon
\Big\{\|F\|_{L^q(\Omega_T)}^q
+1 \Big\},
\end{aligned}
\end{equation}
where we trivially insert $N$ points in the time direction
in the second inequality above to use the definition of Riemann integral\footnote{Since $\nabla u_0|_{x\in\Sigma_{2c\varepsilon}}(x,t)$ is smooth with respect to time variable $t$ due to the interior regularity of $u_0$, it is fine to understand it
in the sense of Riemann integral.} for the third inequality. Then,
taking the $q$-th root on the both sides of $\eqref{f:3.30-ap}$
consequently yields the stated estimate $\eqref{f:3.6-b-ap}$.
This ends the arguments for $\eqref{pri:3.1-ap}$.

\textbf{Step 2.} Arguments for $\eqref{f:3.7-ap}$ and $\eqref{f:3.13-ap}$.
Concerning the equations (i) of $\eqref{pde:5.5}$,
It can be observed that the equation $(\text{i})$ of $\eqref{pde:5.5}$, as well as $v$, possesses a zero
extension for $t<0$. Therefore, one can understand $D_t^{1/2}\nabla v$
in the Fourier sense, i.e.,
\begin{equation*}
D_t^{1/2}\nabla v = \mathcal{F}^{-1}\big(\kappa(\cdot,\cdot)\mathcal{F}(\tilde{F})\big)
\quad \text{with~a~Mihlin~multiplier}\quad \kappa(\tau,\xi):=\frac{\sqrt{|\tau|}\xi}{i\tau+\xi\cdot\bar{a}\xi}.
\end{equation*}
Then, using Mihlin's theorem, it follows that
\begin{equation*}
 \|D_t^{1/2}\nabla v\|_{L^q(\mathbb{R}_{+}\times\mathbb{R}^d)}
 \leq \|D_t^{1/2}\nabla v\|_{L^q(\mathbb{R}^{d+1})}
 \lesssim_{\mu,d,q}\|\tilde{F}\|_{L^q(\mathbb{R}^{d+1})}
 \lesssim\|F\|_{L^q(\Omega_T)}.
\end{equation*}
By the same token, it further holds that
\begin{equation}\label{f:5.27-ap}
 \|D_t^{1/2}\nabla v\|_{L^q(\mathbb{R}_{+}\times\mathbb{R}^d)}
+\|\nabla^2 v\|_{L^q(\mathbb{R}_{+}\times\mathbb{R}^d)}
+\|\partial_t v\|_{L^q(\mathbb{R}_{+}\times\mathbb{R}^d)}
 \lesssim_{\mu,d,q}\|F\|_{L^q(\Omega_T)}.
\end{equation}
This, together with $\|v\|_{L^q(\Omega_T)}\lesssim_T\|\partial_tv\|_{L^q(\Omega_T)}$,
consequently leads to the stated estimate $\eqref{f:3.7-ap}$.
%
%

We now proceed to study the equations (ii) of $\eqref{pde:5.5}$.
On account of \cite[Theorem 3.4]{Fabes-Riviere-79} (or
\cite[Theorem 6.1]{Brown-89} in the case of Lipschitz domains with $q=2$,
where the symmetry condition $a=a^*$ is additionally required), one
can first derive that
\begin{equation}\label{f:5.25-ap}
\begin{aligned}
\|(\nabla w)^*\|_{L^q(\partial_{\shortparallel}\Omega_T)}
&\lesssim \|g-v\|_{H^{1,1/2}_{q}(\partial_{\shortparallel}\Omega_T)}\\
&\lesssim
\|g\|_{H^{1,1/2}_{q}(\partial_{\shortparallel}\Omega_T)}
+\|\nabla v\|_{L^{q}(\partial_{\shortparallel}\Omega_T)}
+\|D_t^{1/2} v\|_{L^{q}(\partial_{\shortparallel}\Omega_T)}
+ \|v\|_{L^{q}(\partial_{\shortparallel}\Omega_T)}.
\end{aligned}
\end{equation}

By the trace theorem with respect to the spatial variable, it is obtained that
\begin{equation}\label{f:5.26-ap}
\begin{aligned}
&\|\nabla v\|_{L^{q}(\partial_{\shortparallel}\Omega_T)}
+\|D_t^{1/2} v\|_{L^{q}(\partial_{\shortparallel}\Omega_T)}
+ \|v\|_{L^{q}(\partial_{\shortparallel}\Omega_T)}\\
&\lesssim  \Big\{\|v\|_{W^{2,1}_q(\Omega_T)}
+ \|\nabla D_t^{1/2} v\|_{L^q(\Omega_T)} + \|D_t^{1/2} v\|_{L^q(\Omega_T)}\Big\}\\
&\lesssim  \Big\{\|v\|_{W^{2,1}_q(\Omega_T)}
+ \|D_t^{1/2}\nabla v\|_{L^q(\mathbb{R}_{+}\times\mathbb{R}^{d})}
+ \|\partial_t v\|_{L^q(\mathbb{R}_{+}\times\mathbb{R}^{d})}^{\frac{1}{2}}
\|v\|_{L^q(\mathbb{R}_{+}\times\mathbb{R}^{d})}^{\frac{1}{2}}\Big\},
\end{aligned}
\end{equation}
where the interpolation inequality on time\footnote{
That is $\|D_t^{1/2} u\|_{L^q(I)}\lesssim \|\partial_t u\|_{L^q(I)}^{\frac{1}{2}}\| u\|_{L^q(I)}^{\frac{1}{2}}$
for the case $|I|=\infty$, and we refer the reader to
\cite[Corollary 1.27]{Leoni-23}.} is also employed for the second inequality.
Then, the following is claimed:
\begin{equation}\label{f:3.12-ap}
 \|v\|_{L^q(\mathbb{R}_{+}\times\mathbb{R}^{d})}
 \lesssim \|\tilde{F}\|_{L^{s}(\mathbb{R}_{+};L^r(\mathbb{R}^d))} \lesssim_{R_0,T}\|F\|_{L^q(\Omega_T)},
\end{equation}
where $r,s\in[1,q)$ satisfy $\frac{d}{2}\big(\frac{1}{r}-\frac{1}{q}\big)
+\big(\frac{1}{s}-\frac{1}{q}\big)=1$.
Thus, plugging $\eqref{f:3.12-ap}$, $\eqref{f:5.26-ap}$, $\eqref{f:5.27-ap}$, and $\eqref{f:3.7-ap}$
back into \eqref{f:5.25-ap} yields the stated estimate
$\eqref{f:3.13-ap}$.

\textbf{Step 3.} Arguments for $\eqref{f:3.6-a-ap}$.
Let $S_r:=\{x\in\Omega:\text{dist}(x,\partial\Omega)=r\}$
denote the level set of $\Omega$. For any $r\in[0,2c\varepsilon]$ and $t>0$, it follows from the trace theorem that
\begin{equation*}
\begin{aligned}
\int_{S_r}dS_r|\nabla v(\cdot,t)|^q
\lesssim\bigg\{\int_{\Sigma_r}|\nabla^2 v(\cdot,t)|^q
+ \int_{\Sigma_r}|\nabla v(\cdot,t)|^q \bigg\}
\lesssim \bigg\{\int_{\Omega}|\nabla^2 v(\cdot,t)|^q
+ \int_{\Omega}|\nabla v(\cdot,t)|^q \bigg\},
\end{aligned}
\end{equation*}
where the multiplicative constant is independent of $r$ and $t$. By the co-area formula, we further have
\begin{equation}\label{f:3.10-ap}
\begin{aligned}
\Big(\int_{(O_{2c\varepsilon})_{T}} |\nabla v|^q\Big)^{\frac{1}{q}}
&=
\bigg(\int_{0}^Tdt\int_0^{2c\varepsilon}dr\int_{S_r}dS_r|\nabla v(\cdot,t)|^q\bigg)^{\frac{1}{q}} \\
&\lesssim\varepsilon^{\frac{1}{q}}
\Big(\int_{\Omega_T}|\nabla^2 v|^q+|\nabla v|^q \Big)^{\frac{1}{q}}
\lesssim^{\eqref{f:3.7-ap}}
\varepsilon^{\frac{1}{q}}
\Big\{\|F\|_{L^q(\Omega_T)}+1\Big\}.
\end{aligned}
\end{equation}


By using the co-area formula again, it is also obtained that
\begin{equation}\label{f:3.11-ap}
\begin{aligned}
\Big(\int_{(O_{2c\varepsilon})_T}
|\nabla w|^q \Big)^{\frac{1}{q}}
\lesssim \varepsilon^{\frac{1}{q}}\big\|(\nabla w)^*\big\|_{L^q(\partial_{
\shortparallel}\Omega_T)}
\lesssim^{\eqref{f:3.13-ap}}\varepsilon^{1/q}
\Big\{\|F\|_{L^q(\Omega_T)}+1\Big\}.
\end{aligned}
\end{equation}
Combining the estimates $\eqref{f:3.10-ap}$ and $\eqref{f:3.11-ap}$ implies
the stated estimate $\eqref{f:3.6-a-ap}$.

\textbf{Step 4.} Show the estimate $\eqref{pri:3.3-ap}$.
We now turn to the so-called co-layer type estimate.
Since $\partial_t u_0 + \mathcal{L}_0(u_0) = F$ in $\Omega_T$, it is sufficient to prove the
estimate $\eqref{pri:3.3-ap}$ for the quantity
\begin{equation*}
\int_{(\Omega\setminus O_{c\varepsilon})_T}
|\nabla^2 u_0|^q.
\end{equation*}
In view of $u_0 = v+w$, the above integral could be bounded  by
\begin{equation}\label{f:3.19-ap}
\begin{aligned}
\int_{0}^{T^*}\int_{\tilde{\Omega}}
|\nabla^2 v|^q
&+ \int_{0}^{T}\int_{\Omega\setminus O_{c\varepsilon}}
|\nabla^2 w|^q \\
&\lesssim^{\eqref{f:3.7-ap}} \|F\|_{L^q(\Omega_T)}^q
+\underbrace{
\int_{0}^{T}\int_{O_{c_0}\setminus O_{c\varepsilon}}
|\nabla^2 w|^q}_{I_1}
+\underbrace{\int_{0}^{T}\int_{\Omega\setminus O_{c_0}}
|\nabla^2 w|^q}_{I_2},
\end{aligned}
\end{equation}
where $c_0=R_0/100$. From the interior estimate\footnote{The estimate
$\eqref{f:3.22-ap}$ follows from Caccioppoli's inequality and
Sobolev embedding theorem, where it is noted that $w$ possesses the zero extension for $t<0$. The technique for averaging from $L^2$ to $L^1$ is also standard, and we refer the reader to \cite[pp.173]{Fefferman-Stein72}
for a convexity argument.}:
\begin{equation}\label{f:3.22-ap}
  |\nabla^2 w(z)|
  \lesssim [\sigma(z)]^{-1}\dashint_{Q_{\sigma(z)}(z)}|\nabla w|,
  \qquad \sigma(z):=\text{dist}(x,\partial\Omega)
  ~\text{and}~z=(x,t)\in\Omega_{T},
\end{equation}
it follows that
\begin{equation}\label{f:3.20-ap}
I_2 \lesssim_{c_0}^{\eqref{f:3.22-ap}} \int_{0}^{T}\int_{\Omega\setminus O_{2c\varepsilon}}
|\nabla w|^q
\lesssim \int_{0}^{T}\int_{\Omega}
|\nabla (u_0-v)|^q \lesssim^{\eqref{f:3.7-ap}}  \Big\{\|F\|_{L^q(\Omega_T)}^q+1\Big\},
\end{equation}
where the facts that $u_0 = v+w$ and
$\|\nabla u_0\|_{L^q(\Omega_T)}\lesssim \|F\|_{L^q(\Omega_T)}+1$ are  employed in the second and third inequalities, respectively.
We proceed to estimate $I_1$,
\begin{equation}\label{f:3.21-ap}
\begin{aligned}
I_1
&\lesssim^{\eqref{f:3.22-ap}}
\int_0^Tdt\int_{\partial\Omega}dS|(\nabla w)^*(\cdot,t)|^q
\int_{c\varepsilon}^{c_0}\frac{dr}{r^q}
\lesssim^{\eqref{f:3.13-ap}} (c\varepsilon)^{1-q}\Big\{
\|F\|_{L^q(\Omega_T)}^q
+ 1\Big\}.
\end{aligned}
\end{equation}

Combining the estimates
$\eqref{f:3.19-ap}$, $\eqref{f:3.20-ap}$, and $\eqref{f:3.21-ap}$ yields
\begin{equation}\label{f:3.23-ap}
 \bigg(\int_{(\Omega\setminus O_{c\varepsilon})_T}|\nabla^2 u_0|^q\bigg)^{\frac{1}{q}}
 \lesssim \varepsilon^{\frac{1}{q}-1}\Big\{\|F\|_{L^q(\Omega_T)}
+ 1 \Big\},
\end{equation}
which implies the stated estimate $\eqref{pri:3.3-ap}$ by noting that
$c\in[1,8]$ assumed in the theorem.

\textbf{Step 5.} Arguments for $\eqref{f:3.12-ap}$. By the semigroup representation $v(\cdot,t) = \int_{0}^{t}ds e^{(s-t)\mathcal{L}_0}\tilde{F}(\cdot,s)$,
it follows from the semigroup estimates that
\begin{equation}\label{f:8.4}
 \|v(\cdot,t)\|_{L^q(\mathbb{R}^d)}
 \lesssim \int_{0}^{t}ds\|e^{(s-t)\mathcal{L}_0}\tilde{F}(\cdot,s)\|_{L^q(\mathbb{R}^d)}
 \lesssim \int_{0}^{t}ds(t-s)^{-\sigma}\|\tilde{F}(\cdot,s)\|_{L^r(\mathbb{R}^d)},
\end{equation}
where $\sigma:=\frac{d}{2}\big(\frac{1}{r}-\frac{1}{q}\big)>0$.
Then, by applying the Hardy-Littlewood-Sobolev inequality to the right-hand side of $\eqref{f:8.4}$, it holds that
\begin{equation}\label{f:8.5}
 \|v\|_{L^q(\mathbb{R}_{+}\times\mathbb{R}^d)}^q
 \lesssim \int_{\mathbb{R}_{+}}dt\Big(\int_{0}^{t}ds
 (t-s)^{-\sigma}\|\tilde{F}(\cdot,s)\|_{L^r(\mathbb{R}^d)}\Big)^q
 \lesssim \|\tilde{F}\|_{L^s(\mathbb{R}_+;L^r(\mathbb{R}^d))}^q,
\end{equation}
where $1 = \frac{1}{s}-\frac{1}{q}+\sigma$, and the desired estimate
$\eqref{f:3.12-ap}$ is obtained. It is noted that the validity
of the estimate $\eqref{f:8.5}$ requires that $\sigma < 1$, and it leads to  requiring $s\in[1,q)$.

\textbf{Step 6.}
Arguments for the second term in the left-hand side of $\eqref{pri:3.6-ap}$. We start from the decomposition of the integral region, i.e,
\begin{equation*}
\begin{aligned}
\int_{\Sigma_{c\varepsilon}^T}
|\nabla u_0|^2 \delta^{-1}
&=\bigg\{\int_{(c\varepsilon)^2}^{c_0^2}\int_{\Sigma_{c\varepsilon}}
+\int_{T-c_0^2}^{T-(c\varepsilon)^2}\int_{\Sigma_{c\varepsilon}}
+\int_{c_0^2}^{T-c_0^2}\int_{\Sigma_{c\varepsilon}\setminus\Sigma_{c_0}}
+\int_{c_0^2}^{T-c_0^2}\int_{\Sigma_{c_0}}\bigg\}|\nabla u_0|^2 \delta^{-1}\\
&=:J_1 + J_2 + J_3 +J_4.
\end{aligned}
\end{equation*}

Obviously, the easiest term is $J_4$ since $\delta\geq c_0$ in $\Sigma_{c_0}\times (c_0^2, T-c_0^2)$, and the relatively easy one is
$J_3$ since we can take $\delta(z)=\sigma(x)=\text{dist}(x,\partial\Omega)$
for $z\in (\Sigma_{c\varepsilon}\setminus\Sigma_{c_0})\times
(c_0^2, T-c_0^2)$. Therefore, we have
\begin{equation}\label{f:3.51-ap}
J_4 \leq c_0^{-1}\int_{0}^{T}\int_{\Omega}
|\nabla u_0|^2
\lesssim \|F\|_{L^2(\Omega_T)} + 1,
\end{equation}
and an analogous argument as given for $\eqref{f:3.6-a-ap}$ in Step 3 leads to
\begin{equation}\label{f:3.52-ap}
\begin{aligned}
J_3
&= \int_{c^2}^{T-c^2}dt\int_{c\varepsilon}^{c_0}\frac{dr}{r}\int_{S_r}
dS_r|\nabla u_0(\cdot,t)|^2\\
&\lesssim \ln(c_0/\varepsilon)\Bigg\{
\int_{0}^T\int_{O_{c_0}}\big(|\nabla^2 v|^2
+|\nabla v|^2\big)dxdt
+ \int_{0}^T\int_{\partial\Omega}|(\nabla \bar{w})^*|^2dSdt\Bigg\}\\
&\lesssim^{\eqref{f:3.7-ap},\eqref{f:3.13-ap}} \ln(R_0/\varepsilon)
\Big\{\|F\|_{L^2(\Omega_T)} + 1\Big\}.
\end{aligned}
\end{equation}

We now turn to $J_1$ and $J_2$. In fact,
by a change of the time variable, the study on $J_2$ can be reduced to investigating $J_1$. To carry out the analysis, $J_1$ is split into two parts:
\begin{equation*}
\underbrace{\int_{4\varepsilon^2}^{c_0^2}dt
\int_{\Sigma_{2\varepsilon}\cap\{\delta = t^{\frac{1}{2}}\}}|\nabla u_0(\cdot,t)|^2
t^{-\frac{1}{2}}}_{J_{11}}
\quad\text{and}\quad
\underbrace{\int_{4\varepsilon^2}^{c_0^2}dt
\int_{\Sigma_{2\varepsilon}\cap\{\delta = \sigma\}}
dx|\nabla u_0(x,t)|^2 [\text{dist}(x,\partial\Omega)]^{-1}}_{J_{12}}.
\end{equation*}
It is clear that the same arguments as those used for $J_3$ can be adopted to derive
\begin{equation}\label{f:3.53-ap}
 J_{12} \lesssim \ln(R_0/\varepsilon)\Big\{\|F\|_{L^2(\Omega_T)} + 1\Big\}.
\end{equation}
For $J_{11}$, set $t_k = 2^k\varepsilon$ and
$N_0 = \log_2(c_0/\varepsilon)$, noting that $2^{N_0}\varepsilon = c_0$. Hence, it is obtained that
\begin{equation}\label{f:3.29-ap}
 J_{11} \leq \sum_{k=1}^{N_0}\int_{t_k^2}^{t_{k+1}^2}
\int_{\Sigma_{t_k}\cap\{\delta=s^{\frac{1}{2}}\}}
|\nabla u_0|^2 s^{-\frac{1}{2}}dxds :=\sum_{k=1}^{N_0} \mathcal{K}_k
\lesssim \log_2(c_0/\varepsilon)\Big\{\|F\|_{L^2(\Omega_T)}+1\Big\},
\end{equation}
provided that there uniformly holds $
\mathcal{K}_k\lesssim \big\{\|F\|_{L^2(\Omega_T)}+1\big\}$
with respect to $k$. To see this, we start from a basic inequality that
\begin{equation}\label{f:3.24-ap}
\begin{aligned}
& \max_{0\leq t\leq T}\|\nabla u_0\|_{L^2(\Sigma_{t_k})}
 \lesssim \Big\{
 \int_{(\Omega\setminus O_{t_k})_T}\big(|\nabla^2 u_0|^2
 +|\nabla u_0|^2 + |u_0|^2 +|\partial_t u_0|^2\big)\Big\}^{\frac{1}{2}}\\
&\lesssim \Big(\int_{(\Omega\setminus O_{t_k})_T}|\nabla^2 u_0|^2\Big)^{\frac{1}{2}} +
\Big\{\|F\|_{L^2(\Omega_T)} + 1\Big\}
\lesssim^{\eqref{f:3.23-ap}}
\frac{1}{\sqrt{t_k}}\Big\{\|F\|_{L^2(\Omega_T)} + 1\Big\},
\end{aligned}
\end{equation}
where the first inequality follows from \cite[Theorem 4, pp.306]{LCE}\footnote{One can replace the extension theorem in the proof with Stein's extension theorem \cite[Theorem 5, pp.181]{Stein-70}
to reduce the requirements for boundary regularity.}, and
the energy estimate is employed in the second.
Hence, it follows that
\begin{equation*}
\begin{aligned}
 \mathcal{K}_{k}
& = \int_{t_k^2}^{t_{k+1}^2}
 \int_{\Sigma_{t_k}\cap\{\delta=s^{\frac{1}{2}}\}}|\nabla u_0|^2 s^{-\frac{1}{2}} dx ds \\
&\leq \max_{0\leq t\leq T}\|\nabla u_0\|_{L^2(\Sigma_{t_k})}^2
\int_{t_k^2}^{t_{k+1}^2}\frac{ds}{\sqrt{s}}
\lesssim^{\eqref{f:3.24-ap}}
\Big\{\|F\|_{L^2(\Omega_T)}^2 + 1\Big\},
\end{aligned}
\end{equation*}
which completes the argument for $\eqref{f:3.29-ap}$. Now,
collecting the estimates $\eqref{f:3.51-ap}$, $\eqref{f:3.52-ap}$,
$\eqref{f:3.53-ap}$, and $\eqref{f:3.29-ap}$ yields the
stated estimate on the second term in the left-hand side of $\eqref{pri:3.6-ap}$. This completes the whole proof.
\end{proof}

\end{document}